\documentclass[10pt,a4paper]{article}
\usepackage[utf8]{inputenc}
\usepackage[T1]{fontenc}
\usepackage{geometry}
\geometry{top=0pt, bottom=0pt, left=0pt, right=0pt}
\usepackage{amsthm}
\usepackage[french,english]{babel}
\usepackage{amssymb}
\usepackage{lmodern}
\usepackage{amsmath,amsfonts}
\usepackage{mathrsfs} 
\usepackage{dsfont}
\usepackage{stmaryrd}
\DeclareMathOperator{\dive}{div} 
\usepackage{graphicx}
\usepackage{fullpage}
\usepackage[nottoc,notlot,notlof]{tocbibind}
\usepackage[colorlinks=true,urlcolor=black,linkcolor=black,citecolor=black]{hyperref}
\numberwithin{equation}{section}
\usepackage{mathtools,array}
\usepackage{verbatimbox}
\usepackage{color}
\usepackage{bm}
\usepackage{tikz-cd}
\usepackage{pst-node}
\usetikzlibrary{matrix}
\hfuzz=5.002pt 
\usepackage{mathtools}
\usepackage{verbatimbox}
\usepackage{diagbox}
\usepackage{array}
\newcolumntype{C}{>{$\displaystyle} c <{$}}
\usepackage{makecell}
\usepackage{float}
\usepackage{tabu}
\usepackage{cancel}
\usepackage{lscape}
\usepackage[babel=true]{csquotes}
\usepackage{bm}
\usepackage{xcolor}
\usepackage[new]{old-arrows}
\usepackage{stackengine}
\makeatletter
\def\env@dmatrix{\hskip -\arraycolsep
	\let\@ifnextchar\new@ifnextchar
	\def\arraystretch{2}%
	\array{*{\c@MaxMatrixCols}{>{\displaystyle}c}}%
}
\usepackage{soul}

\makeatother

%fix tilde in equations
\DeclareFontShape{OMX}{cmex}{m}{n}{
	<-7.5> cmex7
	<7.5-8.5> cmex8
	<8.5-9.5> cmex9
	<9.5-> cmex10
}{}
\SetSymbolFont{largesymbols}{normal}{OMX}{cmex}{m}{n}
\SetSymbolFont{largesymbols}{bold}  {OMX}{cmex}{m}{n}

\begin{document}

	\renewcommand{\thefootnote}{\fnsymbol{footnote}}
	
	\title{Morse Index Stability of Willmore Immersions I}

	\author{Alexis Michelat\footnote{Institute of Mathematics, EPFL B, Station 8, CH-1015 Lausanne, Switzerland.\hspace{.5em} \href{alexis.michelat@epfl.ch}{alexis.michelat@epfl.ch}}\and \;Tristan \selectlanguage{french}Rivière\selectlanguage{english}\footnote{Department of Mathematics, ETH Zentrum, CH-8093 Zürich, Switzerland.\hspace{2.85em}\href{mailto:tristan.riviere@math.ethz.ch}{tristan.riviere@math.ethz.ch}}}
	\date{\today}
	
	\maketitle
	
	\vspace{-0.5em}
	
	\begin{abstract}
		In (\cite{riviere_morse_scs}), F. Da Lio, M. Gianocca, and T. Rivière developped a new method to show upper semi-continuity results in geometric analysis—which they applied to conformally invariant Lagrangians in dimension $2$ (that include harmonic maps). In this article, we apply this method to show that the sum of the Morse index and the nullity of Willmore immersions is upper semi-continuous, provided that the limiting immersions and the bubbles are free of branch points. Our result covers the case of degenerating Riemann surfaces for which the ratio of the second residue (considered by P. Laurain and T. Rivière in \cite{lauriv1}) and the length of the minimal shrinking geodesic of the underlying sequence of Riemann surfaces is smaller than a universal constant.  
	\end{abstract}

	\tableofcontents
	\vspace{0cm}
	\begin{center}
		{Mathematical subject classification : 
		 35J35, 35J48, 35R01, 49Q10, 53A05, 53A10, 53A30, 53C42.
			}
	\end{center}

	\theoremstyle{plain}
	\newtheorem*{theorem*}{Theorem}
	\newtheorem{theorem}{Theorem}[section]
	\newenvironment{theorembis}[1]
	{\renewcommand{\thetheorem}{\ref{#1}$'$}%
		\addtocounter{theorem}{-1}%
		\begin{theorem}}
		{\end{theorem}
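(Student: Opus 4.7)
The plan is to transport the Da Lio--Gianocca--Rivi\`ere strategy from \cite{riviere_morse_scs}, originally developed for second-order conformally invariant Lagrangians in dimension two, to the fourth-order Willmore setting, using the bubble-tree compactness of Laurain--Rivi\`ere \cite{lauriv1} as the underlying geometric input. For a branch-point-free Willmore immersion $\vec\Phi$ the Willmore Hessian $Q_{\vec\Phi}$ is a self-adjoint fourth-order elliptic quadratic form on normal sections whose discrete spectrum defines the Morse index $\mathrm{Ind}(\vec\Phi)$ and nullity $\mathrm{null}(\vec\Phi)$ via the usual min--max characterisation; the branch-point-free hypothesis is what secures genuine ellipticity and a clean Fredholm picture.

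Given a converging sequence $\vec\Phi_k$, I would first extract a bubble tree consisting of a macroscopic limit $\vec\Phi_\infty$ together with finitely many bubbles $\vec\Psi^{(1)},\ldots,\vec\Psi^{(N)}$, all branch-point-free by hypothesis. In the degenerating Riemann-surface case, smallness of the ratio of the second residue to the length of the shrinking geodesic is precisely what prevents loss of energy across the collapsing collar, so the tree retains the full energy and the neck analysis stays tractable.

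The main argument is a contradiction scheme. Set
\[
K := \mathrm{Ind}(\vec\Phi_\infty) + \sum_{j=1}^{N} \mathrm{Ind}(\vec\Psi^{(j)}) + \mathrm{null}(\vec\Phi_\infty) + \sum_{j=1}^{N} \mathrm{null}(\vec\Psi^{(j)}) + 1,
\]
and suppose, for contradiction, that $\mathrm{Ind}(\vec\Phi_k) + \mathrm{null}(\vec\Phi_k) \ge K$ for infinitely many $k$. I would pick $L^2$-orthonormal normal perturbations $\vec w_k^{(1)},\ldots,\vec w_k^{(K)}$ diagonalising the first $K$ eigenvalues of $Q_{\vec\Phi_k}$, all nonpositive, rescale each $\vec w_k^{(\ell)}$ at every scale of the tree (macroscopic piece, each bubble, each neck, shrinking collar), and extract subsequential limits $\vec w_\infty^{(\ell,a)}$ on the corresponding components. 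The target is to produce $K$ linearly independent nonpositive directions distributed across $\vec\Phi_\infty$ and the bubbles, contradicting the definition of $K$.

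The decisive step, and what I expect to be the main obstacle, is a fourth-order \emph{no-neck} estimate for the linearised Willmore system: one must rule out that a non-vanishing fraction of the $L^2$-mass of some $\vec w_k^{(\ell)}$ is trapped either in a degenerating annular neck between bubbles, or in the thin collar associated to the shrinking geodesic. In the second-order case treated in \cite{riviere_morse_scs} the analogous estimate is produced by a Pohozaev-type identity built from Noether conservation laws; here I would try to replicate it for the linearised Willmore operator, using the divergence structure arising from conformal invariance of the Willmore Lagrangian together with the smallness hypothesis on the residue-to-geodesic-length ratio to control the degenerating collar. Once this no-neck estimate is established, the total mass of the limit family matches that of the sequence, the $K$ limits are genuinely independent, and the contradiction closes the proof of upper semi-continuity of $\mathrm{Ind}+\mathrm{null}$.
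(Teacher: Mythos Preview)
Your proposal targets the right architecture but misidentifies the engine that drives the neck analysis, and this is a genuine gap rather than a stylistic difference.

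You propose to exclude mass-trapping in necks via a Pohozaev-type identity for the linearised Willmore operator, and to normalise your $K$ eigenvectors in the ordinary $L^2$ norm. The paper does neither. The correct mechanism is \emph{positivity of $Q_{\phi_k}$ on neck-supported variations}, not a no-neck mass estimate for the linearised equation. Concretely, Theorem~\ref{neck_positive} shows
\[
Q_{\phi_k}(u)\ \ge\ \lambda_1\int_{\Omega_k(\alpha)}\frac{u^2}{|x|^4}\left(\Big(\tfrac{|x|}{\alpha}\Big)^{4\beta_1}+\Big(\tfrac{\alpha^{-1}\delta_k}{|x|}\Big)^{4\beta_1}+\tfrac{1}{\log^2(\alpha^2/\delta_k)}\right)dx
\]
for all $u\in W^{2,2}_0(\Omega_k(\alpha))$, and this is obtained by combining (i) the refined pointwise decay of $|A_k|$ and $|\nabla H_k|$ in necks (Theorems~\ref{pointwise_H} and~\ref{normal_pointwise_gen}, built on the dyadic iteration of Theorem~\ref{collar_statement}) with (ii) the \emph{weighted fourth-order Poincar\'e inequalities} of Theorems~\ref{pde_lemma1}--\ref{pde_lemma2} from the companion paper~\cite{eigenvalue_annuli}, which control $\int u^2/|x|^4$ and $\int|\nabla u|^2/|x|^2$ (with weights) by $\int(\Delta u)^2$. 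No Pohozaev identity for the fourth-order linearised operator appears, and it is unclear one would yield the required two-scale weight.

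The second gap is the normalisation. With plain $L^2$, nothing prevents all $K$ eigenvectors from sinking into the neck, because the neck has divergent conformal volume for the natural weight $|x|^{-4}$. The paper instead introduces the weight $\omega_{1,\alpha,k}$ above on all of $\Sigma$, uses Sylvester's law of inertia (Lemma~\ref{lemme_IV.3}) to transfer the index count to the weighted operator $\mathcal{L}_{\alpha,k}=\omega_{1,\alpha,k}^{-1}\mathcal{L}_{g_k}$, proves a uniform lower bound $\lambda\ge -\mu_{\alpha,k}\to 0$ on its spectrum (Lemma~\ref{lemme_IV.4}), and normalises eigenvectors by $\int u_k^2\,\omega_{1,\alpha,k}\,d\mathrm{vol}_{g_k}=1$. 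The positivity of $Q$ on necks then forces the limits $(u_\infty,v_\infty)$ on the macroscopic piece and the bubble to be nonzero, which is what closes the dimension count. Your contradiction scheme is the same skeleton, but without the weight and the neck-positivity it has no teeth.
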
}
	\renewcommand*{\thetheorem}{\Alph{theorem}}
	\newtheorem{lemme}[theorem]{Lemma}
	\newtheorem*{lemme*}{Lemma}
	\newtheorem{propdef}[theorem]{Definition-Proposition}
	\newtheorem*{propdef*}{Definition-Proposition}
	\newtheorem{prop}[theorem]{Proposition}
	\newtheorem{cor}[theorem]{Corollary}
	\theoremstyle{definition}
	\newtheorem*{definition}{Definition}
	\newtheorem{defi}[theorem]{Definition}
	\newtheorem{rem}[theorem]{Remark}
	\newtheorem*{rem*}{Remark}
	\newtheorem{rems}[theorem]{Remarks}
	\newtheorem{remimp}[theorem]{Important Remark}
	\newtheorem{exemple}[theorem]{Example}
	\newtheorem{defi2}{Definition}
	\newtheorem{propdef2}[defi2]{Proposition-Definition}
	\newtheorem{remintro}[defi2]{Remark}
	\newtheorem{remsintro}[defi2]{Remarks}
	\newtheorem{conj}{Conjecture}
	\newtheorem{question}{Open Question}
	\renewcommand\hat[1]{%
		\savestack{\tmpbox}{\stretchto{%
				\scaleto{%
					\scalerel*[\widthof{\ensuremath{#1}}]{\kern-.6pt\bigwedge\kern-.6pt}%
					{\rule[-\textheight/2]{1ex}{\textheight}}%WIDTH-LIMITED BIG WEDGE
				}{\textheight}% 
			}{0.5ex}}%
		\stackon[1pt]{#1}{\tmpbox}
	}
	\parskip 1ex
	\newcommand{\totimes}{\ensuremath{\,\dot{\otimes}\,}}
	\newcommand{\vc}[3]{\overset{#2}{\underset{#3}{#1}}}
	\newcommand{\conv}[1]{\ensuremath{\underset{#1}{\longrightarrow}}}
	\newcommand{\A}{\ensuremath{\vec{A}}}
	\newcommand{\B}{\ensuremath{\vec{B}}}
	\newcommand{\C}{\ensuremath{\mathbb{C}}}
	\newcommand{\D}{\ensuremath{\nabla}}
	\newcommand{\Disk}{\ensuremath{\mathbb{D}}}
	\newcommand{\E}{\ensuremath{\vec{E}}}
	\newcommand{\I}{\ensuremath{\mathbb{I}}}
	\newcommand{\Q}{\ensuremath{\vec{Q}}}
	\newcommand{\loc}{\ensuremath{\mathrm{loc}}}
	\newcommand{\z}{\ensuremath{\bar{z}}}
	\newcommand{\hh}{\ensuremath{\mathscr{H}}}
	\newcommand{\h}{\ensuremath{\vec{h}}}
	\newcommand{\vol}{\ensuremath{\mathrm{vol}}}
	\newcommand{\hs}[3]{\ensuremath{\left\Vert #1\right\Vert_{\mathrm{H}^{#2}(#3)}}}
	\newcommand{\R}{\ensuremath{\mathbb{R}}}
	\renewcommand{\P}{\ensuremath{\mathbb{P}}}
	\newcommand{\N}{\ensuremath{\mathbb{N}}}
	\newcommand{\Z}{\ensuremath{\mathbb{Z}}}
	\newcommand{\p}[1]{\ensuremath{\partial_{#1}}}
	\newcommand{\Res}{\ensuremath{\mathrm{Res}}}
	\newcommand{\lp}[2]{\ensuremath{\mathrm{L}^{#1}(#2)}}
	\renewcommand{\wp}[3]{\ensuremath{\left\Vert #1\right\Vert_{\mathrm{W}^{#2}(#3)}}}
	\newcommand{\wpn}[3]{\ensuremath{\Vert #1\Vert_{\mathrm{W}^{#2}(#3)}}}
	\newcommand{\np}[3]{\ensuremath{\left\Vert #1\right\Vert_{\mathrm{L}^{#2}(#3)}}}
	\newcommand{\hp}[3]{\ensuremath{\left\Vert #1\right\Vert_{\mathrm{H}^{#2}(#3)}}}
	\newcommand{\ck}[3]{\ensuremath{\left\Vert #1\right\Vert_{\mathrm{C}^{#2}(#3)}}}
	\newcommand{\hardy}[2]{\ensuremath{\left\Vert #1\right\Vert_{\mathscr{H}^{1}(#2)}}}
	\newcommand{\lnp}[3]{\ensuremath{\left| #1\right|_{\mathrm{L}^{#2}(#3)}}}
	\newcommand{\npn}[3]{\ensuremath{\Vert #1\Vert_{\mathrm{L}^{#2}(#3)}}}
	\newcommand{\nc}[3]{\ensuremath{\left\Vert #1\right\Vert_{C^{#2}(#3)}}}
	\renewcommand{\Re}{\ensuremath{\mathrm{Re}\,}}
	\renewcommand{\Im}{\ensuremath{\mathrm{Im}\,}}
	\newcommand{\dist}{\ensuremath{\mathrm{dist}}}
	\newcommand{\diam}{\ensuremath{\mathrm{diam}\,}}
	\newcommand{\leb}{\ensuremath{\mathscr{L}}}
	\newcommand{\supp}{\ensuremath{\mathrm{supp}\,}}
	\renewcommand{\phi}{\ensuremath{\vec{\Phi}}}
	\renewcommand{\H}{\ensuremath{\vec{H}}}
	\renewcommand{\L}{\ensuremath{\vec{L}}}
	\renewcommand{\lg}{\ensuremath{\mathscr{L}_g}}
	\renewcommand{\ker}{\ensuremath{\mathrm{Ker}}}
	\renewcommand{\epsilon}{\ensuremath{\varepsilon}}
	\renewcommand{\bar}{\ensuremath{\overline}}
	\newcommand{\s}[2]{\ensuremath{\langle #1,#2\rangle}}
	\newcommand{\pwedge}[2]{\ensuremath{\,#1\wedge#2\,}}
	\newcommand{\bs}[2]{\ensuremath{\left\langle #1,#2\right\rangle}}
	\newcommand{\scal}[2]{\ensuremath{\langle #1,#2\rangle}}
	\newcommand{\sg}[2]{\ensuremath{\left\langle #1,#2\right\rangle_{\mkern-3mu g}}}
	\newcommand{\n}{\ensuremath{\vec{n}}}
	\newcommand{\ens}[1]{\ensuremath{\left\{ #1\right\}}}
	\newcommand{\lie}[2]{\ensuremath{\left[#1,#2\right]}}
	\newcommand{\g}{\ensuremath{g}}
	\newcommand{\dzeta}{\ensuremath{\det\hphantom{}_{\kern-0.5mm\zeta}}}
	\newcommand{\e}{\ensuremath{\vec{e}}}
	\newcommand{\f}{\ensuremath{\vec{f}}}
	\newcommand{\ig}{\ensuremath{|\vec{\mathbb{I}}_{\phi}|}}
	\newcommand{\ik}{\ensuremath{\left|\mathbb{I}_{\phi_k}\right|}}
	\newcommand{\w}{\ensuremath{\vec{w}}}
	\newcommand{\hooklongrightarrow}{\lhook\joinrel\longrightarrow}
	\renewcommand{\tilde}{\ensuremath{\widetilde}}
	\newcommand{\vg}{\ensuremath{\mathrm{vol}_g}}
	\newcommand{\im}{\ensuremath{\mathrm{W}^{2,2}_{\iota}(\Sigma,N^n)}}
	\newcommand{\imm}{\ensuremath{\mathrm{W}^{2,2}_{\iota}(\Sigma,\R^3)}}
	\newcommand{\timm}[1]{\ensuremath{\mathrm{W}^{2,2}_{#1}(\Sigma,T\R^3)}}
	\newcommand{\tim}[1]{\ensuremath{\mathrm{W}^{2,2}_{#1}(\Sigma,TN^n)}}
	\renewcommand{\d}[1]{\ensuremath{\partial_{x_{#1}}}}
	\newcommand{\dg}{\ensuremath{\mathrm{div}_{g}}}
	\renewcommand{\Res}{\ensuremath{\mathrm{Res}}}
	\newcommand{\un}[2]{\ensuremath{\bigcup\limits_{#1}^{#2}}}
	\newcommand{\res}{\mathbin{\vrule height 1.6ex depth 0pt width
			0.13ex\vrule height 0.13ex depth 0pt width 1.3ex}}
    \newcommand{\antires}{\mathbin{\vrule height 0.13ex depth 0pt width 1.3ex\vrule height 1.6ex depth 0pt width 0.13ex}}
	\newcommand{\ala}[5]{\ensuremath{e^{-6\lambda}\left(e^{2\lambda_{#1}}\alpha_{#2}^{#3}-\mu\alpha_{#2}^{#1}\right)\left\langle \nabla_{\e_{#4}}\vec{w},\vec{\mathbb{I}}_{#5}\right\rangle}}
	\setlength\boxtopsep{1pt}
	\setlength\boxbottomsep{1pt}
	\newcommand\norm[1]{%
		\setbox1\hbox{$#1$}%
		\setbox2\hbox{\addvbuffer{\usebox1}}%
		\stretchrel{\lvert}{\usebox2}\stretchrel*{\lvert}{\usebox2}%
	}
	\allowdisplaybreaks
	\newcommand*\mcup{\mathbin{\mathpalette\mcapinn\relax}}
	\newcommand*\mcapinn[2]{\vcenter{\hbox{$\mathsurround=0pt
				\ifx\displaystyle#1\textstyle\else#1\fi\bigcup$}}}
	\def\Xint#1{\mathchoice
		{\XXint\displaystyle\textstyle{#1}}%
		{\XXint\textstyle\scriptstyle{#1}}%
		{\XXint\scriptstyle\scriptscriptstyle{#1}}%
		{\XXint\scriptscriptstyle\scriptscriptstyle{#1}}%
		\!\int}
	\def\XXint#1#2#3{{\setbox0=\hbox{$#1{#2#3}{\int}$ }
			\vcenter{\hbox{$#2#3$ }}\kern-.58\wd0}} 
	\def\ddashint{\Xint=}
	\newcommand{\dashint}[1]{\ensuremath{{\Xint-}_{\mkern-10mu #1}}}
	\newcommand\ccancel[1]{\renewcommand\CancelColor{\color{red}}\cancel{#1}}
	\newcommand\colorcancel[2]{\renewcommand\CancelColor{\color{#2}}\cancel{#1}}
	\newcommand{\abs}[1]{\left\lvert #1 \right \rvert}
	
	\renewcommand{\thetheorem}{\thesection.\arabic{theorem}}

 \section{Introduction}

 \subsection{History and Background}

 Let $\Sigma$ be a closed Riemann surface and fix some integer $n\geq 3$. For all immersion $\phi:\Sigma\rightarrow \R^n$, let $g=\phi^{\ast}g_{\R^n}$ be the induced metric on $\Sigma$, where $g_{\R^n}$ is the flat metric on $\R^n$. The Willmore energy of $\phi$ is the quantity
 \begin{align}
     W(\phi)=\int_{\Sigma}|\H|^2d\vg,
 \end{align}
 where $\H$ is the mean-curvature vector, given locally by 
 \begin{align}
     \H=\frac{1}{2}\sum_{i,j=1}^2g^{i,j}\,\vec{\I}_{i,j},
 \end{align}
 where $\vec{\I}_{i,j}$ is the second fundamental form of the immersion $\phi$. The Willmore energy, that first appeared in the work of Poisson and Sophie Germain (\cite{poisson,germain3}) is conformally invariant thanks to the work of Blaschke and Thomsen in the early $1920$s (\cite{blaschke,thomsen}). In other words, $W$ is invariant by isometries, dilations and inversions, provided that the inversion does not modify the topology of the surface $\phi(\Sigma)$. More precisely, the quantity
 \begin{align}
     \int_{\Sigma}|\h_0|_{\mathrm{WP}}^2d\vg=\int_{\Sigma}\left(|\H|^2-K_g\right)d\vg,
 \end{align}
 is conformally invariant for \emph{all} conformal transformations, where $K_g$ is the Gauss curvature, and $\h_0=2\,\vec{\I}(\p{z}\phi,\p{z}\phi)\,dz^2$ is the Weingarten tensor, and for all quadratic differential $\alpha=f(z)dz^2$ and $\beta=h(z)dz^2$, we define in a conformal (or complex) chart the \emph{Weil-Petersson product} of $\alpha$ and $\beta$ by
 \begin{align}
     \s{\alpha}{\beta}_{\mathrm{WP}}=\Re\left(e^{-4\lambda}f(z)\bar{h(z)}\right),
 \end{align}
 where $g=e^{2\lambda}|dz|^2$. 
 What about Willmore? He was unaware of the previous work and reintroduced this definition in his $1965$ article (\cite{willmore1})—and reaped all honours for conjecturing that a particular revolution torus (the Willmore torus, which is no other than the stereographic projection of the Clifford torus, a minimal surface of $S^3$) has the least energy amongst \emph{all} tori.
 A \emph{Willmore surface} is a critical point of $W$, and classically satisfies the equation
 \begin{align}
     \Delta_g^N\H-2|\H|^2\H+\mathscr{A}(\H)=0
 \end{align}
 where $\Delta_g^N$ is the normal Laplacian, and $\mathscr{A}$ is the Simons operator, such that for all $\vec{X}\in\R^n$,
 \begin{align*}
     \mathscr{A}(\vec{X})=\sum_{i,j=1}^2e^{-4\lambda}\s{\vec{\I}_{i,j}}{\vec{X}}\vec{\I}_{i,j}.
 \end{align*}
 In codimension $1$ (if $n=3$), the equation reduces to
 \begin{align}
     \Delta_gH+2H(H^2-K_g)=0,
 \end{align}
 where $\H=H\,\n$, and $\n:\Sigma\rightarrow S^2\subset \R^3$ is the unit normal. If $n\geq 3$ is arbitrary, then the unit normal $\n:\Sigma\rightarrow \mathscr{G}_{n-2}(\R^n)$ is a map taking values in the Grassmannian space of $(n-2)$-vectors. It is related to the previous tensors as follows:
 \begin{align*}
     \int_{\Sigma}|d\n|_g^2d\vg=2\int_{\Sigma}|\H|^2d\vg+2\int_{\Sigma}|\h_0|_{\mathrm{WP}}^2d\vg=\int_{\Sigma}(4|\H|^2-2K_g)d\vg.
 \end{align*}
 Thanks to Gauss-Bonnet theorem, the Dirichlet energy of the unit normal is variationally equivalent to the Willmore energy, and if $A$ designs the full fundamental form, we also have
 \begin{align*}
     \int_{\Sigma}|A|^2d\vg=\int_{\Sigma}|d\n|_g^2d\vg.
 \end{align*}

 \subsection{Morse Index Stability}

 In this article, we are concerned with issues of Morse index stability, that are related to min-max problems (\cite{eversion}). In particular, a natural development of the theory developed here would be to apply it to the \emph{viscosity method} (first developed in the case of minimal surfaces by T. Rivière \cite{viscosity}). Due to the extreme technical difficulty associated branched Willmore surfaces (and the length of the present article and the companion paper \cite{eigenvalue_annuli}), we will concentrate ourselves on the case where the limiting surface does not have singularities. However, notice that the most technical part of the two articles, namely the estimates in neck regions, is proved in complete generality for all order or branch points (\cite{eigenvalue_annuli}). 

 Compared to the lower semi-continuity Morse index estimates, upper semi-continuity results are typically much more complicated due to the need of obtaining precise estimates of the sequence of solutions in regions of loss of compactness. The main technical issue that one must needs overcome is that in neck regions, there could be (infinitely many) negative variations. If the underlying Riemann surfaces degenerates and the image geodesics have long enough length, our proof indicates that there could be negative variations located in the neck regions, which would violate the upper semi-continuity inequality. 
 
 The strategy in our proof closely follows the approach developed in \cite{riviere_morse_scs} by Da Lio-Gianocca-Rivière. Let us explain the main steps of this method, whose flexibility shall become apparent in our article. The approach can be decomposed into four major steps.

 \subsubsection{Neck Regions}

 As a means to simplify the exposition of the method of \cite{riviere_morse_scs}, we restrict to the simplest example of harmonic maps with values into $S^3$. Fix a closed Riemann surface $\Sigma$, and a Riemannian metric $h$ on $\Sigma$  of constant curvature of unit volume. The Dirichlet energy of a map $\varphi:\Sigma\rightarrow S^3$ is defined by 
 \begin{align}
     E(u)=\frac{1}{2}\int_{\Sigma}|d\varphi|_h^2d\mathrm{vol}_{h}.
 \end{align}
 A (stationary) harmonic map is a critical point of $E$ for variations of the type $\ens{\varphi_t=\pi_{S^3}(\varphi+t\,u)}_{-\epsilon<t<\epsilon}$, where $u:\Sigma\rightarrow \R^4$ is an arbitrary smooth map. 
 
 Let $\ens{\varphi_k}_{k\in\N}:\Sigma\rightarrow S^3$ be a sequence of harmonic maps into $S^3$ such that
 \begin{align*}
     \limsup_{k\rightarrow \infty}E(\varphi_k)<\infty.
 \end{align*}
 Then, by classical results (and up to a subsequence) $\ens{\varphi_k}_{k\in\N}$ converges in $W^{1,2}$ towards a macroscopic harmonic map $\varphi_{\infty}:\Sigma\rightarrow S^3$ and a finite number of \enquote{bubbles.} In order to simplify the presentation, assume that $\Sigma=S^2=\C\cup\ens{\infty}$, and assume that there is a single bubble. Then, we have
 \begin{align*}
     \varphi_k\conv{k\rightarrow \infty}\varphi_{\infty}\qquad \text{in}\;\, C^{l}_{\mathrm{loc}}(S^2\setminus\ens{0})\;\,\text{for all}\;\, l\in\N,
 \end{align*}
 and there exists a sequence $\ens{\rho_k}_{k\in\N}$ such that for all $\alpha>0$
 \begin{align*}
     \varphi_k(\delta_k\,\cdot\,)_{B(0,\alpha^{-1})}\conv{k\rightarrow \infty}\psi_{\infty}\;\,\text{in}\;\, C^{l}(B(0,\alpha^{-1}))\;\,\text{for all}\;\, l\in\N,
 \end{align*}
 where $v_{\infty}:S^2\rightarrow S^3$ is a harmonic map. The second derivative of $E$ is given by
 \begin{align*}
     Q_{\varphi}(u)=D^2E(\varphi)(u,u)=\int_{\Sigma}\left(|d\varphi|_h^2-|d\varphi|_h^2|u|^2\right)d\mathrm{vol}_h.
 \end{align*}
 The quantization of energy states that
 \begin{align}\label{quanta_harm}
     E(\varphi_k)\conv{k\rightarrow \infty}E(\varphi_{\infty})+E(\psi_{\infty}).
 \end{align}
 Equivalently, \eqref{quanta_harm} holds if and only if 
 \begin{align}\label{noneck}
     \lim_{\alpha\rightarrow 0}\limsup_{k\rightarrow \infty}\int_{\Omega_k(\alpha)}|d\varphi_k|_h^2d\mathrm{vol}_h=0,
 \end{align}
 where for all $\alpha>0$, $\Omega_k(\alpha)=B_{\alpha}\setminus\bar{B}_{\alpha^{-1}\rho_k}(0)$. 
 Thanks to an $\epsilon$-regularity result, one proves that for all $x\in \Omega_k(\alpha)$
 \begin{align}\label{trivial_eps}
     |\D \varphi_k(x)|\leq \frac{C}{|x|}\np{\D \varphi_k}{2}{\Omega_k(2\alpha)}.
 \end{align}
 As we have said it above, the main part of the proof is to show that the neck regions contribute positively to the second derivative. The first important estimate is a weighted Poincaré inequality.
 \begin{theorem}[Lemma IV.$1$, \cite{riviere_morse_scs}]\label{eigenvalue_harmonic_annulus}
     Let $0<a<b<\infty$, and let $\Omega=B_b\setminus\bar{B}_a(0)$. For all $u\in W^{1,2}_0(\Omega)$, we have
     \begin{align*}
         \int_{\Omega}|\D u|^2dx\geq \frac{\pi^2}{\log^2\left(\frac{b}{a}\right)}\int_{\Omega}\frac{u^2}{|x|^2}dx.
     \end{align*}
 \end{theorem}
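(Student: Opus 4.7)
The natural plan is to exploit the conformal structure of the annulus by passing to logarithmic polar coordinates, which will strip the $|x|^{-2}$ weight and reduce the statement to the sharp one-dimensional Dirichlet Poincaré inequality on an interval.

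\textbf{Step 1: Polar coordinates.} Writing $x = (r\cos\theta, r\sin\theta)$ with $r = |x|$, we have $dx = r\,dr\,d\theta$ and
\begin{align*}
|\nabla u|^2 = |\partial_r u|^2 + \frac{1}{r^2}|\partial_\theta u|^2.
\end{align*}
Thus
\begin{align*}
\int_\Omega |\nabla u|^2\,dx &= \int_0^{2\pi}\!\int_a^b \Bigl(r\,|\partial_r u|^2 + \frac{1}{r}|\partial_\theta u|^2\Bigr)\,dr\,d\theta,\\
\int_\Omega \frac{u^2}{|x|^2}\,dx &= \int_0^{2\pi}\!\int_a^b \frac{u^2}{r}\,dr\,d\theta.
\end{align*}

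\textbf{Step 2: Conformal change of variable.} Set $t = \log r$, so that $dt = dr/r$, and let $T = \log(b/a)$. Writing $\tilde u(t,\theta) = u(e^t\cos\theta,e^t\sin\theta)$, one has $\partial_t \tilde u = r\,\partial_r u$, hence
\begin{align*}
\int_\Omega |\nabla u|^2\,dx = \int_0^{2\pi}\!\int_0^T \Bigl(|\partial_t \tilde u|^2 + |\partial_\theta \tilde u|^2\Bigr)\,dt\,d\theta,\qquad
\int_\Omega \frac{u^2}{|x|^2}\,dx = \int_0^{2\pi}\!\int_0^T \tilde u^{\,2}\,dt\,d\theta,
\end{align*}
and since $u \in W^{1,2}_0(\Omega)$, the trace $\tilde u(0,\theta) = \tilde u(T,\theta) = 0$ holds for a.e.\ $\theta$.

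\textbf{Step 3: One-dimensional Poincaré.} Discarding the non-negative $|\partial_\theta \tilde u|^2$ term, it suffices to verify, for every $\theta \in S^1$, the sharp 1-D Dirichlet Poincaré inequality
\begin{align*}
\int_0^T |\partial_t \tilde u(\cdot,\theta)|^2\,dt \;\geq\; \frac{\pi^2}{T^2}\int_0^T \tilde u(\cdot,\theta)^2\,dt,
\end{align*}
which is the classical spectral bound $\lambda_1(-\partial_t^2,\,(0,T),\,\mathrm{Dir}) = \pi^2/T^2$, obtained either by expanding $\tilde u(\cdot,\theta)$ in the orthonormal basis $\{\sqrt{2/T}\,\sin(k\pi t/T)\}_{k\geq 1}$ of $L^2(0,T)$ or, equivalently, by noting that the minimizer of the Rayleigh quotient is $\sin(\pi t/T)$. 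Integrating in $\theta$ yields the claim.

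\textbf{Main point.} There is no genuine obstacle: the entire inequality is driven by the conformal invariance of the Dirichlet energy in dimension two, which makes the weight $|x|^{-2}$ disappear after the substitution $t = \log r$ and converts the problem on an arbitrary annulus into the universal cylinder $(0,T)\times S^1$, where the sharp one-dimensional eigenvalue estimate applies directly. The constant $\pi^2/\log^2(b/a)$ is therefore sharp, attained (formally) by $u(x) = \sin\!\bigl(\frac{\pi}{T}\log(|x|/a)\bigr)$.
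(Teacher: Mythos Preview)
Your proof is correct. The paper does not actually prove this statement itself; it merely cites it as Lemma~IV.1 of \cite{riviere_morse_scs}, so there is no in-paper proof to compare against. Your argument via logarithmic polar coordinates reducing to the sharp one-dimensional Dirichlet Poincar\'e inequality on $(0,T)$ is the standard and essentially canonical proof of this inequality, and your identification of the extremal $u(x)=\sin\bigl(\tfrac{\pi}{T}\log(|x|/a)\bigr)$ confirms sharpness of the constant.
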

 Therefore, a trivial estimate shows that for all $u\in W^{1,2}_0(\Omega_k(\alpha))$,
 \begin{align*}
     Q_{\varphi_k}(u)\geq \left(\frac{\pi^2}{\log^2\left(\frac{\alpha^2}{\delta_k}\right)}-C\np{d\varphi_k}{2}{\Omega_k(\alpha)}^2\right)\int_{\Omega_k(\alpha)}\frac{|u|^2}{|x|^2}dx.
 \end{align*}
 We see that the no-neck energy \eqref{noneck} is not sufficient to show that $Q_{\varphi_k}(u)\geq 0$, and \eqref{trivial_eps} needs to be refined.  
 
 \subsubsection{Step 1: Lorentz Space Energy Quantization and Pointwise Estimate in Neck Regions}

 A flexible method to prove energy quantization results was first developed by Lin-Rivière (\cite{linriv,linriv_GL1,linriv_GL2}). It consists in first proving an energy quantization in the weak $L^2$ space $L^{2,\infty}$ (also known as Marcinkiewicz space), and showing that the $L^{2,1}$ energy is bounded. The Banach space $L^{2,1}$ is a Lorentz space or a special case Orlicz space which is the pre-dual of $L^{2,\infty}$ (for more details, refer to the appendix of \cite{pointwise}). If $\ens{\varphi_k}_{k\in\N}:\Sigma\rightarrow \R^3$ is a sequence of harmonic maps such that
 \begin{align*}
     \np{d\varphi_k}{2,\infty}{\Sigma}\conv{k\rightarrow \infty}0,
 \end{align*}
 and
 \begin{align*}
     \np{d\varphi_k}{2,1}{\Sigma}\leq C,
 \end{align*}
 then the $L^{2,1}/L^{2,\infty}$ duality shows that
 \begin{align*}
     E(\varphi_k)=\frac{1}{2}\int_{\Sigma}|d\varphi_k|_h^2d\mathrm{vol}_h\leq \np{d\varphi_k}{2,1}{\Sigma}\np{d\varphi_k}{2,\infty}{\Sigma}\leq C\np{d\varphi_k}{2,\infty}{\Sigma}\conv{k\rightarrow\infty}0.
 \end{align*}
 For a precise definition of Lorentz spaces, see the appendix of \cite{pointwise}. In this article, we will mostly need the following estimates:
 \begin{align}\label{norms}
 \left\{\begin{alignedat}{1}
     &\np{\frac{1}{|x|}}{2,\infty}{\R^2}=2\sqrt{\pi}\\
     &\np{\frac{1}{|x|}}{2}{B_b\setminus\bar{B}_a(0)}=\sqrt{2\pi}\sqrt{\log\left(\frac{b}{a}\right)}\\
     &\np{\frac{1}{|x|}}{2,1}{B_b\setminus\bar{B}_a(0)}=4\sqrt{\pi}\left(\log\left(\frac{b}{a}\right)+\log\left(1+\sqrt{1-\left(\frac{a}{b}\right)^2}\right)\right).
     \end{alignedat}\right.
 \end{align}
 Generalising \cite[Lemma III.$1$]{riviere_morse_scs} to Jacobian-type systems, we show the following result, that is proven for a smaller range of exponents $0<\beta<1$ in \cite{riviere_morse_scs}.
 \begin{theorem}
     Let $0<2\,a<b<\infty$, $\Omega=B_b\setminus\bar{B}_a(0)$, and for all $0<t\leq 1$, let $\Omega_t=B_{t\,b}\setminus\bar{B}_{t^{-1}a}(0)$. Let $A\in W^{1,(2,1)}(\Omega,\R^n)$ and $B\in W^{1,2}(\Omega,M_n(\R))$, and assume that
     \begin{align}
         \Delta A=\D^{\perp}B\cdot \D A\qquad\text{in}\;\,\Omega.
     \end{align}
     Let 
     \begin{align*}
         \Lambda=\frac{1}{2\pi}\int_{\Omega_{1/2}}\frac{|\D A|}{|x|}dx.
     \end{align*}
     Then, for all $0<\beta<1$, and for all $0<\delta<1$, there exists $\epsilon_{\beta,\delta}>0$ and a \emph{universal} constant $C<\infty$ such that the hypothesis 
     \begin{align*}
         \int_{\Omega}|\D B|^2\leq \epsilon_{\beta,\delta}
     \end{align*}
     implies that for all $z\in \Omega_{1/2}$,
     \begin{align*}
         \np{\D A}{2}{B_{2|z|}\setminus\bar{B}_{|z|}(0)}\leq C\left(\left(\frac{|z|}{b}\right)^{\beta}+\left(\frac{a}{|z|}\right)^{\beta}\right)\np{\D A}{2}{\Omega}+\frac{\sqrt{2\pi\log(2)}}{\log\left(\frac{b}{4a}\right)}\left(1+\delta\right)\left(\Lambda+C\np{\D A}{2}{\Omega}\right).
     \end{align*}
 \end{theorem}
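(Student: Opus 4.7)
The plan is to exploit the Jacobian structure of the equation by recasting it as a conservation law, then performing a Hodge decomposition on the annulus so as to isolate a single harmonic residue accounting for the $\Lambda$-term; the non-harmonic part will be shown to enjoy Morrey decay at rate $\beta$ by a three-annulus iteration of Lin--Rivière type.

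First, I would observe that componentwise $\nabla^{\perp} B\cdot\nabla A = -\mathrm{div}(B\,\nabla^{\perp} A)$, since $\nabla^{\perp} A$ is divergence-free. The equation is therefore equivalent to
\begin{align*}
\mathrm{div}\bigl(\nabla A + B\,\nabla^{\perp} A\bigr)=0 \quad \text{in }\Omega.
\end{align*}
On the (non-simply connected) annulus, each component of this divergence-free vector field admits a Hodge decomposition
\begin{align*}
\nabla A + B\,\nabla^{\perp} A \;=\; \nabla^{\perp} D \;+\; \vec c\;\nabla^{\perp}\log|x|,
\end{align*}
where $D:\Omega\to\R^n$ is an auxiliary potential and $\vec c\in\R^n$ is the vector of residues (circulations around the hole). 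Taking the curl of the above, $\Delta D$ is revealed as a sum of Jacobians $J(B^{ij},A^{j})$, which belongs to the Hardy space $\mathscr{H}^1(\R^2)$ by the Coifman--Lions--Meyer--Semmes theorem.

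Next, I would control the residue $\vec c$. Since $\oint_{|x|=r}(\nabla A+B\nabla^{\perp} A)\cdot\tau\,ds=2\pi\,\vec c$ is independent of $r$, an elementary estimate on circles $r\in(2a,b/2)$ gives $2\pi|\vec c|\leq \frac{1}{r}\int_{|x|=r}\bigl(|\nabla A|+|B||\nabla A|\bigr)ds$. Dividing by $\log(b/4a)$ and integrating in $r$ against $dr/r$ over $\Omega_{1/2}$, one recognises $\int_{\Omega_{1/2}}|\nabla A|/|x|\,dx=2\pi\Lambda$ on the right, together with a remainder bounded by Hölder by $C\|\nabla B\|_{L^2}\|\nabla A\|_{L^2}$ (which, by the smallness hypothesis, is absorbed into $C\|\nabla A\|_{L^2(\Omega)}$). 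The $(1+\delta)$ slack is built in by taking a slightly thickened integration collar. On the dyadic annulus $B_{2|z|}\setminus\bar B_{|z|}(0)$, the third identity in \eqref{norms} then gives
\begin{align*}
\bigl\|\vec c\,\nabla^{\perp}\log|x|\,\bigr\|_{L^2(B_{2|z|}\setminus\bar B_{|z|}(0))}=|\vec c|\sqrt{2\pi\log 2},
\end{align*}
which produces precisely the second term in the statement. It remains to estimate the exact part $\nabla A-(\nabla^{\perp} D-B\nabla^{\perp} A+\vec c\,\nabla^{\perp}\log|x|)$. Using the Wente/CLMS duality $\mathscr{H}^1$--$\mathrm{BMO}$ together with the smallness $\|\nabla B\|_{L^2}\le\epsilon_{\beta,\delta}$, I would derive a Caccioppoli inequality on each dyadic sub-annulus yielding an energy improvement factor strictly less than one. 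Iterating inwards from $\partial B_b$ and outwards from $\partial B_a$ (the three-annulus scheme of Lin--Rivière) then produces the Morrey decay $\bigl((|z|/b)^{\beta}+(a/|z|)^{\beta}\bigr)\|\nabla A\|_{L^2(\Omega)}$ with arbitrary $\beta\in(0,1)$.

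The hardest step will be closing this iteration up to the full range $\beta<1$ while preserving the sharp prefactor $\sqrt{2\pi\log 2}/\log(b/4a)\,(1+\delta)$ in front of $\Lambda$. The naive Caccioppoli bound on a Jacobian system loses a fixed multiplicative constant per dyadic scale, which becomes fatal as $\beta\uparrow 1$. Overcoming this obstruction is precisely where the generalisation from \cite{riviere_morse_scs} enters: it requires exploiting the $W^{1,(2,1)}$ regularity of $A$ via the Lorentz duality $L^{2,1}$--$L^{2,\infty}$ so as to decouple the harmonic mode $\vec c\,\nabla^{\perp}\log|x|$ from the decaying part at every scale, allowing the iteration to proceed without degrading either the rate $\beta$ or the sharp constant multiplying $\Lambda$.
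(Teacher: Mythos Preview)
Your decomposition diverges from the paper's and carries a genuine gap. Taking the curl of $\nabla A+B\nabla^{\perp}A=\nabla^{\perp}D+\vec c\,\nabla^{\perp}\log|x|$ gives $\Delta D=\nabla B\cdot\nabla A+B\,\Delta A$, which is \emph{not} a Jacobian (the first term is the symmetric product, the second is cubic in the data); Wente/CLMS therefore does not apply to $D$, and recovering $\nabla A$ from $\nabla D$ reintroduces $B\nabla^{\perp}A$ with $B$ merely in $\mathrm{BMO}$. The paper instead decomposes $A$ itself: extend $A,B$ to $\tilde A,\tilde B$ on $\C$ by a controlled Whitney extension (Lemma~\ref{whitney_extension}), write $A=\varphi+\psi$ on $\Omega$ with $\Delta\varphi=\nabla^{\perp}\tilde B\cdot\nabla\tilde A$ in $B(0,1)$ and $\varphi|_{\partial B(0,1)}=0$, so that $\psi$ is harmonic on $\Omega$; then split $\psi=d\log|z|+\psi_0$ with $\psi_0$ flux-free. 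The residue $d$ is tied to $\Lambda$ via $L^{2,1}/L^{2,\infty}$ duality applied to $\nabla\varphi$ and $\nabla\psi_0$ (Theorem~\ref{Wente_L21} and Lemma~\ref{improved_reg_harmonic}), and the constant $\sqrt{2\pi\log 2}$ arises from $\|d\,\nabla\log|z|\|_{L^2(A_k)}$ exactly as you observe.

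The mechanism that reaches every $\beta<1$ is not ``Lorentz duality at each scale'' but a dyadic reabsorption. A weighted one-step Wente inequality (Lemma~\ref{lemme_F_1}) is iterated (Lemma~\ref{lemme_F_2}) to give, for any $0<\alpha<1$,
\[
\|\nabla\varphi\|_{L^2(A_k)}\le 2^{-k}\|\nabla\varphi\|_{L^2(A_0)}+\frac{C}{(1-\alpha)^{5/2}}\|\nabla B\|_{L^2}\Bigl(\sum_{l\ge 0}2^{-2\alpha|l-k+1|}\|\nabla\tilde A\|_{L^2(A_l)}^2\Bigr)^{1/2}.
\]
Adding the harmonic decay of $\psi_0$ (Corollary~\ref{pointwise_annulus_harmonic}) and the constant contribution $|d|$ produces a self-referential bound $a_k\le b_k+\Gamma\,(\sum_l 2^{-2\alpha|l-k+1|}a_l^2)^{1/2}$ for $a_k=\|\nabla\tilde A\|_{L^2(A_k)}$, with $\Gamma\sim(1-\alpha)^{-5/2}\|\nabla B\|_{L^2}$. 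A discrete lemma (a variant of \cite[Lemma~G.1]{riviere_morse_scs}, stated and proved inside the argument) then shows that for any $\beta<\alpha$ the sum $(\sum_l 2^{-2\beta|l-k+1|}a_l^2)^{1/2}$ is bounded by the analogous sum of the $b_l$'s plus an absorbable multiple of itself. Choosing $\alpha=(1+\beta)/2$ and evaluating the resulting geometric sums closes the estimate; the smallness $\epsilon_{\beta,\delta}\sim\delta^2(1-\beta)^{7}$ is precisely what makes $\Gamma$ absorbable while preserving the $(1+\delta)$ prefactor on $\Lambda$.
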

 Now, assuming that a strong $L^{2,1}$ energy quantization holds, namely
 \begin{align}\label{quanta_L21_harmonic}
     \lim_{\alpha\rightarrow 0}\limsup_{k\rightarrow \infty}\np{\D \varphi_k}{2,1}{\Omega_k(\alpha)}=0,
 \end{align}
 we deduce by the $L^{2,1}$ duality that
 \begin{align*}
     \Lambda_{\alpha,k}=\frac{1}{2\pi}\int_{\Omega_k(\alpha)}\frac{|\D u_k|}{|x|}dx\leq \frac{1}{2\pi}\np{\D \varphi_k}{2,1}{\Omega_k(\alpha)}\np{\frac{1}{|x|}}{2,\infty}{\Omega_k(\alpha)}\leq \frac{1}{\sqrt{\pi}}\np{\D \varphi_k}{2,1}{\Omega_k(\alpha)},
 \end{align*}
 which implies thanks to the $\epsilon$-regularity \eqref{trivial_eps} that for all $x\in\Omega_k(\alpha)$
 \begin{align}\label{neck_harmonic1}
     |\D \varphi_k(x)|\leq \frac{C}{|x|}\left(\left(\frac{|x|}{b}\right)^{\beta}+\left(\frac{a}{|x|}\right)^{\beta}+\frac{1}{\log\left(\frac{\alpha^2}{\delta_k^2}\right)}\right)\np{\D \varphi_k}{2,1}{\Omega_k(2\alpha)}.
 \end{align}
 Now, to conclude the proof, one needs to prove a weight Poincaré inequality in neck regions.
  
  \subsubsection{Step 2: Weighted Poincaré Estimates in Neck Regions}

  If one adds the natural weight appearing in the previous inequality from \eqref{ineq_harmonic_annulus}, the constant does not converges to $0$ as the conformal class diverges to $\infty$.
 
 \begin{theorem}[Lemma IV.$1$, \cite{riviere_morse_scs}]\label{neck_harmonic2}
     Let $0<a<b<\infty$ and $\Omega=B_b\setminus\bar{B}_a(0)\subset \R^2$. For all $\beta>0$, there exists a constant $C_{\beta}>0$ such that for all $u\in W^{1,2}_0(\Omega)$,
    \begin{align*}
        \int_{\Omega}|\D u|^2dx\geq C_{\beta}\int_{\Omega}\frac{u^2}{|x|^2}\left(\left(\frac{|x|}{b}\right)^{\beta}+\left(\frac{a}{|x|}\right)^{\beta}\right)dx.
    \end{align*}
 \end{theorem}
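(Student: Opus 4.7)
The plan is to reduce the inequality to a one–parameter family of one–dimensional weighted Poincaré inequalities on an interval via Fourier decomposition and a logarithmic change of variables, and then treat the zero–mode by hand using the boundary conditions.

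\textbf{Step 1 (Fourier decomposition).} Writing $x=re^{i\theta}$ and $u(r,\theta)=\sum_{n\in\Z}u_n(r)e^{in\theta}$, the left–hand side splits as
\begin{align*}
\int_{\Omega}|\D u|^2 dx=2\pi\sum_{n\in\Z}\int_a^b\left(|u_n'(r)|^2+\frac{n^2}{r^2}|u_n(r)|^2\right)r\,dr,
\end{align*}
and the right–hand side splits analogously, so it suffices to prove the inequality mode by mode.

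\textbf{Step 2 (Logarithmic change of variables).} Setting $r=ae^t$ with $t\in[0,T]$, $T=\log(b/a)$, and $v_n(t)=u_n(ae^t)$, a direct computation gives
\begin{align*}
\int_a^b\left(|u_n'|^2+\frac{n^2}{r^2}|u_n|^2\right)r\,dr=\int_0^T\left(|v_n'|^2+n^2|v_n|^2\right)dt,
\end{align*}
while $(r/b)^\beta+(a/r)^\beta$ transforms into $e^{\beta(t-T)}+e^{-\beta t}$, so we are reduced to proving, for every $v\in W^{1,2}_0((0,T))$,
\begin{align*}
\int_0^T\left(|v'|^2+n^2|v|^2\right)dt\geq C_\beta\int_0^T|v|^2\left(e^{\beta(t-T)}+e^{-\beta t}\right)dt.
\end{align*}

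\textbf{Step 3 (Non-zero modes).} When $n\neq 0$ we use the trivial bound $e^{\beta(t-T)}+e^{-\beta t}\leq 2$ and $n^2\geq 1$, which already gives the desired inequality with constant $1/2$, uniformly in $T$.

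\textbf{Step 4 (Zero mode).} This is the main point and the only step where the exponential weight is genuinely needed. Since $v(0)=v(T)=0$, Cauchy--Schwarz on $v(t)=\int_0^t v'$ and $v(t)=-\int_t^T v'$ yields the pointwise bound
\begin{align*}
|v(t)|^2\leq \min(t,T-t)\int_0^T|v'|^2\,ds.
\end{align*}
Integrating this against the weight and using
\begin{align*}
\int_0^T\min(t,T-t)\,e^{-\beta t}\,dt\leq\int_0^\infty t\,e^{-\beta t}\,dt=\frac{1}{\beta^2},
\end{align*}
together with the symmetric estimate for $e^{\beta(t-T)}$ (via $s=T-t$), gives
\begin{align*}
\int_0^T|v|^2\left(e^{\beta(t-T)}+e^{-\beta t}\right)dt\leq \frac{2}{\beta^2}\int_0^T|v'|^2\,dt,
\end{align*}
i.e. the inequality with constant $\beta^2/2$. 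Combining both cases produces $C_\beta=\tfrac{1}{2}\min(1,\beta^2)$, which is independent of $a,b$, proving the theorem.

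The only mildly subtle point is the zero mode: a direct application of the standard Poincaré inequality on $[0,T]$ would give a constant of order $T^{-2}=\log^{-2}(b/a)$, which degenerates as the conformal class diverges. The uniformity in $T$ is recovered because the weight $e^{\beta(t-T)}+e^{-\beta t}$ concentrates near the two endpoints $t=0$ and $t=T$, precisely where the boundary conditions allow for the sharp pointwise bound $|v(t)|^2\leq\min(t,T-t)\,\|v'\|_{L^2}^2$. This is the structural reason the weighted inequality has a universal constant, in contrast to the unweighted one.
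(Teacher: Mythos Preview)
Your proof is correct. The paper does not provide its own proof of this statement---it is quoted verbatim from \cite{riviere_morse_scs} (Lemma IV.1) and used as a black box in the introduction---so there is no in-paper argument to compare against. Your Fourier-plus-logarithmic-change-of-variables reduction is the natural approach; the decisive observation is indeed Step~4, where the exponential concentration of the weight near the endpoints compensates for the degeneration of the unweighted Poincar\'e constant, and your pointwise bound $|v(t)|^2\leq\min(t,T-t)\|v'\|_{L^2}^2$ captures this cleanly. The explicit constant $C_\beta=\tfrac12\min(1,\beta^2)$ you obtain is sharp in its dependence on small $\beta$ (the integral $\int_0^\infty t\,e^{-\beta t}\,dt$ genuinely scales like $\beta^{-2}$), and your remark at the end correctly identifies why uniformity in $T$ is not automatic.
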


 \subsubsection{Step 3: Final Estimate}

 Fix some $0<\beta<1$.  Thanks to \eqref{neck_harmonic1} and \eqref{neck_harmonic2}, there exists $\lambda_0>0$ and $\lambda_1>0$ such that for all $\alpha>0$ and for all $k\in\N$, we have for all $u\in W^{1,2}_0(\Omega_k(\alpha))$
 \begin{align*}
     Q_{\varphi_k}(u)\geq \left(\lambda_0-\lambda_1\np{\D \varphi_k}{2,1}{\Omega_k(2\alpha)}\right)\int_{\Omega_k(\alpha)}\frac{|u|^2}{|x|^2}\left(\left(\frac{|x|}{\alpha}\right)^{\beta}+\left(\frac{\alpha^{-1}\delta_k}{|x|}\right)^{\beta}+\frac{1}{\log^2\left(\frac{\alpha^2}{\delta_k}\right)}\right)dx.
 \end{align*}
 Thanks to the $L^{2,1}$ energy quantization \eqref{quanta_L21_harmonic}, there exists $\alpha_0>0$ such that for all $0<\alpha<\alpha_0$ and $k\in\N$ large enough
 \begin{align}\label{est_final_harmonic}
     Q_{\varphi_k}(u)\geq \frac{\lambda_0}{2}\int_{\Omega_k(\alpha)}\frac{|u|^2}{|x|^2}\left(\left(\frac{|x|}{\alpha}\right)^{\beta}+\left(\frac{\alpha^{-1}\delta_k}{|x|}\right)^{\beta}+\frac{1}{\log^2\left(\frac{\alpha^2}{\delta_k}\right)}\right)dx,
 \end{align}
 which finally shows that variations located in the neck regions contribute positively to the second derivative. 

 \subsubsection{Step 4: Sylvester's Law of Inertia and Final Argument}

 We see that the estimate in neck region suggests the introduction of a weight $\omega_{k,\alpha}$ on $\Sigma$ such that
 \begin{align*}
     \omega_{k,\alpha}(x)=\frac{1}{|x|^2}\left(\left(\frac{|x|}{\alpha}\right)^{\beta}+\left(\frac{\alpha^{-1}\delta_k}{|x|}\right)^{\beta}+\frac{1}{\log^2\left(\frac{\alpha^2}{\delta_k}\right)}\right)\quad \text{for all}\;\, x\in \Omega_k(\alpha).
 \end{align*}
 Interpolating on $\Sigma$ with constant non-zero functions, Sylvester's Law of Intertia asserts that both operators $\leb_{g_k}=-\Delta_h-|d\varphi_k|_h^2$ and $\mathcal{L}_{k,\alpha}=-\omega_{k,\alpha}^{-1}\left(\Delta_h+|d\varphi_k|_h^2\right)$ have the same Morse index. Finally, if $u_k$ is a negative eigenvalue, the normalisation
 \begin{align*}
     \int_{\Sigma}|u_k|^2\omega_{k,\alpha}d\vg=1,
 \end{align*}
 together with a lower-estimate for the smallest eigenvalue (\cite[Lemma IV.$4$]{riviere_morse_scs}) permits to show that $\ens{u_k}_{k\in\N}$ is bounded in $W^{1,2}(\Sigma,\R^n)$, which permits to extract a converging subsequence. Using the estimate \eqref{est_final_harmonic}, one proves that the variation cannot be located in neck regions, which permits to link them to negative (or null) variations of the limiting harmonic map or its bubble. This concludes the proof of the main theorem of \cite{riviere_morse_scs}. 
 
 \subsection{Da Lio-Gianocca-Rivière's Method Applied to Willmore Immersions}
 
 \subsubsection{Generalities on Willmore Immersions}
 
 Fix a Riemann surface $\Sigma$, and let $\{\phi_k\}_{k\in\N}$ be a sequence of immersions from $\Sigma$ to $\R^3$, such that
 \begin{align*}
     \limsup_{k\rightarrow \infty}W(\phi_k)<\infty.
 \end{align*}
 Furthermore, assume that the conformal class of $\Sigma$ is bounded, or more generally, that the condition on the second residue $\vec{\gamma}_1$ of \cite{lauriv1} holds (see \eqref{second_residue_def} for an explicit definition). 
 Then, thanks to Bernard-Rivière work \cite{quanta} (or Laurain-Rivière's \cite{lauriv1}), the energy quantization holds. Assume for simplicity that there is a single \enquote{bubble.} Then, there exists $p\in \Sigma$ and a branched Willmore immersion $\phi_{\infty}:\Sigma\rightarrow \R^n$ such that (modulo diffeomorphisms in the domain and conformal transformations in the target space) up to a subsequence
 \begin{align*}
     \phi_k\conv{k\rightarrow \infty}\phi_{\infty}\;\,\text{in}\;\, C^l_{\mathrm{loc}}(\Sigma\setminus\ens{p})\;\,\text{for all}\;\, l\in\N,
 \end{align*}
 and a Willmore sphere $\vec{\Psi}_{\infty}:S^2=\C\cup\ens{\infty}\rightarrow \R^n$ such that in a conformal chart $B(0,1)\subset \C$ centred in $p$, there exists $\ens{\rho_k}_{k\in\N}\subset (0,\infty)$ such that $\rho_k\conv{k\rightarrow \infty}0$, and for all $\alpha>0$
 \begin{align*}
     e^{-\lambda_k}(\phi_k(\rho_k\,\cdot\,)-\phi_k(0))_{B(0,\alpha^{-1})}\conv{k\rightarrow \infty}\vec{\Psi}_{\infty}\quad\text{in}\;\, C^l(B(0,\alpha^{-1}))\;\,\text{for all}\;\, l\in\N.
 \end{align*}
 The quantization of energy asserts that
 \begin{align}
     \lim_{k\rightarrow \infty}W(\phi_k)=W(\phi_{\infty})+W(\vec{\Psi}_{\infty}).
 \end{align}
 It is equivalent to the \emph{no-neck energy property}, namely
 \begin{align}
     \lim_{\alpha\rightarrow 0}\limsup_{k\rightarrow\infty}\int_{\Omega_k(\alpha)}|\D\n_k|^2dx=0.
 \end{align}
 where $\Omega_k(\alpha)=B_{\alpha}\setminus\bar{B}_{\alpha^{-1}\rho_k}(0)$ is the neck region, the annular region connecting the macroscopic surface to its bubble. Now, for all vectorial variation $\w:\Sigma\rightarrow \R^n$, the second derivative of $\phi$ is given by \eqref{simpler_d3} by
 \begin{align}\label{simpler_d20}
        &Q_{\phi}(\w)=\int_{\Sigma}\bigg\{\frac{1}{2}\left|\Delta_g^N\w+\mathscr{A}(\w)\right|^2+2|\s{d\w}{\H}|_g^2-\left(|d\w|_g^2-16|\partial\phi\totimes\partial\w|_{\mathrm{WP}}^2\right)|\H|^2\nonumber\\
    &+32\s{\partial^N\w\totimes\partial^N\H}{\partial\phi\totimes\partial\w}_{\mathrm{WP}}
    -8\s{\partial\w\totimes\partial\w}{\s{\H}{\h_0}}_{\mathrm{WP}}+8\s{d\phi}{d\w}_g\s{\partial\phi\totimes\partial\w}{\s{\H}{\h_0}}_{\mathrm{WP}}\nonumber\\
    &+8\,\{\phi,\w\}_g\s{i\,\partial\phi\totimes\partial\w}{\s{\H}{\h_0}}_{\mathrm{WP}}
    \bigg\}d\vg.
 \end{align}
 The Morse index is the number of negative eigenvalue of the associated fourth-order elliptic operator $\mathcal{L}_g$ acting on normal variations:
 \begin{align}\label{fourth_order_operator0}
    &\mathcal{L}_g=\leb_g^2+4\,d^{\ast_g}\left(\s{d(\,\cdot\,)}{\H}\H\right)-2\,d^{\ast_g}\left(|\H|^2\,d(\,\cdot\,)\right)+8\s{\s{\h_0}{\,\cdot\,}}{\s{\h_0}{\,\cdot\,}}_{\mathrm{WP}}|\H|^2\nonumber\\
    &+16\,\ast_g\, d\,\Re\left(g^{-1}\otimes \s{\h_0}{\,\cdot\,}\otimes\bar{\partial}^N\H\right)
    -16\,\s{\partial^N(\,\cdot\,)\totimes\partial^N\H}{\h_0}_{\mathrm{WP}}+16\,\ast_g\,d\,\Re\left(g^{-1}\otimes\s{\H}{\h_0}\otimes\partial(\,\cdot\,)\right)\nonumber\\
    &+8\s{\s{\H}{\h_0}}{\s{\h_0}{\,\cdot\,}}_{\mathrm{WP}}\H
    +8\s{\H}{\,\cdot\,}\Re\left(g^{-2}\otimes \s{\H}{\bar{\h_0}}\h_0\right).
\end{align}
Thanks to the identity
\begin{align}\label{diag0}
    Q_{\phi}(\vec{w})=\frac{1}{2}\int_{\Sigma}\s{\vec{w}}{\mathcal{L}_g\vec{w}\,}d\vg,
\end{align}
if for all $\lambda\in\R$, we define 
\begin{align*}
    \mathcal{E}(\lambda)=W^{2,2}(\Sigma,\R^n)\cap\ens{\w:\mathcal{L}_g\w=\lambda\,\w},
\end{align*}
then the Morse index of $\phi$ is defined by 
\begin{align*}
    \mathrm{Ind}^{\mathcal{L}}_W(\phi)=\dim\bigoplus_{\lambda<0}\mathcal{E}(\lambda),
\end{align*}
while the nullity is the dimension of the Kernel of $\mathcal{L}_g$:
\begin{align}
    \mathrm{Null}_W^{\mathcal{L}}(\phi)=\dim\mathrm{Ker}(\mathcal{L}_g).
\end{align}
One of the main technical difficulties or \cite{eigenvalue_annuli} is to generalise Theorem \ref{neck_harmonic2} to a family of second-order differential operators. First, using the analysis of \cite{pointwise}, if $\Omega_k(\alpha)$ is a neck region, there exists an integer $m\geq 1$ such that for $\alpha>0$ small enough and $k$ large enough,
\begin{align*}
    e^{\lambda_k}=e^{\mu_k}|z|^{m-1}\qquad\text{for all}\;\, z\in \Omega_k(\alpha),
\end{align*}
where $\mu_k$ is a uniformly bounded function on $B(0,\alpha)$. Restricting to normal variations $\w=u\,\n$, we see that the highest-order term in $Q_{\phi}(u)$ is given by
\begin{align*}
    \frac{1}{2}\int_{\Sigma}(\Delta_{g_k}u)^2d\mathrm{vol}_{g_k}=\frac{1}{2}\int_{\Sigma}e^{-2\mu_k}|z|^{2-2m}(\Delta u)^2|dz|^2. 
\end{align*}
Now, making the change of variable $u=|z|^{m-1}v$ in neck regions, we get
\begin{align*}
    \frac{1}{2}\int_{\Omega_k(\alpha)}(\Delta_{g_k}u)^2d\mathrm{vol}_{g_k}&=\frac{1}{2}\int_{\Omega_k(\alpha)}e^{-2\mu_k}\left(\Delta v+2(m-1)\frac{x}{|x|^2}\cdot \D v+\frac{(m-1)^2}{|x|^2}v\right)^2dx\\
    &\geq C\int_{\Omega_k(\alpha)}\left(\Delta v+2(m-1)\frac{x}{|x|^2}\cdot \D v+\frac{(m-1)^2}{|x|^2}v\right)^2dx.
\end{align*}
Therefore, we are led to study the properties of the second-order elliptic operator with  \emph{regular singularities} (\cite[($2.10$) p.$25$]{PR}) whose adjoint operator already appeared in \cite{index4}
\begin{align*}
    \leb_m=\Delta+2(m-1)\frac{x}{|x|^2}\cdot \D+\frac{(m-1)^2}{|x|^2},
\end{align*}
and more precisely, the first eigenvalue of the fourth-order elliptic operator
\begin{align*}
    \leb_m^{\ast}\leb_m=\Delta^2+2(m^2-1)\frac{1}{|x|^2}-4(m^2-1)\left(\frac{x}{|x|^2}\right)^t\cdot\D^2(\,\cdot\,)\cdot\left(\frac{x}{|x|^2}\right)+\frac{(m^2-1)^2}{|x|^4}.
\end{align*}

\subsubsection{General Estimates for a Family of Elliptic Fourth-Order Operators}

The generalisation of Theorem \ref{neck_harmonic2} is far from straightforward, and due to length considerations, we prove it  in a separate paper (see \cite{eigenvalue_annuli}). Furthermore, an additional eigenvalue problem needs to be considered in order to treat the derivatives of lower order. Contrary to Theorem \ref{neck_harmonic2}, the inequality is only verified when the conformal class of the annulus is large enough. 

\begin{theorem}[\cite{eigenvalue_annuli}]\label{pde_lemma1}
    For all $m\geq 1$, there exists $0<\lambda_m<\infty$ such that for all $0<a<b<\infty$ such that
    \begin{align}
        \log\left(\frac{b}{a}\right)\geq \lambda_m,
    \end{align}
    and let $\Omega=B_b\setminus\bar{B}_a(0)\subset \R^2$. Then, for all $u\in W^{2,2}_0(\Omega)$, we have
    \begin{align}
        \int_{\Omega}\left(\Delta u+2(m-1)\frac{x}{|x|^2}\cdot \D u+\frac{(m-1)^2}{|x|^2}u\right)^2dx\geq \frac{4m^2\pi^2}{\log^2\left(\frac{b}{a}\right)}\int_{\Omega}\frac{u^2}{|x|^4}dx,
    \end{align}
    and
    \begin{align}
        \int_{\Omega}\left(\Delta u+2(m-1)\frac{x}{|x|^2}\cdot \D u+\frac{(m-1)^2}{|x|^2}u\right)^2dx\geq \frac{\left(4m^2+\dfrac{\pi^2}{\log^2\left(\frac{b}{a}\right)}\right)\dfrac{\pi^2}{\log^2\left(\frac{b}{a}\right)}}{4(m^2+1)+\dfrac{2\pi^2}{\log^2\left(\frac{b}{a}\right)}}\int_{\Omega}\frac{|\D u|^2}{|x|^2}dx.
    \end{align}
\end{theorem}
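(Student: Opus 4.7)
The plan is to reduce both estimates to clamped one-dimensional problems on $[0,L]$, with $L = \log(b/a)$, and then exploit the factorisation of the resulting constant-coefficient operator. I would first conjugate via $u = |x|^{1-m} v$: since $\Delta |x|^\alpha = \alpha^2 |x|^{\alpha-2}$ in dimension two, the exponent $\alpha = 1-m$ kills both the first- and zero-order terms of the operator,
\begin{equation*}
\Delta u + 2(m-1)\frac{x}{|x|^2}\cdot\nabla u + \frac{(m-1)^2}{|x|^2} u \;=\; |x|^{1-m}\,\Delta v,
\end{equation*}
and $u \leftrightarrow v$ is a bicontinuous isomorphism of $W^{2,2}_0(\Omega)$. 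Decomposing $v(r,\theta) = \sum_{n \in \mathbb{Z}} v_n(r)\,e^{in\theta}$, passing to $s = \log(r/a) \in [0,L]$, and setting $W_n(s) = e^{-ms} v_n(a e^s)$, both inequalities reduce mode by mode (after factoring out a common $a^{-2m}$) to one-dimensional estimates on $W_n \in W^{2,2}_0((0,L))$ for the constant-coefficient operator $L_{m,n} W = W'' + 2m W' + (m^2-n^2) W = (D+m-n)(D+m+n) W$ with $D = d/ds$.

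\textbf{First inequality.} Splitting $L_{m,n} = A + B$ with $A = D^2 + (m^2-n^2)$ symmetric and $B = 2mD$ antisymmetric, the clamped conditions make the cross term $\langle AW,BW\rangle_{L^2(0,L)}$ a sum of boundary integrals of $W'^2$ and $W^2$, hence zero. Thus
\begin{equation*}
\|L_{m,n} W\|_{L^2}^2 \;=\; \|AW\|_{L^2}^2 + 4m^2\|W'\|_{L^2}^2 \;\geq\; 4m^2\|W'\|_{L^2}^2 \;\geq\; \frac{4m^2\pi^2}{L^2}\|W\|_{L^2}^2,
\end{equation*}
by Poincaré on $W^{1,2}_0$. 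Summing over $n$ and undoing the conjugation yields the first inequality.

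\textbf{Second inequality.} Set $\widetilde W = (D+m+n) W$; the clamped boundary conditions on $W$ give $\widetilde W(0) = \widetilde W(L) = 0$, so $\widetilde W \in W^{1,2}_0((0,L))$ with $\|\widetilde W\|_{L^2}^2 = \|W'\|_{L^2}^2 + (m+n)^2 \|W\|_{L^2}^2$ after integration by parts. Applying the remaining factor and Poincaré on $\widetilde W$ gives
\begin{equation*}
\|L_{m,n} W\|_{L^2}^2 \;=\; \|\widetilde W'\|_{L^2}^2 + (m-n)^2 \|\widetilde W\|_{L^2}^2 \;\geq\; \bigl(\Lambda + (m-n)^2\bigr)\bigl(\|W'\|_{L^2}^2 + (m+n)^2\|W\|_{L^2}^2\bigr),
\end{equation*}
with $\Lambda = \pi^2/L^2$. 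The commuting factors yield the mirror bound with $m\pm n$ interchanged; averaging the two produces
\begin{equation*}
\|L_{m,n} W\|_{L^2}^2 \;\geq\; (\Lambda + m^2 + n^2)\|W'\|_{L^2}^2 + \bigl(\Lambda(m^2+n^2) + (m^2-n^2)^2\bigr)\|W\|_{L^2}^2,
\end{equation*}
reducing the target inequality to the two algebraic comparisons $\Lambda + m^2 + n^2 \geq C(\Lambda)$ and $\Lambda(m^2+n^2) + (m^2-n^2)^2 \geq (1+n^2)C(\Lambda)$ for every $n \in \mathbb{Z}$, with $C(\Lambda) = \Lambda(4m^2+\Lambda)/(2(2m^2+2+\Lambda))$.

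\textbf{Main obstacle.} The factorisation step is soft; the delicate part is the \emph{uniform} verification of the two algebraic comparisons over all integers $n$ and the identification of the sharp threshold $\lambda_m$. The first comparison is tightest at $n = 0$; the second, viewed as a quadratic in $u = n^2$, is tightest near $n \sim \pm m$ where one of the factors $(m\mp n)^2$ degenerates to $\Lambda$. The explicit $C(\Lambda)$ in the statement is precisely calibrated so that both inequalities hold simultaneously once $L \geq \lambda_m$, the threshold being determined by the discriminant condition on the second quadratic which guarantees non-negativity on the integer lattice.
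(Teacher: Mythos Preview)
The paper does not prove this statement: it is cited from the companion paper \cite{eigenvalue_annuli} and only stated here, so there is no proof in the present paper to compare against. I can therefore only assess your argument on its own merits.

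Your reduction is correct. The conjugation $u=|x|^{1-m}v$ indeed collapses $\mathcal{L}_m u$ to $|x|^{1-m}\Delta v$; the Fourier decomposition and the substitution $s=\log(r/a)$, $W_n=e^{-ms}v_n$ turn each mode into the constant-coefficient problem for $L_{m,n}=D^2+2mD+(m^2-n^2)$ on $W^{2,2}_0((0,L))$; and the right-hand sides become $\|W_n\|_{L^2}^2$ and $\|W_n'\|_{L^2}^2+(1+n^2)\|W_n\|_{L^2}^2$ respectively, exactly as you write. For the first inequality your symmetric/antisymmetric split is clean and gives the bound with no threshold at all: $\|L_{m,n}W\|^2\geq 4m^2\|W'\|^2\geq \tfrac{4m^2\pi^2}{L^2}\|W\|^2$ holds for every $L>0$. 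The factorisation $(D+m-n)(D+m+n)$ for the second inequality, together with the averaging of the two symmetric bounds, is also correct and yields the inequality you state.

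The only point left genuinely open is the algebraic verification of (II), namely that
\[
\Lambda(m^2+n^2)+(m^2-n^2)^2\;\geq\;(1+n^2)\,C(\Lambda)
\]
for every integer $n$ once $\Lambda=\pi^2/L^2$ is small. This is not hard but it is not a tautology either: writing $f(u)=u^2+(\Lambda-2m^2-C)u+(\Lambda m^2+m^4-C)$ with $u=n^2$, one checks that $f(0)\geq 0$ and $f(m^2)\geq 0$ hold for all $\Lambda>0$, and that as $\Lambda\to 0$ one has $f(u)=(u-m^2)^2+\tfrac{\Lambda}{m^2+1}(u+m^4)+O(\Lambda^2)\geq 0$ for all $u\geq 0$, which confirms that a threshold $\lambda_m$ exists. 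You should carry this computation out explicitly rather than leave it as an assertion. One small refinement: checking (I) and (II) coefficient-by-coefficient is stricter than needed, since $\|W'\|^2\geq\Lambda\|W\|^2$ links the two terms; using that constraint replaces (II) by the weaker $\Lambda a+b\geq C(\Lambda+1+n^2)$, which may simplify the identification of the sharp $\lambda_m$.
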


The second estimate is the generalisation of \cite[Lemma IV.1]{riviere_morse_scs} (see Theorem \ref{neck_harmonic2}), which is as expected much more technical (especially for $m>1$).
\begin{theorem}[Theorem A, Theorem B \cite{eigenvalue_annuli}]\label{pde_lemma2}
    Let $0<a<b<\infty$, and define $\Omega=B_b\setminus\bar{B}_a(0)\subset \R^2$. Then, for all $\dfrac{1}{2}<\beta<\infty$, there exists  $C_{\beta}<\infty$ such that for all $u\in W^{2,2}_0(\Omega)$,
    \begin{align*}
        \int_{\Omega}\frac{u^2}{|x|^4}\left(\left(\frac{|x|}{b}\right)^{4\beta}+\left(\frac{a}{|x|}\right)^{4\beta}\right)dx\leq C_{\beta}\int_{\Omega}\left(\Delta u\right)^2dx
    \end{align*}
    and for all $\sqrt{2}-1<\beta<\infty$, there exists $C_{\beta}'<\infty$ such that 
    \begin{align*}
        \int_{\Omega}\frac{|\D u|^2}{|x|^2}\left(\left(\frac{|x|}{b}\right)^{2\beta}+\left(\frac{a}{|x|}\right)^{2\beta}\right)dx\leq C_{\beta}'\int_{\Omega}(\Delta u)^2dx.
    \end{align*}
    For all $m>1$ and for all $\alpha>0$, there exists $C_{\alpha,m}<\infty$ such that for all $u\in W^{2,2}_0(\Omega)$,
    \begin{align*}
        \int_{\Omega}\left(\frac{u^2}{|x|^4}+\frac{|\D u|^2}{|x|^2}\right)\left(\left(\frac{|x|}{b}\right)^{2\alpha}+\left(\frac{a}{|x|}\right)^{2\alpha}\right)dx\leq C_{\alpha,m}\int_{\Omega}\left(\Delta u+2(m-1)\frac{x}{|x|^2}\cdot \D u+\frac{(m-1)^2}{|x|^2}u\right)^2dx.
    \end{align*}
\end{theorem}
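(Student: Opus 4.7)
The plan is to pass to polar coordinates and expand $u \in W^{2,2}_0(\Omega)$ in Fourier series: writing $u(r,\theta) = \sum_{n \in \Z} u_n(r)e^{in\theta}$, the Laplacian sends $u_n(r)e^{in\theta}$ to $\bigl(u_n'' + r^{-1}u_n' - n^2 r^{-2}u_n\bigr)e^{in\theta}$, so by Parseval both sides of each of the three inequalities decompose diagonally over $n \in \Z$. It therefore suffices to prove each inequality on every Fourier mode with a constant independent of $n$; the scale invariance $x \mapsto \lambda x$, $(a,b) \mapsto (\lambda a, \lambda b)$ of all three inequalities further reduces us to the case $b = 1$.

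Next I would pass to logarithmic coordinates $t = \log r$, so that the annulus becomes a strip $(-L,0)$ with $L = \log(b/a)$. Setting $v_n(t) := u_n(e^t)$, a direct computation gives $u_n''(r) + r^{-1}u_n'(r) - n^2 r^{-2}u_n(r) = e^{-2t}(v_n'' - n^2 v_n)$, turning the operator $\Delta$ on each Fourier mode into a constant-coefficient one in $t$; the boundary weights $(|x|/b)^{4\beta} + (a/|x|)^{4\beta}$ become the pure exponentials $e^{4\beta t} + e^{-4\beta(t+L)}$. Each of the first two inequalities then reduces to showing that a weighted one-dimensional Rayleigh quotient for the constant-coefficient operator $\partial_t^2 - n^2$ on $(-L,0)$ admits a positive lower bound uniformly in $L$ and $n$. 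Letting $L \to \infty$ yields a limit problem on $\R$ whose spectrum can be computed directly by testing on $e^{\sigma t}$; this computation reproduces the indicial polynomial of $\Delta^2$ acting radially, namely $\sigma^2(\sigma-2)^2$. The thresholds $\beta > 1/2$ and $\beta > \sqrt{2}-1$ arise precisely from the condition that the weight exponent $4\beta$ (resp.\ $2\beta$) lies strictly below the gap between zero and the first indicial root, so that the weight is dominated by the symbol.

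For the third inequality I would exploit the conjugation identity
\begin{align*}
\leb_m u = \Delta u + 2(m-1)\frac{x}{|x|^2}\cdot\D u + \frac{(m-1)^2}{|x|^2}u = r^{-(m-1)}\Delta\bigl(r^{m-1}u\bigr),
\end{align*}
which is verified by a direct expansion of $\Delta(r^{m-1}u)$ using $\Delta r^{m-1} = (m-1)^2 r^{m-3}$. Setting $v := r^{m-1}u \in W^{2,2}_0(\Omega)$, the right-hand side becomes $\int_\Omega r^{-2(m-1)}(\Delta v)^2\,dx$, while the two quantities on the left acquire compensating factors of $r^{\pm 2(m-1)}$. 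A careful bookkeeping shows that the $m > 1$ case reduces to the same spectral analysis as the $m=1$ case but applied to $v$ with strictly more favourable weights at both ends of the annulus: the indicial polynomial is shifted by $\pm(m-1)$, which moves the spectral gap away from zero and makes every $\alpha > 0$ admissible, as opposed to the threshold $\beta > 1/2$ of the first estimate.

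I expect the main obstacle to be the second step: obtaining per-mode weighted inequalities with constants that are uniform in both the angular frequency $n$ and the conformal parameter $L = \log(b/a)$. Three difficulties interact. First, the low Fourier modes $|n| \le 1$ cannot be treated as perturbations of the high modes, since $n^2 r^{-2} u_n$ no longer dominates, whereas for high $|n|$ one must argue differently to avoid a loss growing with $n$. Second, the sharp constant in an unweighted inequality $\int_\Omega (\Delta u)^2\,dx \ge C \int_\Omega u^2/|x|^4\,dx$ degenerates as $L \to \infty$, and the boundary weights $(|x|/b)^{4\beta}$ and $(a/|x|)^{4\beta}$ are precisely what restores a uniform lower bound, so one must be careful not to waste them. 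Third, the thresholds $\beta = 1/2$ and $\beta = \sqrt{2}-1$ are sharp, which forces an exact analysis of the limit spectral problem rather than a soft compactness argument. It is for these reasons that the complete proof is deferred to the companion paper \cite{eigenvalue_annuli}.
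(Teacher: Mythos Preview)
The paper does not prove this theorem: it is stated with an explicit citation to the companion article \cite{eigenvalue_annuli} (Theorems~A and~B there), and the present paper only invokes the result. So there is no ``paper's own proof'' to compare against.

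Your plan is coherent and is presumably close to what is done in \cite{eigenvalue_annuli}: Fourier decomposition in the angular variable, passage to logarithmic radial coordinates, and analysis of the resulting constant-coefficient ODEs mode by mode is the natural route for scale-invariant weighted inequalities on annuli, and the conjugation identity $\leb_m u = r^{-(m-1)}\Delta(r^{m-1}u)$ you state is correct and is indeed the key to reducing the $m>1$ case. Your identification of the obstacles---uniformity of the per-mode constants in both $n$ and $L=\log(b/a)$, the distinct treatment of low and high frequencies, and the sharpness of the thresholds $\beta>1/2$ and $\beta>\sqrt2-1$---is accurate, and your final sentence correctly reflects that the full argument lives in the companion paper. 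One small imprecision: your heuristic linking the thresholds to ``the gap between zero and the first indicial root'' of $\sigma^2(\sigma-2)^2$ is suggestive but not literally how the thresholds arise; the actual mechanism is that the weighted Rayleigh quotient on the half-line limit problem degenerates exactly at those values of $\beta$, which requires a more careful computation than reading off indicial roots.
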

Therefore, we are brought to define the following weight (where $\dfrac{1}{2}<\beta<1$ is fixed once and for all) on any annulus $\Omega=B_b\setminus\bar{B}_a(0)\subset \R^2$ (that we identify to an annulus of $\Sigma$ by a fixed chart), that is nothing else than a constant interpolation of the natural weight given by the previous two estimates:
\begin{align}\label{weight_def}
    \omega_{a,b}(x)&=\left\{\begin{alignedat}{2}
        &\frac{1}{b^{4m}}\left(1+\left(\frac{a}{b}\right)^{4\beta}+\frac{1}{\log^2\left(\frac{b}{a}\right)}\right)\qquad&& \text{for all}\;\, x\in \Sigma\setminus\bar{B}_b(0)\\
        &\frac{1}{|x|^{4m}}\left(\left(\frac{|x|}{b}\right)^{4\beta}+\left(\frac{a}{b}\right)^{4\beta}+\frac{1}{\log^2\left(\frac{b}{a}\right)}\right)\qquad&& \text{for all}\;\, x\in B_b\setminus\bar{B}_a(0)\\
        &\frac{1}{a^{4m}}\left(1+\left(\frac{a}{b}\right)^{4\beta}+\frac{1}{\log^2\left(\frac{b}{a}\right)}\right)\qquad &&\text{for all}\;\, x\in B_{a}(0)
        \end{alignedat}\right.
\end{align}
Here, we see additional difficulties arising when $m\geq 2$. Indeed, $\omega_{\alpha^{-1}\delta_k,\alpha}$ converges to $\omega_{0,\alpha}$ as $k\rightarrow \infty$, where 
\begin{align*}
    \omega_{0,\alpha}=\left\{\begin{alignedat}{2}
        &\frac{1}{\alpha^{4m}}\qquad&&\text{for all}\;\,x\in \Sigma\setminus\bar{B}_{\alpha}(0)\\
        &\frac{1}{\alpha^{4\beta}}\frac{1}{|x|^{4(m-\beta)}}\qquad&&\text{for all}\;\,x\in B_{\alpha}(0)
    \end{alignedat}\right.
\end{align*}
Since $0<\beta<1$, provided that $m\geq 2$, our weight $\omega_{0,\alpha}$ does not belong to $L^{1}_{\mathrm{loc}}(\Sigma)$. Therefore, normalising an eigenvalue as $1$ with respect to our weight will not allow us to take a strong limit in the quantity
\begin{align}
    \int_{\Sigma}u_k^2\,\omega_{\alpha^{-1}\delta_k,\alpha}\,d\mathrm{vol}_{g_k}=1.
\end{align}
This is one of the reasons why in the asymptotic analysis, we restrict to the case $m=1$. Determining the exact (possible) defect in the upper semi-continuity of the (extended) Morse index will be the topic of a future work (\cite{morse_willmore_II}). However, our present analysis permits to treat the case of degenerating Riemann surfaces (\cite{lauriv1}), as in \cite{riviere_morse_scs} in the case of conformally invariant problems in dimension $2$—that include harmonic maps. 

Before stating our main theorem, let us give a few additional comments the possible extensions of this work.

Although those eigenvalue estimates or weighted Poincaré inequality seem to restrict to dimension $2$, Theorem \ref{pde_lemma2} easily generalises to dimension $4$ for the Laplace operator thanks to the Sobolev embedding $W^{2,2}(\R^4)\hookrightarrow W^{1,4}(\R^4)\hookrightarrow L^p(\R^4)$ for all $p<\infty$. In this example, using the conformal invariance (notice that $\leb_m$ is not conformally invariant in dimension $2$ for all $m>1$), the proof is an easy adaptation of the proof of \cite[Lemma IV.$1$]{riviere_morse_scs}. More surprisingly, Theorem \ref{pde_lemma1} generalises in dimension $4$ for the first eigenvalue of the bilaplacian $\Delta^2$. Furthermore, the minimiser of the associated problem is radial (provided that the conformal class of the annulus is large enough). 

We believe that our ODE analysis is \emph{universal} in the sense that it should allow us to treat a large class of elliptic operators of order $4$ in all dimensions. Indeed, the algebraic systems obtained in higher dimension $d\geq 3$ always reduce  to one of our $2$-dimensional systems, which permits to obtain a lower estimate of the first eigenvalue of the bilaplacian in annuli in all dimension. In particular, we deduce the following weighted Poincaré estimate in all dimension.

 \begin{theorem}[Theorem C \cite{morse_biharmonic}]\label{poincare_gen0}
        Let $d\geq 3$, let $0<a<b<\infty$, and let $\Omega=B_b\setminus\bar{B}_a(0)\subset \R^d$.Then, for all  $u\in W^{2,2}_0(\Omega)$, we have
        \begin{align}\label{poincare_dim_gen0}
            \int_{\Omega}(\Delta u)^2dx\geq \left(\frac{d^2}{4}+\frac{\pi^2}{\log^2\left(\frac{b}{a}\right)}\right)\left(\frac{(d-4)^2}{4}+\frac{\pi^2}{\log^2\left(\frac{b}{a}\right)}\right)\int_{\Omega}\frac{u^2}{|x|^4}dx.
        \end{align}
        Furthermore, for all $d\geq 3$ and for all $u\in W^{2,2}_0(\Omega)\subset W^{2,2}(\R^d)$, we have
        \begin{align}\label{poincare_dim_gen1}
            \int_{\Omega}(\Delta u)^2dx\geq \mu_1(\Omega)\int_{\Omega}\frac{|\D u|^2}{|x|^2}dx,
        \end{align}
        where
        \begin{align*}
            \mu_1(\Omega)>\left\{\begin{alignedat}{2}
                &\frac{25+\dfrac{104\pi^2}{\log^2\left(\frac{b}{a}\right)}+\dfrac{16\pi^4}{\log^2\left(\frac{b}{a}\right)}}{36+\dfrac{16\pi^2}{\log^2\left(\frac{b}{a}\right)}}\qquad&& \text{for}\;\,d=3\\
                &\frac{9+\dfrac{10\pi^2}{\log^2\left(\frac{b}{a}\right)}+\dfrac{\pi^4}{\log^2\left(\frac{b}{a}\right)}}{3+\dfrac{\pi^2}{\log^2\left(\frac{b}{a}\right)}}\qquad&&\text{for}\;\, d=4\\
                &\frac{d^2(d-4)^2+((d-2)^2+4)\dfrac{8\pi^2}{\log^2\left(\frac{b}{a}\right)}+\dfrac{16\pi^4}{\log^4\left(\frac{b}{a}\right)}}{4(d-4)^2+\dfrac{16\pi^2}{\log^2\left(\frac{b}{a}\right)}}\qquad &&\text{for}\;\, d\geq 5.
            \end{alignedat}\right.
        \end{align*}
    \end{theorem}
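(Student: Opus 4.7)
The plan is to reduce both inequalities, via spherical harmonic decomposition and a logarithmic change of variable, to one-dimensional variational problems on the interval $I:=(\log a,\log b)$ of length $L:=\log(b/a)$, and then to exploit a factorization of the resulting radial operator into first-order pieces. I write $u(x)=\sum_{k\geq 0}u_k(r)Y_k(\omega)$ with $-\Delta_{S^{d-1}}Y_k=\lambda_kY_k$, $\lambda_k=k(k+d-2)$, and introduce $t=\log r$ together with the Hardy-scaled normalization $U_k(t):=r^{(4-d)/2}u_k(r)$. The exponent $(4-d)/2$ is precisely the one that simultaneously cancels the Jacobian $r^{d-1}$ on the Laplacian side and the $|x|^{-4}$ weight on the Hardy side; with $m_k:=k+\tfrac{d-2}{2}$ and $P_m:=\partial_t^2+2\partial_t+(1-m^2)$, a direct calculation yields the weight-free identities
\[
\int_\Omega(\Delta u)^2dx=\sum_k\int_I(P_{m_k}U_k)^2dt,\quad \int_\Omega\frac{u^2}{|x|^4}dx=\sum_k\int_IU_k^2\,dt,\quad \int_\Omega\frac{|\D u|^2}{|x|^2}dx=\sum_k\int_I\!\left[\bigl(U_k'+\tfrac{4-d}{2}U_k\bigr)^2+\lambda_kU_k^2\right]dt.
\]
The $W^{2,2}_0$ condition on $u$ forces both $U_k$ and $U_k'$ to vanish at the endpoints of $I$, so $U_k\in W^{2,2}_0(I)$.

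The crucial algebraic fact is the factorization $P_m=(\partial_t+1+m)(\partial_t+1-m)=:A_mB_m$. Combined with the elementary one-dimensional Hardy-Poincaré identity---for every $W\in W^{1,2}_0(I)$ and $\alpha\in\R$,
\[
\int_I(W'+\alpha W)^2dt=\int_I(W')^2dt+\alpha^2\int_IW^2dt\geq\Big(\alpha^2+\tfrac{\pi^2}{L^2}\Big)\int_IW^2dt,
\]
where the cross term vanishes by integration by parts and Wirtinger's inequality controls the first summand---this yields \eqref{poincare_dim_gen0} in two steps. Indeed, $U\in W^{2,2}_0(I)$ implies $B_{m_k}U\in W^{1,2}_0(I)$, so applying the identity first to $B_{m_k}U_k$ with $\alpha=1+m_k$ and then to $U_k$ with $\alpha=1-m_k$ gives
\[
\int_I(P_{m_k}U_k)^2dt\geq\bigl((1+m_k)^2+\tfrac{\pi^2}{L^2}\bigr)\bigl((1-m_k)^2+\tfrac{\pi^2}{L^2}\bigr)\int_IU_k^2dt.
\]
A short monotonicity analysis of $f(m):=(m^2+1+\pi^2/L^2)^2-4m^2$ evaluated on $\{m_k\}_{k\geq 0}$ shows that for every $d\geq 3$ the product is minimized at $k=0$, where $(1+m_0)^2=d^2/4$ and $(1-m_0)^2=(d-4)^2/4$; summing over $k$ gives \eqref{poincare_dim_gen0}.

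The second estimate \eqref{poincare_dim_gen1} is more delicate. The key observation is that $U_k'+\tfrac{4-d}{2}U_k=B_{m_0}U_k$ for every mode $k$ (the same first-order operator, independent of $k$), so the Rayleigh quotient for the ground mode $k=0$ reduces to $\|A_{m_0}V\|^2/\|V\|^2$ with $V:=B_{m_0}U_0$ ranging over the codimension-one subspace of $W^{1,2}_0(I)$ characterized by the single integral constraint $\int_Ie^{(1-m_0)s}V(s)ds=0$ (this constraint arises from inverting $B_{m_0}$ subject to both Dirichlet conditions on $U_0$). The constrained Euler-Lagrange problem $-V''+((1+m_0)^2-\mu)V=\nu e^{(1-m_0)s}$ is explicitly solvable, mixing trigonometric eigenmodes with an exponential particular solution coming from the Lagrange multiplier; the smallest compatible eigenvalue yields the rational formulas for $\mu_1(\Omega)$, and the three separate cases $d=3$, $d=4$, $d\geq 5$ correspond to different qualitative behaviours of the secular equation (reflecting the sign of $(1-m_0)^2-(1+m_0)^2=-2(d-2)$ and the position of $m_0$ relative to $1$). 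For higher modes $k\geq 1$, the additional positive term $\lambda_kU_k^2$ in the denominator combined with the growth of the numerator coefficient $f(m_k)$ guarantees $\mu_1^{(k)}\geq\mu_1^{(0)}$, so the global infimum is attained at $k=0$. The main obstacle is precisely this last step: unlike \eqref{poincare_dim_gen0}, whose proof collapses cleanly under the two-step Hardy-Poincaré argument thanks to $P_m=A_mB_m$, the mixed Rayleigh quotient in \eqref{poincare_dim_gen1} admits no such decomposition, and the rational expressions for $\mu_1(\Omega)$ must be extracted from a transcendental eigenvalue equation via a case analysis in the dimension---which is why the fine ODE analysis is carried out in the companion paper \cite{morse_biharmonic}.
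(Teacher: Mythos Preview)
The paper does not prove this theorem; it is quoted verbatim from the companion article \cite{morse_biharmonic}, so there is no in-paper argument to compare against.

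Your treatment of \eqref{poincare_dim_gen0} is correct and self-contained. After spherical-harmonic decomposition and the substitution $t=\log r$, the factorization $P_m=(\partial_t+1+m)(\partial_t+1-m)$ together with two applications of the Wirtinger--Hardy identity on $W^{1,2}_0(I)$ yields the product $((1+m_k)^2+\pi^2/L^2)((1-m_k)^2+\pi^2/L^2)$ mode by mode; one then checks that this product, viewed as $(m^2+1+\pi^2/L^2)^2-4m^2$, is minimal over $\{m_k\}_{k\geq 0}$ at $k=0$ (immediate for $d\geq 4$ since $m_0^2\geq 1$; for $d=3$ a direct comparison of $m_0=1/2$ and $m_1=3/2$ suffices). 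There is a harmless sign slip in your normalization: the exponent that simultaneously clears both weights is $(d-4)/2$, i.e.\ $U_k=r^{(d-4)/2}u_k$, not $r^{(4-d)/2}u_k$; your operator $P_m$ and the subsequent identities are nevertheless stated correctly.

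For \eqref{poincare_dim_gen1} you do not give a proof. You correctly identify that the denominator for every mode is $\|B_{m_0}U_k\|^2+\lambda_k\|U_k\|^2$ and that the ground-mode Rayleigh quotient reduces to a constrained one-dimensional eigenvalue problem for $A_{m_0}$, but you then defer the extraction of the explicit rational bounds and the comparison $\mu_1^{(k)}\geq\mu_1^{(0)}$ to \cite{morse_biharmonic}. This is the same deferral the present paper makes, so relative to what appears here there is nothing further to compare; but as a stand-alone proof of \eqref{poincare_dim_gen1} your proposal is only a sketch, with the actual ODE computation and the higher-mode monotonicity left unverified.
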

    
    \begin{rem}
    \begin{enumerate}
    \item[(1)]
    For $d=4$, the estimate reads
    \begin{align*}
        \lambda_1(\Delta^2,B_b\setminus\bar{B}_a(0))>\left(4+\frac{\pi^2}{\log^2\left(\frac{b}{a}\right)}\right)\frac{\pi^2}{\log^2\left(\frac{b}{a}\right)}>\frac{4\pi^2}{\log^2\left(\frac{b}{a}\right)},
    \end{align*}
    and the upper estimate shows that
    \begin{align*}
        \lambda_1(\Delta^2,B_b\setminus\bar{B}_a(0))<\left(4+\frac{4\pi^2}{\log^2\left(\frac{b}{a}\right)}\right)\frac{4\pi^2}{\log^2\left(\frac{b}{a}\right)}\conv{\frac{b}{a}\rightarrow \infty}0,
    \end{align*}
    where $\lambda_1>0$ is the first positive eigenvalue for the weighted problem
    \begin{align*}
        \Delta^2u=\lambda_1\frac{u}{|x|^4}.
    \end{align*}
    Notice that the power $4$ is not related to the dimension but to the order of the differential operator.
    This degeneracy is expected in critical dimension where concentration phenomenons occur.
    \item[(2)] Thanks to an upper bound on the first eigenvalue $\lambda_1(\Omega)$ appearing on the right-hand side of \eqref{poincare_dim_gen0}, one can recover classical Rellich inequalities (in the full space $\R^d$) in dimension $d\geq 5$ and $d=3$ (for the subspace $W^{2,2}(\R^3)\cap\ens{u:u(0)=0}$) with optimal constants. Likewise, \eqref{poincare_dim_gen1} gives another proof of the Hardy-Rellich inequality with optimal constants in all dimension $d\geq 3$ (see \cite{hardy-rellich1, hardy-rellich2, survey_cazacu}). 
    \end{enumerate}
    \end{rem}

\subsubsection{Main Results}

As we have explained above, due to length and technical difficulties, we restrict ourselves to the case $m=1$ in the asymptotic analysis. First, let us define the notion of bubble convergence for Willmore immersions of bounded energy and possibly degenerating conformal class.

As in \cite{riviere_morse_scs}, we can also treat the case of surfaces of degenerating conformal class, provided that a suitable bound holds on the second residue. Recall that for a Willmore immersion $\phi:\Sigma\rightarrow \R^n$, the second residue associated to the simple closed curve $\gamma$ is given by (\cite[Equation (10)]{classification})
\begin{align}\label{second_residue_def}
    \vec{\gamma}_1(\phi,\gamma)=\frac{1}{4\pi}\Im\int_{\gamma}\phi \wedge\left(\partial\H+|\H|^2\partial\phi+\,g^{-1}\otimes\s{\H}{\h_0}\otimes\bar{\partial}\phi\right)+g^{-1}\otimes \h_0\wedge \bar{\partial}\phi.
\end{align}

\begin{defi}
    Let $\Sigma$ be a closed Riemann surface, $\{\phi_k\}_{k\in\N}$ be a sequence of Willmore immersions from $\Sigma$ into $\R^n$, and for all $k\in\N$, let $g_k=\phi^{\ast}g_{\R^n}$. If 
    \begin{align*}
        \limsup_{k\rightarrow \infty}W(\phi_k)<\infty,
    \end{align*}
    we say that $\{\phi_k\}_{k\in\N}$ \emph{bubble converges} to $(\phi_{\infty}^1,\cdots,\phi_{\infty}^m,\vec{\Psi}_1,\cdots,\vec{\Psi}_p,\vec{\chi}_1,\cdots,\vec{\chi}_q)$ provided that
    \begin{enumerate}
        \item[($1$)] The sequence $\{(\Sigma,g_k)\}_{k\in\N}$ converges (in the sense of Deligne-Mumford) to a nodal surface $(\widetilde{\Sigma},\widetilde{h})$ of connected components $(\widetilde{\Sigma}_1,\cdots,\widetilde{\Sigma}_m)$.
        \item[($2$)] For all $1\leq l\leq m$, the map $\phi_{\infty}^l:\Sigma_l\rightarrow \R^n$ is a Willmore immersion such that (up to conformal transformations and diffeomorphisms in the domain) 
    \begin{align*}
        \phi_k\conv{k\rightarrow \infty}(\phi_{\infty}^1,\cdots,\phi_{\infty}^m)\qquad \text{in}\;\, C^{r}_{\mathrm{loc}}\left(\Sigma\setminus\ens{p_k^1,\cdots,p_k^{2N}}\right)\;\,\text{for all}\;\, r\in \N,
    \end{align*}
    where $\ens{p_k^1,\cdots,p_k^{2N}}$ converge to the $2N$ ($N\in\N$) cusps of $\widetilde{\Sigma}$.
    \item[($3$)] If $\Gamma_k=\ens{\gamma_k^1,\cdots,\gamma_{k}^N}\subset \Sigma$ are the shrinking geodesics of $(\Sigma,g_k)$, then there are no concentration points on $\Gamma_k$.
    \item[($4$)] There exists a finite number of concentration points $\ens{a_k^1,\cdots,a_k^p}\subset \Sigma$ and $\ens{b_k^1,\cdots,b_k^q}\subset \Sigma$ such that (up to conformal maps and diffeomorphisms) for all $1\leq i\leq p$ and $1\leq j\leq q$
    \begin{align*}
        \phi_k(\rho_k^{1,i}z+a_k^i)\conv{k\rightarrow \infty}\vec{\Psi}_i\qquad \text{in}\;\, C^r_{\mathrm{loc}}\left(\widehat{\C}\setminus \mathscr{S}_i^1\right)\;\,\text{for all}\;\,r\in\N,
    \end{align*}
    and 
    \begin{align*}
        \phi_k(\rho_k^{2,i}z+a_k^i)\conv{k\rightarrow \infty}\vec{\chi}_j\qquad \text{in}\;\, C^r_{\mathrm{loc}}(\widehat{\C}\setminus \mathscr{S}_j^2)\;\,\text{for all}\;\,r\in\N,
    \end{align*}
    where $\{\rho_k^{\alpha,\beta}\}_{k\in\N}\subset (0,\infty)$ is such that $\rho_k^{\alpha,\beta}\conv{k\rightarrow \infty}0$ and $\mathscr{S}_{\alpha}^{\beta}$ are discrete subsets of $\widehat{\C}=\C\cup\ens{\infty}\simeq S^2$.
    \item[($5$)] The exists integers $m_1,\cdots,m_q\geq 1$ such that
    \begin{align}
        \lim_{k\rightarrow \infty}W(\phi_k)=\sum_{l=1}^mW(\phi_{\infty}^l)+\sum_{i=1}^pW(\vec{\Psi}_i)+\sum_{j=1}^q\left(W(\vec{\chi}_j)-4\pi m_j\right).
    \end{align}
    \end{enumerate}
\end{defi}
\begin{rem}
\begin{enumerate}
    \item[($1$)]     The difference between $\{\vec{\Psi}_i\}_{1\leq i\leq p}$ and $\ens{\vec{\chi}_j}_{1\leq j\leq q}$ is that the first family comes from compact bubbles, whilst the second family is made of inverted non-compact bubbles. For example, in the case $\Sigma=S^2$ and $W(\phi_k)=16\pi$ for all $k\in\N$ N. Marque (\cite{marque_minimal_bubbling}) found an example where $p=0$ and $q=1$, where $\vec{\phi}_{\infty}$ is the inversion of the L\'{o}pez surface (a minimal surface with two ends: one embedded end (\emph{i.e.} of multiplicity $1$ with zero logarithmic growth) and one end of multiplicity $3$) and $\vec{\chi}_1$ is the inversion of the Enneper surface $\vec{\zeta}:\C\rightarrow \R^3$, a minimal surface with a single end of multiplicity $3$. In particular, we have $W(\vec{\chi}_1)=12\pi$, $m_1=3$, and
    \begin{align*}
        16\pi=W(\phi_k)\conv{k\rightarrow \infty}W(\phi_{\infty})+(W(\vec{\chi}_1)-12\pi)=W(\phi_{\infty}).
    \end{align*}
    To our knowledge, no explicit example of energy quantization for higher genus surface was constructed to this day.
    \item[($2$)] Thanks to the main theorem of \cite{lauriv1}, the bubble convergence and the energy identity hold provided that for all $1\leq l\leq N$
    \begin{align*}
        \lim_{k\rightarrow \infty}\frac{\vec{\gamma}_1(\phi_k,\gamma_k^l)}{\sqrt{\leb(\gamma_k^l)}}=0,
    \end{align*}
    where $\leb(\gamma_k^l)$ is the length of $\gamma_k^l$ with respect to the metric $g_k$.
\end{enumerate}
\end{rem}

\renewcommand*{\thetheorem}{\Alph{theorem}}
\setcounter{theorem}{0}
\begin{theorem}\label{main_theorem}
    Let $n\geq 3$, let $\Sigma$ be a closed Riemann surface, and assume that $\{\phi_k\}_{k\in\N}$ bubble converges to $(\phi_{\infty}^1,\cdots,\phi_{\infty}^m,\vec{\Psi}_1,\cdots,\vec{\Psi}_p,\vec{\chi}_1,\cdots,\vec{\chi}_q)$, where all Willmore immersions are \emph{unbranched}, and let $\ens{\gamma_k^1,\cdots,\gamma_k^N}\subset \Sigma$ be the set of shrinking geodesics of $(\Sigma,g_k)$. There exists a universal constant $\Lambda_n>0$ such that the bound
    \begin{align}\label{smallness_residue}
        \limsup_{k\rightarrow \infty}\max_{1\leq l\leq N}\frac{|\vec{\gamma}_1(\phi_k,\gamma_k^l)|}{\leb(\gamma_k^l)}\leq \Lambda_n
    \end{align}
    implies that 
    \begin{align}\label{scs_unbranched}
        \limsup_{k\rightarrow \infty}\left(\mathrm{Ind}_W(\phi_k)+\mathrm{Null}_W(\phi_k)\right)\leq \sum_{l=1}^m\mathrm{Ind}_W^0(\phi_{\infty}^l)+\sum_{i=1}^p\mathrm{Ind}_W^0(\vec{\Psi}_{i})+\sum_{j=1}^q\mathrm{Ind}_W^0(\vec{\chi}_{j})<\infty
    \end{align}
    where $\mathrm{Ind}^0_W=\mathrm{Ind}_W+\mathrm{Null}_W$.
\end{theorem}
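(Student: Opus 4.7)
The plan is to transplant the four-step Da Lio--Gianocca--Rivi\`ere strategy sketched in the introduction to the fourth-order Willmore operator $\mathcal{L}_{g_k}$ of \eqref{fourth_order_operator0}, taking full advantage of the assumption that all limiting Willmore immersions are unbranched (so that the multiplicity integer $m$ of Section 1.3.1 equals $1$ on every neck). First I would isolate every region of loss of compactness: around each compact bubble concentration point $a_k^i$ (respectively non-compact bubble point $b_k^j$) I take, in a conformal chart, the neck annulus $\Omega_k^i(\alpha)=B_\alpha\setminus\bar{B}_{\alpha^{-1}\rho_k^{\cdot,i}}(0)$; around each shrinking geodesic $\gamma_k^l$ of the degenerating conformal class I use Laurain--Rivi\`ere collar coordinates \cite{lauriv1} to obtain an analogous annular model. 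The smallness hypothesis \eqref{smallness_residue} is exactly what \cite{lauriv1} requires to rule out further bubbling on the collars and, more importantly for us, to bound the residue contribution to the second variation by the positive weighted term produced below; this is the role of the universal constant $\Lambda_n$. Since every bubble is unbranched, $m=1$ throughout, so the weight $\omega_{\alpha^{-1}\rho_k,\alpha}$ defined in \eqref{weight_def} remains in $L^1_{\mathrm{loc}}(\Sigma)$ uniformly in $k$---a fact that is crucial at the limit step.

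Next I would import Steps 1--3 of the harmonic-map model into the Willmore setting. Combining the Jacobian-type conservation laws of Bernard--Rivi\`ere \cite{quanta} with an $L^{2,1}$ energy quantization for $\D\n_k$ (provable along the Lin--Rivi\`ere lines), the Jacobian pointwise estimate of the introduction yields a decay of the form
\begin{align*}
|x|\,|\D\n_k(x)|+|x|^2\,|\D^2\n_k(x)|\leq C\left(\left(\frac{|x|}{\alpha}\right)^{\beta}+\left(\frac{\alpha^{-1}\rho_k}{|x|}\right)^{\beta}+\frac{1}{\log(\alpha^2/\rho_k)}\right)\epsilon_k,\qquad \epsilon_k\conv{k\rightarrow \infty}0,
\end{align*}
uniformly on each neck. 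Plugging this control into the explicit formula \eqref{simpler_d20}, all zeroth- and first-order perturbations of $Q_{\phi_k}$ are dominated by the weight $\omega_{\alpha^{-1}\rho_k,\alpha}$. Combining with Theorem \ref{pde_lemma1} and both inequalities of Theorem \ref{pde_lemma2} in the $m=1$ case (applied to $u=\s{\w}{\n_k}$), I obtain the key neck positivity
\begin{align*}
Q_{\phi_k}(\w)\geq \frac{\lambda_0}{2}\int_{\Omega_k(\alpha)}|\w|^2\,\omega_{\alpha^{-1}\rho_k,\alpha}\,d\mathrm{vol}_{g_k}\qquad \text{for all }\w\in W^{2,2}_0(\Omega_k(\alpha)),
\end{align*}
once $\alpha$ is small and $k$ is large. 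The tangential component of $\w$ is handled by the standard reduction writing it as the infinitesimal generator of a diffeomorphism of $\Sigma$.

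Extending $\omega_{\alpha^{-1}\rho_k,\alpha}$ smoothly across every neck by the plateau values of \eqref{weight_def} yields a global positive weight $\omega_{k,\alpha}$ on $\Sigma$. Sylvester's law of inertia, interpolated against a fixed finite-dimensional subspace spanned by negative directions of the limiting operators pulled back to $\Sigma$, shows that $\mathcal{L}_{g_k}$ and $\omega_{k,\alpha}^{-1}\mathcal{L}_{g_k}$ share their index plus nullity. Normalising an orthonormal family $\ens{u_k^{(s)}}_{1\leq s\leq \mathrm{Ind}_W(\phi_k)+\mathrm{Null}_W(\phi_k)}$ of non-positive eigendirections by $\int_\Sigma |u_k^{(s)}|^2\omega_{k,\alpha}\,d\mathrm{vol}_{g_k}=1$, the $L^1_{\mathrm{loc}}$ bound on the weight (which requires $m=1$), together with the lower bound on the smallest weighted eigenvalue (as in \cite[Lemma IV.4]{riviere_morse_scs}), gives a uniform $W^{2,2}$ bound of $u_k^{(s)}$ away from the concentration points. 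Extracting weak limits on each component of the limiting nodal surface produces candidate non-positive directions of $\phi_{\infty}^l$, $\vec{\Psi}_i$, $\vec{\chi}_j$, and the neck positivity prevents any of the $u_k^{(s)}$ from concentrating in a neck (since that would force $Q_{\phi_k}(u_k^{(s)})>0$, contradicting non-positivity of the associated eigenvalue); the family of limits remains linearly independent, yielding \eqref{scs_unbranched}.

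The hardest step is the neck pointwise estimate: extending the Jacobian-type Lorentz-space estimate of the introduction to the coupled conservation-law system underlying Willmore surfaces, while simultaneously controlling both first and second derivatives of $\n_k$ so that every lower-order term of \eqref{simpler_d20} is absorbed by $\omega_{k,\alpha}$. The secondary difficulty is the collar analysis for degenerating conformal class: the residue contribution to $Q_{\phi_k}$ on a collar must be quantitatively estimated by $|\vec{\gamma}_1(\phi_k,\gamma_k^l)|^2/\leb(\gamma_k^l)^2$, which is the ratio appearing in \eqref{smallness_residue} and which ultimately fixes the admissible value of $\Lambda_n$.
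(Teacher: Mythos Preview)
Your proposal is correct and follows essentially the same route as the paper: the neck positivity via Theorems \ref{pde_lemma1}--\ref{pde_lemma2} is Theorem \ref{neck_positive}, the Sylvester reduction is Lemma \ref{lemme_IV.3}, the weighted eigenvalue lower bound is Lemma \ref{lemme_IV.4}, and the collar case is disposed of exactly as you say, the residue entering only through the $L^{2,1}$ norm of $\D\n_k$ and hence through $|\vec{\gamma}_1|\cdot\log(\alpha^2/\rho_k)\sim |\vec{\gamma}_1|/\leb(\gamma_k^l)$. The one place where the paper's execution is more elaborate than your sketch is that a \emph{single} weight does not suffice: the second derivative \eqref{simpler_d3} contains both a zeroth-order term $|A_k|^4 u^2$ and a gradient term $|A_k|^2|du|^2$ (and a mixed term $u\,\D u\,\D H_k\, h_{0,k}$), which forces the introduction of two exponents $\beta_1>1/2$ and $\beta_2>\sqrt{2}-1$ and the interpolation inequality of Theorem \ref{interpolation_weighted_poincare} to close the bootstrap in Lemma \ref{lemme_IV.4}; your phrase ``all zeroth- and first-order perturbations are dominated by the weight'' is correct but hides this.
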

\setcounter{theorem}{9}
\renewcommand*{\thetheorem}{\thesection.\arabic{theorem}}
\begin{rem}
    Notice that hypothesis \eqref{smallness_residue} implies that 
    \begin{align*}
         \lim_{k\rightarrow \infty}\max_{1\leq l\leq N}\frac{|\vec{\gamma}_1(\phi_k,\gamma_k^l)|}{\sqrt{\leb(\gamma_k^l)}}=0.
    \end{align*}
    Therefore, the analysis of \cite{lauriv1} implies that the quantization of energy holds for our sequence of immersions. Namely, 
    \begin{align*}
        \lim_{k\rightarrow \infty}W(\phi_k)=\sum_{l=1}^nW(\phi_{\infty}^l)+\sum_{i=1}^pW(\vec{\Psi}_i)+\sum_{j=1}^q\left(W(\vec{\chi}_j)-4\pi\hspace{0.1em}m_j\right)
    \end{align*}
    for some integer $m_1,\cdots,m_q\geq 1$ such that $4\pi\hspace{0.1em}m_j\leq W(\vec{\chi}_j)$ for all $1\leq j\leq m$.
\end{rem}

As expected, the hypotheses to get the lower semi-continuity of the Morse index (see \cite{index3}) are weaker than those of Theorem \ref{main_theorem}, and we do not need to assume smoothness at the limit.
\setcounter{theorem}{1}
\renewcommand*{\thetheorem}{\Alph{theorem}}
\begin{theorem}\label{main_theoremII}
    Let $n\geq 3$, let $\Sigma$ be a closed Riemann surface, and assume that $\{\phi_k\}_{k\in\N}$ bubble converges to $(\phi_{\infty}^1,\cdots,\phi_{\infty}^m,\vec{\Psi}_1,\cdots,\vec{\Psi}_p,\vec{\chi}_1,\cdots,\vec{\chi}_q)$, and let $\ens{\gamma_k^1,\cdots,\gamma_k^N}\subset \Sigma$ be the set of shrinking geodesics of $(\Sigma,g_k)$. Assume that
    \begin{align}\label{smallness_residue_II}
        \lim_{k\rightarrow \infty}\max_{1\leq l\leq N}\frac{|\vec{\gamma}_1(\phi_k,\gamma_k^l)|}{\sqrt{\leb(\gamma_k^l)}}=0.
    \end{align}
    Then, we have 
    \begin{align}\label{sci_general}
        \sum_{l=1}^m\mathrm{Ind}_W(\phi_{\infty}^l)+\sum_{i=1}^p\mathrm{Ind}_W(\vec{\Psi}_{i})+\sum_{j=1}^q\mathrm{Ind}_W(\vec{\chi}_{j})\leq \liminf_{k\rightarrow \infty}\mathrm{Ind}_W(\phi_k)
    \end{align}
\end{theorem}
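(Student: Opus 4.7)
The inequality \eqref{sci_general} follows by the classical transplantation argument for lower semi-continuity of the Morse index. Fix any integer
\[
N\leq \sum_{l=1}^m\mathrm{Ind}_W(\phi_{\infty}^l)+\sum_{i=1}^p\mathrm{Ind}_W(\vec{\Psi}_i)+\sum_{j=1}^q\mathrm{Ind}_W(\vec{\chi}_j).
\]
The plan is to construct, for $k$ large, $N$ linearly independent variations in $W^{2,2}(\Sigma,\R^n)$ on which the Jacobi quadratic form $Q_{\phi_k}$ of \eqref{simpler_d20} is strictly negative. This forces $\mathrm{Ind}_W(\phi_k)\geq N$, and letting $N$ tend to the supremum gives \eqref{sci_general}. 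The hypothesis \eqref{smallness_residue_II} yields, via the main theorem of Laurain--Rivière \cite{lauriv1}, the $C^l_{\mathrm{loc}}$ convergence of $\phi_k$ to $\phi_{\infty}^l$ on $\Sigma$ minus the nodes of the Deligne--Mumford limit, and to each $\vec{\Psi}_i$, $\vec{\chi}_j$ in its rescaled chart $z\mapsto \rho_k^{\alpha,\beta}z+a_k^{\bullet}$, away from the singular sets $\mathscr{S}_i^1$, $\mathscr{S}_j^2$.

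\textbf{Transplantation and convergence of $Q$.} On each limiting piece, select a basis of negative Jacobi eigenfunctions of $\mathcal{L}_g$, and approximate it by smooth variations $\ens{\vec{w}^{\alpha}}$ each compactly supported away from the branch locus, the node locus, and the singular sets $\mathscr{S}_i^1\cup\mathscr{S}_j^2$, while preserving strict negative definiteness of the direct sum $Q_\infty:=\bigoplus_l Q_{\phi_\infty^l}\oplus\bigoplus_i Q_{\vec{\Psi}_i}\oplus \bigoplus_j Q_{\vec{\chi}_j}$ on the span. For $k$ large, transplant these to $\Sigma$: macroscopic variations via the $C^l_{\mathrm{loc}}$ identification with $\phi_\infty^l$, compact-bubble variations via the rescaling $z\mapsto \rho_k^{1,i}z+a_k^i$, and inverted-bubble variations via $z\mapsto \rho_k^{2,j}z+a_k^j$. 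For $k$ large the transplanted $\vec{w}^{\alpha}_k$ have pairwise disjoint supports in $\Sigma$. Since each integrand of \eqref{simpler_d20} is polynomial in $\vec{w}$, its first two covariant derivatives, and curvature quantities of $\phi$, the smooth convergence on supports yields
\begin{align*}
Q_{\phi_k}(\vec{w}^{\alpha}_k)\conv{k\rightarrow\infty}Q_\infty(\vec{w}^{\alpha}),
\end{align*}
and by polarisation the entire $N\times N$ Gram matrix of $Q_{\phi_k}$ in the basis $\ens{\vec{w}^{\alpha}_k}$ converges to that of $Q_\infty$ in $\ens{\vec{w}^{\alpha}}$, which is strictly negative definite by construction. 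Hence $Q_{\phi_k}$ is strictly negative on the $N$-dimensional span of $\ens{\vec{w}^{\alpha}_k}$ for $k$ large enough, yielding $\mathrm{Ind}_W(\phi_k)\geq N$.

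\textbf{Main obstacle.} The genuinely technical point is the density statement opening the second step: negative Jacobi eigenfunctions on a \emph{possibly branched} Willmore immersion (allowed here, since unlike Theorem \ref{main_theorem} we do not assume smoothness at the limit) need not be smooth at branch points, where the conformal factor $e^{\lambda}$ vanishes and the fourth-order operator $\mathcal{L}_g$ degenerates, so $W^{2,2}$-density of test variations compactly supported away from the branch locus is not automatic. The Hardy-type cutoff estimates for the Willmore Jacobi operator near branch points developed in \cite{index3, index4} provide exactly the approximation required, with arbitrarily small loss in $Q$. The analogous cutoff on the macroscopic side near each shrinking neck of the degenerating conformal structure is harmless because \eqref{smallness_residue_II} gives both the energy quantization and the $C^l_{\mathrm{loc}}$ convergence on compact subsets of $\Sigma$ away from the nodes, so the contribution of the cut-off annulus to $Q_{\phi_k}$ vanishes as $k\rightarrow\infty$, completing the argument.
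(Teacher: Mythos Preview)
Your proposal is correct and follows precisely the classical transplantation argument that the paper itself invokes: the paper does not give a self-contained proof of Theorem \ref{main_theoremII} but rather states, just before it, that lower semi-continuity ``see \cite{index3}'' requires weaker hypotheses than Theorem \ref{main_theorem}, and in the subsequent remark clarifies that for branched limits the index is taken in the variational sense. Your outline---select a negative subspace for $Q_\infty$, cut off near branch points and nodes using the Hardy-type estimates of \cite{index3,index4}, transplant to $\phi_k$ via the $C^l_{\mathrm{loc}}$ convergence guaranteed by \cite{lauriv1} under \eqref{smallness_residue_II}, and pass to the limit in the Gram matrix---is exactly that argument.

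One terminological wrinkle: in the branched case you should speak of a basis of a finite-dimensional subspace on which $Q$ is negative definite, rather than ``negative Jacobi eigenfunctions of $\mathcal{L}_g$,'' since (as the paper's remark after Theorem \ref{main_theoremII} notes) the spectral picture via $\mathcal{L}_g$ is not set up for branched immersions and the index is defined purely through the quadratic form on $\mathscr{E}(\phi)$. This does not affect the substance of your argument, because the cut-off and approximation you describe work at the level of the quadratic form and do not rely on an eigenbasis.
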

\setcounter{theorem}{10}
\renewcommand*{\thetheorem}{\thesection.\arabic{theorem}}
\begin{rem}
    Let us point out that the definition of the Morse index is different in the case of branched immersions since normal variations are generally not integrable. Therefore, we define the Morse index in the classical sense as the number of negative vectorial variations of the second derivative.
\end{rem}

\subsubsection{Open Problems and Future Work}

Besides the afore-mentioned extension of our work to branched immersions (\cite{morse_willmore_II}) and (extrinsic or intrinsic) biharmonic maps in dimension $4$ (\cite{morse_biharmonic}), the most natural extension would be to the viscosity method for Willmore surfaces (\cite{eversion,morse_viscosity}), namely for solutions of min-max problems related for the Willmore energy. The most famous example is the \emph{min-max sphere eversion} (\cite{kusner,brakke}), that historically consists in using  the Willmore energy as a gradient flow to find the \enquote{optimal} sphere eversion—a sphere eversion is a continuous path of immersions linking the sphere equipped with its standard orientation to the antipodal sphere (\cite{smale})). 

Due to the \emph{universality property} (see \cite{mondinonguyen}) of the Willmore enery as the only non-trivial conformally invariant Lagrangian (since the integral of the Gauss curvature is constant by virtue of Gauss-Bonnet theorem—for the total curvature, \emph{i.e.} the integral of the absolute value of the Gauss curvature, Tobias Ekholm (\cite{ekholm}) showed that the total curvature is at most $8\pi$, but it is not know whether this value is optimal or not), it seems fairly natural to use this Lagrangian as a distance function with respect to the standard embedding of the sphere. Indeed, a classical rigidity result shows that $W(\phi)>4\pi$, unless $\phi:S^2\rightarrow\R^3$ is an embedding of the round sphere (in which case $W(\phi)=4\pi$ since $H=-1$ and $\mathrm{Vol}(S^2)=4\pi$ for the round metric).

Another direction is to study compactness properties provided that a bound on the Morse index is given. The results above might help us restrict the number of possible bubblings and extend the previously known compactness bounds. 

\subsection{Morse Index Stability in Geometric Analysis: History and Perspectives}

Let us draw a short panorama of recent results related to (lower or upper) semi-continuity of the Morse index for harmonic maps and minimal surfaces.

In the Almgren-Pitts min-max theory of minimal surface, the first notable result was proven by F. C. Marques and A. Neves (\cite{marquesmorse}) who showed an upper Morse index bound of min-max minimal hypersurfaces, that can be interpreted as an lower semi-continuity result for the Morse index. One of the applications of this result is to simplify the proof of the Willmore conjecture (\cite{marqueswillmore}). The proof of lower index estimates by Marques-Neves (\cite{marques_lower_bound})—that corresponds to an upper-semi continuity theorem—is much more technical and uses the \emph{Multiplicity One Conjecture} proven by X. Zhou (\cite{zhou_xin}), that asserts that for a generic set of metrics (the bumpy metrics of B. White: \cite{bumpy_white1,bumpy_white2}). Marques-Montezuma-Neves (\cite{marques_morse_inequalities}) later extended this result to prove the general Morse inequalities (see \cite{marques_survey_morse} for more details). For the alternative construction using the Allen-Cahn functional of M. Guaraco and Gaspar-Guaraco (\cite{guaraco1, guaraco2}), O. Chodosh and C. Mantoulidis showed in dimension $3$ the equality between the dimension of the min-max family and the Morse index for \enquote{bumpy metrics} (\cite{chodosh_mantoulidis}; other upper semi-continuity results include the work of F. Hiesmayr (\cite{morse_allen-cahn1}) assuming that the limiting surface is two-sided and the work P. Gaspar (\cite{morse_allen-cahn2}) in the general case). 

As it appears in the afore-mentioned literature, the \emph{Multiplicity One Conjecture} is a central property that is often crucial to obtain upper index bound. In the framework of the Allen-Cahn functional, this property was proven in the case of one-parameter families by C. Bellettini (\cite{bellettini_multiplity_one1,bellettini_multiplity_one2}). When it comes to the \emph{viscosity method} of T. Rivière (\cite{viscosity}) for the construction of ($2$-dimensional) minimal surfaces in arbitrary codimension (for which only the lower semi-continuity of the index is known \cite{lower}), the \emph{Multiplicity One Conjecture} was proven by A. Pigati and T. Rivière (\cite{multiplicity}; see also \cite{pigati1}). It is likely that the method developed in \cite{riviere_morse_scs} will lead to a proof of the upper semi-continuity of the Morse index in the viscosity method for minimal surfaces (while the current work should apply to the viscosity method). Other natural directions would be to apply the method to the Yang-Mills-Higgs model that permits to construct codimension $2$ minimal surfaces as in the work of A. Pigati and D. Stern \cite{pigati_stern}, or Morse index stability results in the viscosity method for free boundary minimal surface of A. Pigati (\cite{pigati_viscosity_boundary}). Finally, let us mention the work of M. Karpukhin and D. Stern (\cite{stern_harmonic,stern_harmonic2}; see also the earlier work of M. Karpukhin \cite{karpukhin}) on harmonic maps that uses Morse index estimates for the Ginzburg-Landau functional and has applications to eigenvalues estimates and maximisers of the conformal volume (introduce by Li-Yau in \cite{lieyau}) that uses min-max constructions that enter in the framework of \emph{min-max hierarchies} \cite{hierarchies,hierarchies2}).

\section{The Second Derivative of the Willmore Energy}

Let $\Sigma$ be a closed Riemann surface, $(M^n,h)$ be a complete Riemannian manifold, and let $\phi:\Sigma\rightarrow M^n$ be smooth immersion. Define
\begin{align*}
    W(\phi)=\int_{\Sigma}|\H|^2d\vg,
\end{align*}
where $g=\phi^{\ast}h$ is the induced metric, and $\H$ is the mean curvature, given by 
\begin{align*}
    \H=\frac{1}{2}\sum_{i,j=1}^2\vec{\I}_{i,j},
\end{align*}
where $\vec{\I}$ is the second fundamental form.

\subsection{First Variation}

We assume that $(M^n,h)$ is isometrically embedded into $\R^m$ for $m$ large enough. If $\w\in C^{\infty}(\Sigma,\R^m)$ is any variation, let $\{\phi_t\}_{-\epsilon<t<\epsilon}$ such that $\phi_0=\phi$ and $\dfrac{d}{dt}\phi_t=\w$. Without loss of generality, we can assume that the following identity holds for all $-\epsilon<t<\epsilon$ 
\begin{align*}
	\D_{\frac{d}{dt}}\frac{d}{dt}\phi_t=0.
\end{align*}
 It is clear in $\R^n$ since we can choose $\phi_t=\phi+t\,\w$.

 \subsubsection{Metric}

In local coordinates, defining
\begin{align*}
	g_{i,j}=\s{\p{x_i}\phi}{\p{x_j}\phi},
\end{align*}
we have $\det(g)=g_{1,1}\,g_{2,2}-g_{1,2}^2$. Therefore, if $\{\phi_t\}_{t\in (-\epsilon,\epsilon)}$ is a variation of $\phi$ and $\vec{w}=\dfrac{d}{dt}\left(\phi_t\right)_{|t=0}$, by the classical symmetry lemma, we have
\begin{align*}
	\D_{\frac{d}{dt}\phi_t}\p{z}\phi_t=\D_{\p{z}\phi_t}\frac{d}{dt}\phi_t.
\end{align*}
If $\w_t=\dfrac{d}{dt}\phi_t$, we deduce that
\begin{align*}
	\frac{d}{dt}g_{i,j}=\s{\D_{\p{x_i}\phi_t}\w_t}{\p{x_j}\phi_t}+\s{\D_{\p{x_j}\phi_t}\w_t}{\phi_t}.
\end{align*}
Therefore, we have
\begin{align*}
	\frac{d}{dt}\det(g_t)&=\frac{d}{dt}\left(g_{1,1}g_{2,2}-g_{1,2}\right)=2\,g_{2,2}\s{\D_{\p{x_1}\phi_t}\w_t}{\p{x_1}\phi_t}+2\,g_{1,2}\s{\D_{\p{x_2}\phi_t}\w_t}{\p{x_2}\phi_t}\\
	&-2\,g_{1,2}\left(\s{\D_{\p{x_1}\phi_t}\w}{\p{x_2}\phi_t}+\s{\D_{\p{x_2}\phi_t}\w_t}{\p{x_1}\phi_t}\right)\\
	&=2\s{d\phi_t}{d\w_t}_{g_t}\det(g_t),
\end{align*}
since
\begin{align*}
	g^{i,j}=(-1)^{i+j}\frac{g_{i+1,j+1}}{\det(g)}
\end{align*}
using $\Z_2$ indices, which implies that for all smooth functions $\varphi,\psi:\Sigma\rightarrow \R$
\begin{align*}
	\s{d\varphi}{d\psi}_g=\sum_{i,j=1}^{2}g^{i,j}\p{x_i}\varphi\cdot\p{x_j}\psi.
\end{align*}
We also get
\begin{align*}
	\frac{d}{dt}\sqrt{\det(g_t)}=\s{d\phi_t}{d\w_t}_{g_t}\sqrt{\det(g_t)}.
\end{align*}
Now, we deduce that
\begin{align*}
	\frac{d}{dt}g^{i,j}=-2\s{d\phi_t}{d\w_t}_{g_t}g^{i,j}+\frac{(-1)^{i+j}}{\det(g_t)}\left(\s{\D_{\p{x_{i+1}}\phi_t}\w_t}{\p{x_{j+1}}\phi_t}+\s{\D_{\p{x_{j+1}}\phi_t}\w_t}{\p{x_{i+1}}\phi_t}\right).
\end{align*}
At $t=0$, we get
\begin{align*}
    \frac{d}{dt}g^{1,1}=-2\,e^{-4\lambda}\left(\s{\D_{\e_1}\w}{\e_1}+\s{\D_{\e_2}\w}{\e_2}\right)+2\,e^{-4\lambda}\s{\D_{\e_2}\w}{\e_2}=-2\,e^{-4\lambda}\s{\D_{\e_1}\w}{\e_1},
\end{align*}
and likewise, we find that for all $i,j=1,2$
\begin{align*}
    \frac{d}{dt}g^{i,j}=-e^{-4\lambda}\left(\s{\D_{\e_i}\w}{\e_j}+\s{\D_{\e_j}\w}{\e_i}\right).
\end{align*}

\subsubsection{Second Fundamental Form}

Notice that 
\begin{align*}
	\vec{\I}(\p{x_i}\phi_t,\p{x_j}\phi_t)=\D_{\p{x_i}\phi_t}^{N}\p{x_j}\phi_t=\D_{\p{x_i}\phi_t}\p{x_j}\phi_t-\D_{\p{x_i}\phi_t}^{\top}\p{x_j}\phi_t.
\end{align*}
For all vector-fields $X,Y$, if
\begin{align*}
	\D_X^{\top}Y=\lambda_1\p{x_1}\phi_t+\lambda_2\p{x_2}\phi_t,
\end{align*}
we get
\begin{align*}
	&\s{\D_XY}{\p{x_1}\phi_t}=g_{1,1}\lambda_1+g_{1,2}\lambda_2\\
	&\s{\D_{X}Y}{\p{x_2}\phi_t}=g_{1,2}\lambda_1+g_{2,2}\lambda_2.
\end{align*}
Therefore, we can rewrite the system as 
\begin{align*}
	\begin{pmatrix}
		\s{\D_XY}{\p{x_1}\phi_t}\\
		\s{\D_XY}{\p{x_2}\phi_t}
	\end{pmatrix}
    =\begin{pmatrix}
    	g_{1,1} & g_{1,2}\\
    	g_{1,2} & g_{2,2}
    \end{pmatrix}\begin{pmatrix}
    \lambda_1\\
    \lambda_2
\end{pmatrix}
\end{align*}
which means that
\begin{align*}
	\begin{pmatrix}
		\lambda_1\\
		\lambda_2
	\end{pmatrix}=\begin{pmatrix}
		g^{1,1} & g^{1,2}\\
		g^{1,2} & g^{2,2}
	\end{pmatrix}\begin{pmatrix}
	\s{\D_XY}{\p{x_1}\phi_t}\\
	\s{\D_XY}{\p{x_2}\phi_t}
\end{pmatrix}=\begin{pmatrix}
g^{1,1}\s{\D_XY}{\p{x_1}\phi_t}+g^{1,2}\s{\D_XY}{\p{x_2}\phi_t}\\
g^{1,2}\s{\D_XY}{\p{x_1}\phi_t}+g^{2,2}\s{\D_XY}{\p{x_2}\phi_t}
\end{pmatrix}.
\end{align*}
In other words, we have
\begin{align*}
	\vec{\I}(\p{x_i}\phi_t,\p{x_j}\phi_t)&=\D_{\p{x_i}\phi_t}\p{x_j}\phi_t-\left(g^{1,1}\s{\D_{\p{x_i}\phi_t}\p{x_j}\phi_t}{\p{x_1}\phi_t}+g^{1,2}\s{\D_{\p{x_i}\phi_t}\p{x_j}\phi_t}{\p{x_2}\phi_t}\right)\p{x_1}\phi_t\\
	&-\left(g^{1,2}\s{\D_{\p{x_i}\phi_t}\p{x_j}\phi_t}{\p{x_1}\phi_t}+g^{2,2}\s{\D_{\p{x_i}\phi_t}\p{x_j}\phi_t}{\p{x_2}\phi_t}\right)\p{x_2}\phi_t.
\end{align*}
We first estimate
\begin{align*}
	\D_{\frac{d}{dt}}\D_{\p{x_i}\phi_t}\p{x_j}\phi_t=\D_{\p{x_i}\phi_t}\D_{\p{x_j}\phi_t}\w_t+R(\w_t,\p{x_i}\phi_t)\p{x_j}\phi_t.
\end{align*}
Therefore, we have
\begin{align*}
	&\pi_{\n_t}\left(\D_{\frac{d}{dt}}\vec{\I}(\p{x_i}\phi_t,\p{x_j}\phi_t)\right)=\D_{\p{x_i}\phi_t}^N\D_{\p{x_j}\phi_t}\w_t+\pi_{\n}\left(R(\w_t,\p{x_i}\phi_t)\p{x_j}\phi_t\right)\\
	&-\left(g^{1,1}\s{\D_{\p{x_i}\phi_t}\p{x_j}\phi_t}{\p{x_1}\phi_t}+g^{1,2}\s{\D_{\p{x_i}\phi_t}\p{x_j}\phi_t}{\p{x_2}\phi_t}\right)\D_{\p{x_1}\phi_t}^N\w\\
	&-\left(g^{1,2}\s{\D_{\p{x_i}\phi_t}\p{x_j}\phi_t}{\p{x_1}\phi_t}+g^{2,2}\s{\D_{\p{x_i}\phi_t}\p{x_j}\phi_t}{\p{x_2}\phi_t}\right)\D_{\p{x_2}\phi_t}^N\w_t\\
    &=\D_{\p{x_i}\phi_t}^N\D_{\p{x_j}\phi_t}w_t-\D_{\D_{\p{x_i}\phi_t}^{\top}\p{x_j}\phi_t}^N\w_t+\pi_{\n}\left(R(\w_t,\p{x_i}\phi_t)\p{x_j}\phi_t\right)
\end{align*}
where $\pi_{\n_t}$ is the normal projection. Furthermore, we have
\begin{align*}
    &\D_{\p{x_j}\phi_t}^{\top}\w_t=\left(g^{1,1}\s{\D_{\p{x_j}\phi_t}\w_t}{\p{x_1}\phi_t}+g^{1,2}\s{\D_{\p{x_j}\phi_t}\w_t}{\p{x_2}\phi_t}\right)\p{x_1}\phi_t\\
    &+\left(g^{1,2}\s{\p{x_j}\w_t}{\p{x_1}\phi_t}+g^{2,2}\s{\D_{\p{x_j}\phi_t}\w_t}{\p{x_2}\phi_t}\right)\p{x_2}\phi_t.
\end{align*}
Therefore, we get
\begin{align*}
    &\D_{\p{x_i}\phi_t}^N\D_{\p{x_j}\phi_t}\w_t=\D_{\p{x_i}\phi_t}^N\D_{\p{x_j}\phi_t}^N\w_t+\D^N_{\p{x_i}\phi_t}\D^{\top}_{\p{x_j}\phi_t}\w_t\\
    &=\D_{\p{x_i}\phi_t}^N\D_{\p{x_j}\phi_t}^N\w_t+\left(g^{1,1}\s{\p{x_j}\w_t}{\p{x_1}\phi_t}+g^{1,2}\s{\p{x_j}\w_t}{\p{x_2}\phi_t}\right)\vec{\I}(\p{x_1}\phi_t,\p{x_i}\phi_t)\\
    &-\left(g^{1,2}\s{\p{x_j}\w_t}{\p{x_1}\phi_t}+g^{2,2}\s{\p{x_j}\w_t}{\p{x_2}\phi_t}\right)\vec{\I}(\p{x_2}\phi_t,\p{x_i}\phi_t)\\
    &=\D_{\p{x_i}\phi_t}^N\D_{\p{x_j}\phi_t}^N\w_t+\vec{\I}(\p{x_i}\phi_t,\D_{\p{x_j}\phi_t}^{\top}\w_t).
\end{align*}
Finally, if 
\begin{align*}
    \D^2_{X,Y}=\D_X^{N}\D_Y^N-\D_{\D_X^{\top}Y}^N,
\end{align*}
we get
\begin{align}
    \pi_{\n_t}\left(\D_{\frac{d}{dt}}\vec{\I}(\p{x_i}\phi_t,\p{x_j}\phi_t)\right)=\D^2_{\p{x_i}\phi_t,\p{x_j}\phi_t}\w_t+\vec{\I}(\p{x_i}\phi_t,\D_{\p{x_j}\phi_t}^{\top}\w_t)+\pi_{\n_t}\left(R(\w_t,\p{x_i}\phi_t)\p{x_j}\phi_t\right).
\end{align}

\subsubsection{Codazzi Identity}

Now, using the definition of the covariant derivative, we get for all tangent vector fields 
\begin{align*}
	\D_{X}^{N}\vec{\I}(Y,Z)&=\D^{N}_X\left(\vec{\I}(Y,Z)\right)-\vec{\I}(\D^{\top}_XY,Z)-\vec{\I}(Y,\D^{\top}_XZ)
\end{align*}
while the Codazzi-Mainardi identity gives
\begin{align*}
	\D_{X}^{N}\vec{\I}(Y,Z)&=\D^{\perp}_Y\vec{\I}(X,Z)+\pi_{\n_t}\left(R(X,Y)Z\right).
\end{align*}

\subsubsection{First Derivative of the Mean Curvature}

We have
\begin{align*}
    \H_{g_t}=\frac{1}{2}\sum_{i,j=1}^2g^{i,j}\,\vec{\I}(\p{x_i}\phi_t,\p{x_j}\phi_t).
\end{align*}
Therefore, we get
\begin{align*}
    &\pi_{\n_t}\left(\D_{\frac{d}{dt}}\H_{g_t}\right)=\frac{1}{2}\sum_{i,j=1}^2\left(-2\s{d\phi_t}{d\w_t}_{g_t}g^{i,j}\right.\\
    &\left.+\frac{(-1)^{i+j}}{\det(g_t)}\left(\s{\D_{\p{x_{i+1}}\phi_t}\w_t}{\p{x_{j+1}}\phi_t}+\s{\D_{\p{x_{j+1}}\phi_t}\w_t}{\p{x_{i+1}}\phi_t}\right)\right)
     \vec{\I}(\p{x_i}\phi_t,\p{x_j}\phi_t)\\
    &
    +\frac{1}{2}\sum_{i,j=1}^2g^{i,j}\left(\D^2_{\p{x_i}\phi_t,\p{x_j}\phi_t}\w_t+\vec{\I}(\p{x_i}\phi_t,\D_{\p{x_j}\phi_t}^{\top}\w_t)+\pi_{\n_t}\left(R(\w_t,\p{x_i}\phi_t)\p{x_j}\phi_t\right)\right)\\
    &=-\s{d\phi_t}{d\w_t}_{g_t}\H_{g_t}+\frac{1}{2}\sum_{i,j=1}^2\frac{(-1)^{i+j}}{\det(g_t)}\left(\s{\D_{\p{x_{i+1}}\phi_t}\w_t}{\p{x_{j+1}}\phi_t}+\s{\D_{\p{x_{j+1}}\phi_t}\w_t}{\p{x_{i+1}}\phi_t}\right)\vec{\I}(\p{x_i}\phi_t,\p{x_j}\phi_t)\\
    &+\frac{1}{2}\Delta_g^N\w_t+\frac{1}{2}\sum_{i,j=1}^2g^{i,j}\,\vec{\I}(\p{x_i}\phi_t,\D_{\p{x_j}\phi_t}^{\top}\w_t)+\pi_{\n_t}\left(\frac{1}{2}\sum_{i,j=1}^2g^{i,j}R(\w,\p{x_i}\phi_t)\p{x_j}\phi_t\right).
\end{align*}
Notice that if $\mathscr{A}$ is the Simons operator, we have
\begin{align}\label{H_compact_first_derivative}
    \pi_{\n_t}\left(\D_{\frac{d}{dt}}\H_{g_t}\right)=\frac{1}{2}\left(\Delta_g^N\w+\mathscr{A}(\w)+\mathscr{R}(\w)\right),
\end{align}
where
\begin{align*}
    \mathscr{R}(\w)=\sum_{i,j=1}^2g^{i,j}R(\w,\p{x_i}\phi_t)\p{x_j}\phi_t.
\end{align*}
is a curvature operator.

\subsection{Second Variation}

\subsubsection{Second Derivative of the Metric}

 Now, noticing that 
\begin{align}\label{nb-1}
	\s{d\phi_t}{d\w_t}_{g_t}d\mathrm{vol}_{g_t}=\frac{d}{dt}\left(d\mathrm{vol}_{g_t}\right),
\end{align}
we will compute the second derivative of the area form. First, since $\D_{\frac{d}{dt}}\frac{d}{dt}\phi_t=0$, and by the symmetry lemma
\begin{align*}
	\D_{\frac{d}{dt}\phi_t}\p{z}\phi_t=\D_{\p{z}\phi_t}\frac{d}{dt}\phi_t,
\end{align*}
we get
\begin{align}\label{nb0}
	\frac{d^2}{dt^2}\left(g_{i,j}\right)_{|t=0}&=\frac{d}{dt}\left(\s{\D_{\p{x_i}\phi_t}\w_t}{\p{x_j}\phi_t}+\s{\D_{\p{x_j}\phi_t}\w_t}{\p{x_i}\phi_t}\right)_{t=0}\nonumber\\
	&=\s{\D_{\w}\D_{\e_i}\w}{\e_j}+2\s{\D_{\e_i}\w}{\D_{\e_j}\w}+\s{\D_{\w}\D_{\e_j}\w}{\e_i}\nonumber\\
	&=2\s{\D_{e_i}\w}{\D_{\e_i}\w}+\s{R(\w,\e_i)\w}{\e_j}+\s{R(\w,\e_j)\w}{\e_i}\nonumber\\
	&=2\s{\D_{\e_i}\w}{\D_{\e_j}\w}-2\s{R(\w,\e_i)\e_j}{\w},
\end{align}
where we wrote $\e_i=\p{x_i}\phi$, and used the symmetry of the curvature tensor. Therefore, we get
\begin{align*}
	&\frac{d^2}{dt^2}\left(\det(g_t)\right)_{|t=0}=\frac{d}{dt}\left(g_{1,1}g_{2,2}-g_{1,2}^2\right)\\
	&=g_{2,2}\left(\frac{d^2}{dt^2}g_{1,1}\right)+g_{1,1}\frac{d^2}{dt^2}\left(g_{2,2}\right)+2\left(\frac{d}{dt}g_{1,1}\right)\left(\frac{d}{dt}g_{2,2}\right)-2\,g_{1,2}\left(\frac{d}{dt}g_{1,2}\right)-2\left(\frac{d}{dt}g_{1,2}\right)^2\\
	&=2\,g_{2,2}\s{\D_{\e_1}\w}{\D_{\e_1}\w}+2\,g_{1,1}\s{\D_{\e_2}\w}{\D_{\e_2}\w}-4\,g_{1,2}\s{\D_{\e_1}\w}{\D_{\e_2}\w}\\
	&-2\,g_{2,2}\s{R(\w,\e_1)\e_1}{\w}-2\,g_{1,1}\s{R(\w,\e_2)\e_2}{\w}-2\,g_{1,2}\s{R(\w,\e_1)\e_2}{\w}-2\,g_{2,1}\s{R(\w,\e_2)\e_1}{\w}\\
	&+8\s{\D_{\e_1}\w}{\e_1}\s{\D_{\e_2}\w}{\e_2}-2\left(\s{\D_{\e_1}\w}{\e_2}+\s{\D_{\e_2}\w}{\e_1}\right)^2\\
	&=\left(2|d\w|_g^2-2\sum_{i,j=1}^{2}g^{i,j}\s{R(\w,\e_i)\e_j}{\w}\right.\\
 &\left.+\frac{1}{\det(g)}\left(8\s{\D_{\e_1}\w}{\e_1}\s{\D_{\e_2}\w}{\e_2}-2\left(\s{\D_{\e_1}\w}{\e_2}+\s{\D_{\e_2}\w}{\e_1}\right)^2\right)\right)\det(g)
\end{align*}
We see that the term 
\begin{align*}
	\mathscr{R}(\w,\w)=\sum_{i,j=1}^{2}g^{i,j}\s{R(\w,\e_i)\e_j}{\w}
\end{align*}
is a Ricci-like curvature, where we take the Ricci curvature of $\varphi(\Sigma)\subset (M^m,h)\subset \R^n$, evaluated at the couple of vector-field $(\w,\w)$ of $M^m$. The second component is better expressed using conformal (of complex) coordinates. 
We recall that the Cauchy-Riemann operators are given by 
\begin{align*}
	\p{z}&=\frac{1}{2}\left(\p{x_1}-i\,\p{x_2}\right)\\
	\p{\z}&=\frac{1}{2}\left(\p{x_1}+i\,\p{x_2}\right).
\end{align*}
Therefore, we deduce that
\begin{align*}
	\p{x_1}&=\p{z}+\p{\z}\\
	\p{x_2}&=i(\p{z}-\p{\z}).
\end{align*}
These identities imply that
\begin{align*}
	&\s{\D_{\e_1}\w}{\e_1}\s{\D_{\e_2}\w}{\e_2}=-\s{\p{z}\w+\p{\z}\w}{\p{z}\phi+\p{\z}\phi}\s{\p{z}\w-\p{\z}\w}{\p{z}\phi-\p{\z}\phi}\\
	&=-4\,\Re\left(\s{\p{z}\phi}{\p{z}\w}\right)^2+4\,\Re\left(\s{\p{z}\phi}{\p{\z}\w}\right)^2%\\
%	&=-2\,\Re\left(\s{\p{z}\phi}{\p{z}\w}^2\right)-2\,|\s{\p{z}\phi}{\p{\z}\w}|^2+4\,\Re\left(\s{\p{z}\phi}{\p{\z}\w}\right)^2
\end{align*}
Likewise, we get
\begin{align*}
	\s{\D_{\e_1}\w}{\e_2}+\s{\D_{\e_2}\w}{\e_1}&=i\s{\p{z}\phi-\p{\z}\phi}{\p{z}\w+\p{\z}\w}+i\s{\p{z}\phi+\p{\z}\phi}{\p{z}\w-\p{\z}\w}\\
	&=2i\left(\s{\p{z}\phi}{\p{z}\w}-\s{\p{\z}\phi}{\p{\z}\w}\right)=-4\,\Im\left(\s{\p{z}\phi}{\p{z}\w}\right).
\end{align*}
Therefore, we finally get
\begin{align*}
	&4\s{{\D_{\e_1}\w}}{\e_1}\s{\D_{\e_2}\w}{\e_2}-\left(\s{\D_{\e_1}\w}{\e_2}+\s{\D_{\e_2}\w}{\e_1}\right)^2\\
	&=-16\,\Re\left(\s{\p{z}\phi}{\p{z}\w}\right)^2+16\,\Re\left(\s{\p{z}\phi}{\p{\z}\w}\right)^2-16\,\Im\left(\s{\p{z}\phi}{\p{z}\w}\right)^2\\
	&=16\,\Re\left(\s{\p{z}\phi}{\p{\z}\w}\right)^2-16|\s{\p{z}\phi}{\p{z}\w}|^2.
\end{align*}
We finally deduce that 
\begin{align}\label{nb1}
	\frac{d^2}{dt^2}\det(g_t)=2\left(|d\w|_g^2+16|\partial\phi\totimes\bar{\partial}\w|^2_{\mathrm{WP}}-16|\partial\phi\totimes\partial\w|_{\mathrm{WP}}^2+\mathscr{R}(\w,\w)\right)\det(g),
\end{align}
where the Weil-Petersson norm of a quadratic differential $\alpha=f(z)dz^2$ is given by 
\begin{align*}
	|\alpha|_{\mathrm{WP}}^2=g^{-2}\otimes \alpha\otimes \bar{\alpha}=e^{-4\lambda}|f(z)|^2.
\end{align*}
Using the formula $(f(u))''=f'(u)u''+f''(u)(u')$, we deduce that 
\begin{align}\label{nb2}
	&\frac{d^2}{dt^2}\left(\sqrt{\det(g_t)}\right)=\frac{1}{2\sqrt{\det(g)}}\frac{d^2}{dt^2}\det(g_t)-\frac{1}{4(\det(g))^{\frac{3}{2}}}\left(\frac{d}{dt}\det(g_t)\right)^2\nonumber\\
	&=\left(|d\w|_g^2+16|\partial\phi\totimes\bar{\partial}\w|^2_{\mathrm{WP}}-16|\partial\phi\totimes\partial\w|_{\mathrm{WP}}^2+\mathscr{R}(\w,\w)\right)\sqrt{\det(g)}-\frac{1}{4(\det(g))^{\frac{3}{2}}}\left(2\s{d\phi}{d\w}_g\det(g)\right)^2\nonumber\\
	&=\left(|d\w|_g^2-\s{d\phi}{d\w}_g^2+16|\partial\phi\totimes\bar{\partial}\w|^2_{\mathrm{WP}}-16|\partial\phi\totimes\partial\w|_{\mathrm{WP}}^2+\mathscr{R}(\w,\w)\right)\sqrt{\det(g)}.
\end{align}
Notice that in the notations of \cite{lower}, we have
\begin{align*}
	16|\partial\phi\totimes\bar{\partial}\w|^2_{\mathrm{WP}}-16|\partial\phi\totimes\partial\w|_{\mathrm{WP}}^2=\frac{1}{2}|d\phi\totimes d\w+d\w\totimes d\phi|_g^2.
\end{align*}
In fact, this expression can be simplified since in a conformal chart $\s{d\phi}{d\w}_g^2=4\,\Re\left(\s{\D_{\e_z}\w}{\e_{\z}}\right)$, which implies that
\begin{align}\label{nb2_bis}
    \frac{d^2}{dt^2}\left(\sqrt{\det(g_t)}\right)_{|t=0}=\left(|d\w|_g^2-16|\partial\phi\totimes\partial\w|_{\mathrm{WP}}^2+\mathscr{R}(\w,\w)\right)\sqrt{\det(g)}.
\end{align}

\subsubsection{Second Derivative of the Mean Curvature}

Recall that
\begin{align}\label{basic1}
	\D_{\frac{d}{dt}}^N\vec{\I}(\p{x_i}\phi_t,\p{x_j}\phi_t)=\D^{N}_{\p{x_i}\phi_t}\D_{\p{x_j}\phi_t}
\w_t-\D^N_{\D^{\top}_{\p{x_i}\phi_t}\p{x_j}\phi_t}\w_t+\pi_{\n_t}\left(R(\w_t,\p{x_i}\phi_t)\p{x_j}\phi_t\right),
\end{align}
and
that
\begin{align}\label{basic2}
	\frac{d}{dt}\left(g^{i,j}\right)_{|t=0}=-e^{-4\lambda}\left(\s{\D_{\e_i}\w}{\e_j}+\s{\D_{\e_j}\w}{\e_i}\right).
\end{align}
We have
\begin{align}\label{new_der0}
	&\D_{\frac{d}{dt}}^N\D^{N}_{\p{x_i}\phi_t}\vec{v}_t=\pi_{\n_t}\left(\D_{\frac{d}{dt}}\D_{\p{x_i}\phi_t}\vec{v}_t\right)-\D_{\frac{d}{dt}}^N\D_{\p{x_i}\phi_t}^{\top}\vec{v}_t\nonumber\\
	&=\D^N_{\p{x_i}\phi_t}\D_{\frac{d}{dt}}\vec{v}_t+\pi_{\n_t}\left(R(\w_t,\p{x_i}\phi_t)\vec{v}_t\right)-\D_{\frac{d}{dt}}^N\left(\left(g^{1,1}\s{\D_{\p{x_i}\phi_t}\vec{v}_t}{\p{x_1}\phi_t}+g^{1,2}\s{\D_{\p{x_i}\phi_t}\vec{v}_t}{\p{x_2}\phi_t}\right)\p{x_1}\phi_t\right.\nonumber\\
	&\left.+\left(g^{1,2}\s{\D_{\p{x_i}\phi_t}\vec{v}_t}{\p{x_1}\phi_t}+g^{2,2}\s{\D_{\p{x_i}\phi_t}\vec{v}_t}{\p{x_2}\phi_t}\right)\p{x_2}\phi_t\right)\nonumber\\
	&=\D^N_{\p{x_i}\phi_t}\D_{\frac{d}{dt}}\vec{v}_t+\pi_{\n_t}\left(R(\w_t,\p{x_i}\phi_t)\vec{v}_t\right)-\left(g^{1,1}\s{\D_{\p{x_i}\phi_t}\vec{v}_t}{\p{x_1}\phi_t}+g^{1,2}\s{\D_{\p{x_i}\phi_t}\vec{v}_t}{\p{x_2}\phi_t}\right)\D_{\p{x_1}\phi_t}^N\w_t\nonumber\\
	&-\left(g^{1,2}\s{\D_{\p{x_i}\phi_t}\vec{v}_t}{\p{x_1}\phi_t}+g^{2,2}\s{\D_{\p{x_i}\phi_t}\vec{v}_t}{\p{x_2}\phi_t}\right)\D_{\p{x_2}\phi_t}^N\w_t.
\end{align}
Therefore, applying the identity \eqref{new_der0} to $\w_t=\D_{\p{x_j}\phi_t}\w_t$ we deduce that
\begin{align}\label{new_der1}
	&\D_{\frac{d}{dt}}^N\D_{\p{x_i}\phi_t}^N\D_{\p{x_j}\phi_t}\w_t=\D^N_{\p{x_i}\phi_t}\D_{\frac{d}{dt}}^N\D_{\p{x_j}\phi_t}\w_t+\pi_{\n}\left(R(\w,\e_i)\D_{\e_j}\w\right)\nonumber\\
	&-e^{-2\lambda}\s{\D_{\e_i}\D_{\e_j}\w}{\e_1}\D_{\e_1}^N\w-e^{-2\lambda}\s{\D_{\e_i}\D_{\e_j}\w}{\e_2}\D_{\e_2}^N\w\nonumber\\
	&=\D_{\p{x_i}\phi_t}^N\left(R(\w_t,\p{x_j}\phi_t)\w_t\right)+\pi_{\n}\left(R(\w,\e_i)\D_{\e_j}\w\right)
	-e^{-2\lambda}\s{\D_{\e_i}\D_{\e_j}\w}{\e_1}\D_{\e_1}^N\w-e^{-2\lambda}\s{\D_{\e_i}\D_{\e_j}\w}{\e_2}\D_{\e_2}^N\w.
\end{align}
Then, we have
\begin{align}\label{new_der2}
	\D_{\p{x_i}\phi_t}\left(R(\w_t,\p{x_j}\phi_t)\w_t\right)=\D_{\e_i}R(\w,\e_j)\w+R(\D^{\top}_{\e_i}\w,\e_j)\w+R(\w,\D_{\e_i}^{\top}\e_j)\w+R(\w,\e_j)\D_{\e_i}^{\top}\w,
\end{align}
which shows by \eqref{new_der1} and \eqref{new_der2} that
\begin{align}\label{new_der3}
	&\D_{\frac{d}{dt}}^N\left(\D_{\p{x_i}\phi_t}^N\D_{\p{x_j}\phi_t}\w_t\right)_{|t=0}=-e^{-2\lambda}\s{\D_{\e_i}\D_{\e_j}\w}{\e_1}\D_{\e_1}^N\w-e^{-2\lambda}\s{\D_{\e_i}\D_{\e_j}\w}{\e_2}\D_{\e_2}^N\w\nonumber\\
	&+\pi_{\n}\left(\D_{\e_i}R(\w,\e_j)\w+R(\D^{\top}_{\e_i}\w,\e_j)\w+R(\w,\D_{\e_i}^{\top}\e_j)\w+R(\w,\e_j)\D_{\e_i}^{\top}\w+R(\w,\e_i)\D_{\e_j}\w\right)\nonumber\\
 &=-\D_{\D^{\top}\e_i\D_{\e_j}\w}^N\w
 +\pi_{\n}\left(\D_{\e_i}R(\w,\e_j)\w+R(\D^{\top}_{\e_i}\w,\e_j)\w+R(\w,\D_{\e_i}^{\top}\right.\e_j)\w+R(\w,\e_j)\D_{\e_i}^{\top}\w\nonumber\\
 &\left.+R(\w,\e_i)\D_{\e_j}\w\right).
\end{align}
Then, we compute by \eqref{new_der0} since $g^{i,j}_{|t=0}=e^{-4\lambda}\delta_{i,j}$ and using \eqref{basic2}
\begin{align*}
	&\D^N_{\frac{d}{dt}}\D^N_{\D^{\top}_{\p{x_i}\phi_t}\p{x_j}\phi_t}\w_t=\D_{\frac{d}{dt}}^N\left(\left(g^{1,1}\s{\D_{\p{x_i}\phi_t}\p{x_j}\phi_t}{\p{x_1}\phi_t}+g^{1,2}\s{\D_{\p{x_i}\phi_t}\p{x_j}\phi_t}{\p{x_2}\phi_t}\right)\D_{\p{x_1}\phi_t}^N\w_t\right.\\
	&\left.+\left(g^{1,2}\s{\D_{\p{x_i}\phi_t}\p{x_j}\phi_t}{\p{x_1}\phi_t}+g^{2,2}\s{\D_{\p{x_i}\phi_t}\p{x_j}\phi_t}{\p{x_2}\phi_t}\right)\D_{\p{x_2}\phi_t}^N\w_t\right)\\
	&=e^{-2\lambda}\s{\D_{\e_i}\e_j}{\e_1}\left(\pi_{\n}\left(R(\w,\e_1)\w\right)-e^{-2\lambda}\s{\D_{\e_1}\w}{\e_1}\D_{\e_1}^N\w-e^{-2\lambda}\s{\D_{\e_1}\w}{\e_2}\D^{N}_{\e_2}\w\right)\\
	&+e^{-2\lambda}\s{\D_{\e_i}\e_j}{\e_2}\left(\pi_{\n}\left(R(\w,\e_2)\w\right)-e^{-2\lambda}\s{\D_{\e_2}\w}{\e_1}\D_{\e_1}^N\w-e^{-2\lambda}\s{\D_{\e_2}\w}{\e_2}\D_{\e_2}^N\w\right)\\
	&+\left(-2e^{-4\lambda}\s{\D_{\e_1}\w}{\e_1}\s{\D_{\e_i}\e_j}{\e_1}-e^{-4\lambda}\left(\s{\D_{\e_1}\w}{\e_2}+\s{\D_{\e_2}\w}{\e_1}\right)\s{\D_{\e_i}\e_j}{\e_2}\right.\\
	&\left.+e^{-2\lambda}\s{\D_{\frac{d}{dt}}\D_{\p{x_i}\phi_t}\p{x_j}\phi_t}{\p{x_1}\phi_t}+e^{-2\lambda}\s{\D_{\e_i}\e_j}{\D_{\e_1}\w}\right)\D^{N}_{\e_1}\w\\
	&+\left(-2e^{-4\lambda}\s{\D_{\e_2}\w}{\e_2}\s{\D_{\e_i}\e_j}{\e_2}-e^{-4\lambda}\left(\s{\D_{\e_1}\w}{\e_2}+\s{\D_{\e_2}\w}{\e_1}\right)\s{\D_{\e_i}\e_j}{\e_1}\right.\\
	&\left.+e^{-2\lambda}\s{\D_{\frac{d}{dt}}\D_{\p{x_i}\phi_t}\p{x_j}\phi_t}{\p{x_2}\phi_t}+e^{-2\lambda}\s{\D_{\e_i}\e_j}{\D_{\e_2}\w}\right)\D_{\e_2}^N\w
\end{align*}
Since 
\begin{align*}
	\D_{\frac{d}{dt}}\D_{\p{x_i}\phi_t}\p{x_j}\phi_t=\D_{\e_i}\D_{\e_j}\w+R(\w,\e_i)\e_j,
\end{align*}
we deduce that
\begin{align}\label{new_der4}
	&\left(\D_{\frac{d}{dt}}^N\D_{\D_{\p{x_i}\phi_t}^{\top}\p{x_j}\phi_t}^N\w_t\right)_{|t=0}=e^{-2\lambda}\s{\D_{\e_i}\e_j}{\e_1}\left(\pi_{\n}\left(R(\w,\e_1)\w\right)-e^{-2\lambda}\s{\D_{\e_1}\w}{\e_1}\D_{\e_1}^N\w\right.\nonumber\\
    &\left.-e^{-2\lambda}\s{\D_{\e_1}\w}{\e_2}\D^{N}_{\e_2}\w\right)\nonumber\\
	&+e^{-2\lambda}\s{\D_{\e_i}\e_j}{\e_2}\left(\pi_{\n}\left(R(\w,\e_2)\w\right)-e^{-2\lambda}\s{\D_{\e_2}\w}{\e_1}\D_{\e_1}^N\w-e^{-2\lambda}\s{\D_{\e_2}\w}{\e_2}\D_{\e_2}^N\w\right)\nonumber\\
	&+\left(-2e^{-4\lambda}\s{\D_{\e_1}\w}{\e_1}\s{\D_{\e_i}\e_j}{\e_1}-e^{-4\lambda}\left(\s{\D_{\e_1}\w}{\e_2}+\s{\D_{\e_2}\w}{\e_1}\right)\s{\D_{\e_i}\e_j}{\e_2}\right.\nonumber\\
	&\left.+e^{-2\lambda}\s{\D_{\e_i}\D_{\e_j}\w+R(\w,\e_i)\e_j}{\e_1}+e^{-2\lambda}\s{\D_{\e_i}\e_j}{\D_{\e_1}\w}\right)\D^{N}_{\e_1}\w\nonumber\\
	&+\left(-2e^{-4\lambda}\s{\D_{\e_2}\w}{\e_2}\s{\D_{\e_i}\e_j}{\e_2}-e^{-4\lambda}\left(\s{\D_{\e_1}\w}{\e_2}+\s{\D_{\e_2}\w}{\e_1}\right)\s{\D_{\e_i}\e_j}{\e_1}\right.\nonumber\\
	&\left.+e^{-2\lambda}\s{\D_{\e_i}\D_{\e_j}\w+R(\w,\e_i)\e_j}{\e_2}+e^{-2\lambda}\s{\D_{\e_i}\e_j}{\D_{\e_2}\w}\right)\D_{\e_2}^N\w.
\end{align}
We can simplify this expression a lot. Indeed, 
\begin{align}\label{new_der41}
    &e^{-4\lambda}\s{\D_{\e_i}\e_j}{\e_1}\left(\s{\D_{\e_1}\w}{\e_1}\D_{\e_1}^N\w+\s{\D_{\e_1}\w}{\e_2}\D^N_{\e_2}\w\right)\nonumber\\
    &+e^{-4\lambda}\s{\D_{\e_i}\e_j}{\e_2}\left(\s{\D_{\e_2}\w}{\e_1}\D^{N}_{\e_1}\w+\s{\D_{\e_2}\w}{\e_1}\D^N_{\e_2}\w\right)\nonumber\\
    &=e^{-2\lambda}\s{\D_{\e_i}\e_j}{\e_1}\D^N_{\D^{\top}_{\e_1}\w}\w+e^{-2\lambda}\s{\D_{\e_i}\e_j}{\e_2}\D^N_{\D_{\e_2}^{\top}\w}\w=\D^N_{\D_{\D_{\e_i}^{\top}\e_j}^{\top}\w}\w,
\end{align}
while
\begin{align}\label{new_der42}
    e^{-2\lambda}\s{\D_{\e_i}\D_{\e_j}\w}{\e_1}\D^N_{\e_1}\w+e^{-2\lambda}\s{\D_{\e_i}\D_{\e_j}\w}{\e_2}\D_{\e_2}^N\w=\D^N_{\D^{\top}_{\e_i}\D_{\e_j}\w}\w.
\end{align}
Finally, we have
\begin{align}\label{new_der43}
    &e^{-4\lambda}\bigg\{2\s{\D_{\e_1}\w}{\e_1}\s{\D_{\e_i}\e_j}{\e_1}+\left(\s{\D_{\e_1}\w}{\e_2}+\s{\D_{\e_2}\w}{\e_1}\right)\s{\D_{\e_i}\e_j}{\e_2}\bigg\}\D^N_{\e_1}\w\nonumber\\
    &+e^{-4\lambda}\bigg\{2\s{\D_{\e_2}\w}{\e_2}\s{\D_{\e_i}\e_j}{\e_2}+\left(\s{\D_{\e_1}\w}{\e_2}+\s{\D_{\e_2}\w}{\e_1}\right)\s{\D_{\e_i}\e_j}{\e_1}\bigg\}\D^N_{\e_2}\w\nonumber\\
    &=e^{-4\lambda}\s{\D_{\e_i}\e_j}{\e_1}\left(2\s{\D_{\e_1}\w}{\e_1}\D^N_{\e_1}\w+\left(\s{\D_{\e_1}\w}{\e_2}+\s{\D_{\e_2}\w}{\e_1}\right)\D_{\e_2}^N\w\right)\nonumber\\
    &+e^{-4\lambda}\s{\D_{\e_i}\e_j}{\e_2}\left(\left(\s{\D_{\e_1}\w}{\e_2}+\s{\D_{\e_2}\w}{\e_1}\right)\D^N_{\e_1}\w+2\s{\D_{\e_2}\w}{\e_2}\D_{\e_2}^N\w\right)\nonumber\\
    &=e^{-4\lambda}\s{\D_{\e_i}\e_j}{\e_1}\left(e^{2\lambda}\D^N_{\D_{\e_1}^{\top}\w}\w+\s{\D_{\e_1}\w}{\e_1}\D_{\e_1}^{N}\w+\s{\D_{\e_2}\w}{\e_1}\D_{\e_2}^N\w\right)\nonumber\\
    &+e^{-4\lambda}\s{\D_{\e_i}\e_j}{\e_2}\left(e^{2\lambda}\D^N_{\D_{\e_2}^{\top}\w}\w+\s{\D_{\e_1}\w}{\e_2}\D^N_{\e_1}\w+\s{\D_{\e_2}\w}{\e_2}\D_{\e_2}^{\perp}\w\right)\nonumber\\
    &=\D^N_{\D_{\D_{\e_i}^{\top}\e_j}^{\top}\w}\w+e^{-2\lambda}\s{\D_{\e_1}\w}{\D_{\e_i}^{\top}\e_j}\D_{\e_1}^N\w+e^{-2\lambda}\s{\D_{\e_2}\w}{\D_{\e_i}^{\top}\e_j}\D_{\e_2}^{N}\w.
\end{align}
Therefore, we have by \eqref{new_der4}, \eqref{new_der41}, \eqref{new_der42}, and \eqref{new_der43} 
\begin{align}\label{new_der44}
    &\left(\D_{\frac{d}{dt}}^N\D_{\frac{d}{dt}}^N\D^N_{\D^{\top}_{\p{x_i}\phi_t}\p{x_j}\phi_t}\w_t\right)_{|t=0}=-2\,\D^N_{\D_{\D_{\e_i}^{\top}\e_j}^{\top}\w}\w+\D^N_{\D^{\top}\e_i\D_{\e_j}\w}\w+\mathscr{R}_{i,j}^0(\w,d\w)\nonumber\\
    &-e^{-2\lambda}\s{\D_{\e_1}\w}{\D_{\e_i}^{\top}\e_j}\D_{\e_1}^N\w-e^{-2\lambda}\s{\D_{\e_2}\w}{\D_{\e_i}^{\top}\e_j}\D_{\e_2}^N\w
    +e^{-2\lambda}\s{\D_{\e_i}\e_j}{\D_{\e_1}\w}\D_{\e_1}^N\w\nonumber\\
    &+e^{-2\lambda}\s{\D_{\e_i}\e_j}{\D_{\e_2}\w}\D_{\e_2}^{N}\w\nonumber\\
    &=-2\,\D^N_{\D_{\D_{\e_i}^{\top}\e_j}^{\top}\w}\w+\D^N_{\D^{\top}_{\e_i}\D_{\e_i}\w}\w+e^{-2\lambda}\s{\vec{\I}_{i,j}}{\D_{\e_1}^N\w}\D^N_{\e_1}\w+e^{-2\lambda}\s{\vec{\I}_{i,j}}{\D_{\e_2}^N\w}\D_{\e_2}^N\w+\mathscr{R}_{i,j}^0(\w,d\w).
\end{align}
Finally, we have by \eqref{new_der4} and \eqref{new_der44}
\begin{align}\label{new_der5}
	&\D_{\frac{d}{dt}}^N\,\pi_{\n_t}\left(R(\w_t,\p{x_i}\phi_t)\p{x_j}\phi_t\right)=\D_{\w}R(\w,\e_i)\e_j+R(\w,\D_{\e_i}\w)\e_j+R(\w,\e_i)\D_{\e_j}\w\nonumber\\
 &-\D_{\frac{d}{dt}}^N\left(R(\w_t,\p{x_i}\phi_t)\p{x_j}\phi_t\right)\nonumber\\
	&=\D_{\w}R(\w,\e_i)\e_j+R(\w,\D_{\e_i}\w)\e_j+R(\w,\e_i)\D_{\e_j}\w-e^{-2\lambda}\s{R(\w,\e_i)\e_j}{\e_1}\D_{\e_1}^{N}\w\nonumber\\
 &-e^{-2\lambda}\s{R(\w,\e_i)\e_j}{\e_2}\D_{\e_2}^{N}\w.
\end{align}
Thanks to \eqref{new_der5}, \eqref{new_der4}, and \eqref{new_der5}, we deduce that
\begin{align}\label{new_der5_bis}
	&\D_{\frac{d}{dt}}^N\D_{\frac{d}{dt}}^N\vec{\I}(\p{x_i}\phi_t,\p{x_j}\phi_t)=-\D_{\D^{\top}_{\e_i}\D_{\e_j}\w}^N\w-\bigg(-2\,\D^N_{\D_{\D_{\e_i}^{\top}\e_j}^{\top}\w}\w+\D^N_{\D^{\top}\e_i\D_{\e_i}\w}\w\nonumber\\
 &+e^{-2\lambda}\s{\vec{\I}_{i,j}}{\D_{\e_1}^N\w}\D^N_{\e_1}\w+e^{-2\lambda}\s{\vec{\I}_{i,j}}{\D_{\e_2}^N\w}\D_{\e_2}^N\w\bigg)
 +\mathscr{R}_{i,j}(\w,d\w)\nonumber\\
 &=-2\,\D^N_{\D^{\top}_{\e_i}\D_{\e_i}\w}\w+2\,\D^N_{\D^{\top}_{\D^{\top}_{\e_i}\e_j}\w}\w-e^{-2\lambda}\s{\vec{\I}_{i,j}}{\D_{\e_1}^N\w}\D_{\e_1}^N\w-e^{-2\lambda}\s{\vec{\I}_{i,j}}{\D_{\e_2}^N\w}\D^N_{\e_2}\w++\mathscr{R}_{i,j}(\w,d\w).
\end{align}
for some curvature component $\mathscr{R}_{i,j}$. Therefore, we have
\begin{align}\label{new_der6}
    &e^{-2\lambda}\left(\D_{\frac{d}{dt}}^N\D_{\frac{d}{dt}}^N\vec{\I}(\p{x_1}\phi_t,\p{x_1}\phi_t)+\D_{\frac{d}{dt}}^N\D_{\frac{d}{dt}}^N\vec{\I}(\p{x_2}\phi_t,\p{x_2}\phi_t)\right)_{|t=0}=-2\,\D^N_{(\Delta_g\w)^{\top}}\w+2\,\D^N_{\D^{\top}_{(\Delta_g\phi)^{\top}}\w}\w\nonumber\\
    &-\s{\H}{\D_{\e_1}^N\w}\D_{\e_1}^N\w-\s{\H}{\D_{\e_2}^N\w}\D_{\e_2}^N\w\nonumber\\
    &=-2\,\D^N_{(\Delta_g\w)^{\top}}\w-2\,e^{-2\lambda}\s{\H}{\D_{\e_1}^N\w}\D_{\e_1}^N\w-2\,e^{-2\lambda}\s{\H}{\D_{\e_2}^N\w}\D_{\e_2}^N\w
\end{align}
since $2\H=\Delta_g\phi$ is normal.

Now, recall that by \eqref{nb-1}, \eqref{nb0}, and \eqref{nb1} 
\begin{align*}
	&\frac{d}{dt}\left(g_{i,j}\right)_{|t=0}=\s{\D_{\e_i}\w}{\e_j}+\s{\D_{\e_j}\w}{\e_i}\\
	&\frac{d^2}{dt^2}\left(g_{i,j}\right)_{|t=0}=2\s{\D_{\e_i}\w}{\D_{\e_j}\w}-2\s{R(\w,\e_i)\e_j}{\w}\\
	&\frac{d}{dt}\det(g_t)_{|t=0}=2\s{d\phi}{d\w}_ge^{4\lambda}\\
	&\frac{d^2}{dt^2}\det(g_t)_{|t=0}=2\left(|d\w|_g^2+\s{d\phi}{d\w}_g^2-16|\partial\phi\totimes\partial\w|_{\mathrm{WP}}^2+\mathscr{R}(\w,\w)\right)e^{4\lambda}.
\end{align*}
Therefore, we have
\begin{align*}
	\frac{d}{dt}\frac{1}{\det(g_t)}&=-2\s{d\phi}{d\w}_ge^{-4\lambda}\\
	\frac{d^2}{dt^2}\frac{1}{\det(g_t)}&=-2\left(|d\w|_g^2+\s{d\phi}{d\w}_g^2-16|\partial\phi\totimes\partial\w|_{\mathrm{WP}}^2+\mathscr{R}(\w,\w)\right)e^{-4\lambda}+8\,\s{d\phi}{d\w}_g^2e^{-4\lambda}\\
	&=-2\left(|d\w|_g^2-3\s{d\phi}{d\w}_g^2-16|\partial\phi\totimes\partial\w|_{\mathrm{WP}}^2+\mathscr{R}(\w,\w)\right)e^{-4\lambda}
\end{align*}
Therefore, we have
\begin{align*}
	&\frac{d^2}{dt^2}g^{i,j}=\frac{d^2}{dt^2}\left((-1)^{i+j}\frac{g_{i+1,j+1}}{\det(g_t)}\right)=2(-1)^{i+j}e^{-4\lambda}\left(\s{\D_{\e_{i+1}}\w}{\D_{\e_{j+1}}\w}-\s{R(\w,\e_{i+1})\e_{j+1}}{\w}\right)\\
	&-4\s{d\phi}{d\w}_ge^{-4\lambda}(-1)^{i+j}\left(\s{\D_{\e_{i+1}}\w}{\e_{j+1}}+\s{\D_{\e_{j+1}}\w}{\e_{i+1}}\right)\\
	&-2\left(|d\w|_g^2-3\s{d\phi}{d\w}_g^2-16|\partial\phi\totimes\partial\w|_{\mathrm{WP}}^2+\mathscr{R}(\w,\w)\right)e^{-2\lambda}(-1)^{i+j} \delta_{i,j}.
\end{align*}
Since
\begin{align*}
	\D_{\frac{d}{dt}}^N\vec{\I}(\p{x_i}\phi_t,\p{x_j}\phi_t)=\D^2_{\e_i,\e_j}\w+\vec{\I}(\e_i,\D_{\e_j}^{\top}\w)+\pi_{\n}\left(R(\w,\e_i)\e_j\right),
\end{align*}
we get
\begin{align}\label{formula_der2_H}
	&\D_{\frac{d}{dt}}^N\D_{\frac{d}{dt}}^N\sum_{i,j=1}^2g^{i,j}\vec{\I}_{i,j}=e^{-2\lambda}\left(\D^{\perp}_{\frac{d}{dt}}\D_{\frac{d}{dt}}^{\perp}\vec{\I}(\p{x_1}\phi_t,\p{x_1}\phi_t)+\D^{\perp}_{\frac{d}{dt}}\D_{\frac{d}{dt}}^{\perp}\vec{\I}(\p{x_2}\phi_t,\p{x_2}\phi_t)\right)\nonumber\\
	&-2\sum_{i,j=1}^2\left(\s{\D_{\e_i}\w}{\e_j}+\s{\D_{\e_j}\w}{\e_i}\right)\left(\D^2_{\e_i,\e_j}\w+\vec{\I}(\e_i,\D_{\e_j}^{\top}\w)+\pi_{\n}\left(R(\w,\e_i)\e_j\right)\right)\nonumber\\
	&+\sum_{i,j=1}^2\bigg\{2(-1)^{i+j}e^{-4\lambda}\left(\s{\D_{\e_{i+1}}\w}{\D_{\e_{j+1}}\w}-\s{R(\w,\e_{i+1})\e_{j+1}}{\w}\right)\nonumber\\
	&-4\s{d\phi}{d\w}_ge^{-4\lambda}\left(\s{\D_{\e_{i+1}}\w}{\e_{j+1}}+\s{\D_{\e_{j+1}}\w}{\e_{i+1}}\right)\bigg\}\vec{\I}_{i,j}\nonumber\\
	&-2\left(|d\w|_g^2-3\s{d\phi}{d\w}_g^2-16|\partial\phi\totimes\partial\w|_{\mathrm{WP}}^2+\mathscr{R}(\w,\w)\right)e^{-2\lambda}\left(\vec{\I}_{1,1}+\vec{\I}_{2,2}\right).
\end{align}
Since $\H=\dfrac{1}{2}e^{-2\lambda}\left(\vec{\I}_{1,1}+\vec{\I}_{2,2}\right)$, we deduce that
\begin{align}\label{formula_der2_H2}
	&\D_{\frac{d}{dt}}^N\D_{\frac{d}{dt}}^N\sum_{i,j=1}^2g^{i,j}\vec{\I}_{i,j}=e^{-2\lambda}\left(\D^{\perp}_{\frac{d}{dt}}\D_{\frac{d}{dt}}^{\perp}\vec{\I}(\p{x_1}\phi_t,\p{x_1}\phi_t)+\D^{\perp}_{\frac{d}{dt}}\D_{\frac{d}{dt}}^{\perp}\vec{\I}(\p{x_2}\phi_t,\p{x_2}\phi_t)\right)\nonumber\\
	&-2\,e^{-4\lambda}\sum_{i,j=1}^2\left(\s{\D_{\e_i}\w}{\e_j}+\s{\D_{\e_j}\w}{\e_i}\right)\left(\D^2_{\e_i,\e_j}\w+\vec{\I}(\e_i,\D_{\e_j}^{\top}\w)+\pi_{\n}\left(R(\w,\e_i)\e_j\right)\right)\nonumber\\
	&+\sum_{i,j=1}^2\bigg\{2(-1)^{i+j}e^{-4\lambda}\left(\s{\D_{\e_{i+1}}\w}{\D_{\e_{j+1}}\w}-\s{R(\w,\e_{i+1})\e_{j+1}}{\w}\right)\nonumber\\
	&-4\s{d\phi}{d\w}_ge^{-4\lambda}(-1)^{i+j}\left(\s{\D_{\e_{i+1}}\w}{\e_{j+1}}+\s{\D_{\e_{j+1}}\w}{\e_{i+1}}\right)\bigg\}\vec{\I}_{i,j}\nonumber\\
	&-4\left(|d\w|_g^2-3\s{d\phi}{d\w}_g^2-16|\partial\phi\totimes\partial\w|_{\mathrm{WP}}^2+\mathscr{R}(\w,\w)\right)\H.
\end{align}
First compute
\begin{align}\label{lemme_complexe1}
	\left\{\begin{alignedat}{1}
		&\s{\D_{\e_1}\w}{\e_1}=\s{\D_{\e_z}\w+\D_{\e_{\z}}\w}{\e_z+\e_{\z}}=\frac{1}{2}e^{2\lambda}\s{d\phi}{d\w}_g+2\,\Re\left(\partial\phi\totimes\partial\w\right)\\
		&\s{\D_{\e_2}\w}{\e_2}=\s{i(\D_{\e_z}-\D_{\e_{\z}})\w}{i(\e_z-\e_{\z})}=\frac{1}{2}e^{2\lambda}\s{d\phi}{d\w}_g-2\,\Re\left(\partial\phi\totimes\partial\w\right)\\
		&\s{\D_{\e_1}\w}{\e_2}+\s{\D_{\e_2}\w}{\e_1}=i\s{\D_{\e_z}\w+\D_{\e_{\z}}\w}{\e_z-\e_{\z}}+i\s{\D_{\e_{z}}\w-\D_{\e_{\z}}\w}{\e_z+\e_{\z}}\\
		&=2\,i\left(\s{\D_{\e_z}\w}{\e_z}-\s{\D_{\e_{\z}}\w}{\e_{\z}}\right)=-4\,\Im\left(\partial\phi\totimes\partial\w\right).
	\end{alignedat}\right.
\end{align}
On the other hand,
\begin{align}\label{lemme_complexe2}
	\left\{\begin{alignedat}{1}
		&e^{-2\lambda}\vec{\I}_{1,1}=e^{-2\lambda}\vec{\I}(\e_z+\e_{\z},\e_z+\e_{\z})=\H+\,\Re(\H_0)\\
		&e^{-2\lambda}\vec{\I}_{2,2}=\H-\Re\left(\H_0\right)\\
		&e^{-2\lambda}\vec{\I}_{1,2}=-\Im(\H_0).
	\end{alignedat}\right.
\end{align}
Therefore, we have
\begin{align}\label{nder2_1}
	&e^{-4\lambda}\sum_{i,j=1}^2(-1)^{i+j}\left(\s{\D_{\e_{i+1}}\w}{\e_{j+1}}+\s{\D_{\e_{j+1}}\w}{\e_{i+1}}\right)\vec{\I}_{i,j}=\left(\s{d\phi}{d\w}_g-4\,\Re\left(g^{-1}\otimes\partial\phi\totimes\partial\w\right)\right)\nonumber\\
 &\times \left(\H+\Re(\H_0)\right)+\left(\s{d\phi}{d\w}_g+4\,\Re\left(g^{-1}\otimes\partial\phi\totimes\partial\w\right)\right)\left(\H-\Re(\H_0)\right)\nonumber\\
	&-8\,\Im\left(g^{-1}\otimes\partial\phi\totimes\partial\w\right)\Im(\H_0)\nonumber\\
	&=2\s{d\phi}{d\w}_g\H-8\,\Re\left(g^{-1}\otimes\partial\phi\totimes\partial\w\right)\Re(\H_0)-8\,\Im\left(g^{-1}\otimes\partial\phi\totimes\partial\w\right)\Im(\H_0)\nonumber\\
	&=2\s{d\phi}{d\w}_g\H-8\,\s{\partial\phi\totimes\partial\w}{\h_0}_{\mathrm{WP}}.
\end{align}
Then, we have
\begin{align}\label{lemme_complexe3}
	\left\{\begin{alignedat}{1}
		\s{\D_{\e_1}\w}{\D_{\e_1}\w}&=\frac{1}{2}e^{2\lambda}|d\w|_g^2+2\,\Re\left(\partial\w\totimes\partial\w\right)\\
		\s{\D_{\e_2}\w}{\D_{\e_2}\w}&=\frac{1}{2}e^{2\lambda}|d\w|_g^2-2\,\Re\left(\partial\w\totimes\partial\w\right)\\
		\s{\D_{\e_1}\w}{\D_{\e_2}\w}&=-2\,\Im\left(\partial\w\totimes\partial\w\right).
	\end{alignedat}\right.
\end{align}
Therefore, we have
\begin{align}\label{nder2_2}
	&e^{-4\lambda}\sum_{i,j=1}^2(-1)^{i+j}\s{\D_{\e_{i+1}}\w}{\D_{\e_{j+1}}\w}\vec{\I}_{i,j}=\left(\frac{1}{2}|d\w|_g^2-2\,\Re\left(g^{-1}\otimes\partial\w\totimes\partial\w\right)\right)\left(\H+\Re(\H_0)\right)\nonumber\\
	&+\left(\frac{1}{2}|d\w|_g^2+2\,\Re\left(g^{-1}\otimes\partial\w\totimes\partial\w\right)\right)\left(\H-\Re(\H_0)\right)\nonumber\\
	&-4\,\Im(g^{-1}\otimes \partial\w\totimes\partial\w)\Im(\H_0)=|d\w|_g^2\H-4\,\s{\partial\w\totimes\partial\w}{\h_0}_{\mathrm{WP}}.
\end{align}
Then, we have
\begin{align}\label{lemme_complexe4}
	\left\{\begin{alignedat}{1}
		\D^{2}_{\e_1,\e_1}\w&=2\,\Re\left(\D^{2}_{\e_z,\e_{\z}}\w\right)+2\,\Re\left(\D^2_{\e_z,\e_z}\w\right)\\
		&=\frac{1}{2}e^{2\lambda}\Delta_g^N\w+2\,\Re\left(\D^2_{\e_z,\e_z}\w\right)\\
		\D^2_{\e_2,\e_2}\w&=\frac{1}{2}e^{2\lambda}\Delta_g^N\w-2\,\Re\left(\D^2_{\e_z,\e_z}\w\right)\\
		\D^2_{\e_1,\e_2}\w+\D^2_{\e_2,\e_1}\w&=-4\,\Im\left(\D^2_{\e_z,\e_z}\w\right),
	\end{alignedat}\right.
\end{align}
which implies by \eqref{lemme_complexe1} and \eqref{lemme_complexe4} that
\begin{align}\label{nder2_3}
	&e^{-4\lambda}\sum_{i,j=1}^2\left(\s{\D_{\e_i}\w}{\e_j}+\s{\D_{\e_j}\w}{\e_i}\right)\D^2_{\e_i,\e_j}\w\nonumber\\
	&=2\left(\frac{1}{2}\s{d\phi}{d\w}_g+2\,\Re\left(g^{-1}\otimes\partial\phi\totimes\partial\w\right)\right)\left(\frac{1}{2}\Delta_g^N\w+2\,\Re\left(e^{-2\lambda}\D^2_{\e_z,\e_z}\w\right)\right)\nonumber\\
	&+2\left(\frac{1}{2}\s{d\phi}{d\w}_g-2\,\Re\left(g^{-1}\otimes\partial\phi\totimes\partial\w\right)\right)\left(\frac{1}{2}\Delta_g^N\w-2\,\Re\left(e^{-2\lambda}\D^2_{\e_z,\e_z}\w\right)\right)\nonumber\\
	&+16\,\Im\left(g^{-1}\otimes\partial\phi\totimes\partial\w\right)\Im\left(e^{-2\lambda}\D^2_{\e_z,\e_z}\w\right)
	=\s{d\phi}{d\w}_g\Delta_g^N\w+16\bs{\D^2_{\partial,\partial}\w}{\partial\phi\totimes\partial\w}_{\mathrm{WP}}.
\end{align}
Since
\begin{align*}
	&\vec{\I}(\e_1,\D_{\e_2}^{\top}\w)=e^{-2\lambda}\s{\D_{\e_2}\w}{\e_1}\vec{\I}_{1,1}+e^{-2\lambda}\s{\D_{\e_2}\w}{\e_2}\vec{\I}_{1,2}\\
	&\vec{\I}(\e_2,\D_{\e_1}^{\top}\w)=e^{-2\lambda}\s{\D_{\e_1}\w}{\e_1}\vec{\I}_{1,2}+e^{-2\lambda}\s{\D_{\e_1}\w}{\e_2}\vec{\I}_{2,2}
\end{align*}
Then, we compute by \eqref{lemme_complexe1} and \eqref{lemme_complexe2}
\begin{align}\label{nder2_4part1}
	&e^{-4\lambda}\sum_{i,j=1}^2\left(\s{\D_{\e_i}\w}{\e_j}+\s{\D_{\e_j}\w}{\e_i}\right)\vec{\I}(\e_i,\D_{\e_j}^{\top}\w)=2\,e^{-2\lambda}\s{\D_{\e_1}\w}{\e_1}\left(e^{-4\lambda}\s{\D_{\e_1}\w}{\e_1}\vec{\I}_{1,1}\right.\nonumber\\
 &\left.+e^{-4\lambda}\s{\D_{\e_1}\w}{\e_2}\vec{\I}_{1,2}\right)
	+2\,e^{-2\lambda}\s{\D_{\e_2}\w}{\e_2}\left(e^{-4\lambda}\s{\D_{\e_2}\w}{\e_1}\vec{\I}_{1,2}+e^{-4\lambda}\s{\D_{\e_2}\w}{\e_2}\vec{\I}_{2,2}\right)\nonumber\\
	&+e^{-2\lambda}\left(\s{\D_{\e_1}\w}{\e_2}+\s{\D_{\e_2}\w}{\e_1}\right)\left(e^{-4\lambda}\left(\s{\D_{\e_1}\w}{\e_1}+\s{\D_{\e_2}\w}{\e_2}\right)\vec{\I}_{1,2}+e^{-4\lambda}\s{\D_{\e_2}\w}{\e_1}\vec{\I}_{1,1}\right.\nonumber\\
 &\left.+e^{-4\lambda}\s{\D_{\e_1}\w}{\e_2}\vec{\I}_{2,2}\right)\nonumber\\
    &=e^{-4\lambda}\bigg\{2\s{\D_{\e_1}\w}{\e_1}^2+\s{\D_{\e_2}\w}{\e_1}\left(\s{\D_{\e_1}\w}{\e_2}+\s{\D_{\e_2}\w}{\e_1}\right)\bigg\}e^{-2\lambda}\vec{\I}_{1,1}\nonumber\\
    &+e^{-4\lambda}\bigg\{2\s{\D_{\e_2}\w}{\e_2}^2+\s{\D_{\e_1}\w}{\e_2}\left(\s{\D_{\e_1}\w}{\e_2}+\s{\D_{\e_2}\w}{\e_1}\right)\bigg\}e^{-2\lambda}\vec{\I}_{2,2}\nonumber\\
    &+e^{-4\lambda}\bigg\{2\s{\D_{\e_1}\w}{\e_1}\s{\D_{\e_1}\w}{\e_2}+2\s{\D_{\e_2}\w}{\e_2}\s{\D_{\e_2}\w}{\e_1}\nonumber\\
    &+\left(\s{\D_{\e_1}\w}{\e_2}+\s{\D_{\e_2}\w}{\e_1}\right)\left(\s{\D_{\e_1}\w}{\e_1}+\s{\D_{\e_2}\w}{\e_2}\right)\bigg\}e^{-2\lambda}\vec{\I}_{1,2}.
\end{align}
Now, recall that by definition (\cite[$2.147$ p.$115$]{helein}), for all smooth functions $a,b:\R^2\rightarrow \R^n$
\begin{align}\label{jac1}
    \{a,b\}=\p{x_1}a\cdot\p{x_2}b-\p{x_1}a\cdot \p{x_1}b=\D^{\perp}a\cdot \D b,
\end{align}
where $\D^{\perp}=(-\p{x_2},\p{x_1})$. On the other hand, since $\p{z}=\dfrac{1}{2}\left(\p{x_1}-i\,\p{x_2}\right)$, we have
\begin{align}\label{jac2}
    -4\,\Im\left(\p{z}a\cdot \p{z}b\right)=-\Im\left(\left(\p{x_1}a-i\,\p{x_2}a\right)\cdot \left(\p{x_1}b-i\,\p{x_2}b\right)\right)=\p{x_1}a\cdot\p{x_2}b+\p{x_2}a\cdot \p{x_2}b.
\end{align}
Therefore, we get by \eqref{jac1} and \eqref{jac2}
\begin{align}\label{jac3}
\left\{\begin{alignedat}{1}
    &\p{x_1}a\cdot \p{x_2}b=\frac{1}{2}\{a,b\}-2\,\Im\left(\p{z}a\cdot\p{z}b\right)\\
    &\p{x_2}a\cdot\p{x_1}b=-\frac{1}{2}\{a,b\}-2\,\Im\left(\p{z}a\cdot\p{z}b\right).
    \end{alignedat}\right.
\end{align}
Therefore, we deduce by \eqref{jac3} that
\begin{align}\label{jac4}
    \left\{\begin{alignedat}{1}
        &\s{\D_{\e_1}\w}{\e_2}=-\frac{1}{2}\{\phi,\w\}-2\,\Im\left(\partial \phi\totimes\partial \w\right)\\
        &\s{\D_{\e_2}\w}{\e_1}=\frac{1}{2}\{\phi,\w\}-2\,\Im\left(\partial\phi\totimes\partial\w\right).
    \end{alignedat}\right.
\end{align}
Therefore, we have by \eqref{nder2_4part1} and \eqref{jac4}
\begin{align}\label{nder2_4part2}
    &e^{-4\lambda}\bigg\{2\s{\D_{\e_1}\w}{\e_1}^2+\s{\D_{\e_2}\w}{\e_1}\left(\s{\D_{\e_1}\w}{\e_2}+\s{\D_{\e_2}\w}{\e_1}\right)\bigg\}e^{-2\lambda}\vec{\I}_{1,1}\nonumber\\
    &+e^{-4\lambda}\bigg\{2\s{\D_{\e_2}\w}{\e_2}^2+\s{\D_{\e_1}\w}{\e_2}\left(\s{\D_{\e_1}\w}{\e_2}+\s{\D_{\e_2}\w}{\e_1}\right)\bigg\}e^{-2\lambda}\vec{\I}_{2,2}\nonumber\\
    &+e^{-4\lambda}\bigg\{2\s{\D_{\e_1}\w}{\e_1}\s{\D_{\e_1}\w}{\e_2}+2\s{\D_{\e_2}\w}{\e_2}\s{\D_{\e_2}\w}{\e_1}\nonumber\\
    &+\left(\s{\D_{\e_1}\w}{\e_2}+\s{\D_{\e_2}\w}{\e_1}\right)\left(\s{\D_{\e_1}\w}{\e_1}+\s{\D_{\e_2}\w}{\e_2}\right)\bigg\}e^{-2\lambda}\vec{\I}_{1,2}\nonumber\\
    &=\bigg\{2\left(\frac{1}{2}\s{d\phi}{d\w}_g+2\,\Re\left(g^{-1}\otimes\partial\phi\totimes\partial\w\right)\right)^2\nonumber\\
    &-4\left(\frac{1}{2}\{\phi,\w\}_g-2\,\Im\left(g^{-1}\otimes\partial\phi\totimes\partial\w\right)\right)\Im\left(g^{-1}\otimes\partial\phi\totimes\partial\w\right)\bigg\}\left(\H+\Re\left(\H_0\right)\right)\nonumber\\
    &+\bigg\{2\left(\frac{1}{2}\s{d\phi}{d\w}_g-2\,\Re\left(g^{-1}\otimes\partial\phi\totimes\partial\w\right)\right)^2\nonumber\\
    &-4\left(-\frac{1}{2}\{\phi,\w\}_g-2\,\Im\left(g^{-1}\otimes\partial\phi\totimes\partial\w\right)\right)\Im\left(g^{-1}\otimes\partial\phi\totimes\partial\w\right)\bigg\}\left(\H-\Re\left(\H_0\right)\right)\nonumber\\
    &-\bigg\{2\left(\frac{1}{2}\s{d\phi}{d\w}_g+2\,\Re\left(g^{-1}\otimes\partial\phi\totimes\partial\w\right)\right)\left(-\frac{1}{2}\{\phi,\w\}_g-2\,\Im\left(g^{-1}\otimes\partial\phi\totimes\partial\w\right)\right)\nonumber\\
    &+2\left(\frac{1}{2}\s{d\phi}{d\w}_g-2\,\Re\left(g^{-1}\otimes\partial\phi\totimes\partial\w\right)\right)\left(\frac{1}{2}\{\phi,\w\}_g-2\,\Im\left(g^{-1}\otimes\partial\phi\totimes\partial\w\right)\right)\nonumber\\
    &-4\,\s{d\phi}{d\w}_g\Im\left(g^{-1}\otimes\partial\phi\totimes\partial\w\right)\bigg\}\Im(\H_0)\nonumber\\
    &=\left(\s{d\phi}{d\w}_g^2+16|\partial\phi\totimes\partial\w|_{\mathrm{WP}}^2\right)\H\nonumber\\
    &+\bigg\{8\s{d\phi}{d\w}_g\,\Re\left(g^{-1}\otimes\partial\phi\totimes\partial\w\right)-4\{\phi,\w\}_g\,\Im\left(g^{-1}\otimes\partial\phi\totimes\partial\w\right)\bigg\}\Re(\H_0)\nonumber\\
    &+\bigg\{8\s{d\phi}{d\w}_g\,\Im\left(g^{-1}\otimes\partial\phi\totimes\partial\w\right)+4\{\phi,\w\}_g\,\Re\left(g^{-1}\otimes\partial\phi\totimes\partial\w\right)\bigg\}\Im(\H_0)\nonumber\\
    &=\left(\s{d\phi}{d\w}_g^2+16|\partial\phi\totimes\partial\w|_{\mathrm{WP}}^2\right)\H+8\s{d\phi}{d\w}_g\,\s{\partial\phi\totimes\partial\w}{\h_0}_{\mathrm{WP}}-4\,\{\phi,\w\}_g\s{i\,\partial\phi\totimes\partial\w}{\h_0}_{\mathrm{WP}},
\end{align}
where we used the formula
\begin{align*}
    -\Re(a)\cdot\Im(b)+\Im(a)\cdot\Re(b)=\frac{1}{4i}\Big\{-(a+\bar{a})\cdot(b-\bar{b})+(a-\bar{a})\cdot (b+\bar{b})\Big\}=\Im\left(a\cdot\bar{b}\right),
\end{align*}
so that
\begin{align*}
    &-\Im\left(g^{-1}\otimes\partial\phi\totimes\partial\w\right)\Re(\H_0)+\Re\left(g^{-1}\otimes\partial\phi\totimes\partial\w\right)\Im(\H_0)=\Im\left(g^{-2}\otimes \left(\partial\phi\totimes\partial\w\right)\otimes\bar{\h_0}\right)\\
    &=-\Re\left(g^{-2}\otimes \left(i\,\partial\phi\totimes\partial\w\right)\otimes\bar{\h_0}\right)
    =\s{i\,\partial\phi\totimes\partial\w}{\h_0}_{\mathrm{WP}}
\end{align*}
since $\Im(a)=-\,\Re(i\,a)$ for all $a\in \C^n$. Therefore, we deduce by \eqref{nder2_4part1}  and \eqref{nder2_4part2} that
\begin{align}\label{nder2_4}
    &e^{-4\lambda}\sum_{i,j=1}^2\left(\s{\D_{\e_i}\w}{\e_j}+\s{\D_{\e_j}\w}{\e_i}\right)\vec{\I}(\e_i,\D_{\e_j}^{\top}\w)=\left(\s{d\phi}{d\w}_g^2+16|\partial\phi\totimes\partial\w|_{\mathrm{WP}}^2\right)\H\nonumber\\
    &+8\s{d\phi}{d\w}_g\,\s{\partial\phi\totimes\partial\w}{\h_0}_{\mathrm{WP}}-4\,\{\phi,\w\}_g\s{i\,\partial\phi\totimes\partial\w}{\h_0}_{\mathrm{WP}}.
\end{align}
Therefore, we deduce by \eqref{formula_der2_H}, \eqref{new_der6}, \eqref{nder2_1}, \eqref{nder2_2}, \eqref{nder2_3}, and \eqref{nder2_4} that
\begin{align}
    &2\,\left(\D_{\frac{d}{dt}}^N\D_{\frac{d}{dt}}^N\H_{g_t}\right)_{|t=0}=\D_{\frac{d}{dt}}^N\D_{\frac{d}{dt}}^N\left(\sum_{i,j=1}^2g^{i,j}\vec{\I}(\p{x_i}\phi_t,\p{x_j}\phi_t)\right)_{|t=0}\nonumber\\
    &=-2\,\D^N_{(\Delta_g\w)^{\top}}\w-2\,e^{-2\lambda}\s{\H}{\D_{\e_1}^N\w}\D_{\e_1}^N\w-2\,e^{-2\lambda}\s{\H}{\D_{\e_2}^N\w}\D_{\e_2}^N\w\nonumber\\
    &-2\left(\s{d\phi}{d\w}_g\Delta_g^N\w+16\bs{\D^2_{\partial,\partial}\w}{\partial\phi\totimes\partial\w}_{\mathrm{WP}}\right)\nonumber\\
    &-2\left(\left(\s{d\phi}{d\w}_g^2+16|\partial\phi\totimes\partial\w|_{\mathrm{WP}}^2\right)\H
    +8\s{d\phi}{d\w}_g\,\s{\partial\phi\totimes\partial\w}{\h_0}_{\mathrm{WP}}
    -4\,\{\phi,\w\}_g\s{i\,\partial\phi\totimes\partial\w}{\h_0}_{\mathrm{WP}}\right)\nonumber\\
    &+2\left(|d\w|_g^2\H-4\,\s{\partial\w\totimes\partial\w}{\h_0}_{\mathrm{WP}}\right)
    -4\s{d\phi}{d\w}_g\left(2\s{d\phi}{d\w}_g\H-8\,\s{\partial\phi\totimes\partial\w}{\h_0}_{\mathrm{WP}}\right)\nonumber\\
    &-4\left(|d\w|_g^2-3\s{d\phi}{d\w}_g^2-16|\partial\phi\totimes\partial\w|_{\mathrm{WP}}^2\right)\H+\mathscr{R}_2(\w,d\w)\nonumber\\
    &=-2\s{d\phi}{d\w}_g\Delta_g^N\w-32\s{\D^2_{\partial,\partial}\w}{\partial\phi\totimes\partial\w}_{\mathrm{WP}}-2\,\D^N_{(\Delta_g\w)^{\top}}\w\nonumber\\
    &-2\,e^{-2\lambda}\s{\H}{\D_{\e_1}^N\H}\D_{\e_1}^N\w-2\,e^{-2\lambda}\s{\H}{\D_{\e_2}^N\w}\D_{\e_2}^N\w-8\s{\partial\w\totimes\partial\w}{\h_0}_{\mathrm{WP}}+16\s{d\phi}{d\w}_g\s{\partial\phi\totimes\partial\w}{\h_0}_{\mathrm{WP}}\nonumber\\
    &+8\,\{\phi,\w\}_g\s{i\,\partial\phi\totimes\partial\w}{\h_0}_{\mathrm{WP}}+\left(-2|d\w|_g^2+2\s{d\phi}{d\w}_g^2+32|\partial\phi\totimes\partial\w|^2_{\mathrm{WP}}\right)\H+\mathscr{R}_2(\w,d\w)
\end{align}
for some curvature operator $\mathscr{R}$. Therefore, we have
\begin{align}\label{end_result_d21}
    &2\s{\D_{\frac{d}{dt}}^N\D_{\frac{d}{dt}}^N\H_{g_t}}{\H_{g_t}}_{|t=0}=-2\s{d\phi}{d\w}_g\bs{\Delta_g^N\w}{\H}-32\s{\s{\D^2_{\partial,\partial}\w}{\H}}{\partial\phi\totimes\partial\w}_{\mathrm{WP}}\nonumber\\
    &-2\s{\s{d\phi}{\Delta_g\w}}{\s{d\w}{\H}}_g
    -2|\s{d\w}{\H}|_g^2-8\s{\partial\w\totimes\partial\w}{\s{\H}{\h_0}}_{\mathrm{WP}}+16\s{d\phi}{d\w}_g\s{\partial\phi\totimes\partial\w}{\s{\H}{\h_0}}_{\mathrm{WP}}\nonumber\\
    &+8\,\{\phi,\w\}_g\s{i\,\partial\phi\totimes\partial\w}{\s{\H}{\h_0}}_{\mathrm{WP}}
    +\left(-2|d\w|_g^2+2\s{d\phi}{d\w}_g^2+32|\partial\phi\totimes\partial\w|_{\mathrm{WP}}^2\right)|\H|^2+\s{\mathscr{R}_2(\w,d\w)}{\H}.
\end{align}
Since
\begin{align}\label{end_result_d22}
    &2\left(\D_{\frac{d}{dt}}^N\H_{g_t}\right)_{|t=0}=\D_{\frac{d}{dt}}^N\left(\sum_{i,j=1}^2g^{i,j}\vec{\I}(\p{x_i}\phi_t,\p{x_j}\phi_t)\right)\nonumber\\
    &=-e^{-4\lambda}\sum_{i,j=1}^2\left(\s{\D_{\e_i}\w}{\e_j}+\s{\D_{\e_j}\w}{\e_i}\right)\vec{\I}_{i,j}+e^{-2\lambda}\left(\D_{\frac{d}{dt}}^{N}\vec{\I}(\p{x_1}\phi_t,\p{x_1}\phi_t)+\D_{\frac{d}{dt}}^{N}\vec{\I}(\p{x_2}\phi_t,\p{x_2}\phi_t)\right)\nonumber\\
    &=-e^{-4\lambda}\sum_{i,j=1}^2\left(\s{\D_{\e_i}\w}{\e_j}+\s{\D_{\e_j}\w}{\e_i}\right)\vec{\I}_{i,j}+e^{-2\lambda}\left(\D^{2}_{\e_1,\e_1}\w+\vec{\I}(\e_1,\D_{\e_1}^{\top}\w\right)\nonumber\\
    &+\D^2_{\e_2,\e_2}\w+\vec{\I}(\e_2,\D_{\e_2}^{\top}\w)+\pi_{\n}\left(R(\w,\e_1)\e_1+R\left(\w,\e_2)\e_2\right)\right)\nonumber\\
    &=-e^{-4\lambda}\sum_{i,j=1}^2\left(\s{\D_{\e_i}\w}{\e_j}+\s{\D_{\e_j}\w}{\e_i}\right)\vec{\I}_{i,j}+\Delta_g^N\w\nonumber\\
    &+e^{-2\lambda}\left(\s{\D_{\e_1}\w}{\e_1}\vec{\I}_{1,1}+\left(\s{\D_ {\e_1}\w}{\e_2}+\s{\D_{\e_2}\w}{\e_1}\right)\vec{\I}_{1,2}+\s{\D_{\e_2}\w}{\e_2}\vec{\I}_{2,2}\right)+\mathscr{R}(\w)\nonumber\\
    &=\Delta_g^N\w-\frac{1}{2}e^{-4\lambda}\sum_{i,j=1}^2\left(\s{\D_{\e_i}\w}{\e_j}+\s{\D_{\e_j}\w}{\e_i}\right)\vec{\I}_{i,j}+\mathscr{R}(\w)=\Delta_g^N\w+\mathscr{A}(\w)+\mathscr{R}(\w),
\end{align}
where $\mathscr{A}$ is the Simons operator, and where we used the identities:
\begin{align*}
    &g^{i,j}_{|t=0}=e^{-2\lambda}\delta_{i,j}\\
    &\frac{d}{dt}g^{i,j}_{|t=0}=-e^{-4\lambda}\left(\s{\D_{\e_i}\w}{\e_j}+\s{\D_{\e_j}\w}{\e_i}\right)\\
    &\D_{\frac{d}{dt}}^N\vec{\I}(\p{x_i}\phi_t,\p{x_j}\phi_t)_{|t=0}=\D_{\e_i,\e_j}^N+\vec{\I}(\e_i,\D_{\e_j}^{\top}\w)+\pi_{\n}\left(R(\w,\e_i)\e_j\right).
\end{align*}
Notice that thanks to \eqref{lemme_complexe1} and \eqref{lemme_complexe2}
\begin{align*}
    &e^{-4\lambda}\sum_{i,j=1}^2\left(\s{\D_{\e_i}\w}{\e_j}+\s{\D_{\e_j}\w}{\e_i}\right)\vec{\I}_{i,j}=\left(\s{d\phi}{d\w}_g+4\,\Re\left(\partial\phi\totimes\partial\w\right)\right)\left(\H+\Re(\H_0)\right)\\
    &+\left(\s{d\phi}{d\w}_g-4\,\Re\left(\partial\phi\totimes\partial\w\right)\right)\left(\H-\Re(\H_0)\right)+8\,\Im\left(g^{-1}\otimes\partial\phi\totimes\partial\w\right)\Im(\H_0)\\
    &=2\s{d\phi}{d\w}_g\H+8\s{\partial\phi\totimes\partial\w}{\h_0}_{\mathrm{WP}},
\end{align*}
which shows that
\begin{align}\label{simons}
    \mathscr{A}(\w)=-\s{d\phi}{d\w}_g\H-4\s{\partial\phi\totimes\partial\w}{\h_0}_{\mathrm{WP}}.
\end{align}
Thanks to \eqref{end_result_d21} and \eqref{end_result_d22}, we deduce that
\begin{align}\label{end_result_d23}
    &\frac{d^2}{dt^2}|\H_{g_t}|^2_{|t=0}=\left(2\left|\D_{\frac{d}{dt}}^N\H_{g_t}\right|^2+2\bs{\D_{\frac{d}{dt}}^N\D_{\frac{d}{dt}}^N\H_{g_t}}{\H_{g_t}}\right)_{|t=0}\nonumber\\
    &=\frac{1}{2}\left|\Delta_g^N+\mathscr{A}(\w)+\mathscr{R}(\w)\right|^2-2\s{d\phi}{d\w}_g\bs{\Delta_g^N\w}{\H}-32\s{\s{\D^2_{\partial,\partial}\w}{\H}}{\partial\phi\totimes\partial\w}_{\mathrm{WP}}\nonumber\\
    &-2\s{\s{d\phi}{\Delta_g\w}}{\s{d\w}{\H}}_g
    -2|\s{d\w}{\H}|_g^2-8\s{\partial\w\totimes\partial\w}{\s{\H}{\h_0}}_{\mathrm{WP}}+16\s{d\phi}{d\w}_g\s{\partial\phi\totimes\partial\w}{\s{\H}{\h_0}}_{\mathrm{WP}}\nonumber\\
    &+8\,\{\phi,\w\}_g\s{i\,\partial\phi\totimes\partial\w}{\s{\H}{\h_0}}_{\mathrm{WP}}
    +\left(-2|d\w|_g^2+2\s{d\phi}{d\w}_g^2+32|\partial\phi\totimes\partial\w|_{\mathrm{WP}}^2\right)|\H|^2+\s{\mathscr{R}_2(\w,d\w)}{\H}.
\end{align}
Therefore, we finally deduce that
\begin{align}\label{end_result_d24}
    &\frac{d^2}{dt^2}\left(|\H_{g_t}|^2d\mathrm{vol}_{g_t}\right)_{|t=0}=\left(\frac{d^2}{dt^2}|\H_{g_t}|^2\right)^2d\vg+2\,\frac{d}{dt}\left(|\H_{g_t}|^2\right)_{|t=0}\frac{d}{dt}\left(d\mathrm{vol}_{g_t}\right)_{|t=0}+|\H|^2\frac{d^2}{dt^2}\left(d\mathrm{vol}_{g_t}\right)_{|t=0}\nonumber\\
    &=\bigg\{\frac{1}{2}\left|\Delta_g^N\w+\mathscr{A}(\w)+\mathscr{R}(\w)\right|^2-\colorcancel{2\s{d\phi}{d\w}_g\s{\Delta_g^N\w}{\H}}{red}-32\s{\s{\D^2_{\partial,\partial}\w}{\H}}{\partial\phi\totimes\partial\w}_{\mathrm{WP}}\nonumber\\
    &-2\s{\s{d\phi}{\Delta_g\w}}{\s{d\w}{\H}}_g
    -2|\s{d\w}{\H}|_g^2-8\s{\partial\w\totimes\partial\w}{\s{\H}{\h_0}}_{\mathrm{WP}}+16\s{d\phi}{d\w}_g\s{\partial\phi\totimes\partial\w}{\s{\H}{\h_0}}_{\mathrm{WP}}\nonumber\\
    &+8\,\{\phi,\w\}_g\s{i\,\partial\phi\totimes\partial\w}{\s{\H}{\h_0}}_{\mathrm{WP}}
    +\left(-2|d\w|_g^2+2\s{d\phi}{d\w}_g^2+32|\partial\phi\totimes\partial\w|_{\mathrm{WP}}^2\right)|\H|^2+\s{\mathscr{R}_2(\w,d\w)}{\H}\nonumber\\
    &+2\s{d\phi}{d\w}_g\s{\colorcancel{\Delta_g^N\w}{red}+\mathscr{A}(\w)+\mathscr{R}(\w)}{\H}+\left(|d\w|_g^2-16|\partial\phi\totimes\partial\w|_{\mathrm{WP}}^2\right)|\H|^2\bigg\}d\vg.
\end{align}
Thanks to \eqref{simons}, we deduce that
\begin{align*}
    2\s{d\phi}{d\w}_g\s{\mathscr{A}(\w)}{\H}=-2\s{\phi}{d\w}_g^2|\H|^2-8\s{d\phi}{d\w}_g\s{\partial\phi\totimes\partial\w}{\s{\H}{\h_0}}_{\mathrm{WP}},
\end{align*}
which shows that 
\begin{align}\label{end_result_d25}
    &\frac{d^2}{dt^2}\left(|\H_{g_t}|^2d\mathrm{vol}_{g_t}\right)_{|t=0}=\bigg\{\frac{1}{2}\left|\Delta_g^N\w+\mathscr{A}(\w)+\mathscr{R}(\w)\right|^2-32\s{\s{\D^2_{\partial,\partial}\w}{\H}}{\partial\phi\totimes\partial\w}_{\mathrm{WP}}\nonumber\\
    &-2\s{\s{d\phi}{\Delta_g\w}}{\s{d\w}{\H}}_g
    -2|\s{d\w}{\H}|_g^2-8\s{\partial\w\totimes\partial\w}{\s{\H}{\h_0}}_{\mathrm{WP}}+8\s{d\phi}{d\w}_g\s{\partial\phi\totimes\partial\w}{\s{\H}{\h_0}}_{\mathrm{WP}}\nonumber\\
    &+8\,\{\phi,\w\}_g\s{i\,\partial\phi\totimes\partial\w}{\s{\H}{\h_0}}_{\mathrm{WP}}
    -\left(|d\w|_g^2-16|\partial\phi\totimes\partial\w|_{\mathrm{WP}}^2\right)|\H|^2\nonumber\\
    &+\s{\mathscr{R}_2(\w,d\w)+2\s{d\phi}{d\w}_g\,\mathscr{R}(\w)}{\H}\bigg\}d\vg.
\end{align}
Finally, we deduce the following result.
\begin{theorem}
    Let $\phi:\Sigma\rightarrow (M^n,h)$ be a Willmore immersion. Then, for all variation $\w:\Sigma\rightarrow TM^m$, we have
    \begin{align}\label{d2_willmore}
    &    Q_{\phi}(\w)=D^2W(\phi)(\w,\w)=\int_{\Sigma}\bigg\{\frac{1}{2}\left|\Delta_g^N\w+\mathscr{A}(\w)+\mathscr{R}(\w)\right|^2-32\s{\s{\D^2_{\partial,\partial}\w}{\H}}{\partial\phi\totimes\partial\w}_{\mathrm{WP}}\nonumber\\
    &-2\s{\s{d\phi}{\Delta_g\w}}{\s{d\w}{\H}}_g
    -2|\s{d\w}{\H}|_g^2-8\s{\partial\w\totimes\partial\w}{\s{\H}{\h_0}}_{\mathrm{WP}}+8\s{d\phi}{d\w}_g\s{\partial\phi\totimes\partial\w}{\s{\H}{\h_0}}_{\mathrm{WP}}\nonumber\\
    &+8\,\{\phi,\w\}_g\s{i\,\partial\phi\totimes\partial\w}{\s{\H}{\h_0}}_{\mathrm{WP}}
    -\left(|d\w|_g^2-16|\partial\phi\totimes\partial\w|_{\mathrm{WP}}^2\right)|\H|^2\nonumber\\
    &+\s{\mathscr{R}_3(\w,d\w)}{\H}\bigg\}d\vg,
    \end{align}
    where $\mathscr{R}_3$ is a curvature operator such that
    \begin{align*}
        |\mathscr{R}_3(\w,d\w)|\leq C\left(|\D R||\w|^2+|R|\left(|d\w|_g+|\vec{\I}|_g|\w|\right)|\w|\right)
    \end{align*}
    for some universal constant $C<\infty$.
\end{theorem}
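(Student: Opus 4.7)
The plan is to assemble the theorem from the pointwise identity \eqref{end_result_d25} by integrating over $\Sigma$. Since $\phi$ is Willmore, $DW(\phi)=0$, hence
\begin{align*}
Q_\phi(\w) = \frac{d^2}{dt^2}W(\phi_t)_{|t=0} = \int_\Sigma \frac{d^2}{dt^2}\left(|\H_{g_t}|^2 d\mathrm{vol}_{g_t}\right)_{|t=0},
\end{align*}
and this is independent of $\D_{d/dt}\frac{d}{dt}\phi_t|_{t=0}$. We may therefore impose $\D_{d/dt}\frac{d}{dt}\phi_t\equiv 0$, as in every calculation above.

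The four ingredients are already in place. The first derivative $\D^N_{d/dt}\H_{g_t}=\tfrac{1}{2}(\Delta^N_g\w+\mathscr{A}(\w)+\mathscr{R}(\w))$ from \eqref{end_result_d22} produces the leading term $\tfrac{1}{2}|\Delta^N_g\w+\mathscr{A}(\w)+\mathscr{R}(\w)|^2$ of \eqref{d2_willmore} through $|\D^N_{d/dt}\H_{g_t}|^2$; the second derivative $\s{\D^N_{d/dt}\D^N_{d/dt}\H_{g_t}}{\H_{g_t}}$ is computed in \eqref{end_result_d21}; and the first and second derivatives of $d\vg$ are \eqref{nb-1} and \eqref{nb2_bis}. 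Combining them through the Leibniz rule $(FG)''_{|t=0} = F''G + 2F'G' + FG''$ with $F = |\H_{g_t}|^2$ and $G = d\mathrm{vol}_{g_t}$ yields \eqref{end_result_d24}. The crucial simplification is a single cancellation: the Simons identity \eqref{simons} gives $2\s{d\phi}{d\w}_g\s{\mathscr{A}(\w)}{\H} = -2\s{d\phi}{d\w}_g^2|\H|^2 - 8\s{d\phi}{d\w}_g\s{\partial\phi\totimes\partial\w}{\s{\H}{\h_0}}_{\mathrm{WP}}$, whose first summand annihilates the term struck through in red in \eqref{end_result_d24} and whose second reshuffles the Weil--Petersson contribution, leaving precisely \eqref{end_result_d25}.

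The remaining task is to repackage all curvature-valued summands into a single operator $\mathscr{R}_3(\w,d\w)$. One collects $\s{\mathscr{R}_2(\w,d\w)}{\H}$, $2\s{d\phi}{d\w}_g\s{\mathscr{R}(\w)}{\H}$, and the curvature cross-terms $\s{\Delta^N_g\w+\mathscr{A}(\w)}{\mathscr{R}(\w)} + \tfrac{1}{2}|\mathscr{R}(\w)|^2$ produced by expanding the square $\tfrac{1}{2}|\Delta^N_g\w+\mathscr{A}(\w)+\mathscr{R}(\w)|^2$; the $\Delta^N_g\w$ part is handled by integration by parts against the zero-order tensor $\mathscr{R}(\w)$. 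Inspection of \eqref{H_compact_first_derivative}, \eqref{new_der3}, \eqref{new_der4}, \eqref{new_der5}, and of the Ricci-type term in \eqref{nb1}, shows that each constituent is either of the form $\D_\w R(\w,\cdot)\cdot$, contributing $|\D R||\w|^2$, or of the form $R(\w,\e_i)X$ with $X\in\{\w,\e_j,\D_{\e_j}\w\}$, which is controlled by $|R|(|d\w|_g+|\vec{\I}|_g|\w|)|\w|$ after splitting $\D_{\e_i}\e_j=\vec{\I}_{i,j}+\D^\top_{\e_i}\e_j$.

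The main obstacle is bookkeeping rather than any new analytic idea: the non-curvature algebra is already finished in \eqref{end_result_d25}, so the argument reduces to tracing each occurrence of $R$ and $\D R$ through the chain \eqref{new_der3}--\eqref{nder2_4}, checking that they combine into a single $\mathscr{R}_3$ obeying the stated pointwise bound, and verifying that no $\D^2 R$ term is generated. A minor subtlety is that some curvature contractions enter through second-derivative expressions such as $\D^N_{\D^\top_{\e_i}\D_{\e_j}\w}\w$, so in order to avoid an apparent $|\D^2\w|$ factor in $\mathscr{R}_3$ one must absorb these normal-second-derivative contractions into the integration by parts of $|\Delta^N_g\w+\mathscr{A}(\w)+\mathscr{R}(\w)|^2$.
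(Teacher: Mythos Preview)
Your overall plan---integrate \eqref{end_result_d25} over $\Sigma$ and read off the formula---is exactly what the paper does (implicitly: the theorem is stated immediately after \eqref{end_result_d25} with no separate proof). But your third paragraph introduces a genuine error.

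You propose to expand the square $\tfrac{1}{2}|\Delta_g^N\w+\mathscr{A}(\w)+\mathscr{R}(\w)|^2$ and fold the cross-terms $\s{\Delta_g^N\w+\mathscr{A}(\w)}{\mathscr{R}(\w)}+\tfrac{1}{2}|\mathscr{R}(\w)|^2$ into $\mathscr{R}_3$. This is wrong on two counts. First, the theorem \emph{retains} the full square as its leading term---look at the first line of \eqref{d2_willmore}---so those cross-terms must stay where they are, not be moved into $\mathscr{R}_3$. Second, $\mathscr{R}_3$ is by construction paired with $\H$ in the formula (the last line reads $\s{\mathscr{R}_3(\w,d\w)}{\H}$), whereas the cross-terms you list are scalar quantities not of this form. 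The correct identification is simply
\[
\mathscr{R}_3(\w,d\w)=\mathscr{R}_2(\w,d\w)+2\s{d\phi}{d\w}_g\,\mathscr{R}(\w),
\]
read directly off the last line of \eqref{end_result_d25}. No integration by parts is needed anywhere.

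This also dissolves your ``minor subtlety'': the term $\D^N_{\D^\top_{\e_i}\D_{\e_j}\w}\w$ is not a curvature term at all. After contraction with $\H$ and summation it becomes $-2\s{\s{d\phi}{\Delta_g\w}}{\s{d\w}{\H}}_g$, which is one of the explicit non-curvature summands in \eqref{d2_willmore}. There is no hidden $|\D^2\w|$ in $\mathscr{R}_3$: both $\mathscr{R}_2$ (trace it through \eqref{new_der3}, \eqref{new_der44}, \eqref{new_der5}) and $\mathscr{R}(\w)$ involve only $\w$, $d\w$, $R$, $\D R$, and tangential projections that bring in $\vec{\I}$, giving the stated bound directly.
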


From now, on, we assume that $M^n=\R^n$ equipped with its standard metric now. We see that it is not obvious that the second derivative corresponds to the index of a self-adjoint operator. 
Notice that in conformal coordinates, there holds
\begin{align*}
    \D^2_{\partial,\partial}\w=\left(\D_{\p{z}\phi}^N\D_{\p{z}\phi}^N\w-2(\p{z}\lambda)\D_{\p{z}\phi}\w\right)dz^2=e^{2\lambda}\D_{\p{z}\phi}^N\left(e^{-2\lambda}\D_{\p{z}\phi}\w\right)dz^2,
\end{align*}
which implies that
\begin{align*}
    &\int_{\Sigma}\s{\s{\H}{\D^{2}_{\partial,\partial}\w}}{\partial\phi\totimes\partial\w}_{\mathrm{WP}}d\vg=\Re\left(\int_{\Sigma}\bs{\bs{\H}{\D_{\p{z}\phi}\left(e^{-2\lambda}\D_{\p{z}\phi}^N\w\right)}}{\s{\p{\z}\phi}{\D_{\p{\z}\phi}\w}}|dz|^2\right)\\
    &=-\Re\left(\int_{\Sigma}e^{-2\lambda}\s{\D_{\p{z}\phi}^N\w}{\D_{\p{z}\phi}^N\H}\s{\p{\z}\phi}{\D_{\p{\z}\phi}\w}|dz|^2\right)-\Re\left(\s{\D_{\p{z}\phi}\w}{\H}\s{e^{-2\lambda}\D_{\p{z}\phi}\p{\z}\phi}{\D_{\p{\z}\phi}\w}|dz|^2\right)\\
    &-\Re\left(\int_{\Sigma}\s{\D_{\p{z}\phi}^N\w}{\H}\s{\p{\z}\phi}{e^{-2\lambda}\D_{\p{z}\phi}\D_{\p{\z}\phi}\w}|dz|^2\right)\\
    &=-\int_{\Sigma}\s{\partial^N\w\totimes\partial^N\H}{\partial\phi\otimes\partial\w}_{\mathrm{WP}}d\vg-\frac{1}{8}\int_{\Sigma}|\s{d\w}{\H}|_g^2d\vg-\frac{1}{16}\int_{\Sigma}\s{\s{d\w}{\H}}{\s{d\phi}{\Delta_g\w}}_gd\vg.
\end{align*}
Therefore, we have
\begin{align*}
    &\int_{\Sigma}\left(-32\s{\s{\D^2_{\partial,\partial}\w}{\H}}{\partial\phi\totimes\partial\w}_{\mathrm{WP}}
    -2\s{\s{d\phi}{\Delta_g\w}}{\s{d\w}{\H}}_g\right)d\vg=\\
    &32\int_{\Sigma}\s{\partial^N\w\totimes\partial^N\H}{\partial\phi\totimes\partial\w}_{\mathrm{WP}}d\vg
    +4\int_{\Sigma}|\s{d\w}{\H}|_g^2d\vg,
\end{align*}
and the second derivative becomes
\begin{align}
    &Q_{\phi}(\w)=\int_{\Sigma}\bigg\{\frac{1}{2}\left|\Delta_g^N\w+\mathscr{A}(\w)\right|^2+2|\s{d\w}{\H}|_g^2-\left(|d\w|_g^2-16|\partial\phi\totimes\partial\w|_{\mathrm{WP}}^2\right)|\H|^2\nonumber\\
    &+32\s{\partial^N\w\totimes\partial^N\H}{\partial\phi\totimes\partial\w}_{\mathrm{WP}}
    -8\s{\partial\w\totimes\partial\w}{\s{\H}{\h_0}}_{\mathrm{WP}}+8\s{d\phi}{d\w}_g\s{\partial\phi\totimes\partial\w}{\s{\H}{\h_0}}_{\mathrm{WP}}\nonumber\\
    &+8\,\{\phi,\w\}_g\s{i\,\partial\phi\totimes\partial\w}{\s{\H}{\h_0}}_{\mathrm{WP}}
    \bigg\}d\vg.
\end{align}
We deduce that the following simpler expression holds.
\begin{theorem}
    Let $\phi:\Sigma\rightarrow\R^n$ be a smooth Willmore immersion. Then, for all $\vec{w}\in W^{2,2}(\Sigma,\R^n)$, we have
    \begin{align}\label{simpler_d2}
        &Q_{\phi}(\w)=\int_{\Sigma}\bigg\{\frac{1}{2}\left|\Delta_g^N\w+\mathscr{A}(\w)\right|^2+2|\s{d\w}{\H}|_g^2-\left(|d\w|_g^2-16|\partial\phi\totimes\partial\w|_{\mathrm{WP}}^2\right)|\H|^2\nonumber\\
    &+32\s{\partial^N\w\totimes\partial^N\H}{\partial\phi\totimes\partial\w}_{\mathrm{WP}}
    -8\s{\partial\w\totimes\partial\w}{\s{\H}{\h_0}}_{\mathrm{WP}}+8\s{d\phi}{d\w}_g\s{\partial\phi\totimes\partial\w}{\s{\H}{\h_0}}_{\mathrm{WP}}\nonumber\\
    &+8\,\{\phi,\w\}_g\s{i\,\partial\phi\totimes\partial\w}{\s{\H}{\h_0}}_{\mathrm{WP}}
    \bigg\}d\vg.
    \end{align}
    If $\w$ is assume to be normal, then
    \begin{align}\label{simpler_d3}
        &Q_{\phi}(\w)=\int_{\Sigma}\bigg\{\frac{1}{2}\left|\Delta_g^N\w+\mathscr{A}(\w)\right|^2+2|\s{d\w}{\H}|_g^2-\left(|d\w|_g^2-16|\partial\phi\totimes\partial\w|_{\mathrm{WP}}^2\right)|\H|^2\nonumber\\
    &+32\s{\partial^N\w\totimes\partial^N\H}{\partial\phi\totimes\partial\w}_{\mathrm{WP}}
    -8\s{\partial\w\totimes\partial\w}{\s{\H}{\h_0}}_{\mathrm{WP}}+8\s{d\phi}{d\w}_g\s{\partial\phi\totimes\partial\w}{\s{\H}{\h_0}}_{\mathrm{WP}}
    \bigg\}d\vg.
    \end{align}
\end{theorem}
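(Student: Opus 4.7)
The plan is to start from the general formula \eqref{d2_willmore} derived in the previous theorem and specialise to $M^n=\R^n$, in which case every curvature operator ($\mathscr{R}$, $\mathscr{R}_2$, $\mathscr{R}_3$) drops out. What remains differs from the announced expression \eqref{simpler_d2} by the third-order term $-32\s{\s{\D^2_{\partial,\partial}\w}{\H}}{\partial\phi\totimes\partial\w}_{\mathrm{WP}}$, by the cross term $-2\s{\s{d\phi}{\Delta_g\w}}{\s{d\w}{\H}}_g$, and by the sign of the $|\s{d\w}{\H}|^2_g$ coefficient ($-2$ instead of $+2$). The entire content of the theorem is the integration-by-parts identity reconciling these three differences; this is precisely the computation carried out in the paragraph immediately preceding the theorem statement.

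The main step is to rewrite $\D^2_{\partial,\partial}\w$ in divergence form. Working in a conformal chart with $g=e^{2\lambda}|dz|^2$, the identity
\begin{align*}
\D^2_{\partial,\partial}\w = e^{2\lambda}\,\D^N_{\p{z}\phi}\!\left(e^{-2\lambda}\,\D_{\p{z}\phi}\w\right)dz^2
\end{align*}
follows from the definition $\D^2_{X,Y}=\D^N_X\D^N_Y-\D^N_{\D^{\top}_XY}$ together with the elementary fact $e^{-2\lambda}\s{\D_{\p{z}\phi}\p{z}\phi}{\p{\z}\phi}=2\,\p{z}\lambda$. Pairing against $\H$ and against $e^{-2\lambda}\s{\p{\z}\phi}{\D_{\p{\z}\phi}\w}$, and using $d\vg=e^{2\lambda}|dz|^2$, one puts the integrand in a form to which Stokes' theorem applies on the closed surface $\Sigma$ (with no boundary contribution), transferring $\p{z}$ onto the remaining factor.

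Applying the Leibniz rule produces three contributions. Derivatives falling on $\H$ reassemble, via the definition of $\partial^N\H$, into $\int_\Sigma\s{\partial^N\w\totimes\partial^N\H}{\partial\phi\totimes\partial\w}_{\mathrm{WP}}d\vg$. Derivatives falling on $\p{\z}\phi$ produce $e^{-2\lambda}\D_{\p{z}\phi}\p{\z}\phi=\H$, yielding $\frac{1}{8}\int_\Sigma|\s{d\w}{\H}|^2_g\,d\vg$. Finally, derivatives falling on $\D_{\p{\z}\phi}\w$ produce $e^{-2\lambda}\D_{\p{z}\phi}\D_{\p{\z}\phi}\w=\tfrac{1}{4}\Delta_g\w$ (with no curvature correction, since we are in $\R^n$), yielding $\frac{1}{16}\int_\Sigma\s{\s{d\phi}{\Delta_g\w}}{\s{d\w}{\H}}_g\,d\vg$. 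Multiplying by $-32$ and substituting into \eqref{d2_willmore} eliminates the $\D^2_{\partial,\partial}\w$ term, cancels the cross term $-2\s{\s{d\phi}{\Delta_g\w}}{\s{d\w}{\H}}_g$ exactly, and flips $-2|\s{d\w}{\H}|^2_g$ into $+2|\s{d\w}{\H}|^2_g$, producing precisely \eqref{simpler_d2}.

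For the second statement \eqref{simpler_d3}, it suffices to verify $\{\phi,\w\}_g=0$ whenever $\w$ is normal. Differentiating $\s{\w}{\e_i}=0$ gives $\s{\p{x_j}\w}{\e_i}=-\s{\w}{\vec{\I}_{i,j}}$, and the symmetry $\vec{\I}_{i,j}=\vec{\I}_{j,i}$ then forces $\{\phi,\w\}=\s{\p{x_1}\phi}{\p{x_2}\w}-\s{\p{x_2}\phi}{\p{x_1}\w}=0$. The main obstacle in executing this plan is purely bookkeeping: one must accurately match the complex-bilinear contractions appearing after integration by parts to the Weil--Petersson pairings in \eqref{simpler_d2}, a manipulation sensitive to numerical factors and to the $\Re$-convention used in defining $\s{\,\cdot\,}{\,\cdot\,}_{\mathrm{WP}}$. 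No new analytic input beyond closedness of $\Sigma$ is required.
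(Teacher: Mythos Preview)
Your proposal is correct and follows essentially the same approach as the paper: specialise \eqref{d2_willmore} to $\R^n$, rewrite $\D^2_{\partial,\partial}\w$ in divergence form via $e^{2\lambda}\D^N_{\p{z}\phi}(e^{-2\lambda}\D_{\p{z}\phi}\w)$, integrate by parts to produce the three contributions with coefficients $-1$, $-\tfrac{1}{8}$, $-\tfrac{1}{16}$, and multiply by $-32$ to obtain \eqref{simpler_d2}; then use the symmetry $\vec{\I}_{1,2}=\vec{\I}_{2,1}$ to kill $\{\phi,\w\}_g$ for normal $\w$. The paper's computation is line-for-line the same.
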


\begin{theorem}\label{admissible_variations}
    Let $\phi:\Sigma\rightarrow \R^n$ be a branched Willmore immersion. Then, a variation $\w\in W^{2,2}(\Sigma,\R^n)$ such that $|d\w|_g\in L^{\infty}(\Sigma,d\mathrm{vol}_{g_0})$ is admissible if and only if
    \begin{align*}
        \Delta_g^N\w\in L^2(\Sigma,d\vg).
    \end{align*}
    Furthermore, for all such variations, the formula \eqref{simpler_d2} holds.
\end{theorem}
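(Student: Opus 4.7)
The plan is to establish the equivalence by bounding each term in \eqref{simpler_d2} under the two hypotheses $|d\w|_g\in L^{\infty}(\Sigma,d\mathrm{vol}_{g_0})$ and $\Delta_g^N\w\in L^2(\Sigma,d\vg)$, and then to justify the formula in the branched setting by an approximation around the branch points. The key observation is that the Simons term $\mathscr{A}(\w)$ automatically sits in $L^2(\Sigma,d\vg)$: by \eqref{simons}, $\mathscr{A}(\w)=-\s{d\phi}{d\w}_g\H-4\s{\partial\phi\totimes\partial\w}{\h_0}_{\mathrm{WP}}$, so using the pointwise bound $|\partial\phi\totimes\partial\w|_{\mathrm{WP}}\leq C|d\w|_g$ in a conformal chart one obtains $|\mathscr{A}(\w)|\leq C|d\w|_g(|\H|+|\h_0|_{\mathrm{WP}})$. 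Since $\phi$ has finite Willmore energy and only finitely many branch points, $|\H|^2+|\h_0|_{\mathrm{WP}}^2\in L^1(\Sigma,d\vg)$, and the $L^{\infty}$ bound on $|d\w|_g$ then gives $\mathscr{A}(\w)\in L^2(\Sigma,d\vg)$. Consequently $\tfrac12|\Delta_g^N\w+\mathscr{A}(\w)|^2\in L^1(\Sigma,d\vg)$ if and only if $\Delta_g^N\w\in L^2(\Sigma,d\vg)$, which will prove the claimed equivalence once the remaining terms of \eqref{simpler_d2} are shown to be integrable.

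Those remaining terms all admit majorants of the form $C|d\w|_g^2(|\H|^2+|\h_0|_{\mathrm{WP}}^2)$ via the systematic bounds $|\partial\phi\totimes\partial\w|_{\mathrm{WP}}\leq C|d\w|_g$, $|\{\phi,\w\}_g|\leq C|d\w|_g$, and $|\s{\H}{\h_0}|_{\mathrm{WP}}\leq |\H|\,|\h_0|_{\mathrm{WP}}$, all of which are integrable. The subtlest term is $\s{\partial^N\w\totimes\partial^N\H}{\partial\phi\totimes\partial\w}_{\mathrm{WP}}$, because $\partial^N\H$ need not be pointwise bounded near branch points; for this one I would integrate by parts to transfer one derivative from $\H$ to $\partial\phi\totimes\partial\w$, then invoke the Willmore equation $\Delta_g^N\H=2|\H|^2\H-\mathscr{A}(\H)$ (which holds weakly across branch points, see \cite{pointwise}) to reduce the resulting expression to already controlled quantities.

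For the formula \eqref{simpler_d2} itself, the pointwise derivation carried out above is already valid on the smooth locus of $\phi$; to extend it to a branched Willmore immersion with branch set $\{p_1,\ldots,p_N\}$, I would introduce logarithmic cut-offs $\chi_{\epsilon}$ vanishing on $\bigcup_i B(p_i,\epsilon)$, equal to $1$ outside $\bigcup_i B(p_i,\sqrt{\epsilon})$, with $\int_{\Sigma}|d\chi_{\epsilon}|_g^2\,d\vg\to 0$ as $\epsilon\to 0$. The smooth formula applies to the compactly supported variation $\chi_{\epsilon}\w$ away from the branch set, and dominated convergence (with the uniform integrable majorants from the previous paragraph) will deliver \eqref{simpler_d2} in the limit. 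The hardest step is to show that the boundary contributions produced by the integrations by parts in the transition annuli $B(p_i,\sqrt{\epsilon})\setminus B(p_i,\epsilon)$ vanish as $\epsilon\to 0$; for this I would use the fine asymptotic expansions of $\H$, $\h_0$ and $\partial^N\H$ at branch points established in \cite{pointwise}, combined with the $L^{\infty}$ bound on $|d\w|_g$ measured in the possibly degenerate metric $g$, which together kill the logarithmically small boundary terms.
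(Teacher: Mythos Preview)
Your overall architecture agrees with the paper: both of you isolate the principal term $\tfrac12|\Delta_g^N\w+\mathscr{A}(\w)|^2$, show via \eqref{simons} that $\mathscr{A}(\w)\in L^2(\Sigma,d\vg)$ under the hypothesis $|d\w|_g\in L^\infty$, and reduce the equivalence to the integrability of the remaining terms. For those remaining terms your majorants $C|d\w|_g^2(|\H|^2+|\h_0|_{\mathrm{WP}}^2)$ are exactly what the paper uses (though it states this in one line).

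The divergence is in the treatment of the $\partial^N\H$ term, and there your proposal has a genuine gap. The paper does \emph{not} integrate by parts: it invokes the explicit branch-point asymptotics from \cite{beriviere}, namely $e^{\lambda}=e^{\mu}|z|^{m-1}$ with $\mu$ bounded and $\H=\Re(\vec{C}_0\,z^{1-\alpha})+O(|z|^{1-\alpha}\log|z|)$ with $\alpha\leq m-1$, from which one reads off directly that
\[
\bigl|\s{\partial^N\w\totimes\partial^N\H}{\partial\phi\totimes\partial\w}_{\mathrm{WP}}\bigr|\,e^{2\lambda}
\;\leq\; C\,e^{\lambda}|\D\H|\;\leq\;\frac{C}{|z|},
\]
which is integrable in two dimensions. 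Your alternative---integrate by parts and then apply the Willmore equation $\Delta_g^N\H=2|\H|^2\H-\mathscr{A}(\H)$---does not work as stated. A single integration by parts on this term (the paper carries it out explicitly in the passage from \eqref{d2_willmore} to \eqref{simpler_d2}) produces $\D^2_{\partial,\partial}\w$ and $\Delta_g\w$, not $\Delta_g^N\H$, so the Willmore equation never enters. Worse, those new terms contain factors of $e^{-2\lambda}\D^2\w$, and at a branch point of order $m\geq 2$ the weight $e^{-2\lambda}\sim|z|^{2(1-m)}$ is not compensated by the mere $W^{2,2}$ regularity of $\w$; you would be trading one uncontrolled quantity for another.

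A smaller point: the logarithmic-cutoff argument you sketch for ``the formula \eqref{simpler_d2} holds'' is more than the paper asks. In the paper ``admissible'' means precisely that the integral in \eqref{simpler_d2} converges, so once each summand is shown to be in $L^1(\Sigma,d\vg)$ the theorem is complete; no approximation across the branch locus is needed.
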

\begin{proof}
    Thanks to the analysis of \cite{beriviere}, at a branched point of order $m$, there exists a bounded function $\mu$ such that
    \begin{align*}
        e^{\lambda}=e^{\mu}|z|^{m-1},
    \end{align*}
    and there exists an integer $\alpha\in \Z$ such that $\alpha\leq m-1$ and $\vec{C_0}\in \C^n\setminus\ens{0}$ such that
    \begin{align*}
        \H=\Re\left(\frac{\vec{C}_0}{z^{\alpha-1}}\right)+O(|z|^{1-\alpha}\log|z|).
    \end{align*}
    Thanks to the Codazzi equation, assuming for simplicity that $\mu(0)=0$, we deduce that for some $\vec{A}_1\in \C^n$
    \begin{align*}
        \h_0=-\frac{(\alpha-1)}{2m}\vec{C}_0\,z^{m-\alpha-1}\z^m+\vec{A}_1z^{m-1}+O(|z|^{m}\log|z|).
    \end{align*}
    In particular, we have
    \begin{align*}
        |\s{\partial^N\w\totimes\partial\H}{\partial\phi\totimes\partial\w}_{\mathrm{WP}}e^{2\lambda}|\leq Ce^{\lambda}|\D\H|\leq \frac{C}{|z|}
    \end{align*}
    which is integrable, while the other terms can all be trivially bounded but
    \begin{align*}
        \frac{1}{2}\int_{\Sigma}\left|\Delta_g^N\w+\mathscr{A}(\w)\right|^2d\vg.
    \end{align*}
    Since the identity \eqref{simons} implies that
    \begin{align*}
        \int_{\Sigma}|\mathscr{A}(\w)|^2d\vg\leq C\int_{\Sigma}|A|^2|d\w|_g^2d\vg<\infty,
    \end{align*}
    the conclusion of the theorem follows.
\end{proof}

\subsection{Formula for normal variations}

For a normal variation, we have
\begin{align*}
    \mathscr{A}(\w)=e^{-4\lambda}\sum_{i,j=1}^2\s{\vec{\I}_{i,j}}{\w}\vec{\I}_{i,j}, 
\end{align*}
and
\begin{align*}
    &\s{d\phi}{d\w}_g=-2\s{\H}{\w}\\
    &\partial\phi\totimes\partial\w=-\frac{1}{2}\s{\h_0}{\w}\\
    &\{\phi,\w\}=\D^{\perp}\phi\cdot\D\w=-\p{x_2}\phi\cdot\D_{\e_1}\w+\p{x_1}\phi\cdot\D_{\e_2}\w=\s{\vec{\I}_{1,2}}{\w}-\s{\vec{\I}_{2,1}}{\w}=0.
\end{align*}
Therefore, the second derivative becomes
\begin{align*}
    &Q_{\phi}(\w)=\frac{1}{2}\int_{\Sigma}\left|\Delta_g^N\w+2\s{\w}{\H}\H+2\s{\s{\w}{\h_0}}{\h_0}_{\mathrm{WP}}\right|^2d\vg+2\int_{\Sigma}|\s{d\w}{\H}|_g^2d\vg\nonumber\\
    &-\int_{\Sigma}\left(|d\w|_g^2
    -4|\s{\h_0}{\w}|_{\mathrm{WP}}^2\right)|\H|^2d\vg
    -16\int_{\Sigma}\s{\partial^N\w\totimes\partial^N\H}{\s{\h_0}{\w}}_{\mathrm{WP}}d\vg\nonumber\\
    &-8\int_{\Sigma}\s{\partial\w\totimes\partial\w}{\s{\H}{\h_0}}_{\mathrm{WP}}d\vg
    +8\int_{\Sigma}\s{\H}{\w}\s{\s{\h_0}{\w}}{\s{\H}{\h_0}}_{\mathrm{WP}}d\vg.
\end{align*}
The first three components (and the fifth component) obviously induce a self-adjoint operator. 
We have
\begin{align*}
    &\int_{\Sigma}\s{\partial^N\w\totimes\partial^N\H}{\s{\h_0}{\w}}_{\mathrm{WP}}d\vg=\Re\int_{\Sigma}e^{-2\lambda}\bs{\bs{\D_{\p{z}\phi}\w}{\D_{\p{z}\phi}^N\H}}{\bs{\bar{\h_0}}{\w}}|dz|^2\\
    &=-\Re\int_{\Sigma}e^{-2\lambda}\bs{\bs{\w}{\left(\D^N_{\p{z}\phi}\D_{\p{z}\phi}^N-2(\p{z}\lambda)\D_{\p{z}\phi}^N\right)\H}}{\bs{\bar{\h_0}}{\w}}|dz|^2\\
    &-\Re\int_{\Sigma}e^{-2\lambda}\bs{\bs{\w}{\D_{\p{z}\phi}^N\H}}{\bs{\bar{\D_{\p{\z}\phi}^N\h_0}}{\w}}|dz|^2-\Re\int_{\Sigma}e^{-2\lambda}\bs{\bs{\w}{\D_{\p{z}\phi}^N\H}}{\bs{\bar{\h_0}}{\D_{\p{z}}^N\w}}|dz|^2\\
    &=-\int_{\Sigma}\s{\s{\D^2_{\partial,\partial}\H}{\w}}{\s{\h_0}{\w}}_{\mathrm{WP}}d\vg-\int_{\Sigma}|\s{\bar{\partial}\h_0}{\w}|_{\mathrm{WP}}^2d\vg-\int_{\Sigma}\s{\s{\partial^N\H}{\w}}{\bar{\partial}^N\w\totimes\h_0}_{\mathrm{WP}}d\vg,
\end{align*}
where we used the Codazzi identity $\bar{\partial}^N\h_0=g\otimes \partial^N\H$. Therefore, we have
\begin{align*}
    &\int_{\Sigma}\s{\partial^N\w\totimes \partial^N\H}{\s{\h_0}{\w}}_{\mathrm{WP}}d\vg=\frac{1}{2}\int_{\Sigma}\s{\partial^N\w\totimes \partial^N\H}{\s{\h_0}{\w}}_{\mathrm{WP}}d\vg\nonumber\\
    &-\frac{1}{2}\int_{\Sigma}\s{\s{\partial^N\H}{\w}}{\h_0\totimes \bar{\partial}^N\w}_{\mathrm{WP}}d\vg
    -\frac{1}{2}\int_{\Sigma}\s{\s{\D^2_{\partial,\partial}\H}{\w}}{\s{\h_0}{\w}}_{\mathrm{WP}}-\frac{1}{2}\int_{\Sigma}|\s{\bar{\partial}\h_0}{\w}|_{\mathrm{WP}}^2d\vg.
\end{align*}
Therefore, if we introduce the symmetric bilinear form $B_{\phi}$ on normal function in $W^{2,2}(\Sigma,\R^n)$ defined by 
\begin{align}\label{bilinear}
    &B_{\phi}(\vec{v},\vec{w})=\frac{1}{2}\int_{\Sigma}\s{\leb_g\vec{v}}{\leb_g\vec{w}}d\vg+2\int_{\Sigma}\s{\s{d\vec{v}}{\H}}{\s{d\vec{w}}{\H}}_gd\vg\nonumber\\
    &-\int_{\Sigma}\left(\s{d\vec{v}}{d\w}_g-4\s{\s{\h_0}{\vec{v}}}{\s{\h_0}{\vec{w}}}_{\mathrm{WP}}\right)|\H|^2d\vg-8\int_{\Sigma}\s{\partial^N\vec{v}\totimes\partial^N\H}{\s{\h_0}{\w}}_{\mathrm{WP}}d\vg\nonumber\\
    %&-4\int_{\Sigma}\s{\partial^N\vec{w}\totimes\partial^N\H}{\s{\h_0}{\vec{v}}}_{\mathrm{WP}}d\vg+4\int_{\Sigma}\s{\s{\partial^N\H}{\vec{v}}}{\h_0\totimes\bar{\partial}^N\vec{w}}_{\mathrm{WP}}d\vg\\
    %&+4\int_{\Sigma}\s{\s{\partial^N\H}{\vec{w}}}{\h_0\totimes\bar{\partial}^N\vec{v}}_{\mathrm{WP}}d\vg+4\int_{\Sigma}\s{\s{\D^2_{\partial,\partial}\H}{\vec{v}}}{\s{\h_0}{\vec{w}}}_{\mathrm{WP}}d\vg+4\int_{\Sigma}\s{\s{\D^2_{\partial,\partial}\H}{\vec{w}}}{\s{\h_0}{\vec{v}}}_{\mathrm{WP}}d\vg
    &-8\int_{\Sigma}\s{\partial^N\vec{w}\totimes\partial^N\H}{\s{\h_0}{\vec{v}}}_{\mathrm{WP}}d\vg-8\int_{\Sigma}\s{\partial\vec{v}\totimes\partial\vec{w}}{\s{\H}{\h_0}}_{\mathrm{WP}}d\vg\nonumber\\
    &+4\int_{\Sigma}\s{\H}{\vec{v}}\s{\s{\h_0}{\vec{w}}}{\s{\H}{\h_0}}_{\mathrm{WP}}d\vg+4\int_{\Sigma}\s{\H}{\vec{w}}\s{\s{\h_0}{\vec{v}}}{\s{\H}{\h_0}}_{\mathrm{WP}}d\vg,
\end{align}
then
\begin{align}\label{symmetric}
    B_{\phi}(\vec{w},\vec{w})=Q_{\phi}(\vec{w}),
\end{align}
and we deduce that provided that
\begin{align}\label{fourth_order_operator}
    &\mathcal{L}_g=\leb_g^2+4\,d^{\ast_g}\left(\s{d(\,\cdot\,)}{\H}\H\right)-2\,d^{\ast_g}\left(|\H|^2\,d(\,\cdot\,)\right)+8\s{\s{\h_0}{\,\cdot\,}}{\s{\h_0}{\,\cdot\,}}_{\mathrm{WP}}|\H|^2\nonumber\\
    &+16\,\ast_g\, d\,\Re\left(g^{-1}\otimes \s{\h_0}{\,\cdot\,}\otimes\bar{\partial}^N\H\right)
    -16\,\s{\partial^N(\,\cdot\,)\totimes\partial^N\H}{\h_0}_{\mathrm{WP}}+16\,\ast_g\,d\,\Re\left(g^{-1}\otimes\s{\H}{\h_0}\otimes\partial(\,\cdot\,)\right)\nonumber\\
    &+8\s{\s{\H}{\h_0}}{\s{\h_0}{\,\cdot\,}}_{\mathrm{WP}}\H
    +8\s{\H}{\,\cdot\,}\Re\left(g^{-2}\otimes \s{\H}{\bar{\h_0}}\h_0\right)
\end{align}
the following identity holds
\begin{align}\label{diag}
    Q_{\phi}(\vec{w})=\frac{1}{2}\int_{\Sigma}\s{\vec{w}}{\mathcal{L}_g\vec{w}\,}d\vg.
\end{align}
Furthermore, thanks to \eqref{bilinear} and \eqref{symmetric}, we deduce that $\mathcal{L}_g$ is a self-adjoint operator. In codimension $1$, taking $\vec{w}=u\,\n$ for some $u\in W^{2,2}(\Sigma)$, we get
\begin{align*}
    Q_{\phi}(u)=\frac{1}{2}\int_{\Sigma}u\,\mathcal{L}_gu\,d\vg,
\end{align*}
where
\begin{align}\label{operator1_normal}
    \mathcal{L}_g&=\leb_g^2+2\,d^{\ast_g}\left(H^2\,d(\,\cdot\,)\right)+16\,\ast_g\,d\,\Re\left(g^{-1}\otimes h_0\otimes \partial H\right)-16\s{\partial(\,\cdot\,)\otimes\partial H}{h_0}_{\mathrm{WP}}\nonumber\\
    &+16\,\ast_g\,d\,\Re\left(H\,g^{-1}\otimes h_0\otimes \partial(\,\cdot\,)\right)
    +24|H|^2|h_0|_{\mathrm{WP}}^2.
\end{align}
We can simplify this formula to
\begin{align}\label{operator2_normal}
\mathcal{L}_g&=\leb_g^2+2\,d^{\ast_g}\left(H^2\,d(\,\cdot\,)\right)+16\,\ast_g\,d\,\Re\left(g^{-1}\otimes h_0\otimes \partial \left(H\,\cdot\,\right)\right)-16\s{\partial(\,\cdot\,)\otimes\partial H}{h_0}_{\mathrm{WP}}
    +24|\H|^2|\h_0|_{\mathrm{WP}}^2.
\end{align}
Using the formula $\leb_g=\Delta_g+|A|^2$, we can also rewrite this operator as
\begin{align}\label{operator3_normal}
    \mathcal{L}_g&=\Delta_g^2+|A|^2\Delta_g+2\s{d|A|^2}{d(\,\cdot)}_g+2\,d^{\ast_g}\left(H^2\,d(\,\cdot\,)\right)+\left(|A|^4+\Delta_g|A|^2+24|H|^2|h_0|_{\mathrm{WP}}^2\right)\nonumber\\
    &+16\,\ast_g\,d\,\Re\left(g^{-1}\otimes h_0\otimes \partial \left(H\,\cdot\,\right)\right)-16\s{\partial(\,\cdot\,)\otimes\partial H}{h_0}_{\mathrm{WP}}.
\end{align}

\subsection{The Definition of the Morse Index}

\begin{defi}
Let $\Sigma$ be a closed Riemann surface, and $\phi:\Sigma\rightarrow \R^n$ be a branched Willmore immersion. Define the following Sobolev space
    \begin{align*}
        \mathscr{E}(\phi)=W^{2,2}(\Sigma,\R^n)\cap\ens{\vec{w}:|d\w|_g\in L^{\infty}(\Sigma),\;\, \leb_g\vec{w}\in L^2(\Sigma,d\vg)}.
    \end{align*}
    Define the index of $\phi$ by
    \begin{align*}
        \mathrm{Ind}_W(\phi)=\sup\ens{\dim(V): V\subset \mathscr{E}(\phi)\;\,\text{such that}\;\, Q_{\phi}(\vec{v})<0\;\,\text{for all}\;\, \vec{v}\in V\setminus\ens{0}},
    \end{align*}
    and the nullity of $\phi$ by 
    \begin{align*}
        \mathrm{Null}_W(\phi)=\sup\ens{\dim(V):V\subset \mathscr{E}(\phi)\;\, \text{such that}\;\, {Q_{\phi}}_{|V}=0}.
    \end{align*}
    Finally, the space of normal variations is defined by
    \begin{align*}
        \mathrm{Var}^N(\phi)=\mathscr{E}(\phi)\cap\ens{\w:\s{\w}{d\phi}=0}.
    \end{align*}
\end{defi}

\begin{rem}
    Thanks to Theorem \ref{admissible_variations}, we deduce that 
    \begin{align*}
        \mathscr{E}(\phi)=W^{2,2}(\Sigma,\R^n)\cap\ens{\vec{w}:|d\w|_g\in L^{\infty}(\Sigma),\;\, \Delta_g^N\vec{w}\in L^2(\Sigma,d\vg)}.
    \end{align*}
    Furthermore, notice that
    \begin{align*}
        \D_{\p{\z}}^N\w=\D_{\p{\z}}\w-\D^{\top}_{\p{\z}}\w=\D_{\p{\z}}\w-2e^{-2\lambda}\s{\D_{\p{\z}}\w}{\e_{\z}}\e_z-2e^{-2\lambda}\s{\D_{\p{\z}}\w}{\e_z}\e_{\z},
    \end{align*}
    which implies that
    \begin{align*}
        \Delta_g^N\w&=4\,e^{-2\lambda}\,\Re\left(\D_{\p{z}}^N\D_{\p{\z}}^N\w\right)=(\Delta_g\w)^N-8\,e^{-4\lambda}\Re\left(\s{\D_{\p{\z}}\w}{\e_{\z}}\vec{\I}(\e_z,\e_z)+\s{\D_{\p{\z}}\w}{\e_z}\vec{\I}(\e_z,\e_{\z})\right)\\
        &=(\Delta_g\w)^N-4\s{\partial\phi\totimes\partial\w}{\h_0}_{\mathrm{WP}}-\s{d\phi}{d\w}_g\H,
    \end{align*}
    which shows that 
    \begin{align*}
        \mathscr{E}(\phi)=W^{2,2}(\Sigma,\R^n)\cap\ens{\vec{w}:|d\w|_g\in L^{\infty}(\Sigma),\;\, (\Delta_g\vec{w})^N\in L^2(\Sigma,d\vg)}.
    \end{align*}
\end{rem}

\begin{theorem}\label{diagonalisation}
    Let $\Sigma$ be a closed Riemann surface, and $\phi:\Sigma\rightarrow \R^n$ be a smooth Willmore immersion, $g=\phi^{\ast}g_{\R^n}$, and 
    \begin{align*}
        &\mathcal{L}_g=\leb_g^2+4\,d^{\ast_g}\left(\s{d(\,\cdot\,)}{\H}\H\right)-2\,d^{\ast_g}\left(|\H|^2\,d(\,\cdot\,)\right)+8\s{\s{\h_0}{\,\cdot\,}}{\s{\h_0}{\,\cdot\,}}_{\mathrm{WP}}|\H|^2\nonumber\\
    &+16\,\ast_g\, d\,\Re\left(g^{-1}\otimes \s{\h_0}{\,\cdot\,}\otimes\bar{\partial}^N\H\right)
    -16\,\s{\partial^N(\,\cdot\,)\totimes\partial^N\H}{\h_0}_{\mathrm{WP}}+16\,\ast_g\,d\,\Re\left(g^{-1}\otimes\s{\H}{\h_0}\otimes\partial(\,\cdot\,)\right)\nonumber\\
    &+8\s{\s{\H}{\h_0}}{\s{\h_0}{\,\cdot\,}}_{\mathrm{WP}}\H
    +8\s{\H}{\,\cdot\,}\Re\left(g^{-2}\otimes \s{\H}{\bar{\h_0}}\h_0\right)
    \end{align*}
    be the operator acting on normal variations $\w \in \mathrm{Var}^N(\phi)$. For all $\lambda\in \R$, denote by $\mathscr{E}(\lambda)\subset \mathrm{Var}^N(\phi)\subset W^{2,2}(\Sigma,\R^n)$ the space of eigenfunctions associated to $\lambda$. Then, we define the index of $\phi$ relative to $\mathcal{L}$ by
    \begin{align*}
        \mathrm{Ind}_W^{\mathcal{L}}(\phi)=\dim\bigoplus_{\lambda<0}\mathscr{E}(\lambda),
    \end{align*}
    and
    \begin{align*}
        \mathrm{Null}_W^{\mathcal{L}}(\phi)=\mathrm{Ker}(\mathscr{L}).
    \end{align*}
    Then, the following identities hold true:
    \begin{align}\label{eigen_bound}
        \left\{\begin{alignedat}{1}
           \mathrm{Ind}_W(\phi)&=\mathrm{Ind}_W^{\mathcal{L}}(\phi)<\infty\\
           \mathrm{Null}_W(\phi)&=\mathrm{Null}_W^{\mathcal{L}}(\phi)<\infty.
        \end{alignedat}\right.
    \end{align}
\end{theorem}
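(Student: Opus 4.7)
My plan is to reduce the statement to the standard spectral theory of the fourth-order self-adjoint elliptic operator $\mathcal{L}_g$ acting on the smooth normal bundle of $\phi$, after first showing that tangential variations contribute trivially to the quadratic form $Q_\phi$.

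\emph{Step 1 (spectral theory of $\mathcal{L}_g$).} A look at \eqref{fourth_order_operator} shows that the leading term is $\leb_g^2 = (\Delta_g + |A|_g^2)^2$, whose principal symbol is $|\xi|_g^4\cdot\mathrm{Id}_{N\Sigma}$, while the remaining terms have differential order at most three. Hence $\mathcal{L}_g$ is a strongly elliptic operator acting on sections of the smooth normal bundle $N\Sigma$ of $\phi$, and self-adjointness is automatic from its construction through the symmetric polarization $B_\phi$ in \eqref{bilinear}--\eqref{symmetric}. Since $\Sigma$ is closed and $\phi$ is smooth (so that all coefficients of $\mathcal{L}_g$ are smooth), $(\mathcal{L}_g + C)^{-1}$ is a compact operator on $L^2(\Sigma, N\Sigma)$ for $C\gg 1$ by Rellich--Kondrachov, yielding a discrete real spectrum with finite multiplicities accumulating only at $+\infty$. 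In particular both $\mathrm{Ind}_W^{\mathcal{L}}(\phi)$ and $\mathrm{Null}_W^{\mathcal{L}}(\phi)$ are finite.

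\emph{Step 2 (tangential variations are inert).} The Willmore energy is invariant under the action of $\mathrm{Diff}(\Sigma)$, so $DW(\tilde\phi)(d\tilde\phi(X)) = 0$ for every smooth immersion $\tilde\phi$ and every smooth vector field $X$ on $\Sigma$. Differentiating this identity at the Willmore immersion $\phi$ along an admissible direction $\w \in \mathscr{E}(\phi)$ and using that $DW(\phi) \equiv 0$ on $\mathscr{E}(\phi)$ gives
\begin{align*}
    B_\phi(d\phi(X), \w) = 0 \qquad\text{for every}\;\,\w\in\mathscr{E}(\phi).
\end{align*}
Because $\phi$ is an unbranched smooth immersion, every $\w\in\mathscr{E}(\phi)$ admits a pointwise orthogonal decomposition $\w = \w^N + d\phi(X_\w)$ with $\w^N \in \mathrm{Var}^N(\phi)$ and $X_\w$ a $W^{2,2}$-vector field on $\Sigma$; this decomposition preserves $\mathscr{E}(\phi)$ by the formulas for $(\Delta_g\w)^N$ recalled in the remark following Theorem \ref{admissible_variations}. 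Expanding
\begin{align*}
    Q_\phi(\w) = Q_\phi(\w^N) + 2\,B_\phi(\w^N, d\phi(X_\w)) + Q_\phi(d\phi(X_\w)) = Q_\phi(\w^N)
\end{align*}
shows that $Q_\phi$ detects only the normal part of any admissible variation.

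\emph{Conclusion and main obstacle.} By Step 2, the variational index and nullity of $Q_\phi$ on $\mathscr{E}(\phi)$ coincide with those of its restriction to $\mathrm{Var}^N(\phi)$, which thanks to \eqref{diag} equals $\tfrac{1}{2}\int_\Sigma\s{\w}{\mathcal{L}_g\w}d\vg$. The Courant--Fischer min-max principle applied to the self-adjoint operator $\mathcal{L}_g$ from Step 1 then yields the identities \eqref{eigen_bound}, together with finiteness. The main technical obstacle lies in the rigorous justification of Step 2 at the Sobolev level: one must verify either that $B_\phi$ is continuous on $\mathscr{E}(\phi)$ in a topology in which smooth variations are dense (so that the vanishing identity passes to the closure), or alternatively establish $B_\phi(d\phi(X), \w) = 0$ directly by substituting the Willmore equation $\Delta_g^N\H - 2|\H|^2\H + \mathscr{A}(\H) = 0$ into the explicit bilinear form \eqref{bilinear} and performing the required integrations by parts — an algebraically heavy but conceptually routine calculation made tractable by the smoothness and unbranched hypotheses on $\phi$.
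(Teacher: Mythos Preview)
Your Step~1 parallels the paper's argument, though the paper is more hands-on: it derives an explicit coercivity bound $Q_\phi(u) \geq \tfrac{1}{8}\int_\Sigma(\Delta_g u)^2\,d\vg - C\int_\Sigma u^2\,d\vg$ via repeated integrations by parts (see \eqref{ineq_d2_final}) before invoking Lax--Milgram and Rellich--Kondrachov, whereas you appeal directly to strong ellipticity. Your Step~2 (diffeomorphism invariance forces $B_\phi(d\phi(X),\w)=0$) is a genuine addition absent from the paper's proof; the paper simply works with normal variations throughout---scalar $u$ in codimension one, a controlled trivialisation of the normal bundle in general---and never discusses the tangential component of a general $\w\in\mathscr{E}(\phi)$.

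There is, however, a real gap in your nullity conclusion. Your Step~2 establishes $Q_\phi(d\phi(X))=0$ for every smooth vector field $X$, so $Q_\phi$ vanishes identically on the infinite-dimensional space of tangential variations $\{d\phi(X):X\in\Gamma(T\Sigma)\}\subset\mathscr{E}(\phi)$. By the paper's own definition $\mathrm{Null}_W(\phi)=\sup\{\dim V: V\subset\mathscr{E}(\phi),\;Q_\phi|_V=0\}$, this would force $\mathrm{Null}_W(\phi)=\infty$, contradicting the finiteness you are trying to prove. The passage from ``$Q_\phi(\w)=Q_\phi(\w^N)$'' to ``the nullity on $\mathscr{E}(\phi)$ coincides with the nullity on $\mathrm{Var}^N(\phi)$'' is valid for the \emph{index} (the projection $\w\mapsto\w^N$ is injective on strictly negative subspaces, since a purely tangential $\w$ has $Q_\phi(\w)=0$), but it fails for the nullity precisely because tangential directions are null. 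The paper's proof avoids this issue by silently restricting everything to normal variations; your route actually exposes that the nullity statement only makes sense when $\mathrm{Null}_W$ is read as the nullity of $Q_\phi|_{\mathrm{Var}^N(\phi)}$. You should flag this explicitly rather than asserting that the two nullities agree.
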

\begin{proof}
    \textbf{Case 1: The Codimension $1$ Case.}
    
    \textbf{Step 1: Spectral decomposition of $\mathcal{L}$.}

    Let $f\in W^{2,2}(\Sigma)$ and consider the following equation on
    \begin{align*}
        \mathcal{L}u=f.
    \end{align*}
    Consider for all $\lambda>0$ the minimisation problem
    \begin{align}\label{min_eigenvalue}
        \inf\ens{Q_{\lambda}(u)=Q_{\phi}(u)+\frac{\lambda}{2}\int_{\Sigma}u^2\,d\vg-\frac{1}{2}\int_{\Sigma}f\,u\,d\vg: u\in W^{2,2}(\Sigma)}.
    \end{align}
    We have 
    \begin{align*}
        \partial\w=\partial(u\,\n)=\partial u\,\n+u\,\p{z}\n\,dz,
    \end{align*}
    which shows that
    \begin{align*}
        \partial\w\totimes\partial\w=(\partial u)^2+u^2\s{\p{z}\n}{\p{z}\n}dz^2.
    \end{align*}
    We have
    \begin{align*}
        \p{z}\n=2e^{-2\lambda}\s{\p{z}\n}{\e_{\z}}\e_z+2e^{-2\lambda}\s{\p{z}\n}{\e_z}\e_{\z}=-H\,\e_z-H_0\e_{\z}
    \end{align*}
    so that
    \begin{align*}
        \s{\p{z}\n}{\p{z}\n}=e^{2\lambda}H\,H_0,
    \end{align*}
    and
    \begin{align*}
        \partial\w\totimes\partial\w=(\partial u)^2+u^2\,H\,h_0.
    \end{align*}
    Finally, we have
    \begin{align*}
        \int_{\Sigma}\s{\partial \w\totimes\partial\w}{\s{\H}{\h_0}}_{\mathrm{WP}}d\vg=\int_{\Sigma}\s{\partial u\otimes\partial u}{h_0}_{\mathrm{WP}}H\,d\vg+\int_{\Sigma}H^2|h_0|_{\mathrm{WP}}^2u^2d\vg.
    \end{align*}
    Therefore, we get
    \begin{align*}
        Q_{\phi}(u)&=\frac{1}{2}\int_{\Sigma}\left(\Delta_gu+|A|^2u\right)^2d\vg+2\int_{\Sigma}|du|_g^2H^2d\vg
        -\int_{\Sigma}\left(|du|_g^2-4|h_0|_{\mathrm{WP}}^2u^2\right)H^2d\vg\\
        &-16\int_{\Sigma}\s{\partial u\otimes\partial H}{h_0}_{\mathrm{WP}}u\,d\vg
        -8\int_{\Sigma}\s{\partial u\otimes\partial u}{h_0}_{\mathrm{WP}}H\,d\vg-8\int_{\Sigma}H^2|h_0|_{\mathrm{WP}}^2u^2d\vg\\
        &+8\int_{\Sigma}H^2|h_0|_{\mathrm{WP}}^2u^2d\vg\\
        &=\frac{1}{2}\int_{\Sigma}(\Delta_gu+|A|^2u)^2d\vg+\int_{\Sigma}\left(|du|_g^2+4|h_0|_{\mathrm{WP}}^2u^2\right)H^2d\vg-8\int_{\Sigma}\s{\partial u\otimes \partial u}{h_0}_{\mathrm{WP}}H\,d\vg\\
        &-16\int_{\Sigma}\s{\partial u\otimes\partial H}{h_0}_{\mathrm{WP}}u\,d\vg.
    \end{align*}
    Using the inequality 
    \begin{align*}
        (a+b)^2\geq \frac{1}{2}a^2-b^2,
    \end{align*}
    we deduce that
    \begin{align}\label{ineq_d21}
        &Q_{\phi}(u)\geq \frac{1}{4}\int_{\Sigma}(\Delta_gu)^2d\vg+\int_{\Sigma}\left(|du|_g^2H^2+\left(4H^2|h_0|_{\mathrm{WP}}^2-|A|^4\right)u^2\right)\,d\vg\nonumber\\
        &-8\int_{\Sigma}\s{\partial u\otimes \partial u}{h_0}_{\mathrm{WP}}H\,d\vg-16\int_{\Sigma}\s{\partial u\otimes\partial H}{h_0}_{\mathrm{WP}}u\,d\vg\nonumber\\
        &\geq \frac{1}{4}\int_{\Sigma}(\Delta_gu)^2d\vg+\int_{\Sigma}\left(|du|_g^2H^2+\left(4H^2|h_0|_{\mathrm{WP}}^2-|A|^4\right)u^2\right)\,d\vg\nonumber\\
        &-2\int_{\Sigma}|du|_g^2|h_0|_{\mathrm{WP}}|H|d\vg-2\int_{\Sigma}|h_0|_{\mathrm{WP}}^2u^2d\vg-2\int_{\Sigma}|du|_g^2H^2d\vg-16\int_{\Sigma}\s{\partial u\otimes\partial H}{h_0}_{\mathrm{WP}}u\,d\vg\nonumber\\
        &\geq \frac{1}{4}\int_{\Sigma}(\Delta_gu)^2d\vg-2\int_{\Sigma}|du|_g^2|A|^2d\vg-2\int_{\Sigma}|A|^4u^2d\vg-16\int_{\Sigma}\s{\partial u\otimes\partial H}{h_0}_{\mathrm{WP}}u\,d\vg
    \end{align}
    We have
    \begin{align}\label{ineq_d22}
        \int_{\Sigma}|du|_g^2|A|^2d\vg&=-\int_{\Sigma}u\,\Delta_gu\,|A|^2d\vg-\int_{\Sigma}u\s{du}{d|A|^2}_gd\vg=-\int_{\Sigma}u\,\Delta_gu\,|A|^2d\vg\nonumber\\
        &+\frac{1}{2}\int_{\Sigma}\Delta_g|A|^2\,u^2\,d\vg\nonumber\\
        &\leq \frac{1}{16}\int_{\Sigma}(\Delta_gu)^2d\vg+4\int_{\Sigma}|A|^4u^2d\vg+\frac{1}{2}\int_{\Sigma}\Delta_g|A|^2\,u^2d\vg,
    \end{align}
    while 
    \begin{align*}
       \int_{\Sigma}\s{\partial u\otimes\partial H}{h_0}_{\mathrm{WP}}u d\vg&=\int_{\Sigma}e^{-2\lambda}\frac{1}{2}\p{z}(u^2)\p{z}H\bar{h_0}=-\frac{1}{2}\int_{\Sigma}u^2\left(\p{z}(e^{-2\lambda}\p{z}H)\bar{h_0}+|\p{z}H|^2\right)|dz|^2\\
       &=-\frac{1}{2}\int_{\Sigma}u^2\left(\s{\D_{\partial,\partial}^2H}{h_0}_{WP}+\frac{1}{4}|dH|_g^2\right)d\vg
    \end{align*}
    where we used the Codazzi identity $\bar{\partial}h_0=g\otimes\partial H$.
    By \eqref{ineq_d21} and \eqref{ineq_d22}, we get
    \begin{align}\label{ineq_d2_final}
        Q_{\phi}(u)\geq \frac{1}{8}\int_{\Sigma}(\Delta_gu)^2d\vg-\int_{\Sigma}(-8\s{\D^{2}_{\partial,\partial}H}{h_0}_{WP}+\Delta_g|A|^2+10|A|^4)u^2d\vg.
    \end{align}
    Since $\phi$ is smooth, we deduce that $-8\s{\D^{2}_{\partial,\partial}H}{h_0}_{WP}+\Delta_g|A|^2+10|A|^4\in L^{\infty}(\Sigma)$, which implies that for 
    \begin{align*}
        \lambda>2\np{-8\s{\D^{2}_{\partial,\partial}H}{h_0}_{WP}+\Delta_g|A|^2+10|A|^4}{\infty}{\Sigma}, 
    \end{align*}
    the bilinear form is coercive
    \begin{align*}
        B_{\lambda}(u,v)=\frac{1}{2}\int_{\Sigma}u\mathcal{L}_gu\,d\vg+\frac{\lambda}{2}\int_{\Sigma}u\,v\,d\vg,
    \end{align*}
    and we deduce by the Lax-Milgram theorem that the minimisation problem \eqref{min_eigenvalue} admits a unique solution $u$. Furthermore, thanks to the Rellich-Kondrachov theorem, if $\ens{u_k}_{k\in\N}$ is a minimising sequence, then $u_k\hookrightarrow u$ weakly as $k\rightarrow \infty$, which shows that for all $v\in W^{2,2}(\Sigma)$, the following equation holds:
    \begin{align}
        \int_{\Sigma}v\,\mathcal{L}u\,d\vg+\lambda\int_{\Sigma}u\,v\,d\vg-\int_{\Sigma}f\,v\,d\vg=0,
    \end{align}
    which shows that $\mathcal{L}_{\lambda}u=\mathcal{L}u+\lambda u=f$.

    Furthermore, the previous analysis shows that for all $\lambda>4C_1^2+2C_2$, the following set
    \begin{align*}
        L^2(\Sigma,d\vg)\cap\ens{u:Q_{\phi}(u)+\frac{\lambda}{2}\int_{\Sigma}u^2\,d\vg\leq 1}
    \end{align*}
    is a compact subset of $L^2(\Sigma)$. Therefore, $\mathcal{L}_{\lambda}^{-1}$ is a compact operator,  which shows that there exists a sequence $\ens{\mu_k}_{k\in\N}\subset (0,\infty)$ such that $\mu_k\conv{k\rightarrow \infty}0$  and $\sigma(\mathcal{L}_{\lambda})=\ens{0}\cup\ens{\mu_k}_{k\in\N}$ (\cite[Théorème VI.$8$]{brezis}) Furthermore, there exists a Hilbert basis $\ens{e_k}_{k\in\N}$ of $L^2(\Sigma,d\vg)$ made of eigenvectors of $\mathcal{L}$ (\cite[Théorème VI.$11$]{brezis}) Finally, for all $k\in\N$, there exists $u_k\in L^2(\Sigma,d\vg)$ such that
    \begin{align*}
        \mathcal{L}_{\lambda}^{-1}u_k=\mu_k\,u_k,
    \end{align*}
    or
    \begin{align*}
        \mathcal{L}u_k=\left(\frac{1}{\mu_k}-\lambda\right)u_k.
    \end{align*}
    Notice that $u_k\in W^{2,2}(\Sigma)$ by the previous elliptic regularity analysis. 
    Therefore, if $\lambda_k=\mu_k^{-1}-\lambda$, we deduce that $\sigma(\mathcal{L})=\ens{\lambda_k}_{k\in\N}$. Furthermore, since $\ens{e_k}_{k\in\N}$ are eigenvectors of $\mathcal{L}$, we deduce that there exists a Hilbertian base of eigenvectors of $\mathcal{L}$. Since $\lambda_k\conv{k\rightarrow \infty} \infty$, we deduce that \begin{align*}
        \mathrm{Ind}_W^{\mathcal{L}}(\phi)+\mathrm{Null}_W^{\mathcal{L}}(\phi)=\dim\bigoplus_{\lambda\leq 0}\mathcal{E}(\lambda)<\infty.
    \end{align*} 

    \textbf{Step 2: First inequality.}
    
    If $\displaystyle V=\bigoplus_{\lambda\leq 0}\mathcal{E}(\lambda)$, then (up to relabelling) there exists $N\in\N$ such that $\ens{e_k}_{k\geq N}$ is a Hilbertian base of $V$. In particular, for all and $u\in V^{\perp}\setminus\ens{0}$, we have the following expansion in $l^2(\N)$
    \begin{align*}
        u=\sum_{k=N}^{\infty}\s{u}{e_k}e_k,
    \end{align*}
    and
    \begin{align*}
        \mathcal{L}u=\sum_{k=N}^{\infty}\lambda_{\varphi(k)}\s{u}{e_k}e_k,
    \end{align*}
    where $\varphi:\N\rightarrow \N$ and $\lambda_{\varphi(k)}>0$ for all $k\geq N$. Therefore, we have
    \begin{align*}
        Q_{\phi}(u)=\frac{1}{2}\int_{\Sigma}u\,\mathcal{L}u\,d\vg=\frac{1}{2}\sum_{k\in\N}\lambda_{\varphi(k)}\s{u}{e_k}^2>0
    \end{align*}
    since $\lambda_k>0$ for all $k\geq N$, and
    \begin{align*}
        \sum_{k=N}^{\infty}\s{u}{e_k}^2=\sum_{k=0}^{\infty}\s{u}{e_k}^2=\int_{\Sigma}u^2\,d\vg>0.
    \end{align*}
    Therefore, the following inequality holds:
    \begin{align}\label{eigen_bound2}
        \left\{\begin{alignedat}{1}
           \mathrm{Ind}_W(\phi)&\leq \mathrm{Ind}_W^{\mathcal{L}}(\phi)<\infty\\
           \mathrm{Null}_W(\phi)&\leq \mathrm{Null}_W^{\mathcal{L}}(\phi)<\infty.
        \end{alignedat}\right.
    \end{align}

    \textbf{Step 3: Second inequality.}
    
    Now, for all $\lambda<0$ and $u\in \mathcal{E}(\lambda)$, we trivially have
    \begin{align*}
        Q_{\phi}(u)=\lambda\int_{\Sigma}u^2\,d\vg<0,
    \end{align*}
    while $Q_{\phi}(u)=0$ for all $u\in \mathcal{E}(0)$. 
    Therefore, the following inequalities hold:
    \begin{align}\label{eigen_bound3}
        \left\{\begin{alignedat}{1}
           \mathrm{Ind}_W^{\mathcal{L}}(\phi)&\leq \mathrm{Ind}_W(\phi)\\
           \mathrm{Null}_W^{\mathcal{L}}(\phi)&\leq \mathrm{Null}_W(\phi).
        \end{alignedat}\right.
    \end{align}
    Therefore, we have equality in \eqref{eigen_bound2}, which completes the proof of the theorem in the case $n=3$.

    \textbf{Case 2: General Codimension $n-2\geq 1$.} We can identify $\mathrm{Var}^N(\phi)$ to $W^{2,2}(\Sigma,\R^{n-2})$ with the help of a locally controlled trivialisation $\n_1,\cdots,\n_{n-2}$ of $\n$. Indeed, for all $p\in \Sigma$, there exists $r=r(p)>0$ and $\n_1(p),\cdots,\n_{n-2}(p):B(p,r)\rightarrow \R^n$ such that
    \begin{align*}
        \n=\n_1(p)\wedge\cdots\wedge\n_{n-2}(p)\qquad \text{in}\;\, B(p,r),
    \end{align*}
    and
    \begin{align*}
        \np{\D\n_{j}(p)}{2}{B(p,r)}\leq C\np{\D\n}{2}{B(p,r)}
    \end{align*}
    where $C<\infty$ is independent of $p$. Therefore, by compactness of $\n$, using a partition of unity subordinated to $(p,r(p))$, there exists a finite number $N<\infty$ such that
    \begin{align*}
        \n=\sum_{i=1}^N\rho_j\n_{1}(p_1)\wedge\cdot\wedge \n_{n-2}(p_n)
    \end{align*}
    where $\rho_j\in C^{\infty}(\Sigma)$. As it will be apparent in computations, it is not restrictive to assume that $N=1$. Furthermore, to make computations easier, assume that $n=4$. Then, $\n=\n_1\wedge\n_2$, where $\s{\n_1}{\n_2}=0$ and 
    \begin{align*}
        \np{d\n_1}{2}{\Sigma}+\np{d\n_2}{2}{\Sigma}\leq C\np{d\n}{2}{\Sigma}.
    \end{align*}
    If $\w$ is a normal variation, there exists $u_1,u_2\in W^{1,\infty}(\Sigma)$ such that 
    \begin{align*}
        \w=u_1\,\n_1+u_2\,\n_2.
    \end{align*}
    We have
    \begin{align*}
        \Delta_g\w=\Delta_gu_1\,\n_1+\Delta_gu_2\,\n_2+2\s{du_1}{d\n_1}_g+2\s{du_2}{d\n_2}_g+u_1\,\Delta_g\n_1+u_2\,\Delta_g\n_2.
    \end{align*}
    Recalling that
    \begin{align*}
        \Delta_g\n+|d\n|_g^2\n=8\,g^{-1}\otimes \Im\left(\star \left(\bar{\partial}\H\wedge\partial\phi\right)\right)+2\,i\,g^{-2}\otimes \left(\bar{\h_0}\wedge \h_0\right),
    \end{align*}
    we deduce that 
    \begin{align*}
        |\Delta_g\n_i|\leq 2|d\n_g|^2+2|d\H|_g^2,
    \end{align*}
    which shows by smoothness of $\phi$ that $\Delta_g^{\perp}\w\in L^2(\Sigma,g)$ if and only $u=(u_1,u_2)\in W^{2,2}(\Sigma,\R^2)$. Furthermore, 
    \begin{align*}
        \int_{\Sigma}|\leb_g\vec{w}|^2d\vg\geq \frac{1}{2}\int_{\Sigma}|\Delta_gu|^2d\vg-C\int_{\Sigma}\left(|A|^4+|d\H|_g^2\right)|u|^2d\vg,
    \end{align*}
    and we can apply the preceding proof to complete the proof of the theorem in the general setting. 
    \end{proof}

\begin{rem}
    We will study the branched case in \cite{morse_willmore_II}. However, an arbitrary $W^{2,2}(\Sigma)$ variation will not necessarily give an \emph{admissible} variation for which the integral
    \begin{align*}
        Q_{\phi}(u)=\frac{1}{2}\int_{\Sigma}u\,\mathcal{L}u\,d\vg
    \end{align*}
    is finite, since $H\notin L^4(\Sigma,d\vg)$ in general. 
\end{rem}
At this point, the reader must be weary and not want to hear about Willmore surfaces for a few hours. Let us therefore move to the eigenvalue estimates of some elegant fourth-order differential operators.

\section{Refined Pointwise Estimate on the Second Fundamental Form}

\subsection{General Lemmas}

We will need some lemmas from the appendix of \cite{riviere_morse_scs}. First, we have the following result. 
\begin{lemme}\label{lemma_harmonic_pointwise}
    Let $0<a\leq 2\,b<\infty$, $\Omega=B_b\setminus\bar{B}_a(0)$ and $u\in W^{1,2}(\Omega)$ be a harmonic function, and assume that for some $a\leq r\leq b$, we have
    \begin{align*}
        \int_{\partial B_{r}(0)}\partial_{\nu}u\,d\mathscr{H}^1=0.
    \end{align*}
    Then, we have for all $z\in \Omega$
    \begin{align}\label{ineq_harmonic_annulus}
        |\D u(z)|\leq \frac{2}{\sqrt{3\pi}}\left(\frac{b}{b^2-|z|^2}+\frac{a}{|z|^2-a^2}\right)\np{\D u}{2}{\Omega},
    \end{align}
\end{lemme}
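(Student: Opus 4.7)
The plan is to reduce the pointwise bound to an estimate on holomorphic functions and extract the sharp constant from a Laurent-series computation.

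First, I would use the flux hypothesis to eliminate the logarithmic term from the angular expansion of $u$. In polar coordinates every harmonic function on $\Omega$ has the form
\begin{align*}
u(r,\theta)=A_{0}+B_{0}\log r+\sum_{n\ne 0}(A_{n}r^{n}+B_{n}r^{-n})e^{in\theta},
\end{align*}
and one computes $\int_{\partial B_{r}(0)}\partial_{\nu}u\,d\hh^{1}=2\pi B_{0}$, a quantity independent of $r\in(a,b)$ by the divergence theorem applied to $\Delta u=0$. The hypothesis therefore forces $B_{0}=0$, and $u$ admits a single-valued harmonic conjugate $v$ on the annulus. The function $f:=u+iv$ is then holomorphic with $|f'|=|\D u|$, and setting $g:=f'$, the single-valuedness of $f$ forces the $z^{-1}$-coefficient in the Laurent expansion $g(z)=\sum_{n\in\Z}c_{n}z^{n}$ to vanish: $c_{-1}=0$.

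Next, I would decompose $g=g_{+}+g_{-}$ with $g_{+}(z):=\sum_{n\ge 0}c_{n}z^{n}$ (holomorphic on $B_{b}$) and $g_{-}(z):=\sum_{n\le -2}c_{n}z^{n}$ (holomorphic on $\C\setminus\bar{B}_{a}(0)$ and vanishing at infinity). Since the monomials $\{z^{n}\}_{n\in\Z}$ are $L^{2}(\Omega)$-orthogonal with $\np{z^{n}}{2}{\Omega}^{2}=2\pi A_{n}$, where $A_{n}:=(b^{2n+2}-a^{2n+2})/(2n+2)$, one has $\np{g}{2}{\Omega}^{2}=\np{g_{+}}{2}{\Omega}^{2}+\np{g_{-}}{2}{\Omega}^{2}$. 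Cauchy--Schwarz on the Laurent sum yields
\begin{align*}
|g_{+}(z)|^{2}\le\frac{\np{g_{+}}{2}{\Omega}^{2}}{2\pi}\sum_{n\ge 0}\frac{(2n+2)|z|^{2n}}{b^{2n+2}-a^{2n+2}}.
\end{align*}
Here the thickness hypothesis on the annulus ($2a\le b$) enters: for every $n\ge 0$ one has $(a/b)^{2n+2}\le 1/4$, so $b^{2n+2}-a^{2n+2}\ge\tfrac{3}{4}b^{2n+2}$, and summing using $\sum_{n\ge 0}(n+1)x^{n}=(1-x)^{-2}$ produces
\begin{align*}
|g_{+}(z)|^{2}\le\frac{4b^{2}}{3\pi(b^{2}-|z|^{2})^{2}}\np{g_{+}}{2}{\Omega}^{2}.
\end{align*}
A parallel computation for $g_{-}$, performed via the index change $k=-n-1$ and using $\sum_{k\ge 1}kx^{k}=x/(1-x)^{2}$, produces the symmetric bound $|g_{-}(z)|^{2}\le\frac{4a^{2}}{3\pi(|z|^{2}-a^{2})^{2}}\np{g_{-}}{2}{\Omega}^{2}$. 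Combining $|g|\le|g_{+}|+|g_{-}|$ with the orthogonality, Cauchy--Schwarz, and the elementary inequality $\sqrt{X^{2}+Y^{2}}\le X+Y$ (for $X,Y\ge 0$) yields the stated bound on $|\D u(z)|=|g(z)|$.

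The main technical point is the lower bound $b^{2n+2}-a^{2n+2}\ge\tfrac{3}{4}b^{2n+2}$: it is precisely the hypothesis on the ratio $a/b$ that produces the factor $4/3$ responsible for the sharp constant $2/\sqrt{3\pi}$. No other step is delicate: the computation for $g_{-}$ mirrors that for $g_{+}$ under the natural duality between the interior and exterior parts of the Laurent expansion, and all geometric series involved are explicit.
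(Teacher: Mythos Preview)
Your proof is correct and follows essentially the same route as the paper. The paper writes $u=\Re\big(\sum_{k\in\Z}a_kz^k\big)$ (the logarithmic coefficient being killed by the flux condition), observes $|\D u|=\big|\sum_{k\ne 0}k\,a_kz^{k-1}\big|$, and bounds the positive- and negative-index pieces separately by the identical Cauchy--Schwarz argument using $b^{2k}-a^{2k}\ge\tfrac34 b^{2k}$; your holomorphic function $g=f'$ is precisely $2\,\p{z}u$, so the two computations coincide.
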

\begin{proof}
    Let $d\in \R$ and $\ens{a_k}_{k\in\Z}$ such that 
    \begin{align*}
        u(z)=d\,\log|z|+\Re\left(\sum_{k\in\Z}a_kz^k\right).
    \end{align*}
    By assumption, we have $d=0$. Then, we have
    \begin{align*}
        |\D u|^2=4|\p{z}u|^2=\left|\sum_{k\in\Z^{\ast}}ka_kz^{k-1}\right|^2.
    \end{align*}
    Therefore, we have 
    \begin{align}\label{l2_formula}
        \int_{\Omega}|\D u|^2dx=2\pi\sum_{k\in \Z^{\ast}}k^2|a_k|^2\int_{a}^b\rho^{2k-1}d\rho=\pi\sum_{k\in\Z^{\ast}}|k||a_k|^2\left|b^{2k}-a^{2k}\right|.
    \end{align}
    Then, we get by the Cauchy-Schwarz inequality
    \begin{align*}
        &\left|\sum_{k=0}^{\infty}(k+1)a_{k+1}z^{k}\right|\leq\left(\sum_{k=0}^{\infty}(k+1)|a_{k+1}|^2b^{2k}\right)^{\frac{1}{2}}\left(\sum_{k=0}^{\infty}(k+1)\left(\frac{|z|}{b}\right)^{2k}\right)^{\frac{1}{2}}\\
        &=\frac{1}{1-\left(\frac{|z|}{b}\right)^2}\left(\sum_{k=0}^{\infty}(k+1)|a_k|^2b^{2k}\right)^{\frac{1}{2}}
        \leq \frac{b^2}{b^2-|z|^2}\frac{2}{\sqrt{3\pi}}\frac{1}{b}\np{\D u}{2}{\Omega}=\frac{2}{\sqrt{3\pi}}\frac{b}{b^2-|z|^2}\np{\D u}{2}{\Omega},
    \end{align*}
    where we used that
    \begin{align}\label{ineq_harmonic}
        \int_{\Omega}|\D u|^2dx\geq \pi\sum_{k=0}^{\infty}(k+1)|a_{k+1}|^2b^{2k+2}\left(1-\left(\frac{a}{b}\right)^{2k+2}\right)\geq \frac{3\pi}{4}b^2\sum_{k=0}^{\infty}(k+1)|a_{k+1}|^2b^{2k},
    \end{align}
    where the second inequality comes from the hypothesis $\left(\dfrac{a}{b}\right)^2\leq \dfrac{1}{4}$.
    Likewise, we have 
    \begin{align*}
        &\left|\sum_{k=1}^{\infty}-(k+1)a_{-(k+1)}\frac{1}{z^{k+2}}\right|\leq \left(\sum_{k=0}^{\infty}(k+1)|a_{-(k+1)}|^2\frac{1}{a^{2k+2}}\right)^{\frac{1}{2}}\left(\sum_{k=0}^{\infty}(k+1)\frac{a^{2k+2}}{|z|^{2k+4}}\right)^{\frac{1}{2}}\\
        &=\frac{a}{|z|^2}\frac{1}{1-\left(\frac{a}{|z|}\right)^2}\left(\sum_{k=0}^{\infty}(k+1)|a_{-(k+1)}|^2\frac{1}{a^{2k+2}}\right)^{\frac{1}{2}}
        \leq \frac{2}{\sqrt{3\pi}}\frac{a}{|z|^2-a^2}\np{\D u}{2}{\Omega}
    \end{align*}
    using a similar inequality to \eqref{ineq_harmonic}.
    Finally, we get the estimate
    \begin{align*}
        |\D u(z)|\leq \frac{2}{\sqrt{3\pi}}\left(\frac{b}{b^2-|z|^2}+\frac{a}{|z|^2-a^2}\right)\np{\D u}{2}{\Omega},
    \end{align*}
    which concludes the proof of the lemma.
\end{proof}
\begin{rem}
    This inequality is very similar to the following inequality for holomorphic functions $\varphi:\mathbb{D}\rightarrow \C$:
    \begin{align*}
        \sup_{z\in\mathbb{D}}\,(1-|z|^2)|\varphi(z)|\leq \frac{1}{\sqrt{\pi}}\np{\varphi}{2}{\mathbb{D}}
    \end{align*}
    that is proved in a similar fashion.
\end{rem}

\begin{cor}\label{pointwise_annulus_harmonic}
   Let $0<a<16\,b<\infty$, $\Omega=B_b\setminus\bar{B}_a(0)$, and let $u\in W^{1,2}(\Omega)$ be a harmonic function such that for all some $a\leq r\leq b$, we have
   \begin{align*}
       \int_{\partial B_r(0)}\partial_{\nu}u\,d\mathscr{H}^1=0.
   \end{align*}
   Then, we have for all $4a\leq |z|\leq \dfrac{b}{4}$
   \begin{align}\label{ineq_harmonic_average}
       \left(\int_{B_{2|z|}\setminus\bar{B}_{\frac{|z|}{2}}(0)}|\D u|^2dx\right)^{\frac{1}{2}}\leq \frac{8}{3}\sqrt{\frac{5}{2}}\left(\frac{|z|}{b}+\frac{a}{|z|}\right)\np{\D u}{2}{\Omega}.
   \end{align}
\end{cor}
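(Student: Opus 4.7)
The plan is to derive Corollary \ref{pointwise_annulus_harmonic} as a direct $L^2$ integration of the pointwise bound from Lemma \ref{lemma_harmonic_pointwise}, making the hypotheses $4a\le |z|$ and $|z|\le b/4$ play their role by simplifying the two curvature-type factors on the right-hand side. The only assumption used on $u$ in the corollary is the vanishing of the flux, which guarantees the absence of a log-term, so Lemma \ref{lemma_harmonic_pointwise} applies on $\Omega$ and furnishes the pointwise inequality \eqref{ineq_harmonic_annulus} at every $w$ of the smaller annulus $B_{2|z|}\setminus \bar B_{|z|/2}(0)$.

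The first step is to sharpen the two pieces of the Lemma bound on this smaller annulus. For any $w$ with $|z|/2\le |w|\le 2|z|$ and $|z|\le b/4$, write $\dfrac{b}{b^2-|w|^2}=\dfrac1b\cdot\dfrac1{1-(|w|/b)^2}\le \dfrac{4}{3b}$ since $(|w|/b)^2\le 1/4$. Likewise, for $|w|\ge |z|/2\ge 2a$ (which uses $|z|\ge 4a$), factor out $|w|^2$ to get $\dfrac{a}{|w|^2-a^2}\le \dfrac{4a}{3|w|^2}$. Plugging these back into \eqref{ineq_harmonic_annulus} yields
\begin{align*}
|\D u(w)|^2\le \frac{64}{27\pi}\Bigl(\frac{1}{b}+\frac{a}{|w|^2}\Bigr)^{2}\np{\D u}{2}{\Omega}^{2}\le \frac{128}{27\pi}\Bigl(\frac{1}{b^{2}}+\frac{a^{2}}{|w|^{4}}\Bigr)\np{\D u}{2}{\Omega}^{2},
\end{align*}
using the elementary inequality $(\alpha+\beta)^{2}\le 2(\alpha^{2}+\beta^{2})$. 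This decouples the two contributions so they can be integrated separately in polar coordinates.

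Next I integrate on $B_{2|z|}\setminus \bar B_{|z|/2}(0)$: the area equals $\pi\cdot\tfrac{15}{4}|z|^{2}$, while $\int_{B_{2|z|}\setminus \bar B_{|z|/2}(0)}\dfrac{dw}{|w|^{4}}=2\pi\int_{|z|/2}^{2|z|}\dfrac{d\rho}{\rho^{3}}=\pi\cdot\dfrac{15}{4|z|^{2}}$. Combining,
\begin{align*}
\int_{B_{2|z|}\setminus \bar B_{|z|/2}(0)}|\D u|^{2}dw\le \frac{128}{27\pi}\cdot\frac{15\pi}{4}\Bigl(\frac{|z|^{2}}{b^{2}}+\frac{a^{2}}{|z|^{2}}\Bigr)\np{\D u}{2}{\Omega}^{2}=\frac{160}{9}\Bigl(\frac{|z|^{2}}{b^{2}}+\frac{a^{2}}{|z|^{2}}\Bigr)\np{\D u}{2}{\Omega}^{2}.
\end{align*}
Using the trivial bound $\dfrac{|z|^{2}}{b^{2}}+\dfrac{a^{2}}{|z|^{2}}\le \Bigl(\dfrac{|z|}{b}+\dfrac{a}{|z|}\Bigr)^{2}$ and taking square roots gives the claim with constant $\sqrt{160/9}=\tfrac{4\sqrt{10}}{3}=\tfrac{8}{3}\sqrt{5/2}$.

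There is no substantial obstacle here: the only subtlety is to keep the second singular factor in the form $a/|w|^{2}$ rather than immediately bounding $1/|w|^{2}\le 4/|z|^{2}$, which would worsen the constant by a factor $4$; the correct constant is recovered by integrating $|w|^{-4}$ directly, thanks to the fact that $\int\rho^{-3}d\rho$ and $\int\rho\,d\rho$ on $[|z|/2,2|z|]$ produce the same numerical factor $15/4$.
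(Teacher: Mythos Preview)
Your proof is correct and follows essentially the same approach as the paper: both integrate the pointwise bound \eqref{ineq_harmonic_annulus} from Lemma~\ref{lemma_harmonic_pointwise} over the dyadic annulus $B_{2|z|}\setminus\bar{B}_{|z|/2}(0)$ after simplifying the factors $\tfrac{b}{b^2-|w|^2}$ and $\tfrac{a}{|w|^2-a^2}$ in exactly the same way. The only cosmetic difference is that the paper applies Minkowski's inequality to the sum $\tfrac{1}{b}+\tfrac{a}{|w|^2}$ and computes the two $L^2$ norms separately, whereas you square first via $(\alpha+\beta)^2\le 2(\alpha^2+\beta^2)$ and recover the sum at the end via $\alpha^2+\beta^2\le(\alpha+\beta)^2$; both routes land on the same constant $\tfrac{8}{3}\sqrt{5/2}$.
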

\begin{proof}
    Since $4\,a\leq |z|\leq \dfrac{b}{4}$ for all $w\in B_{2|z|}\setminus\bar{B}_{\frac{|z|}{2}}(0)$, we deduce by the point-wise bound \eqref{ineq_harmonic_annulus} from Lemma \ref{lemma_harmonic_pointwise} that
    \begin{align*}
        |\D u(w)|&\leq \frac{2}{\sqrt{3\pi}}\left(\frac{b}{b^2-|w|^2}+\frac{a}{|w|^2-a^2}\right)\np{\D u}{2}{\Omega}\\
        &=\frac{2}{\sqrt{3\pi}}\left(\frac{b}{b^2\left(1-\left(\frac{|w|}{b}\right)^2\right)}+\frac{a}{|w|^2\left(1-\left(\frac{a}{|w|}\right)^2\right)}\right)\np{\D u}{2}{\Omega}\\
        &\leq \frac{8}{3\sqrt{3\pi}}\left(\frac{1}{b}+\frac{a}{|w|^2}\right)\np{\D u}{2}{\Omega}.
    \end{align*}
    Therefore, we deduce by Minkowski's inequality that
    \begin{align*}
        \np{\D u}{2}{B_{2|z|}\setminus\bar{B}_{\frac{|z|}{2}}(0)}&\leq \frac{8}{3\sqrt{3\pi}}\np{\D u}{2}{\Omega}\left(\np{\frac{1}{b}}{2}{B_{2|z|}\setminus\bar{B}_{\frac{|z|}{2}}(0)}+\np{\frac{a}{|w|^2}}{2}{B_{2|z|}\setminus\bar{B}_{\frac{|z|}{2}}(0)}\right)\\
        &=\frac{8}{3\sqrt{3\pi}}\np{\D u}{2}{\Omega}\left(\sqrt{\frac{2\pi}{b^2}\left(4|z|^2-\frac{|z|^2}{4}\right)}+\sqrt{\pi\,a^2\left(\frac{1}{\left(\frac{|z|}{2}\right)^2}-\frac{1}{(2|z|)^2}\right)}\right)\\
        &\leq \frac{8}{3\sqrt{3\pi}}\np{\D u}{2}{\Omega}\sqrt{\frac{15\pi}{2}}\left(\frac{|z|}{b}+\frac{a}{|z|}\right)\\
        &=\frac{8}{3}\sqrt{\frac{5}{2}}\left(\frac{|z|}{b}+\frac{a}{|z|}\right)\np{\D u}{2}{\Omega},
    \end{align*}
    which concludes the proof of the Corollary.
\end{proof}

We will now use the following elementary lemma that would follow from the mean-value formula in the case of balls without the extra hypothesis \eqref{no_flux3} on the flux.
\begin{lemme}\label{monotonicity_formula_annulus}
    Let $0<a<b<\infty$, $\Omega=B_b\setminus\bar{B}_a(0)$, and for all $0<t<1$ let $\Omega_t=B_{t\,b}\setminus\bar{B}_{t^{-1}a}(0)$. Let $u\in W^{1,2}(\Omega)$ be a harmonic function such that for some $a\leq r\leq b$
    \begin{align}\label{no_flux3}
        \int_{\partial B_r(0)}\partial_{\nu}u\,d\mathscr{H}^1=0.
    \end{align}
    Then, for all $\left(\dfrac{a}{b}\right)^{\frac{1}{2}}<t\leq 1$, we have
    \begin{align}\label{mono}
        \np{\D u}{2}{\Omega_t}\leq t\np{\D u}{2}{\Omega}.
    \end{align}
\end{lemme}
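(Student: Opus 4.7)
\bigskip

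\noindent\textbf{Proof proposal for Lemma \ref{monotonicity_formula_annulus}.}

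The plan is to exploit the Laurent expansion of $u$ and reduce the $L^2$-monotonicity to a coefficient-wise convexity inequality.

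First, I would use the no-flux hypothesis to eliminate the logarithmic mode. Since $u$ is harmonic on the annulus $\Omega$, it admits a unique decomposition
\begin{align*}
    u(z) = c\,\log|z| + \Re\Big(\sum_{k\in\Z} a_k\,z^k\Big),
\end{align*}
convergent on $\Omega$. A direct computation gives $\int_{\partial B_r(0)}\partial_\nu u\,d\mathscr{H}^1 = 2\pi c$ for every $a\le r\le b$ (this integral is independent of $r$ by harmonicity and the divergence theorem), so the hypothesis forces $c=0$. Then, exactly as in the proof of Lemma \ref{lemma_harmonic_pointwise}, switching to polar coordinates yields
\begin{align*}
    \int_{\Omega}|\D u|^2\,dx = \pi\sum_{k\ge 1} k\,|a_k|^2\big(b^{2k}-a^{2k}\big) + \pi\sum_{k\ge 1} k\,|a_{-k}|^2\big(a^{-2k}-b^{-2k}\big),
\end{align*}
and the analogous identity holds on $\Omega_t$ with $(b,a)$ replaced by $(tb,t^{-1}a)$, provided $t\ge\sqrt{a/b}$ (so that $\Omega_t$ is non-empty).

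Next, I would show term-by-term that each contribution on $\Omega_t$ is bounded by $t^2$ times the corresponding contribution on $\Omega$. Writing $s=t^2$, it suffices to prove, for every $k\ge 1$,
\begin{align*}
    h_k(s) := s^{k}b^{2k} - s^{-k}a^{2k} - s\,(b^{2k}-a^{2k}) \le 0,\qquad \tilde h_k(s) := s^{k}a^{-2k} - s^{-k}b^{-2k} - s\,(a^{-2k}-b^{-2k}) \le 0,
\end{align*}
for $a/b\le s\le 1$. Both functions are convex on $(0,\infty)$, as seen from
\begin{align*}
    h_k''(s) = k(k-1)s^{k-2}b^{2k} + k(k+1)s^{-k-2}a^{2k} \ge 0,
\end{align*}
and analogously $\tilde h_k''\ge 0$. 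Moreover, a direct calculation gives $h_k(1)=\tilde h_k(1)=0$ and
\begin{align*}
    h_k(a/b) = -\tfrac{a}{b}\big(b^{2k}-a^{2k}\big) \le 0,\qquad \tilde h_k(a/b) = -\tfrac{a}{b}\big(a^{-2k}-b^{-2k}\big)\le 0.
\end{align*}
Since a convex function which is non-positive at both endpoints of an interval lies below the (non-positive) chord on that interval, we conclude $h_k, \tilde h_k \le 0$ on $[a/b, 1]$, which is exactly the desired pointwise inequality.

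Summing these inequalities against $\pi k|a_k|^2$ and $\pi k|a_{-k}|^2$ yields
\begin{align*}
    \int_{\Omega_t}|\D u|^2\,dx \le t^2\int_{\Omega}|\D u|^2\,dx,
\end{align*}
and taking square roots gives \eqref{mono}. The main technical point is the verification that the endpoint values $h_k(a/b)$ and $\tilde h_k(a/b)$ are non-positive—once this and the trivial convexity are in hand, the chord argument closes the proof with no further estimate. No subtlety beyond bookkeeping arises from the positive and negative Fourier modes being handled symmetrically.
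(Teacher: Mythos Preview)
Your proof is correct and follows essentially the same approach as the paper: expand $u$ in a Laurent series, use the flux condition to kill the logarithmic mode, apply the $L^2$-formula \eqref{l2_formula} on $\Omega_t$, and verify the inequality coefficient by coefficient. The only difference is in the final elementary step: the paper bounds each mode directly via $t^{2k}\le t^2$ and $t^{-2k}\ge 1$ (for $0<t\le 1$, $k\ge 1$), whereas you recast the same inequality as non-positivity of a convex function on $[a/b,1]$ and check the two endpoints---a perfectly valid and slightly more systematic variant of the same verification.
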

\begin{proof}
    Thanks to the proof of Lemma \ref{lemma_harmonic_pointwise} and the hypothesis \eqref{no_flux3}, there exists $\ens{a_k}_{k\in\Z}\subset \C$ such that for all $\left(\dfrac{a}{b}\right)^{\frac{1}{2}}<t\leq 1$
    \begin{align}
        \int_{\Omega_t}|\D u|^2dx=\pi\sum_{k\in\Z^{\ast}}|k||a_k|^2\left|(t\,b)^{2k}-(t^{-1}a)^{2k}\right|.
    \end{align}
    We directly estimate
    \begin{align}\label{mono1}
        \sum_{k=1}^{\infty}|k||a_k|^2\left|(t\,b)^{2k}-(t^{-1}a)^{2k}\right|&=\sum_{k=1}^{\infty}|k||a_k|^2\left((t\,b)^{2k}-(t^{-1}a)^{2k}\right)=\sum_{k=1}^{\infty}|k||a_k|^2t^{2k}(b^{2k}-t^{-2k}a^{2k})\nonumber\\
        &\leq t^2\sum_{k=1}^{\infty}|k||a_k|^2\left(b^{2k}-a^{2k}\right)
    \end{align}
    since $t^{-2}\geq 1$, so that $-t^{-2k}a^{2k}\leq -a^{2k}$ for all $k\geq 1$.
    Likewise, we have
    \begin{align}\label{mono2}
        \sum_{k=1}^{\infty}|k||a_{-k}|^2\left|\frac{1}{(tb)^{2k}}-\frac{1}{(t^{-1}a)^{2k}}\right|=\sum_{k=1}^{\infty}|k||a_{-k}|^2\left(\left(\frac{t}{a}\right)^{2k}-\left(\frac{t^{-1}}{b}\right)^{2k}\right)\leq t^2\sum_{k=1}^{\infty}|k||a_{-k}|^2\left(\frac{1}{a^{2k}}-\frac{1}{b^{2k}}\right).
    \end{align}
    Gathering the estimates \eqref{mono1} and \eqref{mono2} shows thanks to the identity \eqref{mono} that
    \begin{align*}
        \int_{\Omega_t}|\D u|^2dx\leq t^2\int_{\Omega}|\D u|^2dx,
    \end{align*}
    which concludes the proof of the lemma.
\end{proof}

Finally, recall the following lemma from \cite{pointwise} (see also \cite{lauriv1}).
\begin{lemme}[Lemma $2.3$, \cite{pointwise}]\label{improved_reg_harmonic}
    Let $0<a<4\,b<\infty$, $\Omega=B_b\setminus\bar{B}_a(0)$, and for all $0<t<1$, let $\Omega_t=B_{t\,b}\setminus\bar{B}_{t^{-1}a}(0)$. Let $u\in W^{1,2}(\Omega)$ be a harmonic function such that for some $a\leq r\leq b$
    \begin{align*}
        \int_{\partial B_r(0)}\partial_{\nu}u\,d\mathscr{H}^1=0.
    \end{align*}
    Then, for all $\left(\dfrac{a}{b}\right)^{\frac{1}{2}}<t<1$, we have
    \begin{align}
        \np{\D u}{2,1}{\Omega_t}+\np{\D^2u}{1}{\Omega_t}\leq 64\sqrt{\frac{\pi}{15}}\frac{t}{1-t}\np{\D u}{2}{\Omega}.
    \end{align}
\end{lemme}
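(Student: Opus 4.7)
The approach is to combine the pointwise gradient estimate of Lemma~\ref{lemma_harmonic_pointwise} with a boundary-focused decomposition of $\Omega_t$ where the pointwise majorant is largest. The underlying bookkeeping device is the Laurent expansion $u(z)=\Re\sum_{k\in\Z^*}a_k z^k$ from the proof of Lemma~\ref{lemma_harmonic_pointwise}---the logarithmic term vanishes under the flux hypothesis---together with the orthogonality identity
\begin{equation*}
    \np{\D u}{2}{\Omega}^2=\pi\sum_{k\in\Z^*}|k||a_k|^2\bigl|b^{2k}-a^{2k}\bigr|.
\end{equation*}

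For the $\mathrm L^{2,1}$ estimate on $\D u$, I split $\Omega_t=\Omega_t^+\cup\Omega_t^-$ at the geometric mean $|z|=\sqrt{ab}$. On the outer half $\Omega_t^+=\Omega_t\cap\{|z|\geq\sqrt{ab}\}$, the inner-boundary term of the majorant satisfies $\tfrac{a}{|z|^2-a^2}\leq\tfrac{2}{b}$, while from $|z|\leq tb$ the outer-boundary term obeys $\tfrac{b}{b^2-|z|^2}\leq\tfrac{1}{b(1-t)}$; combining this $\mathrm L^\infty$ bound with $\np{\mathbf 1}{2,1}{\Omega_t^+}=2|\Omega_t^+|^{1/2}\leq 2\sqrt\pi\,tb$ yields $\np{\D u}{2,1}{\Omega_t^+}\leq\frac{Ct}{1-t}\np{\D u}{2}{\Omega}$. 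The inner half $\Omega_t^-$ requires a further dyadic decomposition into shells $S_k=\{2^k t^{-1}a\leq|z|\leq 2^{k+1}t^{-1}a\}$ accumulating at $\partial B_{t^{-1}a}$: the innermost shell $S_0$ contributes the boundary-amplified term of order $\frac{Ct}{1-t}$, while for $k\geq 1$ the bound $\tfrac{a}{|z|^2-a^2}\leq\tfrac{2t^2}{a\cdot 4^k}$ combined with $\np{\mathbf 1}{2,1}{S_k}\leq C\cdot 2^k t^{-1}a$ produces $C\,t\,2^{-k}$, whose geometric sum is $O(t)$ and is absorbed into $\frac{Ct}{1-t}$ under the standing hypothesis $t^2\geq a/b$.

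For the $\mathrm L^1$ bound on $\D^2u$, I apply Cauchy--Schwarz $\np{\D^2u}{1}{E}\leq |E|^{1/2}\np{\D^2u}{2}{E}$ together with interior elliptic regularity for harmonic functions: $\np{\D^2u}{2}{B(z,r/2)}\leq\frac{C}{r}\np{\D u}{2}{B(z,r)}$. On $\Omega_t^+$ the distance to $\partial B_b$ is at least $b(1-t)$, so a bounded-overlap covering produces $\np{\D^2u}{2}{\Omega_t^+}\leq\frac{C}{b(1-t)}\np{\D u}{2}{\Omega}$, which combined with $|\Omega_t^+|^{1/2}\leq\sqrt\pi\,tb$ recovers the desired $\frac{t}{1-t}$ factor; the inner half is handled identically via the same dyadic shells $S_k$, using Corollary~\ref{pointwise_annulus_harmonic} to control $\np{\D u}{2}{}$ locally on each shell. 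The main obstacle is extracting precisely the factor $\frac{t}{1-t}$: a uniform dyadic decomposition of $\Omega_t$ via Corollary~\ref{pointwise_annulus_harmonic} naively yields only $O(t)$ (with possible logarithmic losses), missing the boundary amplification $(1-t)^{-1}$; the latter emerges only from pairing the boundary pointwise rate $\frac{1}{b(1-t)}$ with the Lorentz/Lebesgue measure $|\Omega_t^+|^{1/2}\sim tb$, which is precisely why the initial split at $\sqrt{ab}$ is essential.
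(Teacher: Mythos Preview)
The paper does not prove this lemma; it is simply recalled from the cited reference \cite{pointwise}. So there is no ``paper's own proof'' to compare against, and I assess your argument on its merits.

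Your $\mathrm{L}^{2,1}$ argument is essentially correct, though more laborious than necessary. The gap is in the $\mathrm{L}^1(\nabla^2 u)$ estimate on $\Omega_t^+$. You write that the distance to $\partial B_b$ is at least $b(1-t)$ and that a bounded-overlap covering at that scale gives $\np{\D^2u}{2}{\Omega_t^+}\leq\frac{C}{b(1-t)}\np{\D u}{2}{\Omega}$. But interior elliptic regularity requires the balls to stay in $\Omega$, hence the relevant distance is to $\partial\Omega=\partial B_a\cup\partial B_b$, not to $\partial B_b$ alone. A point at $|z|=\sqrt{ab}$ has distance $\sqrt{ab}-a$ to $\partial B_a$, which can be much smaller than $b(1-t)$ when $t$ is close to $\sqrt{a/b}$; your uniform-scale covering then exits $\Omega$. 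If instead you use a genuine Whitney covering, the bulk shells (those with $|z|$ far from both boundaries) contribute $\np{\D^2u}{2}{\text{bulk}}^2\sim\frac{\log(b/a)}{b^2}\np{\D u}{2}{\Omega}^2$, and the global Cauchy--Schwarz $\np{\D^2u}{1}{\Omega_t^+}\leq|\Omega_t^+|^{1/2}\np{\D^2u}{2}{\Omega_t^+}$ then produces a spurious $t\sqrt{\log(b/a)}$ term that is \emph{not} dominated by $\frac{t}{1-t}$ uniformly in $a,b$.

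The fix is to abandon global Cauchy--Schwarz on $\Omega_t^+$ and apply it shell-by-shell, exactly as you already do on $\Omega_t^-$; summing $|S_j|^{1/2}\np{\D^2u}{2}{S_j}$ with the local energy controlled by Corollary~\ref{pointwise_annulus_harmonic} gives a convergent geometric sum. Cleaner still is to split $u=u_++u_-$ into positive and negative Laurent modes: $u_+$ extends harmonically to all of $B_b$ and $u_-$ to $\mathbb{C}\setminus\bar B_a$, so each lives on a simply-connected domain with a \emph{single} boundary, and the pointwise majorants $\frac{b}{b^2-|z|^2}$ (resp.\ $\frac{a}{|z|^2-a^2}$) together with their derivatives can be integrated explicitly in $\mathrm{L}^{2,1}$ and $\mathrm{L}^1$ over $B_{tb}$ (resp.\ $\mathbb{C}\setminus\bar B_{t^{-1}a}$) to produce exactly $\frac{Ct}{1-t}$. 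This avoids both the split at $\sqrt{ab}$ and the dyadic bookkeeping.
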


The following extention lemma was used in \cite{quanta,quantamoduli, pointwise} and a less precise version stated in \cite[Lemma $7.2$]{pointwise}.
\begin{lemme}[Lemma C.$1$, \cite{riviere_morse_scs}]\label{whitney_extension}
        Let $\epsilon>0$ and $0<(1+\epsilon)r<R$, $\Omega=B_{R}\setminus\bar{B}_r(0)$. Then, there exists a universal constant $C_0(\epsilon)<\infty$ depending only on $\epsilon>0$ with the following property. For all $u\in W^{1,2}(\Omega)$, there exists an extension $\widetilde{u}\in W^{1,2}(\C)$ of $u$ such that $\mathrm{supp}(\D \widetilde{u})\subset B_{(1+\epsilon)R}\setminus\bar{B}_{(1+\epsilon)^{-1}r}(0)$ and satisfying the estimates
    \begin{align}
    \left\{\begin{alignedat}{2}
        \int_{\C}|\D \widetilde{u}|^2dx&\leq C_0(\epsilon)\int_{\Omega}|\D u|^2dx.\\
        \int_{B_{(1+\epsilon)R}\setminus\bar{B}_{(1+\epsilon)^{-1}R}(0)}|\D \widetilde{u}|^2dx&\leq C_0(\epsilon)\int_{B_{R}\setminus\bar{B}_{(1+\epsilon)^{-1}R}(0)}|\D u|^2dx\\
        \int_{B_{(1+\epsilon)r}\setminus\bar{B}_{(1+\epsilon)^{-1}r}(0)}|\D \widetilde{u}|^2dx&\leq C_0(\epsilon)\int_{B_{(1+\epsilon)r}\setminus\bar{B}_{r}(0)}|\D u|^2dx
    \end{alignedat}\right.
    \end{align}
\end{lemme}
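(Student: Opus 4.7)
The plan is to use a classical reflection-and-cutoff construction. Since $\Omega$ has two boundary circles $\partial B_r$ and $\partial B_R$, I will reflect $u$ across each of them by the anticonformal inversions $\Phi_R(z) = R^2/\bar z$ and $\Phi_r(z) = r^2/\bar z$, then glue the reflected copies to suitable constants via smooth radial cutoffs supported in the thin strips $B_{(1+\epsilon)R}\setminus\bar B_R$ and $B_r\setminus\bar B_{(1+\epsilon)^{-1}r}$. The hypothesis $(1+\epsilon)r<R$ is used precisely to guarantee that (i) the two strips are disjoint and contained in the target support of $\D\widetilde u$, and (ii) the reflected images $\Phi_R(B_{(1+\epsilon)R}\setminus B_R)=B_R\setminus B_{(1+\epsilon)^{-1}R}$ and $\Phi_r(B_r\setminus B_{(1+\epsilon)^{-1}r})=B_{(1+\epsilon)r}\setminus B_r$ both lie inside $\Omega$, so that the reflections $u\circ\Phi_R,u\circ\Phi_r$ are well-defined.

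Concretely, I fix radial cutoffs $\chi_R,\chi_r\in C^\infty_c(\mathbb{R}_+)$ with $\chi_R=1$ on $\partial B_R$, $\chi_R=0$ on $\partial B_{(1+\epsilon)R}$, $\chi_r=1$ on $\partial B_r$, $\chi_r=0$ on $\partial B_{(1+\epsilon)^{-1}r}$, satisfying $|\D\chi_R|\leq C/(\epsilon R)$ and $|\D\chi_r|\leq C(1+\epsilon)/(\epsilon r)$. I let $c_R := \fint_{B_R\setminus B_{(1+\epsilon)^{-1}R}}u\,dx$ and $c_r := \fint_{B_{(1+\epsilon)r}\setminus B_r}u\,dx$. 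The extension is
\begin{align*}
\widetilde u(z) := \begin{cases}
c_r & z\in B_{(1+\epsilon)^{-1}r}(0),\\
\chi_r(|z|)\,u(\Phi_r(z)) + (1-\chi_r(|z|))\,c_r & z\in B_r\setminus\bar B_{(1+\epsilon)^{-1}r},\\
u(z) & z\in\Omega,\\
\chi_R(|z|)\,u(\Phi_R(z)) + (1-\chi_R(|z|))\,c_R & z\in B_{(1+\epsilon)R}\setminus\bar B_R,\\
c_R & z\in \mathbb{C}\setminus B_{(1+\epsilon)R}.
\end{cases}
\end{align*}
Since $\Phi_R$ and $\Phi_r$ fix their respective circles pointwise, and $\chi_R,\chi_r$ vanish on the outermost and innermost boundaries of the strips, $\widetilde u$ is continuous, hence in $W^{1,2}_{\mathrm{loc}}(\mathbb{C})$, with $\mathrm{supp}(\D\widetilde u)\subset B_{(1+\epsilon)R}\setminus\bar B_{(1+\epsilon)^{-1}r}$.

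For the estimates, the decisive input is the conformal invariance of the Dirichlet integral in dimension two, which gives the exact identities
\begin{align*}
\int_{B_{(1+\epsilon)R}\setminus B_R}|\D(u\circ\Phi_R)|^2\,dx = \int_{B_R\setminus B_{(1+\epsilon)^{-1}R}}|\D u|^2\,dx,
\end{align*}
and similarly for $\Phi_r$, since $\Phi_R,\Phi_r$ are anticonformal. On the outer strip I expand $\D\widetilde u = \D\chi_R\,(u\circ\Phi_R - c_R) + \chi_R\,\D(u\circ\Phi_R)$, so it suffices to bound the first term. The change of variables $w=\Phi_R(z)$ turns $c_R$ into the average of $u\circ\Phi_R$ over the outer strip, and the scaled Poincaré inequality there (constant $\leq C(\epsilon)R^2$, obtained by rescaling to a strip of width $\epsilon$ on a fixed annulus) combined with $|\D\chi_R|^2\leq C/(\epsilon R)^2$ gives
\begin{align*}
\int_{B_{(1+\epsilon)R}\setminus B_R}|\D\widetilde u|^2\,dx \leq C(\epsilon)\int_{B_R\setminus B_{(1+\epsilon)^{-1}R}}|\D u|^2\,dx,
\end{align*}
which is estimate (b). Estimate (c) follows in the same way from the inner reflection. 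The global estimate (a) is then obtained by summing (b), (c), and the trivial contribution $\int_\Omega|\D u|^2$ from the region where $\widetilde u=u$.

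There is no serious obstacle here: the argument is essentially bookkeeping. The only points requiring care are tracking the $\epsilon$-dependence so that the Poincaré constant $C(\epsilon)R^2$ exactly cancels the $|\D\chi_R|^2 \sim 1/(\epsilon R)^2$ factor (leaving a constant depending only on $\epsilon$), and noting that the hypothesis $(1+\epsilon)r<R$ makes the two reflection operations independent so that the cutoffs can be chosen of disjoint support. The conformal invariance of the Dirichlet energy avoids any loss in the change-of-variables step, which is what makes all three constants identical and universal in $r,R$.
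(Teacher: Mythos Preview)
The paper does not prove this lemma: it is stated with attribution to \cite[Lemma C.1]{riviere_morse_scs} and used as a black box, so there is no proof in the paper to compare against. Your reflection-and-cutoff argument is the standard one and is correct; the only imprecision is the claim that the change of variables ``turns $c_R$ into the average of $u\circ\Phi_R$ over the outer strip'' --- the Jacobian of $\Phi_R$ is not constant so the two averages differ, but since on the strip $R<|z|<(1+\epsilon)R$ the Jacobian is pinched between $(1+\epsilon)^{-4}$ and $1$, the two averages are comparable up to a factor $C(\epsilon)$ and the Poincar\'e step goes through unchanged (or one can simply apply Poincar\'e on the inner annulus $B_R\setminus B_{(1+\epsilon)^{-1}R}$ where $c_R$ \emph{is} the average, then change variables).
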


We finish this section with an elementary result.

\begin{theorem}\label{Wente_L21}
    There exists a constant $C_{W(2,1)}<\infty$ with the following property. For all $0<R<\infty$, and for all $a,b\in W^{1,2}(\Omega)$, the solution of the following system
    \begin{align}
    \left\{\begin{alignedat}{2}
        \Delta u&=\D^{\perp}a\cdot \D b\qquad&&\text{in}\;\, B(0,R)\\
        u&=0\qquad&& \text{on}\;\, \partial B(0,R)
        \end{alignedat}\right.
    \end{align}
    satisfies the estimate
    \begin{align}
        \np{\D u}{2,1}{B(0,R)}\leq C_{W(2,1)}\np{\D a}{2}{B(0,R)}\np{\D b}{2}{B(0,R)}.
    \end{align}
\end{theorem}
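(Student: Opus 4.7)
The plan is to combine three classical ingredients: the Hardy-space regularity of Jacobians (Coifman–Lions–Meyer–Semmes), elliptic regularity on $\mathscr{H}^1(\R^2)$, and the refined Sobolev embedding $W^{1,1}(\R^2)\hookrightarrow L^{2,1}(\R^2)$. By rescaling we may assume $R=1$. First, I would extend $a$ and $b$ to compactly supported functions $\tilde a,\tilde b\in W^{1,2}(\R^2)$ via a Stein extension followed by a smooth cut-off (after subtracting means so that Poincaré's inequality controls the $L^2$ norm by the $L^2$ norm of the gradient), preserving the $W^{1,2}$ semi-norms up to a universal constant. Set $\tilde u$ to be the Newtonian potential solving $\Delta\tilde u=\D^\perp\tilde a\cdot\D\tilde b$ on $\R^2$ with decay at infinity.

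By the Coifman–Lions–Meyer–Semmes theorem,
\[
\np{\D^\perp\tilde a\cdot\D\tilde b}{\mathscr{H}^1}{\R^2}\leq C\,\np{\D\tilde a}{2}{\R^2}\np{\D\tilde b}{2}{\R^2},
\]
and since the second-order Riesz transforms $R_iR_j$ are bounded on $\mathscr{H}^1(\R^2)$, one obtains $\D^2\tilde u\in L^1(\R^2)$ with the same control. The refined Sobolev embedding $W^{1,1}(\R^2)\hookrightarrow L^{2,1}(\R^2)$ (a classical consequence of the $O'$Neil–Hunt inequality for Riesz potentials) then yields
\[
\np{\D\tilde u}{2,1}{\R^2}\leq C\,\np{\D a}{2}{B(0,R)}\np{\D b}{2}{B(0,R)},
\]
which already proves the desired estimate for the restriction $\tilde u|_{B(0,R)}$.

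Finally, I would write $u=\tilde u|_{B(0,R)}+v$, where $v$ is harmonic on $B(0,R)$ with boundary trace $v=-\tilde u$ on $\partial B(0,R)$, and estimate $\np{\D v}{2,1}{B(0,R)}$. The tangential component $\partial_\tau v$ on $\partial B(0,R)$ coincides with $-\partial_\tau\tilde u$, whose Lorentz norm is controlled by the previous step; the normal component $\partial_\nu v$ on $\partial B(0,R)$ is obtained from $\partial_\tau v$ by the Hilbert transform on the circle, which is bounded on $L^{2,1}(\partial B(0,R))$. An expansion of $v$ in its Fourier series on $\partial B(0,R)$ together with harmonic monotonicity (as in Lemma \ref{monotonicity_formula_annulus}) then yields $\np{\D v}{2,1}{B(0,R)}\leq C\np{\D\tilde u}{2,1}{\R^2}$, completing the proof.

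The main obstacle is the last paragraph: the refined Sobolev embedding and Hardy space elliptic regularity are entirely standard once the Jacobian structure is exploited, whereas quantifying the harmonic correction in the Lorentz scale requires a careful boundary trace analysis, since $L^{2,1}$ is not stable under Poisson extension from generic $L^\infty$ boundary data. What saves us here is that the datum $-\tilde u|_{\partial B(0,R)}$ is not arbitrary but arises from a whole-plane $W^{1,(2,1)}$ function, so the trace inherits enough regularity to make the Hilbert transform argument go through with a universal constant.
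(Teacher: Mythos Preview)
Your argument is correct but far more elaborate than what the paper does. The paper's proof is purely a scaling argument: it sets $\widetilde u(x)=u(Rx)$, observes that $\widetilde u$ solves the same Jacobian system on $B(0,1)$ with data $\widetilde a,\widetilde b$, invokes the classical improved Wente estimate on the unit ball as a black box (citing H\'elein, Th\'eor\`eme~3.4.1), and then checks that all three norms are scale-invariant in dimension~$2$ --- the $L^2$ norms of $\D a,\D b$ by conformal invariance of the Dirichlet integral, and $\np{\D u}{2,1}{B(0,R)}=\np{\D\widetilde u}{2,1}{B(0,1)}$ by a direct computation on the distribution function. That is the entire proof.

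You also rescale to $R=1$, but then you reprove the cited unit-ball result from scratch via CLMS, Hardy-space Calder\'on--Zygmund, the embedding $W^{1,1}(\R^2)\hookrightarrow L^{2,1}(\R^2)$, and a harmonic correction. This is indeed the classical route to the improved Wente inequality, so the strategy is sound; you are simply doing an order of magnitude more work than the paper asks for. The one genuinely delicate point in your outline is the harmonic-correction step: controlling $\np{\D v}{2,1}{B(0,1)}$ from boundary Lorentz data is not a one-liner, and the reference to Lemma~\ref{monotonicity_formula_annulus} is a bit loose (that lemma concerns annuli and $L^2$, not balls and $L^{2,1}$). A cleaner way to bypass this entirely is the standard reflection trick --- extend $a$ evenly and $b$ oddly across $\partial B(0,1)$ so that the whole-plane Newtonian potential of the extended Jacobian already vanishes on $\partial B(0,1)$, making $v\equiv 0$ and the boundary trace analysis unnecessary.
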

\begin{proof}
    Let $\widetilde{u}:B(0,1)\rightarrow \R$ such that $\widetilde{u}(x)=u(Rx)$ for all $x\in B(0,1)$. Then, $\widetilde{u}$ solves the equation
    \begin{align}
    \left\{\begin{alignedat}{2}
        \Delta \widetilde{u}&=\D^{\perp}\widetilde{a}\cdot \D \widetilde{b}\qquad&&\text{in}\;\, B(0,1)\\
        \widetilde{u}&=0\qquad&&\text{on}\;\, \partial B(0,1),
        \end{alignedat}\right.
    \end{align}
    where $\widetilde{a},\widetilde{b}\in W^{1,2}(B(0,1))$ are defined similarly to $\widetilde{u}$. 
    Therefore, the classical (\cite[Théorème ($3.4.1$)]{helein}) improved Wente estimate shows that
    \begin{align}\label{scaling1}
        \np{\D \widetilde{u}}{2,1}{B(0,1)}\leq C_{W(2,1)}\np{\D\widetilde{a}}{2}{B(0,1)}\np{\D\widetilde{b}}{2}{B(0,1)}.
    \end{align}
    We have
    \begin{align}\label{scaling2}
    \left\{\begin{alignedat}{1}
        &\int_{B(0,1)}|\D \widetilde{a}|^2dx=\int_{B(0,1)}R^2|\D a(Rx)|^2dx=\int_{B(0,R)}|\D a|^2dx,\\
        &\int_{B(0,1)}|\D\widetilde{b}|^2dx=\int_{B(0,R)}|\D b|^2dx.
        \end{alignedat}\right.
    \end{align}
    Finally, for all $t>0$, we have
    \begin{align*}
        \leb^2\left(B(0,1)\cap\ens{x:|\D \widetilde{u}(x)|>t}\right)&=\int_{B(0,1)}\mathbf{1}_{\ens{|\D u(Rx)|>R^{-1}t}}dx=\frac{1}{R^2}\int_{B(0,R)}\mathbf{1}_{\ens{|\D u(y)|>R^{-1}t}}dy\\
        &=\frac{1}{R^2}\leb^2\left(B(0,R)\cap\ens{x:|\D u(x)|>R^{-1}t}\right).
    \end{align*}
    Therefore, we have
    \begin{align}\label{scaling3}
        \np{\D \widetilde{u}}{2,1}{B(0,1)}&=4\int_{0}^{\infty}\left(\leb^2\left(B(0,1)\cap\ens{x:|\D \widetilde{u}(x)|>t}\right)\right)^{\frac{1}{2}}dt\nonumber\\
        &=4\int_{0}^{\infty}\left(\leb^2\left(B(0,R)\cap\ens{x:|\D u(x)|>R^{-1}t}\right)\right)^{\frac{1}{2}}R^{-1}\,dt\nonumber\\
        &=4\int_{0}^{\infty}\left(\leb^2\left(B(0,R)\cap\ens{x:|\D u(x)|>s}\right)\right)^{\frac{1}{2}}ds=\np{\D u}{2,1}{B(0,R)}.
    \end{align}
    Finally, the estimate follows from \eqref{scaling1}, \eqref{scaling2}, and \eqref{scaling3}. 
\end{proof}

\subsection{Improved Epsilon-Regularity for the Mean Curvature}

\begin{theorem}\label{epsilon_reg_H}
    Let $\phi:B(0,1)\rightarrow \R^n$ be a conformal weak Willmore immersion. There exists a constant $\epsilon_0>0$ with the following property. Let $\n:B(0,1)\rightarrow \Lambda^2\R^n$ be the unit normal of the immersion $\phi$. Assume that
    \begin{align}
        \int_{B(0,1)}|\D\n|^2dx\leq \epsilon_0.
    \end{align}
    Then, for all $k\in\N$, there exists a positive constant $C_k<\infty$ such that
    \begin{align}
        \np{e^{\lambda}\D^k\H}{\infty}{B(0,\frac{1}{2})}\leq C_k\np{e^{\lambda}\H}{2}{B(0,1)}.
    \end{align}
\end{theorem}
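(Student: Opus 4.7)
\medskip

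\noindent\textbf{Proof plan for Theorem \ref{epsilon_reg_H}.}

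The plan is to combine Rivière's conservation laws for Willmore immersions with an iterative elliptic bootstrap on a shrinking sequence of interior balls. The $k=0$ case is essentially the classical $\epsilon$-regularity for Willmore immersions, which I would set up as follows. Under the smallness assumption $\int_{B(0,1)}|\D\n|^2dx\leq\epsilon_0$, a Coulomb moving frame argument in the spirit of Hélein (combined with the Liouville equation $-\Delta\lambda=K_g\,e^{2\lambda}$, which here has right-hand side bounded in $L^1$ by $\epsilon_0/2$) provides an $L^\infty$-bound on the oscillation of $\lambda$ on $B_{3/4}$; normalising by a harmonic function, we may assume $|\lambda|\leq C(\epsilon_0)$ on $B_{3/4}$. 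Simultaneously, by the Bernard--Rivière conservative reformulation of the Willmore system, there exist potentials $S:B(0,1)\to\R$ and $\vec{R}:B(0,1)\to\Lambda^{n-2}\R^n$ satisfying a Jacobian-type system of the form
\begin{align*}
    \Delta S=-\D R\cdot \D^{\perp}\phi,\qquad
    \Delta \vec{R}=\D R\cdot\D^{\perp}\n+\D^{\perp}S\wedge\D\phi,
\end{align*}
with $\vec R,\,S$ linked algebraically to $e^{2\lambda}\H$. Feeding this into the improved Wente estimate in $L^{2,1}$ (Theorem \ref{Wente_L21}) together with the $L^{2,\infty}$-smallness of $\D\n$ produces, after absorbing a small constant, the base estimate $\np{e^{\lambda}\H}{\infty}{B(0,3/4)}\leq C\,\np{e^{\lambda}\H}{2}{B(0,1)}$.

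For $k\geq 1$ I would run an elliptic bootstrap on a dyadic sequence of balls $B_{1/2+2^{-k-1}}\supset\dots\supset B_{1/2}$. In conformal coordinates the Willmore equation reads, schematically,
\begin{align*}
    \Delta(e^{2\lambda}\H)=Q(\phi,\D\phi,\n,\D\n,\H,\D\H),
\end{align*}
where $Q$ is algebraic of degree at most two in $(\D\n,\D\H)$ and at most quadratic in $\H$, with coefficients controlled by $e^{\lambda}$. With $\|e^\lambda\H\|_{L^\infty}$ already bounded, standard $L^p$ interior elliptic regularity applied to the scalar equation for each component of $e^{2\lambda}\H$ (together with the Calderón--Zygmund bound on $\D^2\phi$ coming from $\Delta\phi=2e^{2\lambda}\H$) gives $e^{2\lambda}\H\in W^{1,p}_{\mathrm{loc}}$ for every $p<\infty$, hence $\D\H\in L^p_{\mathrm{loc}}$. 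Re-injecting this information into the right-hand side yields $\D^2\H\in L^p_{\mathrm{loc}}$, and so on: the $k$-th iteration controls $\D^k\H$ in $L^\infty$ on a slightly smaller ball, with constants depending only on $k$, $n$, and $\epsilon_0$. Multiplying by $e^{\lambda}$ and using the $L^\infty$-bound on $\lambda$ obtained in the first step gives the claimed inequality with a constant $C_k$ independent of the immersion.

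The main technical obstacle is to ensure that at every iteration step the constants depend only on $\epsilon_0$ (and not, say, on pointwise bounds of $\phi$ or $\lambda$ that could degenerate). This forces one to run the bootstrap entirely in terms of geometric quantities: the Coulomb frame gives a quantitative $L^\infty$-control of $\lambda$ modulo additive constants (which drop out in weighted estimates), and the Wente estimate together with Theorem \ref{Wente_L21} provides the key gain in integrability that compensates for the quadratic nonlinearity; outside of this delicate first step, the higher-order regularity is purely linear elliptic theory.
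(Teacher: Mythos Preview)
Your approach is correct in outline and rests on the same foundation as the paper's proof --- Rivi\`ere's conservation laws and Lorentz-space regularity --- but the execution differs. The paper does not route the $k=0$ case through the $S,\vec{R}$ potentials; instead it extracts three specific interior estimates from \cite{riviere1} (labelled there (III.13), (III.20), (III.21)) that track $\D\H$ directly through the chain $L^{2,\infty}\to L^2\to L^{2,1}$, each gain coming from the Jacobian structure and a Wente-type inequality. Chaining these and invoking the embedding $W^{1,(2,1)}\hookrightarrow C^0$ yields the $k=0$ bound, after the Harnack inequality for the conformal factor; the higher $k$ follow by a one-line bootstrap, exactly as you propose.

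Your $S,\vec{R}$ route would also work, but two points deserve care. First, the Jacobian system you wrote is not the correct one: the actual system is
\begin{align*}
\Delta S=-\star\,\D\n\cdot\D^{\perp}\vec{R},\qquad \Delta\vec{R}=(-1)^n\star\left(\D\n\antires\D^{\perp}\vec{R}\right)+\star\,\D\n\cdot\D^{\perp}S,
\end{align*}
with $\D\n$ (not $\D\phi$) on the right-hand side, and the link to $\H$ is the algebraic identity $e^{2\lambda}\H=\tfrac{1}{4}\D^{\perp}S\cdot\D\phi-\tfrac{1}{4}\D\vec{R}\res\D^{\perp}\phi$. Second, your absorption argument tacitly presupposes an a~priori bound $\np{\D S}{2}{}+\np{\D\vec{R}}{2}{}\lesssim\np{e^{\lambda}\H}{2}{}$, which in turn requires the $L^{2,\infty}$ control on $e^{\lambda}\vec{L}$ --- and that is precisely what the paper's first cited estimate delivers. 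So the paper's route is shorter and avoids this extra loop through $\vec{L}$, at the cost of relying on the already-published inequalities in \cite{riviere1} rather than rederiving them.
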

\begin{rem}
    The classical $\epsilon$-regularity shows that
    \begin{align*}
        \np{\D^k\n}{\infty}{B(0,\frac{1}{2})}\leq C_k\np{\D\n}{2}{B(0,1)}.
    \end{align*}
    A similar result is proved by Nicolas Marque in \cite{marque_epsilon_regularity} by assuming only a smallness of the $L^2$ norm of the mean curvature, but the trade-off is that the result is less general for the improved regularity only holds for $k=0$. A similar result was recently proved assuming a smallness of the trace-free second fundamental form by Bernard-Laurain-Marque, see \cite{marque_epsilon_regularity_tracefree_A}. The precised $\epsilon$-regularity presented here is well-adapted to analysis related to the quantization of energy.
\end{rem}
\begin{proof}
    We adapt the proof of \cite{riviere1} by following the estimates more carefully. The following estimates hold for any Willmore immersion $\phi:B(0,1)\rightarrow \R^n$ (see \cite[(III.$13$, $20$, $21$)]{riviere1}) 
\begin{align}
    &\np{\D\H}{2,\infty}{B(0,\frac{1}{2})}\leq C\left(\int_{B_1\setminus\bar{B}_{\frac{1}{2}}(0)}|\H|^2dx\right)\label{epsilon_H1}\\
    &\np{\D\H}{2}{B(0,\frac{1}{4})}\leq C\np{\D\n}{2}{B(0,\frac{1}{2})}\np{\D\H}{2,\infty}{B(0,\frac{1}{2})}+\np{\D\H}{2,\infty}{B(0,\frac{1}{2})}\label{epsilon_H2}\\
    &\np{\D\H}{2,1}{B(0,\frac{1}{8})}\leq C\np{\D\n}{2}{B(0,\frac{1}{4})}\np{\D\H}{2}{B(0,\frac{1}{4})}+\np{\D\H}{2,\infty}{B(0,\frac{1}{4})}\label{epsilon_H3}.
\end{align}
Therefore, adding all estimates \eqref{epsilon_H1}, \eqref{epsilon_H2}, and \eqref{epsilon_H3}, we find that
\begin{align*}
    \np{\D\H}{2,1}{B(0,\frac{1}{8})}&\leq C\np{\D\n}{2}{B(0,\frac{1}{4})}\left(C\np{\D\n}{2}{B(0,\frac{1}{2})}\np{\D\H}{2,\infty}{B(0,\frac{1}{2})}+\np{\D\H}{2,\infty}{B(0,\frac{1}{2})}\right)\\
    &+\np{\D\H}{2,\infty}{B(0,\frac{1}{2})}\\
    &\leq C\left(1+\np{\D\n}{2}{B(0,\frac{1}{2})}\right)\np{\D\n_k}{2}{B(0,\frac{1}{4})}\np{\D\H}{2,\infty}{B(0,\frac{1}{2})}\\
    &\leq C\left(1+\np{\D\n}{2}{B(0,\frac{1}{2})}\right)\np{\D\n}{2}{B(0,\frac{1}{2}}\np{\H}{2}{B(0,1)}.
\end{align*}
Using the Sobolev embedding $W^{1,(2,1)}(\R^2)\hookrightarrow C^0(\R^2)$, we deduce that
\begin{align}\label{sobolev_embedding_H}
    \np{\H}{\infty}{B(0,\frac{1}{8})}&\leq C\np{\D\H}{2,1}{B(0,\frac{1}{8})}+\np{\H}{2}{B(0,\frac{1}{8})}\nonumber\\
    &\leq C\left(1+\np{\D\n}{2}{B(0,\frac{1}{2})}\right)\np{\D\n}{2}{B(0,\frac{1}{2}}\np{\H}{2}{B(0,1)}+C\np{\H}{2}{B(0,\frac{1}{8})}.
\end{align}
Of course, one can replace the $\dfrac{1}{8}$ by $\dfrac{1}{2}$ up to slightly modifying the cutoff functions, and using the Harnack inequality on the conformal parameter, we finally obtain the following estimate
\begin{align}\label{precise_Linfty_H}
    \np{e^{\lambda}\H}{\infty}{B(0,\frac{1}{2})}\leq C\left(1+\np{\D\n}{2}{B(0,1)}\right)\np{\D\n}{2}{B(0,1)}\np{e^{\lambda}\H}{2}{B(0,1)}.
\end{align}
Furthermore, since $\np{\D\n}{2}{B(0,1)}\leq \sqrt{\epsilon_0}$, we obtain the announced bound for a universal constant. As in \cite{riviere1}, this last estimate can be bootstrapped and gives the announced series of inequalities.           
\end{proof}

Let us now precise \cite[Lemma VI.$1$]{quanta} and its counterpart \cite[Theorem $4.2$]{pointwise}.

\begin{theorem}\label{precise_estimate_neck_region}
    There exists a constant $\epsilon_1(n)>0$ with the following property. Let $0<2\,a\leq b<\infty$ and $\phi:B(0,b)\rightarrow \R^n$ be a weak conformal Willmore immersion such that
    \begin{align}\label{neck_hypothesis}
        \sup_{a<r<b/2}\int_{B_{2r}\setminus\bar{B}_r(0)}|\D\n|^2dx\leq \epsilon_1(n).
    \end{align}
    Let $\Omega=B_b\setminus\bar{B}_{a}(0)$, and for all $\left(\frac{r}{R}\right)^{\frac{1}{2}}<t\leq 1$, define $\Omega_t=B_{t\,b}\setminus\bar{B}_{t^{-1}a}(0)$. Let 
    \begin{align*}
        E=\np{\D\lambda}{2,\infty}{\Omega}+\np{\D\n}{2}{\Omega}.
    \end{align*}
    Then, the exists a universal constant $C_1(n)<\infty$ such that for all $\left(\dfrac{r}{R}\right)^{\frac{1}{2}}<t\leq \dfrac{1}{2}$
    \begin{align}\label{precise1}
    \left\{\begin{alignedat}{2}
        &\np{\D^2 S}{1}{\Omega_t}+\np{\D S}{2,1}{\Omega_t}+\np{S}{\infty}{\Omega_t}\leq C_1e^{C_1E}\np{e^{\lambda}\H}{2}{\Omega}\\
        &\np{\D^2 \vec{R}}{1}{\Omega_t}+\np{\D \vec{R}}{2,1}{\Omega_t}+\np{\vec{R}}{\infty}{\Omega_t}\leq C_1e^{C_1E}\np{e^{\lambda}\H}{2}{\Omega}.
        \end{alignedat}\right.
    \end{align}
    In particular, we have
    \begin{align}\label{precise2}
    \left\{\begin{alignedat}{2}
        &\np{\D^2 S}{1}{\Omega_t}+\np{\D S}{2,1}{\Omega_t}+\np{S}{\infty}{\Omega_t}\leq C_1e^{C_1E}\left(\np{\D S}{2}{\Omega}+\np{\D \vec{R}}{2}{\Omega}\right)\\
        &\np{\D^2 \vec{R}}{1}{\Omega_t}+\np{\D \vec{R}}{2,1}{\Omega_t}+\np{\vec{R}}{\infty}{\Omega_t}\leq C_1e^{C_1E}\left(\np{\D S}{2}{\Omega}+\np{\D \vec{R}}{2}{\Omega}\right).
        \end{alignedat}\right.
    \end{align}
    \begin{align}\label{precise3}
        \np{e^{\lambda}\H}{2,1}{\Omega_t}+\np{e^{\lambda}\D\H}{1}{\Omega_t}\leq C_1e^{C_1E}\np{e^{\lambda}\H}{2}{\Omega}.
    \end{align}
\end{theorem}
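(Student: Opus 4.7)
\textbf{Proof plan for Theorem \ref{precise_estimate_neck_region}.}

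The strategy is to exploit the Jacobian structure of the conservation-law system satisfied by the Rivière potentials $(S,\vec R)$: one has schematically
\begin{align*}
\Delta S \;=\; \D^{\perp}\n\cdot \D \vec R + \text{lower order},\qquad \Delta \vec R \;=\; \D^{\perp}\n\cdot \D S + \D^{\perp}\vec R\wedge \D\n + \text{lower order},
\end{align*}
and the Wente-type estimate of Theorem \ref{Wente_L21} combined with the smallness \eqref{neck_hypothesis} will upgrade $W^{1,2}$ control to $L^{\infty}\cap W^{1,(2,1)}\cap W^{2,1}$ control. The exponential factor $e^{C_1 E}$ arises from iterating this bootstrap over a dyadic decomposition of the neck; its appearance is the analytic price paid for the weak Lorentz norm $\|\D\lambda\|_{2,\infty}$ not being controlled by the small-energy hypothesis alone.

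First I would reduce to a single dyadic annulus $A_k=B_{2^{-k+1}b}\setminus\bar B_{2^{-k}b}(0)$ inside $\Omega$. On each $A_k$, the hypothesis \eqref{neck_hypothesis} says that $\|\D\n\|_{2,A_k}^2\le \epsilon_1(n)$, which is exactly the regime where $(S,\vec R)$ enjoy Wente gains. Using the extension Lemma \ref{whitney_extension} with a fixed $\epsilon>0$, extend $S,\vec R$ from $(1+\epsilon)A_k$ to the whole plane, solve on a ball containing $\supp(\D\widetilde S)$, $\supp(\D\widetilde{\vec R})$ the Jacobian-type Poisson problem with right-hand sides $\D^{\perp}\widetilde\n\cdot \D\widetilde{\vec R}$, $\D^{\perp}\widetilde\n\cdot \D\widetilde S$, and apply Theorem \ref{Wente_L21} to obtain Wente solutions $(S_1^k,\vec R_1^k)$ whose $\D (\,\cdot\,)$ are bounded in $L^{2,1}(A_k)$ by $\epsilon_1^{1/2}(\|\D S\|_{2,A_k}+\|\D\vec R\|_{2,A_k})$, which we may reabsorb. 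The residuals $S_0^k=S-S_1^k,\;\vec R_0^k=\vec R-\vec R_0^k$ are harmonic on $A_k$, so their flux may be normalised using the fact that (up to constants) one can arrange $\int_{\partial B_r}\partial_\nu S_0^k\,d\mathscr{H}^1=0$ from the integral identities satisfied by $S$.

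The harmonic pieces are then controlled via the annular pointwise Lemma \ref{lemma_harmonic_pointwise} and its averaged corollary \ref{pointwise_annulus_harmonic}: on the middle shell of $A_k$,
\begin{align*}
\|\D S_0^k\|_{L^2(\text{middle of }A_k)}\;\le\; C\big(\tfrac{|z|}{b}+\tfrac{a}{|z|}\big)\|\D S_0^k\|_{2,A_k},
\end{align*}
and the same for $\vec R$. Combining this with Lemma \ref{improved_reg_harmonic} gives the stronger $L^{2,1}$ and $W^{2,1}$ control on a slightly smaller sub-annulus, provided one has a factor $\frac{t}{1-t}$ headroom. Summing the Wente part and the harmonic part on $A_k$, and using the monotonicity Lemma \ref{monotonicity_formula_annulus} on $\D S_0^k$, one gets an inductive inequality of the form
\begin{align*}
\Phi_{k+1}\;\le\; (1+C\epsilon_1^{1/2})\,\Phi_k + C\,\|e^\lambda\H\|_{2,A_k}^2,
\end{align*}
where $\Phi_k$ packages the $L^\infty\cap W^{1,(2,1)}\cap W^{2,1}$ norm on the $k$-th scale. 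Iterating across $O(\log(b/a))$ scales and summing geometrically yields \eqref{precise1} and \eqref{precise2}; the factor $e^{C_1 E}$ absorbs the multiplicative loss from each level of the dyadic induction, where the powers of $\|\D\lambda\|_{2,\infty}$ enter through the conformal factor in the equations for $S$ and $\vec R$.

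For the third estimate \eqref{precise3}, I would apply Theorem \ref{epsilon_reg_H} on each ball $B(z,|z|/4)\subset \Omega_t$ to get
\begin{align*}
\|e^\lambda\H\|_{\infty,B(z,|z|/4)}+|z|\,\|e^\lambda\D\H\|_{\infty,B(z,|z|/4)}\;\le\; C\,\|e^\lambda\H\|_{2,B(z,|z|/2)}.
\end{align*}
Covering $\Omega_t$ with controlled overlap by such balls, summing, and invoking the $L^{2,1}/L^{2,\infty}$ duality (using $\|1/|x|\|_{2,\infty}$ from \eqref{norms}) converts the dyadic $L^{\infty}$ estimates into the Lorentz bound $\|e^\lambda\H\|_{2,1,\Omega_t}$, and likewise the derivative estimate into $\|e^\lambda\D\H\|_{1,\Omega_t}$. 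The main obstacle in the whole proof is the bookkeeping of the exponential factor $e^{C_1 E}$: one must keep the dyadic constants independent of the number of scales and track how $\|\D\lambda\|_{2,\infty}$ interacts with each Wente step, which is where Theorem \ref{Wente_L21} (yielding an \emph{a priori} $L^{2,1}$ bound rather than only $L^{\infty}$) is crucial to close the iteration.
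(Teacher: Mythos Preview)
Your plan has a genuine gap and also misidentifies the origin of the exponential factor. The paper does \emph{not} run a dyadic iteration on the Jacobian system for $(S,\vec R)$. Instead, following \cite{quanta,pointwise}, it first constructs the intermediate potential $\vec L$ from the Willmore conservation law $\D^\perp\vec L = \D\H-3\D^N\H+\star(\D^\perp\n\wedge\H)$ via the Poincar\'e lemma, obtains pointwise and $L^{2,\infty}$ bounds on $e^\lambda\vec L$, and only then uses the relations $\D S=\vec L\cdot\D\phi$, $\D\vec R=\vec L\wedge\D\phi+\cdots$ together with the global Jacobian system on $\Omega_{1/2}$. The factor $e^{C_1E}$ enters through the Harnack inequality on the conformal parameter $\lambda$ (needed to compare $e^{\lambda(z)}$ with $e^{\lambda(w)}$ on dyadic annuli, see the step $e^{\lambda(z)}|\vec L(z)-\vec L_{|z|}|\le Ce^{2\Gamma_1E}\delta(|z|)$ in the paper), not from accumulating Wente losses across scales. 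Your recurrence $\Phi_{k+1}\le(1+C\epsilon_1^{1/2})\Phi_k+C\|e^\lambda\H\|_{2,A_k}^2$ would, if correct, produce a multiplicative factor $(1+C\epsilon_1^{1/2})^{O(\log(b/a))}$, which blows up with the conformal class of the annulus and is \emph{not} dominated by $e^{C_1E}$ (the quantity $E$ is independent of $\log(b/a)$). You also never explain how to bound $\|\D S\|_2+\|\D\vec R\|_2$ by $\|e^\lambda\H\|_{2,\Omega}$ to start the argument; the algebraic identity $e^{2\lambda}\H=\tfrac14\D^\perp S\cdot\D\phi-\tfrac14\D\vec R\res\D^\perp\phi$ only gives the \emph{reverse} inequality, so the forward direction requires going through $\vec L$.

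Your approach to \eqref{precise3} via Theorem \ref{epsilon_reg_H} on balls also does not close: covering $\Omega_t$ by balls $B(z_k,|z_k|/4)$ and summing the local $L^\infty$ bounds gives $\|e^\lambda\H\|_{2,1,\Omega_t}\lesssim\sum_k\|e^\lambda\H\|_{2,A(2,z_k)}$, but you only control $\sum_k\|e^\lambda\H\|_{2,A(2,z_k)}^2\simeq\|e^\lambda\H\|_{2,\Omega}^2$, not the $\ell^1$-sum. The paper avoids this by deriving \eqref{precise3} directly from the $L^{2,1}$ bounds on $\D S,\D\vec R$ via the algebraic identity and its derivative (using $L^{2,1}/L^{2,\infty}$ duality with $\|e^{-\lambda}\D^2\phi\|_{2,\infty}\le 2E$).
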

\begin{rem}
    The results are stated for $0<8\,a<b$ in \cite{quanta} and $0<100\,a<b$ in \cite{pointwise}, but those are just artifacts of the proof, and we need only assume that the conformal class $\log\left(\dfrac{R}{r}\right)$ is bounded away from $-\infty$. The bound on $R$ from \cite{pointwise} is not necessary either but convenient to apply extension results.
\end{rem}
\begin{proof}
    We need only show the proof for $t=\dfrac{1}{2}$. For all $R>1$ and $z\in \Omega_{\frac{1}{2}}$, define 
    \begin{align}
        A(R,z)=B_{R|z|}\setminus\bar{B}_{R^{-1}|z|}(0).
    \end{align}
    Thanks to the Harnack inequality on the conformal parameter in dyadic annuli of the neck region (\cite[Lemma V.$2$]{quanta}) that we can apply here, we deduce that for all $z\in \Omega_{\frac{1}{2}}$ the estimate
\begin{align}\label{new_Linfty_H}
    e^{\lambda(z)}|\H(z)|&\leq C\left(1+\np{\D\n_k}{2}{A(2,z)}\right)\np{\D\n_k}{2}{A(2,z)}\left(\dashint{A(2,z)}e^{2\lambda}|\H|^2dx\right)^{\frac{1}{2}}\nonumber\\
    &\leq C_0(1+\epsilon_1)\epsilon_1\np{e^{\lambda}\H}{2}{A(2,z)}=C_0'\np{e^{\lambda}\H}{2}{A(2,z)}.
\end{align}
Likewise, we have
\begin{align}\label{new_Linfty_DH}
    e^{\lambda(z)}|\D\H(z)|\leq \frac{C_1'}{|z|}\np{e^{\lambda}\H}{2}{A(2,z)}.
\end{align}
Therefore, if we define for all $0<r<\dfrac{b}{2}$
\begin{align*}
    \delta(r)=\left(\frac{1}{r^2}\int_{A(2,r)}e^{2\lambda}|\H|^2dx\right)^{\frac{1}{2}}
\end{align*}
where $A(2,r)=B_{2r}\setminus\bar{B}_{\frac{r}{2}}(0)$, then we can replace $\delta$ as defined in the equation preceding (VI.$9$) in \cite{quanta} (and also in the proof of the $L^{2,1}$ quantization in \cite[Theorem $4.2$, equation preceding ($4.6$)]{pointwise}) by this new $\delta$ function, up to a few modifications that we will indicate now. Rather than detailing those changes in \cite{quanta}, we will directly show them for \cite{pointwise}.

We will also need the classical $\epsilon$-regularity for $\n$, that states that for all $z\in\Omega_{\frac{1}{2}}$, we have
\begin{align}\label{new_Linfty_n}
    |\D\n(z)|\leq C\np{\D\n}{2}{A(2,z)}.
\end{align}
Then, recall that the Willmore equation implies that there exists $\vec{L}:B_b(0)\rightarrow \R^n$
\begin{align}\label{poinca1}
    2i\,\partial\vec{L}=\partial\H+|\H|^2\partial\phi+g^{-1}\otimes\s{\H}{\h_0}{\bar{\partial}\phi}.
\end{align}
Indeed, the imaginary part of the right-hand side is closed (this is the weak Willmore equation first proved in \cite{riviere1}), so the result follows from the Poincaré lemma. Notice that in real coordinatres, we have
\begin{align}\label{poinca2}
    \D^{\perp}\vec{L}=\D\H-3\D^N\H+\star\left(\D^{\perp}\n\wedge \H\right).
\end{align}
By virtue of \eqref{new_Linfty_H} and \eqref{new_Linfty_n}, we directly from either \eqref{poinca1} or \eqref{poinca2}
\begin{align}\label{}
    e^{\lambda}|\D\L|&\leq C\left(\frac{1}{|z|}\delta(|z|)+\delta^2(|z|)+\np{\D\n}{2}{A(2,z)}\delta(|z|)\right)\leq \frac{C}{|z|}\delta(|z|),
\end{align}
where we used the control $\np{\D\n}{2}{A(2,z)}\leq \epsilon_1(n)$ that follwos from .
Then, as in \cite{quanta}, we have for all $2a\leq r_1\leq r_2\leq \dfrac{b}{2}$
\begin{align}\label{second_quanta1}
    \int_{r_1}^{r_2}r\delta^2(r)\,dr\leq \log(4)(1+\epsilon_1(n))\epsilon_1(n)\int_{B_{r_2}\setminus\bar{B}_{r_1}(0)}e^{2\lambda}|\H|^2dx.
\end{align}
The estimate ($4.12$) from \cite{pointwise} is 
\begin{align}\label{second_quanta2}
    e^{\lambda(z)}|\vec{L}(z)-\vec{L}_{|z|}|\leq e^{\lambda(z)}\int_{\partial B(0,|z|)}|\D\vec{L}|d\mathscr{H}^1\leq Ce^{2\Gamma_1E}\delta(|z|),
\end{align}
Therefore, we deduce by \eqref{second_quanta1} and \eqref{second_quanta2} that the estimate ($4.13$) from \cite{pointwise} becomes
\begin{align}
    \int_{\Omega_{\frac{1}{2}}}|\vec{L}(z)-\vec{L}_{|z|}|^2e^{2\lambda(z)}|dz|^2\leq Ce^{2\Gamma_1E}\int_{\Omega}e^{2\lambda}|\H|^2dx.
\end{align}
Now, no change in the strategy from \cite{quanta} is needed to obtain the pointwise estimate
\begin{align}
    e^{\lambda}|\vec{L}(z)|\leq \frac{C}{|z|}e^{2\Gamma_1E}\np{e^{\lambda}\H}{2}{\Omega}^2,
\end{align}
which implies the bound (that actually precedes it in the proof)
\begin{align*}
    \np{e^{\lambda}\vec{L}}{2,\infty}{\Omega_{1/2}}\leq Ce^{2\Gamma_1E}\np{e^{\lambda}\H}{2}{\Omega}^2.
\end{align*}
Now, the rest of the proof is unchanged, and we deduce thanks to the Jacobian systems
\begin{align}
\left\{\begin{alignedat}{2}
    \Delta S&=-\star\D\n\cdot\D^{\perp}\vec{R}\qquad&&\text{in}\;\, \Omega\\
    \Delta \vec{R}&=(-1)^n\star\left(\D\n\antires\D^{\perp}\vec{R}\right)+\star\D\n\cdot\D^{\perp}S\qquad&&\text{in}\;\, \Omega
    \end{alignedat}\right.
\end{align}
that
\begin{align*}
    \left\{\begin{alignedat}{2}
        &\np{\D^2 S}{1}{\Omega_{1/2}}+\np{\D S}{2,1}{\Omega_{1/2}}+\np{S}{\infty}{\Omega_{1/2}}\leq Ce^{2\Gamma_1E}\left(1+\np{\D\n}{2}{\Omega}\right)^2\np{e^{\lambda}\H}{2}{\Omega}\\
        &\np{\D^2 \vec{R}}{1}{\Omega_{1/2}}+\np{\D \vec{R}}{2,1}{\Omega_{1/2}}+\np{\vec{R}}{\infty}{\Omega_{1/2}}\leq C_1(n)e^{C_1(n)E}\left(1+\np{\D\n}{2}{\Omega}\right)^2\np{e^{\lambda}\H}{2}{\Omega}.
        \end{alignedat}\right.
\end{align*}
Recalling the algebraic identity, we have
\begin{align}\label{algebraic_mean_curvature_R_S}
    e^{2\lambda}\H=\frac{1}{4}\D^{\perp}S\cdot\D\phi-\frac{1}{4}\D\vec{R}\res \D^{\perp}\phi,
\end{align}
which implies by the Minkowski inequality that
\begin{align*}
    \np{e^{\lambda}\H}{2}{\Omega}\leq \frac{1}{2\sqrt{2}}\left(\np{\D S}{2}{\Omega}+\np{\D\vec{R}}{2}{\Omega}\right),
\end{align*}
Finally, we have
\begin{align}
\left\{\begin{alignedat}{1}
    &\np{\D S}{2,1}{\Omega_{1/2}}+\np{\D\vec{R}}{2,1}{\Omega_{1/2}}\leq Ce^{2\Gamma_1E}\left(1+\np{\D\n}{2}{\Omega}\right)^2\left(\np{\D S}{2}{\Omega}+\np{\D\vec{R}}{2}{\Omega}\right)\\
    &\np{S}{\infty}{\Omega_{1/2}}+\np{\vec{R}}{\infty}{\Omega_{1/2}}\leq Ce^{2\Gamma_1E}\left(1+\np{\D\n}{2}{\Omega}\right)^2\left(\np{\D S}{2}{\Omega}+\np{\D\vec{R}}{2}{\Omega}\right),
    \end{alignedat}\right.    
\end{align}
while the estimate on $\H$ and $\D\H$ follow from \eqref{algebraic_mean_curvature_R_S}. Indeed, we first have
\begin{align*}
    \np{e^{\lambda}\H}{2,1}{\Omega_{1/2}}\leq \np{\D S}{2,1}{\Omega_{1/2}}+\np{\D\vec{R}}{2,1}{\Omega_{1/2}}\leq Ce^{2\Gamma_1E}\left(1+\np{\D\n}{2}{\Omega}\right)\np{e^{\lambda}\H}{2}{\Omega}.
\end{align*}
On the other hand, \eqref{algebraic_mean_curvature_R_S} shows that
\begin{align*}
    e^{\lambda}\p{z}\H&=-2(\p{z}\lambda)e^{\lambda}\H+\frac{1}{4}\D^{\perp}(\p{z}S)\cdot e^{-\lambda}\D^{\perp}\phi+\frac{1}{4}\D^{\perp}S\cdot e^{-\lambda}\D(\p{z}\phi)-\frac{1}{4}\D(\p{z}\vec{R})\res \D^{\perp}\phi\\
    &-\frac{1}{4}\D\vec{R}\res e^{-\lambda}\D^{\perp}(\p{z}\phi).
\end{align*}
Recall the following identities
\begin{align*}
    &\H=\frac{1}{2}\Delta_g\phi=\frac{1}{2}e^{-\lambda}\Delta\phi=2\,e^{-2\lambda}\p{z\z}^2\phi\\
    &\h_0=2\,\pi_{\n}(\p{z}^2\phi)=2\,\left(\p{z}^2\phi-2(\p{z}\lambda)\p{z}\phi\right).
\end{align*}
Therefore, we estimate by the identity $|\D^2\phi|^2=2|\p{z}^2\phi|^2+2|\p{z\z}^2\phi|^2$
\begin{align*}
    \np{e^{-\lambda}\D^2\phi}{2,\infty}{\Omega_{{1/2}}}&=\np{e^{\lambda}\H}{2,\infty}{\Omega_{\frac{1}{2}}}+\np{e^{-\lambda}\h_0}{2,\infty}{\Omega_{\frac{1}{2}}}+2\np{\D\lambda}{2}{\Omega_{\frac{1}{2}}}\\
    &\leq 2\left(\np{\D\n}{2}{\Omega_{\frac{1}{2}}}+\np{\D\lambda}{2,\infty}{\Omega_{\frac{1}{2}}}\right)\leq 2E.
\end{align*}
Using the $L^{2,1}/L^{2,\infty}$ duality, we get 
\begin{align*}
    \np{e^{\lambda}\D\H}{1}{\Omega_{\frac{1}{2}}}&\leq 2\np{e^{\lambda}\H}{2,1}{\Omega_{\frac{1}{2}}}\np{\D\lambda}{2,\infty}{\Omega_{\frac{1}{2}}}+\np{\D^2S}{1}{\Omega_{\frac{1}{2}}}+\np{\D S}{2,1}{\Omega_{\frac{1}{2}}}\np{e^{-\lambda}\D^2\phi}{2,\infty}{\Omega_{\frac{1}{2}}}\\
    &\np{\D^2\vec{R}}{1}{\Omega_{\frac{1}{2}}}+\np{\D \vec{R}}{2,1}{\Omega_{\frac{1}{2}}}\np{e^{-\lambda}\D^2\phi}{2,\infty}{\Omega_{\frac{1}{2}}}\\
    &\leq C(1+E)e^{2\Gamma_1E}\left(1+\np{\D\n}{2}{\Omega}\right)\np{e^{\lambda}\H}{2}{\Omega}.
\end{align*}
Finally, all estimates follow by estimating
\begin{align*}
    &(1+E)\leq e^{E}\\
    &\left(1+\np{\D\n}{2}{\Omega}\right)^2\leq \exp\left(2\left(1+\np{\D\n}{2}{\Omega}\right)\right)
\end{align*}
and taking a large enough constant $C_1(n)$.
\end{proof}

\subsection{Pointwise Estimate in Annular Regions for Jacobian Systems}

In this section, we formalise the argument contained in \cite[Lemma III.$1$]{riviere_morse_scs} and prove a general result that allows us to obtain a Hölder-type estimate for all $0<\beta<1$ (it will be important in the asymptotic analysis due to the non-integrable weight that we consider).

First, recall a few technical lemmas from \cite{riviere_morse_scs}. Indeed, our approach from the previous section does not work in the case of a non-trivial logarithmic component as in \cite{lauriv1}.

\begin{lemme}[Lemma F.$1$, \cite{riviere_morse_scs}]\label{lemme_F_1}
    Let $A\in W^{1,2}(B(0,1),\R^n), B\in W^{1,2}(B(0,1),M_n(\R))$. Assume that $u:B(0,1)\rightarrow \R^n$ solves the equation
    \begin{align}
        \Delta u=\D^{\perp}B\cdot \D A\qquad\text{in}\;\, B(0,1).
    \end{align}
    Then, there exists a universal constant $C<\infty$ independent of $n$ such that
    \begin{align}\label{monotonie_optimale}
        \np{\D u}{2}{B_{1/2}\setminus\bar{B}_{1/4}(0)}\leq \frac{1
        }{2}\np{\D u}{2}{B_1\setminus\bar{B}_{1/2}(0)}+C\np{\D A}{2}{B(0,1)}\np{\omega\,\D B}{2}{B(0,1)},
    \end{align}
    where $\omega(x)=|x|\log\left(1+\frac{1}{|x|}\right)\sqrt{\log\left(1+\log\left(\frac{1}{|x|}\right)\right)}$.
\end{lemme}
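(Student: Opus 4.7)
The plan is to split $u=v+h$, where $v\in W^{1,2}_0(B(0,1),\R^n)$ solves the Dirichlet problem $\Delta v=\D^{\perp}B\cdot \D A$ on $B(0,1)$ with $v=0$ on $\partial B(0,1)$, and $h=u-v$ is harmonic on the full disk. By the triangle inequality it suffices to bound the harmonic piece by $\tfrac{1}{2}\np{\D h}{2}{B_1\setminus\bar{B}_{1/2}(0)}$ (plus an error absorbable into the Wente piece) and to bound the Wente piece by the weighted product $C\np{\D A}{2}{B(0,1)}\np{\omega\,\D B}{2}{B(0,1)}$.

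For the harmonic part, since $h$ extends across the whole disk one writes $h(z)=\Re\bigl(\sum_{k\geq 0} a_k z^k\bigr)$ and computes, exactly as in the proof of Lemma~\ref{monotonicity_formula_annulus},
\begin{align*}
\int_{B_r\setminus\bar B_s(0)}|\D h|^2\,dx=\pi\sum_{k\geq 1}k|a_k|^2\bigl(r^{2k}-s^{2k}\bigr).
\end{align*}
The ratio $\bigl((1/2)^{2k}-(1/4)^{2k}\bigr)/\bigl(1-(1/2)^{2k}\bigr)$ is maximal at $k=1$ with value $1/4$, which yields $\np{\D h}{2}{B_{1/2}\setminus \bar B_{1/4}(0)}\leq \tfrac{1}{2}\np{\D h}{2}{B_{1}\setminus\bar B_{1/2}(0)}$.

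For the Wente part $v$, the plan is to use the Green kernel representation $v(x)=\int_{B(0,1)}G(x,y)\,\D^{\perp}B(y)\cdot\D A(y)\,dy$, integrate by parts once so that one derivative lands on $B$, and then apply a Hardy-$\mathrm{BMO}$/Coifman-Lions-Meyer-Semmes pairing exploiting the Jacobian structure of $\D^{\perp}B\cdot\D A$. Concretely, testing $\D v$ against an arbitrary $L^2$-vector field $F$ supported in $B_{1/2}\setminus\bar B_{1/4}(0)$, solving an auxiliary Poisson problem $\Delta \psi=\mathrm{div}(F)$, and pairing $\psi$ with the Jacobian reduces the task to a pointwise control of $\psi$ and $\D\psi$ \emph{near the origin} (i.e.\ far from the support of $F$). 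The weight comes from that pointwise control: since $F$ is concentrated in an annulus of unit scale while $\D B$ is allowed to concentrate at scale $|x|\ll 1$, the off-diagonal behaviour of the Poisson extension produces a kernel whose modulus matches $\omega(x)$ up to constants.

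The hard part is pinning down the precise weight $\omega(x)=|x|\log(1+1/|x|)\sqrt{\log(1+\log(1/|x|))}$. The factor $|x|$ is forced by the pure scaling between the target annulus (scale $1$) and the support of $\D B$; the factor $\log(1+1/|x|)$ is the logarithmic singularity of the $2$-D Green's function; but the iterated $\sqrt{\log(1+\log(1/|x|))}$ reflects a Lorentz/$\ell^2$-summation of dyadic contributions across the scales separating $|x|$ from $1$, and is required only because the lemma must later be iterated across \emph{all} dyadic subannuli of a neck without accumulating an unbounded logarithm. I expect this iterated-log refinement to be the genuine obstacle, as it is invisible to a crude Wente/Hardy-$\mathrm{BMO}$ pairing and requires either a sharp Chanillo-Semmes type refinement or a direct dyadic decomposition with careful tracking of $\log\log$ losses.
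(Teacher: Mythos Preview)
Your decomposition $u=v+h$ with $v$ solving the Dirichlet problem and $h$ harmonic is exactly the paper's proof, as is your Fourier-series computation for the harmonic piece (the paper writes $u=\varphi+\psi$ and obtains $\np{\D\psi}{2}{B_{1/2}\setminus\bar B_{1/4}}\leq\tfrac12\np{\D\psi}{2}{B_1\setminus\bar B_{1/2}}$ by the same ratio calculation). The triangle-inequality bookkeeping at the end also matches: one picks up a $\tfrac12\np{\D\varphi}{2}{B_1\setminus\bar B_{1/2}}$ from replacing $\psi$ by $u$, which is absorbed into the Wente term with constant $\tfrac32 C_W$.

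The only divergence is the Wente piece, where you spend effort sketching how the weight $\omega$ might arise from Green-kernel off-diagonal decay and a dyadic $\ell^2$ summation. The paper does none of this: it simply invokes the weighted Wente inequality of \cite[Lemma~E.1]{riviere_morse_scs}, which already asserts
\[
\np{\D\varphi}{2}{B_1\setminus\bar B_{1/4}(0)}\leq C_W\,\np{\D A}{2}{B(0,1)}\np{\omega\,\D B}{2}{B(0,1)}
\]
with exactly this $\omega$. So the iterated-log factor you flag as the ``genuine obstacle'' is not something to be derived here---it is the content of Lemma~E.1, taken as a black box. Your heuristic for its origin is reasonable, but for the purposes of the present lemma the proof is complete once you cite that estimate; there is no gap in your argument beyond not knowing that this weighted Wente inequality is already on the shelf.
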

\begin{proof}
    Write $u=\varphi+\psi$, where 
    \begin{align*}
        \left\{\begin{alignedat}{2}
           \Delta \varphi&=\D^{\perp}B\cdot \D A\qquad&&\text{in}\;\, B(0,1)\\
           \varphi&=0\qquad&&\text{in}\;\, \partial B(0,1).
        \end{alignedat}\right.
    \end{align*}
    Since $\psi$ is a harmonic function on $B(0,1)$, there exists a sequence $\ens{a_k}_{k\in\N}\subset \C^n$ such that
    \begin{align*}
        \psi(z)=\Re\left(\sum_{k=0}^{\infty}a_kz^k\right)
    \end{align*}
    Thanks to the identity \eqref{l2_formula}, we have for all $0\leq a\leq b\leq 1$
    \begin{align*}
        \int_{B_b\setminus\bar{B}_a(0)}|\D \psi|^2dx=\pi\sum_{k=1}^{\infty}k|a_k|^2\left(b^{2k}-a^{2k}\right).
    \end{align*}
    Therefore, taking $b=1$ and $a=1/2$, we deduce that 
    \begin{align*}
        \int_{B_1\setminus\bar{B}_{1/2}(0)}|\D \psi|^2dx=\pi\sum_{k=1}^{\infty}|k||a_k|^2\left(1-\frac{1}{4^k}\right),
    \end{align*}
    while (taking $b=1/2$ and $a=1/4$)
    \begin{align}\label{ineq_harmonic_ball}
        \int_{B_{1/2}\setminus\bar{B}_{1/4}(0)}|\D \psi|^2dx&=\pi\sum_{k=1}^{\infty}|k||a_k|^2\left(\frac{1}{4^k}-\frac{1}{16^k}\right)=\pi\sum_{k=1}^{\infty}\frac{|k|}{4^k}|a_k|^2\left(1-\frac{1}{4^k}\right)\nonumber\\
        &\leq \frac{1}{4}\,\pi\sum_{k=1}^{\infty}|k||a_k|^2\left(1-\frac{1}{4^k}\right)=\frac{1}{4}\int_{B_1\setminus\bar{B}_{1/2}(0)}|\D \psi|^2dx.
    \end{align}
    On the other hand, the weighted Wente inequality of \cite[Lemma E.$1$]{riviere_morse_scs} shows that
    \begin{align*}
        \np{\D\varphi}{2}{B_1\setminus\bar{B}_{1/4}(0)}\leq C_W\np{\D A}{2}{B(0,1)}\np{\omega\,\D B}{2}{B(0,1)}.
    \end{align*}
    Therefore, we deduce that
    \begin{align*}
        \np{\D u}{2}{B_{1/2}\setminus\bar{B}_{1/4}(0)}&\leq \np{\D \varphi}{2}{B_{1/2}\setminus\bar{B}_{1/4}(0)}+\np{\D\psi}{2}{B_{1/2}\setminus\bar{B}_{1/4}(0)}\\
        &\leq \frac{1}{2}\np{\D\psi}{2}{B_1\setminus\bar{B}_{1/2}(0)}+\np{\D\varphi}{2}{B_{1/2}\setminus\bar{B}_{1/4}(0)}\\
        &\leq \frac{1}{2}\np{\D u}{2}{B_{1}\setminus\bar{B}_{1/2}(0)}+\frac{1}{2}\np{\D\varphi}{2}{B_1(0)\setminus\bar{B}_{1/2}(0)}+\np{\D\varphi}{2}{B_1\setminus\bar{B}_{1/2}(0)}\\
        &\leq \frac{1}{2}\np{\D u}{2}{B_1\setminus\bar{B}_{1/2}(0)}+\frac{3}{2}C_W\np{\D A}{2}{B(0,1)}\np{\omega\,\D B}{2}{B(0,1)},
    \end{align*}
    which concludes the proof of the the lemma.
\end{proof}

\begin{lemme}[Lemma F.$2$, \cite{riviere_morse_scs}]\label{lemme_F_2}
     Let $A\in W^{1,2}(B(0,1),\R^n), B\in W^{1,2}(B(0,1),M_n(\R))$. Assume that $u:B(0,1)\rightarrow \R^n$ solves the equation
    \begin{align}
        \Delta u=\D^{\perp}B\cdot \D A\qquad\text{in}\;\, B(0,1).
    \end{align}    
    Then, there exists a universal constant $C<\infty$ such that for all $0<\alpha<1$ and for all $k\in \N$
    \begin{align}\label{ineq_F20}
        \np{\D u}{2}{A_k}\leq \frac{1}{2^k}\np{\D u}{2}{A_0}+\frac{C}{(1-\alpha)^{\frac{5}{2}}}\np{\D A}{2}{B(0,1)}\left(\sum_{l=0}^{\infty}\frac{1}{2^{2\alpha|l-k+1|}}\int_{A_l}|\D B|^2dx\right)^{\frac{1}{2}},
    \end{align}
    where $A_k=B_{2^{-k}}\setminus\bar{B}_{2^{-(k+1)}}(0)$ for all $k\in\N$.
\end{lemme}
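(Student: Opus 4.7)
The strategy is to iterate Lemma \ref{lemme_F_1} on the dyadic sub-balls $B(0,2^{-j})$ for $j=0,1,\ldots,k-1$ and to sum the resulting weighted Wente remainders with care. The first step is to rescale: for each $j$, setting $u_j(y):=u(2^{-j}y)$ on $B(0,1)$, the equation $\Delta u=\D^\perp B\cdot\D A$ is scale-invariant, so $u_j$ satisfies the same equation with rescaled data, and applying Lemma \ref{lemme_F_1} followed by undoing the rescaling yields
\begin{equation*}
\np{\D u}{2}{A_{j+1}}\leq\tfrac{1}{2}\np{\D u}{2}{A_j}+C\np{\D A}{2}{B_{2^{-j}}(0)}\np{\omega_j\D B}{2}{B_{2^{-j}}(0)},
\end{equation*}
where $\omega_j(x):=\omega(2^jx)$ is the pulled-back weight. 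A straightforward telescoping from $j=0$ to $k-1$ then produces the first term of the claimed inequality together with an accumulated Wente contribution:
\begin{equation*}
\np{\D u}{2}{A_k}\leq\frac{1}{2^k}\np{\D u}{2}{A_0}+C\np{\D A}{2}{B(0,1)}\sum_{j=0}^{k-1}2^{-(k-1-j)}\np{\omega_j\D B}{2}{B_{2^{-j}}(0)}.
\end{equation*}

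The second step is to control the rescaled weight $\omega_j$ dyadically. For $x\in A_l$ with $l\geq j$ one has $2^j|x|\in[2^{-(l-j+1)},2^{-(l-j)}]$, which combined with the explicit form $\omega(y)=|y|\log(1+1/|y|)\sqrt{\log(1+\log(1/|y|))}$ gives the pointwise bound $\omega_j(x)^2\leq C\cdot 4^{-(l-j)}(l-j+1)^2\log(l-j+2)$. The elementary maximisation $\sup_{t\geq 0}(1+t)^2\log(2+t)\,e^{-2(1-\alpha)(\log 2)\,t}\leq C(1-\alpha)^{-3}$ then converts this into the uniform bound
\begin{equation*}
\omega_j(x)^2\leq\frac{C}{(1-\alpha)^3}\,4^{-\alpha(l-j+1)}\qquad\text{for every }\alpha\in(0,1),\ x\in A_l,\ l\geq j.
\end{equation*}

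To conclude, I would apply Cauchy--Schwarz to the geometric sum $\sum_{j=0}^{k-1}2^{-(k-1-j)}\leq 2$, reducing the control of the Wente contribution to that of
\begin{equation*}
\sum_{j=0}^{k-1}2^{-(k-1-j)}\np{\omega_j\D B}{2}{B_{2^{-j}}(0)}^2\leq\frac{C}{(1-\alpha)^3}\sum_{l=0}^{\infty}\np{\D B}{2}{A_l}^2\,S_k(l)
\end{equation*}
after inserting the pointwise weight estimate and swapping the order of summation, where $S_k(l):=\sum_{j=0}^{\min(k-1,l)}2^{-(k-1-j)}4^{-\alpha(l-j+1)}$. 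Computing this truncated geometric series explicitly in the two regimes $l\geq k-1$ and $l<k-1$, and absorbing the denominator $2^{1+2\alpha}-1$ together with the remaining logarithmic factors inherited from $\omega$ into a factor $(1-\alpha)^{-1}$, one obtains the target bound $S_k(l)\leq C(1-\alpha)^{-1}\,2^{-2\alpha|l-k+1|}$. Collecting the $(1-\alpha)^{-3}$ from the pointwise weight bound with this $(1-\alpha)^{-1}$ and taking a square root produces the prefactor $C(1-\alpha)^{-5/2}$ appearing in the statement. The main obstacle I anticipate lies in the inner regime $l<k-1$: there the sum defining $S_k(l)$ is truncated by the support condition $j\leq l$ rather than by the iteration length $j\leq k-1$, so a naive estimate only gives $S_k(l)\lesssim 2^{-|l-k+1|}$. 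Producing the sharper $2^{-2\alpha|l-k+1|}$ decay for $\alpha>\tfrac{1}{2}$ requires a delicate balancing of the geometric ratios $2^{1+2\alpha}$ against the logarithmic growth of $\omega_j$, and is precisely what generates the blow-up $(1-\alpha)^{-5/2}$ as $\alpha\uparrow 1$.
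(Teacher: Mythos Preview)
Your iteration of Lemma~\ref{lemme_F_1} and the pointwise dyadic bound $\omega_j(x)^2\leq C(1-\alpha)^{-3}\,4^{-\alpha(l-j+1)}$ on $A_l$ are both correct and match the paper. The gap is in the Cauchy--Schwarz step. By splitting $2^{-(k-1-j)}$ symmetrically you arrive at
\[
S_k(l)=\sum_{j=0}^{\min(k-1,l)}2^{-(k-1-j)}\,4^{-\alpha(l-j+1)},
\]
and for the inner regime $l<k-1$ this quantity is \emph{not} bounded by $C(1-\alpha)^{-1}2^{-2\alpha|l-k+1|}$: the single term $j=l$ already contributes $4^{-\alpha}\,2^{-(k-1-l)}$, so $S_k(l)\gtrsim 2^{-|l-k+1|}$, and for $\alpha>\tfrac12$ the ratio $2^{-|l-k+1|}/2^{-2\alpha|l-k+1|}=2^{(2\alpha-1)|l-k+1|}$ is unbounded in $l$. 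There is no ``delicate balancing against the logarithmic growth of $\omega_j$'' available at this stage, because you have already absorbed all the $\omega$-behaviour into the $(1-\alpha)^{-3}$ prefactor; the obstruction is purely the geometric weight $2^{-(k-1-j)}$ left inside $S_k$.

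The missing idea, which the paper uses, is an \emph{$\alpha$-asymmetric} Cauchy--Schwarz: write $2^{-(k-1-j)}=2^{-(1-\alpha)(k-1-j)}\cdot 2^{-\alpha(k-1-j)}$ and pair the second factor with $\np{\omega_j\D B}{2}{B_{2^{-j}}}$. The first factor then sums to $\big(\sum_j 2^{-2(1-\alpha)(k-1-j)}\big)^{1/2}\lesssim (1-\alpha)^{-1/2}$, while the inner sum governing the $\D B$ term becomes $\sum_{j\leq\min(k-1,l)}2^{-2\alpha(k-1-j)}4^{-\alpha(l-j+1)}$, whose dominant contribution at $j=\min(k-1,l)$ already carries the correct $2^{-2\alpha|l-k+1|}$ decay in both regimes. (Equivalently, in the paper's indexing $j\mapsto k-1-j$, one splits $2^{-j}=2^{-(1-\alpha)j}2^{-\alpha j}$ before Cauchy--Schwarz and keeps the $\D A$-norm on the shrinking ball in the first factor; since only $\np{\D A}{2}{B(0,1)}$ appears in the conclusion this makes no difference.) Collecting the $(1-\alpha)^{-1/2}$ with the $(1-\alpha)^{-2}$ coming from the weight estimate under the square root recovers the exponent $\tfrac52$.
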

\begin{proof}
    Thanks to an immediate scaling argument, we deduce that for all $k\in\N$
    \begin{align}\label{iteration1}
        \np{\D u}{2}{A_{k+1}}\leq \frac{1}{2}\np{\D u}{2}{A_k}+C\np{\D A}{2}{B(0,2^{-k})}\np{\omega_k\,\D B}{2}{B(0,2^{-k}(0)}.
    \end{align}
    where $\omega_k(x)=\omega(2^kx)$. Therefore, an immediate induction shows that
    \begin{align*}
        \np{\D u}{2}{A_{k}}\leq \frac{1}{2^{k}}\np{\D u}{2}{A_0}+C\sum_{j=0}^{k-1}\frac{1}{2^{j}}\np{\D A}{2}{B(0,2^{-(k-1-j)}}\np{\omega_{k-1-j}\D B}{2}{B(0,2^{-(k-1-j)})}.
    \end{align*}
    Using the Cauchy-Schwarz inequality, we deduce that for all $0\leq \alpha\leq 1$
    \begin{align*}
        &\sum_{j=0}^{k-1}\frac{1}{2^j}\np{\D A}{2}{B(0,2^{-(k-1-j)})}\np{\omega_{k-1-j}\D B}{2}{B(0,2^{-(k-1-j)}}\\
        &\leq \left(\sum_{j=0}^{k-1}\frac{1}{2^{2(1-\alpha) j}}\int_{B(0,2^{k-1-j})}|\D A|^2dx\right)^{\frac{1}{2}}\left(\sum_{j=0}^{k-1}\frac{1}{2^{2\alpha\,j}}\int_{B(0,2^{-(k-1-j)})}f(2^{k-1-j}x)|\D B|^2dx\right)^{\frac{1}{2}},%\\
        %&= \left(\sum_{j=0}^{k-1}\frac{1}{2^{(1-\alpha)j}}\sum_{l=k-1-j}^{\infty}\int_{A_l}|\D A|^2dx\right)^{\frac{1}{2}}\left(\sum_{j=0}^{k-1}\frac{1}{2^{\alpha\,j}}\sum_{l=k-1-j}^{\infty}\int_{A_l}f(2^{k-1-j}x)|\D B|^2dx\right)^{\frac{1}{2}},
    \end{align*}
    where $f=\omega^2$.
    Now, for all $\ens{\varphi_{k,j}}_{k,j\in\N}\in L^2(B(0,1))$ and for all $0<\alpha<1$, we have
    \begin{align}\label{fubini_discrete}
        &\sum_{j=0}^{k-1}\frac{1}{2^{2\alpha\,j}}\int_{B(0,2^{k-1-j})}|\varphi_{k,j}|^2dx=\sum_{j=0}^{k-1}\frac{1}{2^{2\alpha\,j}}\sum_{l=k-1-j}^{\infty}\int_{A_l}|\varphi_{k,j}|^2dx=\sum_{l=0}^{\infty}\sum_{j=0}^{k-1}\frac{1}{2^{2\alpha j}}\mathbf{1}_{\ens{k-1-j\leq l}}\int_{A_l}|\varphi_{k,j}|^2dx\nonumber\\
        &=\sum_{l=0}^{\infty}\sum_{j=\max\ens{k-1-l,0}}^{k-1}\frac{1}{2^{2\alpha j}}\int_{A_l}|\varphi_{k,j}|^2dx=\sum_{l=0}^{k-1}\sum_{j=k-1-l}^{k-1}\frac{1}{2^{2\alpha j}}\int_{A_l}|\varphi_{k,j}|^2dx+\sum_{l=k}^{\infty}\sum_{j=0}^{k-1}\frac{1}{2^{2\alpha j}}\int_{A_l}|\varphi_{k,j}|^2dx.
    \end{align}
    Applying this inequality to $\ens{\varphi_{k,j}}_{k,j\in\N}=\ens{\D A}_{k,j\in\N}$, we get
    \begin{align}\label{dyadic_final1}
        \sum_{j=0}^{k-1}\frac{1}{2^{2(1-\alpha) j}}\int_{B(0,2^{k-1-j})}|\D A|^2dx&=\sum_{l=0}^{k-1}\int_{A_l}|\D A|^2dx\sum_{j=k-1-l}^{k-1}\frac{1}{2^{2(1-\alpha)j}}+\sum_{l=k}^{\infty}\int_{A_l}|\D A|^2dx\sum_{j=0}^{k-1}\frac{1}{2^{2(1-\alpha)j}}\nonumber\\
        &\leq \frac{2^{2(1-\alpha)}}{2^{2(1-\alpha)}-1}\sum_{l=0}^{k-1}\frac{1}{2^{2(1-\alpha)l}}\int_{A_l}|\D A|^2+\frac{2^{2(1-\alpha)}}{2^{2(1-\alpha)}-1}\sum_{l=k}^{\infty}\int_{A_l}|\D A|^2dx\nonumber\\
        &\leq \frac{4}{2^{2(1-\alpha)-1}}\int_{B(0,1)}|\D A|^2.
    \end{align}
    On the other hand, using that for all $l\in \N$ and $x\in A_l$
    \begin{align*}
        f(2^{k-1-j}x)\leq 2^{2(k-1-j-l)}\log^2(1+2^{l-(k-j)+2})\log\left(1+(l-(k-j)+2)\log(2)\right),
    \end{align*}
    we deduce that
    \begin{align*}
        &\sum_{j=0}^{k-1}\frac{1}{2^{2\alpha j}}\int_{B(0,2^{-(k-1-j)})}f(2^{k-1-j}x)|\D B|^2dx\\
        &\leq \sum_{l=0}^{k-1}\int_{A_l}|\D B|^2\sum_{j=k-1-l}^{k-1}\frac{1}{2^{2\alpha j}}2^{-2(l-(k-j)+1)}\log^2(1+2^{l-(k-j)+2})\log\left(1+(l-(k-j)+2)\log(2)\right)\\
        &+\sum_{l=k}^{\infty}\int_{A_l}|\D B|^2dx\sum_{j=0}^{k-1}\frac{1}{2^{2\alpha j}}2^{-2(l-(k-j)+1)}\log^2(1+2^{l-(k-j)+2})\log\left(1+(l-(k-j)+2)\log(2)\right).
    \end{align*}
    First, we have by the elementary inequality $\log(1+x)\leq x$
    \begin{align*}
        &\sum_{j=k-1-l}^{k-1}\frac{1}{2^{2\alpha j}}2^{-2(l-(k-j)+1)}\log^2(1+2^{l-(k-j)+2})\log\left(1+(l-(k-j)+2)\log(2)\right)\\
        &\leq \log^3(2)\sum_{j=k-1-l}^{k-1}(l-(k-j)+2)^3\frac{1}{2^{2\alpha j}}2^{-2(l-(k-j)+1)}\leq \log^3(2)\frac{1}{2^{2\alpha(k-1-l)}}\sum_{i=0}^{l}(i+1)^3\frac{1}{2^{2(1+\alpha)i}}\\
        &\leq \log^3(2)\frac{1}{2^{2\alpha(k-1-l)}}\frac{2^{-4(1+\alpha)}+4\cdot 2^{-2(1+\alpha))}+1}{(1-2^{-2(1+\alpha))})^4}\leq \frac{176\log^3(2)}{27}\frac{1}{2^{2\alpha(k-1-l)}}
    \end{align*}
    where we used
    \begin{align*}
        \sum_{i=0}^{\infty}(i+1)^3x^i=\frac{x^2+4x+1}{(1-x)^4}.
    \end{align*}
    On the other hand, we have
    \begin{align*}
        &\sum_{j=0}^{k-1}\frac{1}{2^{2\alpha j}}2^{-2(l-(k-j)+1)}\log^2(1+2^{l-(k-j)+2})\log\left(1+(l-(k-j)+2)\log(2)\right)\\
        &\leq \frac{1}{2}\log^3(2)2^{-2(l-k-1)}\sum_{j=0}^{k-1}(l-(k-j)+2)^3\frac{1}{2^{2(1+\alpha)j}}\\
        &=\log^3(2)2^{-2(l-k+1)}\sum_{j=0}^{k-1}\left(j^3+3(l-k+2)j^2+3(l-k+2)^2j+(l-k+2)^3\right)\frac{1}{2^{2(1+\alpha)j}}\\
        &=\log^3(2)2^{-2(l-k+1)}\\
        &\times\left(\frac{(k-1)^3\beta^{k+3}-(3k^3-6k^2+4)\beta^{k+2}+(3k^3-3k^2-3k+1)\beta^{k+1}-k^3\beta^k+\beta^3+4\beta^2+\beta}{(1-\beta)^4}\right.\\
        &\left.+3(l-k+2)\frac{-(k-1)^2\beta^{k+2}+(2k^2-2k-1)\beta^{k+1}-k^2\beta^k+\beta^2+\beta}{(1-\beta)^3}\right.\nonumber\\
        &\left.+3(l-k+2)^2\frac{(k-1)\beta^{k+1}-k\beta^{k}+\beta}{(1-\beta)^4}
        +(l-k+2)^3\frac{1-\beta^k}{1-\beta}\right)
    \end{align*}
    where $\beta=2^{-2(1+\alpha)}$. Since the series $\sum_{k=1}^{\infty}k^3\beta^k$ converges, we deduce that there exists a universal constant independent of $0<\alpha<1$ such that
    \begin{align*}
        &\sum_{j=0}^{k-1}\frac{1}{2^{2\alpha j}}2^{-2(l-(k-j)+1)}\log^2(1+2^{l-(k-j)+2})\log\left(1+(l-(k-j)+2)\log(2)\right)\\
        &\leq C\left(1+(l-k+1)^3\right)\frac{1}{2^{2(l-k+1)}}.
    \end{align*}
    Then, for all $0<\alpha<1$ and for all $l\geq k$, we have
    \begin{align*}
        &(l-k+1)^3\frac{1}{2^{2(1-k-1)}}=\frac{1}{2^{2\alpha(l-k+1)}}\frac{(l-k+1)^3}{2^{2(1-\alpha)(l-k-1)}}\leq \frac{1}{2^{2\alpha(l-k+1)}}\sum_{l=k}^{\infty}\frac{(l-k+1)^3}{2^{2(1-\alpha)(l-k+1)}}\\
        &\leq \frac{1}{2^{2\alpha(l-k+1)}}\frac{2^{-4(1-\alpha)}+4\cdot 2^{-2(1-\alpha)}+1}{(1-2^{-2(1-\alpha)})^4}=\frac{1}{2^{2\alpha(l-k+1)}}\frac{2^{4(1-\alpha)}+4\cdot 2^{6(1-\alpha)}+2^{8(1-\alpha)}}{(2^{2(1-\alpha)}-1)^4}
    \end{align*}
    Therefore, we finally deduce that there exists a universal constant $C<\infty$ such that
    \begin{align}\label{dyadic_final2}
        \sum_{j=0}^{k-1}\frac{1}{2^{2\alpha j}}\int_{B(0,2^{-(k-1-j)})}f(2^{k-1-j}x)|\D B|^2dx\leq \frac{C}{(2^{2(1-\alpha)}-1)^4}\sum_{l=0}^{\infty}\frac{1}{2^{2\alpha|l-k+1|}}\int_{A_l}|\D B|^2dx.
    \end{align}
    Thanks to \eqref{dyadic_final1} and \eqref{dyadic_final2}, we deduce that for all $0<\alpha<1$
    \begin{align}\label{inequality_fundamental2}
        &\sum_{j=0}^{k-1}\frac{1}{2^j}\np{\D A}{2}{B(0,2^{-(k-1-j)})}\np{\omega_{k-1-j}\D B}{2}{B(0,2^{-(k-1-j)}}\nonumber\\
        &\leq \frac{C}{(2^{2(1-\alpha)}-1)^{\frac{5}{2}}}\np{\D A}{2}{B(0,1)}\left(\sum_{l=0}^{\infty}\frac{1}{2^{2\alpha|l-k+1|}}\int_{A_l}|\D B|^2dx\right)^{\frac{1}{2}},
    \end{align}
    and
    \begin{align*}
        \np{\D u}{2}{A_k}\leq \frac{1}{2^k}\np{\D u}{2}{A_0}+\frac{C}{(2^{2(1-\alpha)}-1)^{\frac{5}{2}}}\np{\D A}{2}{B(0,1)}\left(\sum_{l=0}^{\infty}\frac{1}{2^{2\alpha|l-k+1|}}\int_{A_l}|\D B|^2dx\right)^{\frac{1}{2}}.
    \end{align*}
    Using the elementary inequality
    \begin{align*}
        2^{2(1-\alpha)}-1\geq 2(1-\alpha)\log(2),
    \end{align*}
    the lemma is proved.
\end{proof}
\begin{rem}
    The proof is symmetric in $A$ and $B$, and we also get
    \begin{align*}
        \np{\D u}{2}{A_k}\leq \frac{1}{2^k}\np{\D u}{2}{A_0}+\frac{C}{(2^{2(1-\alpha)}-1)^{\frac{5}{2}}}\np{\D B}{2}{B(0,1)}\left(\sum_{l=0}^{\infty}\frac{1}{2^{2\alpha|l-k+1|}}\int_{A_l}|\D A|^2dx\right)^{\frac{1}{2}}.
    \end{align*}
\end{rem}

\begin{cor}\label{iteration_partial}
    Let $A\in W^{1,2}(B(0,1),\R^n), B\in W^{1,2}(B(0,1),M_n(\R))$. Assume that $u:B(0,1)\rightarrow \R^n$ solves the equation
    \begin{align}
        \Delta u=\D^{\perp}B\cdot \D A\qquad\text{in}\;\, B(0,1).
    \end{align}    
    Then, there exists a universal constant $C<\infty$ such that for all $0<\alpha<1$ and for all $k\in \N$ and $0\leq j\leq k$
    \begin{align}\label{ineq_F2}
        \np{\D u}{2}{A_k}\leq \frac{1}{2^j}\np{\D u}{2}{A_{k-j}}+\frac{C}{(1-\alpha)^{\frac{5}{2}}}\np{\D A}{2}{B(0,1)}\left(\sum_{l=0}^{\infty}\frac{1}{2^{2\alpha|l-j+1|}}\int_{A_l}|\D B|^2dx\right)^{\frac{1}{2}},
    \end{align}
    where $A_k=B_{2^{-k}}\setminus\bar{B}_{2^{-(k+1)}}(0)$ for all $k\in\N$.
\end{cor}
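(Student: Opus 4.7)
The plan is to deduce Corollary \ref{iteration_partial} from Lemma \ref{lemme_F_2} by a simple rescaling argument, exploiting the scale-invariance of the $L^2$-norm of the gradient in dimension two.

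First, I would set $\rho = 2^{-(k-j)}$ and introduce the rescaled functions $\tilde u(y)=u(\rho y)$, $\tilde A(y)=A(\rho y)$, $\tilde B(y)=B(\rho y)$ on $B(0,1)$. A direct computation using the chain rule gives
\begin{align*}
\Delta \tilde u(y) = \rho^2(\Delta u)(\rho y) = \rho^2\,\D^{\perp}B(\rho y)\cdot \D A(\rho y) = \D^{\perp}\tilde B(y)\cdot \D\tilde A(y),
\end{align*}
so $\tilde u$ satisfies the same Jacobian-type equation on $B(0,1)$, and Lemma \ref{lemme_F_2} applies to the triple $(\tilde u,\tilde A,\tilde B)$.

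Next, I would apply Lemma \ref{lemme_F_2} to $\tilde u$ at level $j$ (in place of the integer $k$ appearing in the statement of that lemma), obtaining
\begin{align*}
\np{\D \tilde u}{2}{A_j}\leq \frac{1}{2^j}\np{\D \tilde u}{2}{A_0}+\frac{C}{(1-\alpha)^{\frac{5}{2}}}\np{\D \tilde A}{2}{B(0,1)}\left(\sum_{l=0}^{\infty}\frac{1}{2^{2\alpha|l-j+1|}}\int_{A_l}|\D \tilde B|^2dx\right)^{\frac{1}{2}}.
\end{align*}
The key point is that, under the change of variables $x=\rho y$, the $l$-th dyadic annulus $A_l$ in the $y$-coordinates corresponds exactly to $A_{k-j+l}$ in the $x$-coordinates. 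Combined with the identity $\int_{B(0,1)}|\D\tilde f|^2dy=\int_{B(0,\rho)}|\D f|^2dx$ for $f\in\{u,A,B\}$, this translates the inequality above into an inequality for $u$ where the annuli $A_0$, $A_j$, $A_l$ in the rescaled coordinates become $A_{k-j}$, $A_k$, $A_{k-j+l}$ respectively.

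After transferring back, the term $\np{\D\tilde A}{2}{B(0,1)}$ is bounded by $\np{\D A}{2}{B(0,1)}$ using $B(0,\rho)\subset B(0,1)$, and the sum over $l\geq 0$ of $\int_{A_{k-j+l}}|\D B|^2dx$ with weight $2^{-2\alpha|l-j+1|}$ is trivially bounded by the full sum $\sum_{l=0}^{\infty}2^{-2\alpha|l-j+1|}\int_{A_l}|\D B|^2dx$ appearing in the statement (after the obvious relabelling of the index of summation). This yields exactly the announced estimate. There is no genuine obstacle in the argument; the only thing to be careful about is the bookkeeping of indices in the relabelling, and the scale-invariance checks which are immediate in two dimensions.
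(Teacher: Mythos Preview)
Your rescaling argument is sound and does produce an estimate of the stated form, but the final ``obvious relabelling'' does not yield the weight printed in the statement. After the substitution $m=k-j+l$ your sum becomes
\[
\sum_{m\ge k-j}\frac{1}{2^{2\alpha|m-k+1|}}\int_{A_m}|\D B|^2\,dx
\;\le\;\sum_{m\ge 0}\frac{1}{2^{2\alpha|m-k+1|}}\int_{A_m}|\D B|^2\,dx,
\]
so the exponent that drops out is $|l-k+1|$, not $|l-j+1|$; these two sums are not comparable in general and no change of index converts one into the other. In fact the paper's own proof --- iterating \eqref{iteration1} exactly $j$ times and then invoking the already-established estimate \eqref{inequality_fundamental2} --- also naturally produces the weight $|l-k+1|$ (since \eqref{inequality_fundamental2} bounds the full sum, hence any sub-sum, by $\sum_l 2^{-2\alpha|l-k+1|}\int_{A_l}|\D B|^2$). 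So the printed $|l-j+1|$ is almost certainly a typo for $|l-k+1|$; with that correction your argument is complete.

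Modulo this point, your route is genuinely different from the paper's. The paper reopens the proof of Lemma~\ref{lemme_F_2}, truncates the iteration after $j$ steps, and reuses the intermediate inequality \eqref{inequality_fundamental2} as a black box. You instead exploit the two-dimensional scale invariance of both the equation and the $L^2$ Dirichlet norm to reduce directly to the statement of Lemma~\ref{lemme_F_2} applied to $(\tilde u,\tilde A,\tilde B)$ at level $j$. Your approach is cleaner and avoids going back into the proof of the lemma; the paper's approach is marginally sharper in that it only uses the sub-sum $\sum_{i=0}^{j-1}$ rather than the full conclusion of the lemma on the rescaled ball, but this gains nothing at the level of the final estimate.
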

\begin{proof}
    Iterating \eqref{iteration1} only $0\leq j\leq k$ times, we get
    \begin{align*}
        \np{\D u}{2}{A_k}\leq \frac{1}{2^{j}}\np{\D u}{2}{A_{k-j}}+C\sum_{i=0}^{j-1}\frac{1}{2^i}\np{\D A}{2}{B(0,2^{-(k-1-i)}}\np{\omega_k\D B}{2}{B(0,2^{-(k-1-j)}},
    \end{align*}
    which shows the expected result thanks to \eqref{inequality_fundamental2}.
\end{proof}

We can now state the main theorem of this section.

\begin{theorem}\label{collar_statement}
    Let $0<4\,a<b\leq 1$, $\Omega=B_b\setminus\bar{B}_a(0)$, and for all $0<t\leq 1$, let $\Omega_t=B_{t\,b}\setminus\bar{B}_{t^{-1}\,a}(0)$. Let $A\in W^{1,(2,1)}(\Omega,\R^n)$ and $B\in W^{1,2}(\Omega,M_n(\R))$, and assume that
    \begin{align}\label{jacobian_system_pointwise}
        \Delta A=\D^{\perp}B\cdot \D A\qquad\text{in}\;\,\Omega.
    \end{align}
    Let 
    \begin{align*}
        \Lambda=\frac{1}{2\pi}\int_{\Omega_{1/2}}\frac{|\D A|}{|x|}dx.
    \end{align*}
    Then, there exists  universal constants $C,\Gamma<\infty$ such that for all $0<\delta<1$,  and for all $0<\beta<1$, if
    \begin{align}\label{epsilon_smallness2}
        \frac{\Gamma}{(1-\beta)^7}\int_{\Omega}|\D B|^2dx\leq \delta^2,
    \end{align}
    then for all $z\in \Omega_{1/2}$, we have
    \begin{align}\label{near_optimal_neck}
        \np{\D A}{2}{B_{2|z|}\setminus\bar{B}_{|z|}(0)}&\leq C\left(\left(\frac{|z|}{b}\right)^{\beta}+\left(\frac{a}{|z|}\right)^{\beta}\right)\np{\D A}{2}{\Omega}
        +\frac{\left(1+C\delta\right)}{\log\left(\frac{b}{4a}\right)}\left(\sqrt{2\pi\log(2)}\,\Lambda+C\np{\D A}{2}{\Omega}\right).
    \end{align}
\end{theorem}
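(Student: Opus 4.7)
The plan is to separate the logarithmic (flux) component of $A$ from the rest, then iterate the geometric decay of Corollary~\ref{iteration_partial} on the remainder after a suitable extension and rescaling. A similar scheme appears in \cite{riviere_morse_scs} for a restricted range of $\beta$; here the key is to track more carefully the $(1-\beta)^{-5/2}$ loss in the dyadic iteration, which forces the exponent $(1-\beta)^{-7}$ in \eqref{epsilon_smallness2}.

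\textbf{Step 1 (flux extraction).} Set $c=\frac{1}{2\pi}\int_{\partial B_r(0)}\partial_\nu A\,d\mathscr{H}^1$ for some fixed $a\leq r\leq b$. Since $\Delta A=\D^{\perp}B\cdot\D A$ is of Jacobian form, Stokes' theorem gives that the map $r\mapsto \frac{1}{2\pi}\int_{\partial B_r}\partial_\nu A$ varies by at most $C\np{\D A}{2}{\Omega}\np{\D B}{2}{\Omega}$. Together with the elementary bound $|c|\log(b/(4a))\leq \Lambda+C\np{\D A}{2}{\Omega}\np{\D B}{2}{\Omega}$ obtained by integrating the pointwise flux inequality against $dr/r$ on $\Omega_{1/2}$, this yields
\begin{align*}
|c|\;\leq\;\frac{\Lambda}{\log\left(\tfrac{b}{4a}\right)}+\frac{C\delta}{\log\left(\tfrac{b}{4a}\right)}\np{\D A}{2}{\Omega}.
\end{align*}
A direct computation then gives $\np{\D(c\log|x|)}{2}{B_{2|z|}\setminus\bar{B}_{|z|}(0)}=|c|\sqrt{2\pi\log 2}$, which reproduces exactly the logarithmic contribution in \eqref{near_optimal_neck}.

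\textbf{Step 2 (iteration on the remainder).} Set $\widehat A=A-c\log|x|$ on $\Omega_{1/2}$; then $\widehat A$ has zero flux through every circle in $\Omega_{1/2}$ and still satisfies $\Delta\widehat A=\D^{\perp}B\cdot \D A$. Using Lemma~\ref{whitney_extension}, extend $\widehat A$ and $B$ from $\Omega_{1/2}$ to $B(0,1)$ after rescaling so that the outer radius is $1$. For $z\in\Omega_{1/2}$ with $|z|\sim 2^{-k}b$, apply Corollary~\ref{iteration_partial} with $j=\lfloor\beta k\rfloor$ and $\alpha=(1+\beta)/2$; together with $\np{\D\widehat A}{2}{B(0,1)}\leq C\np{\D A}{2}{\Omega}+C\Lambda$, this yields the outer-boundary decay $(|z|/b)^\beta\np{\D A}{2}{\Omega}$ modulo an error absorbed by the smallness hypothesis \eqref{epsilon_smallness2}. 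The inner-boundary decay $(a/|z|)^\beta\np{\D A}{2}{\Omega}$ follows by repeating the argument after the conformal inversion $z\mapsto ab/\bar{z}$, which maps $\Omega$ into itself and preserves the Jacobian structure of the equation.

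\textbf{Step 3 (combine and main obstacle).} Adding the contributions from Steps 1 and 2 yields \eqref{near_optimal_neck}, with the $(1+C\delta)$ prefactor in the logarithmic term arising from the cross-error in extracting $c$. The main technical difficulty is the coupling between $\beta$ and the smallness of $B$: naively the factor $(1-\alpha)^{-5/2}$ in Corollary~\ref{iteration_partial}, multiplied by the geometric series $\sum_{l}2^{-2\alpha|l-j+1|}$ appearing in the iteration, would blow up as $\beta\to 1$. The $(1-\beta)^{-7}$ factor in the smallness hypothesis \eqref{epsilon_smallness2} is chosen precisely to absorb these losses while preserving the clean constant in the final estimate.
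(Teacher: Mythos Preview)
The central gap is that you bypass the Wente--harmonic decomposition $A=\varphi+\psi$ on $\Omega$ (with $\varphi$ the Dirichlet solution of $\Delta\varphi=\D^{\perp}\widetilde B\cdot\D\widetilde A$ on the full ball $B(0,1)$ and $\psi$ harmonic on $\Omega$), and this decomposition is not cosmetic: both of your steps break without it.

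In Step~1 your flux bound is off by a factor $\log(b/(4a))$. Since $A$ is not harmonic, the flux $c_r=\tfrac{1}{2\pi}\int_{\partial B_r}\partial_\nu A$ is not constant; integrating $|c|\leq |c_r|+|c-c_r|$ against $dr/r$ gives
\[
|c|\log\!\Big(\tfrac{b}{4a}\Big)\;\leq\;\Lambda+C\np{\D A}{2}{\Omega}\np{\D B}{2}{\Omega}\cdot\log\!\Big(\tfrac{b}{4a}\Big),
\]
not the bound you wrote, so $|c|\leq \Lambda/\log(b/(4a))+C\delta\np{\D A}{2}{\Omega}$ with \emph{no} $1/\log$ in front of the second term. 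That residual $C\delta\np{\D A}{2}{\Omega}$ is not dominated by the right-hand side of \eqref{near_optimal_neck} when $z$ sits in the middle of the annulus. The paper instead extracts the \emph{constant} flux $d$ of the harmonic part $\psi$; because $\D\varphi\in L^{2,1}$ by the improved Wente estimate \eqref{lambda_1} and $\D\psi_0\in L^{2,1}$ by Lemma~\ref{improved_reg_harmonic}, the $L^{2,1}/L^{2,\infty}$ duality gives $\big|2\pi d\log(b/(4a))-2\pi\Lambda\big|\leq C\np{\D A}{2}{\Omega}$ as in \eqref{lambda6}, with the correct $1/\log$ on both terms.

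In Step~2, after Whitney extension your $\widehat A$ no longer satisfies $\Delta\widehat A=\D^{\perp}B\cdot\D A$ on the filled-in region $B_a(0)$, so Corollary~\ref{iteration_partial}---whose proof uses the equation on the full ball $B(0,2^{-(k-i)})$ at each step of the iteration---does not apply to it. Even granting the PDE, a single application with $j=\lfloor\beta k\rfloor$ leaves an additive error $\sim (1-\alpha)^{-5/2}\np{\D A}{2}{\Omega}\np{\D B}{2}{\Omega}\sim\delta\np{\D A}{2}{\Omega}$ which, as above, is not absorbable. The paper's route is to apply Lemma~\ref{lemme_F_2} to the \emph{globally defined} Wente solution $\varphi$, control $\psi_0=\psi-d\log|\cdot|$ by the explicit harmonic estimate of Corollary~\ref{pointwise_annulus_harmonic}, and then feed the resulting dyadic inequality \eqref{collar3} back into itself via the absorption lemma stated in the proof (the variant of \cite[Lemma~G.1]{riviere_morse_scs}); this self-improvement is what converts the additive $\delta$-error into the multiplicative $(1+C\delta)$ prefactor in \eqref{near_optimal_neck} and produces the inner and outer $\beta$-decays simultaneously, making the inversion trick unnecessary.
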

\begin{rem}
    The hypothesis $b\leq 1$ is not restrictive for the previous lemmas apply provided that the conformal class of the annulus is bounded from below.
\end{rem}
\begin{proof}
    Make on $\Omega$ a global expansion
    \begin{align*}
        \D A=\D\varphi+\D\psi, 
    \end{align*}
    where
    \begin{align}
        \left\{\begin{alignedat}{2}
            \Delta \varphi&=\D^{\perp}\widetilde{B}\cdot \D \widetilde{A}\qquad&&\text{in}\;\,B(0,1)\\
            \varphi&=0\qquad&&\text{on}\;\, \partial B(0,1).
    \end{alignedat}\right.
    \end{align}
    and $\widetilde{A}:\C\rightarrow \R^n$ and $\widetilde{B}:\C\rightarrow M_n(\R)$ are controlled extensions of $A$ and $B$ respectively given by Lemma \ref{whitney_extension}. There exists a universal constant $C<\infty$ such that
    \begin{align}\label{lambda_1}
        \np{\D\varphi}{2,1}{B(0,1)}\leq C\np{\D A}{2}{\Omega}\np{\D B}{2}{\Omega}.
    \end{align}
    Now, if 
    \begin{align*}
        d=\frac{1}{2\pi}\int_{\partial B(0,r)}\partial_{\nu}\psi\,d\mathscr{H}^1,
    \end{align*}
    and $\psi_0=\psi-d\,\log|z|$, 
    then thanks to the proof of Corollary \ref{pointwise_annulus_harmonic}, we deduce that for all $z\in \Omega_{1/2}$
    \begin{align*}
        |z||\D \psi_0(z)|\leq \frac{8}{3\sqrt{3\pi}}\left(\frac{|z|}{b}+\frac{a}{|z|}\right)\np{\D\psi_0}{2}{\Omega},
    \end{align*}
    which implies that for all $k\in\N$ such that $A_k=B_{2^{-k}}\setminus\bar{B}_{2^{-(k+1)}}(0)\subset \Omega_{1/2}$
    \begin{align}\label{collar1}
        \np{\D\psi_0}{2}{A_k}\leq 3\left(\frac{2^{-k}}{b}+\frac{a}{2^{-k}}\right)\np{\D\psi_0}{2}{\Omega}.
    \end{align}
    Notice also that thanks to Parseval identity, we have
    \begin{align*}
        \sqrt{\np{\D\psi_0}{2}{\Omega}^2+\np{\D(\psi-\psi_0)}{2}{\Omega}^2}&=\np{\D\psi}{2}{\Omega}\leq \np{\D A}{2}{\Omega}+\np{\D\varphi}{2}{\Omega}\\
        &\leq \left(1+C\np{\D B}{2}{\Omega}\right)\np{\D A}{2}{\Omega}. 
    \end{align*}
    Then, thanks to Lemma \ref{improved_reg_harmonic}, we have
    \begin{align}\label{lambda_2}
        \np{\D \psi_0}{2,1}{\Omega_{1/2}}\leq 64\sqrt{\frac{\pi}{15}}\np{\D\psi_0}{2}{\Omega}\leq 64\sqrt{\frac{\pi}{15}}\left(1+C\np{\D B}{2}{\Omega}\right)\np{\D A}{2}{\Omega}.
    \end{align}
    Therefore, thanks to the $L^{2,1}/L^{2,\infty}$ duality and \eqref{lambda_1}, we deduce that 
    \begin{align}\label{lambda3}
        \int_{\Omega_{1/2}}\frac{|\D\varphi|}{|x|}dx\leq \np{\D\varphi}{2,1}{\Omega_{1/2}}\np{\frac{1}{|x|}}{2,\infty}{\Omega_{1/2}}\leq \sqrt{\pi}C\np{\D A}{2}{\Omega}\np{\D B}{2}{\Omega},
    \end{align}
    while \eqref{lambda_2} shows that 
    \begin{align}\label{lambda4}
        \int_{\Omega_{1/2}}\frac{|\D\psi_0|}{|x|}dx\leq 64\frac{\pi}{\sqrt{15}}\left(1+C\np{\D B}{2}{\Omega}\right)\np{\D A}{2}{\Omega}.
    \end{align}
    Therefore, we have by \eqref{lambda3} and \eqref{lambda4}
    \begin{align}\label{lambda5}
        \int_{\Omega_{1/2}}\frac{|\D u-\D(\psi-\psi_0)|}{|x|}dx\leq \int_{\Omega_{1/2}}\frac{|\D\varphi|}{|x|}dx+\int_{\Omega_{1/2}}\frac{|\D\psi_0|}{|x|}dx\leq C\left(1+\np{\D B}{2}{\Omega}\right)\np{\D A}{2}{\Omega},
    \end{align}
    which implies in particular thanks to the triangle inequality that
    \begin{align}\label{lambda6}
        \left|2\pi\, d\,\log\left(\frac{b}{4a}\right)- 2\pi\Lambda\right|&=\left|\int_{\Omega_{1/2}}\frac{|\D u|}{|x|}dx-\int_{\Omega_{1/2}}\frac{|\D(\psi-\psi_0)|}{|x|}dx\right|\leq  \int_{\Omega_{1/2}}\frac{|\D u-\D(\psi-\psi_0)|}{|x|}dx\nonumber\\
        &\leq C\left(1+\np{\D B}{2}{\Omega}\right)\np{\D A}{2}{\Omega}.
    \end{align}
    Now,
    we deduce thanks to Lemma \ref{lemme_F_2} that for all $k\in\N$ and for all $0<\alpha<1$, there holds 
    \begin{align}\label{collar2}
        \np{\D \varphi}{2}{A_k}\leq \frac{1}{2^k}\np{\D\varphi}{2}{A_0}+\frac{C}{(1-\alpha)^{\frac{5}{2}}}\np{\D \widetilde{B}}{2}{B(0,1)}\left(\sum_{l=0}^{\infty}\frac{1}{2^{2\alpha|l-k+1|}}\int_{A_l}|\D \widetilde{A}|^2dx\right)^{\frac{1}{2}}.
    \end{align}
    Since $\mathrm{supp}(\D \widetilde{A})\subset B_{2b}\setminus\bar{B}_{a/2}(0)$, we have 
    \begin{align*}
        \int_{A_l}|\D \widetilde{A}|^2dx=0
    \end{align*}
    provided that $2^{-l}\leq a/2$, or $l\geq \log_2\left(\dfrac{2}{a}\right)$. Likewise, 
    \begin{align*}
        \int_{A_l}|\D \widetilde{A}|^2dx=0
    \end{align*}
    provided that $2^{-l-1}\geq 2b$, or $l\leq \log_2\left(\dfrac{1}{4b}\right)$. If $N_2=\left[\log_2\left(\dfrac{2}{a}\right)\right]$, and $N_1=\left[\log_2\left(\dfrac{1}{4b}\right)\right]$, then 
    \begin{align*}
        \sum_{l=0}^{\infty}\frac{1}{2^{2\alpha|l-k+1|}}\int_{A_l}|\D \widetilde{A}|^2dx=\sum_{l=N_1}^{N_2}\frac{1}{2^{2\alpha|l-k+1|}}\int_{A_l}|\D \widetilde{A}|^2dx.
    \end{align*}
    Then, provided that $A_k\subset \Omega_{1/2}$, we have by \eqref{lambda_2}, \eqref{collar1}, and \eqref{collar2} the estimate
    \begin{align}\label{collar3}
        &\np{\D A}{2}{A_k}\leq \np{\D\varphi}{2}{A_k}+\np{\D\psi}{2}{A_k}\leq \frac{1}{2^k}\np{\D\varphi}{2}{A_0}+\np{\D\psi_0}{2}{A_k}+\np{\D(\psi-\psi_0)}{2}{A_k}\nonumber\\
        &\leq \frac{C}{2^k}\np{\D A}{2}{\Omega}\np{\D B}{2}{\Omega}+3\left(\frac{2^{-k}}{b}+\frac{a}{2^{-k}}\right)\left(1+C\np{\D B}{2}{\Omega}\right)\np{\D A}{2}{\Omega}\nonumber\\%\np{\D\psi_0}{2}{\Omega}\\
        &+\frac{C}{(1-\alpha)^{\frac{5}{2}}}\np{\D \widetilde{B}}{2}{B(0,1)}\left(\sum_{l=N_1}^{N_2}\frac{1}{2^{2\alpha|l-k+1|}}\int_{A_l}|\D \widetilde{A}|^2dx\right)^{\frac{1}{2}}+\sqrt{2\pi\log(2)}\frac{\Lambda}{\log\left(\frac{b}{4a}\right)}\nonumber\\
        &+\frac{C}{\log\left(\frac{b}{4a}\right)}\left(1+\np{\D B}{2}{\Omega}\right)\np{\D A}{2}{\Omega}\nonumber\\
        &\leq C\left(\frac{2^{-k}}{b}+\frac{a}{2^{-k}}\right)\left(1+\np{\D B}{2}{\Omega}\right)\np{\D A}{2}{\Omega}\nonumber\\
        &+\frac{C}{(1-\alpha)^{\frac{5}{2}}}\np{\D {B}}{2}{\Omega}\left(\sum_{l=N_1}^{N_2}\frac{1}{2^{2\alpha|l-k+1|}}\int_{A_l}|\D \widetilde{A}|^2dx\right)^{\frac{1}{2}}\nonumber\\
        &+\sqrt{2\pi\log(2)}\frac{\Lambda}{\log\left(\frac{b}{4a}\right)}
        +\frac{C}{\log\left(\frac{b}{4a}\right)}\left(1+\np{\D B}{2}{\Omega}\right)\np{\D A}{2}{\Omega},
    \end{align}
    where we used $0<b<1$, to bound the first term, and thanks to \eqref{lambda6} that
    \begin{align*}
        \np{\D(\psi-\psi_0)}{2}{A_k}&=\sqrt{2\pi\,d^2\int_{2^{-(k+1)}}^{2^{-k}}\frac{d\rho}{\rho}}=\sqrt{2\pi\log(2)}\,|d|\\
        &\leq \sqrt{2\pi\log(2)}\frac{\Lambda}{\log\left(\frac{b}{4a}\right)}+\frac{C}{\log\left(\frac{b}{4a}\right)}\left(1+\np{\D B}{2}{\Omega}\right)\np{\D A}{2}{\Omega}
    \end{align*}
    Now, let us prove a variant of \cite[Lemma G.$1$]{riviere_morse_scs}.
    \begin{lemme}
        Let $0<s<t<1$, $N_1,N_2\in\N\cup\ens
        {\infty}$, and $\ens{a_k}_{k\in\N},\ens{b_k}_{n\in\N}\subset \R_+$ be two sequences. Assume that the following inequality holds for all $N_1\leq k\leq N_2$
        \begin{align}\label{G_1_1}
            a_k\leq b_k+\Gamma\left(\sum_{n=0}^{\infty}s^{|n-k+1|}a_n^2\right)^{\frac{1}{2}},
        \end{align}
        where $\Gamma<\infty$ is a fixed real number independent of $k\in\N$. Then, we have for all $N_1\leq k\leq N_2$
        \begin{align}\label{G_1_2}
            \left(\sum_{l=N_1}^{N_2}t^{|l-k+1|}a_l^2\right)^{\frac{1}{2}}\leq \left(\sum_{l=N_1}^{N_2}t^{|l-k+1|}b_l^2\right)^{\frac{1}{2}}+2\,\Gamma\left(\frac{1}{1-s\,t}+\frac{1}{1-s\,t^{-1}}\right)\left(\sum_{l=0}^{\infty}t^{|l-k+1|}a_l^2\right)^{\frac{1}{2}}.
        \end{align}
    \end{lemme}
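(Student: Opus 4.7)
The plan is to apply Minkowski's inequality in a weighted $\ell^2$ norm and then reduce the resulting double sum to a discrete convolution estimate in which the assumption $s<t$ plays the crucial role. For fixed $k\in\ens{N_1,\ldots,N_2}$, write $w_l=t^{|l-k+1|}$ and apply the triangle inequality in $\ell^2(w)$ to the pointwise hypothesis \eqref{G_1_1} summed over $l\in\ens{N_1,\ldots,N_2}$ to obtain
\begin{align*}
\left(\sum_{l=N_1}^{N_2}t^{|l-k+1|}a_l^2\right)^{\frac{1}{2}}\leq \left(\sum_{l=N_1}^{N_2}t^{|l-k+1|}b_l^2\right)^{\frac{1}{2}}+\Gamma\left(\sum_{l=N_1}^{N_2}t^{|l-k+1|}\sum_{n=0}^{\infty}s^{|n-l+1|}a_n^2\right)^{\frac{1}{2}}.
\end{align*}
The first summand is already in the form required by \eqref{G_1_2}, so everything reduces to controlling the third summand.

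By nonnegativity of every term, Fubini allows the exchange
\begin{align*}
\sum_{l=N_1}^{N_2}t^{|l-k+1|}\sum_{n=0}^{\infty}s^{|n-l+1|}a_n^2=\sum_{n=0}^{\infty}a_n^2\,K(n),\qquad K(n):=\sum_{l=N_1}^{N_2}t^{|l-k+1|}s^{|n-l+1|},
\end{align*}
and I would bound $K(n)$ by extending the summation to all of $\Z$ and computing the resulting convolution-type kernel. After the substitutions $j=l-k+1$ and $p=n-k+2$, it takes the form $\sum_{j\in\Z}t^{|j|}s^{|p-j|}$. Splitting into the three ranges $j\leq 0$, $1\leq j\leq p-1$, and $j\geq p$ (assuming $p\geq 1$, the case $p\leq 0$ being symmetric via $j\mapsto -j$) and summing geometric series with ratios $ts$ and $t/s$ yields
\begin{align*}
K(n)\leq \left(\frac{2}{1-ts}+\frac{s}{t-s}\right)t^{|n-k+2|}\leq t^{-1}\left(\frac{2}{1-ts}+\frac{1}{1-s/t}\right)t^{|n-k+1|},
\end{align*}
where I used $\frac{s}{t-s}\leq \frac{1}{1-s/t}$ together with $t^{|n-k+2|}\leq t^{-1}t^{|n-k+1|}$.

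Substituting this bound back and taking square roots gives
\begin{align*}
\Gamma\left(\sum_{l=N_1}^{N_2}t^{|l-k+1|}\sum_{n=0}^{\infty}s^{|n-l+1|}a_n^2\right)^{\frac{1}{2}}\leq \Gamma\,t^{-\frac{1}{2}}\sqrt{\frac{2}{1-ts}+\frac{1}{1-s/t}}\left(\sum_{n=0}^{\infty}t^{|n-k+1|}a_n^2\right)^{\frac{1}{2}},
\end{align*}
and absorbing the factor $t^{-1/2}\sqrt{\ldots}$ into the slightly enlarged expression $2\left(\frac{1}{1-ts}+\frac{1}{1-st^{-1}}\right)$ (valid since $(1-s/t)^{-1}\geq 1$ and by elementary interpolation) produces precisely \eqref{G_1_2}. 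The main obstacle is the convolution calculation itself: the middle range $1\leq j\leq p-1$ gives rise to a geometric sum with ratio $t/s>1$ whose growth must be carefully balanced against the leading factor $s^p$, and this is the only step where the strict inequality $s<t$ is essential, for otherwise $t-s$ would vanish in the denominator. The minor off-by-one index shift between $|n-k+1|$ and $|n-k+2|$ is harmless as it contributes only a universal multiplicative constant.
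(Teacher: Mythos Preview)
Your argument is essentially the paper's own proof: Minkowski in the weighted $\ell^2\big(t^{|l-k+1|}\big)$ norm, followed by Fubini and the discrete convolution bound $\sum_l t^{|l-k+1|}s^{|n-l+1|}\leq C_{s,t}\,t^{|n-k+1|}$, which the paper simply quotes from \cite[Lemma~G.1]{riviere_morse_scs} while you carry it out explicitly and are in fact more careful about the off-by-one shift from $t^{|n-k+2|}$ to $t^{|n-k+1|}$. The only soft spot is your final absorption of the factor $t^{-1/2}$ into the stated constant $2\big(\tfrac{1}{1-st}+\tfrac{1}{1-st^{-1}}\big)$: this requires $t^{-1}(2A+B)\leq 4(A+B)^2$ with $A,B\geq 1$, which holds precisely when $t\geq 1/4$ but fails for small $t$; in the paper's only application one has $t=2^{-2\beta}\in(1/4,1)$, so the issue is harmless there.
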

    \begin{proof}
        Thanks to Minkowski's inequality and Fubini's theorem, we have
        \begin{align*}
            \left(\sum_{k=N_1}^{N_2}t^{|l-k+1|}a_l^2\right)^{\frac{1}{2}}&\leq \left(\sum_{l=N_1}^{N_2}t^{|l-k+1|}b_k^2\right)^{\frac{1}{2}}+\Gamma\left(\sum_{l=N_1}^{N_2}t^{|l-k+1|}\sum_{n=0}^{\infty}s^{|n-k+1|}a_n^2\right)^{\frac{1}{2}}\\
            &=\left(\sum_{l=N_1}^{N_2}t^{|l-k+1|}b_k^2\right)^{\frac{1}{2}}+\Gamma\left(\sum_{n=0}^{\infty}a_n^2\sum_{l=N_1}^{N_2}s^{|n-k+1|}t^{|l-k+1|}\right)^{\frac{1}{2}},
        \end{align*}
        and thanks to the proof of \cite[Lemma G.$1$]{riviere_morse_scs}, we have
        \begin{align*}
            \sum_{l=N_1}^{N_2}s^{|n-k+1|}t^{|l-k+1|}\leq \left(\frac{2}{1-s\,t}+\frac{1}{1-s\,t^{-1}}\right)\,t^{|n-k+1|},
        \end{align*}
        which concludes the proof of the lemma.
    \end{proof}
    Since $\widetilde{A}=A$ on $\Omega$, if 
    \begin{align*}
    \left\{\begin{alignedat}{1}
        &s=\frac{1}{2^{2\alpha}}\\
        &a_k=\np{\D \widetilde{A}}{2}{A_k}\\
        &b_k=C\left(\frac{2^{-k}}{b}+\frac{a}{2^{-k}}+\frac{1}{\log\left(\frac{b}{4a}\right)}\right)\left(1+\np{\D B}{2}{\Omega}\right)\np{\D A}{2}{\Omega}+\sqrt{2\pi\log(2)}\frac{\Lambda}{\log\left(\frac{b}{4a}\right)}\\
        &\Gamma=\frac{C}{(1-\alpha)^{\frac{5}{2}}}\np{\D B}{2}{\Omega}
        \end{alignedat}\right.
    \end{align*}
    then the estimate \eqref{G_1_1} holds provided that $A_k\subset \Omega_{1/2}$, or 
    \begin{align*}
        2^{-k}\leq \frac{b}{2}\qquad \text{and}\;\, 2^{-k-1}\geq 2a,
    \end{align*}
    which gives
    \begin{align*}
        \log_2\left(\frac{2}{b}\right)\leq k\leq \log_2\left(\frac{1}{4a}\right)
    \end{align*}
    Recalling that $N_1=\left[\log_2\left(\frac{1}{4b}\right)\right]$ and $N_2=\left[\log_2\left(\frac{2}{a}\right)\right]$, we deduce that $A_k\subset \Omega_{1/2}$ if and only if $N_1+3\leq k\leq N_2-3$. Therefore, we deduce thanks to the lemma that for all $0<\beta<\alpha<1$, there holds thanks to Minkowski's inequality
    \begin{align*}
        &\left(\sum_{l=N_1+3}^{N_2-3}\frac{1}{2^{2\beta|l-k+1|}}\int_{A_l}|\D\widetilde{A}|^2dx\right)^{\frac{1}{2}}\\
        &\leq C\left(1+\np{\D B}{2}{\Omega}\right)\np{\D A}{2}{\Omega}\left(\left(\sum_{l=N_1}^{N_2}\frac{1}{2^{2\beta|l-k+1|}}\left(\frac{2^{-l}}{b}\right)^2\right)^{\frac{1}{2}}+\left(\sum_{l=N_1}^{N_2}\frac{1}{2^{2\beta|l-k+1|}}\left(\frac{a}{2^{-l}}\right)^2\right)^{\frac{1}{2}}\right)\\
        &+\left(\sum_{l=N_1}^{N_2}\frac{1}{2^{2\beta|l-k+1|}}\right)^{\frac{1}{2}}\left(\sqrt{2\pi\log(2)}\frac{\Lambda}{\log\left(\frac{b}{4a}\right)}+\frac{C}{\log\left(\frac{b}{4a}\right)}\left(1+\np{\D B}{2}{\Omega}\right)\np{\D A}{2}{\Omega}\right)\\
        &+\frac{C}{(1-\alpha)^{\frac{5}{2}}}\left(\frac{1}{\alpha+\beta}+\frac{1}{\alpha-\beta}\right)\np{\D B}{2}{\Omega}\left(\sum_{l=N_1}^{N_2}\frac{1}{2^{2\beta|l-k+1|}}\int_{A_l}|\D\widetilde{A}|^2dx\right)^{\frac{1}{2}}.
    \end{align*}
    Therefore, if we choose $\alpha=\beta+\frac{1-\beta}{2}=\frac{1+\beta}{2}$, then 
    \begin{align*}
        \frac{C}{(1-\alpha)^{\frac{5}{2}}}\left(\frac{1}{\alpha+\beta}+\frac{1}{\alpha-\beta}\right)\leq \frac{C'}{(1-\beta)^{\frac{7}{2}}},
    \end{align*}
    so for all $0<\delta<1$, assuming that 
    \begin{align}\label{bound_B_neck}
        \frac{C'}{(1-\beta)^{\frac{7}{2}}}\np{\D B}{2}{\Omega}\leq \left(1-\frac{1}{1+\delta}\right),
    \end{align}
    we deduce that
    \begin{align}\label{dyadic_end0}
        &\left(\sum_{l=N_1+3}^{N_2-3}\frac{1}{2^{2\beta|l-k+1|}}\int_{A_l}|\D \widetilde{A}|^2dx\right)^{\frac{1}{2}}\leq C(1+\delta)\left(1+\np{\D B}{2}{\Omega}\right)\np{\D A}{2}{\Omega}\nonumber\\
        &\times\left(\left(\sum_{l=N_1}^{N_2}\frac{1}{2^{2\beta|l-k+1|}}\left(\frac{2^{-l}}{b}\right)^2\right)^{\frac{1}{2}}+\left(\sum_{l=N_1}^{N_2}\frac{1}{2^{2\beta|l-k+1|}}\left(\frac{a}{2^{-l}}\right)^2\right)^{\frac{1}{2}}\right)\nonumber\\
        &+(1+\delta)\left(\sum_{l=N_1}^{N_2}\frac{1}{2^{2\beta|l-k+1|}}\right)^{\frac{1}{2}}\left(\sqrt{2\pi\log(2)}\frac{\Lambda}{\log\left(\frac{b}{4a}\right)}+\frac{C}{\log\left(\frac{b}{4a}\right)}\left(1+\np{\D B}{2}{\Omega}\right)\np{\D A}{2}{\Omega}\right)\nonumber\\
        &+\delta \left(\sum_{l=N_1}^{N_1+2}\frac{1}{2^{2\beta|l-k+1|}}\int_{A_l}|\D\widetilde{A}|^2dx+\sum_{l=N_2-2}^{N_2}\frac{1}{2^{2\beta|l-k+1|}}\int_{A_l}|\D\widetilde{A}|^2dx\right)^{\frac{1}{2}}.
    \end{align}
    From this inequality which corresponds to \cite[Proposition III.$1$]{riviere_morse_scs}, the rest of the proof is unchanged. However, for the sake of completeness, let us give the remaining details.     
    We have
    \begin{align*}
        \sum_{l=N_1}^{N_2}\frac{1}{2^{2\beta|l-k+1|}}\left(\frac{2^{-l}}{b}\right)^2=\sum_{l=N_1}^{k-1}\frac{1}{2^{2\beta(k-1-l)}}\left(\frac{2^{-l}}{b}\right)^2+\sum_{l=k}^{N_2}\frac{1}{2^{2\beta(l+1-k)}}\left(\frac{2^{-l}}{b}\right)^2.
    \end{align*}
    Recalling that $N_1=\left[\log_2\left(\frac{1}{4b}\right)\right]$, we get
    \begin{align}\label{dyadic_end1}
        \sum_{l=N_1}^{k-1}\frac{1}{2^{2\beta(k-1-l)}}\left(\frac{2^{-l}}{b}\right)^2&=\frac{1}{2^{2\beta(k-1)}}\frac{1}{b^2}\sum_{l=N_1}^{k-1}\frac{1}{2^{2(1-\beta)l}}=\frac{1}{2^{2\beta(k-1)}}\frac{1}{b^2}\frac{1}{2^{2(1-\beta)N_1}}\sum_{l'=0}^{k--N_1-1}\frac{1}{2^{2(1-\beta)l'}}\nonumber\\
        &\leq \frac{1}{2^{2\beta(k-1)}}\frac{1}{b^2}\frac{1}{2^{2(1-\beta)N_1}}\frac{2^{2(1-\beta)}}{2^{2(1-\beta)}-1}=\frac{1}{2^{2\beta(k-1)}}\frac{1}{b^2}\frac{1}{2^{2(1-\beta)}\log_2\left(\frac{1}{4b}\right)}\frac{2^{2(1-\beta)}}{2^{2(1-\beta)}-1}\nonumber\\
        &=\frac{1}{2^{2\beta(k-1)}}\frac{1}{b^2}(4b)^{2(1-\beta)}\frac{2^{2(1-\beta)}}{2^{2(1-\beta)}-1}=\frac{2^{2(3-2\beta)}}{2^{2(1-\beta)}-1}\left(\frac{2^{-k}}{b}\right)^{2\beta}.
    \end{align}
    Then, trivially have
    \begin{align}\label{dyadic_end2}
        \sum_{l=k}^{N_2}\frac{1}{2^{2\beta(l+1-k)}}\left(\frac{2^{-l}}{b}\right)^2&\leq \left(\frac{2^{-k}}{b}\right)^2\sum_{l'=0}^{\infty}\frac{1}{2^{2(1+\beta) l'}}
        =\frac{2^{2(1+\beta)}}{2^{2(1+\beta)}-1}\left(\frac{2^{-k}}{b}\right)^2\leq \frac{2^{2(1+\beta)}}{2^{2(1+\beta)}-1}\left(\frac{2^{-k}}{b}\right)^{2\beta}.
    \end{align}
    since $0<\beta<1$. Likewise, we have
    \begin{align}\label{dyadic_end3}
        &\sum_{l=N_1}^{k-1}\frac{1}{2^{2\beta(k-1-l)}}\left(\frac{a}{2^{-l}}\right)^2=a^2\frac{1}{2^{2\beta(k-1)}}2^{2\beta N_1}\sum_{l'=0}^{k-N_1-1}2^{2(1+\beta)l'}=a^2\frac{2^{2\beta}}{2^{2\beta(k-N_1)}}\frac{2^{(1+\beta)(k-N_1)}-1}{2^{2(1+\beta)}-1}\nonumber\\
        &\leq \frac{2^{2\beta}}{2^{2(1+\beta)}-1}\left(\frac{a}{2^{-k}}\right)^2\leq \frac{4}{3}\left(\frac{a}{2^{-k}}\right)^{2\beta}.
    \end{align}
    Finally, since $N_2=\left[\log_2\left(\frac{2}{a}\right)\right]$, we have the estimate
    \begin{align}\label{dyadic_end4}
        \sum_{l=k}^{N_2}\frac{1}{2^{2\beta(l-k+1)}}\left(\frac{a}{2^{-l}}\right)^{2}&=a^22^{2\beta(k-1)}\sum_{l=k}^{N_2}2^{2(1-\beta)l}=a^22^{2\beta(k-1)}2^{2(1-\beta)k}\frac{2^{2(1-\beta)(N_2-k+1)}-1}{2^{2(1-\beta)}-1}\nonumber\\
        &\leq \frac{2^{2(1-2\beta)}}{2^{2(1-\beta)}-1}a^22^{2\beta k}2^{2(1-\beta)N_2}=\frac{2^{2(1-2\beta)}}{2^{2(1-\beta)}-1}a^22^{2\beta k}\left(\frac{2}{a}\right)^{2(1-\beta)}\nonumber\\
        &=\frac{2^{2(2-3\beta)}}{2^{2(1-\beta)}-1}\left(\frac{a}{2^{-k}}\right)^{2\beta}.
    \end{align}
    Therefore, thanks to \eqref{dyadic_end1}, \eqref{dyadic_end2}, \eqref{dyadic_end3}, \eqref{dyadic_end4}, and the elementary inequality $\sqrt{x+y}\leq \sqrt{x}+\sqrt{y}$, we deduce that
    \begin{align}\label{dyadic_end5}
        &\left(\sum_{l=N_1}^{N_2}\frac{1}{2^{2\beta|l-k+1|}}\left(\frac{2^{-l}}{b}\right)^2\right)^{\frac{1}{2}}+\left(\sum_{l=N_1}^{N_2}\frac{1}{2^{2\beta|l-k+1|}}\bigg(\frac{a}{2^{-l}}\bigg)^2\right)^{\frac{1}{2}}\nonumber\\
        &\leq \left(\frac{2^{\beta}}{\sqrt{2^{2(1-\beta)}-1}}+\frac{2^{3-2\beta}}{\sqrt{2^{2(1-\beta)}-1}}\right)\left(\left(\frac{2^{-k}}{b}\right)^{\beta}+\bigg(\frac{a}{2^{-k}}\bigg)^{\beta}\right)\nonumber\\
        &\leq \left(2+4\sqrt{\frac{2}{\log(2)}}\frac{1}{\sqrt{1-\beta}}\right)\left(\left(\frac{2^{-k}}{b}\right)^{\beta}+\bigg(\frac{a}{2^{-k}}\bigg)^{\beta}\right),
    \end{align}
    where we used the elementary inequality $2^{2(1-\beta)}\geq 1+2\log(2)(1-\beta)$. Then, we trivially have
    \begin{align*}
        \left(\sum_{l=N_1}^{N_2}\frac{1}{2^{2\beta|l-k+1|}}\right)^{\frac{1}{2}}\leq \sqrt{\frac{2^{2\beta}}{2^{2\beta}-1}}\leq \sqrt{\frac{2}{\log(2)}}\frac{1}{\sqrt{\beta}}.
    \end{align*}
    Then, we can estimate
    \begin{align}\label{dyadic_end6}
        \sum_{k=N_1}^{N_1+2}\frac{1}{2^{2\beta|l-k+1|}}\int_{A_l}|\D \widetilde{A}|^2dx\leq \frac{3}{2^{2\beta(k-1)}}2^{2\beta N_1}\int_{B_{2a}\setminus\bar{B}_{a}(0)}|\D A|^2dx=3\cdot 2^{2\beta}\left(\frac{2^{-k}}{b}\right)^{2\beta}\int_{B_{2a}\setminus\bar{B}_a(0)}|\D A|^2dx,
    \end{align}
    where we used the extra property from the extension Lemma \ref{whitney_extension}. Finally, we have
    \begin{align}\label{dyadic_end7}
        &\sum_{l=N_2-2}^{N_2}\frac{1}{2^{2\beta|l-k-1|}}\int_{A_l}|\D \widetilde{A}|^2dx\leq 3\cdot 2^{2\beta(k-1)}\frac{1}{2^{2\beta(N_2-2)}}\int_{B_b\setminus\bar{B}_{\frac{b}{2}}(0)}|\D A|^2dx\nonumber\\
        &=3\cdot 2^{2\beta(k-1)}2^{4\beta}\left(\frac{a}{2}\right)^{2\beta}\int_{B_b\setminus\bar{B}_{\frac{b}{2}}(0)}|\D A|^2dx=3\bigg(\frac{a}{2^{-k}}\bigg)^{2\beta}\int_{B_b\setminus\bar{B}_{\frac{b}{2}}(0)}|\D A|^2dx
    \end{align}
    Gathering \eqref{dyadic_end0}, \eqref{dyadic_end5}, \eqref{dyadic_end6}, and \eqref{dyadic_end7}, we deduce that 
    \begin{align}\label{dyadic_end8}
        &\left(\sum_{l=N_1}^{N_2}\frac{1}{2^{2\beta|l-k+1|}}\int_{A_l}|\D\widetilde{A}|^2dx\right)^{\frac{1}{2}}\leq \left(\sum_{l=N_1}^{N_2}\frac{1}{2^{2\beta|l-k+1|}}\int_{A_l}|\D\widetilde{A}|^2dx\right)^{\frac{1}{2}}\nonumber\\
        &+\left(\sum_{l=N_1}^{N_1+2}\frac{1}{2^{2\beta|l-k+1|}}\int_{A_l}|\D\widetilde{A}|^2dx+\sum_{l=N_2-2}^{N_2}\frac{1}{2^{2\beta|l-k+1|}}\int_{A_l}|\D\widetilde{A}|^2dx\right)^{\frac{1}{2}}\nonumber\\
        &\leq C(1+\delta)\left(2+4\sqrt{\frac{2}{\log(2)}}\frac{1}{\sqrt{1-\beta}}\right)\left(\left(\frac{2^{-k}}{b}\right)^{\beta}+\bigg(\frac{a}{2^{-k}}\bigg)^{\beta}\right)\left(1+\np{\D B}{2}{\Omega}\right)\np{\D A}{2}{\Omega}\nonumber\\
        &+(1+\delta)\sqrt{\frac{2^{2\beta}}{2^{2\beta}-1}}\left(\sqrt{2\pi\log(2)}\frac{\Lambda}{\log\left(\frac{b}{4a}\right)}+\frac{C}{\log\left(\frac{b}{4a}\right)}\left(1+\np{\D B}{2}{\Omega}\right)\np{\D A}{2}{\Omega}\right)\nonumber\\
        &+(1+\delta)\sqrt{3}(1+2^{\beta})\left(\left(\frac{2^{-k}}{b}\right)^{\beta}+\bigg(\frac{a}{2^{-k}}\bigg)^{\beta}\right)\left(\np{\D A}{2}{B_b\setminus\bar{B}_{\frac{b}{2}}(0)}+\np{\D B}{2}{B_{2a}\setminus\bar{B}_a(0)}\right).
    \end{align}
    Now, coming back to \eqref{collar3}, and applying the inequality for $\alpha=\beta$ (this step explains why we had to precisely keep track of constants), we finally obtain
    \begin{align*}
        &\np{\D A}{2}{A_k}\leq C\left\{\left(\frac{2^{-k}}{b}+\frac{a}{2^{-k}}\right)+\frac{1}{(1-\beta)^3}\np{\D B}{2}{\Omega}\left(\left(\frac{2^{-k}}{b}\right)^{\beta}+\bigg(\frac{a}{2^{-k}}\bigg)^{\beta}\right)\right\}\\
        &\times \left(1+\np{\D B}{2}{\Omega}\right)\np{\D A}{2}{\Omega}
        +\left(1+\frac{C}{(1-\beta)^{\frac{5}{2}}}\np{\D B}{2}{\Omega}\sqrt{\frac{2^{2\beta}}{2^{2\beta}-1}}\right)\left(\sqrt{2\pi\log(2)}\frac{\Lambda}{\log\left(\frac{b}{4a}\right)}\right.\\
        &\left.+\frac{C}{\log\left(\frac{b}{4a}\right)}\left(1+\np{\D B}{2}{\Omega}\right)\np{\D A}{2}{\Omega}\right)\\
        &\leq C\left(\left(\frac{2^{-k}}{b}+\frac{a}{2^{-k}}\right)+\sqrt{1-\beta}\left(1-\frac{1}{1+\delta}\right)\left(\left(\frac{2^{-k}}{b}\right)^{\beta}+\bigg(\frac{a}{2^{-k}}\bigg)\right)\right)\left(1+\np{\D B}{2}{\Omega}\right)\np{\D A}{2}{\Omega}\\
        &+\left(1+C\sqrt{1-\beta}\left(1-\frac{1}{1+\delta}\right)\right)\left(\sqrt{2\pi\log(2)}\frac{\Lambda}{\log\left(\frac{b}{4a}\right)}+\frac{C}{\log\left(\frac{b}{4a}\right)}\left(1+\np{\D B}{2}{\Omega}\right)\np{\D A}{2}{\Omega}\right),
    \end{align*}
    where we used \eqref{bound_B_neck}.
    Finally, using the elementary inequality 
    \begin{align*}
        \frac{\delta}{2}\leq 1-\frac{1}{1+\delta}\leq \delta \quad \text{for all}\;\, 0<\delta<1,
    \end{align*}
    we deduce by \eqref{bound_B_neck} that there exists universal constants $C,\Gamma<\infty$ such that 
    \begin{align*}
        \np{\D B}{2}{\Omega}\leq \Gamma\,\delta(1-\beta)^{\frac{7}{2}}
    \end{align*}
    implies that 
    \begin{align*}
        \np{\D A}{2}{A_k}&\leq C\left(\left(\frac{2^{-k}}{b}+\frac{a}{2^{-k}}\right)+\delta\sqrt{1-\beta}\left(\left(\frac{2^{-k}}{b}\right)^{\beta}+\bigg(\frac{a}{2^{-k}}\bigg)^{\beta}\right)\right)\left(1+\np{\D B}{2}{\Omega}\right)\np{\D A}{2}{\Omega}\\
        &+\left(1+C\delta\sqrt{1-\beta}\right)\left(\sqrt{2\pi\log(2)}\frac{\Lambda}{\log\left(\frac{b}{4a}\right)}+\frac{C}{\log\left(\frac{b}{a}\right)}\left(1+\np{\D B}{2}{\Omega}\right)\np{\D A}{2}{\Omega}\right),
    \end{align*}
    that we can also rewrite (for all $z\in \Omega_{1/2}$) as
    \begin{align}\label{ineq_neck_optimal}
        &\np{\D A}{2}{B_{2|z|}\setminus\bar{B}_{|z|}(0)}\leq C\left(\left(\left(\frac{|z|}{b}\right)+\frac{a}{|z|}\right)+\delta\sqrt{1-\beta}\left(\left(\frac{|z|}{b}\right)^{\beta}+\left(\frac{a}{|z|}\right)^{\beta}\right)\right)\np{\D A}{2}{\Omega}\nonumber\\
        &+\left(1+C\delta\sqrt{1-\beta}\right)\left(\sqrt{2\pi\log(2)}\,\Lambda+C\np{\D A}{2}{\Omega}\right),
    \end{align}
    which finally concludes the proof of the theorem. 
    \end{proof}
    \begin{rem}
    \begin{enumerate}
        \item[($1$)] For $B=0$, we recover the estimate for harmonic functions from Corollary \eqref{ineq_harmonic_average}. Indeed, if 
        \begin{align*}
            u(z)=d\,\log|z|+\Re\left(\sum_{k\in\Z}a_kz^k\right)=d\,\log|z|+u_0(z),
        \end{align*}
        we can directly estimate thanks to Lemma \ref{improved_reg_harmonic}
        \begin{align*}
            \int_{\Omega_{1/2}}\frac{|\D u_0|}{|x|}dx\leq C\np{\D u_0}{2}{\Omega}\leq C\np{\D u}{2}{\Omega},
        \end{align*}
        while
        \begin{align*}
            |d|\,\log\left(\frac{b}{4a}\right)=\frac{1}{2\pi}\int_{\Omega_{1/2}}\frac{|\D(u-u_0)|}{|x|}dx,
        \end{align*}
        which implies the general bound that includes the logarithm component. 
        \item[($2$)] Another reason why we were careful in the proof is to obtain an optimal constant for the upper semi-continuity result in the case of collars. Let us take the example of harmonic maps into $S^n$ to make the result clearer. If $u:\Sigma\rightarrow S^{n-1}$ is a harmonic map, thanks to the conservation law
        \begin{align*}
            \dive(u_i\D u_j-u_j\D u_i)=0\quad  \text{for all}\;\, 1\leq i,j\leq n
        \end{align*}
        we can (locally) rewrite the equation $-\Delta u=|\D u|^2u$ as
        \begin{align*}
            -\Delta u=\D^{\perp}B\cdot \D u,
        \end{align*}
        where $\mathrm{curl}B_{i,j}=u_i\D u_j-u_j\D u_i$ for all $1\leq i,j\leq n$. Now, let us define the optimal constant in the $\epsilon$-regularity on annuli. We have the following result (\cite{helein}) that follows from the standard one on balls.
        \begin{theorem}
            Let $0<a<b<\infty$, $\Omega=B_b\setminus\bar{B}_a(0)$, and $u:\Omega\rightarrow S^{n-1}$ be a harmonic map. There exists $\epsilon_0(n)>0$ and a universal constant $C(n)<\infty$ such that
            \begin{align}
                \int_{\Omega}|\D u|^2dx\leq \epsilon_0(n)
            \end{align}
            implies that for all $2a<|z|<b/2$
            \begin{align}
                |z||\D u(z)|\leq C(n)\np{\D u}{2}{B_{3|z|/2}\setminus\bar{B}_{|z|/2}(0)}.
            \end{align}
        \end{theorem}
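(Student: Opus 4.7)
The plan is to deduce the annular $\epsilon$-regularity from its well-known counterpart on balls via a translation-and-rescaling argument, with no essential new analytic input. Recall the classical $\epsilon$-regularity on balls for harmonic maps into spheres (see for example Hélein \cite{helein}): there exist universal constants $\epsilon_0(n)>0$ and $C(n)<\infty$ such that any harmonic map $v:B(0,R)\to S^{n-1}$ satisfying $\int_{B(0,R)}|\D v|^2\,dx\leq\epsilon_0(n)$ obeys the pointwise bound $R\,|\D v(0)|\leq C(n)\,\np{\D v}{2}{B(0,R)}$. This is the only non-trivial input I will need.

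Fix $z\in\Omega$ with $2\,a<|z|<b/2$ and set $R=|z|/2$. First I verify the geometric inclusion
\begin{align*}
    B(z,R)\subset B_{3|z|/2}\setminus \bar{B}_{|z|/2}(0)\subset \Omega,
\end{align*}
which holds since $|z|-R=|z|/2>a$ and $|z|+R=3|z|/2<3b/4<b$ by the hypothesis on $|z|$. Thus the restriction $v=u_{|B(z,R)}:B(z,R)\to S^{n-1}$ is a well-defined harmonic map into $S^{n-1}$, and
\begin{align*}
    \int_{B(z,R)}|\D v|^2\,dx\leq \int_{\Omega}|\D u|^2\,dx\leq \epsilon_0(n).
\end{align*}

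Applying the ball version of $\epsilon$-regularity to the translated map $w\mapsto v(z+w):B(0,R)\to S^{n-1}$ (harmonicity and the $L^2$-norm are translation-invariant) yields
\begin{align*}
    R\,|\D u(z)|\leq C(n)\,\np{\D u}{2}{B(z,R)}\leq C(n)\,\np{\D u}{2}{B_{3|z|/2}\setminus\bar{B}_{|z|/2}(0)},
\end{align*}
and multiplying both sides by $2$ (absorbing the factor into $C(n)$) gives the claimed inequality
\begin{align*}
    |z|\,|\D u(z)|\leq C(n)\,\np{\D u}{2}{B_{3|z|/2}\setminus\bar{B}_{|z|/2}(0)}.
\end{align*}

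There is really no obstacle here beyond bookkeeping: the only point to check is that the ball $B(z,|z|/2)$ stays inside $\Omega$, which is ensured by the range $2a<|z|<b/2$, and that the energy on this ball is controlled by the energy on the concentric annular dyadic piece $B_{3|z|/2}\setminus\bar{B}_{|z|/2}(0)$, which is automatic from the inclusion above. The restriction $|z|<b/2$ (rather than the slightly larger $|z|<2b/3$ that the inclusion alone would allow) is cosmetic and convenient for later applications where dyadic annuli of comparable size are compared. The constant $\epsilon_0(n)$ is exactly the one from the ball version, and $C(n)$ differs from the ball constant only by the factor $2$ coming from $R=|z|/2$.
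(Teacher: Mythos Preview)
Your proposal is correct and is exactly the argument the paper has in mind: the paper does not give a proof but simply states that the result ``follows from the standard one on balls'' (citing \cite{helein}), and your translation-and-rescaling from the ball $\epsilon$-regularity is precisely that deduction. The inclusion $B(z,|z|/2)\subset B_{3|z|/2}\setminus\bar{B}_{|z|/2}(0)\subset\Omega$ and the absorption of the factor $2$ into $C(n)$ are the only bookkeeping points, and you handle them correctly.
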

        Recall that the second derivative of the Dirichlet energy for harmonic maps into spheres is given by
        \begin{align*}
            Q_u(w)=\int_{\Sigma}\left(|dw|^2_g-|du|_g^2|w|^2\right)d\vg.
        \end{align*}
        Therefore, if $C(n)<\infty$ is the optimal constant in this inequality, the proof of \cite[Lemma V.$1$]{riviere_morse_scs} and \cite[Lemma IV.$1$]{riviere_morse_scs} (see Theorem \ref{eigenvalue_harmonic_annulus} above) shows that $\Lambda^{\ast}$ can be estimated from below by
        \begin{align*}
            \Lambda^{\ast}\geq \Lambda,
        \end{align*}
        where $\Lambda$ is such that
        \begin{align*}
            \Lambda\sqrt{2\pi\log(2)}C(n)\leq \pi,
        \end{align*}
        which gives us the estimate
        \begin{align*}
            \Lambda^{\ast}\geq \frac{1}{C(n)}\sqrt{\frac{\pi}{2\log(2)}}.
        \end{align*}
        In other words, if $\mathcal{L}_k$ is the maximum length of the \emph{images} of collars by a sequence of harmonic maps $\ens{u_k}_{k\in\N}$ from $\Sigma$ into $S^{n-1}$ is such that
        \begin{align*}
            \limsup_{k\rightarrow \infty}\mathcal{L}_k<\frac{1}{C(n)}\sqrt{\frac{\pi}{2\log(2)}},
        \end{align*}
        then \cite[Theorem I.$2$]{riviere_morse_scs} holds true (assuming that the other hypotheses of the theorem hold).
        We believe that this estimate is optimal. Due to the instability of geodesics of length larger than $\pi$ on the sphere $S^2$, we expect that 
        \begin{align*}
            C(2)=\sqrt{2\pi\log(2)}=2.08690\cdots
        \end{align*}
        Notice that this estimate is verified in all cases covered in \cite{riviere_morse_scs} (although it will be slightly modified depending on the shape of the second derivative) and works for Willmore immersions too.
    \end{enumerate}
    \end{rem}

    \subsection{Extension of the Results for General Divergence Equations}

    In this section, we extend the previous results to an equation of the form
    \begin{align*}
        \Delta u=\D^{\perp}B\cdot \D A+\dive(C),
    \end{align*}
    where $C\in W^{1,2}(B(0,1),M_{n, 2}(\R))$, and $\dive C=(\dive C_1,\cdots,\dive C_n)\in \R^n$, where $C_i\in W^{1,2}(B(0,1),\R^2)$. In the case of collar regions, the non-trivial residue makes such a divergence quantity in the weak Willmore equation (\cite{lauriv1}). 
    
    Let us start by proving a variant of \cite[Lemma E.$1$]{riviere_morse_scs} for another type of divergence equations.

    \begin{lemme}\label{cz_lemme_F1}
        Let $C\in L^2(B(0,1),\R^2)$, and let $u:B(0,1)\rightarrow \R$ be a solution of the equation:
        \begin{align}\label{cz_system}
        \left\{\begin{alignedat}{2}
            \Delta u&=\dive(C)\qquad&&\text{in}\;\, B(0,1)\\
            u&=0\qquad&&\text{on}\;\,\partial B(0,1).
            \end{alignedat}\right.
        \end{align}
        Then, for all $\alpha>2\sqrt{2}=2.828\cdots$, we have
        \begin{align*}
            \int_{B(0,1)}|x|^{\alpha}\frac{u^2}{|x|^2}dx+\int_{B(0,1)}|x|^{\alpha}|\D u|^2dx\leq 25\left(\frac{\alpha^2+1}{\alpha^2-8}\right)^2\int_{B(0,1)}|x|^{\alpha}|C|^2dx.
        \end{align*}
    \end{lemme}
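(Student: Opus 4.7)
\medskip

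\noindent\textbf{Proof plan.}
The natural starting point is to test the equation $\Delta u = \dive(C)$ against the weighted function $|x|^\alpha u$ and integrate by parts. Using the identity $\dive(|x|^{\alpha-2} x) = \alpha\,|x|^{\alpha-2}$ valid in $\R^2$ together with $2u(x\cdot\D u) = x\cdot\D(u^2)$, a direct computation yields the Pohozaev-type energy identity
\begin{align*}
\int_{B(0,1)} |x|^\alpha |\D u|^2\, dx - \frac{\alpha^2}{2} \int_{B(0,1)} |x|^{\alpha-2} u^2\, dx = \int_{B(0,1)} |x|^\alpha \,C\cdot\D u\, dx + \alpha \int_{B(0,1)} |x|^{\alpha-2} u\,(x\cdot C)\, dx.
\end{align*}
The key obstacle is the coefficient $-\alpha^2/2$ in front of $\int |x|^{\alpha-2} u^2$, which has the wrong sign and cannot be absorbed by a global weighted Hardy inequality on $H^1_0(B(0,1))$: the best Hardy constant for functions in this space is $\alpha^2/4$, which is strictly smaller than $\alpha^2/2$.

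To recover coercivity I would Fourier-decompose $u(r,\theta) = \sum_{k\in\Z} u_k(r) e^{ik\theta}$ and $C$ analogously. The orthogonal modes decouple in the energy identity, each one giving
\begin{align*}
\int_0^1 r^{\alpha+1} |u_k'|^2\, dr + \Bigl(k^2 - \frac{\alpha^2}{2}\Bigr) \int_0^1 r^{\alpha-1} |u_k|^2\, dr = \mathrm{RHS}_k,
\end{align*}
where $\mathrm{RHS}_k$ is a linear combination of terms of the form $\int r^{(\cdots)} u_k C_{r,k}$, $\int r^{(\cdots)} u_k' C_{r,k}$, $\int r^{(\cdots)} u_k C_{\theta,k}$. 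The sharp mode-wise weighted Hardy inequality $\int r^{\alpha+1}(u_k')^2\, dr \geq (\alpha^2/4) \int r^{\alpha-1} u_k^2\, dr$ (which is attained as an infimum by the limit $u_k \sim -\log r$ for $k=0$), together with the contribution $k^2\int r^{\alpha-1} u_k^2$ from the angular derivative, produces coercivity of the left-hand side with coefficient proportional to $(k^2 - \alpha^2/4)/(k^2 + \alpha^2/4)$, positive precisely when $|k| > \alpha/2$. For the radial mode $k=0$, the ODE collapses to $u_0' = C_{r,0}$ after using regularity at the origin, and a direct weighted Cauchy-Schwarz gives $\int_0^1 r^{\alpha-1} u_0^2\, dr \leq (4/\alpha^2) \int_0^1 r^{\alpha+1} C_{r,0}^2\, dr$.

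For the low nonradial modes, the explicit Green's function built from the homogeneous solutions $r^{|k|}$ and $r^{-|k|}$ provides an integral representation for $u_k$ from which a weighted Schur-type estimate yields the mode-wise bound. The bottleneck is the mode $|k|=2$: writing its identity as $b_2 - \tfrac{1}{2}(\alpha^2-8)\,a_2 = \mathrm{RHS}_2$ shows that the margin controlling coercivity is exactly $\alpha^2-8$, which is positive precisely when $\alpha > 2\sqrt 2$ and produces the denominator $(\alpha^2-8)$ appearing in the statement. Summing the mode-wise inequalities and controlling the right-hand side by Cauchy-Schwarz with a single parameter chosen to equilibrate the $\D u$-type and $u/|x|$-type contributions then closes the estimate with constant of the form $25 \bigl((\alpha^2+1)/(\alpha^2-8)\bigr)^2$.

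The main obstacle is therefore the analysis of the critical low mode $|k|=2$: the ``global'' weighted Hardy inequality is too weak to handle the coefficient $-\alpha^2/2$ whenever $\alpha > \sqrt{2}$, so one is forced into the Fourier decomposition and must track the precise mode-wise coercivity. The restriction $\alpha > 2\sqrt 2$ emerges as the exact threshold at which the critical mode $|k|=2$ regains coercivity, and no analogous estimate can hold uniformly up to this threshold without the blow-up factor $(\alpha^2-8)^{-2}$.
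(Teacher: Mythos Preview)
Your route through angular Fourier decomposition is genuinely different from the paper's. The paper works entirely by direct weighted integration by parts, with no mode splitting at all: from $\int|x|^{\beta+2}|\D u|^2 = -\int|x|^{\beta+2}u\,\Delta u - (\beta+2)\int|x|^{\beta}u\,(x\cdot\D u)$ and the scalar identity $\int|x|^{\beta}u(x\cdot\D u) = -\tfrac{\beta+2}{2}\int|x|^{\beta}u^2$, it inserts the equation, applies Cauchy's inequality twice, and then feeds the weighted Hardy inequality $\int|x|^{\beta}u^2 \le \tfrac{4}{(\beta+2)^2}\int|x|^{\beta+2}|\D u|^2$ back into the resulting bound for $\int|x|^{\beta}u^2$ (rather than for $\int|x|^{\alpha}|\D u|^2$, as you would). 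The absorption condition it obtains is $\tfrac{8}{(\beta+2)^2}<1$, i.e.\ $\alpha=\beta+2>2\sqrt{2}$, and optimising the Cauchy parameter produces the explicit constant. So the paper claims the direct route \emph{does} close; your energy identity with coefficient $-\alpha^2/2$ is exactly what this same chain of integrations by parts yields, and you should compare your computation with the paper's equation labelled \eqref{basic_id2}, where the corresponding coefficient is recorded as~$2$.

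Your Fourier plan has a concrete gap at the step where you locate the threshold. You write the mode-$2$ identity as $b_2 - \tfrac12(\alpha^2-8)\,a_2 = \mathrm{RHS}_2$ and claim coercivity when $\alpha^2-8>0$, but the sign is backwards: for $\alpha>2\sqrt{2}$ the coefficient of $a_2$ is \emph{negative}, so the bare identity gives no upper bound on $a_2$. Invoking the sharp one-dimensional Hardy inequality $b_k\ge\tfrac{\alpha^2}{4}a_k$ gives effective margin $k^2-\alpha^2/4$, which at $k=2$ equals $4-\alpha^2/4$ and is positive for $\alpha<4$, not for $\alpha>2\sqrt{2}$. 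Near $\alpha=2\sqrt{2}$ the modes genuinely requiring separate treatment are $|k|=0,1$; your $k=0$ argument is correct, but a Green's-function estimate for $|k|=1$ (built from $r^{\pm 1}$) will involve $\alpha$ through quantities like $\alpha^2-4$, not $\alpha^2-8$. As sketched, the Fourier route therefore does not explain where the specific threshold $2\sqrt{2}$ or the denominator $(\alpha^2-8)^2$ comes from.
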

    \begin{proof}
        First, by \cite[Lemma $5.6$]{eigenvalue_annuli}, for all $\beta>-2$, we have
        \begin{align}\label{id_ipp}
            \int_{B(0,1)}|x|^{\beta}u^2dx=-\frac{2}{\beta+2}\int_{B(0,1)}|x|^{\beta}u\,(x\cdot \D u)dx,
        \end{align}
        which also implies by Cauchy-Schwarz inequality that
        \begin{align}\label{basic_id}
            \int_{B(0,1)}|x|^{\beta}u^2dx\leq \frac{4}{(\beta+2)^2}\int_{B(0,1)}|x|^{\beta}\left(x\cdot \D u\right)^2dx\leq \frac{4}{(\beta+2)^2}\int_{B(0,1)}|x|^{\beta+2}|\D u|^2dx.
        \end{align}
        Now, integrating by parts, we have
        \begin{align}\label{basic_id2}
            \int_{B(0,1)}|x|^{\beta+2}|\D u|^2dx&=-\int_{B(0,1)}|x|^{\beta+2}u\,\Delta u\,dx-(\beta+2)\int_{B(0,1)}|x|^{\beta}u(x\cdot \D u)dx\nonumber\\
            &=-\int_{B(0,1)}|x|^{\beta+2}u\,\Delta u\,dx+2\int_{B(0,1)}|x|^{\beta}u^2dx.
        \end{align}
        Using the Euler-Lagrange equation and integrating by parts again, we deduce that
        \begin{align}\label{basic_id3}
            -\int_{B(0,1)}|x|^{\beta+2}u\,\Delta u\,dx&=-\int_{B(0,1)}|x|^{\beta+2}u\,\dive(C)\,dx\nonumber\\
            &=(\beta+2)\int_{B(0,1)}|x|^{\beta}u(x\cdot C)dx+\int_{B(0,1)}|x|^{\beta+2}\D u\cdot C\,dx.
        \end{align}
        Therefore, by \eqref{basic_id2}, \eqref{basic_id3}, and Cauchy's inequality, we deduce that
        \begin{align*}
            &\int_{B(0,1)}|x|^{\beta+2}|\D u|^2dx=(\beta+2)\int_{B(0,1)}|x|^{\beta}u(x\cdot C)\,dx+\int_{B(0,1)}|x|^{\beta+2}\D u\cdot C+2\int_{B(0,1)}|x|^{\beta}u^2dx\\
            &\leq \epsilon\int_{B(0,1)}|x|^{\beta+2}|\D u|^2dx+\frac{1}{4\epsilon}\int_{B(0,1)}|x|^{\beta+2}|C|^2dx+(\beta+2)\int_{B(0,1)}|x|^{\beta}u(x\cdot C)\,dx+2\int_{B(0,1)}|x|^{\beta}u^2dx,
        \end{align*}
        which shows by another application of Cauchy's inequality that
        \begin{align}\label{basic_id4}
            \int_{B(0,1)}|x|^{\beta+2}|\D u|^2dx&\leq \frac{1}{4\epsilon(1-\epsilon)}\int_{B(0,1)}|x|^{\beta+2}|C|^2dx+\frac{\beta+2}{1-\epsilon}\int_{B(0,1)}|x|^{\beta}u(x\cdot C)\,dx\nonumber\\
            &+\frac{2}{1-\epsilon}\int_{B(0,1)}|x|^{\beta}u^2dx\nonumber\\
            &\leq \frac{1+(\beta+2)^2}{4\epsilon(1-\epsilon)}\int_{B(0,1)}|x|^{\beta+2}|C|^2dx+\frac{2+\epsilon}{1-\epsilon}\int_{B(0,1)}|x|^{\beta}u^2dx.
        \end{align}
        Putting together \eqref{basic_id} and \eqref{basic_id4}, we finally deduce that
        \begin{align}\label{basic_id5}
            \int_{B(0,1)}|x|^{\beta}u^2dx\leq \frac{1+(\beta+2)^2}{\epsilon(1-\epsilon)(\beta+2)^2}\int_{B(0,1)}|x|^{\beta+2}|C|^2dx+\frac{2+\epsilon}{1-\epsilon}\frac{4}{(\beta+2)^2}\int_{B(0,1)}|x|^{\beta}u^2dx.
        \end{align}
        Since 
        \begin{align*}
            \frac{8}{(\beta+2)^2}<1
        \end{align*}
        if and only if $\beta>2(\sqrt{2}-1)$, and $0<\epsilon<1$ is arbitrary, we deduce that for all $\beta>2(\sqrt{2}-1)$, there exists $0<\epsilon_{\beta}<1$ such that
        \begin{align*}
            \frac{2+\epsilon}{1-\epsilon}\frac{4}{(\beta+2)^2}<1.
        \end{align*}
        Explicitly, we can choose
        \begin{align*}
            \epsilon_{\beta}=\frac{1}{2}\frac{(\beta+2)^2-8}{4+(\beta+2)^2},
        \end{align*}
        which yields by \eqref{basic_id5} the inequality
        \begin{align}\label{basic_id6}
            \int_{B(0,1)}|x|^{\beta}u^2dx\leq \frac{1+(\beta+2)^2}{\epsilon_{\beta}(1-\epsilon_{\beta})(\beta+2)^2}\left(1-\frac{2+\epsilon_{\beta}}{1-\epsilon_{\beta}}\frac{4}{(\beta+2)^2}\right)^{-1}\int_{B(0,1)}|x|^{\beta+2}|C|^2dx.
        \end{align}
        Make the change of variable $\alpha=\beta+2$. Let us estimate the constant explicitly. We have
        \begin{align*}
            \epsilon=\frac{1}{2}\frac{\alpha^2-8}{\alpha^2+4}, 
        \end{align*}
        which implies that
        \begin{align*}
            &1-\epsilon=\frac{1}{2}\frac{2(\alpha^2+4)-(\alpha^2-8)}{\alpha^2+4}=\frac{1}{2}\frac{\alpha^2+16}{\alpha^2+4}\\
            &\frac{1}{1-\epsilon}=\frac{2(\alpha^2+4)}{\alpha^2+16},
        \end{align*}
        while
        \begin{align*}
            2+\epsilon=\frac{1}{2}\frac{4(\alpha^2+4)+\alpha^2-8}{\alpha^2+4}=\frac{1}{2}\frac{5\alpha^2+8}{\alpha^2+4}
        \end{align*}
        so that
        \begin{align*}
            \frac{2+\epsilon}{1-\epsilon}=\frac{5\alpha^2+8}{\alpha^2+16}.
        \end{align*}
        Therefore, we have
        \begin{align*}
            \frac{2+\epsilon}{1-\epsilon}\frac{4}{\alpha^2}=\frac{4(5\alpha^2+8)}{\alpha^2(\alpha^2+16)},
        \end{align*}
        and finally
        \begin{align*}
            1-\frac{2+\epsilon}{1-\epsilon}\frac{4}{\alpha^2}=\frac{\alpha^4-4\alpha^2-32}{\alpha^2(\alpha^2+16)}=\frac{(\alpha^2-8)(\alpha^2+4)}{\alpha^2(\alpha^2+16)}
        \end{align*}
        Then, we have
        \begin{align*}
            \epsilon(1-\epsilon)=\frac{1}{4}\frac{(\alpha^2+16)(\alpha^2-8)}{(\alpha^2+4)^2}\\
            \frac{1+\alpha^2}{\epsilon(1-\epsilon)\alpha^2}=\frac{4(\alpha^2+1)(\alpha^2+4)^2}{\alpha^2(\alpha^2+16)(\alpha^2-8)},
        \end{align*}
        and finally
        \begin{align*}
            \frac{1+\alpha^2}{\epsilon(1-\epsilon)\alpha^2}\left(1-\frac{2+\epsilon}{1-\epsilon}\frac{4}{\alpha^2}\right)^{-1}&=\frac{4(\alpha^2+1)(\alpha^2+4)^2}{\alpha^2(\alpha^2+16)(\alpha^2-8)}\times \frac{\alpha^2(\alpha^2+16)}{(\alpha^2+4)(\alpha^2-8)}\\
            &=\frac{4(\alpha^2+1)(\alpha^2+4)}{(\alpha^2-8)^2},
        \end{align*}
        which shows that 
        \begin{align}\label{basic_id6bis}
            \int_{B(0,1)}|x|^{\alpha-2}u^2dx\leq \frac{4(\alpha^2+1)(\alpha^2+4)}{(\alpha^2-8)^2}\int_{B(0,1)}|x|^{\alpha}|C|^2dx, 
        \end{align}
        and finally, \eqref{basic_id4} and \eqref{basic_id6bis} show that
        \begin{align}\label{basic_id7}
            \int_{B(0,1)}|x|^{\alpha}|\D u|^2dx&\leq \left(\frac{(\alpha^2+1)(\alpha^2+4)^2}{\alpha^2(\alpha^2+16)(\alpha^2-8)}+\frac{5\alpha^2+8}{\alpha^2+16}\times \frac{4(\alpha^2+1)(\alpha^2+4)}{(\alpha^2-8)^2}\right)\int_{B(0,1)}|x|^{\alpha}|C|^2dx\nonumber\\
            &=\frac{(\alpha^2+1)(\alpha^2+4)}{\alpha^2(\alpha^2+16)(\alpha^2-8)^2}\left((\alpha^2+4)(\alpha^2-8)+4\alpha^2(5\alpha^2+8)\right)\int_{B(0,1)}|x|^{\alpha}|C|^2dx\nonumber\\
            &\leq \frac{(\alpha^2+1)(\alpha^2+4)}{\alpha^2(\alpha^2+16)(\alpha^2-8)^2}\left(21\alpha^4+28\alpha^2-32\right)\int_{B(0,1)}|x|^{\alpha}|C|^2dx.
        \end{align}
        Therefore, \eqref{basic_id6} and \eqref{basic_id7} show that
        \begin{align*}
            \int_{B(0,1)}|x|^{\alpha}\frac{u^2}{|x|^2}dx+\int_{B(0,1)}|x|^{\alpha}|\D u|^2dx\leq \frac{(\alpha^2+1)(\alpha^2+4)}{\alpha^2(\alpha^2+16)(\alpha^2-8)^2}\left(25\alpha^4+102\alpha^2-32\right)\int_{B(0,1)}|x|^{\alpha}|C|^2dx.
        \end{align*}
        Since
        \begin{align*}
            (\alpha^2+4)(25\alpha^4+102\alpha^2)= 25\alpha^6+202\alpha^4+376\alpha^2\leq  25\alpha^6+425\alpha^4+400\alpha^2=25(\alpha^2+1)\alpha^2(\alpha^2+16),
        \end{align*}
        the proof is complete. 
    \end{proof}
    \begin{rem}
        The same proof combined with the $L^{\infty}$ Wente estimate shows that for all $a,b\in W^{1,2}(B(0,1))$, if $u$ solves
        \begin{align}
            \left\{\begin{alignedat}{2}
            \Delta u&=\D^{\perp}a\cdot \D b\qquad&&\text{in}\;\, B(0,1)\\
            u&=0\qquad&&\text{on}\;\,\partial B(0,1),
            \end{alignedat}\right.
        \end{align}
        then for all $\alpha>2\sqrt{2}$ and for all $0\leq \beta\leq 1$, we have
        \begin{align*}
            \int_{B(0,1)}|x|^{\alpha}|\D u|^2dx\leq C_{\alpha}\np{\D a}{2}{\Omega}\np{\D b}{2}{\Omega}\left(\int_{B(0,1)}|x|^{2\alpha\beta}|\D a|^2dx\right)^{\frac{1}{2}}\left(\int_{B(0,1)}|x|^{2\alpha(1-\beta)}|\D b|^2dx\right)^{\frac{1}{2}}
        \end{align*}
    \end{rem}

    We can now extend Lemma \ref{lemme_F_1}.

    \begin{lemme}\label{lemme_F_1_gen}
        Let $A\in W^{1,2}(B(0,1),\R^n)$, $B\in W^{1,2}(B(0,1),M_n(\R))$, and $C\in L^2(B(0,1),M_{n,2}(\R))$ and assume that $u:B(0,1)\rightarrow \R^n$ solves the equation
        \begin{align}
            \Delta u=\D^{\perp}B\cdot \D A+\dive(C)\qquad\text{in}\;\, B(0,1).
        \end{align}
        Then, there exists a universal constant $\Gamma<\infty$ such that for all $\beta>\sqrt{2}$, there holds
        \begin{align}
            \np{\D u}{2}{B_{1/2}\setminus\bar{B}_{1/4}(0)}&\leq \frac{1}{2}\np{\D u}{2}{B_1\setminus\bar{B}_{1/2}(0)}+\Gamma\np{\D A}{2}{B(0,1)}\np{\omega \D B}{2}{B(0,1)}\nonumber\\
            &+2^{2\beta+4}\frac{\beta^2+1}{\beta^2-2}\np{|x|^{\beta}C}{2}{B(0,1)},
        \end{align}
        where $\omega(x)=|x|\log\left(1+\frac{1}{|x|}\right)\sqrt{\log\left(1+\log\left(\frac{1}{|x|}\right)\right)}$.
    \end{lemme}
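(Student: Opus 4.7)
The structure mirrors that of Lemma \ref{lemme_F_1}: I split $u$ into a harmonic part and two ``source'' parts, treat each one by the appropriate tool, and reassemble via the triangle inequality.

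\textbf{Step 1: Decomposition.} Write $u=\varphi_1+\varphi_2+\psi$ on $B(0,1)$, where
\begin{align*}
    \left\{\begin{alignedat}{2}
    \Delta\varphi_1&=\D^{\perp}B\cdot\D A\qquad&&\text{in}\;B(0,1),\\
    \varphi_1&=0\qquad&&\text{on}\;\partial B(0,1),
    \end{alignedat}\right.
    \qquad
    \left\{\begin{alignedat}{2}
    \Delta\varphi_2&=\dive(C)\qquad&&\text{in}\;B(0,1),\\
    \varphi_2&=0\qquad&&\text{on}\;\partial B(0,1),
    \end{alignedat}\right.
\end{align*}
so that $\psi=u-\varphi_1-\varphi_2$ is harmonic on $B(0,1)$. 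This is the natural extension of the splitting used in Lemma \ref{lemme_F_1}, where the additional divergence source is pulled out into its own Dirichlet problem.

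\textbf{Step 2: Harmonic part.} Since $\psi$ is harmonic on $B(0,1)$, the very same Fourier computation leading to \eqref{ineq_harmonic_ball} gives $\np{\D\psi}{2}{B_{1/2}\setminus\bar{B}_{1/4}(0)}\le\tfrac{1}{2}\np{\D\psi}{2}{B_1\setminus\bar{B}_{1/2}(0)}$. Writing $\psi=u-\varphi_1-\varphi_2$ on each side and applying the triangle inequality absorbs the $\D\varphi_j$ contributions on the $B_1\setminus\bar{B}_{1/2}(0)$ side into the source terms that I estimate below; this is exactly the maneuver used in Lemma \ref{lemme_F_1}.

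\textbf{Step 3: Wente part $\varphi_1$.} The weighted Wente estimate \cite[Lemma E.$1$]{riviere_morse_scs} applied to $\varphi_1$ yields
\begin{equation*}
    \np{\D\varphi_1}{2}{B_1\setminus\bar{B}_{1/4}(0)}\le C_W\np{\D A}{2}{B(0,1)}\np{\omega\,\D B}{2}{B(0,1)},
\end{equation*}
which is precisely the middle term in the target inequality, up to renaming the universal constant $C_W$ to $\Gamma$ (after combining with the analogous term coming from Step 2's triangle inequality).

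\textbf{Step 4: Divergence part $\varphi_2$.} This is where Lemma \ref{cz_lemme_F1} enters. With $\alpha=2\beta$, the lemma requires $\alpha>2\sqrt{2}$, that is $\beta>\sqrt{2}$, and gives
\begin{equation*}
    \int_{B(0,1)}|x|^{2\beta}|\D\varphi_2|^2\,dx\le 25\left(\frac{4\beta^2+1}{4\beta^2-8}\right)^{\!2}\int_{B(0,1)}|x|^{2\beta}|C|^2\,dx.
\end{equation*}
Since $|x|\ge 1/4$ on $B_{1/2}\setminus\bar{B}_{1/4}(0)$, I bound $1\le 4^{2\beta}|x|^{2\beta}$ on this region, which produces
\begin{equation*}
    \np{\D\varphi_2}{2}{B_{1/2}\setminus\bar{B}_{1/4}(0)}\le 5\cdot 2^{2\beta}\,\frac{4\beta^2+1}{4(\beta^2-2)}\,\np{|x|^{\beta}C}{2}{B(0,1)}.
\end{equation*}
Since $4\beta^2+1\le 4(\beta^2+1)$, this is bounded by $5\cdot 2^{2\beta}(\beta^2+1)/(\beta^2-2)\le 2^{2\beta+4}(\beta^2+1)/(\beta^2-2)$, which matches the third term in the statement. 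The same bound controls the contribution of $\varphi_2$ coming from the $B_1\setminus\bar{B}_{1/2}(0)$ side in Step 2 (after absorbing a harmless numerical constant into $\Gamma$).

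\textbf{Step 5: Assembly.} Adding the estimates from Steps 2--4 via the triangle inequality gives the claimed bound. The only real work is Step 4; Steps 1--3 are literally the argument of Lemma \ref{lemme_F_1}. The main obstacle is therefore ensuring that the weighted Calder\'on--Zygmund-type bound of Lemma \ref{cz_lemme_F1} really delivers a local $L^2$ estimate on $B_{1/2}\setminus\bar{B}_{1/4}(0)$ with a controllable constant, which it does for every $\beta>\sqrt{2}$ once one multiplies by the explicit weight $|x|^{-2\beta}\le 4^{2\beta}$ on that annulus.
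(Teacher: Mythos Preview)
Your proposal is correct and follows essentially the same route as the paper: the three-term decomposition $u=\varphi_1+\varphi_2+\psi$, the harmonic monotonicity \eqref{ineq_harmonic_ball}, the weighted Wente estimate for $\varphi_1$, and Lemma~\ref{cz_lemme_F1} with $\alpha=2\beta$ for $\varphi_2$ are exactly what the paper does (the paper simply writes the Calder\'on--Zygmund step with an auxiliary parameter $>2\sqrt{2}$ and substitutes $\beta\mapsto 2\beta$ at the very end). One cosmetic remark: in your Step~4/5 the extra factor~$\tfrac{3}{2}$ coming from the $B_1\setminus\bar B_{1/2}$ side of the triangle inequality is absorbed into the explicit constant $2^{2\beta+4}$ (since $\tfrac{3}{2}\cdot 5<16$), not into~$\Gamma$.
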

    \begin{proof}
        Make a decomposition $u=\varphi+\psi+\chi$, where 
        \begin{align*}
            \left\{\begin{alignedat}{2}
                \Delta \varphi&=\D^{\perp}A\cdot \D B\qquad&&\text{in}\;\, B(0,1)\\
                u&=0\qquad&&\text{on}\;\,\partial B(0,1),
            \end{alignedat}\right.
        \end{align*}
        and
        \begin{align*}
            \left\{\begin{alignedat}{2}
                \Delta \chi&=\dive(C)\qquad&&\text{in}\;\, B(0,1)\\
                u&=0\qquad&&\text{on}\;\,\partial B(0,1).
            \end{alignedat}\right.
        \end{align*}
        Since $\psi$ is a harmonic function on the ball $B(0,1)$, the inequality \eqref{ineq_harmonic_ball} shows that
        \begin{align}\label{ext_div1}
            \int_{B_{1/2}\setminus\bar{B}_{1/4}(0)}|\D \psi|^2dx\leq \frac{1}{4}\int_{B_1\setminus\bar{B}_{1/2}(0)}|\D \psi|^2dx.
        \end{align}
        On the other hand, we have by \cite[Lemma E.$1$]{riviere_morse_scs}
        \begin{align}\label{ext_div2}
            \int_{B(0,1)}|x|^2|\D \varphi|^2dx\leq C\int_{B(0,1)}|\D A|^2dx\int_{B(0,1)}|\D B|^2\omega^2dx,
        \end{align}
        which implies that
        \begin{align}\label{ext_div2bis}
            \int_{B_1\setminus\bar{B}_{1/4}(0)}|\D \varphi|^2dx\leq 16\int_{B_1\setminus\bar{B}_{1/4}(0)}|x|^2|\D \varphi|^2dx\leq 16C\int_{B(0,1)}|\D A|^2dx\int_{B(0,1)}|\D B|^2\omega^2dx.
        \end{align}
        Likewise, Lemma \ref{cz_lemme_F1} implies that for all $\beta>2\sqrt{2}$, we have
        \begin{align}\label{ext_div3}
            \int_{B_1\setminus\bar{B}_{1/4}(0)}|\D \chi|^2\leq 2^{2\beta}\int_{B(0,1)}|x|^{\beta}|\D\chi|^2dx\leq 2^{2\beta+5}\left(\frac{\beta^2+1}{\beta^2-8}\right)^2\int_{B(0,1)}|x|^{\beta}|C|^2dx.
        \end{align}
        Gathering \eqref{ext_div1}, \eqref{ext_div2bis}, and \eqref{ext_div3}, we deduce by Minkowski's inequality that
        \begin{align*}
            \np{\D u}{2}{B_{1/2}\setminus\bar{B}_{1/4}(0)}&\leq \np{\D \psi}{2}{B_{1/2}\setminus\bar{B}_{1/4}(0)}+\np{\D\varphi}{2}{B_{1/2}\setminus\bar{B}_{1/4}(0)}+\np{\D\chi}{2}{B_{1/2}\setminus\bar{B}_{1/4}(0)}\\
            &\leq \frac{1}{2}\np{\D\psi}{2}{B_{1}\setminus\bar{B}_{1/2}(0)}+\np{\D\varphi}{2}{B_{1/2}\setminus\bar{B}_{1/4}(0)}+\np{\D\chi}{2}{B_{1/2}\setminus\bar{B}_{1/4}(0)}\\
            &\leq \frac{1}{2}\np{\D u}{2}{B_1\setminus\bar{B}_{1/2}(0)}+\frac{3}{2}\np{\D\varphi}{2}{B_{1/2}\setminus\bar{B}_{1/4}(0)}+\frac{3}{2}\np{\D\chi}{2}{B_{1/2}\setminus\bar{B}_{1/4}(0)}\\
            &\leq \frac{1}{2}\np{\D u}{2}{B_1\setminus\bar{B}_{1/2}(0)}+6\sqrt{C}\np{\D A}{2}{B(0,1)}\np{\omega\D B}{2}{B(0,1)}\\
            &+\frac{3}{2}2^{\beta+\frac{5}{2}}\frac{\beta^2+1}{\beta^2-8}\np{|x|^{\frac{\beta}{2}}C}{2}{B(0,1)}.
        \end{align*}
        Replacing $\beta$ by $2\beta$, the last constant becomes
        \begin{align*}
            \frac{3}{2}2^{2\beta+\frac{5}{2}}\frac{4\beta^2+1}{4\beta^2-8}\leq 2^{2\beta+4}\frac{\beta^2+1}{\beta^2-2},
        \end{align*}
        which concludes the proof of the lemma.
    \end{proof}

    Let us now extend Lemma \ref{lemme_F_2}.

    \begin{lemme}\label{lemme_F_2_gen}
        Let $A\in W^{1,2}(B(0,1),\R^n)$, $B\in W^{1,2}(B(0,1),M_n(\R))$, and $C\in L^{2}(B(0,1),M_{n,2}(\R))$ and assume that $u:B(0,1)\rightarrow \R^n$ solves the equation
        \begin{align}
            \Delta u=\D^{\perp}B\cdot \D A+\dive(C)\qquad\text{in}\;\, B(0,1).
        \end{align}
        Then, there exists a universal constant $\Gamma<\infty$ such that for all $0<\alpha<1$ and for all $\beta>\sqrt{2}$, there holds
        \begin{align}\label{iteration_gen}
            &\np{\D u}{2}{B(0,1)}\leq \frac{1}{2^k}\np{\D u}{2}{A_0}+\frac{\Gamma}{(1-\alpha)^{\frac{5}{2}}}\np{\D A}{2}{B(0,1)}\left(\sum_{l=0}^{\infty}\frac{1}{2^{2\alpha|l-k+1|}}\int_{A_l}|\D B|^2dx\right)^{\frac{1}{2}}\nonumber\\
            &+\Gamma\frac{2^{2\beta}}{\sqrt{1-\alpha}}\left(\frac{\beta^2+1}{\beta^2-2}\right)\left(\sum_{l=0}^{k-1}\frac{1}{2^{2\beta l}}\frac{1}{2^{2\alpha|l-k+1|}}\int_{A_l}|C|^2dx
            +\sum_{l=k}^{\infty}\frac{1}{2^{2\beta|l-k+1|}}\int_{A_l}|C|^2dx\right)^{\frac{1}{2}}
        \end{align}
        where $A_k=B_{2^{-k}}\setminus\bar{B}_{2^{-(k+1)}}(0)$ for all $k\in\N$.
    \end{lemme}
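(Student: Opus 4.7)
The plan is to follow the structure of the proof of Lemma~\ref{lemme_F_2}, using Lemma~\ref{lemme_F_1_gen} in place of Lemma~\ref{lemme_F_1} as the one-step base estimate, and then iterating dyadically while carefully tracking the new divergence contribution through the rescaling.

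First I would rescale Lemma~\ref{lemme_F_1_gen} to each dyadic scale. Setting $v(y)=u(2^{-j}y)$, $\tilde A(y)=A(2^{-j}y)$, $\tilde B(y)=B(2^{-j}y)$ and $\tilde C(y)=2^{-j}C(2^{-j}y)$ (the correct scaling so that $\dive\tilde C$ has the same scaling as $\Delta v$), one verifies that $v$ solves $\Delta v=\D^{\perp}\tilde B\cdot\D\tilde A+\dive(\tilde C)$ on $B(0,1)$. The key identity arising from the change of variable is
\begin{align*}
\||y|^{\beta}\tilde C\|_{L^{2}(B(0,1))}=2^{j\beta}\,\||x|^{\beta}C\|_{L^{2}(B(0,2^{-j}))},
\end{align*}
so that Lemma~\ref{lemme_F_1_gen} applied to $v$ and transported back gives the scale-$2^{-j}$ one-step estimate
\begin{align*}
\|\D u\|_{L^{2}(A_{j+1})}\le \tfrac{1}{2}\|\D u\|_{L^{2}(A_{j})}+\Gamma\|\D A\|_{L^{2}(B(0,2^{-j}))}\|\omega_{j}\D B\|_{L^{2}(B(0,2^{-j}))}+M_{\beta}\,2^{j\beta}\||x|^{\beta}C\|_{L^{2}(B(0,2^{-j}))},
\end{align*}
with $\omega_{j}(x)=\omega(2^{j}x)$ and $M_{\beta}=2^{2\beta+4}(\beta^{2}+1)/(\beta^{2}-2)$.

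Iterating this inequality $k$ times (following verbatim the induction performed in the proof of Lemma~\ref{lemme_F_2}) yields the master inequality
\begin{align*}
\|\D u\|_{L^{2}(A_{k})}\le \tfrac{1}{2^{k}}\|\D u\|_{L^{2}(A_{0})}+\sum_{j=0}^{k-1}\tfrac{1}{2^{j}}\bigl(J_{k-1-j}+D_{k-1-j}\bigr),
\end{align*}
where $J_{i}$ and $D_{i}$ denote the Jacobian and divergence contributions at scale $2^{-i}$. The Jacobian sum $\sum\tfrac{1}{2^{j}}J_{k-1-j}$ is controlled exactly as in the proof of Lemma~\ref{lemme_F_2}: split via Cauchy--Schwarz in $j$ with weight $2^{-2(1-\alpha)j}$, then swap sums via the discrete Fubini identity \eqref{fubini_discrete}, yielding precisely the first stated term with constant $\Gamma(1-\alpha)^{-5/2}$.

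For the divergence sum, I would expand $\||x|^{\beta}C\|_{L^{2}(B(0,2^{-j}))}^{2}=\sum_{l\ge j}\int_{A_{l}}|x|^{2\beta}|C|^{2}dx$, apply Cauchy--Schwarz in $j$ with the same weight $2^{-2(1-\alpha)(k-1-j)}$ (which supplies the prefactor $(1-\alpha)^{-1/2}$), and then swap the order of the resulting $(j,l)$ double sum. The inner geometric sum over $j$ separates into two regimes: for $l\le k-1$ the index $j$ runs from $0$ to $l$ and the sum evaluates to a bounded multiple of $2^{2\beta l}/2^{2\alpha(k-1-l)}$, while for $l\ge k$ the index $j$ runs from $0$ to $k-1$ and telescopes to a bounded multiple of $2^{2(k-1)\beta}$. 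Pairing these coefficients with the weighted integral $\int_{A_{l}}|x|^{2\beta}|C|^{2}dx$ and using the pointwise bound $|x|^{2\beta}\le 2^{-2l\beta}$ on $A_{l}$ produces exactly the two pieces of the divergence term appearing in \eqref{iteration_gen}.

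The main obstacle is the precise bookkeeping of constants and exponents in the Cauchy--Schwarz/Fubini step for the divergence contribution; in particular, one must check that both regimes $l\le k-1$ and $l\ge k$ produce the claimed weights, and that every prefactor is absorbed cleanly into $\Gamma\cdot 2^{2\beta}(\beta^{2}+1)/((\beta^{2}-2)\sqrt{1-\alpha})$. The hypothesis $\beta>\sqrt{2}$ is essential to keep $\beta^{2}-2$ bounded away from zero (inherited through Lemmas~\ref{cz_lemme_F1} and \ref{lemme_F_1_gen}), while the restriction $0<\alpha<1$ ensures convergence of the geometric series $\sum 2^{-2(1-\alpha)i}$ underlying the Cauchy--Schwarz weight, with a singularity of order $(1-\alpha)^{-1/2}$ that is absorbed into the final constant.
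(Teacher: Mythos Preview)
Your approach is correct and matches the paper's proof essentially step for step: rescale Lemma~\ref{lemme_F_1_gen} to obtain the one-step estimate at scale $2^{-j}$, iterate $k$ times, treat the Jacobian sum exactly as in Lemma~\ref{lemme_F_2}, and for the divergence sum apply Cauchy--Schwarz in $j$ (with exponent split $2^{-j}=2^{-(1-\alpha)j}\cdot 2^{-\alpha j}$) followed by the discrete Fubini swap \eqref{fubini_discrete}, which separates the $(j,l)$ double sum into the two regimes $l\le k-1$ and $l\ge k$ with the stated weights. Your indexing is occasionally off by a substitution (e.g.\ the ball should be $B(0,2^{-(k-1-j)})$ rather than $B(0,2^{-j})$ in the expansion, and for $l\le k-1$ the $j$-range is $k-1-l\le j\le k-1$ before reindexing), but these are cosmetic and the argument goes through as you describe.
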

    \begin{proof}
        Fix some $\beta>\sqrt{2}$, and apply Lemma \ref{lemme_F_1_gen} and an immediate scaling argument to deduce that for all $k\in\N$, there holds
        \begin{align*}
            \np{\D u}{2}{A_{k+1}}&\leq \frac{1}{2}\np{\D u}{2}{A_k}+\Gamma\np{\D A}{2}{B(0,2^{-k})}\np{\omega_k\D B}{2}{B(0,2^{-k})}\\
            &+2^{2\beta+4}\frac{\beta^2+1}{\beta^2-2}\np{|2^{k}x|^{\beta}C}{2}{B(0,2^{-k})},
        \end{align*}
        where $\omega_k(x)=\omega(2^{k}x)$. Therefore, an immediate induction shows that
        \begin{align}\label{gen_iteration1}
            \np{\D u}{2}{A_k}&\leq \frac{1}{2^k}\np{\D u}{2}{A_k}+C\sum_{j=0}^{k-1}\frac{1}{2^j}\np{\D A}{2}{B(0,2^{-(k-1-j)})}\np{\omega_{k-1-j}\D B}{2}{B(0,2^{-(k-1-j)})}\nonumber\\
            &+2^{2\beta+4}\frac{\beta^2+1}{\beta^2-2}\sum_{j=0}^{k-1}\frac{1}{2^j}\np{|2^{k-1-j}x|^{\beta} C}{2}{B(0,2^{-(k-1-j)})}.
        \end{align}
        We estimate as in Lemma \ref{lemme_F_2} the second component of the right-hand side of \eqref{gen_iteration1}. Now, using Cauchy-Schwarz inequality, we deduce that for all $0<\delta<1$, there holds
        \begin{align}\label{C_estimate1}
            &\sum_{j=0}^{k-1}\frac{1}{2^j}\np{|2^{k-1-j}x|^{\beta} C}{2}{B(0,2^{-(k-1-j)})}\nonumber\\
            &
            \leq \left(\int_{B(0,1)}\frac{1}{2^{(1-\delta)j}}\right)^{\frac{1}{2}}
            \left(\sum_{j=0}^{k-1}\frac{1}{2^{2\delta j}}\int_{B(0,2^{k-1-j})}|2^{k-1-j}x|^{2\beta}|C|^2dx\right)^{\frac{1}{2}}
            \nonumber\\
            &\leq \sqrt{\frac{2^{1-\delta}}{2^{1-\delta}-1}}\left(\sum_{j=0}^{k-1}\frac{1}{2^{2\delta j}}\int_{B(0,2^{k-1-j})}|2^{k-1-j}x|^{2\beta}|C|^2dx\right)^{\frac{1}{2}}.
        \end{align}
        Now, using the identity \eqref{fubini_discrete}, we get
        \begin{align}\label{C_estimate2}
            \sum_{j=0}^{k-1}\frac{1}{2^{2\delta j}}\int_{B(0,2^{k-1-j})}|2^{k-1-j}x|^{2\beta}|C|^2dx&=\sum_{l=0}^{k-1}\sum_{j=k-1-l}^{k-1}\frac{1}{2^{2\delta j}}\int_{A_l}|2^{k-1-j}x|^{2\beta}|C|^2dx\nonumber\\
            &+\sum_{l=k}^{\infty}\sum_{j=0}^{k-1}\frac{1}{2^{2\delta j}}\int_{A_l}|2^{k-1-j}x|^{2\beta}|C|^2dx.
        \end{align}
        We first estimate
        \begin{align}\label{C_estimate3}
            \sum_{l=0}^{k-1}\sum_{j=k-1-l}^{k-1}\frac{1}{2^{2\delta j}}\int_{A_l}|2^{k-1-j}x|^{2\beta}|C|^2dx\leq \sum_{l=0}^{k-1}\frac{1}{2^{2\beta l}}\int_{A_l}|C|^2dx\sum_{j=k-1-l}^{k-1}\frac{1}{2^{2\delta j}}2^{2\beta(k-1-j)}.
        \end{align}
        Then, we have
        \begin{align}\label{C_estimate4}
            \sum_{j=k-1-l}^{k-1}\frac{1}{2^{2(\beta+\delta)j}}=\frac{1}{2^{2(\beta+\delta)(k-1-l)}}\sum_{i=0}^{l}\frac{1}{2^{2(\beta+\delta)i}}\leq \frac{2^{2(\beta+\delta)}}{2^{2(\beta+\delta)}-1}\frac{1}{2^{2(\beta+\delta)(k-1-l)}},
        \end{align}
        so that
        \begin{align}\label{C_estimate5}
            &\sum_{l=0}^{k-1}\frac{1}{2^{2\beta l}}\int_{A_l}|2^{k-1-j}x|^{2\beta}|C|^2dx\sum_{j=k-1-l}^{k-1}\frac{1}{2^{2\delta j}}2^{2\beta(k-1-j)}\leq \frac{2^{2(\beta+\delta)}}{2^{2(\beta+\delta)}-1}\sum_{l=0}^{k-1}\frac{1}{2^{2\beta l}}\frac{1}{2^{2\delta(k-1-l)}}\int_{A_l}|C|^2dx.
        \end{align}
        On the other hand, there holds
        \begin{align}\label{C_estimate6}
            &\sum_{l=k}^{\infty}\sum_{j=0}^{k-1}\frac{1}{2^{2\delta j}}\int_{A_l}|2^{k-1-j}x|^{2\beta}|C|^2dx\leq \sum_{l=k}^{\infty}\frac{1}{2^{2\beta (l-k+1)}}\int_{A_l}|C|^2dx\sum_{j=0}^{k-1}\frac{1}{2^{2(\beta+\delta)j}}\nonumber\\
            &\leq \frac{2^{2(\beta+\delta)}}{2^{2(\beta+\delta)}-1}\sum_{l=k}^{\infty}\frac{1}{2^{2\beta(l-k+1)}}\int_{A_l}|C|^2dx.
        \end{align}
        Finally, we have by \eqref{C_estimate1}, \eqref{C_estimate2}, \eqref{C_estimate3}, \eqref{C_estimate5}, and \eqref{C_estimate6}
        \begin{align}\label{C_estimate7}
            \sum_{j=0}^{k-1}\frac{1}{2^{2\delta j}}\int_{B(0,2^{k-1-j})}|2^{k-1-j}|^{2\beta}|C|^2dx&\leq \frac{2^{2(\beta+\delta)}}{2^{2(\beta+\delta)}-1}\left(\sum_{l=0}^{k-1}\frac{1}{2^{2\beta l}}\frac{1}{2^{2\delta (l-k+1)}}\int_{A_l}|C|^2dx\right.\nonumber\\
            &\left.+\sum_{l=k}^{\infty}\frac{1}{2^{2\beta(l-k+1)}}\int_{A_l}|C|^2dx\right).
        \end{align}
        Since $\beta>\sqrt{2}$, the theorem follows by taking $\alpha=\delta$.
    \end{proof}

    Finally, we can prove a generalisation of Theorem \ref{collar_statement}.

    \begin{theorem}\label{collar_statement_div}
    Let $0<2\,a<b\leq 1$, $\Omega=B_b\setminus\bar{B}_a(0)$, and for all $0<t\leq 1$, let $\Omega_t=B_{t\,b}\setminus\bar{B}_{t^{-1}\,a}(0)$. Let $A\in W^{1,(2,1)}(\Omega,\R^n)$, $B\in W^{1,2}(\Omega,M_n(\R))$, $C\in L^{2,1}(\Omega,M_{n,2}(\R))$ and assume that
    \begin{align*}
        \Delta A=\D^{\perp}B\cdot \D A+\dive(C)\qquad\text{in}\;\,\Omega.
    \end{align*}
    Let 
    \begin{align*}
        \Lambda=\frac{1}{2\pi}\int_{\Omega_{1/2}}\frac{|\D A|}{|x|}dx.
    \end{align*}
    Then, there exists a universal constants $\Gamma_0,\Gamma_1<\infty$ such that for all $0<\delta<1$,  and for all $0<\beta<1$, if
    \begin{align}\label{epsilon_smallness2_div}
        \frac{\Gamma_0}{(1-\beta)^7}\int_{\Omega}|\D B|^2dx\leq \delta^2,
    \end{align}
    then there exists $C_{\beta}<\infty$ such that for all $z\in \Omega_{1/2}$, we have
    \begin{align}
        \np{\D A}{2}{B_{2|z|}\setminus\bar{B}_{|z|}(0)}&\leq \Gamma\left(\left(\frac{|z|}{b}\right)^{\beta}+\left(\frac{a}{|z|}\right)^{\beta}\right)\left(\np{\D A}{2}{\Omega}+\np{C}{2,1}{\Omega}\right)\nonumber\\
        &
        +\frac{\left(1+C\delta\right)}{\log\left(\frac{b}{4a}\right)}\left(\sqrt{2\pi\log(2)}\,\Lambda+\Gamma_1\left(\np{\D A}{2}{\Omega}+\np{C}{2,1}{\Omega}\right)\right).
    \end{align}
\end{theorem}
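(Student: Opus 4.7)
The plan is to mirror the proof of Theorem \ref{collar_statement} line by line, inserting a third building block to absorb the divergence source. Starting from extensions $\widetilde A,\widetilde B,\widetilde C$ given by Lemma \ref{whitney_extension} (applied componentwise to $C$ as a $W^{-1,2}$ datum, or to a suitable primitive), I would decompose on the whole disk $B(0,1)$
\begin{equation*}
A = \varphi + \psi + \chi,\qquad
\begin{cases}\Delta\varphi = \nabla^{\perp}\widetilde B\cdot\nabla\widetilde A\\\varphi=0\text{ on }\partial B(0,1)\end{cases},\qquad
\begin{cases}\Delta\chi = \dive(\widetilde C)\\\chi=0\text{ on }\partial B(0,1)\end{cases},
\end{equation*}
with $\psi$ harmonic. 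The harmonic part $\psi$ is treated exactly as before: subtracting its logarithmic component $d\log|z|$ and using Corollary \ref{pointwise_annulus_harmonic} and Lemma \ref{improved_reg_harmonic} yields the dyadic decay $(2^{-k}/b+a/2^{-k})\|\nabla A\|_{L^{2}(\Omega)}$ plus the universal $\sqrt{2\pi\log 2}\,\Lambda/\log(b/4a)$ contribution from the logarithm. The Jacobian part $\varphi$ is treated through the extension Lemma \ref{lemme_F_2_gen}, which already absorbs $C$ into the iteration.

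Next I would insert the new ingredient: the scale-invariant estimate of Lemma \ref{cz_lemme_F1} (via its dyadic form already incorporated in Lemma \ref{lemme_F_2_gen}) provides, for $\beta>\sqrt 2$ chosen close to $1$ and $0<\alpha<1$, a dyadic tail controlled by
\begin{equation*}
\sum_{l=0}^{k-1}\frac{1}{2^{2\beta l}}\frac{1}{2^{2\alpha|l-k+1|}}\int_{A_{l}}|\widetilde C|^{2}\,dx+\sum_{l=k}^{\infty}\frac{1}{2^{2\beta|l-k+1|}}\int_{A_{l}}|\widetilde C|^{2}\,dx.
\end{equation*}
I would then run the same geometric-series/Minkowski computation performed in \eqref{dyadic_end1}--\eqref{dyadic_end7} (with $\widetilde C$ replacing $\widetilde A$ in the relevant sums and the $\beta$-weight replacing the $\alpha$-weight where appropriate), which converts each tail into a factor $(|z|/b)^{\beta}+(a/|z|)^{\beta}$ multiplying an $L^{2}$ integral of $C$ over $\Omega$. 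To upgrade the resulting $\|C\|_{L^{2}(\Omega)}$ into the $\|C\|_{L^{2,1}(\Omega)}$ norm appearing in the statement, I would use $L^{2,1}\hookrightarrow L^{2}$ on bounded sets together with the $L^{2,1}/L^{2,\infty}$ duality
\begin{equation*}
\int_{\Omega_{1/2}}\frac{|\nabla\chi|}{|x|}\,dx\leq \|\nabla\chi\|_{L^{2,1}(\Omega_{1/2})}\left\|\frac{1}{|x|}\right\|_{L^{2,\infty}(\Omega_{1/2})}\leq C\,\|C\|_{L^{2,1}(\Omega)},
\end{equation*}
the last inequality coming from the Calderón--Zygmund boundedness of the Newtonian gradient on $L^{2,1}$ applied to $\dive(\widetilde C)$. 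This dual estimate is what produces the correction term $\Gamma_{1}\|C\|_{L^{2,1}(\Omega)}$ that enters the bound on $|d|\log(b/4a)$ (the analogue of \eqref{lambda5}--\eqref{lambda6}), forcing $\|C\|_{L^{2,1}(\Omega)}$ to appear inside the $\Lambda$-type remainder.

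Finally, I would combine the three dyadic inequalities exactly as in \eqref{collar3} and invoke the fixed-point lemma used in the previous proof (the weighted-sum version applied to the sequence $a_{k}=\|\nabla\widetilde A\|_{L^{2}(A_{k})}$), choosing $\alpha=(1+\beta)/2$ so that the self-absorption constant is $C/(1-\beta)^{7/2}\|\nabla B\|_{L^{2}(\Omega)}$; the smallness hypothesis \eqref{epsilon_smallness2_div} is precisely what is needed to close the iteration. Reassembling the three contributions and taking $b\leq 1$ to dominate $2^{-k}/b+a/2^{-k}$ by its $\beta$-power yields the stated estimate with constants $\Gamma_{0},\Gamma_{1}$ absorbing the geometric-series sums. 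The main obstacle I anticipate is not the iteration itself but keeping the $L^{2,1}$ bookkeeping clean on the dyadic annuli $A_{l}$: unlike the $L^{2}$ norms of $\nabla A$, the Lorentz norm does not split additively over the dyadic decomposition, so one must be careful to apply $L^{2,1}/L^{2,\infty}$ duality \emph{after} reconstructing the global logarithmic term via the harmonic residue $d$, rather than inside each annular estimate; this is exactly the step that transforms the $L^{2}$-quadratic contributions of $C$ coming from Lemma \ref{cz_lemme_F1} into the linear $L^{2,1}$ appearance in the final bound.
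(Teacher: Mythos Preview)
Your proposal is correct and follows essentially the same route as the paper: the three-term decomposition $A=\varphi+\chi+\psi$, the Calder\'on--Zygmund bound $\np{\D\chi}{2,1}{B(0,1)}\leq \Gamma_0\np{C}{2,1}{\Omega}$ feeding into the control of the logarithmic residue $d$ via $L^{2,1}/L^{2,\infty}$ duality, the dyadic iteration of Lemma~\ref{lemme_F_2_gen} applied to $\varphi+\chi$, and the absorption step with $\alpha=(1+\beta)/2$ are exactly what the paper does. Two minor simplifications relative to what you anticipate: the paper takes $\widetilde C$ to be the extension of $C$ by zero (no Whitney machinery needed for $C$), and the $L^{2,1}$ bookkeeping worry you raise at the end does not materialise---the dyadic tails coming from Lemma~\ref{lemme_F_2_gen} are $L^2$ quantities, and since $b\leq 1$ one simply uses $\np{C}{2}{\Omega}\lesssim\np{C}{2,1}{\Omega}$ at the very end; the $L^{2,1}$ norm is only needed once, globally, to bound $\int_{\Omega_{1/2}}|\D\chi|/|x|\,dx$ in the analogue of \eqref{lambda6}.
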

\begin{proof}
    On $\Omega$, make a global expansion
    \begin{align*}
        A=\D\varphi+\D\chi+\D\psi,
    \end{align*}
    where
    \begin{align}\label{decomp_gen1}
        \left\{\begin{alignedat}{2}
            \Delta \varphi&=\D^{\perp}\widetilde{B}\cdot \D\widetilde{A}\qquad&&\text{in}\;\, B(0,1)\\
            \varphi&=0\qquad&&\text{on}\;\, \partial B(0,1),
        \end{alignedat}\right.
    \end{align}
    and 
    \begin{align}
        \left\{\begin{alignedat}{2}
            \Delta\chi&=\dive(C)\qquad&&\text{in}\;\, B(0,1)\\
            \chi&=0\qquad&&\text{on}\;\, \partial B(0,1)
        \end{alignedat}\right.
    \end{align}
    where $\widetilde{A}:\C\rightarrow \R^n$ and $\widetilde{B}:\C\rightarrow M_n(\R)$ are controlled extensions of $A$ and $B$ given by Lemma \ref{whitney_extension}, while $\widetilde{C}$ is the extension of $C$ by $0$. We have as in \eqref{lambda_1}
    \begin{align}\label{lambda_1_gen}
        \np{\D\varphi}{2,1}{B(0,1)}\leq \Gamma_0\np{\D A}{2}{\Omega}\np{\D B}{2}{\Omega},
    \end{align}
    while Calder\'{o}n-Zygmund estimates and interpolation theory show that
    \begin{align}\label{lambda_1_extra}
        &\np{\D\chi}{2}{B(0,1)}\leq \Gamma_0\np{C}{2}{\Omega}\nonumber\\
        &\np{\D\chi}{2,1}{B(0,1))}\leq \Gamma_0\np{C}{2,1}{\Omega}.
    \end{align}
    Then, if for some $a\leq r\leq b$
    \begin{align*}
        d=\frac{1}{2\pi}\int_{\partial B(0,r)}\partial_{\nu}\psi\,d\mathscr{H}^1,
    \end{align*}
    and $\psi_0=\psi-d_0\log|z|$, we obtain as in \eqref{collar1}  that all $k\in\N$ such that $A_k=B_{2^{-k}}\setminus\bar{B}_{2^{-(k+1)}}(0)\subset \Omega_{1/2}$, we have
    \begin{align}\label{collar1_gen}
        \np{\D\psi_0}{2}{A_k}\leq 3\left(\frac{2^{-k}}{b}+\frac{a}{2^{-k}}\right)\np{\D\psi_0}{2}{\Omega},
    \end{align}
    while Parseval identity implies that
    \begin{align*}
        \sqrt{\np{\D\psi_0}{2}{\Omega}+\np{\D(\psi-\psi_0)}{2}{\Omega}}&=\np{\D\psi}{2}{\Omega}\leq \np{\D A}{2}{\Omega}+\np{\D\psi}{2}{\Omega}+\np{\D\chi}{2}{\Omega}\\
        &\leq \left(1+\Gamma_0\np{\D B}{2}{\Omega}\right)\np{\D A}{2}{\Omega}+\Gamma_0\np{C}{2}{\Omega}.
    \end{align*}
    As previously, using Lemma \ref{ineq_harmonic_annulus}, we get
    \begin{align}\label{lambda2_gen}
        \np{\D\psi_0}{2,1}{\Omega_{1/2}}\leq 64\sqrt{\frac{\pi}{15}}\np{\D\psi_0}{2}{\Omega}\leq 64\sqrt{\frac{\pi}{15}}\left(1+\Gamma_0\np{\D B}{2}{\Omega}\right)\np{\D A}{2}{\Omega}+\Gamma_0\np{C}{2}{\Omega}.
    \end{align}
    Now, we have
    \begin{align}\label{lambda3_gen_bis}
        \int_{\Omega_{1/2}}\frac{|\D\chi|}{|x|}dx\leq \np{\D\chi}{2,1}{\Omega_{1/2}}\np{\frac{1}{|x|}}{2,\infty}{\Omega_{1/2}}\leq \sqrt{\pi}\,\Gamma_0\np{C}{2,1}{\Omega}.
    \end{align}
    As previously (see \eqref{lambda3} and \eqref{lambda4}), we have
    \begin{align}\label{lambda3_gen}
        \int_{\Omega_{\frac{1}{2}}}\frac{|\D\varphi|}{|x|}dx\leq \sqrt{\pi}\,\Gamma_0\np{\D A}{2}{\Omega}\np{\D B}{2}{\Omega},
    \end{align}
    and
    \begin{align}\label{lambda4_gen}
        \int_{\Omega_{1/2}}\frac{|\D\psi_0|}{|x|}dx\leq 64\frac{\pi
        }{\sqrt{15}}\left(1+\Gamma_0\np{\D B}{2}{\Omega}\right)\np{\D A}{2}{\Omega}+64\frac{\pi}{\sqrt{15}}\Gamma_0\np{C}{2,1}{\Omega}.
    \end{align}
    Finally, we deduce by \eqref{lambda3_gen_bis}, \eqref{lambda3_gen}, and \eqref{lambda4_gen}, we have by the triangle inequality
    \begin{align}\label{lambda6_gen}
        \left|2\pi d\,\log\left(\frac{b}{4a}\right)-2\pi\,\Lambda\right|&=\left|\int_{\Omega_{1/2}}\frac{|\D u|}{|x|}dx-\int_{\Omega_{1/2}}\frac{|\D(\psi-\psi_0)|}{|x|}dx\right|\leq \int_{\Omega_{1/2}}\frac{|\D u-\D(\psi-\psi_0)|}{|x|}dx\nonumber\\
        &\leq \int_{\Omega_{1/2}}\frac{|\D\varphi|}{|x|}dx+\int_{\Omega_{1/2}}\frac{|\D\chi|}{|x|}dx+\int_{\Omega_{1/2}}\frac{|\D\psi_0|}{|x|}dx\nonumber\\
        &\leq \Gamma_1\left(1+\np{\D B}{2}{\Omega}\right)\np{\D A}{2}{\Omega}+\Gamma_1\np{C}{2,1}{\Omega}.
    \end{align}
    Now, using Lemma \ref{lemme_F_2_gen}, we deduce that for all $0<\alpha<1$ and $\beta>\sqrt{2}$, we have
    \begin{align}\label{collar2_gen}
        \np{\D (\varphi+\chi)}{2}{A_k}&\leq \frac{1}{2^k}\np{\D(\varphi+\chi)}{2}{A_0}+\frac{\Gamma}{(1-\alpha)^{\frac{5}{2}}}\np{\D \widetilde{B}}{2}{B(0,1)}\left(\sum_{l=0}^{\infty}\frac{1}{2^{2\alpha|l-k+1|}}\int_{A_l}|\D\widetilde{A}|^2dx\right)^{\frac{1}{2}}\nonumber\\
        &+\Gamma\frac{2^{2\beta}}{\sqrt{1-\alpha}}\left(\frac{\beta^2+1}{\beta^2-1}\right)\left(\sum_{l=0}^{k-1}\frac{1}{2^{2\beta l}}\frac{1}{2^{2\alpha|l-k+1|}}\int_{A_l}|\widetilde{C}|^2dx+\sum_{l=k}^{\infty}\frac{1}{2^{2\beta|l-k+1|}}\int_{A_l}|\widetilde{C}|^2dx\right)^{\frac{1}{2}}.
    \end{align}
    Proceeding as in the proof of Theorem \ref{collar_statement}, we finally deduce that for all $k\in\N$ such that $A_k\subset \Omega_{1/2}$, there holds
    \begin{align*}
        &\np{\D A}{2}{A_k}\leq \np{\D(\varphi+\chi)}{2}{A_k}+\np{\D\psi_0}{2}{A_k}+\np{\D(\psi-\psi_0)}{2}{A_k}\\
        &\leq \frac{1}{2^k}\left(\np{\D\varphi}{2}{B(0,1)}+\np{\D\chi}{2}{B(0,1)}\right)
        +3\left(\frac{2^{-k}}{b}+\frac{a}{2^{-k}}\right)\left((1+\Gamma_0\np{\D B}{2}{\Omega})\np{\D A}{2}{\Omega}\right.\nonumber\\
        &\left.+\Gamma_0\np{C}{2,1}{\Omega}\right)\\
        &+\frac{\Gamma}{(1-\alpha)^{\frac{5}{2}}}\np{\D B}{2}{\Omega}\left(\sum_{l=0}^{\infty}\frac{1}{2^{2\alpha|l-k+1|}}\int_{A_l}|\D\widetilde{A}|^2dx\right)^{\frac{1}{2}}\nonumber\\
        &+\Gamma\frac{2^{2\beta}}{\sqrt{1-\alpha}}\left(\frac{\beta^2+1}{\beta^2-1}\right)\left(\sum_{l=0}^{k-1}\frac{1}{2^{2\beta l}}\frac{1}{2^{2\alpha|l-k+1|}}\int_{A_l}|\widetilde{C}|^2dx+\sum_{l=k}^{\infty}\frac{1}{2^{2\beta|l-k+1|}}\int_{A_l}|\widetilde{C}|^2dx\right)^{\frac{1}{2}}\\
        &+\sqrt{2\pi\log(2)}\frac{\Lambda}{\log\left(\frac{b}{4a}\right)}+\frac{\Gamma_2}{\log\left(\frac{b}{a}\right)}\left(\left(1+\np{\D B}{2}{\Omega}\right)\np{\D A}{2}{\Omega}+\np{C}{2,1}{\Omega}\right)\\
        &\leq \Gamma_2\left(\frac{2^{-k}}{b}+\frac{a}{2^{-k}}\right)\left(\left(1+\np{\D B}{2}{\Omega}\right)\np{\D A}{2}{\Omega}+\np{C}{2,1}{\Omega}\right)\\
        &+\frac{\Gamma}{(1-\alpha)^{\frac{5}{2}}}\np{\D B}{2}{\Omega}\left(\sum_{l=0}^{\infty}\frac{1}{2^{2\alpha|l-k+1|}}\int_{A_l}|\D\widetilde{A}|^2dx\right)^{\frac{1}{2}}\nonumber\\
        &+\Gamma\frac{2^{2\beta}}{\sqrt{1-\alpha}}\left(\frac{\beta^2+1}{\beta^2-1}\right)\left(\sum_{l=0}^{k-1}\frac{1}{2^{2\beta l}}\frac{1}{2^{2\alpha|l-k+1|}}\int_{A_l}|\widetilde{C}|^2dx+\sum_{l=k}^{\infty}\frac{1}{2^{2\beta|l-k+1|}}\int_{A_l}|\widetilde{C}|^2dx\right)^{\frac{1}{2}}\\
        &+\sqrt{2\pi\log(2)}\frac{\Lambda}{\log\left(\frac{b}{4a}\right)}+\frac{\Gamma_2}{\log\left(\frac{b}{a}\right)}\left(\left(1+\np{\D B}{2}{\Omega}\right)\np{\D A}{2}{\Omega}+\np{C}{2,1}{\Omega}\right).
    \end{align*}
    From now on, all steps of the proof of Theorem \ref{collar_statement} are identical and we skip the details.
\end{proof}

\subsection{Pointwise Estimate of the Mean Curvature:Neck Regions Case}

In this section, we refine the work of \cite{pointwise} that gives a pointwise estimate of the metric in the neck regions thanks to the previous general theorem. Recall that in a neck region $\Omega_k(\alpha)=B_{\alpha}\setminus\bar{B}_{\alpha^{-1}\rho_k}(0)$, there exists an integer $m\geq 1$ and $\alpha_0>0$ such that for $0<\alpha<\alpha_0$ and all $k$ large enough,
\begin{align*}
	\np{\D(\lambda_k-(m-1)\log|z|)}{2,1}{\Omega_k(\alpha)}\leq C\left(\sqrt{\alpha}\np{\D\lambda_k}{2,\infty}{\Omega_k(\alpha)}+\int_{\Omega_k(\alpha)}|\D\n_k|^2dx\right).
\end{align*}
We will also need the improved energy quantization
\begin{align}\label{quanta_L21}
	\lim\limits_{\alpha\rightarrow 0}\limsup_{k\rightarrow\infty}\np{\D\n_k}{2,1}{\Omega_k(\alpha)}=0.
\end{align}
Explicitly, we prove in \cite[Theorem $4.1$]{pointwise} that
\begin{align*}
    \np{\D\n_k}{2,1}{\Omega_k(\alpha)}\leq Ce^{C\,E}\left(1+\np{\D\n_k}{2}{\Omega_k(4\alpha)}\right)\np{\D\n_k}{2}{\Omega_k(4\alpha)},
\end{align*}
where
\begin{align*}
    E=\sup_{k\in\N}\left(\np{\D\lambda_k}{2,\infty}{\Omega_k(1)}+\int_{\Omega_k(1)}|\D\n_k|^2dx\right)<\infty,
\end{align*}
which allows us to deduce the $L^{2,1}$ quantization from the $L^2$ quantization first proved in \cite{quanta}.

\begin{theorem}\label{pointwise_H}
    Let $0<2\,a<b<\infty$, $\Omega=B_{b}\setminus\bar{B}_a(0)$, and for all $0<t\leq 1$, defined $\Omega_t=\Omega_{t\,b}\setminus\bar{B}_{t^{-1}a}$. Let $\phi:B(0,b)\rightarrow \R^n$ be a weak conformal Willmore immersion. Define
    \begin{align*}
        E=\np{\D\lambda}{2,\infty}{\Omega}+\int_{\Omega}|\D\n|^2dx.
    \end{align*}
    For all $0<\beta<1$, there exists $\epsilon_1(\beta,n)>0$ and for all $k\in\N$, there exists a constant $C_k(E,n)$ such that the estimate
    \begin{align}\label{smallness_annulus}
        \np{\D \n}{2}{\Omega}\leq \epsilon_1(\beta,n)
    \end{align}
    implies that for all $z\in \Omega_{1/2}$,
    \begin{align}
        e^{\lambda(z)}|\D^k\H(z)|\leq \frac{C_k(E,n)}{|z|^k}\left(\left(\frac{|z|}{b}\right)^{\beta}+\left(\frac{a}{|z|}\right)^{\beta}+\frac{1}{\log\left(\frac{b}{4a}\right)}\right)\np{e^{\lambda}\H}{2}{\Omega}.
    \end{align}
\end{theorem}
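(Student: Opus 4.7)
The plan is to reduce the pointwise bound on $e^\lambda\,\D^k\H$ to an $L^2$ estimate of $e^\lambda\H$ on a dyadic sub-annulus, and then obtain the latter from the conservation laws for the potentials $S,\vec R$, combining the Jacobian pointwise estimate of Theorem \ref{collar_statement} with the $L^{2,1}$ neck estimate of Theorem \ref{precise_estimate_neck_region}.

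First, fix $z\in\Omega_{1/2}$ and $w\in B_{3|z|/2}\setminus\bar B_{|z|/2}(0)$. Then $B_{|z|/8}(w)\subset B_{2|z|}\setminus\bar B_{|z|/4}(0)\subset\Omega$, and rescaling $\phi$ by the factor $8/|z|$ and applying Theorem \ref{epsilon_reg_H} on $B_{|z|/8}(w)$ (which is permissible after shrinking $\epsilon_1(\beta,n)$) together with the Harnack control of the conformal factor $\lambda$ in dyadic annuli yield
\begin{align*}
    e^{\lambda(w)}|\D^k\H(w)|\leq\frac{C_k(E,n)}{|z|^{k+1}}\np{e^\lambda\H}{2}{B_{2|z|}\setminus\bar B_{|z|/4}(0)}.
\end{align*}
It therefore suffices to bound $\np{e^\lambda\H}{2}{B_{2|z|}\setminus\bar B_{|z|/4}(0)}$ by the announced prefactor times $\np{e^\lambda\H}{2}{\Omega}$. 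By the algebraic identity \eqref{algebraic_mean_curvature_R_S},
\begin{align*}
    \np{e^\lambda\H}{2}{\Omega'}\leq\frac{1}{2\sqrt 2}\bigl(\np{\D S}{2}{\Omega'}+\np{\D\vec R}{2}{\Omega'}\bigr)\qquad\text{for every sub-annulus }\Omega'\subset\Omega.
\end{align*}

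Second, the potentials $(S,\vec R)$ satisfy the coupled Jacobian system
\begin{align*}
    \Delta S=-\star\D\vec n\cdot\D^\perp\vec R,\qquad\Delta\vec R=(-1)^n\star(\D\vec n\antires\D^\perp\vec R)+\star\D\vec n\cdot\D^\perp S,
\end{align*}
which fits the hypothesis of Theorem \ref{collar_statement} with $A=(S,\vec R)$ and $B=\vec n$. Choosing $\epsilon_1(\beta,n)^2=(1-\beta)^7\delta^2/\Gamma$ for some fixed $0<\delta<1$ guarantees that \eqref{smallness_annulus} implies the smallness hypothesis \eqref{epsilon_smallness2}, whence for every $z\in\Omega_{1/2}$
\begin{align*}
    &\np{\D S}{2}{B_{2|z|}\setminus\bar B_{|z|}(0)}+\np{\D\vec R}{2}{B_{2|z|}\setminus\bar B_{|z|}(0)}\\
    &\qquad\leq C\Bigl[\bigl(\tfrac{|z|}{b}\bigr)^{\beta}+\bigl(\tfrac{a}{|z|}\bigr)^{\beta}\Bigr]\mathscr E+\frac{C(1+\delta)}{\log(b/4a)}\bigl(\Lambda_S+\Lambda_{\vec R}+\mathscr E\bigr),
\end{align*}
with $\mathscr E=\np{\D S}{2}{\Omega}+\np{\D\vec R}{2}{\Omega}$ and $\Lambda_S,\Lambda_{\vec R}$ the $\frac{1}{2\pi}\int|x|^{-1}|\D\cdot|$ averages over $\Omega_{1/2}$.

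It remains to bound $\mathscr E$ and $\Lambda_S+\Lambda_{\vec R}$ by $\np{e^\lambda\H}{2}{\Omega}$, which we achieve by covering $\Omega$ with a bounded number of overlapping dyadic sub-annuli on each of which the hypothesis \eqref{neck_hypothesis} of Theorem \ref{precise_estimate_neck_region} holds (this requires shrinking $\epsilon_1$ once more). On each such sub-annulus, Theorem \ref{precise_estimate_neck_region} gives the $L^{2,1}$ control $\np{\D S}{2,1}{\cdot}+\np{\D\vec R}{2,1}{\cdot}\leq Ce^{CE}\np{e^\lambda\H}{2}{\cdot}$, which dually (via \eqref{norms}) bounds $\Lambda_S+\Lambda_{\vec R}$ and, after fixing the gauge for $S,\vec R$ as in \cite{pointwise}, also bounds $\mathscr E$. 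Substituting these estimates into the displayed inequality and then into the first step yields the desired bound. The main obstacle is precisely the coercivity step $\mathscr E\leq C(E,n)\np{e^\lambda\H}{2}{\Omega}$ on the \emph{full} neck region: \eqref{algebraic_mean_curvature_R_S} provides only the reverse inequality, the potentials are defined up to constants (respectively an affine function), so their normalizations must be fixed coherently across the dyadic decomposition of $\Omega$; the iteration for $k\geq 1$ is then automatic from Theorem \ref{epsilon_reg_H}.
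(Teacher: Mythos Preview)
Your overall strategy coincides with the paper's: reduce to a dyadic $L^2$ bound on $(\D S,\D\vec R)$ via Theorem~\ref{collar_statement}, control the $\Lambda$-term by the $L^{2,1}$ estimate of Theorem~\ref{precise_estimate_neck_region}, convert back through \eqref{algebraic_mean_curvature_R_S}, and finish with the precised $\epsilon$-regularity Theorem~\ref{epsilon_reg_H}. That is exactly what the paper does.

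Where your write-up goes wrong is in the last paragraph. First, the ``bounded number of overlapping dyadic sub-annuli'' covering $\Omega$ does not exist: the number of dyadic annuli needed is of order $\log(b/a)$, which is unbounded in the regime of interest. Second, there is nothing to patch or gauge-fix. The global smallness \eqref{smallness_annulus} already implies the dyadic hypothesis \eqref{neck_hypothesis}, so Theorem~\ref{precise_estimate_neck_region} applies on the \emph{whole} annulus $\Omega$ in one stroke and yields directly
\[
\np{\D S}{2,1}{\Omega_{1/2}}+\np{\D\vec R}{2,1}{\Omega_{1/2}}\leq C(E,n)\,\np{e^{\lambda}\H}{2}{\Omega},
\]
hence also the $L^2$ bound on $\Omega_{1/2}$ and, via $L^{2,1}/L^{2,\infty}$ duality, the bound $\Lambda\leq C(E,n)\np{e^{\lambda}\H}{2}{\Omega}$. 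The ``obstacle'' you flag---controlling $\mathscr E$ on the \emph{full} $\Omega$---is a non-issue once you apply Theorem~\ref{collar_statement} on the shrunken annulus $\Omega_{1/2}$ rather than on $\Omega$; its right-hand side then only involves $\np{\D(S,\vec R)}{2}{\Omega_{1/2}}$, which is exactly what \eqref{precise1} supplies. The paper does precisely this (proving the estimate first for $z\in\Omega_{1/4}$ and then remarking that $\Omega_{1/2}$ is obtained by adjusting the shrinking parameter in \eqref{precise1}--\eqref{precise2}). No normalisation of $S,\vec R$ is needed anywhere, since only their gradients enter.
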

\begin{proof}
    Recall that $A=(\vec{R},S):B(0,1)\rightarrow (\Lambda^2\R^n)\times \R=\R^{N(n)}$ solves the system of equations
    \begin{align}
\left\{\begin{alignedat}{2}
    \Delta S&=-\star\D\n\cdot\D^{\perp}\vec{R}\qquad&&\text{in}\;\, \Omega\\
    \Delta \vec{R}&=(-1)^n\star\left(\D\n\antires\D^{\perp}\vec{R}\right)+\star\D\n\cdot\D^{\perp}S\qquad&&\text{in}\;\, \Omega
    \end{alignedat}\right.
\end{align}
    Therefore, $A$ solves a system of the form \eqref{jacobian_system_pointwise} in Theorem \ref{collar_statement} (where the coefficients of $B$ are linear combinations of coefficients of $\n$). Furthermore, the estimate \eqref{epsilon_smallness2} is equivalent to \eqref{smallness_annulus}. Therefore, we can apply Theorem \ref{collar_statement} to $A=(\vec{R},S)$ which gives us the estimate for all $z\in \Omega_{1/4}$
    \begin{align}\label{pointwise_H1}
        \np{\D (\vec{R},S)}{2}{A(2,z)}&\leq C_{\beta}\left(\left(\frac{|z|}{b}\right)^{\beta}+\left(\frac{a}{|z|}\right)^{\beta}\right)\np{\D (\vec{R},S)}{2}{\Omega_{1/2}}\nonumber\\
        &+\frac{C}{\log\left(\frac{b}{4a}\right)}\left(\Lambda+\np{\D\vec{R}}{2}{\Omega}+\np{\D S}{2}{\Omega}\right)\nonumber\\
        &\leq C_{\beta}\left(\left(\frac{|z|}{b}\right)^{\beta}+\left(\frac{a}{|z|}\right)^{\beta}\right)\np{e^{\lambda}\H}{2}{\Omega}+\frac{C}{\log\left(\frac{b}{4a}\right)}\left(\Lambda+\np{e^{\lambda}\H}{2}{\Omega}\right),
    \end{align}
    where 
    \begin{align}\label{def_Lambda}
        \Lambda=\frac{1}{2\pi}\int_{\Omega_{1/2}}\frac{|\D A|}{|x|}dx,
    \end{align}
    where we used the estimate \eqref{precise1} from Theorem \ref{precise_estimate_neck_region}. Thanks to the $L^{2,1}$ bound \eqref{precise2} from Theorem \ref{precise_estimate_neck_region}, we deduce thanks to the $L^{2,1}/L^{2,\infty}$ duality that
    \begin{align}\label{lambda_bound}
        \Lambda&\leq \frac{1}{2\pi}\np{\D A}{2,1}{\Omega_{1/2}}\np{\frac{1}{|x|}}{2,\infty}{\Omega_{1/2}}\leq \frac{1}{2\sqrt{\pi}}\left(\np{\D \vec{R}}{2}{\Omega_{1/2}}+\np{\D S}{2}{\Omega_{1/2}}\right)\nonumber\\
        &\leq C(E)\np{e^{\lambda}\H}{2}{\Omega}.
    \end{align}
    Since
    \begin{align*}
        \np{\D (\vec{R},S)}{2}{A(2,z)}^2=\int_{A(2,z)}\left(|\D \vec{R}|^2+|\D S|^2\right)dx,
    \end{align*}
    and using the algebraic identity
    \begin{align}
        e^{2\lambda}\H=\frac{1}{4}\D^{\perp}S\cdot\D\phi-\frac{1}{4}\D\vec{R}\res \D^{\perp}\phi,
    \end{align}
    we deduce by \eqref{pointwise_H} and \eqref{lambda_bound} that for all $z\in \Omega_{\frac{1}{4}}$,
    \begin{align}
        \np{e^{\lambda}\H}{2}{A(2,z)}\leq C_{\beta}(E)\left(\left(\frac{|z|}{b}\right)^{\beta}+\left(\frac{a}{|z|}\right)^{\beta}+\frac{1}{\log\left(\frac{b}{4a}\right)}\right)\np{e^{\lambda}\H}{2}{\Omega}
    \end{align}
    Finally, thanks to the precised $\epsilon$-regularity from Theorem \ref{epsilon_reg_H}, we
    have for all $z\in \Omega_{1/4}$
    \begin{align}
        e^{\lambda(z)}|\H(z)|\leq \frac{C_1}{|z|}\np{e^{\lambda}\H}{2}{A(2,z)}\leq \frac{C_{\beta}(E)}{|z|}\left(\left(\frac{|z|}{b}\right)^{\beta}+\left(\frac{a}{|z|}\right)^{\beta}+\frac{1}{\log\left(\frac{b}{4a}\right)}\right)\np{e^{\lambda}\H}{2}{\Omega}
    \end{align}
    which concludes the proof of the theorem for $k=1$. The case $k>1$ is immediate thanks to Theorem \ref{epsilon_reg_H}.
\end{proof}
\begin{rem}
    The theorem was stated on $\Omega_{1/2}$ and proved on $\Omega_{1/4}$, but it clearly holds on $\Omega_{\frac{1}{2}}$ provided that one uses \eqref{precise1} and \eqref{precise2} with $t=\dfrac{1}{\sqrt{2}}$ and chooses worst constants in Theorem \ref{collar_statement}.
\end{rem}

\begin{cor}\label{pointwise_H_sequence}
    Under the hypotheses of Theorem \ref{main_theorem}, for all $0<\beta<1$, there exists a universal constant there exists a universal constant $\alpha_0=\alpha_0(\beta)>0$, and $C_{\beta}<\infty$ such that for all $0<\alpha<\alpha_0$ and for all $z\in \Omega_k(\alpha)$
    \begin{align}
        e^{\lambda_k(z)}|\H_k(z)|\leq \frac{C_{\beta}}{|z|^k}\left(\left(\frac{|z|}{\alpha}\right)^{\beta}+\left(\frac{\alpha^{-1}\delta_k
        }{|z|}\right)^{\beta}+\frac{1}{\log\left(\frac{\alpha^2}{\delta_k}\right)}\right)\np{e^{\lambda_k}\H_k}{2}{\Omega_k(2\alpha)}.
    \end{align}
\end{cor}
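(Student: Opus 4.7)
The plan is to derive this statement as an immediate consequence of Theorem \ref{pointwise_H}, applied separately to each immersion $\phi_k$ of the sequence, with parameters chosen to match the neck-region scaling. Specifically, I would set in Theorem \ref{pointwise_H} the annulus parameters $b = 2\alpha$ and $a = (2\alpha)^{-1}\delta_k$, so that the outer annulus $\Omega$ of the theorem becomes $\Omega_k(2\alpha)$ and the intermediate annulus $\Omega_{1/2}$ becomes exactly $\Omega_k(\alpha)$. The theorem's conclusion then reads, for every $z \in \Omega_k(\alpha)$,
\begin{align*}
e^{\lambda_k(z)}|\D^k\H_k(z)| \leq \frac{C_k(E,n)}{|z|^k}\left(\left(\frac{|z|}{2\alpha}\right)^{\beta} + \left(\frac{(2\alpha)^{-1}\delta_k}{|z|}\right)^{\beta} + \frac{1}{\log\left(\frac{\alpha^2}{\delta_k}\right)}\right)\np{e^{\lambda_k}\H_k}{2}{\Omega_k(2\alpha)},
\end{align*}
which, after absorbing the harmless powers of $2$ in the first two summands into a new constant $C_\beta$, yields the announced estimate.

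To make the constant $C_\beta$ independent of $k$ along the sequence, two uniform controls must be secured. First, the smallness hypothesis $\np{\D\n_k}{2}{\Omega_k(2\alpha)} \leq \epsilon_1(\beta,n)$ of Theorem \ref{pointwise_H} is provided by the no-neck energy property
\begin{align*}
\lim_{\alpha \to 0}\limsup_{k\to\infty}\int_{\Omega_k(\alpha)}|\D\n_k|^2 dx = 0,
\end{align*}
which under the hypothesis \eqref{smallness_residue} is a consequence of the quantization results of \cite{quanta, lauriv1}; this allows us to fix the threshold $\alpha_0(\beta)$ small enough (after $\beta$ is chosen, so that $\epsilon_1(\beta,n)$ is well defined) and then take $k$ large enough. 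Second, the constant $C_k(E,n)$ appearing in Theorem \ref{pointwise_H} depends only continuously on the quantity $E = \np{\D\lambda_k}{2,\infty}{\Omega_k(2\alpha)} + \int_{\Omega_k(2\alpha)}|\D\n_k|^2 dx$, which is controlled uniformly in $k$ thanks to the uniform Willmore bound $\sup_k W(\phi_k) < \infty$ combined with the Harnack-type control on the conformal factor in neck regions developed in \cite{quanta, pointwise}.

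Since all the analytic content has been dealt with upstream---in Theorem \ref{pointwise_H} for the pointwise bound itself, in Theorem \ref{collar_statement} for the underlying dyadic iteration of Jacobian systems, and in \cite{quanta, pointwise, lauriv1} for the $L^{2,1}$ energy quantization and the conformal-factor estimates---the corollary is essentially a bookkeeping exercise, and no substantial analytical obstacle is expected. The only care required is in the order of quantifiers: one fixes $\beta \in (0,1)$ first, then extracts $\epsilon_1(\beta,n)$ and $\alpha_0(\beta)$ from Theorem \ref{pointwise_H}, and only afterwards chooses $k$ large enough for the no-neck property to force the smallness assumption on $\Omega_k(2\alpha)$ uniformly along the sequence.
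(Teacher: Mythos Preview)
Your proposal is correct and matches the paper's approach exactly: the corollary is stated in the paper without proof, as a direct specialization of Theorem \ref{pointwise_H} to the sequence $\phi_k$ on the annuli $\Omega_k(2\alpha)$, with the smallness hypothesis supplied by the no-neck energy and the constants made uniform via the bounds on $E$ from \cite{quanta,pointwise}. Your identification $b=2\alpha$, $a=(2\alpha)^{-1}\delta_k$ so that $\Omega_{1/2}=\Omega_k(\alpha)$ is precisely the intended substitution, and your discussion of the order of quantifiers is accurate.
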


\subsection{Estimate of the Full Second Fundamental Form}

\subsubsection{The Codimension One Case}

In this case, the unit normal takes values into $S^2\subset \R^3$.

Recall that the following equation holds on $B(0,b)$
\begin{align*}
    \D\n=\n\times \D^{\perp}\n-2\,H\D\phi.
\end{align*}
Therefore, we have
\begin{align*}
    \Delta\n=\D\n\times \D^{\perp}\n-2\,\dive\left(H\D\phi\right).
\end{align*}
We are exactly in the setting of Theorem \ref{collar_statement_div}, which implies the following statement.

\begin{theorem}\label{normal_pointwise_codimension1}
    Let $0<4\,a<b<\infty$, $\Omega=B_{b}\setminus\bar{B}_a(0)$, and for all $0<t\leq 1$, defined $\Omega_t=\Omega_{t\,b}\setminus\bar{B}_{t^{-1}a}$. Let $\phi:B(0,b)\rightarrow \R^3$ be a weak conformal Willmore immersion. Define
    \begin{align*}
        E=\np{\D\lambda}{2,\infty}{\Omega}+\int_{\Omega}|\D\n|^2dx.
    \end{align*}
    For all $0<\beta<1$ there exists $\epsilon_1(\beta,n)>0$ and for all $k\in\N$, there exists a constant $C_k(E,n)<\infty$ such that the estimate
    \begin{align}\label{smallness_annulus2}
        \np{\D \n}{2}{\Omega}\leq \epsilon_1(\beta,n)
    \end{align}
    implies that for all $z\in \Omega_{1/2}$,
    \begin{align}
        |z|^k|\D^k\n(z)|\leq C_k(E,n)\left(\left(\frac{|z|}{b}\right)^{\beta}+\left(\frac{a}{|z|}\right)^{\beta}+\frac{1}{\log\left(\frac{b}{4a}\right)}\right)\np{\D\n}{2}{\Omega}.
    \end{align}
\end{theorem}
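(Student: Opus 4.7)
The plan is to apply Theorem \ref{collar_statement_div} directly to the equation
\begin{align*}
\Delta\n = \D\n\times\D^{\perp}\n - 2\,\dive(H\,\D\phi),
\end{align*}
with the choices $A=\n$, $B$ a matrix-valued primitive of $\n$ coming from the cross-product (so that the Jacobian term $\D\n\times\D^{\perp}\n$ reads $\D^{\perp}B\cdot\D A$ with $|\D B|\lesssim|\D\n|$), and $C=-2H\,\D\phi$. This recasts the equation in the form $\Delta A=\D^{\perp}B\cdot\D A+\dive(C)$ to which the collar estimate of the previous subsection applies.

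The first task will be to verify the smallness hypothesis \eqref{epsilon_smallness2_div}. Since $|\D B|\lesssim|\D\n|$, choosing $\epsilon_1(\beta,n)$ small enough (depending on $\beta$ through the factor $(1-\beta)^{-7}$) makes $\frac{\Gamma_0}{(1-\beta)^7}\int_{\Omega}|\D B|^2dx\leq\delta^2$ for some fixed $\delta\in(0,1)$. The next task is to bound the two data involved in Theorem \ref{collar_statement_div}. For $\Lambda=\frac{1}{2\pi}\int_{\Omega_{1/2}}|\D\n|/|x|\,dx$, the $L^{2,1}/L^{2,\infty}$ duality combined with $\|1/|x|\|_{L^{2,\infty}}=2\sqrt{\pi}$ gives $\Lambda\leq\frac{1}{\sqrt{\pi}}\|\D\n\|_{L^{2,1}(\Omega_{1/2})}$, and the $L^{2,1}$-energy quantization of \cite[Theorem~4.1]{pointwise} bounds the right-hand side by $C(E)\|\D\n\|_{L^{2}(\Omega)}$. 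For $C=-2H\,\D\phi$, the conformal identity $|\D\phi|=\sqrt{2}\,e^{\lambda}$ yields $|C|=2\sqrt{2}\,e^{\lambda}|H|$, and estimate \eqref{precise3} of Theorem \ref{precise_estimate_neck_region} (applied on a slightly larger annulus) gives $\|C\|_{L^{2,1}(\Omega_{1/2})}\leq C(E)\|e^{\lambda}H\|_{L^{2}(\Omega)}$; the codimension-one pointwise bound $e^{\lambda}|H|\leq\frac{1}{\sqrt{2}}|\D\n|$ then closes the loop so that $\|C\|_{L^{2,1}(\Omega_{1/2})}\leq C(E)\|\D\n\|_{L^{2}(\Omega)}$.

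Feeding these bounds into Theorem \ref{collar_statement_div} produces, for all $z\in\Omega_{1/2}$,
\begin{align*}
\|\D\n\|_{L^2(B_{2|z|}\setminus\bar B_{|z|}(0))} \leq C_{\beta}(E)\left(\left(\tfrac{|z|}{b}\right)^{\beta}+\left(\tfrac{a}{|z|}\right)^{\beta}+\tfrac{1}{\log(b/4a)}\right)\|\D\n\|_{L^2(\Omega)}.
\end{align*}
The pointwise statement for $k=1$ then follows from the standard $\epsilon$-regularity for the unit normal of a Willmore immersion, $|z||\D\n(z)|\leq C\|\D\n\|_{L^2(B_{3|z|/2}\setminus\bar B_{|z|/2}(0))}$, and the cases $k\geq 2$ are obtained by the same bootstrap argument used in Theorem \ref{epsilon_reg_H}, where each extra derivative costs a factor $|z|^{-1}$ and a constant depending on $E$.

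The main obstacle will be the clean identification of $B$ so that the Jacobian structure of $\D\n\times\D^{\perp}\n$ genuinely has the form $\D^{\perp}B\cdot\D\n$ with $\|\D B\|_{L^2}\lesssim\|\D\n\|_{L^2}$; in codimension one this is direct (one can take the components of $B$ to be the components of $\n$ themselves up to a permutation), but one must check that the constants do not degenerate. A second delicate point is the coupling between the two datum estimates: the bound on $\|C\|_{L^{2,1}}$ relies on a bound on $\|e^{\lambda}H\|_{L^2}$ inside a slightly enlarged annulus, so the domain-matching between the collar estimate $\Omega_{1/2}$, the $L^{2,1}$ quantization from \cite{pointwise} and the $\epsilon$-regularity ball $B_{3|z|/2}\setminus\bar B_{|z|/2}(0)$ has to be carefully tracked (as in the proof of Theorem \ref{pointwise_H}) so that all constants depend only on $E$ and $n$.
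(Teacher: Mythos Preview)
Your proposal is correct and follows essentially the same route as the paper: apply Theorem~\ref{collar_statement_div} to the equation $\Delta\n=\D\n\times\D^{\perp}\n-2\,\dive(H\,\D\phi)$, control $\Lambda$ via the $L^{2,1}/L^{2,\infty}$ duality together with the $L^{2,1}$-quantization from \cite{pointwise}, and finish with the standard $\epsilon$-regularity. The only cosmetic difference is that the paper bounds $\np{C}{2,1}{\Omega_{1/2}}$ directly by $\np{\D\n}{2,1}{\Omega_{1/2}}$ via the pointwise inequality $e^{\lambda}|H|\leq\frac{1}{\sqrt{2}}|\D\n|$ (rather than passing through Theorem~\ref{precise_estimate_neck_region} first), and handles the domain-shrinking from $\Omega_{1/8}$ back to $\Omega_{1/2}$ by the usual worsening-of-constants remark; your identification of $B$ via $B_{ij}=\epsilon_{ijk}\n_k$ is exactly what is meant and presents no obstacle.
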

\begin{proof}
    Assuming that $\epsilon_1(\beta,n)>0$ is small enough, we deduce by \cite[Theorem C, ($1.5$)]{pointwise} that
    \begin{align*}
        \np{\D\n}{2,1}{\Omega_{1/2}}\leq C_1e^{C_1 E}\left(1+\np{\D\n}{2}{\Omega}\right)\np{\D\n}{2}{\Omega},
    \end{align*}
    while Theorem \ref{collar_statement_div} shows (up to worsening the constants) that for all $z\in \Omega_{1/4}$
    \begin{align*}
        \np{\D\n}{2}{B_{2|z|}\setminus\bar{B}_{\frac{|z|}{2}}(0)}\leq C\left(\left(\frac{|z|}{b}\right)^{\beta}+\left(\frac{a}{|z|}\right)^{\beta}+\frac{\Lambda}{\log\left(\frac{b}{a}\right)}\right)\left(\np{\D\n}{2}{\Omega_{1/2}}+\np{\D\n}{2,1}{\Omega_{1/2}}\right).
    \end{align*}
    We need only control 
    \begin{align*}
        \Lambda=\frac{1}{2\pi}\int_{\Omega_{1/4}}\frac{|\D\n|}{|x|}dx.
    \end{align*}
    As previously, the $L^{2,1}/L^{2,\infty}$ duality implies that
    \begin{align*}
        \Lambda\leq \frac{1}{2\sqrt{\pi}}\np{\D\n}{2,1}{\Omega_{1/2}}\leq \frac{1}{2\sqrt{\pi}}\left(1+\np{\D\n}{2}{\Omega}\right)\np{\D\n}{2}{\Omega},
    \end{align*}
    Finally, thanks to the $\epsilon$-regularity, we deduce that for some universal constant such that for all $z\in \Omega_{1/8}$,
    \begin{align*}
        |z||\D\n(z)|\leq C\np{\D\n}{2}{A_{2|z|}\setminus\bar{B}_{\frac{|z|}{2}}(0)}\leq C\left(\left(\frac{|z|}{b}\right)^{\beta}+\left(\frac{a}{|z|}\right)^{\beta}+\frac{1}{\log\left(\frac{b}{a}\right)}\right)\np{\D\n}{2}{\Omega},
    \end{align*}
    the conclusion follows by replacing dyadic annuli by smaller annuli and worsening the constants. 
\end{proof}

\subsubsection{Argument in Arbitrary Codimension}

Thanks to the analysis of \cite{riviere1} (see also \cite[Lemme $4.1.3$]{helein}), we deduce that the exists a trivialisation $\n=\n_{1}\wedge\n_{2}\wedge \cdots\wedge \n_{n-2}$ satisfying for all $1\leq i,j\leq n-2$ the Coulomb condition
\begin{align}\label{coulomb}
\left\{\begin{alignedat}{2}
    \dive\left(\D\n_{i}\cdot\n_{j}\right)&=0\qquad &&\text{in}\;\, \Omega\\
    \partial_{\nu}\n_{i}\cdot \n_{j}&=0\qquad&&\text{on}\;\, \partial \Omega.
    \end{alignedat}\right.
\end{align}
where
\begin{align*}
    \sum_{i=1}^{n-2}\np{\D \n_{i}}{2}{\Omega}\leq C\np{\D\n}{2}{\Omega}.
\end{align*}
Furthermore, for all $1\leq i\leq n-2$, the following pointwise identity holds
\begin{align}\label{controlled_frame}
    \D\n_{i}=-\ast(\n\wedge \D^{\perp}\n_{i})+\sum_{j=1}^{n-2}\s{\D\n_{i}}{\n_{j}}\,\n_{j}-2\,H_{i}\D\phi,
\end{align}
where we wrote $\displaystyle\H=\sum_{i=1}^{n-2}H_{i}\,\n_{i}$.
Thanks to the Coulomb condition \eqref{coulomb}, we deduce that 
\begin{align*}
    \Delta \n_{i}=-\ast\left(\D\n_k\wedge \D^{\perp}\n_{i}\right)+\sum_{j=1}^{n-2}\bs{\D\n_{i}}{\n_{j}}\cdot\D\n_{j}-2\,\dive\left(H_{i}\D\phi_k\right).
\end{align*}

\begin{theorem}\label{normal_pointwise_gen}
    Let $n\geq 3$, $0<4\,a<b<\infty$, $\Omega=B_{b}\setminus\bar{B}_a(0)$, and for all $0<t\leq 1$, defined $\Omega_t=\Omega_{t\,b}\setminus\bar{B}_{t^{-1}a}$. Let $\phi:B(0,b)\rightarrow \R^3$ be a weak conformal Willmore immersion. Define
    \begin{align*}
        E=\np{\D\lambda}{2,\infty}{\Omega}+\int_{\Omega}|\D\n|^2dx.
    \end{align*}
    For all $0<\beta<1$ there exists $\epsilon_1(\beta,n)>0$ and for all $k\in\N$, there exists a constant $C_k(E,n)<\infty$ such that the estimate
    \begin{align}\label{smallness_annulus3}
        \np{\D \n}{2}{\Omega}\leq \epsilon_1(\beta,n)
    \end{align}
    implies that for all $z\in \Omega_{1/2}$,
    \begin{align}
        |z|^k|\D^k\n(z)|\leq C_k(E,n)\left(\left(\frac{|z|}{b}\right)^{\beta}+\left(\frac{a}{|z|}\right)^{\beta}+\frac{1}{\log\left(\frac{b}{4a}\right)}\right)\np{\D\n}{2}{\Omega}.
    \end{align}
\end{theorem}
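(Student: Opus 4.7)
The plan is to mimic the proof of Theorem~\ref{normal_pointwise_codimension1} in codimension one, except that $\vec{n}$ is replaced by each component $\vec{n}_i$ of the Coulomb trivialisation $\vec{n}=\vec{n}_1\wedge\cdots\wedge\vec{n}_{n-2}$ satisfying \eqref{coulomb} and \eqref{controlled_frame}. First I would fix such a Coulomb frame on $\Omega$; since $\np{\D\n}{2}{\Omega}\leq \epsilon_1$ is small, the construction of \cite{riviere1} yields a gauge with $\sum_i\np{\D\n_i}{2}{\Omega}\leq C\np{\D\n}{2}{\Omega}$. Differentiating \eqref{controlled_frame} and using \eqref{coulomb} gives, for each $1\leq i\leq n-2$, an equation of the form
\begin{align*}
    \Delta\vec{n}_i=-\ast\bigl(\D\n\wedge\D^{\perp}\vec{n}_i\bigr)+\sum_{j=1}^{n-2}\bs{\D\vec{n}_i}{\vec{n}_j}\cdot\D\vec{n}_j-2\,\dive(H_i\,\D\phi).
\end{align*}

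The key observation is that the second term has a hidden Jacobian structure: by the Coulomb condition, $\bs{\D\vec{n}_i}{\vec{n}_j}=\D^{\perp}\psi_{ij}$ on $\Omega$ for some $\psi_{ij}\in W^{1,2}(\Omega)$ with $\np{\D\psi_{ij}}{2}{\Omega}\leq C\np{\D\vec{n}}{2}{\Omega}^2$. Hence the whole right-hand side (apart from the divergence term) is a finite sum of $\D^{\perp}B_i\cdot\D A_i$, where $A_i,B_i$ are linear combinations of coordinates of $\vec{n}$, $\vec{n}_j$ and $\psi_{ij}$, all controlled in $W^{1,2}$ by $\np{\D\vec{n}}{2}{\Omega}$. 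This puts $\vec{n}_i$ into exactly the framework of Theorem~\ref{collar_statement_div}, with $C_i=-2\,H_i\,\D\phi$.

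Next I would control the $L^{2,1}$ norm of $C_i$. Since $|C_i|\leq 2\,|e^{\lambda}H_i|$ pointwise (because $|\D\phi|=\sqrt{2}\,e^{\lambda}$), the $L^{2,1}$ bound \eqref{precise3} from Theorem~\ref{precise_estimate_neck_region} gives $\np{C_i}{2,1}{\Omega_{1/2}}\leq C\,e^{CE}\np{e^{\lambda}\H}{2}{\Omega}\leq C(E)\np{\D\vec{n}}{2}{\Omega}$, after a trivial scaling to adjust the annulus. Plugging this, together with the $L^{2,1}$ quantisation $\np{\D\vec{n}}{2,1}{\Omega_{1/2}}\leq C(E)\np{\D\vec{n}}{2}{\Omega}$ from \cite[Theorem~C]{pointwise}, into Theorem~\ref{collar_statement_div} yields, for $0<\beta<1$ and all $z\in\Omega_{1/4}$,
\begin{align*}
    \np{\D\vec{n}_i}{2}{B_{2|z|}\setminus\bar{B}_{|z|}(0)}\leq C_{\beta}(E)\left(\left(\frac{|z|}{b}\right)^{\beta}+\left(\frac{a}{|z|}\right)^{\beta}+\frac{1}{\log\bigl(\frac{b}{4a}\bigr)}\right)\np{\D\vec{n}}{2}{\Omega},
\end{align*}
exactly as in the codimension one argument, where the logarithmic term arises from estimating the residue $\Lambda$ via $L^{2,1}/L^{2,\infty}$ duality. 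Summing over $i$ controls the same $L^2$ norm of $\D\vec{n}$ on dyadic annuli $A(2,z)$.

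The final step is the standard $\epsilon$-regularity for weak Willmore immersions: since $\np{\D\vec{n}}{2}{A(2,z)}\leq\epsilon_1$ is small, we have $|z||\D\vec{n}(z)|\leq C\np{\D\vec{n}}{2}{A(2,z)}$, and by bootstrapping, $|z|^k|\D^k\vec{n}(z)|\leq C_k\np{\D\vec{n}}{2}{A(2,z)}$ for all $k\geq 1$. Combining with the dyadic estimate above gives the claimed pointwise bound on $\Omega_{1/4}$; reaching $\Omega_{1/2}$ is achieved by replacing $1/2$ by $1/\sqrt{2}$ in Theorem~\ref{precise_estimate_neck_region} and slightly worsening the constants, as in Theorem~\ref{pointwise_H}. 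The main obstacle I anticipate is handling the coupling between the Coulomb frame components (the terms $\langle\D\vec{n}_i,\vec{n}_j\rangle\cdot\D\vec{n}_j$): one must check that identifying these terms with Jacobians via $\D^{\perp}\psi_{ij}$ produces $A_i, B_i$ whose $W^{1,2}$ norms are linearly controlled by $\np{\D\vec{n}}{2}{\Omega}$ (so that the smallness assumption \eqref{epsilon_smallness2_div} of Theorem~\ref{collar_statement_div} is met), and that the boundary conditions in \eqref{coulomb} are compatible with extending $\vec{n}_i$ via Lemma~\ref{whitney_extension} without destroying the gauge.
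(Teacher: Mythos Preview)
Your proposal is correct and follows exactly the strategy the paper intends: the paper derives the equation for $\vec{n}_i$ via the Coulomb frame in the subsection immediately preceding the theorem and then simply states that the proof is identical to that of Theorem~\ref{normal_pointwise_codimension1}, which you have essentially filled in (including the Jacobian structure of the coupling term via the Coulomb potentials $\psi_{ij}$). One minor inaccuracy: since $|\D\psi_{ij}|=|\langle\D\vec{n}_i,\vec{n}_j\rangle|\leq|\D\vec{n}_i|$, the bound is $\np{\D\psi_{ij}}{2}{\Omega}\leq C\np{\D\vec{n}}{2}{\Omega}$ (linear, not quadratic), but this does not affect the argument.
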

\begin{proof}
    The proof is the same as the one of Theorem \ref{normal_pointwise_codimension1} and we skip it. 
\end{proof}

\section{Proof of the Main Theorem for Precompact Conformal Class}\label{proof_neck}

    In this section, we will prove a version of \cite[Lemma IV.$2$]{riviere_morse_scs} in our setting. Fix some $\frac{1}{2}<\beta<1$ and introduce the following weight on $\Sigma$:
    \begin{align}\label{weight}
        \omega_{a,b}(x)=\left\{\begin{alignedat}{2}
        &\frac{1}{|x|^{4m}}\left(\left(\frac{|x|}{b}\right)^{4\beta}+\left(\frac{a}{|x|}\right)^{4\beta_1}+\frac{1}{\log^2\left(\frac{b}{a}\right)}\right)\qquad&&\text{for all}\;\, x\in \Omega=B_b\setminus\bar{B}_a(0)\\
        &\frac{1}{b^{4m}}\left(1+\left(\frac{a}{b}\right)^{4\beta}+\frac{1}{\log^2\left(\frac{b}{a}\right)}\right)\qquad&&\text{for all}\;\, x\in \Sigma\setminus\bar{B}(0,a)\\
        &\frac{1}{a^{4m}}\left(\left(1+\left(\frac{a}{b}\right)^{4\beta}+\frac{1}{\log^2\left(\frac{b}{a}\right)}\right)\right)\qquad&&\text{for all}\;\, x\in B(0,a)
        \end{alignedat}\right.
    \end{align}

    \begin{theorem}\label{neck_positive}
        Let $0<\beta_1,\beta_2<1$ be such that
        \begin{align}\label{beta12_def0}
        \left\{\begin{alignedat}{1}
            &\frac{1}{2}<\beta_1<1\\
            &\sqrt{2}-1<\beta_2<1
            \end{alignedat}\right.
        \end{align}
        and $\{\phi_k\}_{k\in\N}$ be a sequence of Willmore immersions from Theorem \ref{main_theorem}, and let $\Omega_k(\alpha)=B_{\alpha}\setminus\bar{B}_{\alpha^{-1}\delta_k}(0)$ be a neck region. Then, there exists $\lambda_1(\beta_1),\lambda_2(\beta_2)>0$ and $\alpha_0>0$ such that for all $0<\alpha\leq \alpha_0$ and for all $k\in\N$ large and for all $u\in W^{2,2}_0(\Omega_k(\alpha))$
        \begin{align}\label{neck_inequality}
            Q_{\phi_k}(u)&\geq \lambda_1(\beta_1)\int_{\Omega_k(\alpha)}\frac{u^2}{|x|^4}\left(\left(\frac{|x|}{\alpha}\right)^{4\beta_1}+\left(\frac{\alpha^{-1}\delta_k}{|x|}\right)^{4\beta_1}+\frac{1}{\log^2\left(\frac{\alpha^2}{\delta_k}\right)}\right)dx\\
            &+\lambda_2(\beta_2)\int_{\Omega_k(\alpha)}\frac{|\D u|^2}{|x|^2}\left(\left(\frac{|x|}{\alpha}\right)^{2\beta_2}+\left(\frac{\alpha^{-1}\delta_k}{|x|}\right)^{2\beta_2}+\frac{1}{\log^2\left(\frac{\alpha^2}{\delta_k}\right)}\right)dx.
        \end{align}
    \end{theorem}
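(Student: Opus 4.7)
The plan is to reduce the second derivative $Q_{\phi_k}(u)$, evaluated on a normal variation $\w=u\,\n_k$ (the vectorial case following from the trivialisation argument used in the proof of Theorem \ref{diagonalisation}), to a dominant Laplacian-squared contribution plus an absorbable remainder, and then to invoke the eigenvalue inequalities of Theorems \ref{pde_lemma1} and \ref{pde_lemma2}. Starting from formula \eqref{simpler_d3}, I would isolate the fourth-order term
\begin{align*}
    T_k(u) := \frac{1}{2}\int_{\Omega_k(\alpha)}\bigl|\Delta_{g_k}^N(u\,\n_k)+\mathscr{A}(u\,\n_k)\bigr|^2\,d\mathrm{vol}_{g_k},
\end{align*}
and treat every other piece in \eqref{simpler_d3} as a perturbation. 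Since the neck is unbranched ($m=1$), the conformal factor $e^{\lambda_k}$ is uniformly bounded above and below on $\Omega_k(\alpha)$; hence, after expanding $\Delta_{g_k}^N(u\,\n_k)$ and absorbing cross-terms by Young's inequality, $T_k(u)$ dominates $\tfrac{1}{4}\int(\Delta u)^2\,dx$ modulo corrections of the form $\int\bigl(|\D\n_k|^4\,u^2+|\D\n_k|^2|\D u|^2\bigr)dx$.

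The second step is to convert every curvature-valued factor appearing in \eqref{simpler_d3} into the universal weight
\begin{align*}
    \eta_k(z) := \Bigl(\frac{|z|}{\alpha}\Bigr)^{\beta}+\Bigl(\frac{\alpha^{-1}\delta_k}{|z|}\Bigr)^{\beta}+\frac{1}{\log(\alpha^2/\delta_k)},
\end{align*}
multiplied by the appropriate negative power of $|z|$ dictated by scaling. Corollary \ref{pointwise_H_sequence} gives the pointwise bound $e^{\lambda_k(z)}|\D^j\H_k(z)|\leq C_\beta\,|z|^{-j}\eta_k(z)\,\np{e^{\lambda_k}\H_k}{2}{\Omega_k(2\alpha)}$, and Theorem \ref{normal_pointwise_gen} supplies the analogous bound $|z|^{j}|\D^j\n_k(z)|\leq C_\beta\,\eta_k(z)\,\np{\D\n_k}{2}{\Omega_k(2\alpha)}$. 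Consequently each perturbative summand in \eqref{simpler_d3}, together with the corrections from the previous step, is majorised pointwise by $C\,\epsilon_{k,\alpha}\,\eta_k^2(x)\bigl(u^2/|x|^4+|\D u|^2/|x|^2\bigr)$, where $\epsilon_{k,\alpha}:=\np{\D\n_k}{2}{\Omega_k(2\alpha)}^2+\np{e^{\lambda_k}\H_k}{2}{\Omega_k(2\alpha)}^2$ tends to $0$ as $k\to\infty$ and then $\alpha\to 0$ by the $L^{2,1}$ energy quantization of \cite{pointwise}.

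Third, I would apply Theorem \ref{pde_lemma2} (in the case $m=1$, separately for the values $\beta_1>\tfrac{1}{2}$ and $\beta_2>\sqrt{2}-1$ prescribed by \eqref{beta12_def0}) to obtain
\begin{align*}
    \int_{\Omega_k(\alpha)}\eta_k^2\Bigl(\frac{u^2}{|x|^4}+\frac{|\D u|^2}{|x|^2}\Bigr)dx\leq C_{\beta_1,\beta_2}\int_{\Omega_k(\alpha)}(\Delta u)^2\,dx,
\end{align*}
so that the Young-type absorption from Step 1 closes once $\epsilon_{k,\alpha}$ is small enough. Writing $T_k(u)=\theta\,T_k(u)+(1-\theta)T_k(u)$ and using the small copy $\theta\,T_k(u)$ to run the above absorption, the remaining $(1-\theta)T_k(u)$ feeds the right-hand side of \eqref{neck_inequality}: the polynomial pieces $(|x|/\alpha)^{4\beta_1}+(\alpha^{-1}\delta_k/|x|)^{4\beta_1}$ and $(|x|/\alpha)^{2\beta_2}+(\alpha^{-1}\delta_k/|x|)^{2\beta_2}$ are delivered by reversing Theorem \ref{pde_lemma2}, while the logarithmic pieces $1/\log^2(\alpha^2/\delta_k)$ come from the two lower bounds of Theorem \ref{pde_lemma1}. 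A convex combination of these inequalities gives strictly positive constants $\lambda_1(\beta_1),\lambda_2(\beta_2)$.

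The main obstacle will be the terms of \eqref{simpler_d3} involving $\partial^N\H$, in particular
\begin{align*}
    32\,\s{\partial^N(u\,\n_k)\totimes\partial^N\H_k}{\partial\phi_k\totimes\partial(u\,\n_k)}_{\mathrm{WP}},
\end{align*}
which is only borderline integrable after rescaling and requires the full strength of the pointwise estimate on $\D\H_k$ from Corollary \ref{pointwise_H_sequence} taken at $j=1$, together with an $L^{2,1}/L^{2,\infty}$ duality applied to $\eta_k/|x|$. Carefully bookkeeping the constants through this term, and verifying that the exponents $\beta_1,\beta_2$ remain inside the admissible range of Theorem \ref{pde_lemma2}, is the delicate point; the thresholds $\beta_1>\tfrac{1}{2}$ and $\beta_2>\sqrt{2}-1$ imposed by \eqref{beta12_def0} are sharp precisely for the weighted Hardy--Rellich estimates used here, so if the curvature-weighted remainder decayed any more slowly the absorption would fail.
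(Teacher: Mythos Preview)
Your overall architecture matches the paper's: reduce $Q_{\phi_k}(u)$ to $\frac{1}{4}\int(\Delta u)^2dx$ plus curvature-weighted remainders via the scalar inequality \eqref{ineq_d21}, convert those remainders into the universal weight $\eta_k$ using Theorem \ref{normal_pointwise_gen} and Corollary \ref{pointwise_H_sequence}, and then invoke Theorems \ref{pde_lemma1}--\ref{pde_lemma2}. The paper in fact works from the codimension-one formula \eqref{der2_redite} and the precomputed bound \eqref{ineq_d21} rather than from \eqref{simpler_d3}, which shortens the bookkeeping considerably; you may want to do the same.

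There is, however, a genuine gap in your absorption step. You claim that every perturbative term is pointwise dominated by $C\epsilon_{k,\alpha}\,\eta_k^{2}(x)\bigl(u^2/|x|^4+|\D u|^2/|x|^2\bigr)$ and then feed both pieces into Theorem \ref{pde_lemma2}. But for the $u^2/|x|^4$ piece this requires the weight exponent to exceed $2$, i.e.\ $2\beta>2$, which is impossible for $\beta<1$; the symmetric Young splitting you propose therefore fails precisely on the cross term coming from $\partial^N\H_k$. The paper's remedy (see \eqref{proof_main_8}--\eqref{fine_cond2}) is an \emph{asymmetric} Cauchy--Schwarz splitting of the weight,
\[
\int_{\Omega_k(\alpha)}\frac{|u|}{|x|^2}\frac{|\D u|}{|x|}\,\eta_k^{2\beta}\,dx
\;\le\;
\Bigl(\int\frac{u^2}{|x|^4}\,\eta_k^{4(1-\epsilon)\beta}\Bigr)^{1/2}
\Bigl(\int\frac{|\D u|^2}{|x|^2}\,\eta_k^{4\epsilon\beta}\Bigr)^{1/2},
\]
with $0<\epsilon<1$ chosen so that $4(1-\epsilon)\beta>2$ and $4\epsilon\beta>2(\sqrt{2}-1)$ simultaneously; this forces $\beta>\sqrt{2}/2$ and produces the coupled pair $\beta_1=(1-\epsilon)\beta$, $\beta_2=2\epsilon\beta$ landing exactly in the range \eqref{beta12_def0}. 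This mechanism---not an $L^{2,1}/L^{2,\infty}$ duality---is what makes the thresholds $\beta_1>\tfrac12$ and $\beta_2>\sqrt{2}-1$ appear, and without it the $\partial^N\H_k$ term cannot be closed. Once this special coupled pair is handled, the general $(\beta_1,\beta_2)$ in \eqref{beta12_def0} follows by monotonicity of the weights.
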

    \begin{proof}
        For technical reasons, we first let $0<\beta_1,\beta_2<1$ be such that
        \begin{align}\label{beta12_def}
            \left\{\begin{alignedat}{1}
            &\frac{1}{2}<\beta_1<1-\frac{1}{2\sqrt{2}}\\
            &\sqrt{2}-1<\beta_2=2\beta_1-2+\sqrt{2}<1
            \end{alignedat}\right.
        \end{align}
        Since we have assumed that the limiting map and the bubbles were immersions, we deduce by the main theorem of \cite{pointwise} that there exists $\alpha_0>0$ such that for all 
        \begin{align}\label{proof_main_1}
            e^{2\lambda_k}=e^{2\mu_k},
        \end{align}
        where $\mu_k\in C^0(\Omega_k(B(0,\alpha_0))$ and for all $0<\alpha<\alpha_0$,
        \begin{align}\label{proof_main_2}
            \np{\D u_k}{2,1}{B(0,\alpha)}+\np{\mu_k-c_k}{\infty}{\Omega_k(\alpha)}\leq C(n)\left(\sqrt{\alpha}\np{\D\lambda_k}{2,\infty}{\Omega_k(2\alpha)}+\int_{\Omega_k(2\alpha)}|\D\n_k|^2dx\right).
        \end{align}
        where $c_k\conv{k\rightarrow \infty}c\in\R$. Then, using the two main estimates from \cite{eigenvalue_annuli}, we deduce that for all $1/2<\beta_1<1$ and for all $\sqrt{2}-1<\beta_2<1$ there exists $0<C_{\beta_1,1},C_{\beta_2,2}<\infty$ such that for all $u\in \Omega_k(\alpha)$,
        \begin{align}\label{fine_neck_estimate}
            \int_{\Omega_k(\alpha)}(\Delta u)^2dx&\geq C_{\beta_1,1}\int_{\Omega_k(\alpha)}\frac{u^2}{|x|^4}\left(\left(\frac{|x|}{\alpha}\right)^{4\beta_1}+\left(\frac{\alpha^{-1}\delta_k}{|x|}\right)^{4\beta_1}+\frac{1}{\log^2\left(\frac{\alpha^2}{\delta_k}\right)}\right)dx\nonumber\\
            &+C_{\beta_2,2}\int_{\Omega_k(\alpha)}\frac{|\D u|^2}{|x|^2}\left(\left(\frac{|x|}{\alpha}\right)^{2\beta_2}+\left(\frac{\alpha^{-1}\delta_k}{|x|}\right)^{2\beta_2}+\frac{1}{\log^2\left(\frac{\alpha^2}{\delta_k}\right)}\right)dx.
        \end{align}
        Then, using inequality \eqref{ineq_d21}, we deduce that
        \begin{align*}
            Q_{\phi_k}(u)&\geq \frac{1}{4}\int_{\Omega_k(\alpha)}(\Delta_{g_k}u)^2d\mathrm{vol}_{g_k}-2\int_{\Omega_k(\alpha)}|\D u|^2|A_k|^2dx-2\int_{\Omega_k(\alpha)}|A_k|^4u^2d\mathrm{vol}_{g_k}\\
            &-4\int_{\Omega_k(\alpha)}|\D u||u||\D H_k||A_k|dx.
        \end{align*}
        Furthermore, thanks to estimates \eqref{proof_main_1} and \eqref{proof_main_2}, we deduce that 
        \begin{align*}
            \int_{\Omega_k(\alpha)}(\Delta_{g_k}u)^2d\mathrm{vol}_{g_k}=\int_{\Omega_k(\alpha)}e^{-2\mu_k}(\Delta u)^2dx\geq \frac{1}{2}e^{-2c}\int_{\Omega_k(\alpha)}(\Delta u)^2dx
        \end{align*}
        for $k$ large enough. Therefore we deduce that 
        \begin{align}\label{proof_main_3}
            Q_{\phi_k(u)}&\geq \frac{1}{8}e^{-2c}\int_{\Omega_k(\alpha)}(\Delta u)^2dx-2\int_{\Omega_k(\alpha)}|\D u|^2|A_k|^2dx-2\int_{\Omega_k(\alpha)}|A_k|^4u^2d\mathrm{vol}_{g_k}\nonumber\\
            &-4\int_{\Omega_k(\alpha)}|\D u||u||\D H_k||A_k|dx.
        \end{align}
        Then, Theorem \ref{normal_pointwise_gen} shows that there exists a universal constant $C<\infty$ (depending only on $\beta$ and $n$) such that
        \begin{align}\label{pointwise_second_fundamental}
            |A_k(x)|\leq \frac{C}{|x|}\left(\left(\frac{|x|}{\alpha}\right)^{\beta}+\left(\frac{a}{|x|}\right)^{\beta}+\frac{1}{\log\left(\frac{\alpha^2}{\delta_k}\right)}\right)\np{\D\n_k}{2}{\Omega_k(2\alpha)}.
        \end{align}
        Therefore, we get (by the elementary inequality $(a+b+c)^2\leq 4a^2+4b^2+2c^2\leq 4(a^2+b^2+c^2)$
        \begin{align}\label{proof_main_4}
            2\int_{\Omega_k(\alpha)}|\D u|^2|A_k|^2dx\leq 4C^2\np{\D\n_k}{2}{\Omega_k(2\alpha)}^2\int_{\Omega_k(\alpha)}\frac{|\D u|^2}{|x|^2}\left(\left(\frac{|x|}{\alpha}\right)^{2\beta}+\left(\frac{a}{|x|}\right)^{2\beta}+\frac{1}{\log^2\left(\frac{\alpha^2}{\delta_k}\right)}\right)dx.
        \end{align}
        On the other hand, we have for $k$ large enough
        \begin{align*}
            e^{2\lambda_k}|A_k|^4\leq 32e^{2c}\np{\D\n_k}{2}{\Omega_k(2\alpha)}^4\frac{C^4}{|x|^4}\left(\left(\frac{|x|}{\alpha}\right)^{4\beta}+\left(\frac{\alpha^{-1}\delta_k}{|x|}\right)^{4\beta}+\frac{1}{\log^4\left(\frac{b}{a}\right)}\right)dx,
        \end{align*}
        which implies that (by the inequality $(a+b+c)^4\leq 16(a^4+b^4+c^4)$)
        \begin{align}\label{proof_main_5}
            2\int_{\Omega_k(\alpha)}|A_k|^2u^2d\mathrm{vol}_{g_k}\leq 64e^{2c}C^2\np{\D\n_k}{2}{\Omega_k(2\alpha)}^4\int_{\Omega_k(\alpha)}\frac{u^2}{|x|^4}\left(\left(\frac{|x|}{\alpha}\right)^{4\beta}+\left(\frac{\alpha^2}{\delta_k}\right)^{4\beta}+\frac{1}{\log^4\left(\frac{\alpha^2}{\delta_k}\right)}\right)dx.
        \end{align}
        Then, we have for all $1/2<\beta<1$
        \begin{align}\label{proof_main_6}
            &\int_{\Omega_k(\alpha)}|\D u||u||\D H_k||A_k|dx\nonumber\\
            &\leq C'_{\beta}\np{\D\n_k}{2}{\Omega_k(2\alpha)}\int_{\Omega_k(\alpha)}\frac{|\D u|}{|x|}\frac{|u|}{|x|^2}\left(\left(\frac{|x|}{\alpha}\right)^{2\beta}+\left(\frac{\alpha^{-1}\delta_k}{|x|}\right)^{2\beta}+\frac{1}{\log^2\left(\frac{\alpha^2}{\delta_k}\right)}\right)dx.
        \end{align}
        We trivially estimate by virtue of the Cauchy-Schwarz inequality
        \begin{align}\label{proof_main_7}
            \int_{\Omega_k(\alpha)}\frac{|\D u|}{|x|}\frac{|u|}{|x|^2}\frac{1}{\log^2\left(\frac{\alpha^2}{\delta_k}\right)}dx\leq \left(\int_{\Omega_k(\alpha)}\frac{|\D u|^2}{|x|^2}\frac{1}{\log^2\left(\frac{\alpha^2}{\delta_k}\right)}dx\right)^{\frac{1}{2}}\left(\int_{\Omega_k(\alpha)}\frac{u^2}{|x|^4}\frac{1}{\log^2\left(\frac{\alpha^2}{\delta_k}\right)}dx\right)^{\frac{1}{2}}.
        \end{align}
        Then, we have for all $0<\epsilon<1$
        \begin{align}\label{proof_main_8}
            &\int_{\Omega_k(\alpha)}\frac{|\D u|}{|x|}\frac{|u|}{|x|^2}\left(\left(\frac{|x|}{\alpha}\right)^{2\beta}+\left(\frac{\alpha^{-1}\delta_k}{|x|}\right)^{2\beta}\right)dx
            \leq 2\left(\int_{\Omega_k(\alpha)}\frac{|\D u|^2}{|x|^2}\left(\left(\frac{|x|}{\alpha}\right)^{4\epsilon\beta}+\left(\frac{\alpha^{-1}\delta_k}{|x|}\right)^{4\epsilon\beta}\right)dx\right)^{\frac{1}{2}}\nonumber\\
            &\times\left(\int_{\Omega_k(\alpha)}\frac{u^2}{|x|^4}\left(\left(\frac{|x|}{\alpha}\right)^{4(1-\epsilon)\beta}+\left(\frac{\alpha^{-1}\delta_k}{|x|}\right)^{4(1-\epsilon)\beta}\right)dx\right)^{\frac{1}{2}}.
        \end{align}
        In order to control this quantity thanks to \eqref{fine_neck_estimate}, we must choose $1/2<\beta<1$ and $0<\epsilon<1$ such that
        \begin{align}\label{fine_cond2}
            \left\{\begin{alignedat}{1}
                4\epsilon \beta>2(\sqrt{2}-1)\\
                4(1-\epsilon)\beta>2.
            \end{alignedat}\right.
        \end{align}
        Summing both equations, we deduce that $\beta>\dfrac{\sqrt{2}}{2}=0.707\cdots$. Then, we get
        \begin{align}\label{fine_cond2bis}
            \left\{\begin{alignedat}{1}
                \epsilon >\frac{\sqrt{2}-1}{2\beta}\\
                \epsilon<1-\frac{1}{2\beta}.
            \end{alignedat}\right.
        \end{align}
        Therefore, we must have
        \begin{align*}
            \frac{\sqrt{2}-1}{2\beta}<1-\frac{1}{2\beta},
        \end{align*}
        which is indeed equivalent to $\beta>\dfrac{\sqrt{2}}{2}$. For such a $\beta$, there exists $0<\epsilon<1$ solving \eqref{fine_cond2}. Explicitly, we can choose
        \begin{align*}
            \epsilon=\frac{1}{2}\left(\frac{\sqrt{2}-1}{2\beta}+1-\frac{1}{2\beta}\right)=\frac{1}{2}-\frac{(2-\sqrt{2})}{4\beta}.
        \end{align*}
        Then, we get
        \begin{align}\label{beta2_def}
            2\beta_2=4\epsilon\beta=2\beta-(2-\sqrt{2})>2(\sqrt{2}-1)
        \end{align}
        since we have assume that $\beta>\dfrac{\sqrt{2}}{2}$. On the other hand,
        \begin{align}\label{beta1_def}
            4\beta_1=4(1-\epsilon)\beta=2\beta+2-\sqrt{2}>2
        \end{align}
        since $\beta>\dfrac{\sqrt{2}}{2}$. Gathering \eqref{proof_main_6}, \eqref{proof_main_7}, and \eqref{proof_main_8}, we deduce that
        \begin{align}\label{proof_main_9}
            &4\int_{\Omega_k(\alpha)}|\D u||u||\D H_k||A_k|dx\nonumber\\
            &\leq 4C_{\beta}'\np{\D\n_k}{2}{\Omega_k(2\alpha)}\int_{\Omega_k(\alpha)}\frac{u^2}{|x|^4}\left(\left(\frac{|x|}{\alpha}\right)^{4\beta_1}+\left(\frac{\alpha^{-1}\delta_k}{|x|}\right)^{4\beta_1}+\frac{1}{\log^2\left(\frac{\alpha^2}{\delta_k}\right)}\right)dx\nonumber\\
            &+4C_{\beta}'\np{\D\n_k}{2}{\Omega_k(2\alpha)}\int_{\Omega_k(\alpha)}\frac{|\D u|^2}{|x|^2}\left(\left(\frac{|x|}{\alpha}\right)^{2\beta_2}+\left(\frac{\alpha^{-1}\delta_k}{|x|}\right)^{2\beta_2}+\frac{1}{\log^2\left(\frac{\alpha^2}{\delta_k}\right)}\right)dx
        \end{align}
        Therefore, if $0<\beta_1,\beta_2<1$ are chosen as in the statement of the theorem, we deduce by \eqref{proof_main_3}, \eqref{proof_main_4}, \eqref{proof_main_5}, and \eqref{proof_main_9} that
        \small
        \begin{align*}
            &Q_{\phi_k}(u)\geq \left(\frac{1}{8}e^{-2c}C_{\beta_1,1}-\left(4C_{\beta}'+64e^{2c}C^2\np{\D\n_k}{2}{\Omega_k(2\alpha)}^2\right)\np{\D\n_k}{2}{\Omega_k(2\alpha)}^2\right)\\
            &\times\int_{\Omega_k(\alpha)}\frac{u^2}{|x|^4}\left(\left(\frac{|x|}{\alpha}\right)^{4\beta_1}+\left(\frac{\alpha^{-1}\delta_k}{|x|}\right)^{4\beta_1}+\frac{1}{\log^2\left(\frac{\alpha^2}{\delta_k}\right)}\right)dx\\
            &+\left(\frac{1}{8}e^{-2c}C_{\beta_2,2}-4\left(C^2+C_{\beta}'\right)\np{\D\n_k}{2}{\Omega_k(2\alpha)}\right)\int_{\Omega_k(2\alpha)}\frac{|\D u|^2}{|x|^2}\left(\left(\frac{|x|}{\alpha}\right)^{2\beta_2}+\left(\frac{\alpha^{-1}\delta_k}{|x|}\right)^{2\beta_2}+\frac{1}{\log^2\left(\frac{\alpha^2}{\delta_k}\right)}\right)dx,
        \end{align*}
        \normalsize
        where we assumed without loss of generality that $\delta_k\leq \alpha^2 e$ in order to get the trivial estimate
        \begin{align*}
            \frac{1}{\log^4\left(\frac{\alpha^2}{\delta_k}\right)}\leq \frac{1}{\log^2\left(\frac{\alpha^2}{\delta_k}\right)}.    
        \end{align*}
        Therefore, the quantization of energy (\cite{beriviere}) 
        \begin{align*}
            \lim_{\alpha\rightarrow 0}\limsup_{k\rightarrow \infty}\np{\D\n_k}{2}{\Omega_k(2\alpha)}=0
        \end{align*}
        completes the proof of the theorem. Now, if $1\leq \beta_1\leq \beta<1$, then
        \begin{align*}
            \left(\frac{|x|}{\alpha}\right)^{4\beta}+\left(\frac{\alpha^{-1}\delta_k}{|x|}\right)^{4\beta}\leq \left(\frac{|x|}{\alpha}\right)^{4\beta_1}+\left(\frac{\alpha^{-1}\delta_k}{|x|}\right)^{4\beta_1}.
        \end{align*}
        Therefore, coming back to \eqref{fine_cond2}, we see that as $\epsilon,\beta>0$ varies, $\beta_1$ or $\beta_2$ can be chosen arbitrarily (as long as \eqref{beta12_def0} is satisfied). Therefore, we deduce that \eqref{neck_inequality} holds for all $0<\beta_1,\beta_2<1$ satisfying the conditions in \eqref{beta12_def0}.
    \end{proof}
    Now, recall the formula
    \begin{align}\label{der2_redite}
        Q_{\phi}(u)&=\frac{1}{2}\int_{\Sigma}\left(\Delta_gu+|A|^2u\right)^2d\vg+\int_{\Sigma}\left(|du|_g^2+4|h_0|_{WP}^2u^2\right)H^2d\vg-8\int_{\Sigma}\s{\partial u\otimes\partial u}{h_0}_{WP}H\,d\vg\nonumber\\
        &-16\int_{\Sigma}\s{\partial u\otimes\partial H}{h_0}_{\mathrm{WP}}u\,d\vg.
    \end{align}
    We want to introduce two weights, both adapted to derivatives of different order.
    \begin{align*}
        \omega_{1,\alpha,k}(x)=\frac{1}{|x|^4}\left(\left(\frac{|x|}{\alpha}\right)^{4\beta_1}+\left(\frac{\alpha^{-1}\delta_k}{|x|}\right)^{4\beta_1}+\frac{1}{\log^2\left(\frac{\alpha^2}{\delta_k}\right)}\right)
    \end{align*}
    and
    \begin{align*}
        \omega_{2,k}(x)=\frac{1}{|x|^2}\left(\left(\frac{|x|}{\alpha}\right)^{2\beta_2}+\left(\frac{\alpha^{-1}\delta_k}{|x|}\right)^{2\beta_2}+\frac{1}{\log^2\left(\frac{\alpha^2}{\delta_k}\right)}\right).
    \end{align*}
    We introduce a third weight
    \begin{align*}
        \omega_{3,k}=\sqrt{\omega_{1,k}\omega_{2,k}}.
    \end{align*}
    This is justified in the proof of Theorem \ref{neck_positive} since $\beta$ is given thanks to \eqref{beta2_def} and \eqref{beta1_def} by
    \begin{align*}
        \beta=\beta_1+\frac{1}{2}\beta_1,
    \end{align*}
    so that
    \begin{align*}
        \left(\frac{|x|}{\alpha}\right)^{2\beta}=\left(\frac{|x|}{\alpha}\right)^{2\beta_1}\left(\frac{|x|}{\alpha}\right)^{\beta_2}=\sqrt{\left(\frac{|x|}{\alpha}\right)^{4\beta_1}\left(\frac{|x|}{\alpha}\right)^{2\beta_2}}
    \end{align*}
    and shows that this is the weight that appeared in the proof of Theorem \ref{neck_positive}. Then, we write
    \begin{align*}
        \frac{1}{2}\int_{\Sigma}(\Delta_gu)^2d\vg=\frac{1}{2}\int_{\Sigma}u\,\Delta_g^2u\,d\vg=\frac{1}{2}\int_{\Sigma}\left((\omega_{1,k}+\omega_{2,k})^{-1}\Delta_g^2u\right)u(\omega_{1,k}+\omega_{2,k})d\vg.
    \end{align*}
    For the next term, we write
    \begin{align*}
        \int_{\Sigma}|A|^2u\,\Delta_g u\,d\vg=\int_{\Sigma}(\omega_{1,k}^{-1}|A|^2\Delta_gu)u\,\omega_{1,k}d\vg.
    \end{align*}
    The next term is
    \begin{align*}
        \int_{\Sigma}|du|_g^2H^2d\vg=\int_{\Sigma}u\,d^{\ast_g}(H^2\,du)d\vg=\int_{\Sigma}\omega_{2,k}^{-1}d^{\ast_g}\left(H^2\,du\right)u\,\omega_{2,k}\,d\vg.
    \end{align*}
    We then rewrite
    \begin{align*}
        \int_{\Sigma}4|h_0|_{\mathrm{WP}}^2H^2u^2d\vg=\int_{\Sigma}\left(4\,\omega_{1,k}^{-1}|h_0|_{WP}^2H^2u\right)u\,\omega_{1,k}\,d\vg.
    \end{align*}
    The last two terms are rewritten the following way
    \begin{align*}
        -8\int_{\Sigma}\s{\partial u\otimes\partial u}{h_0}_{\mathrm{WP}}H\,d\vg=8\int_{\Sigma}\omega_{2,k}^{-1}\ast_g\,d\,\Re\left(H\,g^{-1}\otimes\h_0\otimes\partial u\right)u\,\omega_{2,k}\,d\vg,
    \end{align*}
    and
    \begin{align*}
        -16\int_{\Sigma}\s{\partial u\otimes \partial H}{h_0}_{\mathrm{WP}}u\,d\vg=-16\int_{\Sigma}\left(\omega_{3,k}^{-1}\s{\partial u\otimes\partial H}{h_0}_{\mathrm{WP}}\right)u\,\omega_{3,k}\,d\vg.
    \end{align*}
    Now, we write thanks to \eqref{operator3_normal}
    \begin{align}\label{second_derivative_weight}
        Q_{\phi_k}(u)=\frac{1}{2}\int_{\Sigma}\left(\mathcal{L}_{\alpha,k}u\right)u\,\omega_{1,\alpha,k}\,d\mathrm{vol}_{g_k},
    \end{align}
    where $\mathcal{L}_{\alpha,k}=\omega_{1,\alpha,k}^{-1}\mathcal{L}_{g_k}$, and
    \begin{align*}
        \mathcal{L}_{g_k}&=\Delta_{g_k}^2+|A_k|^2\Delta_{g_k}+2\s{d|A_k|^2}{d(\,\cdot\,)}_{g_k}+2\,d^{\ast_{g_k}}\left(H_k^2\,d(\,\cdot\,)\right)+\left(|A_k|^4+\Delta_g|A_k|^2+24|H_k|^2|h_{0,k}|_{\mathrm{WP}}^2\right)\\
        &+16\,\ast_{g_k}\,d\,\Re\left(g_k^{-1}\otimes h_{0,k}\otimes\partial(H_k\,\cdot\,)\right)-16\s{\partial(\,\cdot\,)\otimes\partial H_k}{h_{0,k}}_{\mathrm{WP}}.
    \end{align*}
    Thanks to \eqref{second_derivative_weight}, we deduce that $\mathcal{L}_{\alpha,k}$ is a self-adjoint operator with respect to the weight $\omega_{1,\alpha,k}$. Then, we generalise \cite[Lemma IV.$3$]{riviere_morse_scs} to out setting.

    \begin{lemme}\label{lemme_IV.3}
        For all $\lambda\in\R$, let 
        \begin{align}\label{weighted_eigenspace}
            \mathscr{E}_{\alpha,k}(\lambda)=W^{2,2}(\Sigma)\cap\ens{u:\mathcal{L}_{\alpha,k}u=\lambda\,u}.
        \end{align}
        We have
        \begin{align*}
            \mathrm{Ind}_W(\phi_k)=\dim\bigoplus_{\lambda<0}\mathscr{E}_{\alpha,k}(\lambda).
        \end{align*}
    \end{lemme}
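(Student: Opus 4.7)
The plan is to mirror the argument of Theorem \ref{diagonalisation} while replacing the standard $L^2(\Sigma, d\mathrm{vol}_{g_k})$ pairing by the weighted pairing $\langle u,v\rangle_{\omega} := \int_{\Sigma} u\,v\, \omega_{1,\alpha,k}\, d\mathrm{vol}_{g_k}$. For fixed $\alpha>0$ and $k\in\N$, the weight $\omega_{1,\alpha,k}$ is smooth and bounded above and below away from zero on $\Sigma$ (the potential singularity at the origin is replaced by the constant $\alpha^{-1}\delta_k$-value in the interior region of the definition \eqref{weight}), so the Hilbert space $L^2_{\omega}(\Sigma) := L^2(\Sigma, \omega_{1,\alpha,k}\, d\mathrm{vol}_{g_k})$ is topologically equivalent to $L^2(\Sigma, d\mathrm{vol}_{g_k})$, and the embedding $W^{2,2}(\Sigma)\hookrightarrow L^2_{\omega}(\Sigma)$ remains compact by Rellich-Kondrachov.

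First I would verify that $\mathcal{L}_{\alpha,k}$ is symmetric in the weighted pairing: for all $u,v\in W^{2,2}(\Sigma)$,
\begin{align*}
\langle \mathcal{L}_{\alpha,k}u,v\rangle_{\omega} = \int_{\Sigma}(\mathcal{L}_{g_k}u)\,v\,d\mathrm{vol}_{g_k} = \int_{\Sigma}u\,(\mathcal{L}_{g_k}v)\,d\mathrm{vol}_{g_k} = \langle u,\mathcal{L}_{\alpha,k}v\rangle_{\omega},
\end{align*}
using the self-adjointness of $\mathcal{L}_{g_k}$ established in Theorem \ref{diagonalisation}. The identity \eqref{second_derivative_weight} gives the crucial formula $Q_{\phi_k}(u)=\frac{1}{2}\langle \mathcal{L}_{\alpha,k}u,u\rangle_{\omega}$.

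Next, I would follow step by step the Lax-Milgram argument of Theorem \ref{diagonalisation}: reproducing the coercivity estimate \eqref{ineq_d2_final} for the bilinear form
\begin{align*}
B_{\mu}(u,v) = \frac{1}{2}\langle \mathcal{L}_{\alpha,k}u,v\rangle_{\omega} + \frac{\mu}{2}\langle u,v\rangle_{\omega}
\end{align*}
for $\mu>0$ sufficiently large (depending on $\|A_k\|_{L^\infty}$, $\|\Delta_{g_k}|A_k|^2\|_{L^\infty}$, and the uniform bounds of $\omega_{1,\alpha,k}$ from above and below for fixed $\alpha,k$). Combined with the compactness of the resolvent $(\mathcal{L}_{\alpha,k}+\mu I)^{-1}: L^2_{\omega}(\Sigma)\to L^2_{\omega}(\Sigma)$, classical spectral theory for compact self-adjoint operators then supplies a Hilbertian basis $\{e_j\}_{j\in\N}$ of $L^2_{\omega}(\Sigma)$ composed of eigenfunctions of $\mathcal{L}_{\alpha,k}$ with eigenvalues $\lambda_j\to \infty$, together with the bound $\dim\bigoplus_{\lambda\leq 0}\mathscr{E}_{\alpha,k}(\lambda)<\infty$.

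The final step is to apply Sylvester's law of inertia in the weighted setting. Writing $u=\sum_j \langle u,e_j\rangle_{\omega}\,e_j$ for any $u\in W^{2,2}(\Sigma)$ yields
\begin{align*}
Q_{\phi_k}(u) = \frac{1}{2}\sum_{j\in\N}\lambda_j\,\langle u,e_j\rangle_{\omega}^2,
\end{align*}
so $Q_{\phi_k}$ is strictly positive on the $\langle\cdot,\cdot\rangle_{\omega}$-orthogonal complement of $\bigoplus_{\lambda\leq 0}\mathscr{E}_{\alpha,k}(\lambda)$, strictly negative on each $\mathscr{E}_{\alpha,k}(\lambda)$ with $\lambda<0$, and vanishes on $\mathscr{E}_{\alpha,k}(0)$. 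Exactly as in the proof of Theorem \ref{diagonalisation}, this yields the two opposite inequalities $\mathrm{Ind}_W(\phi_k)\leq \dim\bigoplus_{\lambda<0}\mathscr{E}_{\alpha,k}(\lambda)$ and $\dim\bigoplus_{\lambda<0}\mathscr{E}_{\alpha,k}(\lambda)\leq \mathrm{Ind}_W(\phi_k)$, giving equality. The main (minor) obstacle is ensuring that the coercivity constants in the weighted setting degenerate only through the explicit bounds of $\omega_{1,\alpha,k}$ and not through any subtle interaction with the singular behaviour of the weight; but since $\alpha$ and $k$ are fixed throughout the lemma, this degeneracy is harmless.
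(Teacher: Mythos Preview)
Your proposal is correct and takes essentially the same approach as the paper: the paper simply writes ``The proof is identical as the one in \cite{riviere_morse_scs} and we omit it,'' and your argument---noting that for fixed $\alpha,k$ the weight $\omega_{1,\alpha,k}$ is bounded above and below by positive constants, then rerunning the spectral decomposition of Theorem~\ref{diagonalisation} for the weighted pairing and invoking Sylvester's law of inertia (exactly as advertised in the introduction, \S1.3.5)---is precisely the intended argument.
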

    \begin{proof}
        The proof is identical as the one in \cite{riviere_morse_scs} and we omit it. 
    \end{proof}

    Now, recall the following theorem from \cite[Theorem $4.5$]{eigenvalue_annuli}, which is a generalisation of the afore-mentioned estimate for functions that do \emph{not} necessarily vanish on the boundary.

    \begin{theorem}\label{interpolation_weighted_poincare}
            Let $0<a<b<\infty$ and $\Omega=B_b\setminus\bar{B}_a(0)$. For all $\dfrac{1}{2}<\beta<1$ and $\sqrt{2}-1<\gamma<1$ there exists a universal constant $C_{\beta,\gamma}<\infty$ such that
            for all $u\in W^{2,2}(\Omega)$, provided that
            \small
            \begin{align}\label{conformal_class_cor}
            \log\left(\frac{b}{a}\right)\geq \max\ens{2,\frac{1}{2\beta-1}\log\left(4\beta\right),\frac{1}{4\beta}\log\left(\frac{2}{2-\sqrt{3}}\right), \frac{1}{4(1-\beta)}\log\left(1+\frac{8\beta(1-\beta)}{(2\beta-1)^2}\right), \frac{1}{4\beta}\log(8\beta(\beta+1))},
            \end{align}
            \normalsize
            we have
            \begin{align}\label{interpolation_weight}
                \int_{\Omega}\frac{|\D u|^2}{|x|^2}\left(\left(\frac{|x|}{b}\right)^{2\gamma}+\left(\frac{a}{|x|}\right)^{2\gamma}\right)dx\leq C_{\beta,\gamma}\left(\int_{\Omega}\frac{u^2}{|x|^4}\left(\left(\frac{|x|}{b}\right)^{4\beta}+\left(\frac{a}{|x|}\right)^{4\beta}\right)dx+\int_{\Omega}|\D^2u|^2dx\right).
            \end{align}
    \end{theorem}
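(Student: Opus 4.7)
The strategy is to reduce the non-vanishing trace problem to the already-established $W^{2,2}_0$ bound of Theorem \ref{pde_lemma2} by means of a cutoff decomposition, controlling the boundary strips separately via a scale-invariant $W^{2,2}$ interpolation on annuli of bounded conformal class.

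First I would fix a smooth radial cutoff $\chi:[0,\infty)\to[0,1]$ with $\chi\equiv 1$ on $[4a, b/4]$, $\chi\equiv 0$ outside $[2a, b/2]$, and satisfying $|\rho\,\chi'(\rho)|+|\rho^2\,\chi''(\rho)|\leq C$ uniformly. Such a cutoff exists thanks to the lower bound on $\log(b/a)$ in the hypothesis, which in particular forces $4a<b/4$ with a safety margin. Set $S_a=B_{4a}\setminus\bar B_a(0)$ and $S_b=B_b\setminus\bar B_{b/4}(0)$; these boundary strips have bounded conformal class. Then $\chi u\in W^{2,2}_0(\Omega)$ while $(1-\chi)u$ is supported in $S_a\cup S_b$, so the splitting $u=\chi u+(1-\chi)u$ isolates the boundary behaviour. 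Applying the second estimate of Theorem \ref{pde_lemma2} (with $\beta$ replaced by $\gamma$, which is allowed since $\gamma>\sqrt{2}-1$) to $\chi u$ gives
\begin{align*}
\int_\Omega \frac{|\D(\chi u)|^2}{|x|^2}\left(\left(\frac{|x|}{b}\right)^{2\gamma}+\left(\frac{a}{|x|}\right)^{2\gamma}\right)dx\leq C_\gamma'\int_\Omega (\Delta(\chi u))^2\,dx.
\end{align*}

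Next I would expand $\Delta(\chi u)=\chi\Delta u+2\,\D\chi\cdot\D u+u\,\Delta\chi$ and use $(a+b+c)^2\leq 3(a^2+b^2+c^2)$. The principal piece yields $3\int\chi^2(\Delta u)^2\,dx\leq 6\int_\Omega |\D^2 u|^2\,dx$, by the pointwise bound $(\Delta u)^2\leq 2|\D^2 u|^2$ in dimension two. The two commutator pieces are supported in $S_a\cup S_b$ where $|\D\chi|^2\lesssim |x|^{-2}$ and $(\Delta\chi)^2\lesssim |x|^{-4}$, so they are respectively dominated by $\int_{S_a\cup S_b} |\D u|^2 /|x|^2\,dx$ and $\int_{S_a\cup S_b} u^2/|x|^4\,dx$. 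Since each of $S_a$ and $S_b$ has bounded conformal ratio, the classical scale-invariant $W^{2,2}$ interpolation
\begin{align*}
\int_{S_a\cup S_b}\frac{|\D u|^2}{|x|^2}\,dx\leq C\left(\int_{S_a\cup S_b}\frac{u^2}{|x|^4}\,dx+\int_{S_a\cup S_b}|\D^2 u|^2\,dx\right)
\end{align*}
takes care of the gradient commutator. Observing that on $S_a\cup S_b$ the weight $w_\beta(x)=(|x|/b)^{4\beta}+(a/|x|)^{4\beta}$ is bounded below by a positive constant, all these boundary contributions are absorbed in $\int_\Omega u^2 w_\beta/|x|^4\,dx+\int_\Omega|\D^2 u|^2\,dx$.

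To reconstruct the full left-hand side over $\Omega$, I would use the pointwise inequality $\chi^2|\D u|^2\leq 2|\D(\chi u)|^2+2u^2|\D\chi|^2$, which together with the previous step controls $\int_\Omega \chi^2|\D u|^2 w_\gamma/|x|^2\,dx$ by the desired right-hand side. The remaining contribution $\int_{S_a\cup S_b}(1-\chi^2)|\D u|^2 w_\gamma/|x|^2\,dx$ is bounded by $\int_{S_a\cup S_b}|\D u|^2/|x|^2\,dx$ since $w_\gamma\sim 1$ there, and is controlled again by the scale-invariant interpolation on the strips. Combining all the pieces yields the claimed estimate.

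\textbf{Main obstacle.} The delicate point — and the reason for the explicit thresholds $\max\{2,\frac{1}{2\beta-1}\log(4\beta),\ldots\}$ appearing in the hypothesis on $\log(b/a)$ — is the \emph{quantitative} absorption in the last step: the universal constants $C_\gamma'$, $C$ from the scale-invariant interpolation, and the constant coming from $(\Delta u)^2\leq 2|\D^2 u|^2$ must combine so that no uncontrolled multiple of $\int|\D u|^2 w_\gamma/|x|^2\,dx$ remains on the right. The explicit thresholds in the statement are precisely the ones that make this absorption close; checking them requires tracking the constants through Theorem \ref{pde_lemma2} and the commutator bounds with some care, which is the technical heart of the argument.
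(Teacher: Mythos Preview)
The paper does not prove this theorem here; it is quoted verbatim from the companion article \cite{eigenvalue_annuli} (Theorem~4.5 there), so there is no in-paper argument to compare against. Your cutoff-plus-strip strategy is a perfectly legitimate and essentially self-contained derivation of \eqref{interpolation_weight} from the $W^{2,2}_0$ estimate of Theorem~\ref{pde_lemma2}, and it goes through as you describe: apply the second inequality of Theorem~\ref{pde_lemma2} to $\chi u$, expand $\Delta(\chi u)$, and handle all commutator and boundary-strip contributions by the scale-invariant Ehrling inequality on the fixed-ratio annuli $S_a,S_b$, using that both weights are comparable to $1$ there.

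Two corrections, however. First, your ``Main obstacle'' paragraph misdiagnoses the difficulty. Trace your own chain of inequalities: every gradient term that appears on the right is supported in $S_a\cup S_b$ and is immediately converted, via the strip interpolation, into $u^2/|x|^4$ and $|\D^2u|^2$ terms. No multiple of the full left-hand side $\int_\Omega |\D u|^2 w_\gamma/|x|^2$ ever reappears on the right, so there is \emph{no absorption step} and hence nothing for the explicit thresholds in \eqref{conformal_class_cor} to calibrate. Those thresholds are artefacts of the ODE/Fourier-mode analysis carried out in \cite{eigenvalue_annuli}; your cutoff argument needs only a uniform lower bound on $\log(b/a)$ large enough to build the cutoff, and produces a (possibly different) constant $C_{\beta,\gamma}$.

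Second, a minor quantitative slip: your chosen cutoff profile ($\chi\equiv 1$ on $[4a,b/4]$, vanishing outside $[2a,b/2]$) requires $b/a>16$, i.e.\ $\log(b/a)>4\log 2\approx 2.77$, which is \emph{not} implied by $\log(b/a)\geq 2$ alone, and the remaining entries of the maximum in \eqref{conformal_class_cor} do not obviously make up the deficit for all $\beta$. This is harmless for the qualitative statement (just shrink the transition layers or accept a threshold depending only on your cutoff), but you should not claim that your construction is licensed ``in particular'' by the stated hypothesis.
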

    The generalisation of the next result (\cite[Lemma IV.$4$]{riviere_morse_scs}) will be considerably more involved for it makes use of Theorem \ref{interpolation_weighted_poincare}.

    \begin{lemme}\label{lemme_IV.4}
        There exists $\alpha_0>0$ and $\mu_0>0$ and a family of constants $\ens{\mu_{\alpha,k}}_{0<\alpha<\alpha_0,k\in\N}\subset (0,\mu_0)$ such that
        \begin{align*}
            \lim_{\alpha\rightarrow 0}\limsup_{k\rightarrow \infty}\mu_{\alpha,k}=0.
        \end{align*}
        Furthermore, for all $\lambda\in\R$, we have
        \begin{align*}
            \mathrm{dim}\,\mathscr{E}_{\alpha,k}(\lambda)>0\implies \lambda\geq -\mu_{\alpha,k}\geq -\mu_{0}.
        \end{align*}
    \end{lemme}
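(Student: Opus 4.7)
The plan is to combine a uniform absolute lower bound $\lambda\geq -\mu_0$ for the Rayleigh quotient of $\mathcal{L}_{\alpha,k}$ with a quantitative refinement based on the observation that, in the case $m=1$, the weight $\omega_{1,\alpha,k}$ is of order $\alpha^{-4}$ on the macroscopic region $\Sigma\setminus B_\alpha$, which rescales down any negative eigenvalue inherited from the limit immersion and the bubbles.

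\textbf{Uniform bound $\lambda\geq -\mu_0$.} Thanks to \eqref{ineq_d2_final}, the operator $\mathcal{L}_{g_k}$ is bounded from below as a quadratic form on $W^{2,2}(\Sigma,\R^n)$, with bound depending only on $L^\infty$ controls of the zeroth order coefficients $|A_k|^4$, $|\s{\D^2_{\partial,\partial}\H_k}{\h_{0,k}}_{\mathrm{WP}}|$ and $\Delta_{g_k}|A_k|^2$. These are uniformly bounded thanks to the $C^\infty_{\mathrm{loc}}$ convergence of $\phi_k$ away from the concentration points and to the quantitative pointwise estimates in neck regions provided by Theorems \ref{pointwise_H} and \ref{normal_pointwise_gen}. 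Since $\omega_{1,\alpha,k}$ is bounded away from zero on $\Sigma$, the Rayleigh quotient of $\mathcal{L}_{\alpha,k}=\omega_{1,\alpha,k}^{-1}\mathcal{L}_{g_k}$ is bounded from below by some uniform constant $-\mu_0$.

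\textbf{Refinement $\mu_{\alpha,k}\to 0$ by contradiction.} Suppose otherwise, so that there are sequences $\alpha_j\to 0$, $k_j\to\infty$ and eigenfunctions $u_j\in\mathscr{E}_{\alpha_j,k_j}(\lambda_j)$ with $\lambda_j\leq -\epsilon<0$, normalised by $\int_\Sigma u_j^2\,\omega_{1,\alpha_j,k_j}\,d\mathrm{vol}_{g_{k_j}}=1$, giving $Q_{\phi_{k_j}}(u_j)=\lambda_j/2\leq -\epsilon/2$. Introduce a smooth partition of unity $\chi_j^{\mathrm{m}}+\chi_j^{\mathrm{n}}+\chi_j^{\mathrm{b}}=1$ on $\Sigma$ with transition annuli placed dyadically inside $\Omega_{k_j}(\alpha_j/2)\setminus\Omega_{k_j}(\alpha_j/4)$ and $\Omega_{k_j}(4\alpha_j^{-1}\delta_{k_j})\setminus\Omega_{k_j}(2\alpha_j^{-1}\delta_{k_j})$, and write $u_j^\bullet=\chi_j^\bullet u_j$. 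Theorem \ref{neck_positive} applied to $u_j^{\mathrm{n}}\in W^{2,2}_0(\Omega_{k_j}(\alpha_j))$ yields the coercive lower bound
\[
Q_{\phi_{k_j}}(u_j^{\mathrm{n}})\geq \lambda_1(\beta_1)\int (u_j^{\mathrm{n}})^2\,\omega_{1,\alpha_j,k_j}\,dx+\lambda_2(\beta_2)\int|\D u_j^{\mathrm{n}}|^2\,\omega_{2,\alpha_j,k_j}\,dx.
\]
On the macroscopic piece, using $\omega_{1,\alpha_j,k_j}\sim\alpha_j^{-4}$ on $\mathrm{supp}(\chi_j^{\mathrm{m}})$, the rescaled function $\alpha_j^2 u_j^{\mathrm{m}}$ is uniformly bounded in $W^{2,2}(\Sigma)$; the smooth convergence of $\phi_{k_j}$ together with the finiteness of $\mathrm{Ind}_W(\phi_\infty^l)$ from Theorem \ref{diagonalisation} gives $Q_{\phi_{k_j}}(u_j^{\mathrm{m}})\geq -C\alpha_j^4=o(1)$. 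The bubble piece $u_j^{\mathrm{b}}$ is handled analogously after conformal rescaling by $\delta_{k_j}$, noting that $\omega_{1,\alpha_j,k_j}\sim \alpha_j^{4}\delta_{k_j}^{-4}$ on the bubble region, so the associated Rayleigh quotient inherits its negativity from the finite-dimensional space of negative variations of $\vec{\Psi}_i$.

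\textbf{Main obstacle: cross-term control.} Since $Q_{\phi_{k_j}}$ is a fourth-order form, the cross term $Q_{\phi_{k_j}}(u_j)-\sum_\bullet Q_{\phi_{k_j}}(u_j^\bullet)$ contains contributions involving up to two derivatives of the cutoffs multiplied with $u_j$ and $\D u_j$. These live on the two transition annuli, which sit strictly inside the neck, and one must control integrals like $\int u_j^2|\D^2\chi_j^\bullet|^2$ and $\int|\D u_j|^2|\D\chi_j^\bullet|^2$ by the definite coercive quantity produced by the neck. This absorption is achieved by dyadically distributing the transitions and invoking Theorems \ref{pde_lemma2} and \ref{interpolation_weighted_poincare} on each dyadic annulus, together with the $L^{2,1}$ quantisation of $\D\n_k$ in neck regions to handle the first-order coefficients of $\mathcal{L}_{g_{k_j}}$. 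Combining all contributions yields $-\epsilon/2\geq Q_{\phi_{k_j}}(u_j)\geq -o(1)$ as $j\to\infty$, which contradicts $\lambda_j\leq -\epsilon$. The fine quantitative cross-term analysis, and the necessity of placing the transitions where the weighted Poincaré constants and the weight itself match, is the technical heart of the argument.
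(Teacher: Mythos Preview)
Your step~1 contains a genuine error: the zeroth-order coefficients $|A_k|^4$, $\Delta_{g_k}|A_k|^2$ and $|\s{\D^2_{\partial,\partial}H_k}{h_{0,k}}_{\mathrm{WP}}|$ are \emph{not} uniformly bounded in $L^\infty(\Sigma)$. In the bubble region $B(0,\alpha^{-1}\delta_k)$ one has $|A_k|\sim(\alpha^{-1}\delta_k)^{-1}$, so $|A_k|^4\sim(\alpha/\delta_k)^4\to\infty$; in the neck, Theorems~\ref{pointwise_H} and~\ref{normal_pointwise_gen} only give $|A_k(x)|\lesssim|x|^{-1}$, again unbounded. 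Hence the chain ``$\|V_k\|_\infty$ uniform $\Rightarrow Q_{\phi_k}(u)\geq -C\int u^2 \Rightarrow$ weighted Rayleigh quotient bounded below'' fails at the first implication, and you are left without the uniform bound $\lambda\geq-\mu_0$ that your step~2 presupposes (for instance, to obtain any $W^{2,2}$ control on $u_j$ before splitting with the cutoffs).

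The paper's proof avoids both problems by a single direct pointwise comparison: one shows that
\[
\mu_{\alpha,k}=\Big\|\,|A_k|\,\omega_{1,\alpha,k}^{-1/4}\Big\|_{L^\infty(\Sigma)},\qquad \nu_{\alpha,k}=\Big\|\,|A_k||\D A_k|\,(\omega_{1,\alpha,k}\,\omega_{2,\alpha,k})^{-1/2}\Big\|_{L^\infty(\Sigma)}
\]
(and an analogous $\kappa_{\alpha,k}$) all tend to $0$ as $\alpha\to0$, $k\to\infty$. The weight $\omega_{1,\alpha,k}$ was \emph{designed} so that this holds: in the neck it matches the decay rate of \eqref{pointwise_second_fundamental}, on the macroscopic region it equals $\alpha^{-4}$ against a bounded $A_\infty$, and on the bubble it scales like $(\alpha^{-1}\delta_k)^{-4}$ against the rescaled $A_{\vec\Psi_\infty}$. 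Once this is established, every lower-order term in $Q_{\phi_k}(u)$ is dominated by $(\mu_{\alpha,k}^4+\kappa_{\alpha,k}^2+\nu_{\alpha,k})\int u^2\omega_{1,\alpha,k}$ plus a weighted gradient term, and the latter is absorbed globally (not on neck pieces) via the Poincar\'e-type inequality of Theorem~\ref{interpolation_weighted_poincare}, yielding directly \eqref{main_control4}. The uniform bound and the refinement $\mu_{\alpha,k}\to0$ come out of the \emph{same} inequality, with no partition of unity, no contradiction argument, and no cross-term analysis; the difficulty you flag as ``the technical heart'' of your step~2 is precisely what the pointwise route sidesteps.
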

    \begin{proof}
        First, we let $\dfrac{1}{2}<\beta_1<1$ and $\sqrt{2}-1<\beta_2<1$ to be defined later.
        Write $\omega_{\alpha,k}=\omega_{1,\alpha,k}$ for simplicity. We have
        \begin{align*}
            \omega_{1,\alpha,k}=\left\{\begin{alignedat}{2}
                 &\frac{1}{\alpha^4}\left(1+\left(\frac{\alpha^{-1}\delta_k}{\alpha}\right)^{4\beta_1}+\frac{1}{\log^2\left(\frac{\alpha^2}{\delta_k}\right)}\right)\qquad&& \text{for all}\;\, x\in \Sigma\setminus\bar{B}(0,\alpha)\\
                 &\frac{1}{|x|^4}\left(\left(\frac{|x|}{\alpha}\right)^{4\beta_1}+\left(\frac{\alpha^{-1}\delta_k}{|x|}\right)^{4\beta_1}+\frac{1}{\log^2\left(\frac{\alpha^2}{\delta_k}\right)}\right)\qquad&& \text{for all}\;\, x\in \Omega_k(\alpha)=B_{\alpha}\setminus\bar{B}_{\alpha^{-1}\delta_k}(0)\\
                 &\frac{1}{\alpha^4}\left(1+\left(\frac{\alpha^{-1}\delta_k}{\alpha}\right)^{4\beta_1}+ \frac{1}{\log^2\left(\frac{\alpha^{2}}{\delta_k}\right)}\right)\qquad&&\text{for all}\;\, x\in B(0,\alpha^{-1}\delta_k),
            \end{alignedat}\right.
        \end{align*}
        and
        \begin{align*}
            \omega_{2,\alpha,k}=\left\{\begin{alignedat}{2}
                 &\frac{1}{\alpha^2}\left(1+\left(\frac{\alpha^{-1}\delta_k}{\alpha}\right)^{2\beta_2}+\frac{1}{\log^2\left(\frac{\alpha^2}{\delta_k}\right)}\right)\qquad&& \text{for all}\;\, x\in \Sigma\setminus\bar{B}(0,\alpha)\\
                 &\frac{1}{|x|^2}\left(\left(\frac{|x|}{\alpha}\right)^{2\beta_2}+\left(\frac{\alpha^{-1}\delta_k}{|x|}\right)^{2\beta_2}+\frac{1}{\log^2\left(\frac{\alpha^2}{\delta_k}\right)}\right)\qquad&& \text{for all}\;\, x\in \Omega_k(\alpha)=B_{\alpha}\setminus\bar{B}_{\alpha^{-1}\delta_k}(0)\\
                 &\frac{1}{\alpha^2}\left(1+\left(\frac{\alpha^{-1}\delta_k}{\alpha}\right)^{2\beta_2}+ \frac{1}{\log^2\left(\frac{\alpha^{2}}{\delta_k}\right)}\right)\qquad&&\text{for all}\;\, x\in B(0,\alpha^{-1}\delta_k).
            \end{alignedat}\right.
        \end{align*}
        First, using \eqref{pointwise_second_fundamental}, we deduce that 
        \begin{align}\label{weight_limit1}
            \lim_{\alpha\rightarrow 0}\limsup_{k\rightarrow \infty}\np{\frac{A_k}{\omega_k^{\frac{1}{4}}}}{\infty}{\Omega_k(\alpha)}=0.
        \end{align}
        Assume without loss of generality that there are no bubbles in $\Sigma\setminus \bar{B}(0,\alpha)$. Then, we have on $\Sigma\setminus\bar{B}(0,\alpha)$
        \begin{align*}
            \omega_{\alpha,\infty}=\lim_{k\rightarrow \infty}\omega_{1,\alpha,k}=\frac{1}{\alpha^4}.
        \end{align*}
        Thanks to the strong convergence on $\Sigma\setminus\bar{B}(0,\alpha)$ and the absence of branch point at the limit (which implies that $A_{\infty}\in L^{\infty}(\Sigma)$), we deduce that 
        \begin{align}\label{weight_limit2}
            \lim_{\alpha\rightarrow 0}\limsup_{k\rightarrow \infty}\np{\frac{A_k}{\omega_k^{\frac{1}{4}}}}{\infty}{\Sigma\setminus\bar{B}_{\alpha}(0)}=\lim_{\alpha\rightarrow 0}\alpha\np{A_{\infty}}{\infty}{\Sigma\setminus\bar{B}(0,\alpha)}=0.
        \end{align}
        Likewise, the strong convergence of the bubble (and the absence of branch point of the bubble at the point of concentration) finally shows by \eqref{weight_limit1} and \eqref{weight_limit2} that 
        \begin{align}
            \lim_{\alpha\rightarrow 0}\limsup_{k\rightarrow \infty}\np{\frac{A_k}{\omega_k^{\frac{1}{4}}}}{\infty}{\Sigma}=0.
        \end{align}
        Then, we introduce on $\Sigma$ the weight
        \begin{align*}
            \omega_{\alpha,\infty}=\left\{\begin{alignedat}{2}
                &\frac{1}{\alpha^4}\qquad&&\text{in}\;\, \Sigma\setminus\bar{B}(0,\alpha)\\
                &\frac{1}{|x|^4}\left(\frac{|x|}{\alpha}\right)^{4\beta_1}\qquad&&\text{for all}\;\, x\in B(0,\alpha).
            \end{alignedat}\right.
        \end{align*}
        Notice that since $\beta_1>\dfrac{1}{2}$, there exists $p>1$ such that $\omega_{\alpha,\infty}\in L^p(\Sigma)$. 
        Previous estimate \eqref{weight_limit2} shows that
        \begin{align}\label{weight_limit3}
            \np{\frac{A_{\infty}}{\omega_{\infty}}}{\infty}{\Sigma\setminus\bar{B}_{\alpha}(0)}=\limsup_{k\rightarrow \infty}\np{\frac{A_{k}}{\omega_{k}^{\frac{1}{4}}}}{\infty}{\Sigma\setminus\bar{B}(0,\alpha)}=O(\alpha).
        \end{align}
        On the other hand, since $|\mathring{A}_k|^2e^{2\lambda_k}$ is a pointwise conformal invariant, and $\ens{\lambda_k}_{k\in\N}$ is bounded in $L^{\infty}(\Sigma)$ due to the absence of branch points, we deduce that we can assume that 
        \begin{align*}
            \vec{\Psi}_k(z)=\phi_k(\delta_kz)\conv{k\rightarrow \infty}\vec{\Psi}_{\infty}\quad \text{in}\;\, C^l_{\mathrm{loc}}(\widehat{\C})\;\, \text{for all}\;\, l\in\N.
        \end{align*}
        Then, we get
        \begin{align*}
            A_{\vec{\Psi}_k}(x)=\delta_k^{-1}A_k(\delta_k x).
        \end{align*}
        Now, recall that on $B(0,\alpha^{-1}\delta_k)$, we have
        \begin{align*}
            \omega_k(x)=\frac{1}{(\alpha^{-1}\delta_k)^4}\left(1+\left(\frac{\alpha^{-1}\delta_k}{\alpha}\right)^{4\beta_1}+\frac{1}{\log^2\left(\frac{\alpha^2}{\delta_k}\right)}\right).
        \end{align*}
        In particular, we have for all $x\in B(0,\alpha^{-1})$ 
        \begin{align}
            \frac{A_k(\delta_k x)}{\omega_k(\delta_k x)^{\frac{1}{4}}}=\frac{\delta_kA_{\vec{\Psi}_k}(x)}{(\alpha^{-1}\delta_k)}\left(1+\left(\frac{\delta_k}{\alpha^2}\right)^{4\beta_1}+\frac{1}{\log^2\left(\frac{\alpha^2}{\delta_k}\right)}\right)^{-\frac{1}{4}}\conv{k\rightarrow \infty}\alpha A_{\vec{\Psi}_{\infty}}(x),
        \end{align}
        which implies that
        \begin{align}\label{weight_limit4}
            \limsup_{k\rightarrow \infty}\np{\frac{A_k}{\omega_k^{\frac{1}{4}}}}{\infty}{B(0,\alpha^{-1}\delta_k)}=\alpha \np{A_{\vec{\Psi}_{\infty}}}{\infty}{B(0,\alpha^{-1})}=O(\alpha)
        \end{align}
        since $A_{\vec{\Psi}_{\infty}}\in L^{\infty}(S^2)$ by hypothesis since $\vec{\Psi}_{\infty}$ is an immersion (which allows us to apply the classical $\epsilon$-regularity \cite{riviere1}).
        Therefore, using the three inequalities \eqref{weight_limit1}, \eqref{weight_limit2}, and \eqref{weight_limit3}, defining 
        \begin{align}
            \mu_{\alpha,k}=\np{\frac{A_k}{\omega_k^{\frac{1}{4}}}}{\infty}{\Sigma},
        \end{align}
        we deduce that the two conditions below hold for some $\alpha_0>0$ and some $\mu_0<\infty$
        \begin{align}
        \left\{\begin{alignedat}{1}
            &\lim_{\alpha\rightarrow 0}\limsup_{k\rightarrow \infty}\mu_{\alpha,k}=0,\\
            &\text{For all}\;\, k\in\N,\;\, \text{for all}\;\,  0<\alpha<\alpha_0, \;\, \text{we have}\;\,  0<\mu_{\alpha,k}<\mu_0.
            \end{alignedat}\right.
        \end{align}
        Let $\lambda\in\R$ and assume that 
        \begin{align*}
            \dim\mathscr{E}_{\alpha,k}(\lambda)>0,
        \end{align*}
        where we recall that $\mathscr{E}_{\alpha,k}(\lambda)$ is defined in \eqref{weighted_eigenspace}.
        Then, there exists $u\in W^{2,2}(\Sigma)\setminus\ens{0}$ such that 
        \begin{align*}
            Q_{\phi_k}(u)=\frac{\lambda}{2}\int_{\Sigma}u^2\,\omega_{\alpha,k}\,d\mathrm{vol}_{g_k},
        \end{align*}
        or
        \begin{align}\label{vp_equality}
            &\frac{1}{2}\int_{\Sigma}\left(\Delta_{g_k}u+|A_k|^2u\right)^2d\mathrm{vol}_{g_k}+\int_{\Sigma}\left(|du|_{g_k}^2+4|h_{0,k}|_{\mathrm{WP}}^2u^2\right)H_k^2d\mathrm{vol}_{g_k}-8\int_{\Sigma}\s{\partial u\otimes\partial u}{h_{0,k}}_{\mathrm{WP}}H_k\,d\mathrm{vol}_{g_k}\nonumber\\
            &-16\int_{\Sigma}\s{\partial u\otimes \partial H_k}{h_{0,k}}_{\mathrm{WP}}u\,d\mathrm{vol}_{g_k}=\frac{\lambda}{2}\int_{\Sigma}u^2\,\omega_{1,\alpha,k}\,d\mathrm{vol}_{g_k}.
        \end{align}
        Now, let us prove that
        \begin{align*}
            \lim_{\alpha\rightarrow 0}\limsup_{k\rightarrow \infty}\np{\frac{A_k\D A_k}{\sqrt{\omega_{1,\alpha,k}\omega_{2,\alpha,k}}}}{\infty}{\Sigma}=0.
        \end{align*}
        This estimate holds in $\Omega_k(\alpha)$ thanks to the proof of Theorem \ref{neck_positive}, provided that $\dfrac{\sqrt{2}}{2}<\beta<1$ and $0<\epsilon<1$ satisfy the conditions \eqref{fine_cond2}. Likewise, thanks to the strong convergence on $\Sigma\setminus\bar{B}(0,\alpha)$, we get
        \begin{align*}
            \lim_{k\rightarrow \infty}\np{\frac{A_k\D A_k}{\sqrt{\omega_{1,\alpha,k}\omega_{2,\alpha,k}}}}{\infty}{\Sigma}=\np{\frac{A_{\infty}\D A_{\infty}}{\sqrt{\omega_{1,\alpha,\infty}\omega_{2,\alpha,\infty}}}}{\infty}{\Sigma\setminus\bar{B}(0,\alpha)}.
        \end{align*}
        On $\Sigma\setminus\bar{B}(0,\alpha)$, we have
        \begin{align*}
            \omega_{2,\alpha,\infty}(x)=\frac{1}{\alpha^2},
        \end{align*}
        which implies since $A_{\infty}\in C^{\infty}(\Sigma)$ that
        \begin{align*}
            \np{\frac{A_{\infty}\D A_{\infty}}{\sqrt{\omega_{1,\alpha,\infty}\omega_{2,\alpha,\infty}}}}{\infty}{\Sigma\setminus\bar{B}(0,\alpha)}=O(\alpha^3).
        \end{align*}
        Then, on the bubble, we have
        \begin{align*}
            \D A_{\vec{\Psi}_k}(x)=\delta_k^{-2}\D A_k(\delta_kx),
        \end{align*}
        while
        \begin{align*}
            \sqrt{\omega_{1,\alpha,k}\omega_{2,\alpha,k}}=\frac{1}{(\alpha^{-1}\delta_k)^3}\sqrt{1+\left(\frac{\alpha^{-1}\delta_k}{\alpha}\right)^{4\beta_1}+\frac{1}{\log^2\left(\frac{\alpha^2}{\delta_k}\right)}}\sqrt{1+\left(\frac{\alpha^{-1}\delta_k}{\alpha}\right)^{2\beta_2}+\frac{1}{\log^2\left(\frac{\alpha^2}{\delta_k}\right)}},
        \end{align*}
        which implies that for all $x\in B(0,\alpha^{-1})$,
        \begin{align*}
            &\frac{A_k(\delta_k x)\D A_k(\delta_k x)}{\sqrt{\omega_{1,\alpha,k}(\delta_kx)\omega_{2,\alpha,k}(\delta_kx)}}=\alpha^3 A_{\vec{\Psi_k}}(x)\D A_{\vec{\Psi}_k}(x)\left(1+\left(\frac{\alpha^{-1}\delta_k}{\alpha}\right)^{4\beta_1}+\frac{1}{\log^2\left(\frac{\alpha^2}{\delta_k}\right)}\right)^{-\frac{1}{2}}\\
            &\times \left(1+\left(\frac{\alpha^{-1}\delta_k}{\alpha}\right)^{2\beta_2}+\frac{1}{\log^2\left(\frac{\alpha^2}{\delta_k}\right)}\right)^{-\frac{1}{2}}\conv{k\rightarrow \infty}\alpha^3 A_{\vec{\Psi}_{\infty}}(x)\D A_{\vec{\Psi}_{\infty}}(x)=O(\alpha^3).
        \end{align*}
        Therefore, if 
        \begin{align}
            \nu_{\alpha,k}=\np{\frac{A_k\D A_k}{\sqrt{\omega_{1,\alpha,k}\omega_{2,\alpha,k}}}}{\infty}{\Sigma},
        \end{align}
        we deduce that the two conditions below hold for some $\alpha_0>0$ and some $\nu_0<\infty$
        \begin{align}
        \left\{\begin{alignedat}{1}
            &\lim_{\alpha\rightarrow 0}\limsup_{k\rightarrow \infty}\nu_{\alpha,k}=0,\\
            &\text{For all}\;\, k\in\N,\;\, \text{for all}\;\,  0<\alpha<\alpha_0, \;\, \text{we have}\;\,  0<\nu_{\alpha,k}<\nu_0.
            \end{alignedat}\right.
        \end{align}
        Then, we get
        \begin{align}\label{delicate_term1}
            \left|\int_{\Sigma}\s{\partial u\otimes\partial H_k}{h_{0,k}}_{\mathrm{WP}}\,u\,d\mathrm{vol}_{g_k}\right|\leq \nu_{\alpha,k}\left(\int_{\Sigma}u^2\,\omega_{1,\alpha,k}\,d\mathrm{vol}_{g_k}\right)^{\frac{1}{2}}\left(\int_{\Sigma}|du|_{g_k}^2\omega_{2,\alpha,k}\,d\mathrm{vol}_{g_k}\right)^{\frac{1}{2}}.
        \end{align}
        Since $\omega_{2,\alpha,k}$ is constant and Lipschitzian on $\Sigma\setminus\Omega_k(\alpha)$, integrating by parts, we get
        \begin{align}\label{unmerry_terms1}
            \int_{\Sigma}|du|_{g_k}^2\,\omega_{2,\alpha,k}\,d\mathrm{vol}_{g_k}=-\int_{\Sigma}u\left(\Delta_{g_k}u\right)\,\omega_{2,\alpha,k}\,d\mathrm{vol}_{g_k}-\int_{\Omega_k(\alpha)}u\,\D u\cdot \D\omega_{2,\alpha,k}\,dx.
        \end{align}
        We first estimate thanks to Cauchy-Schwarz inequality
        \begin{align}\label{unmerry_terms2}
            \left|\int_{\Sigma}u\left(\Delta_{g_k}u\right)\,\omega_{2,\alpha,k}\,d\mathrm{vol}_{g_k}\right|\leq \left(\int_{\Sigma}u^2\,\omega_{2,\alpha,k}^2\,d\mathrm{vol}_{g_k}\right)^{\frac{1}{2}}\left(\int_{\Sigma}(\Delta_{g_k}u)^2d\mathrm{vol}_{g_k}\right)^{\frac{1}{2}}.
        \end{align}
        Then, if we choose $\beta_2\geq \beta_1$, we deduce by the elementary inequality $(a+b+c)^2\leq 4(a^2+b^2+c^2)$ that 
        \begin{align*}
            \omega_{2,\alpha,k}^2\leq \frac{4}{|x|^4}\left(\left(\frac{|x|}{\alpha}\right)^{4\beta_2}+\left(\frac{\alpha^{-1}\delta_k}{|x|}\right)^{4\beta_2}+\frac{1}{\log^4\left(\frac{\alpha^2}{\delta_k}\right)}\right)\leq 4\,\omega_{1,\alpha,k}\qquad\text{in}\;\, \Omega_k(\alpha).
        \end{align*}
        The same inequality on $\Sigma\setminus\bar{\Omega_k(\alpha)}$ implies that
        \begin{align}\label{weight_relation}
            \omega_{2,\alpha,k}^2\leq 4\,\omega_{1,\alpha,k}\qquad \text{in}\;\, \Sigma.
        \end{align}
        Therefore, \eqref{unmerry_terms1} shows that
        \begin{align}\label{unmerry_terms3}
            \left|\int_{\Sigma}u\left(\Delta_{g_k}u\right)\,\omega_{2,\alpha,k}\,d\mathrm{vol}_{g_k}\right|\leq 2\left(\int_{\Sigma}u^2\,\omega_{1,\alpha,k}\,d\mathrm{vol}_{g_k}\right)^{\frac{1}{2}}\left(\int_{\Sigma}\left(\Delta_{g_k}u\right)^2d\mathrm{vol}_{g_k}\right)^{\frac{1}{2}}.
        \end{align}
        Now, we have
        \begin{align*}
            \D\omega_{2,\alpha,k}=-2\frac{x}{|x|^2}\omega_{2,\alpha,k}+2\beta_2\frac{x}{|x|^4}\left(\left(\frac{|x|}{\alpha}\right)^{2\beta_2}-\left(\frac{\alpha^{-1}\delta_k}{|x|}\right)^{2\beta_2}\right).
        \end{align*}
        Therefore, we have
        \begin{align*}
            |\D \omega_{2,\alpha,k}|\leq \frac{2(1+\beta_2)}{|x|}\omega_{2,\alpha,k}.
        \end{align*}
        We first have by Cauchy's inequality 
        \begin{align}\label{unmerry_terms4}
            &2(1+\beta_2)\left|\int_{\Omega_k(\alpha)}u\,\D u\cdot \frac{x}{|x|^4}\frac{1}{\log^2\left(\frac{\alpha}{\delta_k}\right)}dx\right|\leq\frac{1}{2}\int_{\Omega_k(\alpha)}\frac{|\D u|^2}{|x|^2}\frac{1}{\log^2\left(\frac{\alpha^2}{\delta_k}\right)}dx\nonumber\\
            &+2(1+\beta_2)^2\int_{\Omega_k(\alpha)}\frac{u^2}{|x|^4}\frac{1}{\log^2\left(\frac{\alpha^2}{\delta_k}\right)}dx
            \leq \frac{1}{2}\int_{\Sigma}|du|_{g_k}^2\,\omega_{2,\alpha,k}\,d\mathrm{vol}_{g_k}+C\int_{\Sigma}u^2\,\omega_{1,\alpha,k}\,d\mathrm{vol}_{g_k}.
        \end{align}
        Then, using inequality \eqref{proof_main_8} (where $\beta$ is replaced by $\beta_2$), provided that $\beta_2>\dfrac{\sqrt{2}}{2}$, there exists $\dfrac{1}{2}<\beta_1'<1$ and $\sqrt{2}-1<\beta_2'<1$ such that
        \begin{align}\label{unmerry_terms5}
            &\int_{\Omega_k(\alpha)}\frac{|\D u|}{|x|}\frac{|u|}{|x|^2}\left(\left(\frac{|x|}{\alpha}\right)^{2\beta_2}+\left(\frac{\alpha^{-1}\delta_k}{|x|}\right)^{2\beta_2}\right)dx
            \leq 2\left(\int_{\Omega_k(\alpha)}\frac{|\D u|^2}{|x|^2}\left(\left(\frac{|x|}{\alpha}\right)^{2\beta_2'}+\left(\frac{\alpha^{-1}\delta_k}{|x|}\right)^{2\beta_2'}\right)dx\right)^{\frac{1}{2}}\nonumber\\
            &\times \left(\int_{\Omega_k(\alpha)}\frac{u^2}{|x|^4}\left(\left(\frac{|x|}{\alpha}\right)^{4\beta_1'}+\left(\frac{\alpha^{-1}\delta_k}{|x|}\right)^{4\beta_1'}\right)dx\right)^{\frac{1}{2}}
        \end{align}
        Furthermore, the chosen $0<\epsilon<1$ in \eqref{fine_cond2} is such that $\epsilon<\dfrac{1}{2}$ (which is in fact trivial since $\beta_2<1$ and $4\beta_1'=4(1-\epsilon)\beta_2>2$). In particular, we have $\beta_1'\geq\beta_2\geq \beta_1$, so by the elementary inequality
        \begin{align*}
            x^{4\beta_1'}\leq x^{4\beta_1}\quad \text{for all}\;\, 0<x<1,
        \end{align*}
        we deduce that 
        \begin{align}\label{unmerry_terms6}
            \int_{\Omega_k(\alpha)}\frac{u^2}{|x|^4}\left(\left(\frac{|x|}{b}\right)^{4\beta_1'}+\left(\frac{a}{|x|}\right)^{4\beta_1'}\right)dx\leq C\int_{\Sigma}u^2\,\omega_{1,\alpha,k}\,d\mathrm{vol}_{g_k}.
        \end{align}
        Now, this is where the weighted Poincaré estimate comes in, since $\beta_2'<\beta_2$, which prevents one from controlling the associated weighted integral by the initial integral. Thanks to Theorem \ref{interpolation_weighted_poincare}, we deduce that
        \begin{align}\label{unmerry_terms7}
            \int_{\Omega_k(\alpha)}\frac{|\D u|^2}{|x|^2}\left(\left(\frac{|x|}{b}\right)^{2\beta_2'}+\left(\frac{a}{|x|}\right)^{2\beta_2'}\right)dx&\leq C_{\beta_1,\beta_2'}\int_{\Omega_k(\alpha)}\frac{u^2}{|x|^4}\left(\left(\frac{|x|}{\alpha}\right)^{4\beta_1}+\left(\frac{\alpha^{-1}\delta_k}{|x|}\right)^{4\beta_1}\right)dx\nonumber\\
            &+C_{\beta_1,\beta_2'}\int_{\Omega_k(\alpha)}|\D^2u|^2dx.
        \end{align}
        Now, by the uniform boundedness of $\ens{g_k}_{k\in\N}$ (see \cite[Remark $2.36$ p. $56$]{aubin} for example), we deduce that  that
        \begin{align*}
            \int_{\Omega_k(\alpha)}|\D^2u|^2dx\leq \int_{\Sigma}|\mathrm{Hess}(u)|^2_{g_0}d\mathrm{vol}_{g_0}\leq C\int_{\Sigma}(\Delta_{g_k}u)^2d\mathrm{vol}_{g_k},
        \end{align*}
        where $g_0$ is any smooth metric chosen flat on $\Omega_k(\alpha)$, and $C<\infty$ is independent of $\alpha>0$ and $k\in\N$. Therefore, \eqref{unmerry_terms5}, \eqref{unmerry_terms6}, and \eqref{unmerry_terms7} show that 
        \begin{align}\label{unmerry_term8}
            &\int_{\Omega_k(\alpha)}\frac{|\D u|}{|x|}\frac{|u|}{|x|^2}\left(\left(\frac{|x|}{\alpha}\right)^{2\beta_2}+\left(\frac{\alpha^{-1}\delta_k}{|x|}\right)^{2\beta_2}\right)dx\leq C\int_{\Omega_k(\alpha)}u^2\,\omega_{1,\alpha,k}\,d\mathrm{vol}_{g_k}\nonumber\\
            &+C\left(\int_{\Omega_k(\alpha)}u^2\,\omega_{1,\alpha,k}\,d\mathrm{vol}_{g_k}\right)^{\frac{1}{2}}\left(\int_{\Sigma}\left(\Delta_{g_k}u\right)^2d\mathrm{vol}_{g_k}\right)^{\frac{1}{2}}.
        \end{align}
        We deduce by \eqref{unmerry_terms1}, \eqref{unmerry_terms3}, \eqref{unmerry_terms4}, and \eqref{unmerry_term8} that
        \begin{align*}
            \int_{\Sigma}|du|_{g_k}^2\omega_{2,\alpha,k}d\mathrm{vol}_{g_k}&\leq \frac{1}{2}\int_{\Sigma}|du|_{g_k}^2\,\omega_{2,\alpha,k}\,d\mathrm{vol}_{g_k}+C\int_{\Omega_k(\alpha)}u^2\,\omega_{1,\alpha,k}\,d\mathrm{vol}_{g_k}\nonumber\\
            &+C\left(\int_{\Omega_k(\alpha)}u^2\,\omega_{1,\alpha,k}\,d\mathrm{vol}_{g_k}\right)^{\frac{1}{2}}\left(\int_{\Sigma}\left(\Delta_{g_k}u\right)^2d\mathrm{vol}_{g_k}\right)^{\frac{1}{2}},
        \end{align*}
        and finally
        \begin{align}\label{ipp_weighted}
            \int_{\Sigma}|du|_{g_k}^2\omega_{2,\alpha,k}d\mathrm{vol}_{g_k}&\leq C\int_{\Omega_k(\alpha)}u^2\,\omega_{1,\alpha,k}\,d\mathrm{vol}_{g_k}
            +C\left(\int_{\Omega_k(\alpha)}u^2\,\omega_{1,\alpha,k}\,d\mathrm{vol}_{g_k}\right)^{\frac{1}{2}}\left(\int_{\Sigma}\left(\Delta_{g_k}u\right)^2d\mathrm{vol}_{g_k}\right)^{\frac{1}{2}}
        \end{align}
        where we used the fact that $\ens{g_k}_{k\in\N}$ is a uniformly bounded metric. Therefore, we get by \eqref{delicate_term1} and \eqref{ipp_weighted}
        \begin{align}\label{main_control1}
            16\left|\int_{\Sigma}\s{\partial u\otimes\partial H_k}{h_{0,k}}_{\mathrm{WP}}u\,d\mathrm{vol}_{g_k}\right|\leq C_{\beta_1,\beta_2}\,\nu_{\alpha,k}\int_{\Sigma}u^2\,\omega_{1,\alpha,k}\,d\mathrm{vol}_{g_k}+C_{\beta_1,\beta_2}\,\nu_{\alpha,k}\int_{\Sigma}(\Delta_{g_k}u)^2d\mathrm{vol}_{g_k},
        \end{align}
        and if $\kappa_{\alpha,k}$ is defined like $\mu_{\alpha,k}$ but with $\beta_1$ replaced by $\beta_2$, we get by \eqref{ipp_weighted}
        \begin{align}\label{main_control2}
            &16\left|\int_{\Sigma}\s{\partial u\otimes\partial u}{h_{0,k}}_{\mathrm{WP}}H_kd\mathrm{vol}_{g_k}\right|\leq \kappa_{\alpha,k}^2\int_{\Sigma}|du|_{g_k}^2\omega_{2,\alpha,k}\,d\mathrm{vol}_{g_k}\nonumber\\
            &\leq C_{\beta_1,\beta_2}\,\kappa_{\alpha,k}^2\int_{\Sigma}u^2\,\omega_{1,\alpha,k}\,d\mathrm{vol}_{g_k}+C_{\beta_1,\beta_2}\,\kappa_{\alpha,k}^2\int_{\Sigma}(\Delta_{g_k}u)^2d\mathrm{vol}_{g_k}.
        \end{align}
        Then, we have
        \begin{align}\label{main_control3}
            &\frac{1}{2}\int_{\Sigma}\left(\Delta_{g_k}u+|A_k|^2u\right)^2d\mathrm{vol}_{g_k}\geq \frac{1}{4}\int_{\Sigma}(\Delta_{g_k}u)^2d\mathrm{vol}_{g_k}-\int_{\Sigma}|A_k|^4u^2d\mathrm{vol}_{g_k}\nonumber\\
            &\geq \frac{1}{4}\int_{\Sigma}(\Delta_{g_k}u)^2d\mathrm{vol}_{g_k}-\mu_{\alpha,k}^4\int_{\Sigma}u^2\,\omega_{1,\alpha,k}\,d\mathrm{vol}_{g_k}.
        \end{align}
        Therefore, by \eqref{der2_redite}, \eqref{main_control1}, \eqref{main_control2}, and \eqref{main_control3}, we deduce that
        \begin{align}\label{main_control4}
            Q_{\phi_k}(u)&\geq \left(\frac{1}{4}-C_{\beta_1,\beta_2}\left(\nu_{\alpha,k}+\kappa_{\alpha,k}^2+\mu_{\alpha,k}^4\right)\right)\int_{\Sigma}(\Delta_{g_k}u)^2d\mathrm{vol}_{g_k}\nonumber\\
            &-C_{\beta_1,\beta_2}\left(\nu_{\alpha,k}+\kappa_{\alpha,k}^2+\mu_{\alpha,k}^4\right)\int_{\Sigma}u^2\,\omega_{1,\alpha,k}\,d\mathrm{vol}_{g_k}.
        \end{align}
        Now, let $\alpha_0>0$ be such that 
        \begin{align*}
            \limsup_{k\rightarrow\infty}\left(\nu_{\alpha_0,k}+\kappa_{\alpha_0,k}^2+\mu_{\alpha_0,k}^4\right)< \frac{1}{4}\frac{1}{C_{\beta_1,\beta_2}}.
        \end{align*}
        Then, for all $0<\alpha\leq \alpha_0$, if \eqref{vp_equality} holds, we get for all $k\in\N$ large enough by \eqref{main_control4}
        \begin{align}
            \frac{\lambda}{2}\int_{\Sigma}u^2\,\omega_{1,\alpha,k}\,d\mathrm{vol}_{g_k}=Q_{\phi_k}(u)\geq -C_{\beta_1,\beta_2}\left(\nu_{\alpha,k}+\kappa_{\alpha,k}^2+\mu_{\alpha,k}^4\right)\int_{\Sigma}u^2\,\omega_{1,\alpha,k}\,d\mathrm{vol}_{g_k},
        \end{align}
        which concludes the proof of the theorem. 
    \end{proof}

    We now generalise \cite[Lemma B.$1$]{riviere_morse_scs} to our setting. 

    \begin{lemme}\label{Lemma_B.1}
        Let $\phi:\Sigma \rightarrow \R^n$ be a smooth Willmore immersion. Let $p\in\Sigma$, let $\omega\in C^{\infty}(\Sigma\setminus\ens{p})$ such that for some $\dfrac{1}{2}<\beta<1$, 
        \begin{align}
            0\leq \omega_0<\omega(x)\leq \frac{C_0}{\mathrm{dist}(x,p)^{4(1-\beta)}}\quad \text{and}\quad |\D \omega_0(x)|\leq \frac{C_l}{\mathrm{dist}(x,p)^{4(1-\beta)+l}}\;\, \text{for all}\;\, 0\leq l\in \N,
        \end{align}
        where $\ens{C_l}_{l\in\N}\subset (0,\infty)$. Denote
        \begin{align}\label{weighted_L2}
            L^2_{\omega}(\Sigma)=L^2(\Sigma)\cap\ens{u:\int_{\Sigma}u^2\,\omega\,d\mathrm{vol}_{g_0}<\infty},
        \end{align}
        where $g_0$ is any smooth metric on $\Sigma$. If $g=\phi^{\ast}g_{\R^n}$ is the induced metric on $\Sigma$. Consider the operator
        \begin{align*}
            \mathcal{L}_{g,\omega}&=\omega^{-1}\mathcal{L}_g=\omega^{-1}\left(\Delta_g^2+|A|^2\Delta_g+2\s{d|A|^2}{d(\,\cdot\,)}_g+2\,d^{\ast_g}\left(H^2\,d(\,\cdot\,)\right)\right.\\
            &\left.+\left(|A|^4+\Delta_g|A|^2+24\,H^2|h_0|_{\mathrm{WP}}^2\right)+16\,\ast_g\,d\,\Re\left(g^{-1}\otimes h_0\otimes\partial(H\,\cdot\,)\right)-16\s{\partial(\,\cdot\,)\otimes\partial H}{h_0}_{\mathrm{WP}}\right)
        \end{align*}
        acting on $W^{2,2}(\Sigma)$ functions. Then, there exists a Hilbertian base of $L^2_{\omega}(\Sigma)$ made of eigenvectors of $\mathcal{L}_{g,\omega}$ whose eigenvalues satisfy
        \begin{align*}
            \lambda_1\leq \lambda_2\leq \cdots\leq \lambda_k\conv{k\rightarrow \infty}\infty. 
        \end{align*}
    \end{lemme}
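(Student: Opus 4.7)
The strategy is to adapt the codimension-one argument in Theorem~\ref{diagonalisation}, replacing the reference $L^2(\Sigma,d\vg)$ inner product by the weighted inner product $\langle u,v\rangle_\omega:=\int_\Sigma uv\,\omega\,d\vg$. Since $\mathcal{L}_g$ is formally self-adjoint on $L^2(\Sigma,d\vg)$, we have
\begin{align*}
    \langle \mathcal{L}_{g,\omega}u,v\rangle_\omega = \int_\Sigma (\mathcal{L}_g u)\,v\,d\vg,
\end{align*}
which is symmetric in $(u,v)$, so $\mathcal{L}_{g,\omega}$ is formally self-adjoint with respect to $\langle\,\cdot\,,\cdot\,\rangle_\omega$.

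The first step is the coercivity of a shifted bilinear form. By the computation leading to \eqref{ineq_d2_final} (valid since $\phi$ is smooth and all coefficients of $\mathcal{L}_g$ lie in $L^\infty(\Sigma)$), there is a constant $K=K(\phi)<\infty$ with
\begin{align*}
    Q_\phi(u) \geq \tfrac{1}{8}\int_\Sigma (\Delta_g u)^2\,d\vg - K\int_\Sigma u^2\,d\vg\qquad\forall\,u\in W^{2,2}(\Sigma).
\end{align*}
Combined with the pointwise lower bound $\omega>\omega_0$ (or, in the borderline case $\omega_0=0$, the strict positivity of $\omega$ on the compact set $\Sigma\setminus B(p,\epsilon)$), for $\lambda$ large enough the form
\begin{align*}
    B_\lambda(u,v) := \tfrac{1}{2}\int_\Sigma (\mathcal{L}_g u)\,v\,d\vg + \lambda\,\langle u,v\rangle_\omega
\end{align*}
is coercive and continuous on $W^{2,2}(\Sigma)$. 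Lax--Milgram then provides, for every $f\in L^2_\omega(\Sigma)$, a unique $u=T_\lambda f\in W^{2,2}(\Sigma)$ satisfying $B_\lambda(u,v) = \langle f,v\rangle_\omega$ for all $v\in W^{2,2}(\Sigma)$, and $T_\lambda$ is self-adjoint on $L^2_\omega(\Sigma)$ thanks to the symmetry of $B_\lambda$.

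The crux is then the compactness of $T_\lambda$ on $L^2_\omega(\Sigma)$, which reduces to compactness of the inclusion $W^{2,2}(\Sigma)\hookrightarrow L^2_\omega(\Sigma)$. Here the assumption $\beta>\tfrac{1}{2}$ enters crucially: the bound $\omega(x)\leq C_0\,\mathrm{dist}(x,p)^{-4(1-\beta)}$ with $4(1-\beta)<2$ forces $\omega\in L^1(\Sigma)$ in dimension two. Combined with the Sobolev embedding $W^{2,2}(\Sigma)\hookrightarrow C^0(\Sigma)$ and Rellich--Kondrachov, any weakly convergent sequence $u_n\rightharpoonup u$ in $W^{2,2}(\Sigma)$ satisfies
\begin{align*}
    \int_\Sigma (u_n-u)^2\,\omega\,d\vg \leq \|u_n-u\|_{C^0(\Sigma)}^2\,\|\omega\|_{L^1(\Sigma)}\conv{n\rightarrow\infty} 0,
\end{align*}
establishing the required compactness.

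Finally, the spectral theorem for compact self-adjoint operators applied to $T_\lambda$ produces a Hilbertian basis $\{e_k\}_{k\in\N}$ of $L^2_\omega(\Sigma)$ of eigenfunctions with $T_\lambda e_k = \mu_k e_k$ and $\mu_k\searrow 0^+$. Undoing the spectral shift via $\mathcal{L}_{g,\omega}e_k = (\mu_k^{-1}-\lambda)\,e_k$ yields the announced Hilbertian basis of eigenfunctions of $\mathcal{L}_{g,\omega}$, with eigenvalues $\lambda_k=\mu_k^{-1}-\lambda\nearrow+\infty$. The most delicate step will be the coercivity argument when $\omega_0$ is allowed to vanish: this requires a partition-of-unity localization near $p$, exploiting that $\omega$ remains strictly positive on $\Sigma\setminus\{p\}$, to ensure that the weak singularity of the weight does not break the Lax--Milgram argument.
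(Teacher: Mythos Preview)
Your proposal is correct and follows essentially the same route as the paper: invoke the coercivity estimate \eqref{ineq_d2_final} from Theorem~\ref{diagonalisation}, shift by $\lambda$ to make $B_\lambda$ coercive on $W^{2,2}(\Sigma)$, apply Lax--Milgram, and then use compactness plus the spectral theorem for compact self-adjoint operators. The paper's proof is terser---it simply says ``the rest of the proof is exactly the same as the one of Theorem~\ref{diagonalisation}''---whereas you spell out the compactness of $W^{2,2}(\Sigma)\hookrightarrow L^2_\omega(\Sigma)$ explicitly via $\omega\in L^1(\Sigma)$ (which is exactly where $\beta>\tfrac{1}{2}$ is used) combined with the compact embedding $W^{2,2}(\Sigma)\hookrightarrow C^0(\Sigma)$; this is a genuine clarification over the paper. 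Your closing remark about the borderline $\omega_0=0$ case is more cautious than necessary: since $\omega$ is continuous and strictly positive on $\Sigma\setminus\{p\}$, one can fix a small ball $B(p,\epsilon)$, bound $\int_{B(p,\epsilon)}u^2\,d\vg\le C\epsilon^2\|u\|_{C^0}^2\le C'\epsilon^2\bigl(\int(\Delta_g u)^2+\int u^2\bigr)$ via Sobolev, and absorb---no partition of unity is required (and in the intended application the weight is in fact bounded below by a positive constant).
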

    \begin{rem}
        Notice that the definition of the weighted $L^2$ space $L^2_{\omega}$ does not depend on the choice of the metric $g_0$ on $\Sigma$.
    \end{rem}
    \begin{proof}
        The proof is similar to the one of Theorem \ref{diagonalisation}. First, as $\phi$ is smooth, $\leb_{g,\omega}u$ is a well-defined distribution for all $u\in L^2_{\omega}(\Sigma)$. For all $\lambda>0$, let $\mathcal{L}_{g,\omega}^{\lambda}=\mathcal{L}_{g,\omega}+\lambda\mathrm{Id}$. Then, let $f\in L^2_{\omega}(\Sigma)$ and $u\in W^{2,2}(\Sigma)$ is such that
        \begin{align*}
            \mathcal{L}_{g,\omega}^{\lambda}u=f.
        \end{align*}
        Now, define the bilinear form $B_{\lambda}$ on $W^{2,2}(\Sigma)$
        \begin{align*}
            B_{\lambda}(u,v)=\frac{1}{2}\int_{\Sigma}u\,\mathcal{L}_gu\,d\vg+\frac{\lambda}{2}\int_{\Sigma}u\,v\,d\vg,
        \end{align*}
        and let $Q_{\lambda}(u)=B_{\lambda}(u,u)$. 
        Recalling \eqref{ineq_d2_final}, we deduce that 
        \begin{align*}
            \int_{\Sigma}f\,u\,\omega\,d\vg\geq \int_{\Sigma}(\Delta_gu)^2d\vg-\int_{\Sigma}\left(\Delta_g|A|^2+10|A|^4-8\s{\D^2_{\partial,\partial}H}{h_0}_{\mathrm{WP}}\right)u^2d\vg
            &+\frac{\lambda}{2}\int_{\Sigma}u^2\,\omega\,d\vg.
        \end{align*}
        Therefore, taking $\lambda>\lambda_0=2\np{\Delta_g|A|^2+10|A|^4-8\s{\D^2_{\partial,\partial}H}{h_0}_{\mathrm{WP}}}{\infty}{\Sigma}$, we deduce by Cauchy-Schwarz inequality that
        \begin{align*}
            &\frac{1}{8}\int_{\Sigma}(\Delta_gu)^2d\vg+\frac{\lambda-\lambda_0}{2}\int_{\Sigma}u^2\,\omega\,d\vg\leq \left(\int_{\Sigma}u^2\,\omega\,d\vg\right)^{\frac{1}{2}}\left(\int_{\Sigma}f^2\,\omega\,d\vg\right)^{\frac{1}{2}}\\
            &\leq \frac{\lambda-\lambda_0}{4}\int_{\Sigma}u^2\,\omega\,d\vg+\frac{4}{\lambda-\lambda_0}\int_{\Sigma}f^2\,\omega\,d\vg,
        \end{align*}
        which finally implies that
        \begin{align*}
            \frac{1}{8}\int_{\Sigma}(\Delta_gu)^2d\vg+\frac{\lambda-\lambda_0}{4}\int_{\Sigma}u^2\,\omega\,d\vg\leq \frac{4}{\lambda-\lambda_0}\int_{\Sigma}f^2\,\omega\,d\vg,
        \end{align*}
        The rest of the proof is exactly the same as the one of Theorem \ref{diagonalisation} and we omit it. 
    \end{proof}

    Now, introduce as in \cite{riviere_morse_scs}
    \begin{align*}
        \mathscr{E}_{\alpha,\infty}^0=\bigoplus_{\lambda\leq 0}\mathscr{E}_{\alpha,\infty}(\lambda),
    \end{align*}
    where $\mathcal{L}_{\alpha,\infty}^0=\omega_{1,\alpha,\infty}^{-1}\mathcal{L}_{g_{\phi_{\infty}}}$, and $\mathscr{E}_{\alpha,\infty}(\lambda)$ is defined as in \eqref{weighted_eigenspace}.

    Then, we also get
    \begin{align*}
        \mathrm{dim}(\mathscr{E}_{\alpha,\infty})\leq \mathrm{Ind}_W(\phi_{\infty})+\mathrm{Null}_{W}(\phi_{\infty}).
    \end{align*}

    We can finally move to the proof of the main theorem. 

    \begin{proof}
        Consider the finite-dimensional sphere
        \begin{align}\label{spectrum_sphere}
            \mathscr{S}_{\alpha,k}=\bigoplus_{\lambda\leq 0}\mathscr{E}_{\alpha,k}(\lambda)\cap\ens{u:\int_{\Sigma}u^2\,\omega_{1,\alpha,k}\,d\mathrm{vol}_{g_k}=1}.
        \end{align}
        Let $u_k\in \mathscr{S}_{\alpha,k}$. Thanks to \eqref{main_control4}, for $k$ large enough, we have
        \begin{align*}
            Q_{\phi_k}(u_k)\geq \frac{1}{8}\int_{\Sigma}(\Delta_{g_k}u_k)^2d\mathrm{vol}_{g_k}-\mu_{\alpha,k}\int_{\Sigma}u^2\,\omega_{1,\alpha,k}\,d\mathrm{vol}_{g_k},
        \end{align*}
        while for some $\lambda>-\mu_{\alpha,k}\geq -\mu_0$, we have
        \begin{align*}
            Q_{\phi_k}(u_k)=\frac{\lambda}{2}\int_{\Sigma}u_k^2\,\omega_{1,\alpha,k}\,d\mathrm{vol}_{g_k}\geq -\frac{\mu_0}{2}\int_{\Sigma}(\Delta_{g_k}u)^2d\mathrm{vol}_{g_k}.
        \end{align*}
        Therefore, we get
        \begin{align*}
            \int_{\Sigma}(\Delta_{g_k}u_k)^2d\mathrm{vol}_{g_k}\leq \frac{\mu_0}{2}.
        \end{align*}
        Then, since $\Sigma$ is closed, we have by the Cauchy-Schwarz inequality
        \begin{align*}
            \int_{\Sigma}|du_k|_{g_k}^2d\mathrm{vol}_{g_k}&=-\int_{\Sigma}u_k\,\Delta_{g_k}u_k\,d\mathrm{vol}_{g_k}\leq \left(\int_{\Sigma}u_k^2\,d\mathrm{vol}_{g_k}\right)^{\frac{1}{2}}\left(\int_{\Sigma}(\Delta_{g_k}u_k)^2d\mathrm{vol}_{g_k}\right)^{\frac{1}{2}}\\
            &\leq C_{\alpha}\mu_0.
        \end{align*}
        Therefore, $\ens{u_k}_{k\in\N}$ is bounded in $W^{2,2}(\Sigma)$. Up to a subsequence, we deduce that 
        \begin{align*}
            u_k\underset{k\rightarrow \infty}{\hookrightarrow}u_{\infty}\quad\text{in}\;\,W^{2,2}(\Sigma)\qquad \text{and}\qquad u_k(\delta_kz)\underset{k\rightarrow \infty}{\hookrightarrow}v_{\infty}\quad \text{in}\;\, W^{2,2}_{\mathrm{loc}}(\C).
        \end{align*}

        \textbf{Claim 1: $u_{\infty}\neq 0$ or $v_{\infty}\neq 0$.} 
        
        Let $\ens{\varphi_{k,j}}_{1\leq j\leq N_k}$ be an orthonormal base of 
        \begin{align*}
            \mathscr{E}_{\alpha,k}^0=\bigoplus_{\lambda\leq 0}\mathscr{E}_{\alpha,k}(\lambda).
        \end{align*}
        For all $1\leq j\leq N_k$, we have
        \begin{align*}
            \mathcal{L}_{g_k}\varphi_{k,j}=\lambda_{k,j}\,\omega_{1,\alpha,k}\,\varphi_{k,j}.
        \end{align*}
        Since $u_k\in \mathscr{S}_{\alpha,k}\simeq S^{N_k-1}$, we have a decomposition
        \begin{align*}
            u_k=\sum_{j=1}^{N_k}a_{k,j}\varphi_{k,j},
        \end{align*}
        where $a=\ens{a_{k,j}}_{1\leq j\leq N_k}\in S^{N_k-1}$. Then, we get as $-\mu_0\leq \lambda_{k,j}\leq 0$ the inequality
        \begin{align*}
            \int_{\Sigma}\frac{1}{\omega_{1,\alpha,k}}|\mathcal{L}_{g_k}u_k|^2d\mathrm{vol}_{g_k}&=\int_{\Sigma}\sum_{i,j=1}^{N_k}a_{k,i}a_{k,j}\lambda_{k,i}\lambda_{k,j}\varphi_{k,i}\varphi_{k,j}\,\omega_{1,\alpha,k}d\mathrm{vol}_{g_k}\\
            &=\sum_{j=1}^{N_k}|a_{k,j}|^2|\lambda_{k,j}|^2\leq \mu_0^2.
        \end{align*}
        Therefore, there exists a uniformly bounded sequence of functions $\ens{f_k}_{k\in\N}\subset L^2(\Sigma)$ such that
        \begin{align*}
            \mathcal{L}_{g_k}u_k=\sqrt{\omega_{\alpha,k}}f_k.
        \end{align*}
        By classical elliptic estimates, as $\ens{g_k}_{k\in\N}$ is uniformly bounded and smooth, we deduce that
        \begin{align*}
            \np{\D^4u_k}{2}{\Sigma\setminus\bar{B}(0,\alpha)}\leq C_{\alpha}.
        \end{align*}
        Likewise, making a linear change of variable, we deduce that 
        \begin{align*}
            \np{\D^4v_k}{2}{B(0,\alpha^{-1})}\leq C_{\alpha},
        \end{align*}
        where $v_k(x)=u_k(\rho_kx)$. Therefore, we have
        \begin{align}\label{w42_convergence}
            u_k\underset{k\rightarrow \infty}{\rightharpoonup}u_{\infty}\quad \text{in}\;\, W^{4,2}_{\mathrm{loc}}(\Sigma\setminus\ens{0})\quad \text{and}\quad v_k\underset{k\rightarrow \infty}{\rightharpoonup}v_{\infty}\quad \text{in}\;\, W^{4,2}_{\mathrm{loc}}(\C). 
        \end{align}
        Assume by contradiction that $u_{\infty}=0$ and $v_{\infty}=0$. Let $\chi_{\alpha,k}\in C^{\infty}(\Sigma)$ be a cutoff function such that $\chi_{\alpha,k}=1$ in $\Sigma\setminus\bar{\Omega_k(\alpha)}$ and $\chi_{\alpha,k}=0$ in $\Omega_k(\alpha/2)$ and $\widetilde{u}_k=\chi_{\alpha,k}u_k$. We can also assume that for some universal constant $C<\infty$ independent of $\alpha$ and $k$, we have
        \begin{align*}
            |\D^l\chi_{\alpha,k}|\leq \frac{C}{|x|^l}\quad \text{for all}\;\,x\in \left(B_{\alpha}\setminus\bar{B}_{\alpha/2}(0)\right)\cup\;\,\left(B_{2\alpha^{-1}\delta_k}\setminus\bar{B}_{\alpha^{-1}\delta_k}(0)\right) \text{for all}\;\,l=0,1,2.
        \end{align*}
        First, we trivially get by the contradiction hypothesis and the strong convergence that
        \begin{align*}
            \np{\D^2u_k(1-\chi_{\alpha},k)}{2}{\Sigma}\conv{k\rightarrow \infty}0.
        \end{align*}
        Then, we get
        \begin{align*}
            \int_{\Sigma}|du_k|_{g_k}|^2|d\chi_{\alpha,k}|_{g_k}^2d\mathrm{vol}_{g_k}&\leq C^2\int_{B_{\alpha}\setminus\bar{B}_{\frac{\alpha}{2}}(0)}\frac{|\D u_k|^2}{|x|^2}dx+C^2\int_{B_{2\alpha^{-1}}\setminus\bar{B}_{\alpha^{-1}}(0)}\frac{|\D v_k|^2}{|x|^2}dx\\
            &\conv{k\rightarrow \infty}C^2\int_{B_{\alpha}\setminus\bar{B}_{\frac{\alpha}{2}}(0)}\frac{|\D u_{\infty}|^2}{|x|^2}dx+C^2\int_{B_{2\alpha^{-1}}\setminus\bar{B}_{\alpha^{-1}}(0)}\frac{|\D v_{\infty}|^2}{|x|^2}dx=0
        \end{align*}
        by assumption. Likewise, we have
        \begin{align*}
            \limsup_{k\rightarrow \infty}\int_{\Sigma}u_k^2|\D^2\chi_{\alpha,k}|^2_{g_k}d\mathrm{vol}_{g_k}\leq C^2\int_{B_{\alpha}\setminus\bar{B}_{\frac{\alpha}{2}}(0)}\frac{u_{\infty}^2}{|x|^4}dx+C^2\int_{B_{2\alpha^{-1}}\setminus\bar{B}_{\alpha^{-1}}(0)}\frac{v_{\infty}^2}{|x|^4}dx=0.
        \end{align*}
        Therefore, we deduce that
        \begin{align}\label{w22_conv}
            \lim_{k\rightarrow \infty}\wp{u_k-\widetilde{u}_k}{2,2}{\Sigma}=0.
        \end{align}
        Now, recall the formula
        \begin{align*}
            Q_{\phi_k}(u_k)&=\frac{1}{2}\int_{\Sigma}\left(\Delta_{g_k}u_k+|A_k|^2u_k\right)^2d\mathrm{vol}_{g_k}+\int_{\Sigma}\left(|du_k|_{g_k}^2+4|h_{0,k}|_{WP}^2u_k^2\right)H_k^2d\vg\\
            &-8\int_{\Sigma}\s{\partial u_k\otimes\partial u_k}{h_{0,k}}_{WP}H_k\,d\mathrm{vol}_{g_k}
            -16\int_{\Sigma}\s{\partial u_k\otimes\partial H_k}{h_{0,k}}_{\mathrm{WP}}u_k\,d\mathrm{vol}_{g_k}.
        \end{align*}
        Thanks to the boundedness of $\ens{g_k}_{k\in\N}$ and $\ens{A_k}_{k\in\N}$, we deduce by \eqref{w22_conv} that 
        \begin{align}\label{conv_second_der}
            \lim_{k\rightarrow \infty}|Q_{\phi_k}(u_k)-Q_{\phi_k}(\widetilde{u}_k)|=0.
        \end{align}
        Then, we have
        \begin{align*}
            &\left|\int_{\Sigma}u_k^2\,\omega_{1,\alpha,k}\,d\mathrm{vol}_{g_k}-\int_{\Sigma}\widetilde{u}_k^2\,\omega_{1,\alpha,k}\,d\mathrm{vol}_{g_k}\right|\leq \int_{B_{\alpha}\setminus\bar{B}_{\frac{\alpha}{2}}(0)}u_k^2\,\omega_{1,\alpha,k}\,d\mathrm{vol}_{g_k}
            +\int_{B_{2\alpha^{-1}}\setminus\bar{B}_{\frac{\alpha}{2}}(0)}v_k^2\,\widetilde{\omega_{1,\alpha,k}}\,d\mathrm{vol}_{g_{\vec{\Psi}_k}}\\
            &\conv{k\rightarrow \infty}\int_{B_{\alpha}\setminus\bar{B}_{\frac{\alpha}{2}}(0)}u_{\infty}^2\,\omega_{1,\alpha,\infty}\,d\mathrm{vol}_{g_{\infty}}
            +\int_{B_{2\alpha^{-1}}\setminus\bar{B}_{\frac{\alpha}{2}}(0)}v_{\infty}^2\,\frac{1}{\alpha^4}\,d\mathrm{vol}_{g_{\vec{\Psi}_{\infty}}}=0.
        \end{align*}
        Thanks to the normalisation of $u_k\in \mathscr{S}_{\alpha,k}$ (see \eqref{spectrum_sphere}), we deduce that 
        \begin{align}\label{unit_weight}
            \lim_{k\rightarrow \infty}\int_{\Sigma}\widetilde{u}_k^2\,\omega_{1,\alpha,k}\,d\mathrm{vol}_{g_k}=1.
        \end{align}
        Therefore, we deduce by Theorem \ref{neck_positive}, \eqref{unit_weight}, and the boundedness of the sequence of metrics $\ens{g_k}_{k\in\N}$ that 
        \begin{align*}
            Q_{\phi_k}(\widetilde{u}_k)\geq \lambda_1(\beta_1)\int_{\Omega_k(\alpha)}\widetilde{u}_k^2\,\omega_{1,\alpha,k}\,d\mathrm{vol}_{g_k}\conv{k\rightarrow \infty}\lambda_1(\beta_1)>0.
        \end{align*}
        However, this contradicts the fact that $Q_{\phi_k}(u_k)\leq 0$ due to the convergence in \eqref{conv_second_der}.

        Therefore, we deduce that either $u_{\infty}\neq 0$ or $v_{\infty}\neq 0$ and \textbf{Claim 1} is proven.

        \textbf{End of the proof of the main theorem.} As in the proof of \textbf{Claim 1}, let $\ens{\varphi_{k,j}}_{1\leq j\leq N_k}$ be an orthonormal base of 
        \begin{align}
            \bigoplus_{\lambda\leq 0}\mathscr{E}_{\alpha,k}(\lambda),
        \end{align}
        and let $\ens{\lambda_{k,j}}_{k\in\N,j\in \ens{1,\cdots,N_k}}$ be such that
        \begin{align*}
            \mathcal{L}_{g_k}\varphi_{k,j}=\lambda_{k,j}\,\omega_{1,\alpha,k}\,\varphi_{k,j}.
        \end{align*}
        Thanks to \eqref{w42_convergence}, for all $1\leq j\leq N=\limsup_{k\rightarrow\infty}N_k$ we deduce that there exists $\lambda_{\infty,j}\leq 0$ and $\varphi_{\infty,j}\in W^{2,2}(\Sigma)$ and $\psi_{\infty,j}\in W^{2,2}_{\mathrm{loc}}(\C)$ such that
        \begin{align}\label{limiting_eigenvalues}
        \left\{\begin{alignedat}{3}
            &\mathcal{L}_{g_{\infty}}\varphi_{\infty,j}&&=\lambda_{\infty,j}\,\omega_{1,\alpha,\infty}\,\varphi_{\infty,j}\qquad&& \text{in}\;\, \Sigma\\
            &\mathcal{L}_{g_{\Psi_{\infty}}}\psi_{\infty,j}&&=\lambda_{\infty,j}\,\widehat{\omega_{1,\alpha,\infty}}\,\psi_{\infty,j}\qquad&& \text{in}\;\, \C,
            \end{alignedat}\right.
        \end{align}
        where 
        \begin{align*}
            \omega_{1,\alpha,\infty}(x)=\left\{\begin{alignedat}{2}
                &\frac{1}{|x|^{4(1-\beta)}}\frac{1}{\alpha^{4\beta}}\qquad&&\text{for all}\;\, x\in B(0,\alpha)\\
                &\frac{1}{\alpha^4}\qquad&&\text{for all}\;\, x\in \Sigma\setminus\bar{B}(0,\alpha)
            \end{alignedat}\right.
        \end{align*}
        and
        \begin{align*}
            \widehat{\omega_{1,\alpha,\infty}}(x)=\left\{\begin{alignedat}{2}
                &\frac{1}{|x|^{4(1-\beta)}}\frac{1}{\alpha^{4\beta}}\qquad&&\text{for all}\;\, x\in \C\setminus\bar{B}(0,\alpha^{-1})\\
                &\frac{1}{\alpha^4}\qquad&&\text{for all}\;\, x\in B(0,\alpha^{-1}).
            \end{alignedat}\right.
        \end{align*}
        Indeed, since the branch point is of multiplicity $1$, there exists a bounded sequence $\ens{\mu_k}_{k\in\N}$ such that for all $\alpha>0$ small enough
        \begin{align*}
            \vec{\Psi}_k(z)=e^{-\mu_k}\phi_k(\rho_k\,z)\conv{k\rightarrow \infty} \vec{\Psi}_{\infty}(z)\quad \text{for all}\;\, C^l(B(0,\alpha^{-1}))\;\,\text{for all}\;\, l\in\N.
        \end{align*}
        Furthermore, an elementary computation shows that for all $z\in B(0,\alpha^{-1})$,
        \begin{align*}
            e^{2\lambda_{\vec{\Psi}_k}(z)}=\rho_k^2e^{-2\mu_k}e^{2\lambda_{\phi_k}(\rho_kz)}.
        \end{align*}
        Therefore, if $\psi_{k,j}(z)=e^{\mu_k}\varphi_{k,j}(\rho_k\,z)$, we get
        \begin{align*}
            &\int_{B(0,\alpha^{-1}\rho_k)}\left(\Delta_{g_k}\varphi_{k,j}\right)^2d\mathrm{vol}_{g_k}=\int_{B(0,\alpha^{-1}\rho_k)}\left(\Delta \varphi_{k,j}(w)\right)^2e^{-2\lambda_{\phi_k}(w)}|dw|^2\\
            &=\int_{B(0,\alpha^{-1}\rho_k)}\left(\rho_k^{-2}\Delta \psi_{k,j}(\rho_k^{-1}w)\right)^2\rho_k^2e^{2\lambda_{\vec{\Psi}_k}(\rho_k^{-1}w)}|dw|^2=\int_{B(0,\alpha^{-1})}\left(\Delta_{g_{\vec{\Psi}_k}}\psi_{k,j}\right)^2d\mathrm{vol}_{g_{\vec{\Psi}_k}},
        \end{align*}
        which shows by the weak convergence that
        \begin{align}\label{bounded_delta}
            \int_{B(0,\alpha^{-1})}\left(\Delta_{g_{\vec{\Psi}_{\infty}}}\psi_{\infty,j}\right)^2d\mathrm{vol}_{g_{\vec{\Psi}_{\infty}}}\leq \liminf_{k\rightarrow\infty}\int_{B(0,\alpha^{-1}\rho_k)}\left(\Delta_{g_k}\varphi_{k,j}\right)^2d\mathrm{vol}_{g_k}<\infty.
        \end{align}
        Since all terms in the second derivative enjoy the same scaling, we deduce that $\psi_{\infty,j}\in W^{2,2}(\C)$.
        Furthermore, if our limiting immersion is parametrised by $\C$, and $\pi:S^2\rightarrow \C$ is the stereographic projection, by conformal invariance of the Willmore energy, if $\widetilde{\vec{\Psi}}_{\infty}=\vec{\Psi}_{\infty}\circ \pi:S^2\setminus\ens{N}\rightarrow \R^3$, then 
        \begin{align*}
            \mathscr{W}(\widetilde{\Psi}_{\infty})=\mathscr{W}({\Psi}_{\infty}),
        \end{align*}
        where for all $\phi:\Sigma\rightarrow \R^n$, we define
        \begin{align*}
            \mathscr{W}(\phi)=\int_{\Sigma}\left(|\H|^2-K_g\right)d\vg.
        \end{align*}
        If $\widetilde{\vec{\Psi}}_{\infty}:S^2\setminus{N}\rightarrow \R^3$ extend to a continuous function on $S^2$, and $\psi_{\infty,j}$ extends to a $W^{2,2}(S^2)$ function. Otherwise, making an inversion $\vec{\chi}_{\infty}=\dfrac{\widetilde{\vec{\Psi}}_{\infty}}{|\widetilde{\vec{\Psi}}_{\infty}|^2}:S^2\rightarrow \R^3$, we deduce by \cite{indexS3} (see also \cite{index4}) that for all $v\in W^{2,2}(S^2)$
        \begin{align}\label{formula_inversion_d2}
            Q_{\vec{\chi}_{\infty}}(v)=Q_{\widetilde{\Psi}_{\infty}}\left(|\widetilde{\vec{\Psi}}_{\infty}|^2v\right).
        \end{align}
        In particular, we have
        \begin{align*}
            \mathrm{Ind}_{W}(\vec{\chi}_{\infty})=\mathrm{Ind}_{W}(\widetilde{\vec{\Psi}}_{\infty})\qquad \text{and}\qquad \mathrm{Null}_{W}(\vec{\chi}_{\infty})=\mathrm{Null}_{W}(\widetilde{\vec{\Psi}}_{\infty}),
        \end{align*}
        and 
        \begin{align}\label{weight_index_bound}
            \mathrm{dim}\,\mathscr{E}_{\alpha,\infty}^0(\mathscr{L}_{g_{\vec{\Psi}_{\infty}}})\leq \mathrm{Ind}_{W}(\vec{\chi}_{\infty})+\mathrm{Null}_{W}(\vec{\chi}_{\infty}).
        \end{align}
        Indeed, the function $\vec{\Psi}_{\infty}:\C\rightarrow \R^3$ admits the following expansion as $|z|\rightarrow \infty$
        \begin{align*}
            \vec{\Psi}_{\infty}(z)=\Re\left(\vec{A}_0z\right).
        \end{align*}
        Therefore, the metric at infinity is flat, and \eqref{bounded_delta} is equivalent to
        \begin{align*}
            \int_{\C}\left(\Delta \psi_{\infty,j}\right)^2|dz|^2<\infty.
        \end{align*}
        Since the metric on $\widetilde{C}=\C\cup\ens{\infty}\simeq S^2$ is given by 
        \begin{align*}
            g_{\widehat{\C}}=\frac{4|dz|^2}{\left(1+|z|^2\right)^2},
        \end{align*}
        we deduce that
        \begin{align*}
            \int_{\widehat{\mathbb{C}}}\left(1+|z|^2\right)^2\left(\Delta_{g_{\widehat{\mathbb{C}}}}\psi_{\infty,j}\right)^2d\mathrm{vol}_{g_{\widehat{\C}}}<\infty.
        \end{align*}
        Finally, if $u_{\infty,j}=|\vec{\chi}_{\infty}|^2\psi_{\infty,j}$, we have $u_{\infty,j}\in W^{2,2}(S^2)$, and the previous formula \eqref{formula_inversion_d2} and the Lemma \ref{Lemma_B.1} show that \eqref{weight_index_bound} holds.
        Notice that in higher codimension, if $\vec{\Psi}:\Sigma\rightarrow \R^n$ and $\vec{\Psi}=\frac{\phi}{|\phi|^2}$, we have for all $\vec{v}\in W^{2,2}(\Sigma,\R^n)$ (see \cite{index4})
        \begin{align*}
            Q_{\vec{\Psi}}(\vec{v})=Q_{\phi}\left(|\phi|^2\vec{v}-2\s{\phi}{\vec{v}\,}\phi\right)
        \end{align*}
        and the same proof applies.

        Notice that \textbf{Claim 1} implies that $(\varphi_{\infty,j},\psi_{\infty,j})\neq (0,0)$. Now, assume by contradiction that
        \begin{align*}
            N=\limsup_{k\rightarrow \infty}N_k>\mathrm{dim}\,\mathscr{E}_{\alpha,\infty}^0+\mathrm{dim}\,\mathscr{E}_{\alpha,\infty}^0(\mathscr{L}_{g_{\vec{\Psi}_{\infty}}}).
        \end{align*}
        Then, we deduce that the family $(\varphi_{\infty,j},\psi_{\infty,j})_{1\leq j\leq N}$ is linearly independent, which shows that there exists $(c_{\infty,1},\cdots,c_{\infty,N})\in \R^N\setminus\ens{0}$ (almost all $0$) such that
        \begin{align*}
            \left\{\begin{alignedat}{1}
                &\sum_{j=1}^Nc_{\infty,j}\,\varphi_{\infty,j}=0\\
                &\sum_{j=1}^Nc_{\infty,j}\,\psi_{\infty,j}=0.
            \end{alignedat}\right.
        \end{align*}
        Finally, considering the variation $\displaystyle u_k=\sum_{j=1}^Nc_{\infty,j}\,\varphi_{k,j}$, we are led to a contradiction. Therefore, we have
        \begin{align*}
            \limsup_{k\rightarrow \infty}\left(\mathrm{Ind}_W(\phi_k)+\mathrm{Null}_W(\phi_k)\right)\leq \left(\mathrm{Ind}_W(\phi_{\infty})+\mathrm{Null}_W(\phi_{\infty})\right)+\left(\mathrm{Ind}_W(\vec{\Psi}_{\infty})+\mathrm{Null}_W(\vec{\Psi}_{\infty})\right)
        \end{align*}
        and this concludes the proof of the main theorem, up to a last remark on non-compact bubbles. 
    \end{proof}

\section{Improved Energy Quantization for Degenerating Riemann Surfaces}

\subsection{First Estimates on the Residues}

In this section, we make more precise estimates from \cite{lauriv1} and find a criterium under which $L^{2,1}$ energy quantization holds, or more generally, is sufficiently small. This stronger quantization is necessary to get pointwise estimates of the second fundamental form in neck regions. First, recall that
\begin{align*}
    d\H-3\pi_{\n}(d\H)+\star(\H\wedge d\n)&=-4\,\Im\left(g^{-1}\otimes\left(\bar{\partial}^N-\bar{\partial}^{\top}\right)\h_0-|\h_0|_{\mathrm{WP}}^2\right)\\
    &=-4\,\Im\left(\partial\H+|\H|^2\partial\phi+\,g^{-1}\otimes\s{\H}{\h_0}\otimes\bar{\partial}\phi\right),
\end{align*}
the four residues can be defined as (\cite{classification})
\begin{align*}
    &\vec{\gamma}_0(\phi,p)=\frac{1}{4\pi}\Im\int_{\gamma}\partial\H+|\H|^2\partial\phi+\,g^{-1}\otimes\s{\H}{\h_0}\otimes\bar{\partial}\phi\\
    &\vec{\gamma}_1(\phi,p)=\frac{1}{4\pi}\Im\int_{\gamma}\phi \wedge\left(\partial\H+|\H|^2\partial\phi+\,g^{-1}\otimes\s{\H}{\h_0}\otimes\bar{\partial}\phi\right)+g^{-1}\otimes \h_0\wedge \bar{\partial}\phi\\
    &\gamma_2(\phi,p)=\frac{1}{4\pi}\Im\int_{\gamma}\left(\partial\H+|\H|^2\partial\phi+\,g^{-1}\otimes\s{\H}{\h_0}\otimes\bar{\partial}\phi\right)\cdot\phi\\
    &\vec{\gamma}_3(\phi,p)=\frac{1}{4\pi}\Im\int_{\gamma}\mathscr{I}_{\phi}\left(\partial\H+|\H|^2\partial\phi+\,g^{-1}\otimes\s{\H}{\h_0}\otimes\bar{\partial}\phi\right)-g^{-1}\otimes\left(\bar{\partial}|\phi|^2\otimes\h_0-2\s{\phi}{\h_0}\otimes\bar{\partial}\phi\right),
\end{align*}
where $\mathscr{I}_{\phi}(\vec{X})=|\phi|^2\vec{X}-2\s{\phi}{\vec{X}}\phi$, while $\gamma$ is a closed simple curve around $p$. The residues are invariant under isometries, $\vec{\gamma}_1$ is invariant under all conformal transformation, while an inversion changes the sign of $\gamma_2$ and inverts $\vec{\gamma}_0$ and $\vec{\gamma}_4$, the residues associated to the invariance under translations and inversions respectively. 

If $\phi:\Omega=B_R\setminus\bar{B}_r(0)\rightarrow \R^n$ is a weak conformal immersion, we have
\begin{align*}
    \dive\left(\D \H-3\D^N\H+\star(\D^{\perp}\n\wedge \H)\right)=0\qquad \text{in}\;\, B_R\setminus\bar{B}_r(0).
\end{align*}
Thanks to the Poincaré lemma, if
\begin{align}\label{gamma_0}
    \vec{\gamma}_0=\frac{1}{2\pi}\int_{\partial B(0,\rho)}d\H-3\pi_{\n}(d\H)+\star(\H\wedge d\n)=-8\,\vec{\gamma}_0(\phi,0)
\end{align}
for some $r\leq \rho\leq R$, then by the Poincaré lemma, there exists $\vec{L}:\Omega\rightarrow\R^n$ such that
\begin{align*}
    \D^{\perp}\vec{L}=\D\H-3\pi_{\n}(\D\H)+\star\left(\D^{\perp}\n\wedge \H\right)-\vec{\gamma}_0\D\log|z|
\end{align*}
As in \cite{lauriv1}, we have 
\begin{align*}
    &\dive\left(\vec{L}\cdot\D^{\perp}\phi-\s{\vec{\gamma}_0}{\phi}\D\log|z|\right)=0\\
    &\dive\left(\vec{L}\wedge \D^{\perp}\phi+2(-1)^{n-1}(\star(\n\res \H)\res \D^{\perp}\phi-\vec{\gamma}_0\wedge\phi\,\D\log|z|\right)=0.
\end{align*}
Therefore, if 
\begin{align*}
    \widetilde{\vec{\gamma}_1}&=-\frac{1}{2\pi}\int_{\partial B(0,r)}\left(\vec{L}\wedge\partial_{\tau}\phi+2(-1)^{n-1}\left(\star(\n\res\H)\res\partial_{\tau}\phi\right)-\vec{\gamma}_0\wedge\phi\right)d\mathscr{H}^1\\
    \widetilde{\gamma_2}&=-\frac{1}{2\pi}\int_{\partial B(0,r)}\left(\s{\vec{L}}{\partial_{\tau}\phi}+\s{\vec{\gamma_0}}{\phi}\right)\,d\mathscr{H}^1,
\end{align*}
there exists $S:\Omega\rightarrow \R$ and $\vec{R}:\Omega\rightarrow \Lambda^2\R^n$ such that
\begin{align}\label{jacobian_RS_residue}
    \left\{\begin{alignedat}{2}
    \D\vec{R}&=\vec{L}\wedge \D\phi+2(-1)^{n-1}\left(\star(\n\res\H)\res\D^{\perp}\phi\right)+\vec{\gamma}_1\D^{\perp}\log|z|\qquad&&\text{in}\;\,\Omega\\
    \D S&=\vec{L}\cdot\D\phi+\gamma_2\D^{\perp}\log|z|\qquad&&\text{in}\;\,\Omega,
    \end{alignedat}\right.
\end{align}
where
\begin{align}\label{log_constants}
    \left\{\begin{alignedat}{1}
        \vec{\gamma}_1&=\widetilde{\vec{\gamma}_1}+\vec{\gamma}_0\wedge \phi\\
        \gamma_2&=\widetilde{\gamma_2}+\vec{\gamma}_0\cdot\phi
    \end{alignedat}\right.
\end{align}

Using our precised $\epsilon$-regularity result of Theorem \ref{epsilon_reg_H}, we deduce that \cite[Lemma $4.1$]{lauriv1} can be more precisely stated as
follows.

\begin{lemme}\label{lemma_4_1}
    There exists a universal constant $C_1(\Lambda)<\infty$ such that for all $z\in\Omega_{1/2}$,
    \begin{align*}
        e^{\lambda(z)}|\vec{L}(z)|\leq \frac{C_1(\Lambda)}{|z|}\np{e^{\lambda}\H}{2}{\Omega}
    \end{align*}
\end{lemme}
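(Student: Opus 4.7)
The plan is to upgrade the estimate by propagating the precised $\epsilon$-regularity for $\H$ (Theorem \ref{epsilon_reg_H}) and the associated Harnack control on the conformal factor $\lambda$ in each dyadic annulus $A(2,z)=B_{2|z|}\setminus\bar{B}_{|z|/2}(0)$ through the defining equation $\D^{\perp}\vec{L}=\D\H-3\pi_{\n}(\D\H)+\star(\D^{\perp}\n\wedge \H)-\vec\gamma_0\,\D\log|z|$. The first step is to obtain a sharp pointwise bound on $\D\vec L$: since Theorem \ref{epsilon_reg_H} gives $e^{\lambda(z)}|\H(z)|\leq C\delta(|z|)$ and $e^{\lambda(z)}|\D\H(z)|\leq \frac{C}{|z|}\delta(|z|)$ with $\delta(r)^2=\frac{1}{r^2}\int_{A(2,r)}e^{2\lambda}|\H|^2\,dx$, and since classical $\epsilon$-regularity yields $|\D\n(z)|\leq\frac{C}{|z|}\np{\D\n}{2}{A(2,z)}$, combining these with the Harnack inequality for $\lambda$ on $A(2,z)$ gives
\begin{align*}
e^{\lambda(z)}|\D\vec L(z)|\leq \frac{C(\Lambda)}{|z|}\delta(|z|)+\frac{e^{\lambda(z)}|\vec\gamma_0|}{|z|}\quad\text{for all }z\in\Omega_{1/2}.
\end{align*}

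The second step is to control the oscillation of $\vec L$ on circles. Introducing the angular average $\vec L_{r}=\frac{1}{2\pi}\int_{\partial B_{r}(0)}\vec L\,d\mathscr{H}^1$, the Poincaré–Wirtinger inequality on the circle together with the pointwise gradient bound and the Harnack inequality for $\lambda$ produces
\begin{align*}
e^{\lambda(z)}\bigl|\vec L(z)-\vec L_{|z|}\bigr|\leq C(\Lambda)\bigl(\delta(|z|)+e^{\lambda(z)}|\vec\gamma_0|\bigr).
\end{align*}
To bound $\vec L_{|z|}$ itself, I would derive the ODE
\[
\frac{d}{dr}\vec L_r=\frac{1}{2\pi r}\int_{\partial B_r(0)}\D^{\perp}\vec L\cdot\tau\,d\mathscr{H}^1,
\]
and observe that the residue contribution vanishes from this identity because $\D\log|z|\cdot\tau\equiv 0$ on circles centered at the origin. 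Inserting the Willmore right-hand side and the pointwise bounds from the first step yields $|\frac{d}{dr}\vec L_r|\leq C(\Lambda)\delta(r) e^{-\lambda(r)}/r$; integrating from a reference radius in $\Omega_{1/2}$ and exploiting the elementary Fubini identity $\int_{r_1}^{r_2}\rho\,\delta(\rho)^2\,d\rho\leq C\int_{B_{r_2}\setminus\bar B_{r_1}(0)}e^{2\lambda}|\H|^2\,dx$ (used in the proof of Theorem \ref{precise_estimate_neck_region}) produces the integral bound
\begin{align*}
\int_{\Omega_{1/2}}|\vec L(z)|^2 e^{2\lambda(z)}\,|dz|^2\leq C(\Lambda)\np{e^{\lambda}\H}{2}{\Omega}^2.
\end{align*}

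Finally, I would upgrade the $L^2$-type bound to a pointwise estimate. By the Cauchy–Schwarz inequality applied on each dyadic annulus $A(2,z)$, combined with the pointwise gradient bound established in step one (which allows the Moser-type iteration $e^{\lambda(z)}|\vec L(z)|\leq \dashint_{A(2,z)} e^{\lambda}|\vec L|\,dx +$ oscillation term) and the Harnack control on $e^{\lambda}$, the previous $L^2$ bound on $e^{\lambda}|\vec L|$ translates into the announced pointwise estimate $e^{\lambda(z)}|\vec L(z)|\leq \frac{C_1(\Lambda)}{|z|}\np{e^{\lambda}\H}{2}{\Omega}$. The main obstacle will be keeping track of the dependence on the residue $\vec\gamma_0$ so that it can be absorbed into the parameter $\Lambda$: in particular one must verify that the flux formula $\vec\gamma_0=\frac{1}{2\pi}\int_{\partial B_{\rho}(0)}V\cdot\nu\,d\mathscr{H}^1$ and the pointwise control on $V$ from step one together imply $|\vec\gamma_0|\leq C(\Lambda)\delta(\rho)$ for a well-chosen $\rho$, so that the residue contribution does not degrade the $1/|z|$ decay of the final estimate.
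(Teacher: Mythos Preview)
Your proposal is correct and follows essentially the same route as the paper. The paper gives no detailed proof of Lemma \ref{lemma_4_1}: it simply says that, using the precised $\epsilon$-regularity of Theorem \ref{epsilon_reg_H}, the argument of \cite[Lemma 4.1]{lauriv1} yields the stated bound with $\np{e^{\lambda}\H}{2}{\Omega}$ in place of $\np{\D\n}{2}{\Omega}$. Your outline---pointwise control of $\D\vec L$ via the precised $\epsilon$-regularity on dyadic annuli, Poincar\'e--Wirtinger on circles for the oscillation $\vec L-\vec L_{|z|}$, the ODE for the angular average $\vec L_r$ (where the residue $\vec\gamma_0\,\D\log|z|$ drops out because $\D\log|z|\cdot\tau=0$), the Fubini identity $\int r\delta^2(r)\,dr\leq C\np{e^{\lambda}\H}{2}{\Omega}^2$, and the final upgrade from $L^2$ to pointwise---is exactly the scheme from \cite{quanta,pointwise,lauriv1} that the paper invokes (compare the sketch inside the proof of Theorem \ref{precise_estimate_neck_region}).

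One small clarification on your last paragraph: you do not need a uniform bound $|\vec\gamma_0|\leq C(\Lambda)\delta(\rho)$ at a single well-chosen radius to salvage the $1/|z|$ decay. Since $\vec L$ is only defined up to an additive constant, one normalises $\vec L$ (say $\vec L_{\rho_0}=0$ for some fixed $\rho_0\in[2r,R/2]$), and then the ODE for $\vec L_r$---which is residue-free, as you observed---directly controls $\vec L_{|z|}$ by integrating $|\tfrac{d}{dr}\vec L_r|\leq C(\Lambda)e^{-\lambda(r)}\delta(r)/r$. The residue only enters the oscillation term $e^{\lambda(z)}|\vec\gamma_0|$, and there the flux formula combined with the Harnack inequality for $\lambda$ (encoded in $\Lambda$) and the $\epsilon$-regularity at radius $|z|$ gives $e^{\lambda(z)}|\vec\gamma_0|\leq C(\Lambda)\delta(|z|)$, which is already of the right size.
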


The next lemma allows one to control the residue of $S$.

\begin{lemme}\label{lemma_4_2}
    There exists a universal constant $C_2(\Lambda)<\infty$ such that
    \begin{align*}
        \np{\gamma_2\D\log|z|}{2,1}{\Omega_{1/2}}\leq C_2(\Lambda)\np{\D\n}{2}{\Omega}\np{e^{\lambda}\H}{2}{\Omega}.
    \end{align*}
\end{lemme}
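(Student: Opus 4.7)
The plan is to reduce the $L^{2,1}$ norm of $\gamma_2\,\D\log|z|$ to the size of the constant $\gamma_2$ (multiplied by the logarithm of the conformal class), and then extract a factor of $\np{\D\n}{2}{\Omega}$ by exploiting the Jacobian/Wente structure of the right-hand side of the PDE satisfied by $\vec{L}$. Since $\np{\gamma_2\,\D\log|z|}{2,1}{\Omega_{1/2}}=|\gamma_2|\,\np{1/|x|}{2,1}{\Omega_{1/2}}$, the explicit formula in \eqref{norms} gives $\np{1/|x|}{2,1}{\Omega_{1/2}}\leq 4\sqrt{\pi}\log(b/(4a))+O(1)$, so the task reduces to showing
\begin{align*}
|\gamma_2|\,\log\!\left(\tfrac{b}{4a}\right)\leq C(\Lambda)\,\np{\D\n}{2}{\Omega}\,\np{e^{\lambda}\H}{2}{\Omega}.
\end{align*}

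The first step is to turn $\gamma_2$ into an averaged integral. Since $S$ in \eqref{jacobian_RS_residue} is single-valued and the form $\D^{\perp}\log|z|$ has circulation $2\pi$ on every circle $\partial B(0,\rho)$, the identity $\D S=\vec{L}\cdot \D\phi+\gamma_2\,\D^{\perp}\log|z|$ yields for every $\rho\in[2a,b/2]$
\begin{align*}
2\pi\,\gamma_2=-\int_{\partial B(0,\rho)}\vec{L}\cdot\partial_{\tau}\phi\,d\mathscr{H}^1.
\end{align*}
(The additive term $\vec{\gamma}_0\cdot\phi$ in \eqref{log_constants} is absorbed by the corresponding correction in the definition of $\widetilde{\gamma_2}$.) Averaging this identity in $\rho$ against $d\rho/\rho$ and applying the coarea formula produces
\begin{align*}
2\pi |\gamma_2|\,\log\!\left(\tfrac{b}{4a}\right)\leq \int_{\Omega_{1/2}}\frac{|\vec{L}|\,e^{\lambda}}{|x|}\,dx.
\end{align*}

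The second step uses $L^{2,1}$-$L^{2,\infty}$ duality together with $\np{1/|x|}{2,\infty}{\R^2}=2\sqrt{\pi}$ from \eqref{norms} to bound
\begin{align*}
\int_{\Omega_{1/2}}\frac{|\vec{L}|\,e^{\lambda}}{|x|}\,dx\leq 2\sqrt{\pi}\,\np{e^{\lambda}\vec{L}}{2,1}{\Omega_{1/2}},
\end{align*}
so the whole problem is reduced to establishing the improved Lorentz bound
\begin{align*}
\np{e^{\lambda}\vec{L}}{2,1}{\Omega_{1/2}}\leq C(\Lambda)\,\np{\D\n}{2}{\Omega}\,\np{e^{\lambda}\H}{2}{\Omega}.
\end{align*}

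The third step, which is the main technical obstacle, is precisely this $L^{2,1}$ estimate. Starting from
\begin{align*}
\D^{\perp}\vec{L}=\D\H-3\,\pi_{\n}(\D\H)+\star(\D^{\perp}\n\wedge\H)-\vec{\gamma}_0\,\D\log|z|,
\end{align*}
I would decompose $e^{\lambda}\vec{L}$ as a sum of terms each falling in one of three categories: (i) a Wente-type term $\star(\D^{\perp}\n\wedge\H)$, for which the $L^{2,1}$ improvement is standard after solving $\Delta\widetilde{L}=\star(\D^{\perp}\n\wedge\H)$ by Theorem \ref{Wente_L21}, producing the factor $\np{\D\n}{2}{\Omega}\,\np{\H\,e^{\lambda}}{2}{\Omega}$; (ii) a ``pure'' $\D\H$ contribution, absorbed by the $L^{2,1}$ bound on $\D S$, $\D\vec{R}$ coming from Theorem \ref{precise_estimate_neck_region} combined with the algebraic relation $e^{2\lambda}\H=\tfrac{1}{4}\D^{\perp}S\cdot\D\phi-\tfrac{1}{4}\D\vec{R}\mathbin{\mbox{\raisebox{0pt}[0pt][0pt]{$\res$}}}\D^{\perp}\phi$; (iii) the residue term $\vec{\gamma}_0\,\D\log|z|$, which requires a separate (parallel) bound on $|\vec{\gamma}_0|$ of the same type $|\vec{\gamma}_0|\leq C(\Lambda)\,\np{\D\n}{2}{\Omega}\,\np{e^{\lambda}\H}{2}{\Omega}/\mathrm{diam}$. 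The $L^{2,1}$ bound on $e^{\lambda}\vec{L}$ itself (as opposed to $\D\vec{L}$) is finally obtained either through the pointwise control of Lemma \ref{lemma_4_1} combined with a log-truncation argument, or directly by introducing an auxiliary potential $\widetilde{\vec{L}}$ with $\Delta\widetilde{\vec{L}}=\D^{\perp}\vec{L}\cdot\D\phi$ and applying Theorem \ref{Wente_L21} once more.

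Combining the three steps gives $|\gamma_2|\log(b/(4a))\leq C(\Lambda)\,\np{\D\n}{2}{\Omega}\,\np{e^{\lambda}\H}{2}{\Omega}$, which, multiplied by $\np{1/|x|}{2,1}{\Omega_{1/2}}/\log(b/(4a))\leq C$, yields the desired bound on $\np{\gamma_2\,\D\log|z|}{2,1}{\Omega_{1/2}}$. The delicate point is the clean separation of the Wente-type Jacobian part of $\D^{\perp}\vec{L}$ (which is responsible for the appearance of the $\np{\D\n}{2}{\Omega}$ factor) from the purely $\D\H$ part, requiring the full strength of the $\epsilon$-regularity in Lorentz spaces proven in Theorem \ref{precise_estimate_neck_region}.
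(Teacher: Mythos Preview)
Your plan diverges from the paper's argument and has a genuine gap in its central step.

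First, a structural misconception: $\gamma_2$ is not a constant. By \eqref{log_constants} one has $\gamma_2(z)=\widetilde{\gamma_2}+\vec{\gamma}_0\cdot\phi(z)$, so your identity $\np{\gamma_2\D\log|z|}{2,1}{\Omega_{1/2}}=|\gamma_2|\,\np{1/|x|}{2,1}{\Omega_{1/2}}$ and the circle identity $2\pi\gamma_2=-\int_{\partial B(0,\rho)}\vec{L}\cdot\partial_\tau\phi\,d\mathscr{H}^1$ hold only after replacing $\gamma_2$ by its circular average $\bar{\gamma_2}(\rho)$. The oscillatory part $\vec{\gamma}_0\cdot(\phi-\bar\phi)$ still has to be estimated and your remark that it is ``absorbed'' does not do this.

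The real gap is your Step~3, the bound
\begin{align*}
\np{e^{\lambda}\vec{L}}{2,1}{\Omega_{1/2}}\leq C(\Lambda)\,\np{\D\n}{2}{\Omega}\,\np{e^{\lambda}\H}{2}{\Omega}.
\end{align*}
Lemma~\ref{lemma_4_1} gives only $e^{\lambda(z)}|\vec{L}(z)|\leq C|z|^{-1}\np{e^{\lambda}\H}{2}{\Omega}$, which places $e^{\lambda}\vec{L}$ in $L^{2,\infty}$ with norm of order $\np{e^{\lambda}\H}{2}{\Omega}$ and \emph{no} factor $\np{\D\n}{2}{\Omega}$. Upgrading to $L^{2,1}$ with that extra factor would require cancellation you have not produced. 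In your decomposition of $\D^{\perp}\vec{L}$, the Wente piece $\star(\D^{\perp}\n\wedge\H)$ is fine, but the ``pure $\D\H$'' contribution carries no intrinsic $\np{\D\n}{2}{\Omega}$ factor: Theorem~\ref{precise_estimate_neck_region} yields $\np{e^{\lambda}\H}{2,1}{\Omega_t}\leq C\np{e^{\lambda}\H}{2}{\Omega}$ and $\np{e^{\lambda}\D\H}{1}{\Omega_t}\leq C\np{e^{\lambda}\H}{2}{\Omega}$, neither of which gains a power of $\np{\D\n}{2}{\Omega}$. The auxiliary-potential manoeuvre you suggest at the end does not help, since $\Delta\widetilde{\vec{L}}=\D^{\perp}\vec{L}\cdot\D\phi$ controls $\D\widetilde{\vec{L}}$, not $e^{\lambda}\vec{L}$.

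The paper proceeds quite differently and avoids bounding $e^{\lambda}\vec{L}$ in $L^{2,1}$ altogether. It returns to the explicit circle-integral expression of $\widetilde{\gamma_2}$, evaluates it on $\partial B(0,|z|)$, and uses the improved $\epsilon$-regularity of Theorem~\ref{epsilon_reg_H} together with $|\phi(z)-\phi(y)|\leq Ce^{\lambda}|z-y|$ to obtain the \emph{pointwise} inequality
\begin{align*}
|\gamma_2(z)|\leq C\,\np{\D\n}{2}{A(2,z)}\,\np{e^{\lambda}\H}{2}{A(2,z)}.
\end{align*}
Writing $\delta(f)(z)=|z|^{-1}\np{f}{2}{A(2,z)}$, this reads $|\gamma_2(z)|/|z|\leq C\,|z|\,\delta(\D\n)(z)\,\delta(e^{\lambda}\H)(z)$, and the $L^{2,1}$ bound then follows at once from Lemma~\ref{lorentz_estimate_average}, which asserts $\np{|z|\,\delta(f)\,\delta(g)}{2,1}{\Omega_{1/2}}\leq C\,\np{f}{2}{\Omega}\,\np{g}{2}{\Omega}$. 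The crucial point, which your route misses, is that the factor $\np{\D\n}{2}{\Omega}$ comes from the \emph{integrand} defining $\gamma_2$: its leading part integrates to zero along circles, leaving only products of curvature quantities that each contribute one local $L^2$ norm.
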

\begin{proof}
    Indeed, using the estimate just after equation ($28$) in \cite{lauriv1} and our improved $\epsilon$-regularity from Theorem \ref{epsilon_reg_H}, the estimate follows immediately since $|\phi(z)-\phi(w)|\leq e^{\lambda}|z-w|$, and
    \begin{align}\label{pointwise_gamma_2}
    |\gamma_2(z)|&\leq C\left(|z|\np{\phi-\phi(z)}{\infty}{\partial B(0,|z|)}\np{\D\n}{\infty}{\partial B(0,|z|)}\np{\H}{\infty}{\partial B(0,|z|)}\right.\nonumber\\
        &\left.+|z|^3\np{\D\n}{\infty}{\partial B(0,|z|)}\np{e^{\lambda}\H}{\infty}{\partial B(0,|z|)}\right)\nonumber\\
        &\leq C'\np{\D\n}{2}{A(2,z)}\np{e^{\lambda}\H}{2}{A(2,z)},
    \end{align}
    which concludes the proof of the lemma (notice that we used the Harnack inequality for the conformal parameter in this step). The conclusion follows from an generatisation of Lemma $4.3$ from \cite{lauriv1} that we state below.
    \end{proof}

    \begin{lemme}\label{lorentz_estimate_average}
    Let $0<4\,r<R<\infty$ and $\Omega=B_{R}\setminus\bar{B}_r(0)$, and for all $0<t<1$, let $\Omega_t=B_{t\,R}\setminus\bar{B}_{t^{-1}\,r}(0)$. For all $2r<|z|<R/2$, and for all $f\in L^2(B_{R}\setminus\bar{B}_r(0)$ define $\delta(f):\Omega_{1/2}\rightarrow \R$ such that for all $z\in \Omega_{1/2}$
    \begin{align*}
        \delta                                                                        (f)(z)=\left(\frac{1}{|z|^2}\int_{B_{2|z|}\setminus\bar{B}_{|z|/2}(0)}|f|^2dx\right)^{\frac{1}{2}}.
    \end{align*}
    Then, there exists a universal constant independent $C<\infty$ of $0<4\,r<R<\infty$ such that for all $f,g\in L^2(\Omega)$
    \begin{align}
         \np{|z|\delta(f)\delta(g)}{2,1}{\Omega_{1/2}}\leq C\np{f}{2}{\Omega}\np{g}{2}{\Omega}.            
    \end{align}
    \end{lemme}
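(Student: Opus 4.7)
\medskip

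\textbf{Proof plan.} The strategy is a dyadic decomposition argument in the radial variable combined with the elementary observation that $|z|\delta(f)(z)$ is essentially the $L^2$ norm of $f$ on an annular window of radius $|z|$. Writing $A(r)=B_{2r}\setminus\bar{B}_{r/2}(0)$, note first that the definition rearranges to
\begin{align*}
|z|\,\delta(f)(z)=\left(\int_{A(|z|)}|f|^2\,dx\right)^{\!\frac{1}{2}},
\end{align*}
so that $|z|\,\delta(f)(z)\,\delta(g)(z)=|z|^{-1}\,\bigl(\int_{A(|z|)}|f|^2\bigr)^{1/2}\bigl(\int_{A(|z|)}|g|^2\bigr)^{1/2}$.

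For $k\in\Z$ with $2r\leq 2^{-k}R\leq R/2$, I introduce the dyadic annulus $A_k=\{z\in \Omega_{1/2}:2^{-k-1}R\leq |z|\leq 2^{-k}R\}$ (so the $A_k$'s cover $\Omega_{1/2}$ and are essentially disjoint), together with the enlarged annulus $\widetilde A_k=B_{4\cdot 2^{-k}R}\setminus\bar{B}_{2^{-k-2}R}(0)$. For every $z\in A_k$ one has $A(|z|)\subset \widetilde A_k$, hence the pointwise bound
\begin{align*}
|z|\,\delta(f)(z)\,\delta(g)(z)\leq \frac{2}{2^{-k}R}\,b_k(f)\,b_k(g)\qquad\text{on }A_k,
\end{align*}
where $b_k(h):=\|h\|_{L^2(\widetilde A_k)}$.

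Next I estimate the $L^{2,1}$ norm annulus by annulus. For a function $c_k\mathbf{1}_{A_k}$ constant on the dyadic annulus $A_k$ (of Lebesgue measure bounded by $\pi(2^{-k}R)^2$), a direct computation of the distribution function gives $\|c_k\mathbf{1}_{A_k}\|_{L^{2,1}}\leq 2\sqrt{\pi}\,c_k\cdot 2^{-k}R$. Applying this with $c_k=2(2^{-k}R)^{-1}b_k(f)b_k(g)$ and summing using the subadditivity of the $L^{2,1}$ norm yields
\begin{align*}
\||z|\,\delta(f)\,\delta(g)\|_{L^{2,1}(\Omega_{1/2})}\leq 4\sqrt{\pi}\sum_{k}b_k(f)\,b_k(g).
\end{align*}
Applying the Cauchy–Schwarz inequality on the right-hand side and using that the enlarged annuli $\widetilde A_k$ have bounded finite overlap (at most a universal number $N_0$ depending only on the dyadic structure), one controls $\sum_k b_k(h)^2\leq N_0\,\|h\|_{L^2(\Omega)}^2$, giving the announced estimate with $C=4\sqrt{\pi}N_0$.

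The main technical point to verify carefully is the subadditivity (up to universal constant) of the $L^{2,1}$ norm across the dyadic pieces; this follows from the standard equivalent norming of $L^{2,1}$ via $f^{**}$, for which the triangle inequality holds, together with the fact that the pieces are disjoint so distribution functions simply add. Apart from this, each step is elementary, and there is no obstacle beyond bookkeeping of constants.
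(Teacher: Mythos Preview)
Your argument is correct, and it takes a genuinely different route from the paper's proof. The paper proceeds by first reducing to the diagonal case $f=g$ via the Lorentz--H\"older inequality $\|\,|z|\delta(f)\delta(g)\,\|_{L^{2,1}}\le C\||z|^{1/2}\delta(f)\|_{L^{4,2}}\||z|^{1/2}\delta(g)\|_{L^{4,2}}$ together with the identity $\|\sqrt{|h|}\|_{L^{4,2}}\simeq\|h\|_{L^{2,1}}^{1/2}$; it then observes that $\varphi(z)=|z|\delta^2(f)(z)$ is a radial $W^{1,1}$ function on $\Omega_{1/2}$ (computing $\partial_\rho\varphi$ explicitly and bounding $\int|\nabla\varphi|$ and $\int|\varphi|$ by Fubini and the co-area formula), and invokes the sharp Sobolev embedding $W^{1,1}(\R^2)\hookrightarrow L^{2,1}(\R^2)$. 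Your approach is more direct and elementary: you bypass both the Sobolev embedding and the Lorentz interpolation, using only the pointwise dyadic freeze, the exact subadditivity of the $L^{2,1}$ quasinorm on disjointly supported pieces (since distribution functions add and $\sqrt{\sum a_k}\le\sum\sqrt{a_k}$), Cauchy--Schwarz on the resulting $\ell^1$ sum, and the bounded overlap of the enlarged annuli $\widetilde A_k$. The paper's route is perhaps more conceptual and yields the diagonal estimate $\||z|\delta^2(f)\|_{L^{2,1}}\le C\|f\|_{L^2}^2$ as a clean intermediate result; your route is shorter, avoids any appeal to function-space machinery beyond the definition of $L^{2,1}$, and handles the bilinear statement in one stroke.
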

    \begin{proof}
        Let $\varphi(r)=r\delta^2(f)(r)$. We have
        \begin{align*}
            \p{r}\varphi=-\frac{1}{r^2}\int_{A(2,r)}|f|^2dx+\frac{1}{r}\int_{\partial A(2,r)}|f|^2d\mathscr{H}^1.
        \end{align*}
        We have thanks to Fubini's theorem
        \begin{align}\label{lemma4.3_1}
            &\int_{\Omega_{1/2}}\frac{1}{|x|^2}\left(\int_{A(2,x)}|f(y)|^2dy\right)dx=\int_{2r}^{R/2}\frac{1}{\rho}\left(\int_{B_{2\rho}\setminus\bar{B}_{\rho/2}(0)}|f(y)|^2dy\right)d\rho\nonumber\\
            &=\int_{B_R\setminus\bar{B}_r(0)}|f(y)|^2\left(\int_{2r}^{R/2}\frac{\mathbf{1}_{\ens{\rho/2\leq |y|\leq 2\rho}}}{\rho}d\rho\right)dy=\int_{B_R\setminus\bar{B}_r(0)}|f(y)|^2\left(\int_{|y|/2}^{2|y|}\frac{d\rho}{\rho}\right)dy\nonumber\\
            &=\log(4)\int_{B_{R}\setminus\bar{B}_r(0)}|f(y)|^2dy.
        \end{align}
        Then, using the co-area formula (\cite[$\mathbf{3.2.11}$]{federer} twice
        \begin{align}\label{lemma4.3_2}
            &\int_{\Omega_{1/2}}\frac{1}{|x|}\left(\int_{\partial A(2,x)}|f(y)|^2d\mathscr{H}^1(y)\right)dx=\int_{2r}^{R/2}\left(\int_{\partial B(0,2\rho)}|f(y)|^2d\mathscr{H}^1(y)\right)d\rho\nonumber\\
            &+\int_{2r}^{R/2}\left(\int_{\partial B(0,\rho/2)}|f(y)|^2d\mathscr{H}^1(y)\right)d\rho\nonumber\\
            &=\int_{B_R\setminus\bar{B}_{4r}(0)}|f(x)|^2dx+\int_{B_{R/4}\setminus\bar{B}_r(0)}|f(x)|^2dx
            \leq 2\int_{B_{R}\setminus\bar{B}_r(0)}|f(x)|^2dx.
        \end{align}
        Therefore, since $\varphi$ is a radial function, we deduce thanks to \eqref{lemma4.3_1} and \eqref{lemma4.3_2}
        \begin{align}\label{lemma4.3_3}
            \int_{\Omega_{1/2}}|\D\varphi|dx\leq \left(2+\log(4)\right)\int_{\Omega}|f(x)|^2dx.
        \end{align}
        Furthermore, using Fubini's theorem and the co-area formula, we get
        \begin{align}\label{lemma4.3_4}
            \int_{\Omega}|\varphi|dx&=\int_{\Omega_{1/2}}\frac{1}{|x|}\left(\int_{A(2,x)}|f(y)|^2dy\right)dx=\int_{2r}^{R/2}\left(\int_{B_{2\rho}\setminus\bar{B}_{\rho/2}(0)}|f(y)|^2dy\right)d\rho\nonumber\\
            &=\int_{B_R\setminus\bar{B}_r(0)}|f(y)|^2\left(\int_{|y|/2}^{2|y|}d\rho\right)dy=\frac{3}{2}\int_{\Omega}|y||f(y)|^2dy.
        \end{align}
        Now, thanks to the improved Sobolev embedding $W^{1,1}(\R^2)\hookrightarrow L^{2,1}(\R^2)$, we deduce by a trivial scaling argument that there exists a universal constant $C<\infty$ such that for all $u\in W^{1,1}(B(0,R))$
        \begin{align}\label{sobolev_ball}
            \np{u}{2,1}{B(0,R)}\leq C_1\left(\int_{B(0,R)}|\D u|dx+\frac{1}{R}\int_{B(0,R)}|u|dx\right).
        \end{align}
        for some universal constant $C_2<\infty$.
        Furthermore, since the conformal class of $\Omega$ is bounded away from $-\infty$, using \cite[Lemma $7.2$]{pointwise}, we deduce that there exists an extension operator $T:W^{1,1}(\Omega)\rightarrow W^{1,1}(B(0,R))$ and a universal constant independent of $0<4\,r<R<\infty$ such that for all $u\in W^{1,1}(\Omega)$, 
        \begin{align}\label{extension}
            \wp{T u}{1,1}{B(0,R)}\leq C_3\wp{u}{1,1}{\Omega}.
        \end{align}
        Combining \eqref{extension} and \eqref{sobolev_ball} we deduce that there exists a universal constant $C_4<\infty$ such that for all $u\in W^{1,1}(\Omega)$,
        \begin{align}\label{last_sobolev}
            &\np{u}{2,1}{\Omega}\leq C_4\left(\int_{\Omega}|\D u|dx+\frac{1}{R}\int_{\Omega}|u|dx\right).
        \end{align}
        Gathering the estimates \eqref{lemma4.3_3}, \eqref{lemma4.3_4}, and \eqref{last_sobolev}, we deduce that
        \begin{align}\label{lemma4.3_5}
            \np{\varphi}{2,1}{\Omega_{1/2}}&\leq C_4\left(\int_{\Omega}|\D\varphi|dx+\frac{1}{R}\int_ {\Omega}|\varphi|dx\right)\nonumber\\
            &\leq C_4\left((2+\log(4))\int_{\Omega}|f(x)|^2dx+2\int_{\Omega}\frac{|x|}{R}|f(x)|^2dx\right)\nonumber\\
            &\leq C_4\left(4+\log(4)\right)\int_{\Omega}|f(x)|^2dx.
        \end{align}
        In other words
        \begin{align}\label{final_lemma4.3}
            \np{|z|\delta^2(f)}{2,1}{\Omega_{1/2}}\leq C\np{f}{2}{\Omega}^2.
        \end{align}
        Then, using the Hölder's inequality for Lorentz spaces, we deduce by \eqref{final_lemma4.3} that
        \begin{align}\label{ineq_finale}
            \np{|z|\delta(f)\delta(g)}{2,1}{\Omega_{1/2}}&\leq C_{H}\np{|z|^{\frac{1}{2}}\delta(f)}{4,2}{\Omega_{1/2}}\np{|z|^{\frac{1}{2}}\delta(g)}{4,2}{\Omega_{1/2}}\nonumber\\
            &\leq C_H'\np{|z|\delta^2(f)}{2,1}{\Omega_{1/2}}^{\frac{1}{2}}\np{|z|\delta^2(g)}{2,1}{\Omega_{1/2}}^{\frac{1}{2}}\nonumber\\
            &\leq C_H'C_4(4+\log(4))\np{f}{2}{\Omega_{1/2}}\np{g}{2}{\Omega_{1/2}},
        \end{align}
        where the constants $C_H$ and $C_H'$ are independent of the domain, since the inequalities are verified for a general measured space $(X,\mu)$ We also used the fact that the $L^{p,q}$ norm is equivalent to the following semi-norm:
        \begin{align*}
            |f|_{\mathrm{L}^{p,q}(X)}=\left(\int_{0}^{\infty}t^{q}\lambda_f(t)^{\frac{q}{p}}\frac{dt}{t}\right)^{\frac{1}{q}},
        \end{align*}
        where $\lambda_f(t)=\mu\left(X\cap\ens{x:|f(x)|>t}\right)$ for all $t>0$. Noticing that $\lambda_{\sqrt{|f|}}(t)=\lambda_f(t^2)$, we deduce that
        \begin{align}
            \left|\sqrt{|f|}\right|_{\mathrm{L}^{p,q}(X)}=\left(\int_{0}^{\infty}t^q\lambda_f(t^2)^{\frac{q}{p}}\frac{dt}{t}\right)^{\frac{1}{q}}=\left(\int_{0}^{\infty}s^{\frac{q}{2}}\lambda_f(s)^{\frac{q}{p}}\frac{ds}{2s}\right)^{\frac{1}{q}}=\frac{1}{2^{\frac{1}{q}}}\left|f\right|_{\mathrm{L}^{\frac{p}{2},\frac{q}{2}}(X)}^{\frac{1}{2}}.
        \end{align}
        This remark closes the proof of the lemma.
    \end{proof}

    Now, we make more precise \cite[Lemma $4.4$]{lauriv1}. 
    \begin{lemme}\label{lemma_4_4}
        There exists a universal constant $C_3(\Lambda)<\infty$ such that
        \begin{align}
            \np{\vec{\gamma}_1\D\log|z|}{2}{\Omega_{1/2}}\leq C_3(\Lambda)\np{e^{\lambda}\H}{2}{\Omega}.
        \end{align}
    \end{lemme}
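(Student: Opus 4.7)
The plan is to represent $\vec{\gamma}_1$ as a circle integral by exploiting the single-valuedness of $\vec{R}$ in \eqref{jacobian_RS_residue}. Integrating $\partial_\tau \vec{R}$ around $\partial B(0,\rho)$ yields zero, and since $\int_{\partial B(0,\rho)}\partial_\tau^\perp \log|z|\,d\mathscr{H}^1 = 2\pi$, one obtains the representation
\begin{equation*}
2\pi\vec{\gamma}_1 = -\int_{\partial B(0,\rho)}\left[\vec{L}\wedge\partial_\tau\phi + 2(-1)^{n-1}\star(\n\res\H)\res\partial_\tau^\perp\phi\right] d\mathscr{H}^1
\end{equation*}
for every $r \le \rho \le R$. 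As the left-hand side is independent of $\rho$, dividing by $\rho$ and integrating over $\rho \in (2r, R/2)$ yields, via Fubini in polar coordinates,
\begin{equation*}
2\pi\vec{\gamma}_1 \log\tfrac{R}{4r} = -\int_{\Omega_{1/2}}\frac{\vec{L}\wedge\partial_\tau\phi + 2(-1)^{n-1}\star(\n\res\H)\res\partial_\tau^\perp\phi}{|x|}\,dx.
\end{equation*}

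For the $\H$-term, since $|\partial_\tau^\perp \phi| = e^\lambda$, Cauchy--Schwarz combined with $\int_{\Omega_{1/2}}|x|^{-2}\,dx = 2\pi\log(R/(4r))$ gives
\begin{equation*}
\int_{\Omega_{1/2}}\frac{|\H|\,e^\lambda}{|x|}\,dx \le \sqrt{2\pi\log\tfrac{R}{4r}}\;\np{e^\lambda\H}{2}{\Omega}.
\end{equation*}

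The $\vec{L}$-term is the more delicate one, because Lemma~\ref{lemma_4_1} only places $e^\lambda \vec{L}$ in $L^{2,\infty}$, which would produce a logarithmically divergent estimate. The key observation is that the circular average $\vec{L}_\rho := \dashint{\partial B(0,\rho)}\vec{L}\,d\mathscr{H}^1$ contributes nothing, since $\vec{L}_\rho \wedge \int_{\partial B(0,\rho)}\partial_\tau\phi\,d\mathscr{H}^1 = 0$; hence one may replace $\vec{L}$ by $\vec{L}-\vec{L}_{|x|}$ in the integrand. For this difference, arguments in the spirit of \eqref{second_quanta2}, based on the pointwise bound $e^{\lambda(z)}|\vec{L}(z)-\vec{L}_{|z|}| \le C(\Lambda)\,\delta(|z|)$ combined with Fubini applied to the averaged quantity $\delta$, yield
\begin{equation*}
\int_{\Omega_{1/2}}|\vec{L}(z)-\vec{L}_{|z|}|^2\,e^{2\lambda(z)}\,dz \le C(\Lambda)\,\np{e^\lambda\H}{2}{\Omega}^2,
\end{equation*}
and another Cauchy--Schwarz gives the analogous $L^1$-bound $\int_{\Omega_{1/2}}|\vec{L}-\vec{L}_{|x|}|\,e^\lambda\,|x|^{-1}\,dx \le C(\Lambda)\sqrt{\log(R/(4r))}\,\np{e^\lambda\H}{2}{\Omega}$.

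Combining the two contributions yields $|\vec{\gamma}_1|\sqrt{\log(R/(4r))} \le C(\Lambda)\,\np{e^\lambda\H}{2}{\Omega}$, and since $\np{\D\log|z|}{2}{\Omega_{1/2}} = \sqrt{2\pi\log(R/(4r))}$, the desired inequality follows at once. The main technical obstacle will be verifying the weighted $L^2$-bound on $\vec{L}-\vec{L}_{|z|}$ in full generality, since the equation $\D^\perp \vec{L} = \D\H - 3\pi_{\n}(\D\H) + \star(\D^\perp\n\wedge\H) - \vec{\gamma}_0\,\D\log|z|$ contains the logarithmic residue $\vec{\gamma}_0$; the extra pointwise contribution of order $|\vec{\gamma}_0|/|x|$ to $|\D\vec{L}|$ must be carefully absorbed into the constant $C(\Lambda)$, which is why the statement is formulated in terms of a constant depending on $\Lambda$.
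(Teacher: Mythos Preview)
Your approach has a genuine gap: you treat $\vec{\gamma}_1$ as a constant, but in this setup (see \eqref{log_constants}) it is the function $\vec{\gamma}_1(z)=\widetilde{\vec{\gamma}_1}+\vec{\gamma}_0\wedge\phi(z)$, which varies with $z$ whenever $\vec{\gamma}_0\neq 0$; indeed the proof of Lemma~\ref{lemma_5_2} records $\D\vec{\gamma}_1=\vec{\gamma}_0\wedge\D\phi$. Consequently your integration of $\partial_\tau\vec{R}$ around $\partial B(0,\rho)$ only recovers the circular average
\[
2\pi\,(\vec{\gamma}_1)_\rho=-\int_{\partial B(0,\rho)}\Bigl[\vec{L}\wedge\partial_\tau\phi+2(-1)^{n-1}\star(\n\res\H)\res\partial_\tau^\perp\phi\Bigr]\,d\mathscr{H}^1,
\]
not $\vec{\gamma}_1$ itself. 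The assertion ``the left-hand side is independent of $\rho$'' is then false, and the final bound $|\vec{\gamma}_1|\sqrt{\log(R/4r)}\le C(\Lambda)\np{e^\lambda\H}{2}{\Omega}$ controls at best an averaged quantity rather than the pointwise one needed in $\np{\vec{\gamma}_1\D\log|z|}{2}{\Omega_{1/2}}$. One could try to repair this by separately estimating $\vec{\gamma}_1(z)-(\vec{\gamma}_1)_{|z|}=\vec{\gamma}_0\wedge(\phi(z)-\phi_{|z|})$, but that requires an independent control on $|\vec{\gamma}_0|\,e^{\lambda}$ which you do not establish.

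The paper's argument is considerably more direct and bypasses all of this. It uses the explicit circle-integral representation of $\vec{\gamma}_1(z)$ (the formula recalled in \eqref{residue_def}) together with the precised $\epsilon$-regularity of Theorem~\ref{epsilon_reg_H} to obtain the \emph{pointwise} bound
\[
|\vec{\gamma}_1(z)|\le C\bigl(1+\np{\D\n}{2}{\Omega}\bigr)\np{e^\lambda\H}{2}{A(2,z)},
\]
then squares, multiplies by $|z|^{-2}$ and integrates, invoking Fubini as in \eqref{lemma4.3_1} to arrive at $C\sqrt{\log(4)}\,\np{e^\lambda\H}{2}{\Omega}$. No mean-subtraction on $\vec{L}$ and no appeal to \eqref{second_quanta2} are needed.
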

    \begin{proof}
        We have
        \begin{align*}
            \vec{\gamma}_1(z)&=\frac{1}{2\pi}\int_{\partial B(0,|z|)}\left(\left(\pi_{T}\left(\p{r}\H\right)-2\pi_{\n}\left(\p{r}\H\right)-\star\left(\p{\tau}\n\wedge\H\right)\right)\wedge (\phi(z)-\phi(y))\right.\\
            &\left.+2(-1)^m\left(\star\left(\n\res \H\right)\right)\res \p{\tau}\phi\right)\,d\mathscr{H}^1(y),
        \end{align*}
        that we estimate as in \cite{lauriv1} as
        \begin{align}\label{pointwise_gamma_1}
            |\vec{\gamma}_1(z)|&\leq C\left(|z|+|z|^2\np{\D\n}{\infty}{\partial B(0,|z|)}\right)\np{e^{\lambda}\H}{\infty}{\partial B(0,|z|)}\nonumber\\
            &\leq C'\left(1+\np{\D\n}{2}{\Omega}\right)\np{e^{\lambda}\H}{2}{A(2,z)},
        \end{align}
        which shows that
        \begin{align}
            \np{\vec{\gamma}_1\D\log|z|}{2}{\Omega_{1/2}}\leq C'\sqrt{\log(4)}\left(1+\np{\D\n}{2}{\Omega}\right)\np{e^{\lambda}\H}{2}{\Omega},
        \end{align}
        and concludes the proof of the lemma.
    \end{proof}

    \subsection{Finer Estimates of Residues}
    
    Now, recall that the following system holds true:
    \begin{align}\label{jacobian_RS_residues}
    \left\{\begin{alignedat}{2}
    \D\vec{R}&=(-1)^n\star\left(\n\antires\D^{\perp}\vec{R}\right)+\star\,\n\cdot\D^{\perp}S+\vec{\gamma}_1\D^{\perp}\log|z|\\
    &+(-1)^n\star\left(\n\antires\vec{\gamma}_1\right)\D\log|z|+\gamma_2\D\log|z|\star\n\qquad&&\text{in}\;\,\Omega\\
    \D S&=-\star \n\cdot\D^{\perp}\vec{R}+\gamma_2\D^{\perp}\log|z|-\star\,\n\cdot \vec{\gamma}_1\D\log|z|\qquad&&\text{in}\;\,\Omega
    \end{alignedat}\right.
    \end{align}
    Thanks to the previous analysis, since $\gamma_2\D\log|z|\in L^{2,1}(\Omega_{1/2})$ while $\vec{\gamma_1}\D\log|z|\in L^2(\Omega_{1/2})$, we need to prove finer estimates for algebraic components featuring $\vec{\gamma}_1$.
    
    Furthermore, the following algebraic identity is verified:
    \begin{align}
        e^{2\lambda}\H&=\frac{1}{4}\D^{\perp}S\cdot\D\phi-\frac{1}{4}\D\vec{R}\res \D^{\perp}\phi+\frac{1}{4}\frac{\gamma_2}{|z|}\p{r}\phi+\frac{1}{4}\frac{\vec{\gamma}_1}{|z|}\res \p{r}\phi.
    \end{align}
    First, let us make precise \cite[Lemma $5.1$]{lauriv1}.

    \begin{lemme}\label{lemma_5_1}
        There exists a universal constant $C_4(\Lambda)<\infty$ such that
        \begin{align*}
            \np{\star\,\n\cdot\vec{\gamma}_1\D\log|z|}{2,1}{\Omega_{1/2}}\leq C_4(\Lambda)\np{\D\n}{2}{\Omega}\np{e^{\lambda}\H}{2}{\Omega}.
        \end{align*}
    \end{lemme}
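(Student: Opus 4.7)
The plan is to follow the same strategy as in Lemma \ref{lemma_4_2}: to establish a pointwise estimate of the form
\begin{align*}
|\star\n(z)\cdot\vec{\gamma}_1(z)|\leq C(\Lambda)\,|z|^2\,\delta(\D\n)(z)\,\delta(e^{\lambda}\H)(z)\quad\text{for all}\;\,z\in\Omega_{1/2},
\end{align*}
and then to invoke Lemma \ref{lorentz_estimate_average} with $f=\D\n$ and $g=e^{\lambda}\H$. The factor $|z|^2$ is exactly what is needed so that multiplication by $|\D\log|z||=|z|^{-1}$ reduces to the setting of Lemma \ref{lorentz_estimate_average}, yielding
\begin{align*}
\np{\star\n\cdot\vec{\gamma}_1\D\log|z|}{2,1}{\Omega_{1/2}}\leq C(\Lambda)\np{|z|\delta(\D\n)\delta(e^{\lambda}\H)}{2,1}{\Omega_{1/2}}\leq C_4(\Lambda)\np{\D\n}{2}{\Omega}\np{e^{\lambda}\H}{2}{\Omega}.
\end{align*}

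The crux of the proof is therefore the pointwise estimate. I would start from the explicit formula for $\vec{\gamma}_1(z)$ recalled in the proof of Lemma \ref{lemma_4_4}, namely an integral over $\partial B(0,|z|)$ of the schematic integrand
\[
\bigl(\pi_T(\partial_r\H)-2\pi_{\n}(\partial_r\H)-\star(\partial_{\tau}\n\wedge\H)\bigr)\wedge(\phi(z)-\phi(y))+2(-1)^m\star(\n(y)\res\H(y))\res\partial_{\tau}\phi(y),
\]
and dot the result with $\star\n(z)$. The naive bound on $|\vec{\gamma}_1(z)|$ from Lemma \ref{lemma_4_4} scales as $|z|\np{e^{\lambda}\H}{\infty}{A(2,z)}$ and lacks the needed factor of $\D\n$; the task is to exhibit algebraic cancellations that force this factor to appear after contraction with $\star\n(z)$. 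For the first group of terms, I would Taylor-expand $\phi(z)-\phi(y)=(z-y)\cdot\D\phi(y)+O(|z-y|^2|A|_{L^{\infty}(\partial B(0,|z|))})$; since $\D\phi(y)$ is tangent and $\star\n(z)$ is normal, the leading contribution vanishes \emph{provided $\n(z)$ is replaced by $\n(y)$}, and the error $\n(z)-\n(y)$ produces a factor bounded by $|z-y|\np{\D\n}{\infty}{\partial B(0,|z|)}$. The second-order Taylor remainder analogously yields a factor of $|z-y|^2|A|$, which again supplies the sought $\D\n$. For the last Weingarten-type term, I would use the algebraic identity $\star\n(z)\cdot\bigl(\star(\n(z)\res\H(z))\res\partial_{\tau}\phi(z)\bigr)=0$, so that only the oscillation of $\n(y)-\n(z)$ and $\H(y)-\H(z)$ contributes, once again producing a $\D\n$ factor.

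Combining these cancellations with the improved $\epsilon$-regularity of Theorem \ref{epsilon_reg_H} applied along $\partial B(0,|z|)$ and the Harnack inequality on the conformal factor (as in the proof of Theorem \ref{pointwise_H}), I arrive at the bound
\begin{align*}
|\star\n(z)\cdot\vec{\gamma}_1(z)|\leq C(\Lambda)\,\np{\D\n}{2}{A(2,z)}\np{e^{\lambda}\H}{2}{A(2,z)}=C(\Lambda)\,|z|^2\,\delta(\D\n)(z)\,\delta(e^{\lambda}\H)(z),
\end{align*}
from which the lemma follows by Lemma \ref{lorentz_estimate_average}.

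The main obstacle is isolating the leading-order cancellations described above in a clean, universal way: a brute pointwise bound on $|\vec{\gamma}_1(z)|$ misses the $\np{\D\n}{2}{A(2,z)}$ factor by exactly one power, and only the contraction with $\star\n(z)$ (via the tangent/normal orthogonality built into $\partial_{\tau}\phi$ and $\n$) recovers it. Carrying out the Taylor expansion in $\phi(z)-\phi(y)$ and the substitution $\n(y)\leftrightarrow\n(z)$ simultaneously, and showing that the combined error is indeed controlled by $\np{\D\n}{\infty}{\partial B(0,|z|)}\np{e^{\lambda}\H}{\infty}{\partial B(0,|z|)}$ times a suitable power of $|z|$, is the main technical point.
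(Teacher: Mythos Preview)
Your overall strategy is exactly the paper's: establish the pointwise bound $|\star\n(z)\cdot\vec{\gamma}_1(z)|\leq C\np{\D\n}{2}{A(2,z)}\np{e^{\lambda}\H}{2}{A(2,z)}$ and then apply Lemma \ref{lorentz_estimate_average}. The conclusion and the way you package it are correct.

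However, your description of \emph{how} the cancellations arise is inaccurate on two points, and as stated your argument for the ``first group'' would not go through. First, the algebraic identity you need is not ``$\D\phi$ is tangent and $\star\n$ is normal'': since $\star\n=\e_1\wedge\e_2$ is the tangent bivector, one has $\langle\star\n,\vec{X}\wedge\vec{v}\rangle=0$ for all $\vec{v}$ precisely when $\vec{X}$ is \emph{normal}, not when $\vec{v}$ is tangent. In particular, for the term $\pi_T(\partial_r\H)\wedge(\phi(z)-\phi(y))$ your proposed Taylor-plus-orthogonality mechanism fails, because $\pi_T(\partial_r\H)$ is tangent and its wedge with another tangent vector has nonzero $\star\n$-component. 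The paper handles this term differently: it observes directly that $\pi_T(\p{z}\H)=-|\H|^2\p{z}\phi-\s{\H}{\H_0}\p{\z}\phi$, so $|\pi_T(\D\H)|\leq 2|\D\n||\H|$ already carries the needed $\D\n$ factor, and no contraction with $\star\n$ is required. The same applies to $\star(\partial_\tau\n\wedge\H)$, which manifestly contains $\D\n$.

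Second, for the genuinely problematic term $\pi_{\n}(\partial_r\H)\wedge(\phi(z)-\phi(y))$, no Taylor expansion of $\phi$ is needed at all: since $\pi_{\n}(\partial_r\H)$ is normal, $\langle\star\n(y),\pi_{\n}(\partial_r\H)\wedge\vec{v}\rangle=0$ for \emph{every} $\vec{v}$, so after the swap $\star\n(z)\mapsto\star\n(y)$ the entire integrand vanishes and the remainder is controlled by $|z|\np{\D\n}{\infty}{\partial B(0,|z|)}$. The same swap argument disposes of the last term $(\star(\n\res\H))\res\partial_\tau\phi$; no oscillation of $\H$ is needed there either. So the decomposition is: two terms carry $\D\n$ explicitly, and two terms vanish after swapping $\n(z)\leftrightarrow\n(y)$ by the normal-vector identity $\langle\star\n,\vec{X}_{\perp}\wedge\cdot\rangle=0$.
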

    \begin{proof}
        Recall that 
        \begin{align}\label{residue_def}
            \vec{\gamma}_1(z)&=\frac{1}{2\pi}\int_{\partial B(0,|z|)}\left(\pi_{T}\left(\p{r}\H\right)-2\,\pi_{\n}(\p{r}\H)-\star\left(\p{\tau}\n\wedge\H\right)\right)\wedge (\phi(z)-\phi(y))d\mathscr{H}^1(y)\nonumber\\
            &+(-1)^n\frac{1}{\pi}\int_{\partial B(0,|z|)}\left(\star(\n\res\H)\right)\res\p{\tau}\phi\,d\mathscr{H}^1.
        \end{align}
        Notice that in a conformal chart, we have
        \begin{align*}
            \pi_{T}(\p{z}\H)&=2\,e^{-2\lambda}\s{\p{z}\H}{\p{\z}\phi}\p{z}\phi+2e^{-2\lambda}\s{\p{z}\H}{\p{z}\phi}\p{\z}\phi\\
            &=-2\,e^{-2\lambda}\s{\H}{\p{z\z}^2\phi}\p{z}\phi-2\,e^{-2\lambda}\s{\H}{\p{z}^2\phi}\p{\z}\phi\\
            &=-|\H|^2\p{z}\phi-\s{\H}{\H_0}\p{\z}\phi.
        \end{align*}
        Therefore, we have
        \begin{align*}
            |\pi_{T}(\D\H)|\leq \left(e^{\lambda}|\H|+e^{\lambda}|\H_0|\right)|\H|\leq 2|\D\n||\H|.
        \end{align*}
        Therefore, we deduce that
        \begin{align}
            \left|\frac{1}{2\pi}\int_{\partial B(0,|z|)}\pi_{T}\left(\p{r}\H\right)\wedge \left(\phi(z)-\phi(y)\right)d\mathscr{H}^1(y)\right|&\leq C|z|^2\np{\D\n}{\infty}{\partial B(0,|z|)}\np{e^{\lambda}\H}{\infty}{\partial B(0,|z|)}\nonumber\\
            &\leq C\np{\D\n}{2}{A(2,z)}\np{e^{\lambda}\H}{2}{A(2,z)}.
        \end{align}
        \emph{A fortiori}, we have
        \begin{align}\label{res_est1}
            \left|\star\,\n(z)\cdot \frac{1}{2\pi}\int_{\partial B(0,|z|)}\pi_{T}\left(\p{r}\H\right)\wedge \left(\phi(z)-\phi(y)\right)d\mathscr{H}^1(y)\right|\leq C\np{\D\n}{2}{A(2,z)}\np{e^{\lambda}\H}{2}{A(2,z)}.
        \end{align}
        On the other hand, we directly estimate
        \begin{align}\label{res_est2}
            \left|\star\n(z)\cdot \frac{1}{2\pi}\star\left(\p{\tau}\n\wedge\H\right)\wedge (\phi(z)-\phi(y))d\mathscr{H}^1(y)\right|&\leq C|z|^2\np{\D\n}{\infty}{\partial B(0,|z|)}\np{e^{\lambda}\H}{\infty}{\partial B(0,|z|)}\nonumber\\
            &\leq C\np{\D\n}{2}{A(2,z)}\np{e^{\lambda}\H}{2}{A(2,z)}.
        \end{align}
        Since $\pi_{\n}(\p{r}\H)$ is a normal vector, we deduce that 
        \begin{align}\label{algebraic_normal}
            \bs{\star\, \n}{\pi_{\n}(\p{r}\H)\wedge \vec{v}}=\bs{\e_1\wedge\e_2}{\pi_{\n}(\p{r}\H)\wedge \vec{v}}=0\quad \text{for all}\;\, \vec{v}\in \R^n.
        \end{align}
        Therefore, we have
        \begin{align}\label{res_est3}
            &\left|\star\,\n(z)\cdot\frac{1}{2\pi}\int_{\partial B(0,|z|)}\pi_{\n}\left(\p{r}\H\right)\wedge(\phi(z)-\phi(y))d\mathscr{H}^1(y)\right|\nonumber\\
            &=\frac{1}{2\pi}\left|\int_{\partial B(0,|z|)}\left(\star\,\n(z)-\star\,\n(y)\right)\cdot \left(\pi_{\n}(\p{r}\H)\wedge \left(\phi(z)-\phi(y)\right)\right)d\mathscr{H}^1(y)\right|\nonumber\\
            &\leq C|z|^3\np{\D\n}{\infty}{\partial B(0,|z|)}\np{e^{\lambda}\D\H}{\infty}{\partial B(0,|z|)}\leq C\np{\D\n}{2}{A(2,z)}\np{e^{\lambda}\H}{2}{A(2,z)}
        \end{align}
        where we used once more the improved $\epsilon$-regularity from Theorem \ref{epsilon_reg_H} together with the classical $\epsilon$-regularity from \cite{riviere1}. Finally, noticing as in \eqref{algebraic_normal} holds for $\H\wedge \p{x_i}\phi$, we deduce that
        \begin{align}\label{res_est4}
            \left|\star\,\n(z)\cdot\frac{1}{2\pi}\int_{\partial B(0,|z|)}\left(\star\left(\n\res \H\right)\res\p{\tau}\phi\right)d\mathscr{H}^1\right|&=\frac{1}{2\pi}\left|\int_{\partial B(0,|z|)}\left(\star\,\n(z)-\star\,\n(y)\right)\cdot\left(\frac{1}{r}\p{\theta}\phi\wedge\H\right)d\mathscr{H}^1(y)\right|\nonumber\\
            &\leq C|z|^2\np{\D\n}{\infty}{\partial B(0,|z|)}\np{e^{\lambda}\H}{\infty}{\partial B(0,|z|)}\nonumber\\
            &\leq C\np{\D\n}{2}{A(2,z)}\np{e^{\lambda}\H}{2}{A(2,z)}.
        \end{align}
        Gathering estimates \eqref{res_est1}, \eqref{res_est2}, \eqref{res_est3}, and \eqref{res_est4}, we deduce by the algebraic identity \eqref{residue_def} that
        \begin{align}\label{pointwise_n_times_gamma_1}
            \left|\star\,\n(z)\cdot \vec{\gamma}_1(z)\right|\leq C\np{\D\n}{2}{A(2,z)}\np{e^{\lambda}\H}{2}{A(2,z)}.
        \end{align}
        Therefore, the proof is completed by virtue of Lemma \eqref{lorentz_estimate_average}.
    \end{proof}
    The next lemma is straightforward (see \cite[Lemma $5.2$]{lauriv1}.
    \begin{lemme}\label{lemma_5_2}
        There exists a universal constant $C_5(\Lambda)<\infty$ such that
        \begin{align*}
            \np{\D\vec{\gamma}_1}{2}{\Omega_{1/2}}\leq C_5(\Lambda)\np{e^{\lambda}\H}{2}{\Omega}.
        \end{align*}
    \end{lemme}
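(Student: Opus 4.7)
\textbf{Proof proposal for Lemma \ref{lemma_5_2}.} The plan rests on the structural identity coming from \eqref{log_constants}, namely
\[
\vec{\gamma}_1(z)=\widetilde{\vec{\gamma}_1}+\vec{\gamma}_0\wedge\phi(z),
\]
where $\widetilde{\vec{\gamma}_1}$ and $\vec{\gamma}_0$ are \emph{constants} (periods coming from the two applications of the Poincaré lemma). Therefore the only $z$-dependence of $\vec{\gamma}_1$ is through $\phi(z)$, and differentiating gives
\[
\D\vec{\gamma}_1(z)=\vec{\gamma}_0\wedge\D\phi(z),\qquad |\D\vec{\gamma}_1(z)|\leq \sqrt 2\,|\vec{\gamma}_0|\,e^{\lambda(z)}.
\]
The task therefore reduces to showing a pointwise bound on $\vec{\gamma}_0$ of the form $\rho\, e^{\lambda(\rho)}|\vec{\gamma}_0|\leq C(\Lambda)\np{e^\lambda\H}{2}{A(2,\rho)}$, valid for every radius $\rho$ in the annulus, and then to assemble it into an $L^2$ bound on $\Omega_{1/2}$ via dyadic decomposition.

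For the pointwise estimate on $\vec{\gamma}_0$, I would start from the defining formula \eqref{gamma_0}, which gives, for any $r\leq\rho\leq R$,
\[
|\vec{\gamma}_0|\leq \frac{1}{2\pi}\int_{\partial B(0,\rho)}\bigl(|\D\H|+|\D\n|\,|\H|\bigr)d\mathscr{H}^1.
\]
Using the improved $\epsilon$-regularity of Theorem~\ref{epsilon_reg_H} (in the localised form \eqref{new_Linfty_H}--\eqref{new_Linfty_DH}) together with the classical $\epsilon$-regularity for $\n$, each integrand is pointwise controlled on the circle $\partial B(0,\rho)$ by a multiple of $|\rho|^{-1}e^{-\lambda(\rho)}(1+\|\D\n\|_{L^2})\|e^\lambda\H\|_{L^2(A(2,\rho))}$, after invoking the Harnack inequality on the conformal parameter within dyadic annuli (as in Lemmas~\ref{lemma_4_1}--\ref{lemma_4_4}). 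Integrating over $\partial B(0,\rho)$ yields
\begin{equation}\label{gamma0_pointwise_plan}
\rho\,e^{\lambda(\rho)}\,|\vec{\gamma}_0|\leq C\bigl(1+\np{\D\n}{2}{\Omega}\bigr)\np{e^\lambda\H}{2}{A(2,\rho)}.
\end{equation}

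With \eqref{gamma0_pointwise_plan} in hand, I then decompose $\Omega_{1/2}$ into dyadic annuli $A_k=B_{2^{-k}}\setminus \bar B_{2^{-(k+1)}}(0)$, and estimate on each $A_k$ (of representative radius $\rho_k=2^{-k}$), using Harnack on the conformal parameter to see that $\int_{A_k}e^{2\lambda}\,dx\leq C\rho_k^2\,e^{2\lambda(\rho_k)}$. Combined with \eqref{gamma0_pointwise_plan}, this gives
\[
\int_{A_k}|\D\vec{\gamma}_1|^2\,dx\leq 2|\vec{\gamma}_0|^2\int_{A_k}e^{2\lambda}\,dx
\leq C\bigl(\rho_k\,e^{\lambda(\rho_k)}|\vec{\gamma}_0|\bigr)^2
\leq C\bigl(1+\np{\D\n}{2}{\Omega}\bigr)^2\np{e^\lambda\H}{2}{A(2,\rho_k)}^2.
\]
Since the enlarged dyadic annuli $\{A(2,\rho_k)\}$ have bounded overlap, summing over $k$ telescopes to
$\sum_k \np{e^\lambda\H}{2}{A(2,\rho_k)}^2 \leq C\np{e^\lambda\H}{2}{\Omega}^2$, and the desired bound
$\np{\D\vec{\gamma}_1}{2}{\Omega_{1/2}}\leq C_5(\Lambda)\np{e^\lambda\H}{2}{\Omega}$ follows.

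The main technical point is the pointwise estimate \eqref{gamma0_pointwise_plan} on the \emph{constant} vector $\vec{\gamma}_0$: because the estimate must hold at every radius $\rho$ (not merely an averaged version), I need the local, scale-invariant form of the $\epsilon$-regularity from Theorem~\ref{epsilon_reg_H} to propagate $\|e^\lambda\H\|_{L^2(\Omega)}$ control down to pointwise bounds on $\D\H$ and $\H$ on each circle $\partial B(0,\rho)$, together with the Harnack estimate controlling the oscillation of $\lambda$ across dyadic annuli. Once this is set up, the rest of the argument is a routine dyadic summation exactly paralleling the proofs of Lemmas~\ref{lemma_4_4} and~\ref{lemma_5_1}.
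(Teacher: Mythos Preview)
Your proposal is correct and follows essentially the same approach as the paper: both exploit the structural identity $\D\vec{\gamma}_1=\vec{\gamma}_0\wedge\D\phi$ from \eqref{log_constants}, bound the constant $\vec{\gamma}_0$ pointwise via the improved $\epsilon$-regularity of Theorem~\ref{epsilon_reg_H} to obtain $|\D\vec{\gamma}_1(z)|\leq C(1+\np{\D\n}{2}{\Omega})\,|z|^{-1}\np{e^{\lambda}\H}{2}{A(2,z)}$, and then integrate. The only difference is cosmetic: the paper integrates the resulting $\delta(z)^2$ directly via the Fubini identity \eqref{lemma4.3_1}, whereas you phrase the same computation as a dyadic summation with bounded overlap; these are equivalent.
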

    \begin{proof}
        Recall by \eqref{gamma_0} that
        \begin{align*}
            \vec{\gamma_0}=\frac{1}{2\pi}\int_{\partial B(0,\rho)}d\H-3\pi_{\n}(d\H)+\star(\H\wedge d\n).
        \end{align*}
        Since $\vec{\gamma}_1(z)=\widetilde{\gamma_1}+\vec{\gamma}_0\wedge \phi(z)$, we have $\D\vec{\gamma}_1=\vec{\gamma}\wedge \D\phi$, which implies that
        \begin{align}\label{pointwise_grad_gamma_1}
            |\D\vec{\gamma}_1(z)|&\leq C|z|\left(\np{e^{\lambda}\H}{\infty}{\partial B(0,|z|)}+\np{e^{\lambda}|\H||\D\n|}{\infty}{\partial B(0,|z|)}\right)\nonumber\\
            &\leq \frac{C}{|z|}\left(1+\np{\D\n}{2}{A(2,z)}\right)\np{e^{\lambda}\H}{2}{A(2,z)}
            \leq C\left(1+\np{\D\n}{2}{\Omega}\right)\delta(z),
        \end{align}
        which shows that
        \begin{align*}
            \np{\D\vec{\gamma}_1}{2}{\Omega_{1/2}}\leq C\sqrt{\log(4)}\left(1+\np{\D\n}{2}{\Omega}\right)\np{e^{\lambda}\H}{2}{\Omega},
        \end{align*}
        and concludes the proof of the theorem.
    \end{proof}
    We skip the proof of the next estimate that is immediately derived from the second inequality in the proof of \cite[Lemma $5.3$]{lauriv1}
    \begin{lemme}\label{lemma_5_3}
        There exists a universal constant $C_6(\Lambda)<\infty$ such that
        \begin{align}\label{pointwise_Delta_gamma_1}
            |\Delta \vec{\gamma}_1(z)|\leq \frac{C_6(\Lambda)}{|z|^2}\np{\D\n}{2}{A(2,z)}\np{e^{\lambda}\H}{2}{A(2,z)}=C_6(\Lambda)\delta_0(z)\,\delta(z).
        \end{align}
    \end{lemme}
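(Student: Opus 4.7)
}

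The starting point is the identity established in the proof of Lemma~\ref{lemma_5_2}. Since \eqref{log_constants} gives $\vec{\gamma}_1 = \widetilde{\vec{\gamma}_1} + \vec{\gamma}_0\wedge\phi$ with $\widetilde{\vec{\gamma}_1}$ and $\vec{\gamma}_0$ constant $n$-vectors, differentiating twice yields
\begin{align*}
    \Delta\vec{\gamma}_1 = \vec{\gamma}_0\wedge\Delta\phi = 2\,e^{2\lambda}\,\vec{\gamma}_0\wedge\H
\end{align*}
in a conformal chart, using $\Delta\phi = 2e^{2\lambda}\H$. Taking absolute values gives immediately
$|\Delta\vec{\gamma}_1(z)|\leq 2|\vec{\gamma}_0|\,e^{2\lambda(z)}|\H(z)|$, so the problem reduces to estimating the two factors $|\vec{\gamma}_0|\,e^{\lambda(z)}$ and $e^{\lambda(z)}|\H(z)|$ separately.

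For the second factor, I will invoke the improved $\epsilon$-regularity estimate \eqref{new_Linfty_H} (which is a consequence of Theorem~\ref{epsilon_reg_H} combined with the Sobolev embedding $W^{1,(2,1)}\hookrightarrow C^0$). Rewriting the average in \eqref{new_Linfty_H} as an $L^2$-norm gives
\begin{align*}
    e^{\lambda(z)}|\H(z)|\leq \frac{C}{|z|}\np{\D\n}{2}{A(2,z)}\np{e^{\lambda}\H}{2}{A(2,z)},
\end{align*}
where the factor $\np{\D\n}{2}{A(2,z)}$ is precisely the gain over the classical $\epsilon$-regularity. This is the crucial ingredient that produces the $\np{\D\n}{2}{A(2,z)}$ appearing in the target estimate.

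For the first factor, since $\vec{\gamma}_0$ is a constant $n$-vector, I can compute it as the contour integral from \eqref{gamma_0} over $\partial B(0,|z|)$. Bounding the integrand pointwise via Theorem~\ref{epsilon_reg_H} (for $\D\H$) and the classical $\epsilon$-regularity (for $\D\n$ and $\H$), I get on $\partial B(0,|z|)$ the pointwise estimate $e^{\lambda}(|\D\H|+|\H||\D\n|)\leq C|z|^{-2}\np{e^{\lambda}\H}{2}{A(2,z)}(1+\np{\D\n}{2}{A(2,z)})$, and integrating around the circle together with the Harnack inequality on the conformal factor (justified since $m=1$, so that $e^\lambda$ is uniformly bounded on $A(2,z)$) yields
\begin{align*}
    |\vec{\gamma}_0|\,e^{\lambda(z)}\leq \frac{C(\Lambda)}{|z|}\np{e^{\lambda}\H}{2}{A(2,z)}\left(1+\np{\D\n}{2}{\Omega}\right)\leq \frac{C(\Lambda)}{|z|}.
\end{align*}

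Combining the two bounds gives
\begin{align*}
    |\Delta\vec{\gamma}_1(z)|\leq 2\,|\vec{\gamma}_0|\,e^{\lambda(z)}\cdot e^{\lambda(z)}|\H(z)|\leq \frac{C_6(\Lambda)}{|z|^2}\np{\D\n}{2}{A(2,z)}\np{e^{\lambda}\H}{2}{A(2,z)},
\end{align*}
which is the claimed inequality. No step here is especially subtle since everything follows from the two improved $\epsilon$-regularity inputs (Theorem~\ref{epsilon_reg_H} and its consequence \eqref{new_Linfty_H}); the only mild care is in making sure that the $L^2$-average of $e^{\lambda}\H$ on $A(2,z)$ carries the correct $|z|^{-1}$ factor so that the two gradient-type bounds compose to give $|z|^{-2}$. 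The simplicity of the derivation matches the paper's remark that the result follows immediately from (the second inequality in) the proof of~\cite[Lemma~5.3]{lauriv1}.
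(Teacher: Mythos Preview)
Your proof is correct and follows precisely the route the paper indicates (it defers to \cite[Lemma~5.3]{lauriv1}, whose argument is exactly the computation $\Delta\vec{\gamma}_1=\vec{\gamma}_0\wedge\Delta\phi=2e^{2\lambda}\vec{\gamma}_0\wedge\H$ combined with the contour bound on $|\vec{\gamma}_0|$ and the improved $\epsilon$-regularity of Theorem~\ref{epsilon_reg_H} to extract the extra factor $\np{\D\n}{2}{A(2,z)}$). One minor remark: your intermediate pointwise bound $e^{\lambda}(|\D\H|+|\H||\D\n|)\leq C|z|^{-2}\np{e^{\lambda}\H}{2}{A(2,z)}(1+\np{\D\n}{2}{A(2,z)})$ overshoots the $e^{\lambda}|\D\H|$ term, which by \eqref{new_Linfty_DH} is only of order $|z|^{-1}$; this is harmless since the ambient assumption $|z|\leq R\leq 1$ absorbs the discrepancy, but it is worth noting that the sharp estimate on $|\vec{\gamma}_0|e^{\lambda(z)}$ is $C\np{e^{\lambda}\H}{2}{A(2,z)}(1+|z|^{-1}\np{\D\n}{2}{A(2,z)})$ rather than the $|z|^{-1}$ you wrote.
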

    Then, the follow result holds.
    \begin{lemme}\label{lemma_5_4}
        There exists a universal constant $C_7(\Lambda)<\infty$ such that
        \begin{align*}
            \np{\D \vec{\gamma}_1}{2}{\Omega_{1/2}}\leq C_7(\Lambda)\left(R+\np{\D\n}{2}{\Omega}\right)\np{e^{\lambda}\H}{2}{\Omega}\sup_{2r<\rho<R/2}\np{e^{\lambda}\H}{2}{A(2,\rho)}.
        \end{align*}
    \end{lemme}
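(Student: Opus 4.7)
The starting point of the proof is the identity $\D\vec{\gamma}_1=\vec{\gamma}_0\wedge\D\phi$, obtained by differentiating the decomposition $\vec{\gamma}_1=\widetilde{\vec{\gamma}_1}+\vec{\gamma}_0\wedge\phi$ from \eqref{log_constants} (both $\widetilde{\vec{\gamma}_1}$ and $\vec{\gamma}_0$ being constants). This yields the pointwise bound $|\D\vec{\gamma}_1(z)|\le C|\vec{\gamma}_0|\,e^{\lambda(z)}$, and hence
\begin{align*}
\np{\D\vec{\gamma}_1}{2}{\Omega_{1/2}}\le C\,|\vec{\gamma}_0|\,\np{e^{\lambda}}{2}{\Omega_{1/2}}.
\end{align*}
The factor $(R+\np{\D\n}{2}{\Omega})$ in the target inequality will emerge from the conformal factor norm: under the hypothesis of bounded $\Lambda$, the Harnack inequality combined with the $L^{\infty}$ control on $\mu$ from Theorem \ref{precise_estimate_neck_region} (together with the no-branch-point assumption) delivers $\np{e^{\lambda}}{2}{\Omega_{1/2}}\le C(R+\np{\D\n}{2}{\Omega})$.

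The second step is the estimation of $|\vec{\gamma}_0|$. I would exploit the representation $2\pi\vec{\gamma}_0=\int_{\partial B(0,\rho)}\omega$ of the Willmore $1$-form $\omega=d\H-3\pi_{\n}(d\H)+\star(\H\wedge d\n)$, which is independent of $\rho\in(r,R)$. Combining the improved $\epsilon$-regularity for $\H$ of Theorem \ref{epsilon_reg_H} (controlling $e^{\lambda}|\D\H|$ pointwise by $\np{e^{\lambda}\H}{2}{A(2,z)}/|z|^{2}$) with the classical $\epsilon$-regularity for $\n$ on each dyadic annulus $A(2,\rho)$, I obtain the scale-invariant pointwise estimate
\begin{align*}
\rho\,e^{\bar{\lambda}(\rho)}|\vec{\gamma}_0|\le C(1+\np{\D\n}{2}{\Omega})\np{e^{\lambda}\H}{2}{A(2,\rho)}\qquad\text{for all }\rho\in(2r,R/2),
\end{align*}
where $\bar{\lambda}(\rho)$ is a Harnack-representative of $\lambda$ on $\partial B(0,\rho)$. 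To convert this into the desired bilinear product, I would average the inequality over $\rho\in(2r,R/2)$ against $d\rho/\rho$, extract one factor $\sup_{\rho}\np{e^{\lambda}\H}{2}{A(2,\rho)}$ from the square, and pair the remaining factor with the bulk control $\np{e^{\lambda}\H}{2}{\Omega}$ by means of Cauchy-Schwarz combined with the Fubini identity from the proof of Lemma \ref{lemma_5_2}.

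The main technical obstacle lies precisely in this last decoupling step. A naive squaring-and-integrating of the residue bound reproduces the integral $\int_{\Omega_{1/2}}\np{e^{\lambda}\H}{2}{A(2,z)}^{2}/|z|^{2}\,dz$, which, upon applying Fubini exactly as in the proof of Lemma \ref{lemma_5_2}, collapses back to $\np{e^{\lambda}\H}{2}{\Omega}^{2}$ and thus merely reproduces Lemma \ref{lemma_5_2}. The refinement to the genuinely bilinear product $\np{e^{\lambda}\H}{2}{\Omega}\cdot\sup_{\rho}\np{e^{\lambda}\H}{2}{A(2,\rho)}$, without a $\sqrt{\log(R/r)}$ logarithmic loss, requires replacing the crude pointwise $L^{2}$ bound by the Lorentz space quantization \eqref{precise3} of Theorem \ref{precise_estimate_neck_region} and exploiting the $L^{2,1}$-$L^{2,\infty}$ duality against the weight $1/|z|$ that pervades all the estimates, in the spirit of Lemma \ref{lorentz_estimate_average}.
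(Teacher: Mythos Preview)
Your approach diverges substantially from the paper's, and the gap you yourself flag (the ``decoupling step'') is precisely where your argument is incomplete. Let me explain both points.

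\textbf{The paper's route.} The paper does not attempt to bound $|\vec{\gamma}_0|$ and $\np{e^{\lambda}}{2}{\Omega_{1/2}}$ separately. Instead it integrates by parts,
\[
\int_{\Omega_{1/2}}|\D\vec{\gamma}_1|^2\,dx=-\int_{\Omega_{1/2}}\vec{\gamma}_1\cdot\Delta\vec{\gamma}_1\,dx+\int_{\partial\Omega_{1/2}}\vec{\gamma}_1\cdot\partial_{\nu}\vec{\gamma}_1\,d\mathscr{H}^1,
\]
and then invokes the three pointwise bounds \eqref{pointwise_gamma_1}, \eqref{pointwise_grad_gamma_1}, and crucially \eqref{pointwise_Delta_gamma_1} from Lemma~\ref{lemma_5_3}. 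The Laplacian bound $|\Delta\vec{\gamma}_1(z)|\le C|z|^{-2}\np{\D\n}{2}{A(2,z)}\np{e^{\lambda}\H}{2}{A(2,z)}$ carries an \emph{extra} factor $\np{\D\n}{2}{A(2,z)}$ that is absent from $|\vec{\gamma}_1|$ and $|\D\vec{\gamma}_1|$. Hence the integrand $|\vec{\gamma}_1\cdot\Delta\vec{\gamma}_1|$ is controlled by $|z|^{-2}\np{\D\n}{2}{A(2,z)}\np{e^{\lambda}\H}{2}{A(2,z)}^2$, and one may extract one copy of $\np{e^{\lambda}\H}{2}{A(2,z)}$ as a supremum and apply Cauchy--Schwarz plus the Fubini identity on the remaining product $\np{\D\n}{2}{A(2,z)}\np{e^{\lambda}\H}{2}{A(2,z)}$ to obtain $\np{\D\n}{2}{\Omega}\np{e^{\lambda}\H}{2}{\Omega}$. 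The boundary term contributes the $R$ factor.

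\textbf{Why your approach stalls.} By working only with $\D\vec{\gamma}_1=\vec{\gamma}_0\wedge\D\phi$ you never access the Laplacian estimate of Lemma~\ref{lemma_5_3}, and so you never get the extra $\np{\D\n}{2}{A(2,z)}$ factor that makes the Cauchy--Schwarz decoupling work without logarithmic loss. Your proposed remedy via $L^{2,1}$--$L^{2,\infty}$ duality is speculative and not carried out; moreover, since $\vec{\gamma}_0$ is a single constant, averaging the residue bound over $\rho$ cannot by itself produce a genuine \emph{product} of two $\H$-integrals --- it only produces an average of a single $\H$-integral. Separately, your claimed bound $\np{e^{\lambda}}{2}{\Omega_{1/2}}\le C(R+\np{\D\n}{2}{\Omega})$ is not justified: under the standing hypotheses one gets at best $\np{e^{\lambda}}{2}{\Omega_{1/2}}\le C(\Lambda)R$ (when $\lambda$ is uniformly bounded), and the additive $\np{\D\n}{2}{\Omega}$ term has no clear provenance from the conformal-factor analysis.

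In short, the missing idea is the integration-by-parts identity combined with Lemma~\ref{lemma_5_3}; this is what converts a quadratic-in-$\H$ estimate into the desired trilinear form without the Lorentz machinery you reach for.
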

    \begin{proof}
        Integrating by parts and using \eqref{pointwise_gamma_1}, \eqref{pointwise_grad_gamma_1}, and \eqref{pointwise_Delta_gamma_1}, we obtain thanks to Cauchy-Schwarz inequality
        \begin{align*}
            &\int_{\Omega_{1/2}}|\D \vec{\gamma}_1|^2dx=-\int_{\Omega_{1/2}}\vec{\gamma}_1\cdot \Delta\vec{\gamma}_1\,dx+\int_{\partial \Omega_{1/2}}\vec{\gamma}_1\cdot \partial_{\nu}\vec{\gamma}_1\,d\mathscr{H}^1\\
            &\leq C\int_{\Omega_{1/2}}\frac{1}{|z|^2}\np{\D\n}{2}{A(2,z)}\np{e^{\lambda}\H}{2}{A(2,z)}^2|dz|^2+C\int_{\Omega_{1/2}}\frac{1}{|z|}\np{e^{\lambda}\H}{2}{A(2,z)}^2|dz|^2\\
            &\leq C\left(\left(\int_{\Omega_{1/2}}\frac{1}{|z|^2}\np{\D\n}{2}{A(2,z)}^2|dz|^2\right)^{\frac{1}{2}}\left(\int_{\Omega_{1/2}}\frac{1}{|z|^2}\np{e^{2\lambda}\H}{2}{A(2,z)}^2|dz|^2\right)^{\frac{1}{2}}\right.\\
            &\left.+\left(\int_{\Omega_{1/2}}\frac{1}{|z|^2}\np{e^{\lambda}\H}{2}{A(2,z)}^2|dz|^2\right)^{\frac{1}{2}}\sqrt{\pi(R^2-r^2)}\right)\sup_{2r<\rho<R/2}\np{e^{\lambda}\H}{2}{A(2,\rho)}\\
            &=C\left(\log(4)\np{\D\n}{2}{\Omega}\np{e^{\lambda}\H}{2}{\Omega}+\sqrt{\log(4)}\sqrt{\pi(R^2-r^2)}\np{e^{\lambda}\H}{2}{\Omega}\right)\sup_{2r<\rho<R/2}\np{e^{\lambda}\H}{2}{A(2,\rho)}.
        \end{align*}
        Notice that we also estimate directly
        \begin{align*}
            &\int_{\Omega_{1/2}}\frac{1}{|z|^2}\np{\D\n}{2}{A(2,z)}\np{e^{\lambda}\H}{2}{A(2,z)}^2|dz|^2+\int_{\Omega_{1/2}}\frac{1}{|z|}\np{e^{\lambda}\H}{2}{A(2,z)}^2|dz|^2\\
            &\leq \log(4)\sup_{2r<\rho<R/2}\np{\D\n}{2}{A(2,z)}\int_{\Omega}e^{2\lambda}|\H|^2dx+\frac{3}{2}\int_{\Omega_{1/2}}|x|e^{2\lambda}|\H|^2dx.
        \end{align*}
    \end{proof}

    \subsection{Estimates on the Jacobian System}

    \subsubsection{Marcinkiewicz Space Estimate}

    Recalling system \eqref{jacobian_RS_residue}
    \begin{align}\label{jacobian_RS_residue2}
    \left\{\begin{alignedat}{2}
    \D\vec{R}&=\vec{L}\wedge \D\phi+2(-1)^{n-1}\left(\star(\n\res\H)\res\D^{\perp}\phi\right)+\vec{\gamma}_1\D^{\perp}\log|z|\qquad&&\text{in}\;\,\Omega\\
    \D S&=\vec{L}\cdot\D\phi+\gamma_2\D^{\perp}\log|z|\qquad&&\text{in}\;\,\Omega,
    \end{alignedat}\right.
\end{align}
we deduce thanks to Lemma \ref{lemma_4_1} that for all $z\in \Omega_{1/2}$
    \begin{align}\label{pointwise_RS}
    \left\{\begin{alignedat}{1}
        |\D \vec{R}(z)|&\leq 2e^{\lambda(z)}|\vec{L}(z)|+4e^{\lambda(z)}|\H(z)|+\frac{|\gamma_1(z)|}{|z|}
        \leq \frac{C(\Lambda)}{|z|}\np{e^{\lambda}\H}{2}{\Omega}+\frac{|\gamma_1(z)|}{|z|}\\
        |\D S(z)|&\leq 2e^{\lambda}|\vec{L}(z)|+\frac{|\gamma_2(z)|}{|z|}
        \leq \frac{C(\Lambda)}{|z|}\np{e^{\lambda}\H}{2}{\Omega}+\frac{|\gamma_2(z)|}{|z|}
        \end{alignedat}\right.
    \end{align}
    where we  used the $\epsilon$-regularity from Theorem \ref{epsilon_reg_H}. Thanks to Lemma \ref{lemma_4_1} Lemma \ref{lemma_4_2}, we deduce that 
    \begin{align}
        \np{\D S}{2,\infty}{\Omega_{1/2}}&\leq 2\np{e^{\lambda}\vec{L}}{2,\infty}{\Omega_{1/2}}+2\np{\gamma_2\D\log|z|}{2}{\Omega_{1/2}}\nonumber\\
        &\leq 2\sqrt{\pi}C_1(\Lambda)\np{e^{\lambda}\H}{2}{\Omega}+2C_2(\Lambda)\np{\D\n}{2}{\Omega}\np{e^{\lambda}\H}{2}{\Omega}\nonumber\\
        &\leq C(\Lambda)\np{e^{\lambda}\H}{2}{\Omega},
    \end{align}
    while Lemma \ref{lemma_4_1} and Lemma \ref{lemma_4_4} shows that 
    \begin{align}
        \np{\D\vec{R}}{2,\infty}{\Omega_{1/2}}\leq C(\Lambda)\np{e^{\lambda}\H}{2}{\Omega}.
    \end{align}

    \subsubsection{Lebesgue Space Estimate}
    Using the same argument as in \cite{lauriv1}, and the pointwise estimates \eqref{pointwise_gamma_1} and \eqref{pointwise_gamma_2}, we deduce that
    \begin{align}
        \left|\frac{d \vec{R}}{d\rho}\right|^2+\left|\frac{dS_{\rho}}{d\rho}\right|^2\leq \frac{C}{\rho^2}\np{e^{\lambda}\H}{2}{A(2,\rho)}
    \end{align}
    where for all $\varphi\in C^{\infty}(\R^2)$ and $0<\rho<\infty$,
    \begin{align*}
        \varphi_{\rho}=\dashint{\partial B(0,\rho)}\varphi\,d\mathscr{H}^1.
    \end{align*}
    Therefore, we have
    \begin{align}
        \int_{\Omega_{1/2}}\left(|\D \vec{R}_{|z|}|^2+|\D S_{|z|}|^2\right)|dz|^2=\int_{2r}^{R/2}\left(\left|\frac{d \vec{R}}{d\rho}\right|^2+\left|\frac{dS_{\rho}}{d\rho}\right|^2\right)\rho\,d\rho\leq C\int_{\Omega}e^{2\lambda}|\H|^2dx.
    \end{align}
    Now, let $\vec{\psi}_{\vec{R}}:\Omega\rightarrow \Lambda^2\R^n$ be the solution of 
    \begin{align}
        \left\{\begin{alignedat}{2}
            \Delta \vec{\psi}_{\vec{R}}&=\dive\left(\vec{\gamma}_1\D^{\perp}\log|z|+(-1)^n\star\left(\n\antires\vec{\gamma}_1)\right)\D\log|z|+\gamma_2\D\log|z|\star\n\right)\qquad&& \text{in}\;\, \Omega_{1/2}\\
            \vec{\psi}_{\vec{R}}&=0\qquad&&\text{on}\;\,\partial\Omega_{1/2},
        \end{alignedat}\right.
    \end{align}
    and $\psi_S:\Omega\rightarrow \R$ be the solution of 
    \begin{align}
        \left\{\begin{alignedat}{2}
            \Delta {\psi}_{\vec{R}}&=\dive\left(\gamma_2\D^{\perp}\log|z|-\star\,\n\cdot\vec{\gamma}_1\D\log|z|\right)\qquad&& \text{in}\;\, \Omega_{1/2}\\
            {\psi}_{\vec{R}}&=0\qquad&&\text{on}\;\,\partial\Omega_{1/2}.
        \end{alignedat}\right.
    \end{align}
    Integrating by parts, we deduce that thanks to Lemma \ref{lemma_4_2} and \eqref{lemma_4_4}
    \begin{align*}
        \int_{\Omega_{1/2}}|\D\psi_S|^2dx&=-\int_{\Omega_{1/2}}\psi_S\Delta \psi_S\,dx=\int_{\Omega_{1/2}}\D\psi_S\cdot \left(\gamma_2\D^{\perp}\log|z|+\star\,\n\cdot\vec{\gamma_1}\D\log|z|\right)dx\\
        &\leq \left(\int_{\Omega_{1/2}}|\D\psi_S|^2dx\right)^{\frac{1}{2}}\left(\np{\gamma_2\D\log|z|}{2}{\Omega_{1/2}}\np{\vec{\gamma}_1\D\log|z|}{2}{\Omega_{1/2}}\right)\\
        &\leq \left(\int_{\Omega_{1/2}}|\D\psi_S|^2dx\right)\left(C_2(\Lambda)\np{\D\n}{2}{\Omega}\np{e^{\lambda}\H}{2}{\Omega}+C_3(\Lambda)\np{e^{\lambda}\H}{2}{\Omega}\right)\\
        &\leq C(\Lambda)\left(\int_{\Omega_{1/2}}|\D\psi_S|^2dx\right)^{\frac{1}{2}}\np{e^{\lambda}\H}{2}{\Omega}.
    \end{align*}
    Therefore, we have
    \begin{align*}
        \np{\D\psi_S}{2}{\Omega_{1/2}}\leq C(\Lambda)\np{e^{\lambda}\H}{2}{\Omega}.
    \end{align*}
    Likewise, 
    \begin{align*}
        \np{\D\psi_{\vec{R}}}{2}{\Omega_{1/2}}\leq C(\Lambda)\np{e^{\lambda}\H}{2}{\Omega}.
    \end{align*}
    Therefore, we finally deduce thanks to \cite[Lemma $5.5$]{pointwise}
    \begin{align}\label{est_S_2}
        \np{\D S}{2}{\Omega_{1/4}}&\leq C\left(\np{\vec{R}}{2,\infty}{\Omega_{1/2}}\np{\D\n}{2}{\Omega_{1/2}}+\np{\D S_{|z|}}{2}{\Omega_{1/2}}+\np{\D S}{2,\infty}{\Omega_{1/2}}\right)+\np{\D\psi_S}{2}{\Omega_{1/2}}\nonumber\\
        &\leq C(\Lambda)\np{e^{\lambda}\H}{2}{\Omega}.
    \end{align}
    Likewise, we deduce that
    \begin{align}\label{est_R_2}
        \np{\D \vec{R}}{2}{\Omega_{1/4}}\leq C(\Lambda)\np{e^{\lambda}\H}{2}{\Omega}.
    \end{align}

    \subsubsection{Lorentz Space Estimate}

    Using the same argument as in \cite[($60$)]{riviere1} and \eqref{pointwise_n_times_gamma_1}, we deduce that
    \begin{align*}
        \left|\frac{dS_{\rho}}{d\rho}\right|\leq \left(\int_{\partial B(0,\rho)}|\D\n|^2d\mathscr{H}^1\right)^{\frac{1}{2}}\left(\int_{\partial B(0,\rho)}|\D\vec{R}|^2d\mathscr{H}^1\right)^{\frac{1}{2}}+\frac{C}{|z|}\np{\D\n}{2}{A(2,\rho)}\np{e^{\lambda}\H}{2}{A(2,\rho)}.
    \end{align*}
    Therefore, using the Cauchy-Schwarz inequality and the co-area formula, we deduce thanks to \eqref{est_R_2} that
    \begin{align*}
        \int_{8r}^{R/8}\left|\frac{d S_{\rho}}{d\rho}\right|d\rho\leq C\np{\D\n}{2}{\Omega}\np{e^{\lambda}\H}{2}{\Omega},
    \end{align*}
    and finally
    \begin{align*}
        \np{S}{\infty}{\Omega_{1/8}}\leq C\np{\D\n}{2}{\Omega}\np{e^{\lambda}\H}{2}{\Omega}.
    \end{align*}
    Reasoning similarly for $\vec{R}$, we deduce that 
    \begin{align*}
        \np{\vec{R}}{\infty}{\Omega_{1/8}}\leq C\np{\D\n}{2}{\Omega}\np{e^{\lambda}\H}{2}{\Omega}.
    \end{align*}
    Finally, using \cite[Lemma $5.6$]{lauriv1} (or rather, the more precise \cite[Lemma $4.5$]{pointwise}), and the $L^{2,1}$ estimates from Lemma \ref{lemma_4_2} and Lemma \ref{lemma_5_1} that 
    \begin{align*}
        \np{\psi_{S}}{\infty}{\Omega_{1/8}}\leq C\np{\D\psi_S}{2,1}{\Omega_{1/8}}&\leq C\left(\np{\gamma_2\D^{\perp}\log|z|}{2,1}{\Omega_{1/8}}+\np{\star\,\n\cdot\vec{\gamma}_1\D\log|z|}{2,1}{\Omega_{1/2}}\right)\\
        &\leq C(\Lambda)\np{\D\n}{2}{\Omega}\np{e^{\lambda}\H}{2}{\Omega}.
    \end{align*}
    Finally, we deduce that
    \begin{align*}
        \np{\D S}{2,1}{\Omega_{1/8}}\leq C(\Lambda)\np{\D\n}{2}{\Omega}\np{e^{\lambda}\H}{2}{\Omega}.
    \end{align*}
    As in \cite{lauriv1}, the $L^{2,1}$ estimate does not hold for $\vec{R}$. However, following the same steps of the proof, we deduce thanks to \eqref{jacobian_RS_residue} 
    \begin{align*}
        \np{\vec{R}}{2,1}{\Omega_{1/2}}\leq C(\Lambda) +|\vec{\gamma}_1(\phi,0)|\np{\D\log|z|}{2,1}{\Omega_{1/8}}
    \end{align*}
    As
    \begin{align*}
        &\np{\D\log|z|}{2,1}{B_1\setminus\bar{B}_{e^{-l_k}}(0)}=4\sqrt{\pi}\left(\log\left(\frac{1}{e^{-l_k}}\right)+\log\left(1+\sqrt{1-e^{-2l_k}}\right)\right)\\
        &=4\sqrt{\pi}\left(l_k+\log\left(1+\sqrt{1-e^{-2l_k}}\right)\right)=4\sqrt{\pi}\left(l_k+O\left(\sqrt{l_k}\right)\right)=4\sqrt{\pi}\left(1+O\left(\frac{1}{\sqrt{l_k}}\right)\right)l_k,
    \end{align*}
    where $l_k\rightarrow \infty$, we deduce that the $L^{2,1}$ quantization holds if 
    \begin{align*}
        \lim_{k\rightarrow \infty}|\vec{\gamma}_1(\phi_k)|l_k=0.
    \end{align*}

\section{Proof of the Main Theorem in the General Setting}

The goal of this setion is to show the following result. Recall that for all Willmore immersion $\phi$, the second residue is defined by 
\begin{align*}
    \vec{\gamma}_1(\phi,p)=\frac{1}{4\pi}\Im\int_{\gamma}\phi\wedge \left(\partial\H+|\H|^2\partial\phi+g^{-1}\otimes\s{\H}{\h_0}\otimes \bar{\partial\phi}\right)+g^{-1}\otimes\h_0\wedge\bar{\partial}\phi.
\end{align*}

\begin{theorem}\label{main}
    For all $n\in\N$, there exists a universal constant $\Lambda_n>0$ with the following property. 
    Let $(\Sigma,h_k)$ be sequence of compact Riemann surfaces of fixed genus that converges to a punctured $(\bar{\Sigma},\bar{h})$ whose connected components are $(\bar{\Sigma}_1,\cdots,\bar{\Sigma}_p)$, while we denote by $\ens{\gamma_k^1,\cdots\gamma_k^q}$ the finite sequence of pinching geodesics. For all $k\in\N$, let $\phi_k:(\Sigma,h_k)\rightarrow \R^n$ be a sequence of smooth Willmore immersions such that
    \begin{align}\label{finite_energy}
        \limsup_{k\rightarrow\infty}W(\phi_k)<\infty,
    \end{align}
    and assume that for all $1\leq j\leq q$, if $l_k^j=\leb(\gamma_k^j)$ is the length of $\gamma_j^j$, the
    \begin{align}\label{residue_bound}
        \limsup_{k\rightarrow \infty}\frac{|\vec{\gamma}_1(\phi_k,\gamma_{k}^j)|}{l_k^j}\leq \Lambda_n,
    \end{align}
    then there exists branched Willmore immersions $\phi_{\infty}^i:\Bar{\Sigma}_i\rightarrow \R^n$ and branched Willmore spheres $\vec{\Psi}_1,\cdots \vec{\Psi}_r:S^2\rightarrow \R^n$, and $\vec{\chi}_1,\cdots\vec{\chi}_s:S^2\rightarrow \R^n$ such that
    \begin{align}\label{scs}
        \limsup_{k\rightarrow\infty}\left(\mathrm{Ind}_W(\phi_k)+\mathrm{Null}_W(\phi_k\right)&\leq \sum_{i=1}^p\left(\mathrm{Ind}_W(\phi_{\infty}^i)+\mathrm{Null}_W(\phi_{\infty}^i)\right)+\sum_{j=1}^r\left(\mathrm{Ind}_W(\vec{\Psi}_j)+\mathrm{Null}_W(\vec{\Psi}_j)\right)\nonumber\\
        &+\sum_{l=1}^{s}\left(\mathrm{Ind}_W(\vec{\chi}_l)+\mathrm{Null}_W(\vec{\chi}_l)\right).
    \end{align}
\end{theorem}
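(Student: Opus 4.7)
The plan is to reduce Theorem \ref{main} to the scheme developed in Section \ref{proof_neck}, adding to it an analysis of the collar regions around the pinching geodesics $\gamma_k^j$ that parallels the neck-region analysis performed there. By \cite{quanta} and \cite{lauriv1}, hypothesis \eqref{finite_energy} together with \eqref{residue_bound} yields bubble convergence and produces the limiting configuration $(\phi_{\infty}^i,\vec{\Psi}_j,\vec{\chi}_l)$. Each collar can be parametrised conformally as an annulus $\Omega_k^j=B_{b_k^j}\setminus\bar{B}_{a_k^j}(0)\subset\C$ of modulus $\log(b_k^j/a_k^j)\sim\pi^2/l_k^j$, so that \eqref{residue_bound} translates into $|\vec{\gamma}_1(\phi_k,\gamma_k^j)|\log(b_k^j/a_k^j)\leq C\,\Lambda_n$ as $k\to\infty$.

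The new technical input is a collar version of Theorem \ref{neck_positive}. First I would use the $L^{2,1}$ calculation \eqref{norms} together with the residue bound to show that the contribution of the terms $\vec{\gamma}_1\,\D^{\perp}\log|z|$ and $\gamma_2\,\D^{\perp}\log|z|$ appearing in the Jacobian system \eqref{jacobian_RS_residues} is controlled by $C(\Lambda_n)\,\|e^{\lambda}\H\|_{L^2(\Omega_k^j)}$, thanks to the pointwise estimates established in Lemmas \ref{lemma_4_4} and \ref{lemma_5_4}. This gives, through Theorem \ref{collar_statement_div} applied to the $(\vec{R},S)$-system, a pointwise bound
\begin{align*}
|A_k(z)|\leq \frac{C}{|z|}\left(\left(\frac{|z|}{b_k^j}\right)^{\beta}+\left(\frac{a_k^j}{|z|}\right)^{\beta}+\frac{1+\Lambda_n}{\log(b_k^j/a_k^j)}\right)\varepsilon_k,
\end{align*}
where $\varepsilon_k\to 0$ is the $L^2$ quantum along the collar. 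Inserting this estimate in the second-derivative formula \eqref{der2_redite} and combining with the weighted fourth-order Poincaré inequalities of Theorems \ref{pde_lemma1} and \ref{pde_lemma2}, I expect to establish an inequality of the form
\begin{align*}
Q_{\phi_k}(u)\geq \lambda_1\int_{\Omega_k^j}\frac{u^2}{|z|^4}\,\omega_{1,k}^j\,dx+\lambda_2\int_{\Omega_k^j}\frac{|\D u|^2}{|z|^2}\,\omega_{2,k}^j\,dx
\end{align*}
with coercivity constants $\lambda_1,\lambda_2>0$ independent of $k$, valid for every $u\in W^{2,2}_0(\Omega_k^j)$, provided that the universal constant $\Lambda_n$ is chosen smaller than a threshold determined by the coercivity constants of Theorem \ref{pde_lemma1}.

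Once this collar coercivity is in place, the conclusion follows by a direct adaptation of the arguments of Section \ref{proof_neck}. I would introduce a global weight $\omega_{\alpha,k}$ on $\Sigma$ built by gluing together copies of \eqref{weight_def}, with one factor for each bubble region and one for each collar. Sylvester's law of inertia in the form of Lemma \ref{lemme_IV.3} and the uniform lower bound on the first weighted eigenvalue in the form of Lemma \ref{lemme_IV.4} remain valid once the collar coercivity above is available. The compactness argument at the end of Section \ref{proof_neck}, applied to weighted eigenfunctions $u_k\in\mathscr{S}_{\alpha,k}$, then produces limits $\varphi_{\infty}^i$ on each $\bar{\Sigma}_i$, together with $\psi_{\infty}^j$ on each compact bubble and $\chi_{\infty}^l$ on each inverted non-compact bubble; the proof of Claim $1$ there also shows that the weak limits on collars vanish identically, so that no independent \emph{collar mode} appears in the count. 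The linear-independence argument at the very end of Section \ref{proof_neck} then gives \eqref{scs}.

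The main obstacle is the quantitative tracking of the residue contribution through the chain of Jacobian estimates. The terms $\vec{\gamma}_1\,\D^{\perp}\log|z|$ and $\gamma_2\,\D^{\perp}\log|z|$ survive in $L^{2,1}$ with magnitude proportional to $|\vec{\gamma}_1|\cdot\log(b_k^j/a_k^j)$, while the coercivity constant afforded by Theorem \ref{pde_lemma1} degenerates like $1/\log^2(b_k^j/a_k^j)$. Under \eqref{residue_bound} the first quantity remains bounded by $\Lambda_n$, and the two effects balance only for $\Lambda_n$ strictly smaller than an explicit threshold $\Lambda_n^{\ast}>0$ expressed in terms of the constants of Theorems \ref{collar_statement_div}, \ref{pde_lemma1}, \ref{pde_lemma2} and Lemmas \ref{lemma_4_4}, \ref{lemma_5_4}. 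Determining this threshold and verifying that the resulting inequality absorbs all lower-order cross-terms appearing in $Q_{\phi_k}$ is the technical heart of the argument. A secondary subtlety is that the weight \eqref{weight_def} degenerates at the pinched nodes, so the limiting weighted $L^2$ space must be understood component-wise on $\bar{\Sigma}$; this is handled by applying the spectral theory of Lemma \ref{Lemma_B.1} on each connected component $\bar{\Sigma}_i$ separately, exactly as the limits $\varphi_{\infty}^i$ above are produced.
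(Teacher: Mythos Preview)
Your proposal is essentially the paper's own argument: reduce everything to Section~\ref{proof_neck} and observe that the only change needed is in Theorem~\ref{neck_positive}, where the smallness parameter becomes the $L^{2,1}$ norm of $\D\n_k$ on the collar, which under \eqref{residue_bound} satisfies $\limsup_{k\to\infty}\np{\D\n_k}{2,1}{\Omega_k^j}\leq C\Lambda_n$; choosing $\Lambda_n$ below the coercivity threshold finishes the proof. Your treatment of the multi-component limit and node-wise spectral theory via Lemma~\ref{Lemma_B.1} is more explicit than the paper's, but follows the same route.

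One correction: your displayed pointwise bound on $|A_k|$ is miswritten. The residue contribution to $\Lambda$ in Theorem~\ref{collar_statement_div} is of order $|\vec{\gamma}_1|\log(b_k^j/a_k^j)\leq C\Lambda_n$, and this term is \emph{not} multiplied by the $L^2$ quantum $\varepsilon_k$; the correct form is
\begin{align*}
|A_k(z)|\leq \frac{C}{|z|}\left(\left(\frac{|z|}{b_k^j}\right)^{\beta}+\left(\frac{a_k^j}{|z|}\right)^{\beta}+\frac{1}{\log(b_k^j/a_k^j)}\right)\left(\varepsilon_k+C\Lambda_n\right),
\end{align*}
i.e.\ the smallness parameter is $\varepsilon_k+C\Lambda_n$, not $\varepsilon_k$ alone. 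This is precisely why a threshold on $\Lambda_n$ is required, as you correctly argue in your final paragraph; with the formula as you wrote it, any $\Lambda_n$ would work since $\varepsilon_k\to 0$.
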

\begin{rem}
    Notice that the $L^2$ energy quantization holds since \eqref{residue_bound} implies that
    \begin{align*}
        \lim_{k\rightarrow \infty}\frac{\vec{\gamma}_1(\phi_k,\gamma_k^j)}{\sqrt{l_k^j}}=0,
    \end{align*}
    which allows us to use the analysis of \cite{quantamoduli}.
\end{rem}
\begin{proof}
    The proof is the same as the one given in Section \ref{proof_neck} with the exception of Theorem \ref{neck_positive} that must be refined. The previous analysis shows the result holds provided that
    \begin{align*}
        \np{\D\n_k}{2,1}{\Omega_k(\alpha)}
    \end{align*}
    is small enough, and since
    \begin{align*}
        \np{\D\n_k}{2,1}{\Omega_k(\alpha)}\leq C\left(\np{\D\n_k}{2}{\Omega_2(2\alpha)}+|\vec{\gamma}_1(\phi_k,0)|\np{\D\log|z|}{2,1}{\Omega_k(2\alpha)}\right).
    \end{align*}
    Since
    \begin{align*}
        \np{\D\log|z|}{2,1}{\Omega_k(2\alpha)}=4\sqrt{\pi}\left(l_k+2\log(\alpha)+\log\left(1+\sqrt{1-\alpha^2e^{-2l_k}}\right)\right),
    \end{align*}
    where $l_k\conv{k\rightarrow\infty}\infty$
    we deduce thanks to the $L^2$ quantization of energy that for all $\epsilon>0$, provided that
    \begin{align*}
        |\vec{\gamma}_1(\phi_k,0)|l_k\leq \epsilon,
    \end{align*}
    then
    \begin{align*}
        \lim_{\alpha\rightarrow 0}\limsup_{k\rightarrow \infty}\np{\D\n_k}{2,1}{\Omega_k(\alpha)}\leq 4\sqrt{\pi}C\epsilon.
    \end{align*}
    Therefore, using \eqref{fine_neck_estimate}, we deduce that the theorem holds provided that $\epsilon>0$ is chosen small enough in the above estimate, which concludes the proof of the theorem.
\end{proof}

    \nocite{}
	 \bibliographystyle{plain}
	 \bibliography{biblio_full}

\begin{thebibliography}{10}

\bibitem{aubin}
Thierrry Aubin.
\newblock {\em Some nonlinear problems in {R}iemannian geometry}.
\newblock Springer Monographs in Mathematics. Springer-Verlag, Berlin,
  xviii+395 pp., 1998.

\bibitem{bellettini_multiplity_one1}
Costante Bellettini.
\newblock Multiplicity-1 minmax minimal hypersurfaces in manifolds with
  positive {R}icci curvature.
\newblock {\em Comm. Pure Appl. Math., \emph{to appear}, arXiv:2004.10112},
  2020.

\bibitem{bellettini_multiplity_one2}
Costante Bellettini.
\newblock Generic existence of multiplicity-1 minmax minimal hypersurfaces via
  {A}llen-cahn.
\newblock {\em Calc. Var. Partial Differential Equations 61, no. 4, Paper No.
  149, 25 pp.}, 2022.

\bibitem{marque_epsilon_regularity_tracefree_A}
Yann Bernard, Paul Laurain, and Nicolas Marque.
\newblock Energy {E}stimates for the {T}racefree {C}urvature of {W}illmore
  {S}urfaces and {A}pplications.
\newblock {\em Arch. Ration. Mech. Anal. 247, no. 1, Paper No. 8.}, 2023.

\bibitem{beriviere}
Yann Bernard and Tristan Rivi\`{e}re.
\newblock Singularity removability at branch points for {W}illmore surfaces.
\newblock {\em Pacific {J}. {M}ath., Vol. 265, No. 2}, 2013.

\bibitem{quanta}
Yann Bernard and Tristan Rivi\`{e}re.
\newblock Energy quantization for {W}illmore surfaces and applications.
\newblock {\em Ann. of Math. 180, 87-136}, 2014.

\bibitem{blaschke}
Wilhelm J.~E. Blaschke.
\newblock {\em Vorlesungen \"{U}ber {D}ifferentialgeometrie {III}:
  {D}ifferentialgeometrie der {K}reise und {K}ugeln}.
\newblock Springer-{V}erlag, coll. {G}rundlehren der mathematischen
  {W}issenschaften, 1929.

\bibitem{brezis}
Ha\"{i}m Brezis.
\newblock {\em Analyse fonctionnelle. Th\'{e}orie et applications.}
\newblock Collection Mathématiques Appliquées pour la Maîtrise. Masson,
  Paris, xiv+234 pp., 1983.

\bibitem{hardy-rellich2}
Cristian Cazacu.
\newblock A new proof of the {H}ardy-{R}ellich inequality in any dimension.
\newblock {\em Proc. Roy. Soc. Edinburgh Sect. A 150, no. 6, 2894–2904},
  2020.

\bibitem{survey_cazacu}
Cristian Cazacu.
\newblock The method of super-solutions in hardy and rellich type inequalities
  in the ${L}^2$ setting: an overview of well-known results and short proofs.
\newblock {\em Rev. Roumaine Math. Pures Appl. 66 (2021), no. 3-4, 617–638},
  2021.

\bibitem{chodosh_mantoulidis}
Otis Chodosh and Christos Mantoulidis.
\newblock Minimal surfaces and the {A}llen-{C}ahn equation on $3$-manifolds:
  index, multiplicity, and curvature estimates.
\newblock {\em Ann. of Math. (2) 191, no. 1, 213–328}, 2020.

\bibitem{ekholm}
Tobias Ekholm.
\newblock Regular homotopy and total curvature. {II}. {S}phere immersions into
  3-space.
\newblock {\em Algebr. Geom. Topol. 6, 493–512}, 2006.

\bibitem{federer}
Herbert Federer.
\newblock {\em Geometric {M}easure {T}heory}.
\newblock Springer-Verlag, 1969.

\bibitem{brakke}
G.~Francis, {J. M.} Sullivan, R.~Kusner, {K. A.} Brakke, C.~Hartman, and
  G.~Chappell.
\newblock The minimax sphere eversion.
\newblock {\em Visualization and mathematics (Berlin-Dahlem, 1995), 3–20,
  Springer, Berlin}, 1997.

\bibitem{morse_allen-cahn2}
Pedro Gaspar.
\newblock The second inner variation of energy and the {M}orse index of limit
  interfaces.
\newblock {\em J. Geom. Anal. 30, no. 1, 69–85}, 2020.

\bibitem{germain3}
Sophie Germain.
\newblock Mémoire sur cette question proposée par la premiere classe de
  l'institut : Donner la théorie mathématique des vibrations des surfaces
  élastiques, et la comparer \`{a} l'expérience.
\newblock {\em Académie des Sciences}, 30 septembre 1815.

\bibitem{guaraco1}
{Marco A. M.} Guaraco.
\newblock Min-max for phase transitions and the existence of embedded minimal
  hypersurfaces.
\newblock {\em J. Differential Geom. 108, no. 1, 91–133}, 2018.

\bibitem{guaraco2}
{Marco A. M.} Guaraco and Pedro Gaspar.
\newblock The allen-cahn equation on closed manifolds.
\newblock {\em Calc. Var. Partial Differential Equations 57, no. 4, Paper No.
  101, 42 pp.}, 2018.

\bibitem{morse_allen-cahn1}
Fritz Hiesmayr.
\newblock Spectrum and index of two-sided {A}llen-{C}ahn minimal hypersurfaces.
\newblock {\em Comm. Partial Differential Equations 43, no. 11, 1541–1565},
  2018.

\bibitem{helein}
Frédéric Hélein.
\newblock {\em Applications harmoniques, lois de conservation, et repères
  mobiles}.
\newblock Diderot éditeur, Sciences et Arts, 1996.

\bibitem{karpukhin}
Mikhaeil Karpukhin.
\newblock Index of minimal spheres and isoperimetric eigenvalue inequalities.
\newblock {\em Invent. Math. 223, no. 1, 335–377}, 2021.

\bibitem{stern_harmonic}
Mikhail Karpukhin and {Daniel L.} Stern.
\newblock Min-max harmonic maps and a new characterization of conformal
  eigenvalues.
\newblock {\em arXiv:2004.04086}, 2020.

\bibitem{stern_harmonic2}
Mikhail Karpukhin and {Daniel L.} Stern.
\newblock Existence of harmonic maps and eigenvalue optimization in higher
  dimensions.
\newblock {\em arXiv:2207.13635}, 2022.

\bibitem{kusner}
Robert Kusner.
\newblock On the minimax sphere eversion.
\newblock {\em Unpublished note}, 1995.

\bibitem{lauriv1}
Paul Laurain and Tristan Rivi\`{e}re.
\newblock Optimal estimate for the gradient of {G}reen functions on
  degenerating surfaces and applications.
\newblock {\em Comm. Anal. Geom. Vol. 26, No 4}, 2018.

\bibitem{quantamoduli}
Paul Laurain and Tristan Rivière.
\newblock Energy quantization of {W}illmore surfaces at the boundary of the
  moduli space.
\newblock {\em Duke Math. J. 167, no. 11, 2073–2124}, 2018.

\bibitem{lieyau}
Peter Li and Shing-Tung Yau.
\newblock A {N}ew {C}onformal {I}nvariant and {I}ts {A}pplications to the
  {W}illmore {C}onjecture and the {F}irst {E}igenvalue of {C}ompact {S}urfaces.
\newblock {\em Invent. Math. 69, 269-291}, 1982.

\bibitem{linriv_GL2}
Fang-Hua Lin and Tristan Rivière.
\newblock A quantization property for moving line vortices.
\newblock {\em Comm. Pure Appl. Math. 54 , no. 7, 826–850}, 2001.

\bibitem{linriv_GL1}
Fang-Hua Lin and Tristan Rivière.
\newblock A quantization property for static ginzburg-landau vortices.
\newblock {\em Comm. Pure Appl. Math. 54, no. 2, 206–228.}, 2001.

\bibitem{linriv}
Fang-Hua Lin and Tristan Rivière.
\newblock Energy quantization for harmonic maps.
\newblock {\em Duke Math. J. Volume 111, Number 1, 177-193}, 2002.

\bibitem{riviere_morse_scs}
Francesca~Da Lio, Matilde Gianocca, and Tristan Rivière.
\newblock Morse index stability for critical points to conformally invariant
  lagrangians.
\newblock {\em arXiv:2212.03124}, 2023.

\bibitem{marque_minimal_bubbling}
Nicolas Marque.
\newblock Minimal bubbling for willmore surfaces.
\newblock {\em Int. Math. Res. Not. IMRN, no. 23, 17708–17765.}, 2021.

\bibitem{marque_epsilon_regularity}
Nicolas Marque.
\newblock An $\epsilon$-regularity with mean curvature control for {W}illmore
  immersions and application to minimal bubbling.
\newblock {\em Ann. Inst. Fourier (Grenoble) 72, no. 2, 639–684}, 2022.

\bibitem{marques_morse_inequalities}
Fernando~C. Marques, Rafael Montezuma, and André Neves.
\newblock Morse inequalities for the area functional.
\newblock {\em J. Differential Geom., \emph{to appear}, arXiv:2003.01301},
  2020.

\bibitem{marques_lower_bound}
{Fernando C.} Marques and André Neves.
\newblock Morse index of multiplicity one min-max minimal hypersurfaces.
\newblock {\em Adv. Math. 378, Paper No. 107527, 58 pp.}, 2021.

\bibitem{marques_survey_morse}
{Fernando C.} Marques and André Neves.
\newblock Morse theory for the area functional.
\newblock {\em São Paulo J. Math. Sci. 15, no. 1, 268–279.}, 2021.

\bibitem{marqueswillmore}
Fernando~Codá Marques and André Neves.
\newblock Min-{M}ax theory and the {W}illmore conjecture.
\newblock {\em Ann. of {M}ath., 179, 683-782}, 2014.

\bibitem{marquesmorse}
Fernando~Codá Marques and André Neves.
\newblock Morse index and multiplicity of min-max minimal hypersurfaces.
\newblock {\em Camb. J. Math. 4, no. 4, 463–511}, 2016.

\bibitem{indexS3}
Alexis Michelat.
\newblock On the {M}orse {I}ndex of {W}illmore {S}pheres in {$S^3$}.
\newblock {\em Comm. Anal. Geom. 28 (2020), no. 6, 1337–1406}, 2016.

\bibitem{index3}
Alexis Michelat.
\newblock Morse {I}ndex {E}stimates of {M}in-{M}ax {W}illmore {S}urfaces.
\newblock {\em arXiv:1808.07700}, 2018.

\bibitem{index4}
Alexis Michelat.
\newblock On the {M}orse {I}ndex of {B}ranched {W}illmore {S}pheres in
  3-{S}pace.
\newblock {\em Calc. Var. Partial Differential Equations 60 (2021), no. 4,
  Paper No. 126., 97 pp.}, 2021.

\bibitem{classification}
Alexis Michelat and Tristan Rivi{\`e}re.
\newblock The {C}lassification of {B}ranched {W}illmore {S}pheres in the
  $3$-{S}phere and the $4$-{S}phere.
\newblock {\em Ann. Sci. Éc. Norm. Supér. (4) 55, no. 5, 1199–1288.}, 2022.

\bibitem{morse_viscosity}
Alexis Michelat and Tristan Rivière.
\newblock Morse {I}ndex {S}tability in the {V}iscosity {M}ethod for {W}illmore
  {S}urfaces.
\newblock {\em in preparation}, 2023+.

\bibitem{morse_biharmonic}
Alexis Michelat and Tristan Rivière.
\newblock Morse {I}ndex {S}tability of {B}iharmonic {M}aps in {D}imension $4$.
\newblock {\em in preparation}, 2023+.

\bibitem{morse_willmore_II}
Alexis Michelat and Tristan Rivière.
\newblock Morse {I}ndex {S}tability of {W}illmore {I}mmersions {II}.
\newblock {\em in preparation}, 2023+.

\bibitem{pointwise}
Alexis Michelat and Tristan Rivière.
\newblock Pointwise expansion of degenerating immersions of finite total
  curvature.
\newblock {\em J. Geom. Anal. 33, no. 1, Paper No. 24, 91 pp.}, 2023.

\bibitem{eigenvalue_annuli}
Alexis Michelat and Tristian Rivière.
\newblock Weighted {E}igenvalue {P}roblems for {F}ourth-{O}rder {O}perators in
  {D}egenerating {A}nnuli.
\newblock {\em arXiv 8th of June 2023}, 2023.

\bibitem{mondinonguyen}
Andrea Mondino and {Huy The} Nguyen.
\newblock Global conformal invariants for submanifolds.
\newblock {\em arXiv:1501.07527, to appear in the Annales de l'Institut
  Fourier}, 2015.

\bibitem{PR}
Frank Pacard and Tristan Rivi\`{e}re.
\newblock {\em Linear and {N}onlinear {A}spects of {V}ortices, The
  {G}inzburg-{L}andau {M}odel}.
\newblock Birk\"{a}user, 2000.

\bibitem{pigati_viscosity_boundary}
Alessandro Pigati.
\newblock The viscosity method for min-max free boundary minimal surfaces.
\newblock {\em Arch. Ration. Mech. Anal. 244, no. 2, 391–441}, 2022.

\bibitem{multiplicity}
Alessandro Pigati and Tristan Rivi\`{e}re.
\newblock A proof of the multiplicity one conjecture for min-max minimal
  surfaces in arbitrary codimension.
\newblock {\em Duke Math. J. 169, no. 11, 2005–2044.}, 2020.

\bibitem{pigati_stern}
Alessandro Pigati and {Daniel L.} Stern.
\newblock Minimal submanifolds from the abelian {H}iggs model.
\newblock {\em Invent. Math. 223, no. 3, 1027–1095.}, 2020.

\bibitem{pigati1}
Alessandro Pigati and Tristan.
\newblock The regularity of parametrized integer stationary varifolds in two
  dimensions.
\newblock {\em Comm. Pure Appl. Math. 73, no. 9, 1981–2042}, 2020.

\bibitem{poisson}
Sim\'{e}on~Denis Poisson.
\newblock M\'{e}moire sur les surfaces \'{e}lastiques.
\newblock {\em M\'{e}moire de l'Institut}, 1814.

\bibitem{riviere1}
Tristan Rivi\`{e}re.
\newblock Analysis aspects of {W}illmore surfaces.
\newblock {\em Invent. Math., 174, 1-45}, 2008.

\bibitem{hierarchies}
Tristan Rivi\`{e}re.
\newblock Minmax {H}ierarchies and {M}inimal {S}urfaces in {M}anifolds.
\newblock {\em arXiv:1705.09848}, 2017.

\bibitem{lower}
Tristan Rivi\`{e}re.
\newblock Lower semi-continuity of the index in the visosity method for minimal
  surfaces.
\newblock {\em Int. Math. Res. Not. IMRN, no. 8, 5651–5675}, 2021.

\bibitem{eversion}
Tristan Rivière.
\newblock Willmore {M}inmax {S}urfaces and the {C}ost of the {S}phere
  {E}version.
\newblock {\em J. Eur. Math. Soc., Volume 23, Issue 2, 2021, pp. 349–423},
  2015.

\bibitem{viscosity}
Tristan Rivière.
\newblock A {V}iscosity {M}ethod in the {M}in-{Max} {T}heory of {M}inimal
  {S}urfaces.
\newblock {\em Publ. Math. Inst. Hautes Études Sci., Vol. 126, Issue 1, pp
  177–246}, 2017.

\bibitem{hierarchies2}
Tristan Rivière.
\newblock Minmax hierarchies, minimal fibrations and a pde based proof of the
  willmore conjecture.
\newblock {\em arXiv:2007.05467}, 2020.

\bibitem{smale}
Stephen~{J.} Smale.
\newblock A classification of immersions of the two-sphere.
\newblock {\em Trans. Amer. Math. Soc. 90, 281-290}, 1958.

\bibitem{hardy-rellich1}
Achilles Tertikas and {Nikolaos B.} Zographopoulos.
\newblock Best constants in the {H}ardy-{R}ellich inequalities and related
  improvements.
\newblock {\em Adv. Math. 209, no. 2, 407–459}, 2007.

\bibitem{thomsen}
Gerhard Thomsen.
\newblock \"{U}ber konforme {G}eometrie, {I}: {G}rundlagen der konformen
  {F}l\"{a}chentheorie.
\newblock {\em Hamb. Math. Abh. 3}, 1923.

\bibitem{bumpy_white1}
Brian White.
\newblock The space of minimal submanifolds for varying {R}iemannian metrics.
\newblock {\em Indiana Univ. Math. J. 40, no. 1, 161–200}, 1991.

\bibitem{bumpy_white2}
Brian White.
\newblock On the bumpy metrics theorem for minimal submanifolds.
\newblock {\em Amer. J. Math. 139, no. 4, 1149–1155}, 2017.

\bibitem{willmore1}
{Thomas J.} Willmore.
\newblock Note on embedded surfaces.
\newblock {\em An. st. Univ. Iaso, s.I.a., Mathematica, 11B, 493-496}, 1965.

\bibitem{zhou_xin}
Xin Zhou.
\newblock On the multiplicity one conjecture in min-max theory.
\newblock {\em Ann. of Math. (2) 192, no. 3, 767–820}, 2020.

\end{thebibliography}

    \end{document}